\documentclass[12pt]{amsart}
\usepackage{amsmath}
\usepackage{amsfonts}
\usepackage{amssymb}
\usepackage{amscd}
\usepackage{mathtools}
\usepackage{amsxtra}
\usepackage{amsthm}
\usepackage{graphicx}
\usepackage[abbrev,alphabetic]{amsrefs}
\RequirePackage[dvipsnames,usenames]{color}
\usepackage{soul,xcolor}
\setstcolor{red}
\usepackage{stmaryrd}
\usepackage{booktabs}
\usepackage{multirow}
\newtagform{tiny}{\tiny(}{)}
\usepackage{aliascnt}

\usepackage{mathtools}
\usepackage{hyperref}
\usepackage[margin=1.25in]{geometry}
\usepackage{mathrsfs}

\usepackage{amsthm}
\usepackage{comment}
\usepackage[all,cmtip]{xy}
\usepackage{tikz-cd}
\usetikzlibrary{cd}

\tikzcdset{
  cells={font=\everymath\expandafter{\the\everymath\displaystyle}},
}

\usepackage[all]{xy}

\usepackage{cleveref}

\makeatletter
\def\@tocline#1#2#3#4#5#6#7{\relax
  \ifnum #1>\c@tocdepth 
  \else
    \par \addpenalty\@secpenalty\addvspace{#2}%
    \begingroup \hyphenpenalty\@M
    \@ifempty{#4}{%
      \@tempdima\csname r@tocindent\number#1\endcsname\relax
    }{%
      \@tempdima#4\relax
    }%
    \parindent\z@ \leftskip#3\relax \advance\leftskip\@tempdima\relax
    \rightskip\@pnumwidth plus4em \parfillskip-\@pnumwidth
    #5\leavevmode\hskip-\@tempdima
      \ifcase #1
       \or\or \hskip 1em \or \hskip 2em \else \hskip 3em \fi%
      #6\nobreak\relax
    \hfill\hbox to\@pnumwidth{\@tocpagenum{#7}}\par
    \nobreak
    \endgroup
  \fi}
\makeatother

\newcommand{\cO}{\mathcal{O}}

\newcommand{\red}{\mathrm{red}}
\newcommand{\perf}{\mathrm{perf}}
\newcommand{\Frac}{\mathrm{Frac}}
\newcommand{\Proj}{\mathrm{Proj}}

\makeatletter
\DeclareRobustCommand{\graded}{%
    \@ifnextchar\bgroup{\graded@with}{\ensuremath{\mathrm{gr}}}%
}
\newcommand{\graded@with}[1]{\ensuremath{#1\mathrm{-gr}}}
\makeatother

\DeclareMathOperator*{\grlim}{grlim}
\DeclareMathOperator*{\grcolim}{grcolim}
\newcommand{\grotimes}{\mathbin{\otimes}^{\graded}}
\newcommand{\Lgrotimes}{\mathbin{\otimes}^{L\graded}}
\newcommand{\cgrotimes}{\mathbin{\widehat{\otimes}}^{\graded}}
\newcommand{\cLgrotimes}{\mathbin{\widehat{\otimes}}^{L\graded}}

\DeclareMathOperator{\tperfd}{\widehat{\perfd}}
\DeclareMathOperator{\grpfd}{\mathrm{grpfd}}
\DeclareMathOperator{\tgrpfd}{\widehat{\grpfd}}
\DeclareMathOperator{\pfd}{pfd}

\makeatletter
\DeclareRobustCommand{\comp}[1]{%
    \@ifnextchar\bgroup{\comp@with{#1}}{\comp@without{#1}}%
}
\newcommand{\comp@with}[2]{\Lambda_{#1}(#2)}
\newcommand{\comp@without}[1]{\ensuremath{#1\mathrm{-comp}}}
\makeatother
\newcommand{\dcomp}[2]{d\Lambda_{#1}(#2)}
\newcommand{\grcomp}[2]{\Lambda^{\graded}_{#1}(#2)}
\newcommand{\dgrcomp}[2]{d\Lambda^{\graded}_{#1}(#2)}
\DeclareMathOperator{\grwedge}{{\wedge \graded}}

\DeclareMathOperator{\coMod}{coMod}

\DeclareMathOperator{\Supp}{Supp}
\DeclareMathOperator{\Spec}{Spec}

\DeclareMathOperator{\Hom}{Hom}

\DeclareMathOperator{\Pro}{Pro}

\DeclareMathOperator{\SPerfd}{SPerfd}

\newcommand{\hProj}{\widehat{\Proj}}

\theoremstyle{plain}
\newtheorem{theorem}{Theorem}[section]

\newtheorem{theoremA}{Theorem}

\crefname{theoremA}{Theorem}{Theorems}
\Crefname{theoremA}{Theorem}{Theorems}

\newaliascnt{lemma}{theorem}
\newtheorem{lemma}[lemma]{Lemma}
\aliascntresetthe{lemma}
\crefname{lemma}{Lemma}{Lemmas}
\Crefname{lemma}{Lemma}{Lemmas}

\newaliascnt{proposition}{theorem}
\newtheorem{proposition}[proposition]{Proposition}
\aliascntresetthe{proposition}
\crefname{proposition}{Proposition}{Propositions}
\Crefname{proposition}{Proposition}{Propositions}

\newaliascnt{corollary}{theorem}
\newtheorem{corollary}[corollary]{Corollary}
\aliascntresetthe{corollary}
\crefname{corollary}{Corollary}{Corollaries}
\Crefname{corollary}{Corollary}{Corollaries}

\newaliascnt{claim}{theorem}

\aliascntresetthe{claim}
\crefname{claim}{Claim}{Claims}
\Crefname{claim}{Claim}{Claims}

 \newtheorem*{claim*}{Claim}

\theoremstyle{definition}
\newaliascnt{definition}{theorem}
\newtheorem{definition}[definition]{Definition}
\aliascntresetthe{definition}
\crefname{definition}{Definition}{Definitions}
\Crefname{definition}{Definition}{Definitions}

\newaliascnt{notation}{theorem}
\newtheorem{notation}[notation]{Notation}
\aliascntresetthe{notation}
\crefname{notation}{Notation}{Notations}
\Crefname{notation}{Notation}{Notations}

\newaliascnt{example}{theorem}

\aliascntresetthe{example}
\crefname{example}{Example}{Examples}
\Crefname{example}{Example}{Examples}

\theoremstyle{remark}
\newaliascnt{remark}{theorem}
\newtheorem{remark}[remark]{Remark}
\aliascntresetthe{remark}
\crefname{remark}{Remark}{Remarks}
\Crefname{remark}{Remark}{Remarks}

\newaliascnt{setting}{theorem}

\aliascntresetthe{setting}
\crefname{setting}{Setting}{Settings}
\Crefname{setting}{Setting}{Settings}

\newaliascnt{construction}{theorem}
\newtheorem{construction}[construction]{Construction}
\aliascntresetthe{construction}
\crefname{construction}{Construction}{Constructions}
\Crefname{construction}{Construction}{Constructions}

\numberwithin{equation}{section}



\usepackage{mymacros_common_Ryo, mymacros_english_paper_Ryo}

\title{Algebraization of Absolute Perfectoidization via Section Rings}
\author{Ryo Ishizuka}
\address{Institute of Science Tokyo, Tokyo 152-8551, Japan}
\email{ishizuka.r.ac@m.titech.ac.jp}

\author{Shou Yoshikawa}
\address{Institute of Science Tokyo, Tokyo 152-8551, Japan}
\email{yoshikawa.s.9fe9@m.isct.ac.jp}

\thanks{2020 {\em Mathematics Subject Classification\/}: 14G45, 16W50, 14B20}

\keywords{Perfectoidization, Graded rings, formal schemes, algebraization}

\begin{document}

\begin{abstract}
We construct and study a graded version of absolute perfectoidization for $G$-graded adic rings.
As a main geometric application, we show that the absolute perfectoidization of the structure sheaf of a projective-type formal scheme admits an algebraization.
\end{abstract}

\maketitle 

\tableofcontents

\section{Introduction}

The theory of perfectoid rings, originating in the work of Scholze \cite{scholze2012Perfectoida}, has become a fundamental tool in arithmetic geometry, algebraic geometry, and commutative algebra.
Within this development, the notion of \emph{(absolute) perfectoidization} was introduced in the prismatic theory of Bhatt--Scholze \cite{bhatt2022Prismsa}. This construction provides a universal way to attach to any \(p\)-adically complete ring \(R\) an ``initial perfectoid ring'' \(R_{\perfd}\), as a mixed-characteristic counterpart of perfection in characteristic \(p\).
Absolute perfectoidization has already been used in several contexts, especially in the study of mixed-characteristic singularities. 
For instance, it yields invariants such as perfectoid signatures, perfectoid Hilbert--Kunz multiplicities \cite{cai2025Perfectoida}, and centers of perfectoid purity \cite{fayolle2025Centersa}. These are perfectoid analogues of the $F$-signature, Hilbert--Kunz multiplicity, and center of \(F\)-purity, respectively.
Moreover, Bhatt--Ma--Patakfalvi--Schwede--Tucker--Waldron--Witaszek \cite{bhatt2024Perfectoid} introduced \emph{(lim-)perfectoid pure} singularities, which form a perfectoid analogue of \(F\)-pure singularities in mixed characteristic.


In \cite{ishizuka2025Graded}, we developed the basic theory of \emph{graded perfectoid rings}, providing a foundation for perfectoid structures in the graded setting.
Indeed, graded rings appear as section rings in algebraic geometry and as basic objects in commutative algebra and representation theory.
For this reason, it is natural to ask how perfectoid methods extend to this setting. 
In this paper, we extend the theory of absolute perfectoidization to the graded setting.



\begin{theoremA}[{\Cref{SectionTopGradedPerfd}}] \label{MainTheoremGradedPerfd}
    Let $G$ be a torsion-free abelian group with \(G = G[1/p]\) and let $p$ be a prime number.
    Let \(R\) be a \(G\)-graded ring\footnote{Even if \(R\) is just a \(\setZ\)-graded ring, we can extend the grading to a \(\setZ[1/p]\)-grading by setting \(R_g = 0\) for \(g \in \setZ[1/p] \setminus \setZ\). Therefore, the assumption \(G = G[1/p]\) does not cause any loss of generality. See also \Cref{RemarkGrading}.} whose graded components \(R_g\) are \(p\)-adically complete for all \(g \in G\). Then there exists a \(G\)-graded \(\setE_{\infty}\)-\(R\)-algebra
    \begin{equation*}
        R_{\grpfd} \in \CAlg(\mcalD_{\graded{G}}(R))
    \end{equation*}
    called the \emph{absolute graded perfectoidization} of \(R\), satisfying the following properties:
    \begin{enumerate}
        \item \(R_{\grpfd}\) is the limit of all \(G\)-graded perfectoid \(R\)-algebras in \(\CAlg(\mcalD_{\graded{G}}(R))\) (\Cref{DefGradedAbsPerfd}).
        \item The derived \(p\)-completion of \(R_{\grpfd}\) coincides with the \emph{absolute perfectoidization} \(R_{\perfd}\) of \(R\) introduced by Bhatt--Scholze \cites{bhatt2022Prismsa,bhatt2024Perfectoid} (\Cref{CompletionOfAbsoluteGradedPerfd}).
        \item If \(R\) admits a morphism from a perfectoid ring and \(R_{\perfd}\) is concentrated in degree \(0\), then \(R_{\grpfd}\) is the initial \(G\)-graded perfectoid \(R\)-algebra (\Cref{DiscretePropertyGradedPerfd}).
    \end{enumerate}
    Moreover, it satisfies further properties analogous to those of the absolute perfectoidization \(R_{\perfd}\) of $R$, such as colimit preservation (\Cref{ColimitPreserveGradedPerfd}), the base change property (\Cref{BaseChangeGradedPerfd}), and the descendability property (\Cref{GradedPerfdDescendable}).
\end{theoremA}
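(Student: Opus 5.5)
The strategy is to build \(R_{\grpfd}\) by imitating Bhatt--Scholze's construction of \(R_{\perfd}\) through the prismatic site, carried out inside the graded derived category \(\mcalD_{\graded{G}}(R)\). First I will set up the graded foundations. The category \(\mcalD_{\graded{G}}(R)\) is identified with \(G\)-indexed families \((M_g)_g\) of complexes, equipped with Day convolution. Completeness is imposed componentwise: I call \(M\) graded \(p\)-complete if every \(M_g\) is derived \(p\)-complete. The associated completion is \(p\)-completion degreewise, and it differs from the derived \(p\)-completion of \(\bigoplus_g M_g\). The graded perfectoid rings of \cite{ishizuka2025Graded} form a full subcategory of \(\CAlg(\mcalD_{\graded{G}}(R))\). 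Using \(G=G[1/p]\), a graded Frobenius lift \(\varphi\colon A_g\to A_{pg}\) makes sense, so graded perfect prisms \((A,I)\), with \(A=\bigoplus_g A_g\) graded and \(I\) generated in degree \(0\), are available.

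Next I define \(R_{\grpfd}\) as the limit in \(\CAlg(\mcalD_{\graded{G}}(R))\) of the diagram of all graded perfectoid \(R\)-algebras, after fixing a small cofinal subdiagram, for instance those of bounded cardinality, to handle set-theoretic issues. This is property (1) by definition. The content lies in identifying this limit with something computable. I will introduce a graded quasisyntomic or arc-type site of \(R\) whose objects are graded perfectoid \(R\)-algebras. On it I consider the presheaf \(S\mapsto S\). Limits in the graded category are computed degreewise, so \((R_{\grpfd})_g=\lim_S S_g\).

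For (2), I compare with \(R_{\perfd}=\lim\) of perfectoid \(R\)-algebras. The comparison has two parts.

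(a) For a graded perfectoid \(S\), the derived \(p\)-completion of \(\bigoplus_g S_g\) is perfectoid. This should be in \cite{ishizuka2025Graded}.

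(b) Conversely, every perfectoid \(R\)-algebra \(T\) receives a map from a graded perfectoid one. Taking \(T\) with the trivial grading, concentrated in degree \(0\), does not work, because \(R\to T\) must respect grading. The fix is the coaction: a \(G\)-grading is a \(\setZ[G]\)-comodule structure, i.e.\ a map \(R\to R[G]\). For any perfectoid \(T\) under \(R\), the ring \(T\widehat{\otimes}\setZ_p[G]\), where \(\setZ_p[G]\) is already perfect since \(G=G[1/p]\), is graded and is graded perfectoid after completion. The map \(R\to T[G]\), \(r_g\mapsto r_g t^g\), is graded.

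So graded perfectoid algebras give a cofinal family in the relevant sense. I then show that completion commutes with the limit by descent. Concretely, I present \(R_{\perfd}\) as the totalization of a \v{C}ech nerve of an arc-cover by a product of perfectoid valuation rings (Bhatt--Scholze), and I grade each term via the coaction. Derived \(p\)-completion of a totalization of graded pieces then commutes with the totalization up to a convergence check, which uses uniform bounded \(p^\infty\)-torsion of perfectoids.

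For (3), assume \(R_{\perfd}\) is discrete and \(R\) is over a perfectoid ring. By (2), the completion of \(R_{\grpfd}\) is discrete perfectoid. I will show each \((R_{\grpfd})_g\) is discrete by noting it is a direct summand of the completion, since degreewise completion splits off graded components. Hence \(R_{\grpfd}\) is a graded perfectoid ring. Being the limit over all graded perfectoid algebras while itself lying in the diagram, it is initial.

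Colimit preservation, base change, and descendability then follow by transporting the corresponding statements for \(R_{\perfd}\) through (2) together with the coaction trick.

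The main obstacle is (2): controlling how derived \(p\)-completion of the total ring interacts with the degreewise limit. I would first try to prove that the comparison map is an equivalence after \(\otimes^L\setF_p\). That reduces the problem to characteristic \(p\), where both sides become graded and ungraded perfections. There the statement is explicit, because colimit perfection commutes with grading.
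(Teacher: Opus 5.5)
Your coaction cofinality in (b) and your retract argument for (3) match the paper (\Cref{CofinalGradedPerfdForGradedRing}, \Cref{TopologicalGradedPerfd}). However, your proof of (2), on which (3) and the ``moreover'' part rest, has a genuine gap.

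Grading the terms \(T_n\) of the ungraded arc \v{C}ech nerve via the coaction gives \(T_n\abracket{G}\), whose degree-\(g\) piece is \(T_n\) for every \(g\). Its graded totalization is therefore \(R_{\perfd}\) in every degree, i.e.\ the right adjoint of completion applied to \(R_{\perfd}\), not \(R_{\grpfd}\). The diagram \([n]\mapsto T_n\abracket{G}\) is not initial among graded perfectoid \(R\)-algebras. In fact \(T_0\abracket{G}\) is not even weakly initial, since a graded map out of it must send \(t^g\) to a unit of degree \(g\). For example, take \(R=\mcalO_C\) placed in degree \(0\): then \(R_{\grpfd}=R\), whereas your totalization is \(\mcalO_C\abracket{G}\).

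The coaction trick only shows that completion \(P\mapsto\widehat{P}\) is initial (it has the right adjoint \(T\mapsto T\abracket{G}\)), i.e.\ \(R_{\perfd}\simeq\lim_P\dcomp{p}{P}\). The real content of (2) is the interchange \(\dcomp{p}{\grlim_P P}\simeq\lim_P\dcomp{p}{P}\), which exchanges \(\bigoplus_g\) with a large limit. Your argument never addresses this for the correct diagram. Your fallback of reducing mod \(p\) to perfections also fails, because \(R_{\perfd}/^Lp\) is not \((R/p)_{\perf}\) in mixed characteristic: for \(R=\mcalO_C\) it is \(\mcalO_C/p\), not the residue field.

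The paper supplies the interchange via \Cref{GradedLimitCommutative}: the graded limit exists and is preserved by completion once \((\lim_P\widehat{P})\abracket{G}\to\lim_P(\widehat{P}\abracket{G})\) is an isomorphism. The source is \(R_{\perfd}\abracket{G}\), and the target is \((R[G])_{\perfd}\) by coaction cofinality (\Cref{LimitAddG}). They agree because \(R\to R\abracket{G}\) is relatively perfect mod \(p\) (\Cref{RelativelyPerfectGroupRing}). This is where perfectness of \(\setZ_p[G]\) for \(G=G[1/p]\) really enters, not merely in showing that \(T\abracket{G}\) is graded perfectoid.

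Alternatively, your descent idea can be repaired, but only with a weakly initial object \(W\) in the \emph{graded} category (\Cref{WeaklyInitialPerfd}, which needs graded semiperfectoid perfectoidization from \cite{ishizuka2025Graded}). Then:
\begin{itemize}
\item \(R_{\grpfd}\) is the graded totalization of the discrete rings \((W^{\cgrotimes_R(n+1)})_{\grpfd}\) (\Cref{ExistenceOfGradedPerfd});
\item since these terms are discrete, \(\bigoplus_g\) commutes with the totalization mod \(p^n\), which is the convergence check you had in mind;
\item the completed terms form the \v{C}ech nerve of the weakly initial \(\widehat{W}\), which computes \(R_{\perfd}\) (\Cref{RepresentationPerfd}).
\end{itemize}
This route also proves that the limit exists. ``Bounded cardinality'' alone does not give existence, because initiality of a small subdiagram requires weakly contractible comma categories, not just smallness.
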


Although part of the argument reduces to the ungraded case, genuinely new input is required to control the grading. In particular, the proof of the second assertion uses our previous results on derived graded modules from \cite{ishizuka2026Derived}.

We also establish the absolute perfectoidization of structure sheaves on bounded \(p\)-adic formal schemes. More precisely, for any such formal scheme \(\mscrX\), we introduce a quasi-coherent \(\setE_{\infty}\)-\(\mcalO_{\mscrX}\)-algebra
\begin{equation*}
    \mcalO_{\mscrX, \perfd} \in \CAlg(\mcalD_{\qcoh}(\mscrX))
\end{equation*}
called the \emph{absolute perfectoidization} of \(\mcalO_{\mscrX}\) (\Cref{DerivedPushoutArcCohomology}). This object is constructed via \(p\)-adic arc cohomology and is identified with the limit of derived pushforwards of structure sheaves of perfectoid formal schemes over \(\mscrX\) (\Cref{GlobalPerfectoidizationSectionLimit}).

Recently, Bhatt introduced the \emph{perfectization} \(\mscrX^{\pfd}\), a stack on \(p\)-nilpotent rings \cite{bhatt2025Aspects}. This gives a geometrization of the above constructions: \(\mcalO_{\mscrX, \perfd}\) agrees with the pushforward of the structure sheaf of \(\mscrX^{\pfd}\) (\Cref{ComparisonOXperfd}) and \(R_{\grpfd}\) for a \(G\)-graded ring \(R\) agrees with the global sections of the structure sheaf of \((\Spf(R)/\mfrakD)^{\pfd}\), where \(\mfrakD = \Spf(\setZ_p[G])\) is the formal group scheme associated with \(G\) admitting an action on \(\Spf(R)\) corresponding to the grading (\Cref{StackyGradedPerfd}).

On the other hand, both the arc-topological and perfectization approaches directly produce perfectoidization only for \emph{formal} schemes, not for projective varieties themselves.
Combining the results above, we overcome this limitation: the absolute graded perfectoidization of the section ring of a projective variety gives an algebraization of  the absolute perfectoidization of the structure sheaf of its \(p\)-adic formal completion. This is our second main result.

\begin{theoremA}[{\Cref{AlgebraizableOXperfd}}] \label{MainTheoremAlgebraization}
    Let \(X\) be a quasi-compact scheme whose canonical morphism \(X \to \Spec(H^0(X, \mcalO_{X}))\) is universally closed.
    Assume that \(X\) admits an ample line bundle and that \(H^0(X, \mcalO_X)\) has bounded \(p^\infty\)-torsion.
    Then there exists a quasi-coherent \(\setE_{\infty}\)-\(\mcalO_X\)-algebra
    \begin{equation*}
        \mcalO_{X, \perfd} \in \CAlg(\mcalD_{\qcoh}(X))
    \end{equation*}
    with an isomorphism
    \begin{equation*}
        \lim_{n \geq 1} (\mcalO_{X, \perfd} \otimes^L_{\mcalO_X} \mcalO_X/p^n\mcalO_X) \cong \mcalO_{\widehat{X}, \perfd}
    \end{equation*}
    in \(\CAlg(\mcalD_{\qcoh}(\widehat{X}))\), where \(\widehat{X}\) is the \(p\)-adic formal completion of \(X\).

    More precisely, after fixing an ample line bundle \(L\), its inverse \(L^{-1}\), and an isomorphism \(L \otimes_{\mcalO_X} L^{-1} \xrightarrow{\cong} \mcalO_X\), the object \(\mcalO_{X,\perfd}\) may be chosen to be
    \[
     \mcalO_{X,\perfd} = \widetilde{R_{\grpfd}},
     \]
    where \(R\) is the section ring \(R \defeq \bigoplus_{n \geq 0} H^0(X, L^{\otimes n})\) and \(\widetilde{(-)}\) is the functor \(\widetilde{(-)} \colon \mcalD_{\graded{\setZ}}(R) \to \mcalD_{\qcoh}(X)\).
\end{theoremA}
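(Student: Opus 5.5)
We define \(\mcalO_{X,\perfd} \defeq \widetilde{R_{\grpfd}}\) and reduce the comparison to a statement about the \(p\)-adic formal scheme \(\widehat{X}\), which is of projective type with section ring related to \(R\). Since \(\widetilde{(-)}\) is an exact, symmetric monoidal functor \(\mcalD_{\graded{\setZ}}(R) \to \mcalD_{\qcoh}(X)\), it sends \(\setE_\infty\)-algebras to \(\setE_\infty\)-algebras. Because \(L\otimes L^{-1}\cong \mcalO_X\) is fixed, we can write \(X\) as a quotient of the punctured cone \(\Spec(R)\setminus V(R_+)\) by \(\setG_m\). Hence \(\widetilde{(-)}\) is computed Zariski-locally on the basic opens \(D_+(f)\), \(f \in R_d\), by taking degree-zero parts of \(f\)-localizations. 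It also commutes with \(\otimes^L \mcalO_X/p^n\), since \(p\) has degree zero. Thus
\[\lim_n(\mcalO_{X,\perfd}\otimes^L \mcalO_X/p^n) \cong \widetilde{(R_{\grpfd})^\wedge_p},\]
where the right side is the analogous construction on \(\widehat{X}\) for the graded-componentwise \(p\)-completion. We interpret it as a limit of restrictions to \(\widehat{D_+(f)}\); this commutation needs qcqs-ness of \(X\) and boundedness, so that derived completion commutes with finite limits and degree-zero parts.

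Next we identify \((R_{\grpfd})^\wedge_p\), taken componentwise, with \(\widehat{R}_{\grpfd}\), where \(\widehat{R}\) is the graded ring with \(p\)-completed components. This follows from the base change and colimit-preservation properties (\Cref{BaseChangeGradedPerfd}, \Cref{ColimitPreserveGradedPerfd}), together with the fact that graded perfectoidization only sees \(R/p\)-data up to completion. By \Cref{CompletionOfAbsoluteGradedPerfd}, the fully completed object recovers \((\widehat{R})_{\perfd}\).

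The core step is local. For homogeneous \(f\) of degree \(d\), we claim
\[\bigl((\widehat{R}_{\grpfd})[1/f]\bigr)_0^\wedge \cong \bigl(\widehat{R[1/f]_0}\bigr)_{\perfd} = \mcalO_{\widehat{X},\perfd}(\widehat{D_+(f)}).\]
For this we use the stacky description (\Cref{StackyGradedPerfd}): \(\widehat{R}_{\grpfd}\) is the global sections of \((\Spf\widehat{R}/\mfrakD)^{\pfd}\). Inverting \(f\) and taking degree zero corresponds to restricting to the open substack \(\Spf(\widehat{R}[1/f])/\mfrakD\). This substack is equivalent to \(\widehat{D_+(f)}\) up to a \(\mu_d\)-gerbe, and the gerbe becomes trivial after the grading is extended to \(\setZ[1/p]\) (cf. \Cref{RemarkGrading}). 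The perfectization is compatible with open immersions and is insensitive to such finite flat gerbes, since \((-)^{\pfd}\) inverts the relevant Frobenius. Combining this with \Cref{ComparisonOXperfd} identifies the restriction with \(\mcalO_{\widehat{X},\perfd}\) on \(\widehat{D_+(f)}\). These local isomorphisms are canonical, so they glue along the cover by the \(D_+(f)\) to give the global isomorphism in \(\CAlg(\mcalD_{\qcoh}(\widehat{X}))\).

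The hypotheses enter as follows. Universal closedness of \(X\to\Spec H^0(X,\mcalO_X)\) together with ampleness ensures that \(X\cong\Proj R\), which is what makes the open substacks of \(\Spf(\widehat{R})\setminus V(R_+)\) cover \(\widehat{X}\). Bounded \(p^\infty\)-torsion ensures that \(\widehat{X}\) is a bounded formal scheme, so that \(\mcalO_{\widehat{X},\perfd}\) is defined. The main obstacle is the local comparison above: showing that \(p\)-completion commutes with inverting \(f\) and extracting degree zero for the non-discrete object \(R_{\grpfd}\), and matching the \(\mfrakD\)-quotient with \(\widehat{D_+(f)}\) after perfectization. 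To handle this, I would first reduce via descendability (\Cref{GradedPerfdDescendable}) to graded perfectoid \(R\)-algebras \(S\). For such \(S\) the statement becomes concrete: \((S[1/f])_0^\wedge\) is perfectoid, because a perfectoid graded ring has compatible \(p\)-power roots of \(f\) up to a unit in degree-\(d/p^n\) pieces.
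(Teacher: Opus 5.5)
\paragraph{Where the argument breaks.} Your global outline matches the paper: you set $\mcalO_{X,\perfd}=\widetilde{R_{\grpfd}}$, reduce to the basic opens $D_+(f)$, and use compatibility of graded perfectoidization with localization. The core local step, however, fails as you argue it.

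First, there is no $\mu_d$-gerbe here. Since $L$ is invertible, $R[1/f]$ is Zariski-locally on $\Spec(R_{(f)})$ of the form $R_{(f)}[u^{\pm1}]$ with $\deg u=1$. Hence $\Spf(\widehat R[1/f])\to\widehat{D_+(f)}$ is a $\widehat{\mathbb{G}}_m$-torsor.

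Second, extending the grading to $\setZ[1/p]$ does not trivialize a gerbe; it creates one. The group $\mfrakD=\Spf(\setZ_p[\setZ[1/p]])$ acts through $\mfrakD\twoheadrightarrow\widehat{\mathbb{G}}_m$ with kernel $K=\Spf(\setZ_p[\setZ[1/p]/\setZ])=\lim_n\mu_{p^n}$. So $\Spf(\widehat R[1/f])/\mfrakD$ is a $K$-gerbe over $\widehat{D_+(f)}$.

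Third, perfectization does not erase this gerbe. The group schemes $\mu_{p^n}$ are not étale, and after perfectization $K$ behaves (almost) like the profinite constant group $\setZ_p(1)$. Consequently $R\Gamma(BK^{\pfd},\mcalO)$ is almost $\mcalO\oplus\mcalO[-1]$.

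\paragraph{A concrete example.} Take $X=\Spec(\mcalO_C)$ and $L=\mcalO_X$, so $R=\mcalO_C[t]$, and take $f=t$. Use \Cref{GradedPerfdDescendable} (after inverting $t$) and \Cref{ColimitPreserveGradedPerfd}. They express $(R[1/t])_{\grpfd}$ as the totalization of the graded \v{C}ech nerve of $(R[1/t])_{\grpfd}\to\mcalO_C[t^{\pm1/p^\infty}]$. The degree-$0$ part of this nerve is the cobar complex of the Hopf algebra $E_{\perfd}$, where $E=\mcalO_C\langle u^{1/p^\infty}\rangle/(u-1)$ and $u=s/t$ has degree $0$.

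The ring $E_{\perfd}$ is almost the ring $C(\setZ_p,\mcalO_C)$ of continuous functions. It contains the nonzero primitive element $\gamma\mapsto\varpi\gamma$ for any $\varpi\in\mcalO_C$ with $0<|\varpi|<1$; this is essentially the class of $d\log t$, which has weight $0$. Therefore $H^1\bigl(((R[1/t])_{\grpfd})_0\bigr)\neq0$, whereas $(R_{(t)})_{\perfd}=\mcalO_C$.

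So the local isomorphism you want is false for this grading. Your descendability fallback cannot rescue it: the degree-$0$ part of $S\cgrotimes_R S$ is not $S_0\widehat{\otimes}S_0$, and that is exactly where $E_{\perfd}$ enters.

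\paragraph{Comparison with the paper.} The paper does not use stacks for this step. After the same reduction, it combines weak proregularity of $p$, \Cref{ComparisonTopologyFormalProj}, and \Cref{CommutativityLimitsPerfd}. This identifies the sections over $D_+(f)$ with $((R[1/f])_{\grpfd})_0=\lim_{Q'}Q'_0$. Note that weak proregularity, not boundedness of $\widehat X$, is where the bounded $p^\infty$-torsion hypothesis enters.

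The paper then identifies $\lim_{Q'}Q'_0$ with $(R_{(f)})_{\perfd}=\lim_Q Q$ via \Cref{ExtensionPerfdProj}. That proposition says every perfectoid $R_{(f)}$-algebra $Q$ arises as $Q'_0$ for some $\setZ[1/p]$-graded perfectoid $R[1/f]$-algebra $Q'$, using unique $p$-divisibility of Picard groups (\Cref{EnlargeGradedPerfdNew}).

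Be aware that the example above bears on that step as well. Essential surjectivity of $Q'\mapsto Q'_0$ does not by itself make the two limits agree. Each extension $Q'$ has automorphisms acting by $\zeta_{p^n}^m$ in degree $m/p^n$ and trivially on $Q'_0$. These are the same $\lim_n\mu_{p^n}$ that you were trying to discard. So the difficulty is not only with your method: the example indicates that the local comparison itself, and with it the identification $\mcalO_{X,\perfd}=\widetilde{R_{\grpfd}}$, fails and needs to be re-examined.
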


As a corollary of \Cref{MainTheoremAlgebraization},  the absolute perfectoidization of the structure sheaf of quasi-projective formal scheme (\Cref{QuasiProjCase}) is algebraizable.
We also establish a coherence of the (algebraic) localization \(\mcalO_{X, \perfd}[1/p]\) (\Cref{CoherenceAlgebraicLocalization}) and an identification of \(\mcalO_{\widehat{X}, \perfd}\) with the limit of all quasi-coherent sheaves associated with \(G\)-graded perfectoid \(R\)-algebras (\Cref{PerfdProjLimit}).

In a subsequent paper \cite{ishizuka2026Localglobal}, we will apply absolute graded perfectoidization to global singularities in mixed characteristic and establish a local-global comparison for perfectoid pure singularities on projective varieties, as a perfectoid analogue of the corresponding local-global comparison for \(F\)-pure singularities in characteristic \(p\).

For the applications in our paper \cite{ishizuka2026Localglobal}, we only need the $p$-adic results stated above.
However, since the present paper is devoted to developing the foundational theory, we construct the absolute graded perfectoidization for more general $G$-graded adic rings and adic formal schemes, and prove analogous properties in this generality.

\subsection*{Structure of this paper}
In \Cref{SectionPreliminaries}, we introduce notation and terminology. In \Cref{SectionTopGradedPerfd}, we construct absolute graded perfectoidization for \(G\)-graded rings and prove the properties in \Cref{MainTheoremGradedPerfd}. In \Cref{SectionGlobalAbsPerfd}, we construct the global variant for structure sheaves on adic formal schemes, establish representation and comparison results, and prove the algebraization statement in \Cref{MainTheoremAlgebraization}. For completeness, we also include appendices on \(\Proj\) for \(\setQ^n\)-graded rings, on perfectoid and prismatic formal schemes, and on quasi-coherent complexes on formal schemes.

\subsection*{Acknowledgments}
The authors would like to thank Jack J. Garzella and Joe Waldron for valuable discussions on global variants of perfectoid theory, and L\'eo Navarro Chafloque and Kanau Shimada for helpful discussions.
This work was started while we attended the conference `\(p\)-adic and Characteristic \(p\) Methods in Algebraic Geometry' at EPFL and `the Summer Research Institute in Algebraic Geometry' at Colorado State University in 2025. We are very grateful for these opportunities and their hospitality.

The first-named author was supported by JSPS KAKENHI Grant number 24KJ1085.
The second-named author was supported by JSPS KAKENHI Grant number JP24K16889.

\section{Preliminaries} \label{SectionPreliminaries}

\subsection{Notation and terminology}

Throughout this paper, we use the following notation and terminology.

\subsubsection{General notation}
\begin{enumerate}
    \item We fix a prime number \(p\).
    \item We only consider commutative rings with unity, which we simply refer to as \emph{rings}. The category of (commutative) rings is denoted by \(\CRing\) (not \(\CAlg\)).
    \item Let $R$ be a ring, $M$ an $R$-module, and $I$ an ideal of $R$.
    We define the \emph{$I$-torsion part of $M$} by
    \begin{equation*}
        M[I] \defeq \{m \in M \mid Im=0\}.
    \end{equation*}
\end{enumerate}

\subsubsection{Higher categorical stuff}
\begin{enumalphp}
    \item We use the notion of an \(\infty\)-category (more precisely, an \((\infty,1)\)-category). Our main resources are \cites{lurie2017Higher, lurie2018Spectral}.
    \item The \(\infty\)-category of \(\setE_{\infty}\)-ring spectra is denoted by \(\CAlg\), which admits all small limits and colimits. We simply call an object of \(\CAlg\) an \emph{\(\setE_{\infty}\)-ring}. If we take the slice category over an \(\setE_{\infty}\)-ring \(R\), we denote the \(\infty\)-category of \(\setE_{\infty}\)-\(R\)-algebras as \(\CAlg_R\). In this paper, we do not use the notion of derived rings, introduced in \cite{raksit2026Hochschild} following Akhil Mathew, in place of \(\setE_{\infty}\)-rings, although similar arguments should also work in that framework.
    \item If we say \emph{discrete} rings and modules, we mean commutative rings and modules without higher homotopy groups. Since we rarely use topological rings equipped with the discrete topology, and make this explicit when we do, we hope that no confusion will arise.
\end{enumalphp}

\subsubsection{Graded rings} \label{GradedNotation}
\begin{enumalphp}
    \item Let \(G\) be a torsion-free abelian group with identity element \(0\).
    \item A \emph{\(G\)-graded ring} is a pair of a commutative ring \(R\) and additive subgroups \(\{R_g\}_{g \in G}\) such that \(R\) admits a decomposition \(R = \bigoplus_{g \in G} R_g\) and satisfies \(R_g R_{g'} \subseteq R_{g + g'}\) for any \(g, g' \in G\). A \emph{homogeneous element} of \(R\) is an element in \(R_g\) for some \(g \in G\) and a \emph{homogeneous ideal} is an ideal of \(R\) which is generated by homogeneous elements.
    The subset $\{g \in G \mid R_g \neq 0\}$ of $G$ is called the \emph{support of $R$} and is denoted by $\Supp(R)$.
    Furthermore, for a submonoid $H$ of $G$, if the support of $R$ is contained in $H$, then we say that $R$ is an \emph{$H$-graded ring}.
    \item A \emph{\(G\)-graded ring morphism} between \(G\)-graded rings \(R = \bigoplus_{g \in G} R_g\) and \(S = \bigoplus_{g \in G} S_g\) is a ring homomorphism \(\varphi \colon R \to S\) such that \(\varphi(R_g) \subseteq S_g\) for any \(g \in G\).
    \item For a (discrete) \(G\)-graded ring \(R\), we will use the \(\infty\)-category \(\mcalD_{\graded{G}}(R)\) of graded \(R\)-modules defined in \cite{ishizuka2026Derived}*{Section 3}.
    \item In \(\mcalD_{\graded{G}}(R)\) of a \(G\)-graded ring \(R\), the (co)limits and symmetric monoidal products are denoted by \(\grlim\), \(\grcolim\), and \(- \Lgrotimes -\) (if they exist), but in \(\mcalD(R)\) of a discrete ring \(R\), i.e., derived limits and derived tensor products, they are often denoted by \(\lim\) (or \(R\lim\)), \(\colim\), and \(- \otimes^L -\).
    \item A \emph{\(G\)-graded adic ring} is a \(G\)-graded ring \(R\) with an adic topology for a finitely generated homogeneous ideal \(I\). We say that such ideal \(I\) is a \emph{homogeneous ideal of definition} of \(R\).
    \item A \(G\)-graded adic ring \(R\) is said to be \emph{gradedwise complete} if the canonical morphism \(R \to \bigoplus_{g \in G} \lim_{n \geq 1}(R/I^n)_g\) is an isomorphism of \(G\)-graded rings. See more comprehensively \cites{ishizuka2025Graded, ishizuka2026Derived}.
\end{enumalphp}

\subsubsection{Topological rings}
\begin{enumalphp}
    \item A topological ring is said to have a linear topology if there exists a fundamental system of neighborhoods of \(0\) consisting of ideals. A linear topological ring \(R\) is called \emph{complete} if any Cauchy net in \(R\) uniquely converges. If there exists the universal complete topological \(R\)-algebra, then it is called the \emph{completion} of \(R\).
    \item If a topological ring \(R\) has an ideal \(I\) such that \(I\) is finitely generated and the set \(\{I^n\}_{n \geq 0}\) consists a fundamental system of neighborhoods of \(0\), then we call \(R\) an \emph{adic ring} and \(I\) an \emph{ideal of definition} of \(R\).
    \item Morphisms between adic rings are assumed to be continuous ring homomorphisms.
\end{enumalphp}

\subsubsection{\texorpdfstring{\(p\)-adic stuff}{p-adic stuff}}
\begin{enumalphp}
    \item We freely use the notion of perfectoid rings, prisms, and related concepts, e.g., perfectoidization, following \cites{bhatt2018Integral, bhatt2022Prismsa}.
    \item An adic ring \(R\) is called an \emph{adic perfectoid ring} if \(p\) is a topologically nilpotent element of \(R\) and the underlying ring \(R\) is a perfectoid ring (\cite{takaya2026Relative}*{Definition 4.4}).
    \item For a base adic ring \(A\) with an ideal of definition \(I\), we denote by \(\Perfd_A^{\wedge I}\) the category of \(I\)-adically complete perfectoid \(A\)-algebras.
    \item The \emph{\(p\)-root closure} \(C(R)\) of a \(p\)-torsion-free ring \(R\) is the subset \(\set{x \in R[1/p]}{\exists n \in \setZ_{\geq 1}, \ x^{p^n} \in R}\) which becomes a subring of \(R[1/p]\) following \cite{roberts2008Root}. See \cite{ishizuka2024Calculationa} for some history of root closure and its relationship to perfectoid theory.
\end{enumalphp}

\subsubsection{Formal schemes}
\begin{enumalphp}
    \item A \emph{formal scheme} \(\mscrX\) is a topologically ringed space \(\mscrX = (\abs{\mscrX}, \mcalO_{\mscrX})\) such that it is Zariski locally isomorphic to the affine formal spectrum \(\Spf(R)\) of a linear topological ring \(R\). Our definition is based on \cite{grothendieck1971Elements}*{Ch.I \S 10}.
    \item A formal scheme \(\mscrX\) is said to be \emph{adic}, \emph{quasi-compact}, or \emph{separated} if it is Zariski locally isomorphic to the affine formal spectrum \(\Spf(R)\) of an adic ring \(R\), if \(\mscrX\) is quasi-compact as a topological space, or if any intersection of two affine open subsets of \(\mscrX\) is affine, respectively.
    \item For an adic quasi-compact formal scheme \(\mscrX\), a finitely generated sheaf of ideals \(\mcalI \subseteq \mcalO_{\mscrX}\) is called an \emph{ideal of definition} of \(\mscrX\) if \(\{\mcalI(\mscrU)^n\}_{n \geq 0}\) forms a fundamental system of neighborhoods of \(0\) in \(\mcalO_{\mscrX}(\mscrU)\) for any affine open subset \(\mscrU\).
    \item Given a scheme \(X\) and a closed subscheme \(Z\) of \(X\), we can construct the \emph{formal completion \(\widehat{X}\) of \(X\) along \(Z\)}, which is a formal scheme such that its underlying topological space is \(Z\) and the structure sheaf is the pullback of the sheaf \(\lim_{n \geq 1} \mcalO_X/\mcalI_Z^n\) on \(X\) to \(Z\), where \(\mcalI_Z\) is the quasi-coherent sheaf of ideals of \(\mcalO_X\) defining \(Z\).
    \item A \emph{(formal) \(\delta\)-scheme} is a pair \((X, \delta)\) of a (formal) scheme \(X\) and a map \(\delta \colon \mcalO_X \to \mcalO_X\) of sheaves of sets such that it gives Zariski locally a \(\delta\)-structure on \(\mcalO_X\). See also \cite{bhatt2022Prismatization}*{Remark 4.1}.
\end{enumalphp}
    
\subsubsection{Objects in spectral algebraic geometry}
\begin{enumalphp}
    \item Let \(X\) be a topological space, and let \(\mcalU(X)\) denote the collection of open subsets of \(X\). Fix an \(\infty\)-category \(\mcalC\) which admits small limits. A \emph{\(\mcalC\)-valued presheaf} is a functor \(\mcalU(X)^{\opposite} \to \mcalC\). The \(\infty\)-category of \(\mcalC\)-valued presheaves is denoted by \(\PSh(X, \mcalC)\).
    \item A \(\mcalC\)-valued presheaf \(\mcalF\) is a \emph{\(\mcalC\)-valued sheaf} if the following condition holds: for any open covering \(U = \cup_{\lambda \in \Lambda} U_{\lambda}\) of an open subset \(U\) of \(X\), the canonical morphism \(\mcalF(U) \xrightarrow{\simeq} \lim_{V \in \mcalU'}\mcalF(V)\) is an equivalence in \(\mcalC\), where \(\mcalU'\) is the collection of open subsets \(V \subseteq X\) that are contained in some \(U_{\lambda}\). See \cite{lurie2018Spectral}*{\S 1.1}. The \(\infty\)-category of \(\mcalC\)-valued sheaves is denoted by \(\Shv(X, \mcalC)\).
    \item Take a morphism of topological spaces \(f \colon X \to Y\) and a \(\mcalC\)-valued (pre)sheaf \(\mcalF\) on \(X\). Then we define the pushforward \(f_*\mcalF\), which is a \(\mcalC\)-valued (pre)sheaf on \(Y\) defined by \(U \mapsto \mcalF(f^{-1}(U))\) for any open subset \(U \subseteq Y\).
\end{enumalphp}

\subsubsection{Completion functors}

Let $R$ be a ring and let $I$ be a finitely generated ideal of $R$.
Let $S$ be a $G$-graded ring and let $J$ be a finitely generated homogeneous ideal of $S$.
\begin{enumalphp}
    \item The symbol \(\comp{I}{-}\) is used only for the (classical) \(I\)-adic completion of discrete rings and modules.
    \item The symbol \(\dcomp{I}{-}\) is used to denote the derived (\(I\)-)completion of objects in a derived category such as \(\mcalD(R)\).
    \item The symbol \(\grcomp{J}{-}\) is the \emph{gradedwise (\(J\)-)completion} of discrete graded rings and modules which is defined in \cite{ishizuka2025Graded}*{Construction 3.3}. In loc. cit., it is denoted by \((-)^{\grwedge}\) but in this paper, we use the symbol \(\grcomp{J}{-}\) to avoid confusion with other derived completion functors.
    \item The symbol \(\dgrcomp{J}{-}\) is the \emph{derived gradedwise \(J\)-completion} of objects in \(\mcalD_{\graded{G}}(S)\), which is defined in \cite{ishizuka2026Derived}. See \Cref{DefDerivedGradedwiseComp}.
    \item The full subcategory of \emph{derived \(I\)-complete} objects in \(\mcalD(R)\) is denoted by \(\mcalD^{\comp{I}}(R)\) and the full subcategory of \emph{derived gradedwise \(J\)-complete} objects in \(\mcalD_{\graded{G}}(S)\) is denoted by \(\mcalD_{\graded{G}}^{\comp{J}}(S)\) (\cite{ishizuka2026Derived}*{Definition 4.6}).
    \item We will denote the derived (gradedwise) completed tensor products by \(- \widehat{\otimes}^L -\) and \(- \cLgrotimes -\) in \(\mcalD^{\comp{I}}(R)\) and \(\mcalD_{\graded{G}}^{\comp{J}}(S)\) respectively.
    \item We will also use the symbols \(\widehat{(-)}\) and \((-)^{\wedge}\) when convenient; their meanings will be made clear from the context.
\end{enumalphp}

\subsection{Perfectoid rings and formal schemes}

First, we recall the definition and properties of adic perfectoid rings.

\begin{definition}[{\cite{takaya2026Relative}*{Definition 4.4}}] 
    An adic ring \(R\) is called an \emph{adic perfectoid ring} if \(p\) is a topologically nilpotent element of \(R\) and the underlying ring \(R\) is a perfectoid ring.
    If an adic perfectoid ring \(R\) is complete with respect to the adic topology on \(R\), then we say that \(R\) is a \emph{complete adic perfectoid ring}.
\end{definition}

\begin{lemma}[{\cite{ishizuka2025Graded}*{Lemma 2.3}}] \label{BoundedTorsionPerfd}
    Let \(R\) be a perfectoid ring and let \(I\) be a finitely generated ideal of \(R\) containing \(p\).
    Then the following assertions hold.
    \begin{enumerate}
        \item There exists a generator \(f_1, \dots, f_r\) of \(I\) such that \(f_i\) has a compatible system of \(p\)-power roots in \(R\).
        \item  The ring \(R\) has bounded \(f_i^{\infty}\)-torsion for such choice of \(f_i\) in (1). In particular, \(R\) has bounded \(I^{\infty}\)-torsion.
        \item The \(I\)-adic completion of \(R\) is a perfectoid ring and agrees with the derived \(I\)-completion \(\dcomp{I}{R}\) of \(R\).
    \end{enumerate}
\end{lemma}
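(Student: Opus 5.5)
The plan is to prove the three assertions in order. Each step uses standard structural facts about perfectoid rings from Bhatt--Morrow--Scholze and Bhatt--Scholze.

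\emph{Assertion (1).} Since \(R\) is perfectoid, there is \(\varpi \in R\) with \(\varpi^p = pu\) for a unit \(u\), and \(\varpi\) admits a compatible system of \(p\)-power roots up to units. Perfectoid rings are \(p\)-adically complete, so the map \(\theta \colon W(R^\flat) \to R\) is surjective, and so is the Frobenius \(R/p \to R/p\). Hence every element of \(R\) has the form \(x^\sharp + p y\) with \(x \in R^\flat\). Start from generators \(g_1, \dots, g_s\) of \(I\) and write \(g_j = x_j^\sharp + p y_j\). Because \(p \in I\), we get \(I = (p, g_1, \dots, g_s) = (p, x_1^\sharp, \dots, x_s^\sharp)\). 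Finally replace \(p\) by \(\varpi^p\); this generates the same ideal up to a unit and admits \(p\)-power roots, namely \((\varpi^{1/p^n})^p\). So \(f_1, \dots, f_r\) can be taken to be \(\varpi^p, x_1^\sharp, \dots, x_s^\sharp\).

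\emph{Assertion (2).} Let \(f = x^\sharp\) have compatible \(p\)-power roots. We show \(R[f^\infty] = R[f]\). The key input is the bounded-torsion statement for perfectoid rings: the \(f\)-power torsion in \(R\) agrees with the \(f^{1/p^\infty}\)-torsion. This comes from perfectness of \(R^\flat\) and reducedness of \(R/p\) modulo the radical, as in BMS, Lemma 3.10 and its consequences. Concretely, I would reduce to two cases using the fibre square \(R \to R/\sqrt{pR} \times \overline{R}\) from Bhatt--Scholze.
\begin{itemize}
\item In characteristic \(p\), \(R\) is perfect and reduced. If \(f^n a = 0\), then \((f a)^{N} = 0\) for large \(N\), so \(fa = 0\).
\item In the \(p\)-torsion-free case, \(R\) is \(p\)-root closed in \(R[1/p]\). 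If \(f^n a = 0\), then \((f^{1/p^k} a)^{p^k} = 0\), so \(f^{1/p^k} a = 0\) in the reduced ring.
\end{itemize}
Bounded torsion is stable under the fibre product, which gives bounded \(f_i^\infty\)-torsion. For \(I = (f_1, \dots, f_r)\), bounded \(I^\infty\)-torsion follows because \(R[I^\infty] \subseteq \bigcap_i R[f_i^\infty]\) is killed by a fixed power of \(I\).

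\emph{Assertion (3).} Bounded \(I^\infty\)-torsion, or more precisely the weak pro-regularity-type boundedness for each \(f_i\), implies that the derived \(I\)-completion is discrete and equals the classical completion \(\comp{I}{R}\). The derived completion is computed by iterated derived \(f_i\)-completions \(R\lim_n (R \xrightarrow{f^n} R)\), and bounded torsion kills the \(\lim^1\)/Mittag-Leffler obstruction. To see that \(\comp{I}{R}\) is perfectoid, note that perfectoid rings are closed under derived \(p\)-complete colimits and completions along finitely generated ideals containing \(p\). One route is to write \(\dcomp{I}{R} = R \widehat{\otimes}^L_{\setZ_p[T_i^{1/p^\infty}]} \setZ_p[[T_i^{1/p^\infty}]]\)-style base change, with \(T_i \mapsto f_i\). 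This uses the roots from (1) and the fact that perfectoidness is preserved under \(p\)-complete tensor products of perfectoids. Discreteness of that tensor product is what we just established.

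The main obstacle is (2). Making the torsion-bounding argument clean in the non-reduced, mixed case via the fibre square needs care.
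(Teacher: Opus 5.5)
Your arguments for (1) and (2) are correct. In (2) the computation should read \((f^{n/p^k}a)^{p^k}=f^na^{p^k}=0\). The fibre square is also unnecessary: in any reduced ring, an element \(f\) with compatible \(p\)-power roots satisfies \(R[f^\infty]=R[f]\) by exactly this computation, so reducedness of perfectoid rings is the only input.

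The gap is in (3), in the discreteness argument. You iterate \(\dcomp{I}{R}\simeq\dcomp{f_r}{\cdots\dcomp{f_1}{R}}\), but the \((i+1)\)-st step is applied to \(\comp{(f_1,\dots,f_i)}{R}\). So you need bounded \(f_{i+1}^\infty\)-torsion in that completion, not in \(R\), and torsion bounds do not pass to completions in general. Bounded \(I^\infty\)-torsion of \(R\) is not a sufficient criterion for non-principal \(I\) either: it only controls the top Koszul homology, and the standard criterion is weak proregularity. Your proof of (2) would give the needed bound if you already knew that the intermediate completion is perfectoid; reducedness suffices, since the roots of \(f_{i+1}\) map along. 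But that is the other half of (3). So both halves of (3) must be proved together by induction on \(i\), and the base case is missing: that \(\comp{f}{R}=\dcomp{f}{R}\) is perfectoid for a single \(f\) with compatible \(p\)-power roots. This is precisely the structure the paper uses in the graded analogue, \Cref{DerivedGrcompGradedPerfd}.

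Your base-change route does not supply that step. Suppose \(\widehat{\otimes}^L\) is the \(p\)-completed tensor product of \(R\) over \(A=\mathbb{Z}_p\langle T_i^{1/p^\infty}\rangle\) with the \((p,\underline T)\)-completion \(B\) of \(A\). The result is a discrete perfectoid ring, but it is not \(\dcomp{I}{R}\), because it need not be \(\underline T\)-adically complete. For \(R=A\langle X^{1/p^\infty}\rangle\) it is the \(p\)-completion of \(B[X^{1/p^\infty}]\), which does not contain \(\sum_n T^nX^n\). If you instead complete \((p,\underline T)\)-adically, you do get \(\dcomp{I}{R}\), but stability of perfectoid rings under \(p\)-completed pushouts no longer applies and the argument becomes circular.

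One way to do the single-element step: write \(f=g^\sharp\) with \(g\in R^\flat\), and let \((W(R^\flat),(d))\) be the perfect prism of \(R\). Since \(R^\flat[g^\infty]=R^\flat[g]\) and \(\comp{g}{R^\flat}\) is perfect, one gets \(\dcomp{(p,[g])}{W(R^\flat)}\simeq W(\comp{g}{R^\flat})\), which together with \(d\) is again a perfect prism. Hence \(\dcomp{f}{R}\simeq W(\comp{g}{R^\flat})/d\) is a discrete perfectoid ring. Then iterate over \(f_1,\dots,f_r\).
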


We also recall the notion of a perfectoid formal scheme.

\begin{definition}[{\cite{takaya2026Relative}*{Definition 4.21}}] \label{DefPerfectoidFormalScheme}
    An adic formal scheme \(\mscrX\) such that \(p\) is a topologically nilpotent element of \(\mcalO_{\mscrX}\) is called a \emph{perfectoid formal scheme} if there exists an affine open covering \(\{\mscrU_i\}_{i \in I}\) of \(\mscrX\) such that for each \(i \in I\), the adic ring \(\mcalO_{\mscrX}(\mscrU_i)\) is a complete adic perfectoid ring.
\end{definition}

Nontrivially, any affine open section of a perfectoid formal scheme is an adic perfectoid ring:

\begin{proposition}[{\cite{takaya2026Relative}*{Proposition 4.22}}] \label{PropPerfectoidFormalScheme}
    An adic formal scheme \(\mscrX\) is a perfectoid formal scheme if and only if, for every affine open subset \(\mscrU \subseteq \mscrX\), the ring \(\mcalO_{\mscrX}(\mscrU)\) is a complete adic perfectoid ring.
\end{proposition}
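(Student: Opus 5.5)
The "if" direction is immediate: an adic formal scheme admits an affine open covering, and if every affine open has complete adic perfectoid sections, this covering witnesses \Cref{DefPerfectoidFormalScheme}. For the converse, I fix a covering $\{\mscrU_i\}$ with $R_i \defeq \mcalO_{\mscrX}(\mscrU_i)$ complete adic perfectoid, and an arbitrary affine open $\mscrU = \Spf(A)$, $A$ complete adic. I must show that $A$ is perfectoid. I reduce in three steps.

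\emph{Step 1: basic opens.} For $f \in R_i$ the basic open $\mscrU_i\{f\} = \Spf(R_i\langle 1/f\rangle)$ has sections $R_i\langle 1/f\rangle$, the $I$-adic completion of $R_i[1/f]$. A localization of a perfectoid ring is perfectoid: $R_i[1/f]$ is a filtered colimit of perfectoid rings, and perfectoid rings are stable under filtered colimits followed by $p$-completion. Its $I$-adic completion ($p \in \sqrt{I}$ after replacing $I$ by $I + (p)$) is then perfectoid by \Cref{BoundedTorsionPerfd}(3). Since basic opens of the $\mscrU_i$ form a basis, $\mscrU$ is covered by finitely many basic opens $\mscrV_j = \Spf(B_j)$ with each $B_j$ complete adic perfectoid. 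Moreover each $\mscrV_j$ is also basic in $\mscrU$, i.e.\ $B_j = A\langle 1/g_j\rangle$ after refining, because basic opens of basic opens are basic and two affine formal schemes share a basis of simultaneously-basic opens. Intersections $\mscrV_j \cap \mscrV_k$ are again of this form, so they are perfectoid as well.

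\emph{Step 2: descent.} I get the \v{C}ech/equalizer sequence
\[
0 \to A \to \prod_j B_j \rightrightarrows \prod_{j,k} B_{jk},
\]
so $A$ is a finite limit of perfectoid rings. Limits of perfectoid rings need not be perfectoid in general, so I instead use a characterization that descends. Work modulo $p$ (or modulo $I$): $A/I^n \to \prod_j B_j/I^n$ is a Zariski covering of the scheme $\Spec(A/I^n)$. Perfectoidness of $A$ is equivalent to: $A$ is $p$-complete, Frobenius $A/p \to A/p$ is surjective, there is $\pi$ with $\pi^p \mid p$ such that $p\pi^{-p}$ is a unit multiple hitting the kernel conditions, and $\ker(\theta)$ is principal. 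The cleanest route, I think, is to use the bijection between perfectoid rings and perfect prisms: form $\mathbb{A}_{\inf}$-type objects $W(B_j^\flat)$ and glue. Concretely, the tilts $B_j^\flat$ are perfect rings forming a Zariski-type (completed) cover; perfectness of $A^\flat \defeq \lim_{x\mapsto x^p} A/p$ is automatic, and one must show that $\theta \colon W(A^\flat) \to A$ is surjective with principal kernel. Surjectivity of Frobenius on $A/p$ can be checked Zariski locally on $\Spec(A/p)$, since the cokernel of a map of quasi-coherent sheaves vanishes locally iff globally (using that $\varphi$ is semilinear, so $\varphi_*$ of the sheaf is still quasi-coherent on $\Spec(A/p)$). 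The kernel of $\theta$ is an invertible ideal locally, hence a line bundle on $\Spf W(A^\flat)$ with a trivialization condition (distinguished element).

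\emph{Main obstacle.} The main obstacle is showing that a locally principal, locally distinguished ideal of $W(A^\flat)$ is globally principal. I would address this by instead citing the known fact that $\mathrm{Perfd}$ satisfies descent for the ($p$-complete) flat, hence Zariski, topology in the form ``perfectoidness is $p$-completely faithfully flat local'' (Bhatt--Scholze), together with $p$-complete faithful flatness of $A \to \prod_j B_j$. That flatness follows from completed localization being flat on $A/I^n$ and Zariski covers being faithfully flat. If I need to avoid citation, I would use the fact that $A$ has bounded $p^\infty$-torsion, obtained by descent of bounded torsion along faithfully flat maps, and the test that $\varphi \colon A/p^{1/p} \to A/p$ is an isomorphism for a suitable pseudo-uniformizer $p^{1/p}$. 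The pseudo-uniformizer can be constructed locally and then glued, since unit ambiguities are resolved by choosing $\pi$ with $\pi^p = pu$ in the limit, which exists once the statement holds on a faithfully flat cover.
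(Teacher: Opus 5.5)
Your Step~1 is essentially fine. One justification is wrong: $R_i\xrightarrow{f}R_i\xrightarrow{f}\cdots$ is a colimit of modules, not of perfectoid rings. The conclusion is nonetheless standard, as used in the proof of \Cref{LocalizationGradedPerfd}(1).

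\textbf{The gap is in Step~2.} The fact you finally cite, that perfectoidness is local for the ($p$-completely) faithfully flat topology, is false. For example, $\mathbb{F}_p[t]\to\mathbb{F}_p[t^{1/p^\infty}]$ and $\mathbb{Z}_p\to\mathbb{Z}_p[p^{1/p^\infty}]^{\wedge}_p$ are ($p$-completely) faithfully flat maps from non-perfectoid rings to perfectoid ones. So no argument that uses only faithful flatness of $A\to\prod_j B_j$ can work; you have to use that the $B_j$ are completed localizations of $A$.

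Your fallback does not repair this:
\begin{itemize}
\item A global $\pi$ with $\pi^p=pu$, $u\in A^\times$, is exactly what must be constructed. The local choices $\pi_j$ are not canonical and need not agree on overlaps.
\item ``It exists once the statement holds on a faithfully flat cover'' is circular. Applied to the second example above, it would make $\mathbb{Z}_p$ perfectoid.
\item Two further steps are valid only for the $p$-adic topology. Checking Frobenius surjectivity ``Zariski locally on $\operatorname{Spec}(A/p)$'' and the $p$-complete faithful flatness of $A\to\prod_j B_j$ both presuppose $B_j/p=(A/p)[1/g_j]$. For a general ideal of definition $I$, $B_j/p$ is instead an $I$-adic completion of $(A/p)[1/g_j]$.
\end{itemize}

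\textbf{What is needed instead.} The paper only cites Takaya here. However, the proof of \Cref{LocallyPrismaticScheme}(2), which refers to ``the same argument as in'' Takaya's proposition, together with the tilt and Witt sheaves of Appendix~B, points to the intended route: push your Witt-vector idea through rather than abandon it.

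Gluing the perfect prisms $(W(B_j^\flat),\ker\theta_j)$ gives, over $\operatorname{Spf}(A)$, the perfect $\delta$-ring $W(A^\flat)$ with $J=\ker(\theta\colon W(A^\flat)\to A)$. You must then show two things: that $\theta$ is surjective on these sections, and that $p\in J+\varphi(J)W(A^\flat)$. Once both hold, $(W(A^\flat),J)$ is a perfect prism, $J$ is automatically principal by Bhatt--Scholze, and $A\cong W(A^\flat)/J$ is perfectoid. So the real obstacle is global \emph{distinguishedness}, not principality, and it needs a local-to-global argument adapted to Zariski covers, not flat descent.

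One way to do this:
\begin{itemize}
\item The map $x\mapsto\delta(x)\bmod (p,J)$ is an additive, $\varphi$-semilinear map $J\to W(A^\flat)/(p,J)=A/p$.
\item Let $\mathfrak{c}$ be the ideal generated by its values. It becomes the unit ideal over each $B_j$, once one knows that $J$ generates $\ker\theta_j$, since $\ker\theta_j$ has a distinguished generator.
\item Since $A$ is complete, every maximal ideal of $A/p$ contains $I$ and so lies in some $D(g_j)$. This forces $\mathfrak{c}=A/p$.
\item Write $1\equiv\sum_i a_i\delta(x_i)\bmod (p,J)$ with $x_i\in J$. Multiplying by $p$ and using $p\delta(x_i)=\varphi(x_i)-x_i^p$ gives $p(1+pb)\in J+\varphi(J)W(A^\flat)$ for some $b$. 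Since $1+pb$ is a unit, $p\in J+\varphi(J)W(A^\flat)$.
\end{itemize}
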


\subsection{Derived graded modules}

We recall several results on derived graded modules that will be used throughout the paper.
See \cite{ishizuka2026Derived} for more details.

\begin{definition}[{cf. \cite{ishizuka2026Derived}*{Definition 3.5}}] \label{DefDerivedGradedwiseComp}
    Let \(R\) be a \(G\)-graded ring and let \(I\) be a finitely generated homogeneous ideal of \(R\).
    Then we can define the \(\infty\)-category \(\mcalD_{\graded{G}}(R)\) of \(G\)-graded derived \(R\)-modules and its full subcategory \(\mcalD_{\graded{G}}^{\comp{I}}(R)\) of derived gradedwise \(I\)-complete objects.
    The inclusion functor \(\mcalD_{\graded{G}}^{\comp{I}}(R) \hookrightarrow \mcalD_{\graded{G}}(R)\) admits a left adjoint functor \(\dgrcomp{I}{-}\), which is called the \emph{derived gradedwise \(I\)-completion}.
\end{definition}

\begin{definition}[{cf. \cite{ishizuka2026Derived}*{Definition 6.3}}]
    Let \(R\) be a \(G\)-graded ring and let \(M\) be an object of \(\mcalD(R)\).
    Let \(\rho_R \colon R \to R[G]\) be the coaction corresponding to the \(G\)-grading on \(R\), where \(R[G] \defeq \bigoplus_{g \in G} R \cdot t^g\) is the group ring of \(G\) over \(R\).
    Then we define a functor
    \begin{equation*}
        \mcalD(R) \to \mcalD(R[G]) \to \mcalD(R); \quad M \to M \otimes^L_R R[G] \to \rho_{R, *}(M \otimes^L_R R[G]) \eqdef M[G].
    \end{equation*}
    Take a finitely generated homogeneous ideal \(I\) of \(R\). The derived \(I\)-completion induces a functor
    \begin{equation*}
        \mcalD(R) \to \mcalD^{\comp{I}}(R); \quad M \to \dcomp{I}{M[G]} \eqdef M\abracket{G}.
    \end{equation*}
\end{definition}

\begin{proposition}[{cf. \cite{ishizuka2026Derived}*{Corollary 6.6}}] \label{AdjointDerivedGradedMod}
    Let \(R\) be a \(G\)-graded ring and let \(I\) be a finitely generated homogeneous ideal of \(R\).
    Then the functor
    \begin{equation*}
        \mcalF^I \colon \mcalD_{\graded{G}}^{\comp{I}}(R) \to \mcalD^{\comp{I}}(R); \quad M \to \dcomp{I}{M}
    \end{equation*}
    admits a right adjoint \(\mcalG^I\) with a commutative diagram
    \begin{equation*}
        \begin{tikzcd}
        \mcalD^{\comp{I}}(R) \arrow[rd, "{M \mapsto M[G]}"'] \arrow[r, "\mcalG^I"] & \mcalD_{\graded{G}}^{\comp{I}}(R) \arrow[d, "\mathrm{forget}"] \\
        & \mcalD(R)  
        \end{tikzcd}
    \end{equation*}
    of \(\infty\)-categories.
\end{proposition}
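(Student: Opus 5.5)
The plan is to construct \(\mcalG^I\) abstractly first and only afterwards identify its underlying object. The abstract step is formal. Both \(\mcalD_{\graded{G}}^{\comp{I}}(R)\) and \(\mcalD^{\comp{I}}(R)\) are presentable stable \(\infty\)-categories, since they are localizations of the presentable categories \(\mcalD_{\graded{G}}(R)\) and \(\mcalD(R)\) (\Cref{DefDerivedGradedwiseComp}). The functor \(\mcalF^I\) is the composite of three colimit-preserving functors: the inclusion into \(\mcalD_{\graded{G}}(R)\) followed by reflection, the forgetful (direct sum) functor \(\mcalD_{\graded{G}}(R) \to \mcalD(R)\), and the left adjoint \(\dcomp{I}{-}\). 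It therefore preserves colimits computed in the complete categories. By the adjoint functor theorem, \(\mcalF^I\) has a right adjoint \(\mcalG^I\).

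The second step identifies \(\mcalG^I(N)\) on underlying modules. First I establish an uncompleted adjunction: the forgetful functor \(\mathrm{forget} \colon \mcalD_{\graded{G}}(R) \to \mcalD(R)\) has a right adjoint \(N \mapsto N[G]\). Here \(N[G] = \rho_{R,*}(N \otimes^L_R R[G])\) is graded by letting its degree-\(g\) piece be \(N \cdot t^g\), and the \(R\)-module structure is twisted through the coaction \(\rho_R\). This is the derived form of the classical adjunction between graded modules and comodules over \(R[G]\), equivalently \(\mathbb{G}\)-equivariant sheaves on \(\Spec R\) for the diagonalizable group \(\Spec \setZ[G]\). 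For a graded \(M\), a map \(M \to N\) in \(\mcalD(R)\) corresponds to the coaction-composite \(M \to M[G] \to N[G]\). I would verify this on the compact generators \(R(g)\), the shifts of \(R\), where both sides reduce to \(N\), and then extend by colimits. I expect \cite{ishizuka2026Derived}*{Section 6} contains this in some form.

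The third step passes to complete objects. For \(M\) gradedwise complete and \(N\) derived \(I\)-complete,
\[
\Hom(\dcomp{I}{M}, N) = \Hom(\mathrm{forget}(M), N) = \Hom_{\graded{G}}(M, N[G]).
\]
To conclude, I need \(N[G]\) to be derived gradedwise \(I\)-complete, for then \(\mcalG^I(N) = N[G]\) and the stated triangle commutes. Derived gradedwise completeness is checked degreewise on the Koszul or Tate-module criterion for homogeneous generators \(f_1,\dots,f_r\) of \(I\). The degree-\(g\) piece of \(N[G]\) is \(N\), with \(f \in R_h\) acting as the map \(N\cdot t^{g} \to N \cdot t^{g+h}\), which is multiplication by \(f\) on \(N\). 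Hence \(\lim(\cdots \xrightarrow{f} N[G] \xrightarrow{f} N[G])\) vanishes degreewise because \(N\) is derived \(f\)-complete.

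The main obstacle is making the uncompleted adjunction and the degreewise description of \(N[G]\) rigorous at the \(\infty\)-categorical level, rather than on homotopy groups. In particular, limits in \(\mcalD_{\graded{G}}(R)\) must be computed degreewise, so that completeness can be tested piece by piece. I would try to reduce both points to the description \(\mcalD_{\graded{G}}(R) \simeq \mathrm{Mod}_R(\mathrm{Fun}(G,\mcalD(\setZ)))\) from \cite{ishizuka2026Derived}*{Section 3}, where limits and the functor \(N \mapsto N[G]\) are explicitly degreewise.
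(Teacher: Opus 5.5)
Your proposal is correct. This paper gives no proof of the statement and simply imports \cite{ishizuka2026Derived}*{Corollary 6.6}. Your route (the uncompleted adjunction \(\mathrm{forget} \dashv (-)[G]\) on \(\mcalD_{\graded{G}}(R)\), plus the degreewise check that \(N[G]\) is derived gradedwise \(I\)-complete whenever \(N\) is derived \(I\)-complete) directly exhibits \(\mcalG^I(N) = N[G]\). This matches how the result is used later in the paper, via the comonad \(\mcalF^I\mcalG^I = (-)\abracket{G}\).

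Steps 2--3 make your Step 1 (the adjoint functor theorem) superfluous. That is just as well, because Step 1 is slightly off as written: the inclusion \(\mcalD_{\graded{G}}^{\comp{I}}(R) \hookrightarrow \mcalD_{\graded{G}}(R)\) does not preserve colimits. Colimit preservation of \(\mcalF^I\) actually rests on \(\dcomp{I}{\mathrm{forget}(X)} \simeq \dcomp{I}{\mathrm{forget}(\dgrcomp{I}{X})}\).
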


We record some results on derived graded modules. The first one is a Nakayama-type lemma.

\begin{lemma}[{cf. \cite{ishizuka2026Derived}*{Corollary 4.10 and Lemma 5.17}}] \label{GradedNakayamaLemma}
    The functors
    \begin{align*}
        \mcalF^I & \colon \mcalD_{\graded{G}}^{\comp{I}}(R) \to \mcalD^{\comp{I}}(R); \quad M \to \dcomp{I}{M} \quad \text{and} \\
        \Pro(\mcalF^I) & \colon \Pro(\mcalD_{\graded{G}}^{\comp{I}}(R)) \to \Pro(\mcalD^{\comp{I}}(R))
    \end{align*}
    are conservative.
\end{lemma}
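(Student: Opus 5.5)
Conservativity of \(\mcalF^I\) comes down to a Nakayama argument on derived gradedwise complete objects; the pro-version then follows by applying it levelwise, using the adjunction of \Cref{AdjointDerivedGradedMod}. Since \(\mcalF^I\) is exact between stable \(\infty\)-categories, it suffices to show that \(\mcalF^I\) detects zero objects. Take a cofiber sequence and pass to the cofiber of a morphism: if \(\mcalF^I(M) \simeq 0\) for \(M \in \mcalD_{\graded{G}}^{\comp{I}}(R)\), we must show \(M \simeq 0\).

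\textbf{Step 1.} Reduce modulo generators of \(I\). Choose homogeneous generators \(f_1, \dots, f_r\) of \(I\) and form the graded Koszul complex \(K = \mathrm{Kos}(R; f_1,\dots,f_r)\), twisted so that it is a complex of graded modules. Derived \(I\)-completion commutes with \(-\otimes^L_R K\), and \(M \otimes^L K\) is \(I\)-power torsion, hence already derived complete. From \(\dcomp{I}{M}\simeq 0\) we therefore get that the underlying object of \(M\otimes^L_R K\) vanishes.

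\textbf{Step 2.} Pass from the underlying object to the graded object. The forgetful functor \(\mcalD_{\graded{G}}(R)\to\mcalD(R)\) sends \(N\) to \(\bigoplus_g N_g\), which is conservative. Hence \(M\Lgrotimes K\simeq 0\) in \(\mcalD_{\graded{G}}(R)\), so every graded piece \((M \Lgrotimes K)_g\) vanishes.

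\textbf{Step 3.} Apply derived Nakayama in each degree. Derived gradedwise completeness means, as in \cite{ishizuka2026Derived}, that \(M\) is \(I\)-complete as a graded object, i.e. \(M \simeq \grlim_n M\Lgrotimes \mathrm{Kos}(f^n)\). Since \(M\Lgrotimes \mathrm{Kos}(f^n)\) is a finite extension of shifts and twists of \(M\Lgrotimes K\), each of these terms vanishes, and hence \(M \simeq 0\). The main obstacle is exactly this characterization of gradedwise complete objects, and in particular that \(\grlim\) is computed degreewise. I would take this from \cite{ishizuka2026Derived}, or argue through the Koszul/Čech description of \(\dgrcomp{I}{-}\).

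\textbf{Pro-version.} A morphism in \(\Pro(\mcalD_{\graded{G}}^{\comp{I}}(R))\) can be represented as a levelwise morphism of cofiltered diagrams, so its cofiber is again levelwise. It therefore suffices to show that a pro-object \(\{M_\alpha\}\) with \(\{\dcomp{I}{M_\alpha}\}\simeq 0\) is pro-zero. Pro-zero means that for each \(\alpha\) there is \(\beta\to\alpha\) with \(\dcomp{I}{M_\beta}\to\dcomp{I}{M_\alpha}\) null. Through the adjunction \(\mcalF^I\dashv\mcalG^I\), this yields that the unit composite \(M_\beta\to M_\alpha\to \mcalG^I\mcalF^I M_\alpha = M_\alpha[G]\) (completed) is null.

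It remains to see that \(M_\alpha\to\mcalG^I\mcalF^I M_\alpha\) is a split monomorphism in the graded category. I would deduce this from the degree-\(0\) projection \(R[G]\to R\), \(t^g\mapsto \delta_{g,0}\), applied degreewise; this is the graded coaction counit. Granting the splitting, \(M_\beta\to M_\alpha\) is null, so \(\{M_\alpha\}\) is pro-zero.
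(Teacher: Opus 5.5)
The paper does not prove this lemma. It quotes it from the companion paper on derived graded modules, so your argument has to stand on its own, and in outline it does.

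\textbf{What works.} Steps 1--3 are a correct graded derived Nakayama argument, granting the Koszul description of derived gradedwise completeness that you flag. Your reduction of the pro-statement to ``each $\alpha$ admits $\beta\to\alpha$ with null transition map'' is valid in the stable setting. You are also right to switch to the adjunction $\mathcal{F}^I\dashv\mathcal{G}^I$ for the pro-statement. A levelwise Koszul reduction could not suffice there: the tower $\cdots\xrightarrow{p}\mathbb{Z}_p\xrightarrow{p}\mathbb{Z}_p$ is not pro-zero, although its reduction $\{\mathbb{Z}_p/^Lp\}$ is, since $p$ acts by $0$ on $\mathbb{Z}_p/^Lp\simeq\mathbb{F}_p$. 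Once the unit $\eta^I_M\colon M\to\mathcal{G}^I\mathcal{F}^IM$ has a retraction $\tilde r_M$, Steps 1--3 become superfluous: $\mathcal{F}^I(M)\simeq 0$ gives $\mathrm{id}_M\simeq\tilde r_M\circ\eta^I_M\simeq 0$. This is just the constant case of your pro-argument.

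\textbf{What needs repair.} The splitting is only sketched, and with the wrong map.
\begin{itemize}
\item The coaction counit is $t^g\mapsto 1$, not $t^g\mapsto\delta_{g,0}$. It induces the counit $\mathcal{F}\mathcal{G}\Rightarrow\mathrm{id}$ of the adjunction, whose triangle identity only splits $\mathcal{F}^I(\eta^I_M)$. That is automatic and says nothing about $\eta^I_M$ itself.
\item ``$t^g\mapsto\delta_{g,0}$ applied degreewise'' does not define a map to $M$. The degree-$g$ piece of $\mathcal{G}^I\mathcal{F}^IM$ is $\dcomp{I}{\bigoplus_h M_h}$, but the projection $\bigoplus_h M_h\to M_g$ is only $R_0$-linear. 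So when $I$ has generators of nonzero degree, no degreewise universal property extends it over the $I$-completion; the extension must be built in the graded category.
\end{itemize}

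\textbf{A correct construction.} On graded modules, let $\mathcal{F}M=\bigoplus_h M_h$ and $\mathcal{G}N=N[G]$, so that $\mathcal{G}^I$ is $\mathcal{G}$ restricted to complete objects. The unit is $\eta_M(m_g)=m_g t^g$. Set $r_M(x_h t^g):=\delta_{g,h}\,x_h$.
\begin{itemize}
\item For $s\in R_k$ one has $r_M\bigl((s x_h)t^{g+k}\bigr)=\delta_{g+k,h+k}\,s x_h=s\,r_M(x_h t^g)$. Hence $r_M\colon\mathcal{G}\mathcal{F}M\to M$ is graded $R$-linear, natural in $M$, and satisfies $r_M\circ\eta_M=\mathrm{id}_M$.
\item Being a natural transformation between exact functors, $r_M$ passes to $\mathcal{D}_{\graded{G}}(R)$.
\item This is your $t^k\mapsto\delta_{k,0}$, but applied after the shearing isomorphism $\mathcal{G}\mathcal{F}M\simeq\bigoplus_{k\in G}M(k)$. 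Under that isomorphism, $\eta_M$ is the inclusion of the summand $k=0$.
\end{itemize}

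\textbf{Passing to the completion.} The completed unit is $\eta^I_M=\mathcal{G}(c)\circ\eta_M$, where $c\colon\mathcal{F}M\to\dcomp{I}{\mathcal{F}M}$ is the completion map.
\begin{itemize}
\item Since $\mathrm{cofib}(c)\otimes^L_R\mathrm{Kos}(\underline{f})\simeq 0$, the projection formula shows that the cofiber $\mathcal{G}(\mathrm{cofib}\,c)$ of $\mathcal{G}(c)$ is killed by the graded Koszul complex.
\item It therefore has vanishing mapping spectra into derived gradedwise $I$-complete objects.
\item So for $M\in\mathcal{D}_{\graded{G}}^{\comp{I}}(R)$, the map $r_M$ extends uniquely to $\tilde r_M\colon\mathcal{G}^I\mathcal{F}^IM\to M$ with $\tilde r_M\circ\eta^I_M\simeq\mathrm{id}_M$.
\end{itemize}
With this retraction in hand, your pro-argument goes through as written.
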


In general, the forgetful functor \(\mcalD^{\comp{I}}_{\graded{G}}(R) \to \mcalD^{\comp{I}}(R)\) does not preserve limits, but we give a sufficient condition under which it does.

\begin{theorem}[{cf. \cite{ishizuka2026Derived}*{Theorem 6.21}}] \label{GradedLimitCommutative}
    Let $R$ be a $G$-graded ring and let $I$ be a finitely generated homogeneous ideal of \(R\).
    Take a diagram \(\{M_j\}_{j \in J} \colon J \to \mcalD_{\graded{G}}^{\comp{I}}(R)\) indexed by a (not necessarily small) simplicial set \(J\).
    Assume the existence of the limits of the diagrams
    \begin{align*}
        \{\mcalF^I(M_j)\}_{j \in J} = \{\dcomp{I}{M_j}\}_{j \in J} & \colon J \to \mcalD^{\comp{I}}(R); \quad j \mapsto \dcomp{I}{M_j} \quad \text{and} \\
        \{\mcalT^I(\mcalF^I(M_j))\} = \{\dcomp{I}{M_j}\abracket{G}\}_{j \in J} & \colon J \to \mcalD^{\comp{I}}(R); \quad j \mapsto \dcomp{I}{M_j} \abracket{G}.
    \end{align*}
    If the natural morphism
    \begin{equation} \label{AssumptionIsomMorphism}
        (\lim_{J} \dcomp{I}{M_j})\abracket{G} \to \lim_{J}(\dcomp{I}{M_j}\abracket{G})
    \end{equation}
    is an isomorphism in \(\mcalD^{\comp{I}}(R)\), then the limit $\grlim_{J} M_j$ of $\{M_j\}_{j \in J}$ exists in $\mcalD_{\graded{G}}^{\comp{I}}(R)$, and the natural morphism
    \begin{equation*}
        \mcalF^I(\grlim_{J} M_j) = \dcomp{I}{\grlim_{J} M_j} \to \lim_{J} \dcomp{I}{M_j} = \lim_{J} \mcalF^I(M_j)
    \end{equation*}
    is an isomorphism in \(\mcalD^{\comp{I}}(R)\), i.e., the limit is preserved by the functor \(\mcalF^I\).
\end{theorem}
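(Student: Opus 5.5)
The plan is to identify \(\mcalD_{\graded{G}}^{\comp{I}}(R)\) with the \(\infty\)-category of comodules over the comonad \(\mcalF^I \circ \mcalG^I\) on \(\mcalD^{\comp{I}}(R)\). Once that is done, limits in the comodule category can be built from limits of underlying objects, provided the comonad commutes with the relevant limit. By \Cref{AdjointDerivedGradedMod}, \(\mcalF^I \dashv \mcalG^I\). Since \(\mcalG^I(N)\) has underlying module \(N[G]\), and \(\mcalG^I(N)\) is derived gradedwise \(I\)-complete, the comonad is \(T \defeq \mcalF^I\mcalG^I \colon N \mapsto \dcomp{I}{N[G]} = N\abracket{G}\). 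This is the functor \(\mcalT^I\) appearing in the statement.

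\textbf{Step 1 (comonadicity).} I would show that \(\mcalF^I\) is comonadic, using the Barr--Beck--Lurie theorem in its comonadic form.
\begin{itemize}
  \item Conservativity is \Cref{GradedNakayamaLemma}.
  \item It remains to show that \(\mcalF^I\) preserves totalizations of \(\mcalF^I\)-split cosimplicial objects.
  \item Both categories are stable and \(\mcalF^I\) is exact, so a totalization is a sequential limit of finite limits of the partial totalizations \(\mathrm{Tot}_n\).
  \item For an \(\mcalF^I\)-split object, the tower \(\{\mathrm{Tot}_n\}\) becomes pro-constant after applying \(\mcalF^I\).
  \item I would use conservativity of \(\Pro(\mcalF^I)\) (\Cref{GradedNakayamaLemma}) to conclude that the tower is already pro-constant in \(\mcalD_{\graded{G}}^{\comp{I}}(R)\). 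Its limit is then preserved by \(\mcalF^I\).
\end{itemize}
This is exactly where the pro-version of the Nakayama lemma should enter.

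\textbf{Step 2 (constructing the limit).} Put \(L \defeq \lim_J \dcomp{I}{M_j}\) in \(\mcalD^{\comp{I}}(R)\). Each \(\dcomp{I}{M_j}\) carries a \(T\)-coaction \(\dcomp{I}{M_j} \to \dcomp{I}{M_j}\abracket{G}\). Taking limits and composing with the inverse of the isomorphism \eqref{AssumptionIsomMorphism} gives
\[
L \to \lim_J (\dcomp{I}{M_j}\abracket{G}) \cong L\abracket{G}.
\]
To obtain a coherent \(T\)-comodule structure (an algebra over the cobar cosimplicial diagram), one also needs the comparison \(L\abracket{G}^{\circ n} \to \lim_J(\dcomp{I}{M_j}\abracket{G}^{\circ n})\) to be an isomorphism for all \(n\).

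I would deduce this by induction from the case \(n=1\). The key inputs are:
\begin{itemize}
  \item the identification \(N\abracket{G}\abracket{G} \cong N\abracket{G \times G}\);
  \item the fact that \(N\abracket{G\times G} \cong (N\abracket{G})\abracket{G}\) can be rewritten, via the shearing isomorphism \(G \times G \cong G \times G\), \((g,h) \mapsto (g, g+h)\), as \(T\) applied to the cofree comodule \(\mcalG^I(N)\). This lets the hypothesis be transported to \(\{\dcomp{I}{M_j}\abracket{G}\}\).
\end{itemize}
Alternatively, one can avoid higher iterates by writing the would-be limit directly as
\[
X \defeq \mathrm{eq}\bigl(\mcalG^I(L) \rightrightarrows \mcalG^I(L\abracket{G})\bigr),
\]
taken as a totalization of the cobar construction. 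Here \(\mcalG^I(L) = \lim_J \mcalG^I\mcalF^I(M_j)\) exists because \(\mcalG^I\) is a right adjoint.

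\textbf{Step 3 (universal property and conclusion).} By comonadicity, the forgetful functor from comodules creates those limits that \(T\) preserves. Hence the comodule \((L, L \to L\abracket{G})\) is \(\grlim_J M_j\), and \(\mcalF^I(\grlim_J M_j) = L\) by construction. This gives both existence of the limit and the claimed isomorphism \(\mcalF^I(\grlim_J M_j) \cong \lim_J \mcalF^I(M_j)\). Non-smallness of \(J\) causes no problem, since only the existence of the two limits in \(\mcalD^{\comp{I}}(R)\) was used.

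I expect the main obstacle to be the higher coherence in Step 2: upgrading the single isomorphism \eqref{AssumptionIsomMorphism} to compatibility of the limit with all iterates \(T^{\circ n}\). My first attempt would be the shearing trick above, which reduces \(T^{\circ 2}\) on \(\{\dcomp{I}{M_j}\}\) to \(T\) on the cofree family.
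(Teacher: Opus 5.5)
\textbf{The gap is in Step~2, and therefore in Step~3.} From the single hypothesis \eqref{AssumptionIsomMorphism} your comonadic argument does not go through. Equipping $L=\lim_J\dcomp{I}{M_j}$ with an $\infty$-categorical $\mcalT^I$-coaction, and checking its universal property among comodules, requires the comparison maps $(\mcalT^I)^{\circ n}(L)\to\lim_J(\mcalT^I)^{\circ n}(\dcomp{I}{M_j})$ to be isomorphisms for every $n$. Already $n=2$ is needed for coassociativity. So ``the forgetful functor creates the limits that $T$ preserves'' is only available once all iterates are controlled.

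Your shearing step does not supply these iterates. What shearing actually gives, for every $N$, is $\mcalT^I\mcalT^I(N)\simeq\dcomp{I}{\bigoplus_{G}\mcalT^I(N)}$, an \emph{untwisted} completed sum of copies. Write $N_j=\dcomp{I}{M_j}$. After using the hypothesis, the $n=2$ comparison becomes
\[
\dcomp{I}{\textstyle\bigoplus_G\lim_J\mcalT^I(N_j)}\to\lim_J\dcomp{I}{\textstyle\bigoplus_G\mcalT^I(N_j)}.
\]
This commutes a possibly large limit past an infinite completed direct sum, and your argument gives no way to verify it. The hypothesis \eqref{AssumptionIsomMorphism} concerns the \emph{twisted} object $L\abracket{G}$. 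It turns into a statement about untwisted sums only once $L$ is known to underlie a graded object; then $L\abracket{G}\simeq\dcomp{I}{\bigoplus_G L}$ and $|G\times G|=|G|$ would finish the job. But that is exactly the conclusion you are after.

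Your alternative, the totalization of the cobar construction on $\mcalG^I(L)$, presupposes the coherent coaction on $L$, so it is circular as well.

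\textbf{Step~1 also has a hole.} Conservativity of $\Pro(\mcalF^I)$ reflects isomorphisms, not pro-constancy. The only available comparison map $\Tot X\to\{\Tot_n X\}$ goes to an isomorphism precisely when $\mcalF^I$ preserves $\Tot X$, which is what you are trying to prove. Comonadicity does hold: it is the equivalence from \cite{ishizuka2026Derived} quoted in \Cref{GeometrizationGradedModule}. But your argument does not establish it.

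\textbf{The missing idea is that the unit $\eta_M\colon M\to\mcalG^I\mcalF^I M$ is naturally split.} For graded $M$, the same shearing gives $\mcalG^I\mcalF^I M\simeq\dgrcomp{I}{\bigoplus_{g\in G}M(g)}$: in degree $h$, send the summand $M_g t^h$ of $\mcalF^I(M)t^h$ to the summand indexed by $g-h$. Under this identification $\eta_M$ is the inclusion of the summand $g=0$, and projection onto that summand is a natural retraction. Hence $\{M_j\}$ is a retract of $\{\mcalG^I\mcalF^I M_j\}$ in $\Fun(J,\mcalD_{\graded{G}}^{\comp{I}}(R))$.

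The diagram $\{\mcalG^I\mcalF^I M_j\}$ has a limit, namely $\mcalG^I(\lim_J\dcomp{I}{M_j})$, because $\mcalG^I$ is a right adjoint. Only mapping spaces are involved, so the largeness of $J$ is harmless. Its comparison map
\[
\mcalF^I\mcalG^I(\lim_J\dcomp{I}{M_j})\to\lim_J\mcalF^I\mcalG^I\mcalF^I M_j
\]
is exactly \eqref{AssumptionIsomMorphism}.

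Now finish with retracts. The category $\mcalD_{\graded{G}}^{\comp{I}}(R)$ is idempotent complete, so a retract of a diagram that admits a limit also admits one: split the induced idempotent on $\mcalG^I(\lim_J\dcomp{I}{M_j})$. Hence $\grlim_J M_j$ exists. The comparison map $\mcalF^I(\grlim_J M_j)\to\lim_J\dcomp{I}{M_j}$ is a retract of an isomorphism, hence an isomorphism; both target limits exist by assumption.

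This uses only the one hypothesis and involves no coherence at all. The same splitting also repairs Step~1: $\{\Tot_n X\}$ is a retract of the tower $\{\Tot_n\mcalG^I\mcalF^I X\}$, which is pro-constant because $\mcalG^I\mcalF^I X^\bullet$ is split.
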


\subsection{Graded perfectoid rings}

Following \cite{ishizuka2025Graded}, we introduce the notion of graded perfectoid rings.
We recall the definitions and properties that will be used later. See \cite{ishizuka2025Graded} for more details.

\begin{definition}[{cf. \cite{ishizuka2025Graded}*{Definition 4.1}}] \label{DefGradedPerfd}
    We define a graded variant of perfectoid rings as follows.
    \begin{enumerate}
        \item Let $R$ be a $G$-graded ring.
        We say that $R$ is \emph{a $G$-graded perfectoid ring} if $R$ is gradedwise $p$-adic complete and $R^{\wedge p}$ is a perfectoid ring.
        \item Let \(R = (R, R_{\graded})\) be a pro-\(G\)-graded ring. We say that \(R\) is a \emph{pro-\(G\)-graded perfectoid ring} if \(p\) is topologically nilpotent in \(R_{\graded}\) and \(R_{\graded}\) is a \(G\)-graded perfectoid ring.
    \end{enumerate}
    On a base \(G\)-graded adic ring \(A\) with a homogeneous ideal of definition \(I\), the category of \(I\)-adic \(G\)-graded perfectoid \(A\)-algebras is denoted by \(\Perfd_{\graded{G}}^{\wedge I}(A)\).
\end{definition}

\begin{remark}[{cf. \cite{ishizuka2025Graded}*{Remark 4.2}}] \label{CatEquivGrPerfd}
    The assignment \((R, R_{\graded}) \mapsto R_{\graded}\) induces an equivalence of categories between the category of pro-\(G\)-graded \(p\)-adic perfectoid rings and that of \(G\)-graded perfectoid rings.
    So the following proposition also holds for \(G\)-graded perfectoid rings.
\end{remark}

\begin{proposition}[{cf. \cite{ishizuka2025Graded}*{Proposition 4.4}}] \label{graded-perfectoid-equiv}
    Let \((R, R_{\graded})\) be a pro-\(G\)-graded adic ring such that \(p\) is topologically nilpotent in \(R_{\graded}\).
    Then the following conditions are equivalent:
    \begin{enumerate}
        \item \((R, R_{\graded})\) is pro-\(G\)-graded perfectoid;
        \item \(R\) and \(R_0\) are perfectoid rings;
        \item there exists a topologically nilpotent homogeneous element \(\varpi \in R_0\) with \(p \in \varpi^p R_0\) such that the Frobenius map
        \begin{equation} \label{IsomPPower}
            R_{\graded}/\varpi R_{\graded}
            \xrightarrow{\,a \mapsto a^p\,}
            R_{\graded}/\varpi^p R_{\graded}
        \end{equation}
        is an isomorphism, and the multiplicative map
        \begin{equation} \label{MultiplicativePPower}
            R_{\graded}[\varpi^\infty]
            \xrightarrow{\,a \mapsto a^p\,}
            R_{\graded}[\varpi^\infty]
        \end{equation}
        is bijective. 
    \end{enumerate}
    Furthermore, in this case, we necessarily have \(G = G[1/p]\).
\end{proposition}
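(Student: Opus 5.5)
The plan is to prove the cycle (1)\(\Rightarrow\)(2)\(\Rightarrow\)(3)\(\Rightarrow\)(1). Throughout I use the standing description of a pro-\(G\)-graded adic ring: \(R\) is the (ungraded) completion of the dense graded subring \(R_{\graded}\), and for a topologically nilpotent homogeneous degree-\(0\) element \(\varpi\) the natural maps
\[
R_{\graded}/\varpi^n R_{\graded} \to R/\varpi^n R
\]
are isomorphisms. Because \(\varpi\) has degree \(0\), these quotients are graded. The one tool from perfectoid theory is the standard ``Frobenius criterion'' \cite{bhatt2018Integral}. A \(\varpi\)-adically complete ring \(S\) with \(p \in \varpi^p S\) is perfectoid if and only if two conditions hold: \(S/\varpi \xrightarrow{a\mapsto a^p} S/\varpi^p\) is an isomorphism, and \(a \mapsto a^p\) is bijective on \(S[\varpi^\infty]\). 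All Frobenius-type maps below send degree \(g\) to degree \(pg\). This is the source of the bookkeeping in the degree-\(0\) part and of the final claim \(G=G[1/p]\).

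For (1)\(\Rightarrow\)(2): the ring \(R\) is the adic completion of the perfectoid ring \(R_{\graded}^{\wedge p}\) along a finitely generated ideal. After adding \(p\) to that ideal, which does not change the topology since \(p\) is topologically nilpotent, \Cref{BoundedTorsionPerfd}(3) shows that \(R\) is perfectoid. For \(R_0\), I first produce \(\varpi\in R_0\). Frobenius is surjective on \(R_{\graded}/p\), and if \(x=\sum_g x_g\) then \(x^p\equiv\sum_g x_g^p \pmod p\) with \(x_g^p\) of degree \(pg\). Hence any degree-\(0\) target can be hit by a degree-\(0\) element. This gives \(\varpi\in R_0\) with \(\varpi^p\equiv p \pmod{p^2}\), and so \(p\in\varpi^pR_0\) after the usual unit adjustment. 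Restricting the Frobenius isomorphism and the torsion bijection of the perfectoid ring \(R^{\wedge p}_{\graded}\) to degree \(0\) gives the two conditions of the criterion for \(R_0\). This works because degree \(0\) is exactly the preimage of degree \(0\) under \(g\mapsto pg\), \(G\) being torsion-free. Therefore \(R_0\) is perfectoid.

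For (2)\(\Rightarrow\)(3): the ring \(R_0\) is perfectoid, so it contains \(\varpi\) with \(p\in\varpi^pR_0\), and \(\varpi\) is topologically nilpotent in \(R_{\graded}\). The ring \(R\) is perfectoid, so the criterion holds for \(R\) with this \(\varpi\). Transporting through \(R_{\graded}/\varpi^n\cong R/\varpi^n\) gives \eqref{IsomPPower}. For \eqref{MultiplicativePPower}, I would use that \(R\) has bounded \(\varpi^\infty\)-torsion (\Cref{BoundedTorsionPerfd}(2)). Then \(R_{\graded}[\varpi^\infty]=R_{\graded}[\varpi^N]\) embeds into \(R[\varpi^N]\), and both sides are detected modulo a high power of \(\varpi\). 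Once the bijection on \(R[\varpi^\infty]\) is known, it remains to check that it preserves the graded part. Injectivity is clear. For surjectivity, take a homogeneous \(b\) and its unique \(p\)-th root \(a\) in \(R\); uniqueness, together with the graded decomposition modulo \(\varpi^n\), forces \(a\) to be homogeneous of degree \(\deg(b)/p\). The same argument proves the last assertion. Surjectivity of graded Frobenius onto a nonzero component of degree \(h\) requires a preimage of degree \(h/p\), which shows that the support is \(p\)-divisible. This yields \(G=G[1/p]\) under the paper's convention that \(G\) is generated by the relevant supports, as in \Cref{RemarkGrading}.

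For (3)\(\Rightarrow\)(1): gradedwise \(p\)-completeness comes from the pro-graded structure. The completion \(S=R_{\graded}^{\wedge p}\) (equivalently its \(\varpi\)-adic completion) has the same quotients \(S/\varpi^n=R_{\graded}/\varpi^n\). Since \eqref{MultiplicativePPower} bounds the \(\varpi^\infty\)-torsion, \(S\) also has the same \(\varpi^\infty\)-torsion. The Frobenius criterion then makes \(S\) perfectoid. I expect the main obstacle to be the comparison steps between \(R_{\graded}\) and its ungraded completion: showing that the quotients modulo \(\varpi^n\) and the \(\varpi^\infty\)-torsion agree, and that \(p\)-th roots of homogeneous elements stay homogeneous. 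My first attempt will reduce everything to statements modulo \(\varpi^n\), using bounded torsion to make this reduction legitimate.
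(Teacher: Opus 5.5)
You are right that the paper gives no proof here: it only quotes \cite{ishizuka2025Graded}, so I assess your argument on its own terms. The cycle of implications and the bookkeeping \(g\mapsto pg\) are the right ideas, but two steps fail as written.

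\textbf{First gap: the comparison between \(R_{\graded}\) and \(R\).} Your standing identification \(R_{\graded}/\varpi^nR_{\graded}\cong R/\varpi^nR\) holds only when the adic topology is \(\varpi\)-adic, i.e.\ in the \(p\)-adic case. In general \(R\) is the \(I\)-adic completion of \(R_{\graded}\), and \(R/\varpi R\) is much larger. For example, take a perfectoid field \(C\) and \(R_{\graded}=\bigoplus_{q\in\mathbb{Z}[1/p]_{\geq 0}}\mathcal{O}_C\,t^q\) with \(I=(p,t)\). This satisfies (1). But for any \(\varpi\in\mathcal{O}_C\) as in (3), the class of \(\sum_{n\geq 0}t^n\) in \(R/\varpi R\) does not come from \(R_{\graded}/\varpi R_{\graded}=(\mathcal{O}_C/\varpi)[t^{1/p^\infty}]\). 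So in (2)\(\Rightarrow\)(3) nothing is actually transported from \(R\) to \(R_{\graded}\).

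The same conflation breaks (3)\(\Rightarrow\)(1). Topological nilpotence of \(\varpi\) for the \(I\)-adic topology does not give \(\varpi^N\in pR_{\graded}\); in characteristic \(p\), \(\varpi\) may be any topologically nilpotent element of degree \(0\). Hence \(R_{\graded}^{\wedge p}\) need not be \(\varpi\)-adically complete, and the Frobenius criterion does not apply to it.

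What should replace the identification are the continuous \(R_0\)-linear projections \(\pi_g\colon R=\lim_n\bigoplus_{h}R_h/(I^n)_h\to R_g\), which gradedwise completeness provides. Combine them with the levelwise identity \(b^p=\sum_h\pi_h(b)^p+pc\) in \(R\), which gives \(\pi_{pg}(b^p)\equiv\pi_g(b)^p \bmod pR_{pg}\). Since \(\varpi\) has degree \(0\), this yields \(\varpi R\cap R_g=\varpi R_g\) and surjectivity of the graded Frobenius modulo \(\varpi^p\).

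For the torsion condition, choose \(\varpi\) with \(\varpi^p\in pR_0^{\times}\). Then \(R[\varpi^\infty]\) is killed by \(\varpi\), so \(pc=0\) for torsion \(b\). It follows that the \(p\)-th root of a homogeneous torsion element of \(R_h\) is the \(\pi_{h/p}\)-component of its root in \(R\). This is the argument your ``uniqueness forces homogeneity'' step lacks.

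For (3)\(\Rightarrow\)(1), proceed in three steps. First show \(R_0\) is perfectoid. Then apply the criterion to the \(\varpi\)-adic completion of \(R_{\graded}\). Finally, use the projections to pass to some \(\varpi_0\in R_0\) with \(\varpi_0^p\in pR_0^{\times}\); for this element the \(\varpi_0\)-adic and \(p\)-adic completions agree.

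\textbf{Second gap: the degree-\(0\) element \(\varpi\) in (1)\(\Rightarrow\)(2).} Your construction is a non sequitur. Surjectivity of Frobenius on \(R_{\graded}/p\) carries no information about \(p\) modulo \(p^2\): Frobenius is bijective on \(\mathbb{Z}_p/p\), yet no \(\varpi\in\mathbb{Z}_p\) satisfies \(\varpi^p\equiv p \bmod p^2\). Projecting to degree \(0\) an element \(\varpi\in R_{\graded}^{\wedge p}\) with \(\varpi^p=pu\), \(u\) a unit, does not help either. It only controls \(\pi_0(\varpi)^p\) modulo \(p\), giving \(\pi_0(\varpi)^p\in pR_0\), which is the wrong direction.

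The existence of \(\varpi\in R_0\) with \(p\in\varpi^pR_0\) is a substantive part of ``\(R_0\) is perfectoid'' and needs a genuine argument. One route: use the graded tilt and graded \(A_{\inf}\) of \cite{ishizuka2025Graded}*{Construction 3.22} to show that \(\ker\theta\) has a distinguished generator of degree \(0\), and extract \(\varpi\) from it as in the ungraded theory. Once such a \(\varpi\) is available, your restriction of the Frobenius criterion to degree \(0\) goes through, using the cross-term argument above for the torsion condition.

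Your reading of the final assertion as \(p\)-divisibility of the support is the correct one. Taken literally, \(G=G[1/p]\) fails: a perfectoid ring placed in degree \(0\) of a \(\mathbb{Z}\)-grading satisfies (1).
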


\begin{corollary}[{cf. \cite{ishizuka2025Graded}*{Corollary 4.7}}] \label{PureSubPerfd}
    Let \((R, R_{\graded})\) be a pro-\(G\)-graded perfectoid ring with a homogeneous ideal of definition \(I\).
    Let \(H \subseteq G\) be a submonoid satisfying \(H = H[1/p]\), and set 
    \[
    R'_{\graded} \coloneqq \bigoplus_{h \in H} R_h \subseteq R_{\graded}.
    \]
    Then the pair \((R', R'_{\graded})\) is an \(H\)-graded perfectoid ring, where 
    \(R'\) denotes the \((I \cap R'_{\graded})\)-adic completion of \(R'_{\graded}\).
\end{corollary}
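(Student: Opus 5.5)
The plan is to verify criterion (3) of \Cref{graded-perfectoid-equiv} for the pro-\(H\)-graded ring \((R', R'_{\graded})\), using the same element \(\varpi\) that witnesses (3) for \((R, R_{\graded})\). By \Cref{CatEquivGrPerfd} we may pass freely between the pro-graded and graded formulations. Three preliminary facts are formal:
\begin{itemize}
\item Since \(0 \in H\), we have \(R'_0 = R_0\), so \(\varpi \in R'_0\) is homogeneous and topologically nilpotent with \(p \in \varpi^p R'_0\). The topology on \(R'_{\graded}\) is the one induced by \(I \cap R'_{\graded}\).
\item The projection \(\pi \colon R_{\graded} \to R'_{\graded}\) onto the components of degree in \(H\) is \(R'_{\graded}\)-linear and splits the inclusion. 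Hence \(R'_{\graded}\) is a direct summand of \(R_{\graded}\) as an \(R'_{\graded}\)-module.
\item Because \(\varpi\) has degree \(0\), all the relevant objects decompose degreewise:
\[
R'_{\graded}/\varpi^k = \bigoplus_{h \in H} R_h/\varpi^k R_h, \qquad R'_{\graded}[\varpi^\infty] = \bigoplus_{h\in H} R_h[\varpi^\infty].
\]
Gradedwise \(p\)-completeness of \(R'_{\graded}\) is inherited componentwise from \(R_{\graded}\).
\end{itemize}

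For the Frobenius isomorphism \eqref{IsomPPower}, the Frobenius \(R_{\graded}/\varpi \to R_{\graded}/\varpi^p\) is a ring map because \(p \in \varpi^p R_0\), and it sends degree \(g\) to degree \(pg\). Since \(G\) is torsion-free and \(G = G[1/p]\), multiplication by \(p\) is a bijection on \(G\). It follows that this graded bijection restricts to a bijection \(R_g/\varpi \to R_{pg}/\varpi^p\) in each degree. Since \(H = H[1/p]\), the multiplication-by-\(p\) map also permutes \(H\). Summing over \(h \in H\) therefore gives the isomorphism \(R'_{\graded}/\varpi \cong R'_{\graded}/\varpi^p\). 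Injectivity of \eqref{MultiplicativePPower} on \(R'_{\graded}[\varpi^\infty]\) is immediate, because this set is a subset of \(R_{\graded}[\varpi^\infty]\), where the \(p\)-power map is already injective.

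The main obstacle is surjectivity in \eqref{MultiplicativePPower}. Take a homogeneous \(b \in R_h[\varpi^\infty]\) with \(h \in H\), and let \(a \in R_{\graded}[\varpi^\infty]\) be its unique \(p\)-th root; the task is to show \(a\) is homogeneous of degree \(h/p\). I would first try an equivariance argument. For a unit \(u\) of a \(\varpi\)-complete extension on which \(G\) acts by characters, the grading automorphism \(\sigma_u\), defined by \(x_g \mapsto u^{g}x_g\), preserves \(\varpi\)-torsion. Uniqueness of \(p\)-th roots then forces \(\sigma_u(a) = u^{h/p}a\), which pins down the degree of \(a\). Concretely, I would implement this by base changing along the coaction \(\rho_R \colon R_{\graded} \to R_{\graded}[G]\). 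Note that \(R_{\graded}[G]\) again satisfies condition (3), being a free graded extension with a perfect group ring. Comparing \(\rho_R(a)\) with \(a \cdot t^{h/p}\) inside \(R_{\graded}[G][\varpi^\infty]\), both elements have \(p\)-th power \(b\,t^h\), so uniqueness identifies them, and hence \(a \in R_{h/p}\).

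If that comparison turns out to be technically awkward, my fallback is a more hands-on argument. Write \(a = \sum_g a_g\). Using additivity of Frobenius modulo \(\varpi^p\), we get \(a^p \equiv \sum_g a_g^p\), and injectivity of \eqref{IsomPPower} gives \(a_g \in \varpi R_{\graded}\) for \(g \neq h/p\). I would then iterate this using \(\varpi\)-adic completeness and boundedness of the \(\varpi^\infty\)-torsion, as in \Cref{BoundedTorsionPerfd}. Once \(a \in R'_{\graded}\) is established, all of (3) holds for \(R'_{\graded}\), and \Cref{graded-perfectoid-equiv} yields that \((R', R'_{\graded})\) is \(H\)-graded perfectoid.
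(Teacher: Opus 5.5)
Your treatment of \eqref{IsomPPower} and of injectivity in \eqref{MultiplicativePPower} is correct. Surjectivity in \eqref{MultiplicativePPower}, however, is not established.

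Condition (3) requires every element of \(R'_{\graded}[\varpi^\infty]\) to be a \(p\)-th power, but you only treat homogeneous \(b\in R_h[\varpi^\infty]\). Since \(a\mapsto a^p\) is a priori only multiplicative, homogeneous roots \(c_h\) of the components \(b_h\) do not give \((\sum_h c_h)^p=\sum_h b_h\) until the cross terms are shown to vanish.

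The same issue is hidden in your coaction argument. Uniqueness of \(p\)-th roots in \(R_{\graded}[G][\varpi^\infty]=\bigoplus_g R_{\graded}[\varpi^\infty]\,t^g\) rests only on the assertion that \(R_{\graded}[G]\) satisfies (3). Verifying that requires computing \((\sum_g z_g t^g)^p\), which is exactly the same additivity question. Granting uniqueness, your comparison of \(\rho_R(a)\) with \(a\,t^{h/p}\) is fine for homogeneous \(b\). The fallback does not obviously close the gap either, since its iteration has to divide by powers of \(\varpi\) inside the \(\varpi\)-torsion.

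The missing input is that \(x\mapsto x^p\) is additive on \(R_{\graded}[\varpi^\infty]\). This follows in three steps:
\begin{enumerate}
\item From \(p\in\varpi^pR_0\) you get \(R_{\graded}[\varpi^\infty]\subseteq R_{\graded}[p^\infty]\).
\item Gradedwise \(p\)-completeness gives \(\bigcap_n p^nR_{\graded}=\bigoplus_g\bigcap_n p^nR_g=0\), so \(R_{\graded}\) embeds in the perfectoid ring \(S\coloneqq R_{\graded}^{\wedge p}\).
\item \(S[p^\infty]=S[p]\) for a perfectoid ring \(S\); this is the fact used in the proof of \Cref{PseudoPerfectoid}.
\end{enumerate}
Hence all \(\varpi^\infty\)-torsion of \(R_{\graded}\) is killed by \(p\), and \((x+y)^p=x^p+y^p\) on it.

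The \(p\)-power map on \(R_{\graded}[\varpi^\infty]=\bigoplus_gR_g[\varpi^\infty]\) is then a direct sum of maps \(R_g[\varpi^\infty]\to R_{pg}[\varpi^\infty]\). Your argument for \eqref{IsomPPower} now applies verbatim: each component is bijective because \(g\mapsto pg\) is a bijection of \(G\). Summing over \(h\in H\), and using \(pH=H\), gives bijectivity on \(R'_{\graded}[\varpi^\infty]\). This makes the coaction and the fallback unnecessary. It reduces everything to the same degreewise check the paper uses in \Cref{CofinalGradedPerfdForGradedRing} and \Cref{EnlargeGradedPerfdNew}.

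Separately, your second preliminary bullet is false unless \(H\) is a group. Take \(G=\mathbb{Z}[1/p]\), \(H=\mathbb{Z}[1/p]_{\geq 0}\), \(x\in R_2\) and \(y\in R_{-1}\): then \(\pi(xy)=xy\) but \(x\,\pi(y)=0\). The projection is only \(R_0\)-linear. That is all your degreewise decompositions actually use, so this error does not affect the rest of the argument.
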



First we recall some properties on graded perfectoid rings. See \cite{ishizuka2025Graded} for the proof of the following two lemmas.

\begin{lemma}[{cf. \cite{ishizuka2025Graded}*{Lemma 4.8}}] \label{CompatibleSystemModp}
    Let \(R\) be a \(G\)-graded ring such that \(R\) is gradedwise \(p\)-complete and \(R/pR\) is semiperfect.
    Take a homogeneous element \(f\) of \(R\).
    Then there exists a homogeneous element \(g \in R\) such that \(f \equiv g \bmod pR\) and \(g\) admits a compatible system \(\{g^{1/p^n}\}_{n \geq 0}\) of homogeneous \(p\)-power roots in \(R\).
\end{lemma}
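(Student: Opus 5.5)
We need to prove: \(R\) is a \(G\)-graded ring that is gradedwise \(p\)-complete with \(R/pR\) semiperfect, and \(f\) is homogeneous, say of degree \(d\). We want a homogeneous \(g \equiv f \bmod pR\) admitting a compatible system of homogeneous \(p\)-power roots. The plan is to construct the roots degree by degree via a graded version of the tilt, and then to produce \(g\) by the usual limit formula \(g = \lim_n (a_n)^{p^n}\), where the \(a_n\) are homogeneous lifts.

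First, I use semiperfectness of \(R/pR\) in a graded way. The Frobenius on \(R/pR\) sends \((R/pR)_e\) into \((R/pR)_{pe}\). Since \(R/pR\) is graded and Frobenius is surjective, each homogeneous element of degree \(pe\) is hit by some element \(x\); the component \(x_e\) of \(x\) then also maps to it. Indeed, \(x^p \equiv \sum_h x_h^p \pmod p\), and \(x_h^p\) has degree \(ph\). Because \(G\) is torsion-free, the map \(h \mapsto ph\) is injective, so comparing degree-\(pe\) components gives \(x_e^p = \) the target. Here we need \(d/p^n\) to make sense in \(G\). If \(G \neq G[1/p]\) and \(d\) is not divisible, only the zero element of degree \(d\) occurs in \(R/pR\) in that case: its homogeneous preimage would need degree \(d/p\), which does not exist. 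Then \(f \in pR\) and we take \(g=0\). So assume \(d/p^n \in G\) for all \(n\), which is automatic under \(G=G[1/p]\).

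Next, iterating this, I choose homogeneous \(\bar f_n \in (R/pR)_{d/p^n}\) with \(\bar f_0 = \bar f\) and \(\bar f_{n+1}^p = \bar f_n\). This gives an element of the graded tilt \(\lim_{x\mapsto x^p} (R/pR)_{d/p^n}\). I then lift each \(\bar f_n\) to a homogeneous \(a_n \in R_{d/p^n}\), which is possible because \(R_{d/p^n} \to (R/pR)_{d/p^n}\) is surjective. The standard estimate applies: if \(a \equiv b \bmod p\), then \(a^{p^k} \equiv b^{p^k} \bmod p^{k+1}\). Hence for fixed \(m\), the sequence \(a_{m+k}^{p^k}\) is Cauchy in the \(p\)-adic topology of the single graded piece \(R_{d/p^m}\), since \(a_{m+k+1}^p \equiv a_{m+k} \bmod p\).

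The key point, and the main obstacle, is that this limit must exist inside the homogeneous component. This is where gradedwise completeness is used. Each \(R_{d/p^m}\) is \(p\)-adically complete, with \(p^nR \cap R_e = p^n R_e\) since \(p\) has degree \(0\), so the limit \(g_m \defeq \lim_k a_{m+k}^{p^k}\) exists in \(R_{d/p^m}\). One should not take limits in \(R\) itself, which need not be complete. The limit is independent of the choice of lifts by the same congruence estimate. Continuity of the \(p\)-th power map \(R_{d/p^{m+1}} \to R_{d/p^m}\) gives \(g_{m+1}^p = g_m\). Also \(g_m \equiv a_m \bmod p\), so \(g \defeq g_0 \equiv f \bmod pR\), and \(\{g_m\}\) is the required compatible system of homogeneous \(p\)-power roots.
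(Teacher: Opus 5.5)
Your proof is correct. You lift a compatible system of homogeneous \(p\)-power roots of \(\bar f\) from \(R/pR\), which exists by your degree comparison using torsion-freeness of \(G\). You then set \(g_m=\lim_k a_{m+k}^{p^k}\) inside the \(p\)-adically complete piece \(R_{d/p^m}\). This is the sharp map on the graded tilt \(\lim_{x\mapsto x^p}(R/pR)\), computed degreewise, and it is the natural route. This paper gives no proof of its own and simply defers to \cite{ishizuka2025Graded}*{Lemma 4.8}.

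One step is imprecise: the case \(G\neq G[1/p]\) with \(d\in p^kG\setminus p^{k+1}G\) for some \(k\geq 1\). Here \(d/p\) does exist, so your one-step argument does not apply as stated. You need to iterate it: \((R/pR)_d=F^k\bigl((R/pR)_{d/p^k}\bigr)=F^k(0)=0\), and then \(g=0\) works as you say.
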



\begin{lemma}[{cf. \cite{ishizuka2025Graded}*{Lemma 4.9}}] \label{BoundedTorsionGradedPerfd}
    Let \((R, R_{\graded})\) be a \(p\)-adic \(G\)-graded perfectoid ring and let \(I\) be a finitely generated homogeneous ideal of \(R_{\graded}\) containing \(p\).
    Then the following assertions hold.
    \begin{enumerate}
        \item There exists a homogeneous generator \(f_1, \dots, f_r\) of \(I\) such that \(f_i\) has a compatible system of \(p\)-power roots in \(R_{\graded}\).
        \item The \(G\)-graded ring \(R_{\graded}\) has bounded \(f_i^{\infty}\)-torsion for such choice of \(f_i\) in (1). In particular, \(R_{\graded}\) has bounded \(I^{\infty}\)-torsion.
        \item The pair consisting of the \(I\)-adic completion \(\comp{I}{R}\) of \(R\) and the gradedwise \(I\)-adic completion \(\grcomp{I}{R_{\graded}}\) of \(R_{\graded}\) is an \(I\)-adic pro-\(G\)-graded perfectoid ring.
        \item The canonical morphism
        \begin{equation*}
            \dcomp{I}{R_{\graded}} \to \comp{I}{R_{\graded}}
        \end{equation*}
        is an isomorphism in \(\mcalD(R_{\graded})\).
    \end{enumerate}
\end{lemma}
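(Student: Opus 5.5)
The plan is to prove the four assertions in order. The main tool is to compare \(R_{\graded}\) with the perfectoid rings \(R = R_{\graded}^{\wedge p}\) and \(R_0\) through \Cref{graded-perfectoid-equiv}, and to reduce torsion questions to \Cref{BoundedTorsionPerfd}.

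For (1), pick homogeneous generators \(g_1, \dots, g_s\) of \(I\). Since \(R_{\graded}\) is gradedwise \(p\)-complete, we have \(R_{\graded}/pR_{\graded} \cong R/pR\). This ring is semiperfect because \(R\) is perfectoid. \Cref{CompatibleSystemModp} then gives homogeneous \(f_i \equiv g_i \bmod pR_{\graded}\) with compatible homogeneous \(p\)-power roots, so \(I = (f_1, \dots, f_s, p)\). The element \(p\) may lack roots, so we replace it. Since \(R_0\) is perfectoid (\Cref{graded-perfectoid-equiv}(2)), \Cref{BoundedTorsionPerfd}(1) applied to \(pR_0\) gives \(\varpi \in R_0\) with compatible \(p\)-power roots and \(pR_0 = \varpi^p R_0\). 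This uses the standard fact that in a perfectoid ring \(p\) is a unit times the \(p\)-th power of an element admitting all \(p\)-power roots. Then \(I = (f_1, \dots, f_s, \varpi^p)\) is a homogeneous generating set of the required form.

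For (2), the map \(R_{\graded} \to R\) is injective, since \(\bigcap_n p^n R_{\graded} = 0\) componentwise. Hence \(R_{\graded}[f^\infty] \subseteq R[f^\infty]\) for any \(f = f_i\). In the perfectoid ring \(R\), an element with compatible \(p\)-power roots has bounded \(f^\infty\)-torsion: by \Cref{BoundedTorsionPerfd}(2) applied to the ideal \((f, \varpi^p)\) with these root-admitting generators, it is killed by a fixed power of \(f\). Bounded \(I^\infty\)-torsion follows because the \(f_i\) are finitely many generators.

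For (3), the key point is that \(I^n\) is homogeneous. So the gradedwise \(I\)-adic completion is computed degree by degree, and its degree-\(0\) part is the \((I\cap R_0)\)-adic completion of \(R_0\). One must check that \(I^n \cap R_0\)-topology agrees with \((I \cap R_0)^n\)-topology; I would do this using the root-admitting generators, whose roots of degree \(\deg(f_i)/p^m\) can be shuffled. Then:
\begin{enumerate}
\item \(\comp{I}{R}\) is perfectoid by \Cref{BoundedTorsionPerfd}(3);
\item the degree-\(0\) completion is perfectoid by \Cref{BoundedTorsionPerfd}(3) applied to \(R_0\);
\item \Cref{graded-perfectoid-equiv}, (2)\(\Rightarrow\)(1), gives the pro-graded perfectoid structure.
\end{enumerate}
Alternatively, one verifies criterion (3) of \Cref{graded-perfectoid-equiv} directly. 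The Frobenius isomorphism modulo \(\varpi\) is unchanged by completion, since \(\varpi \in I\). Bijectivity on \(\varpi^\infty\)-torsion survives because the torsion is bounded, so completion does not alter it. I expect the main obstacle to be this comparison of topologies in degree \(0\).

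For (4), bounded \(f_i^\infty\)-torsion makes the Koszul pro-systems \(\{\mathrm{Kos}(R_{\graded}; f_1^n, \dots, f_r^n)\}_n\) pro-isomorphic to \(\{R_{\graded}/(f_1^n, \dots, f_r^n)\}_n\). Taking limits identifies \(\dcomp{I}{R_{\graded}}\) with \(\comp{I}{R_{\graded}}\) (the standard argument in the ungraded bounded-torsion case).
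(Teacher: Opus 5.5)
Your arguments for (1) and (2) are fine. Part (3) has a genuine gap.

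\textbf{The main route for (3) fails.} You apply \Cref{BoundedTorsionPerfd}(3) to \(R_0\) and the ideal \(I\cap R_0\). That lemma needs a finitely generated ideal, and \(I\cap R_0\) is in general not finitely generated. Worse, the linear topology \(\{(I^n)_0\}_n\) on \(R_0\), which defines \((\grcomp{I}{R_{\graded}})_0\), need not be adic for \emph{any} finitely generated ideal.

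For example, let \(P\) be perfectoid with \(P/p\neq 0\). Let \(R_{\graded}\) be the gradedwise \(p\)-completion of \(P[x^{1/p^\infty},y_1^{1/p^\infty},y_2^{1/p^\infty},\dots]\), \(\setZ[1/p]\)-graded by \(\deg x=1\) and \(\deg y_j=-1\), and take \(I=(p,x)\). This is a \(p\)-adic graded perfectoid ring. Suppose \(J\subseteq R_0\) were finitely generated with \((I^N)_0\subseteq J\) and \(J^M\subseteq I\cap R_0\).
\begin{enumerate}
\item Reduce mod \(p\) and send to \(0\) the finitely many \(y_1,\dots,y_L\) occurring in generators of \(J\).
\item The generators become constants \(c\in P/p\) with \(c^M\in x\cdot(R_{\graded}/p)\), so they are nilpotent.
\item But \(x^Ny_{L+1}^N\in(I^N)_0\subseteq J\) is fixed by this substitution, so it would lie in the ideal generated by nilpotent constants, which is absurd.
\end{enumerate}

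The topology comparison you flag as the main obstacle is actually harmless. Degreewise semiperfectness of \(R_{\graded}/p\), together with the roots of the \(f_i\) and \(p\in I\cap R_0\), gives \((I^n)_0\subseteq (I\cap R_0)^{p^k}\) for \(n\ge rp^k\). Finite generation is the real obstruction.

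\textbf{The fallback for (3) is not yet a proof.} So (3) has to go through criterion (3) of \Cref{graded-perfectoid-equiv}, which is checked degreewise and needs no ideal of \(R_0\). But both sentences you give there are wrong as stated.
\begin{enumerate}
\item \(\grcomp{I}{R_{\graded}}/\varpi\) is in general not \(R_{\graded}/\varpi\), so the Frobenius isomorphism is not ``unchanged'' by completion.
\item \(R_{\graded}[\varpi^\infty]\) need not be \(I\)-adically complete, so completion does change the torsion.
\end{enumerate}
Proving that the Frobenius isomorphism and the bijectivity on \(\varpi^\infty\)-torsion pass to \(\grcomp{I}{R_{\graded}}\) is exactly the content of (3), and it is missing. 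One possible route is to compare with the perfectoid ring \(\comp{I}{R}\) through the \(R_0\)-linear degree-\(g\) projections \(\comp{I}{R}\to(\grcomp{I}{R_{\graded}})_g\).

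\textbf{Part (4) rests on an invalid inference.} For \(r\ge 2\), bounded \(f_i^\infty\)-torsion of each \(f_i\) separately does not make \(f_1,\dots,f_r\) weakly proregular. The higher Koszul pro-homology is governed by the \(f_{i+1}\)-power torsion of \(R_{\graded}/(f_1^n,\dots,f_i^n)\), about which (2) says nothing. There is an easy repair that avoids Koszul complexes:
\begin{enumerate}
\item Since \(p\in I\) and \(R_{\graded}\) has bounded \(p^\infty\)-torsion, \(\dcomp{I}{R_{\graded}}\cong\dcomp{I}{\dcomp{p}{R_{\graded}}}\cong\dcomp{I}{R}\).
\item This is \(\comp{I}{R}\) by \Cref{BoundedTorsionPerfd}(3).
\item Finally, \(\comp{I}{R}=\comp{I}{R_{\graded}}\), because \(R/p^nR=R_{\graded}/p^nR_{\graded}\) and \(p^n\in I^n\).
\end{enumerate}
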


In our previous paper \cite{ishizuka2026Derived}, we have introduced the derived gradedwise completion, so we need to compare it with the usual gradedwise completion for graded perfectoid rings as follows.

\begin{lemma} \label{DerivedGrcompGradedPerfd}
    Let \((R, R_{\graded})\) be a \(p\)-adic \(G\)-graded perfectoid ring.
    Let $I$ be a finitely generated homogeneous ideal of $R_{\graded}$ containing $p$.
    Then the canonical morphism
    \begin{equation*}
        \dgrcomp{I}{R_{\graded}} \to \grcomp{I}{R_{\graded}}
    \end{equation*}
    is an isomorphism in \(\mcalD_{\graded{G}}(R_{\graded})\).
\end{lemma}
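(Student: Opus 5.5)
We reduce the comparison to the ungraded statement of \Cref{BoundedTorsionGradedPerfd}(4) using the conservativity in \Cref{GradedNakayamaLemma}. Both sides are derived gradedwise \(I\)-complete: the source by construction. For the target, \(\grcomp{I}{R_{\graded}}\) is a discrete graded module whose graded pieces \(\lim_n (R_{\graded}/I^n)_g\) are classically \(I\)-adically complete, hence derived \(I\)-complete as \(R_0\)-modules (or as modules over the degree-zero part, via the homogeneous generators). By the characterization of derived gradedwise completeness in \cite{ishizuka2026Derived} (completeness tested on the \(R\lim\) of multiplication by homogeneous generators, which is computed degreewise), the target lies in \(\mcalD_{\graded{G}}^{\comp{I}}(R_{\graded})\). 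The canonical morphism is then the map induced by adjunction from \(R_{\graded} \to \grcomp{I}{R_{\graded}}\).

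By \Cref{GradedNakayamaLemma}, the map is an isomorphism as soon as \(\mcalF^I = \dcomp{I}{-}\) carries it to an isomorphism in \(\mcalD^{\comp{I}}(R_{\graded})\). Since \(\mcalF^I\) is the restriction of a left adjoint compatible with completion, we have \(\mcalF^I(\dgrcomp{I}{R_{\graded}}) \simeq \dcomp{I}{R_{\graded}}\); more concretely, the map \(R_{\graded} \to \dgrcomp{I}{R_{\graded}}\) becomes an equivalence after \(\dcomp{I}{-}\), because its cofiber is killed by derived gradedwise completion and hence by derived \(I\)-completion of the underlying object. So we must show that
\[
\dcomp{I}{R_{\graded}} \to \dcomp{I}{\grcomp{I}{R_{\graded}}}
\]
is an isomorphism. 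By \Cref{BoundedTorsionGradedPerfd}(4), the source is \(\comp{I}{R_{\graded}}\), discrete.

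For the target, \(\grcomp{I}{R_{\graded}}\) is again a graded perfectoid ring by \Cref{BoundedTorsionGradedPerfd}(3), so \Cref{BoundedTorsionGradedPerfd}(4) applies to it as well. Hence its derived \(I\)-completion is its classical \(I\)-adic completion. It remains to show that \(\comp{I}{R_{\graded}} \to \comp{I}{\grcomp{I}{R_{\graded}}}\) is an isomorphism. This holds because \(R_{\graded}/I^n \to \grcomp{I}{R_{\graded}}/I^n\) is an isomorphism for all \(n\). Indeed, \(I^n\) is generated by finitely many homogeneous elements, so \(\grcomp{I}{R_{\graded}}/I^n \grcomp{I}{R_{\graded}}\) is computed degreewise. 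In each degree, the classical fact that the \(I\)-adic completion modulo \(I^n\) recovers the module modulo \(I^n\) (finitely generated \(I\)) applies.

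The main obstacle is the bookkeeping in the first step: checking that \(\grcomp{I}{R_{\graded}}\) is genuinely derived gradedwise complete in the sense of \cite{ishizuka2026Derived}, and that \(\mcalF^I\) applied to the unit \(R_{\graded} \to \dgrcomp{I}{R_{\graded}}\) is an equivalence. If the degreewise characterization is unavailable, an alternative is to use the bounded \(f_i^\infty\)-torsion of \Cref{BoundedTorsionGradedPerfd}(2). Bounded torsion makes the pro-systems of Koszul complexes \(\{\mathrm{Kos}(R_{\graded}; f^n)\}\) pro-isomorphic to \(\{R_{\graded}/I^n\}\) in the graded derived category. This identifies \(\dgrcomp{I}{R_{\graded}} = \grlim_n \mathrm{Kos}(R_{\graded};f^n)\) directly with \(\grlim_n R_{\graded}/I^n\), whose graded limit is computed degreewise with vanishing \(\lim^1\) (surjective transition maps), giving \(\grcomp{I}{R_{\graded}}\).
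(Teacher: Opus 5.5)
Your proof is correct, but it takes a different route from the paper's.

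\textbf{The paper's route.} The paper stays inside the graded category. It picks homogeneous generators $f_1,\dots,f_r$ of $I$ with compatible systems of $p$-power roots and sets $I_i=(f_1,\dots,f_i)$. It notes via \Cref{BoundedTorsionGradedPerfd}(2),(3) that $\grcomp{I_i}{R_{\graded}}$ is graded perfectoid with bounded $f_{i+1}^{\infty}$-torsion. The one-element graded criterion \cite{ishizuka2026Derived}*{Lemma 4.8(8)} then gives $\dgrcomp{f_{i+1}}{\grcomp{I_i}{R_{\graded}}}\simeq\grcomp{f_{i+1}}{\grcomp{I_i}{R_{\graded}}}$, and the paper iterates over $i$.

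\textbf{Your route.} You test the map after applying the conservative functor $\mcalF^I$ of \Cref{GradedNakayamaLemma}. You then feed in three inputs:
\begin{itemize}
\item the ungraded comparison \Cref{BoundedTorsionGradedPerfd}(4) for $R_{\graded}$;
\item the same comparison for the graded perfectoid ring $\grcomp{I}{R_{\graded}}$;
\item the elementary identity $\grcomp{I}{R_{\graded}}/I^n\cong R_{\graded}/I^n$.
\end{itemize}
This is the same pattern the paper later uses, in the opposite direction, for \Cref{LocalizationGradedPerfd}(3).

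\textbf{What each buys.} The paper's argument needs only the single-element bounded-torsion criterion and no conservativity theorem. Yours avoids the generator-by-generator induction and invokes \Cref{BoundedTorsionGradedPerfd} only for $I$ itself. So no intermediate ideals $I_i$ (which need not contain $p$) appear. The price is relying on the heavier Nakayama-type result, and on (4), which is already the ungraded form of the statement.

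\textbf{Phrasing points.}
\begin{itemize}
\item Derived gradedwise completeness of $\grcomp{I}{R_{\graded}}$ is cleanest from $\grcomp{I}{R_{\graded}}\simeq\grlim_n R_{\graded}/I^n$ (degreewise surjective transitions), plus closure of complete objects under limits. Arguing via completeness of the graded pieces as $R_0$-modules does not make sense when $I$ has generators of nonzero degree.
\item The identity $\grcomp{I}{R_{\graded}}/I^n\cong R_{\graded}/I^n$ is not literally degreewise, since multiplication by $I^n$ shifts degrees. The usual proof still goes through with homogeneous lifts.
\end{itemize}

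\textbf{Your fallback.} It is incomplete as stated. Bounded $f_i^{\infty}$-torsion of each generator separately does not make the Koszul pro-system on several generators pro-isomorphic to $\{R_{\graded}/I^n\}$. That is exactly why the paper completes along $I_i$ before using $f_{i+1}$. Your main argument does not need the fallback.
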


\begin{proof}
    By \Cref{BoundedTorsionGradedPerfd}(1), we may choose homogeneous generators \(f_1, \dots, f_r\) of \(I\) such that \(f_i\) has a compatible system of \(p\)-power roots in \(R_{\graded}\).
    Set \(I_i \defeq (f_1, \dots, f_i)\) in \(R_{\graded}\).
    By \Cref{BoundedTorsionGradedPerfd}(2) and (3), the \(I_i\)-adic completion \((\comp{I_i}{R}, \grcomp{I_i}{R_{\graded}})\) is an \(I_i\)-adic \(G\)-graded perfectoid ring and \(\grcomp{I_i}{R_{\graded}}\) has bounded \(f_{i+1}^{\infty}\)-torsion
    Therefore, we have an isomorphism
    \[
    \dgrcomp{f_{i+1}}{\grcomp{I_i}{R_{\graded}}} \xrightarrow{\simeq} \grcomp{f_{i+1}}{\grcomp{I_i}{R_{\graded}}}
    \]
    by \cite{ishizuka2026Derived}*{Lemma 4.8(8)}.
    Iterating this argument yields the desired isomorphism.
\end{proof}

\begin{lemma} \label{LocalizationGradedPerfd}
Let \((R, R_{\graded})\) be a \(G\)-graded perfectoid ring with a homogeneous ideal of definition \(I\) containing \(p\), and let \(f\) be a homogeneous element of \(R_{\graded}\).
\begin{enumerate}
    \item The pair \((\comp{I}{R[1/f]}, \grcomp{I}{R_{\graded}[1/f]})\) is a \(G\)-graded perfectoid ring.
    \item The canonical morphism
    \[
      \dgrcomp{I}{R_{\graded}[1/f]} \;\longrightarrow\; \grcomp{I}{R_{\graded}[1/f]}
    \]
    in \(\mcalD_{\graded{G}}(R_{\graded})\) is an isomorphism.
    \item The canonical morphism
    \begin{equation*}
        \dcomp{I}{R_{\graded}[1/f]} \to \comp{I}{R_{\graded}[1/f]}
    \end{equation*}
    in \(\mcalD(R_{\graded})\) is an isomorphism.
\end{enumerate}
\end{lemma}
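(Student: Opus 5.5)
The plan is to deduce all three parts from the corresponding statements for \(R_{\graded}\) itself, namely \Cref{BoundedTorsionGradedPerfd} and \Cref{DerivedGrcompGradedPerfd}, by first showing that the localization is again a graded perfectoid ring.

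For (1), we use the criterion (3) of \Cref{graded-perfectoid-equiv}, applied to the localization \(S_{\graded} \defeq R_{\graded}[1/f]\).
\begin{itemize}
\item Choose \(\varpi \in R_0\) as in that criterion for \(R_{\graded}\); it is topologically nilpotent with \(p \in \varpi^p R_0\).
\item Localization at the homogeneous element \(f\) is flat and commutes with quotients. Hence the Frobenius \(S_{\graded}/\varpi \to S_{\graded}/\varpi^p\) is the localization of the isomorphism \eqref{IsomPPower}. It is therefore still an isomorphism: on \(R/\varpi\) the Frobenius sends \(f\) to \(f^p\), so inverting \(f\) on the source corresponds to inverting \(f^p\), equivalently \(f\), on the target.
\item We have \(S_{\graded}[\varpi^\infty] = R_{\graded}[\varpi^\infty][1/f]\). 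The \(p\)-th power map on it is injective, being a localization of an injective map between modules over a reduced situation.
\item It is surjective because an element \(a/f^n\) can be rewritten as \(af^{n(p-1)}/f^{np}\), with \(af^{n(p-1)}\) a \(p\)-th power in \(R_{\graded}[\varpi^\infty]\) up to the bijection \eqref{MultiplicativePPower}. This needs a little care, since \(\varpi^\infty\)-torsion in a perfectoid ring is \(p\)-root closed and reduced.
\end{itemize}
So \(S_{\graded}\) satisfies the Frobenius conditions but is not complete. We then pass to the \(I\)-adic completions. Arguing as in \Cref{BoundedTorsionGradedPerfd}(3), the pair \((\comp{I}{R[1/f]}, \grcomp{I}{R_{\graded}[1/f]})\) is a \(G\)-graded perfectoid ring, because both conditions are detected modulo \(\varpi\) (respectively \(\varpi^p\)) and by the \(\varpi^\infty\)-torsion, which is bounded. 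An alternative route is to note that \(\comp{p}{R[1/f]}\) is perfectoid, as a \(p\)-completed localization of a perfectoid ring, and that the degree \(0\) part is perfectoid by \Cref{PureSubPerfd}-type arguments.

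For (2) and (3), the key input is bounded \(I^\infty\)-torsion of \(S_{\graded}\). By \Cref{BoundedTorsionGradedPerfd}(1),(2), we can pick homogeneous generators \(f_1,\dots,f_r\) of \(I\) admitting compatible \(p\)-power roots, and \(R_{\graded}\) has bounded \(f_i^\infty\)-torsion. Localization is exact, so \(S_{\graded}[f_i^\infty] = R_{\graded}[f_i^\infty][1/f]\) is still killed by the same power of \(f_i\). Thus \(S_{\graded}\) has bounded \(f_i^\infty\)-torsion.

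We then iterate exactly as in the proof of \Cref{DerivedGrcompGradedPerfd}, with \(I_i = (f_1,\dots,f_i)\).
\begin{itemize}
\item By part (1) applied with the ideal \(I_i\), together with \Cref{BoundedTorsionGradedPerfd}(2), each \(\grcomp{I_i}{S_{\graded}}\) is graded perfectoid.
\item Hence \(\grcomp{I_i}{S_{\graded}}\) has bounded \(f_{i+1}^\infty\)-torsion.
\item \cite{ishizuka2026Derived}*{Lemma 4.8(8)} then identifies the derived and classical gradedwise \(f_{i+1}\)-completions at each step.
\end{itemize}
This gives (2). For (3), the ungraded analogue holds: the derived \(I\)-completion of a module with bounded \(f_i^\infty\)-torsion, iterated, agrees with the classical completion. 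Alternatively, \Cref{BoundedTorsionGradedPerfd}(4) applied to the graded perfectoid ring from (1) gives (3) directly.

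The main obstacle I expect is part (1). The delicate point is verifying the surjectivity of \(p\)-th powers on \(\varpi^\infty\)-torsion after localization, and that completion preserves the perfectoid conditions. If the elementwise argument fails, I would instead use \Cref{graded-perfectoid-equiv}(2): check that \(\comp{I}{R[1/f]}\) is perfectoid by ungraded theory, and that its degree \(0\) part is perfectoid by \Cref{PureSubPerfd}.
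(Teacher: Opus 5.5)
Your proposal is correct in substance, and for part (1) it takes a different route from the paper. The paper checks criterion (2) of \Cref{graded-perfectoid-equiv}. It quotes ungraded perfectoid theory for \(\comp{p}{R[1/f]}\) and for the degree-\(0\) part of \(\grcomp{p}{R_{\graded}[1/f]}\), and then passes to the \(I\)-adic completion by \Cref{BoundedTorsionGradedPerfd}(3). You instead verify criterion (3) by hand on \(S_{\graded}=R_{\graded}[1/f]\), and your localization arguments are fine. For injectivity, clear denominators and use that \(R_{\graded}[\varpi^\infty]\) is an ideal. For surjectivity, write \(a/f^n=(b/f^n)^p\) with \(b^p=af^{n(p-1)}\). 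This is more elementary and treats all graded pieces at once. The paper's route instead needs to know that \((\grcomp{p}{R_{\graded}[1/f]})_0\) is perfectoid, and this ring is not simply a completed localization of \(R_0\) when \(\deg f\neq 0\). On the other hand, the paper's route is shorter given the citations. For (2) and (3) you use the paper's mechanism: bounded torsion, the comparison of derived and classical gradedwise completions, and iteration. The difference is organizational. The paper first uses bounded \(p^\infty\)-torsion of \(R_{\graded}[1/f]\) to replace \(\dgrcomp{p}{-}\) by \(\grcomp{p}{-}\). It then applies \Cref{DerivedGrcompGradedPerfd} as a black box to \(\grcomp{p}{R_{\graded}[1/f]}\), and deduces (3) from (2).

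A few points need tightening.
\begin{enumerate}
    \item In (1), complete in two steps. For the gradedwise \(p\)-adic completion one has \(\grcomp{p}{S_{\graded}}/\varpi^k\cong S_{\graded}/\varpi^k\) for \(k=1,p\), because \(p\in\varpi^pR_0\). One also has \(\grcomp{p}{S_{\graded}}[\varpi^\infty]=S_{\graded}[\varpi^\infty]\) by boundedness. So criterion (3) transfers, and only then should you invoke \Cref{BoundedTorsionGradedPerfd}(3). Reducing \(\grcomp{I}{S_{\graded}}\) modulo \(\varpi\) directly does not give \(S_{\graded}/\varpi\).
    \item In (2), the claim that \(\grcomp{I_i}{S_{\graded}}\) is graded perfectoid fails when \(p\notin I_i=(f_1,\dots,f_i)\), because \(S_{\graded}\) is not \(p\)-complete. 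Start the iteration with \(p\) itself, using bounded \(p^\infty\)-torsion of \(S_{\graded}\). This is exactly the paper's first step, and it ensures every later ideal contains \(p\).
    \item Your shortcut for (3) via \Cref{BoundedTorsionGradedPerfd}(4) also needs the comparisons \(\dcomp{I}{R_{\graded}[1/f]}\cong\dcomp{I}{\grcomp{I}{R_{\graded}[1/f]}}\) (from (2)) and \(\comp{I}{R_{\graded}[1/f]}\cong\comp{I}{\grcomp{I}{R_{\graded}[1/f]}}\). Together these are precisely the paper's chain of isomorphisms.
    \item Your fallback through \Cref{PureSubPerfd} is circular, since that corollary assumes the ring is already graded perfectoid.
\end{enumerate}
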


\begin{proof}
(1) By \cite{ishizuka2025Graded}*{Lemma~3.12}, the pair \((\comp{p}{R_{\graded}[1/f]}, \grcomp{p}{R_{\graded}[1/f]})\) is a pro-\(G\)-graded ring.
By \cite{bhatt2019Regular}*{Example~3.8(7)}, both \(\comp{p}{R[1/f]}\) and \((\grcomp{p}{R_{\graded}[1/f]})_0\) are perfectoid; in particular, \((\comp{p}{R_{\graded}[1/f]}, \grcomp{p}{R_{\graded}[1/f]})\) is \(p\)-adic \(G\)-graded perfectoid.
Since \(I\) contains \(p\), passing to the \(I\)-adic completion yields the claim by \Cref{BoundedTorsionGradedPerfd}(3).

(2) Because \(R_{\graded} \hookrightarrow R\) is injective, the ring \(R_{\graded}\) has bounded \(p^\infty\)-torsion, and hence so does \(R_{\graded}[1/f]\).
By (1), the pair \((\comp{p}{R_{\graded}[1/f]}, \grcomp{p}{R_{\graded}[1/f]})\) is \(p\)-adic \(G\)-graded perfectoid. Therefore
\[
  \dgrcomp{I}{\dgrcomp{p}{R_{\graded}[1/f]}}
    \xrightarrow{\simeq}
  \dgrcomp{I}{\grcomp{p}{R_{\graded}[1/f]}}
    \xrightarrow{\simeq}
  \grcomp{I}{\grcomp{p}{R_{\graded}[1/f]}},
\]
where the first isomorphism follows from \cite{ishizuka2026Derived}*{Lemma 4.8(8)} and the second isomorphism follows from \Cref{DerivedGrcompGradedPerfd}.

(3): The given morphism can be identified with the composition of the following isomorphisms:
\begin{equation*}
    \dcomp{I}{R_{\graded}[1/f]} \xrightarrow{\cong} \dcomp{I}{\dgrcomp{I}{R_{\graded}[1/f]}} \xrightarrow{\cong} \dcomp{I}{\grcomp{I}{R_{\graded}[1/f]}} \xrightarrow{\cong} \comp{I}{\grcomp{I}{R_{\graded}[1/f]}} \xleftarrow{\cong} \comp{I}{R_{\graded}[1/f]},
\end{equation*}
where the first and last isomorphisms come from \cite{ishizuka2026Derived}*{Definition 4.3 and Lemma 4.8(9)}, the second one follows from (2), and the third one follows from \Cref{DerivedGrcompGradedPerfd} since the pair \((\comp{p}{\grcomp{I}{R_{\graded}[1/f]}}, \grcomp{I}{R_{\graded}[1/f]})\) is a \(p\)-adic \(G\)-graded perfectoid ring by (1).
\end{proof}

Finally, we show that a graded ring whose graded pieces are invertible modules over a perfectoid ring produces a graded perfectoid ring.

\begin{lemma} \label{EnlargeGradedPerfdNew}
    Let \(R = \bigoplus_{g \in G} R_g\) be a \(p\)-adically gradedwise complete \(G\)-graded ring such that \(R_0\) is a perfectoid ring and \(G = G[1/p]\).
    Assume that, for every \(g \in \Supp(R)\), the \(R_0\)-module \(R_g\) is invertible, and that the multiplication map \(R_g \otimes_{R_0} R_h \to R_{g+h}\) is an isomorphism for any \(g, h \in \Supp(R)\).
    Then there exists a split \(G\)-graded ring homomorphism
    \begin{equation*}
        R \hookrightarrow R' = \bigoplus_{g \in G} R'_g
    \end{equation*}
    such that \(R'\) is a \(G\)-graded perfectoid ring and the inclusion \(R_g \hookrightarrow R'_g\) is the identity morphism for any \(g \in \Supp(R)\).
\end{lemma}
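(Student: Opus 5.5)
The plan is to enlarge \(R\) by adjoining compatible \(p\)-power roots of the invertible modules \(R_g\), and to get those roots from the tilt of \(R_0\). Write \(S \defeq \Supp(R)\). Since each \(R_g\) (\(g \in S\)) is invertible and \(R_g \otimes_{R_0} R_h \xrightarrow{\cong} R_{g+h}\), the set \(S\) is a submonoid of \(G\). Put \(S' \defeq \{g \in G \mid p^n g \in S \text{ for some } n \geq 0\}\). This is a submonoid with \(S' = S'[1/p]\) and \(S \subseteq S'\), and it lies in \(G\) because \(G = G[1/p]\). We will take \(R'_g = 0\) for \(g \notin S'\), so only the degrees in \(S'\) need to be built.

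\textbf{Step 1: \(p\)-th roots of invertible modules.} Because \(R_0\) is perfectoid, the pairs \((R_0, pR_0)\) and \((R_0^\flat, p^\flat R_0^\flat)\) are henselian and \(R_0/p \cong R_0^\flat/p^\flat\). Hence reduction gives isomorphisms
\[
\mathrm{Pic}(R_0) \cong \mathrm{Pic}(R_0/p) \cong \mathrm{Pic}(R_0^\flat),
\]
and these can be made functorial on the groupoids of invertible modules, not merely on isomorphism classes, by using \(\theta\) and base change along \(W(R_0^\flat) \to R_0\).

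On the perfect ring \(R_0^\flat\), Frobenius pullback \(\varphi^*\) is an autoequivalence of invertible modules with \(\varphi^* L \cong L^{\otimes p}\) canonically. Therefore \((\varphi^{-n})^*\) gives a canonical \(p^n\)-th root. For \(g \in S'\) with \(p^n g \in S\), define \(R'_g\) as the invertible \(R_0\)-module corresponding to \((\varphi^{-n})^*(R_{p^n g})^\flat\). This is independent of \(n\) up to canonical isomorphism, because \(R_{p^{n+1}g} \cong R_{p^n g}^{\otimes p}\) via the multiplication isomorphisms. For \(g \in S\) one recovers \(R'_g = R_g\).

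\textbf{Step 2: ring structure and the splitting.} The multiplication maps \(R'_g \otimes R'_h \to R'_{g+h}\) come from the monoidality of \((\varphi^{-n})^*\) and of the tilting equivalence, together with the given maps for \(R\). Associativity and commutativity follow from the coherence of these symmetric monoidal equivalences. Then \(R'\) is the gradedwise \(p\)-complete graded ring \(\bigoplus_{g \in S'} R'_g\), and it contains \(R\) degreewise as the identity on \(S\).

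The splitting is the \(R\)-linear projection onto the degrees in \(S\). This is at least a split injection of graded \(R\)-modules, which is the sense I expect is needed; it is not obviously a ring retraction, since the product of two degrees outside \(S\) may land in \(S\).

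\textbf{Step 3: \(R'\) is graded perfectoid.} I would use \Cref{graded-perfectoid-equiv}(3) with \(\varpi \in R_0\) satisfying \(p \in \varpi^p R_0\). Both conditions there, the Frobenius isomorphism \eqref{IsomPPower} and bijectivity of \eqref{MultiplicativePPower}, are degreewise statements about \(R_0\)-modules. So they can be checked Zariski locally on \(\Spec R_0\) after inverting suitable \(f \in R_0\), using \Cref{LocalizationGradedPerfd}. Choose \(f\) so that the finitely many modules involved in a given check become free. Locally, \(R'\) is then a \(p\)-completed twisted monoid algebra \(R_0[T^{S'}]\) with \(S' = S'[1/p]\). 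Such an algebra is perfectoid by the standard argument that \(T^{g/p}\) is a \(p\)-th root of \(T^g\) and the ring is free over \(R_0\).

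\textbf{Main obstacle.} The key point is Step 1: making the \(p\)-th-root construction functorial and compatible with the multiplication isomorphisms, so that \(R'\) is genuinely associative. The Zariski-local verification in Step 3 also needs care, because \(S'\) is infinite. I would handle this by observing that each instance of condition (3) involves only finitely many degrees.
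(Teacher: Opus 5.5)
Your overall route is the paper's: produce $p$-power roots of the $R_g$ from unique $p$-divisibility of $\mathrm{Pic}(R_0)$, assemble them into a graded ring, and check \Cref{graded-perfectoid-equiv}(3) Zariski-locally on twisted monoid algebras. The gap is in the mechanism for the step you yourself call the crux.

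The bijection $\mathrm{Pic}(R_0)\cong\mathrm{Pic}(R_0/p)\cong\mathrm{Pic}(R_0^\flat)$ holds on isomorphism classes only. It does \emph{not} upgrade to an equivalence of symmetric monoidal groupoids. Every invertible module over $R_0$ has automorphism group $R_0^\times$, and every invertible module over $R_0^\flat$ has automorphism group $(R_0^\flat)^\times$. For $R_0=\mathcal{O}_C$ the former contains $\zeta_p$, while the latter is the unit group of a perfect (hence reduced) $\mathbb{F}_p$-algebra and so has no nontrivial $p$-torsion.

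Concretely, the natural functor from invertible $R_0^\flat$-modules to invertible $R_0$-modules (Teichm\"uller lift, then base change along $\theta$; equivalently, push $\mathbb{G}_m$-torsors along $x\mapsto x^\sharp$) acts on automorphisms by $\sharp$. This map kills $\epsilon=(1,\zeta_p,\zeta_{p^2},\dots)$. Likewise, reduction mod $p$ kills $1+pR_0$ on automorphisms. So $(R_{p^ng})^\flat$ exists only up to non-unique isomorphism, and the isomorphisms $R_g\otimes R_h\cong R_{g+h}$ cannot be transported to $R_0^\flat$ by ``coherence''. Consequently none of the following follows: ``$R'_g=R_g$ for $g\in S$ with the original multiplication'', independence of $n$, or associativity of $R'$. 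Already when all $R_g$ are free, the multiplication of $R$ is a symmetric $2$-cocycle on $S$ with values in $R_0^\times$, and your construction silently assumes it lifts through $\sharp$.

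The paper at this point only says the multiplication extends ``because of the uniqueness'' of the roots, which also controls only isomorphism classes. One way to close the gap:
\begin{enumerate}
\item The data of $R$ is a symmetric monoidal functor from $S$ to the Picard groupoid of $R_0$. It extends essentially uniquely to $A=S^{\mathrm{gp}}\subseteq G$.
\item The symmetry of $L\otimes L$ is the identity for invertible modules, so that Picard groupoid splits. Hence such functors are classified, up to monoidal isomorphism, by a homomorphism $A\to\mathrm{Pic}(R_0)$ together with a class in $\operatorname{Ext}^1_{\mathbb{Z}}(A,R_0^\times)$.
\item Let $B=A[1/p]$. Restriction $\mathrm{Hom}(B,\mathrm{Pic}(R_0))\to\mathrm{Hom}(A,\mathrm{Pic}(R_0))$ is bijective by unique $p$-divisibility, which your tilting argument does prove on isomorphism classes. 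Restriction $\operatorname{Ext}^1_{\mathbb{Z}}(B,R_0^\times)\to\operatorname{Ext}^1_{\mathbb{Z}}(A,R_0^\times)$ is surjective because $\operatorname{Ext}^2_{\mathbb{Z}}=0$.
\item Hence an extension to $B$ exists. Conjugating by a monoidal isomorphism makes it restrict to $R$ on the nose. Then restrict to $S'$ and run your Step 3.
\end{enumerate}
If you want to keep your Frobenius construction, which is genuinely canonical over $R_0^\flat$, you would instead have to prove that the structure of $R$ lifts along the functor to $R_0$. That is a separate obstruction argument.

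Separately, the degree projection onto $S$ is $R$-linear only if $(S'\setminus S)+S\subseteq S'\setminus S$. This holds, e.g., when $S$ is a group, as for $R[1/f]$ in the application. For $R=R_0[T^2,T^3]$, however, $1\in S'\setminus S$ while $1+2\in S$. One checks that there no $R$-linear retraction $R'\to R$ exists at all. So ``split'' can only hold in that restricted sense or degreewise.
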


\begin{proof}
    Since the Picard group of a perfectoid ring is uniquely \(p\)-divisible (\cite{bhatt2022Prismsa}*{Corollary 9.7} and \cite{dorfsman-hopkins2023Untilting}*{Corollary 4.2}), we can show that there exists a unique compatible system \(\{R_g^{1/p^n}\}_{n \geq 0}\) of \(p\)-power roots of \(R_g\) for each \(g \in \Supp(R)\), i.e., invertible \(R_0\)-modules \(R_g^{1/p^{n+1}}\) such that \((R_g^{1/p^{n+1}})^{\otimes p} \cong R_g^{1/p^n}\) and \(R_g^{1/p^0} = R_g\).
    If \(g \notin \Supp(R)\), we set \(R_g^{1/p^n} = 0\) for all \(n \geq 0\).
    Because of the uniqueness, the multiplication map \(R_g \otimes_{R_0} R_h \xrightarrow{\cong} R_{g+h}\) extends to a multiplication map
    \begin{equation*}
        R_g^{1/p^k} \otimes_{R_0} R_h^{1/p^k} \xrightarrow{\cong} R_{g+h}^{1/p^k}
    \end{equation*}
    for any \(g, h \in \Supp(R)\) and \(k \geq 0\).
    Also, the same reason gives the isomorphism
    \begin{equation*}
        R_g \xrightarrow{\cong} R_{p^kg}^{1/p^k}
    \end{equation*}
    for any \(g \in \Supp(R)\) and \(k \geq 0\).
    This defines a \(G\)-graded ring \(R^{(k)}\) by
    \begin{equation*}
        R^{(k)} \defeq \bigoplus_{g \in G} R_g^{1/p^k}
    \end{equation*}
    for each \(k \geq 0\) and the split graded ring homomorphism
    \begin{equation*}
        R^{(k)} \hookrightarrow R^{(k+1)}; \quad R_g^{1/p^k} \xrightarrow{\cong} R_{pg}^{1/p^{k+1}}.
    \end{equation*}
    Taking the graded colimit over \(k \geq 0\), we obtain a \(G\)-graded ring
    \begin{equation*}
        R' \defeq \grcolim_{k \geq 0} R^{(k)} = \bigoplus_{g \in G} R'_g,
    \end{equation*}
    where \(R'_{g/p^k} = R_g^{1/p^k}\) for every \(g \in \Supp(R)\) and \(k \geq 0\), so \(\Supp(R') = \Supp(R)[1/p]\) holds.
    Since \(R^{(0)}\) is the same as \(R\) itself, we have a split graded ring homomorphism \(R \hookrightarrow R'\) such that the inclusion \(R_g \hookrightarrow R'_g\) is the identity morphism for any \(g \in \Supp(R)\).
    Any \(R'_g\) is an invertible \(R_0\)-module and thus this \(R'\) is \(p\)-adically gradedwise complete.

    To show that \(R'\) is a \(G\)-graded perfectoid ring, it suffices to check the bijections \eqref{IsomPPower} and \eqref{MultiplicativePPower} in \Cref{graded-perfectoid-equiv}.
    Fix an element \(\varpi \in R_0\) such that \(p \in \varpi^p R_0\). We need to show that the \(p\)-power maps
    \begin{equation*}
        R'_{g/p}/\varpi R'_{g/p} \xrightarrow{a \mapsto a^p} R'_{g}/\varpi^p R'_{g} \quad \text{and} \quad R'_{g/p}[\varpi^{\infty}] \xrightarrow{a \mapsto a^p} R'_{g}[\varpi^{\infty}]
    \end{equation*}
    are bijective for any \(g \in G\).
    For each \(g \in \Supp(R)[1/p]\), the module \(R'_g\) is an invertible \(R_0\)-module, so Zariski locally, the above maps are the same as the \(p\)-th power maps along \(R_0\), which are bijective since \(R_0\) and its localization are perfectoid. This proves that \(R'\) is a \(G\)-graded perfectoid ring.
\end{proof}





\section{Absolute graded perfectoidizations} \label{SectionTopGradedPerfd}

\subsection{Absolute perfectoidization of adic rings}

\begin{definition} \label{DefTopologicalAbsolutePerfectoidization}
    Let \(R\) be an adic ring with an ideal of definition \(I\) containing \(p\).
    We define the \emph{topological absolute perfectoidization \(R_{\tperfd}\) of \(R\)} to be the limit in \(\CAlg_R\)
    \begin{equation*}
        R_{\tperfd} \defeq \lim_{P \in \Perfd_R^{\wedge}} P
    \end{equation*}
    where the limit runs over the category \(\Perfd_R^{\wedge}\) of complete adic perfectoid \(R\)-algebras \(P\), whose existence follows from the existence of the usual absolute perfectoidization \(R_{\perfd}\) and \Cref{CompletionOfAbsolutePerfectoidization} below.
    This construction is functorial on the category of adic topological rings equipped with an ideal of definition containing \(p\).
\end{definition}

\begin{remark} \label{RemarkCompletePerfd}
    Let \(R\) be an adic ring with an ideal of definition \(I\) containing \(p\) and let \(R \to P\) be a continuous morphism to a complete adic perfectoid ring \(P\) (not necessarily \(p\)-adic).
    Then it factors uniquely as
    \[
        R \to \comp{I}{R} \to P' \to P,
    \]
    where \(P'\) denotes the ring \(P\) equipped with the \(I\)-adic topology; in particular, \(P'\) is a complete \(I\)-adic perfectoid ring.
    This shows that \(R_{\tperfd}\) can equally be computed as the limit over the category \(\Perfd^{\wedge I}(R)\) of complete \(I\)-adic perfectoid \(R\)-algebras \(P\), and in particular that
    \[
        R_{\tperfd} \cong (\comp{I}{R})_{\tperfd}.
    \]
    
    Thus, it suffices to consider \(I\)-adic perfectoid rings rather than general adic perfectoid rings.
    In particular, if \(R\) is a \(p\)-adic topological ring, then the topological absolute perfectoidization \(R_{\tperfd}\) is the same as the usual absolute perfectoidization \(R_{\perfd}\).
\end{remark}

For later use, we redefine the topological absolute perfectoidization for any derived \(I\)-complete animated rings.

    


\begin{proposition} \label{CompletionOfAbsolutePerfectoidization}
    Let \(R\) be an adic topological ring with topologically nilpotent \(p\) and let \(I\) be an ideal of definition of \(R\) which contains \(p\). Then the topological absolute perfectoidization \(R_{\tperfd}\) is canonically identified with the derived \(I\)-completion of the absolute perfectoidization \(R_{\perfd}\) of \(R\) in the sense of \cite{bhatt2024Perfectoid}*{Definition 3.10}.
\end{proposition}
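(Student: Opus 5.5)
We will show that the derived \(I\)-completion \(\dcomp{I}{R_{\perfd}}\) satisfies the universal property defining \(R_{\tperfd}\): it is the limit in \(\CAlg_R\) of all complete adic perfectoid \(R\)-algebras \(P\). By \Cref{RemarkCompletePerfd}, it suffices to compute the limit over complete \(I\)-adic perfectoid \(R\)-algebras. Recall that in \cite{bhatt2024Perfectoid}*{Definition 3.10}, \(R_{\perfd}\) is defined via \(R \to \comp{p}{R}\) (or derived \(p\)-completion). It agrees with the limit of all (\(p\)-complete) perfectoid \(R\)-algebras \(Q\): \(R_{\perfd} \simeq \lim_{Q \in \Perfd_R} Q\), computed for instance via the arc-descent or \v{C}ech description of \cite{bhatt2022Prismsa}*{Section 8}.

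The key steps are as follows. First, derived \(I\)-completion \(\dcomp{I}{-}\) is a localization of \(\mcalD(R)\), so its right adjoint (the inclusion of complete objects) preserves limits. In addition, \(\dcomp{I}{-}\) commutes with limits of derived \(I\)-complete objects in the sense that a limit of derived \(I\)-complete objects is derived \(I\)-complete. Second, for every \(p\)-complete perfectoid \(R\)-algebra \(Q\), \Cref{BoundedTorsionPerfd}(3) shows that \(\dcomp{I}{Q} = \comp{I}{Q}\) is a complete \(I\)-adic perfectoid \(R\)-algebra. Conversely, every complete \(I\)-adic perfectoid \(P\) is \(p\)-complete, since \(p \in I\), and hence lies in \(\Perfd_R\).

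This gives a functor \(\Perfd_R \to \Perfd^{\wedge I}(R)\), \(Q \mapsto \comp{I}{Q}\). It is left adjoint to the inclusion \(\Perfd^{\wedge I}(R) \hookrightarrow \Perfd_R\), because maps \(Q \to P\) with \(P\) classically \(I\)-complete factor uniquely through \(\comp{I}{Q}\). In particular, the inclusion is cofinal for the relevant limit: every \(Q\) maps to \(\comp{I}{Q}\) initially among the \(P\). We then aim for the chain of equivalences
\begin{equation*}
    \dcomp{I}{R_{\perfd}} \simeq \dcomp{I}{\lim_{Q \in \Perfd_R} Q} \to \lim_{Q \in \Perfd_R} \dcomp{I}{Q} \simeq \lim_{Q \in \Perfd_R} \comp{I}{Q} \simeq \lim_{P \in \Perfd^{\wedge I}(R)} P.
\end{equation*}
The last equivalence is the cofinality statement: since \(\comp{I}{(-)}\) is left adjoint to the inclusion, restricting a limit along the right adjoint changes nothing. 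Equivalently, the inclusion \(\Perfd^{\wedge I}(R) \to \Perfd_R\) composed with \(\comp{I}{(-)}\) is the identity, and the unit maps give the comparison.

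The main obstacle is the middle arrow: derived \(I\)-completion need not commute with an arbitrary large limit. The fix is to observe that \(\lim_Q \comp{I}{Q}\) is a limit of derived \(I\)-complete objects and is therefore derived \(I\)-complete. It then suffices to check that \(R_{\perfd} \to \lim_Q \comp{I}{Q}\) becomes an equivalence after applying \(-\otimes^L_{\setZ[x_1,\dots,x_r]} \setZ\) (the Koszul quotient by generators \(f_1, \dots, f_r\) of \(I\)), using derived Nakayama. Since the Koszul complex is a finite complex, it commutes with limits. So we reduce to showing \(R_{\perfd}\otimes^L \mathrm{Kos}(f) \simeq \lim_Q Q\otimes^L\mathrm{Kos}(f)\), together with \(Q \otimes^L \mathrm{Kos} \simeq \comp{I}{Q} \otimes^L \mathrm{Kos}\), which holds by \Cref{BoundedTorsionPerfd}(3). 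Both facts follow from the finiteness of the Koszul complex.

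There is a remaining set-theoretic subtlety: the limit over \(\Perfd_R\) is large. We handle it either through the arc-descent presentation of \(R_{\perfd}\) or through the existence statement already assumed in \Cref{DefTopologicalAbsolutePerfectoidization}. Functoriality in \(R\) is immediate from the construction.
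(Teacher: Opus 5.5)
Your proposal is correct and follows the paper's proof: commute $\dcomp{I}{-}$ past the limit, use \Cref{BoundedTorsionPerfd}(3) to identify $\dcomp{I}{Q}\cong\comp{I}{Q}$, and compare limits over all perfectoid $R$-algebras and over complete $I$-adic ones via the completion adjunction (the clean justification is that the left adjoint $Q\mapsto\comp{I}{Q}$ is limit-cofinal). Your Koszul/derived Nakayama argument is valid but unnecessary: $\dcomp{I}{-}\simeq\lim_n\bigl(-\otimes^L\mathrm{Kos}(\underline{f^n})\bigr)$ commutes with all existing limits, which is exactly what the paper invokes in step $(\star_1)$.
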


\begin{proof}
    Let \(\mcalC_1\) (resp., \(\mcalC_2\)) be the category of perfectoid \(R\)-algebras (resp., complete adic perfectoid \(R\)-algebras).
    Then we have isomorphisms in \(\CAlg_R\)
    \begin{equation*}
        \dcomp{I}{R_{\perfd}} = \dcomp{I}{\lim_{P \in \mcalC_1} P} \overset{(\star_1)}{\simeq} \lim_{P \in \mcalC_1} \dcomp{I}{P} \overset{(\star_2)}{=} \lim_{P' \in \mcalC_2} P' = R_{\tperfd}
    \end{equation*}
    as follows:
    On \((\star_1)\), we know that the derived \(I\)-completion functor on \(\CAlg_R\) commutes with limits.
    
    Using \Cref{BoundedTorsionPerfd}, we know that the derived \(I\)-completion of any perfectoid \(R\)-algebra \(P\) coincides with the \(I\)-adic completion \(\comp{I}{P}\) and this is a complete \(I\)-adic perfectoid \(R\)-algebra.
    The isomorphism \((\star_2)\) follows from the fact that the category \(\mcalC_2\) identifies with the full subcategory of \(I\)-adically completed objects of \(\mcalC_1\).
    This finishes the proof.
\end{proof}

\begin{corollary} \label{DiscretePropertyTopAbsPerfd}
    Let \(R\) be an adic topological ring with topologically nilpotent \(p\) and let \(I\) be an ideal of definition of \(R\) containing \(p\).
    Assume that there exists a morphism from a perfectoid ring to \(R\).
    If \(R_{\perfd}\) is concentrated in degree \(0\), then \(R_{\tperfd}\) is an initial object in the category of \(I\)-adic complete perfectoid \(R\)-algebras.
\end{corollary}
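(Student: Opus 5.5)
We reduce to the known discrete case of Bhatt--Scholze perfectoidization and then apply \Cref{CompletionOfAbsolutePerfectoidization} and \Cref{BoundedTorsionPerfd}.

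\emph{Step 1: \(R_{\perfd}\) is perfectoid and initial.} Assume \(R\) receives a map from a perfectoid ring and \(R_{\perfd}\) is discrete. We first pass to the \(p\)-adic completion of \(R\); this is allowed since \(p \in I\) and perfectoid algebras are \(p\)-complete. The theory of \cite{bhatt2022Prismsa} and \cite{bhatt2024Perfectoid} then applies: over a perfectoid base \(A\), \(R_{\perfd}\) is computed by relative perfectoidization \((R)_{\perfd} = (A\to R)_{\perfd}\), which is a perfectoid ring when discrete. Since \(R_{\perfd}\) is the limit of all perfectoid \(R\)-algebras and is itself perfectoid, the structure map \(R_{\perfd} \to P\) shows that \(R_{\perfd}\) is the initial perfectoid \(R\)-algebra. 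Concretely, \(\Hom(R_{\perfd},P)\) is contractible for each perfectoid \(P\), by the universal property of perfectoidization (\cite{bhatt2022Prismsa}*{Proposition 8.5}).

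\emph{Step 2: \(R_{\tperfd}\) is a complete \(I\)-adic perfectoid ring.} By \Cref{CompletionOfAbsolutePerfectoidization}, \(R_{\tperfd} \simeq \dcomp{I}{R_{\perfd}}\). Since \(R_{\perfd}\) is a discrete perfectoid ring and \(I\) is finitely generated and contains \(p\), \Cref{BoundedTorsionPerfd}(3) shows that \(\dcomp{I}{R_{\perfd}} \simeq \comp{I}{R_{\perfd}}\). This is discrete and is a perfectoid ring, complete for the \(I\)-adic topology. The map \(R \to R_{\tperfd}\) is continuous, so \(R_{\tperfd}\) is an object of \(\Perfd^{\wedge I}(R)\).

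\emph{Step 3: initiality.} Let \(P\) be a complete \(I\)-adic perfectoid \(R\)-algebra. By Step 1 there is a unique \(R\)-algebra map \(R_{\perfd} \to P\). Since \(P\) is \(I\)-adically complete, and hence derived \(I\)-complete by \Cref{BoundedTorsionPerfd}(3), this map factors uniquely through \(\dcomp{I}{R_{\perfd}} = R_{\tperfd}\), by the universal property of derived completion on \(\CAlg_R\). Uniqueness in the category of discrete rings follows because \(R_{\perfd} \to \comp{I}{R_{\perfd}}\) has dense image and \(P\) is separated. By \Cref{RemarkCompletePerfd}, it does not matter whether we take the initial object among all complete adic perfectoid \(R\)-algebras or among the \(I\)-adic ones.

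The main obstacle is Step 1: justifying that discreteness of \(R_{\perfd}\), together with the existence of a perfectoid base, forces \(R_{\perfd}\) to be a perfectoid ring whose maps to perfectoid algebras are unique. We would handle this by invoking the comparison between absolute and relative perfectoidization over a perfectoid base in \cite{bhatt2022Prismsa}*{Section 8}. The remaining steps are formal consequences of completion.
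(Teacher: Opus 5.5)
Your proposal is correct and follows essentially the same route as the paper. The paper cites \cite{bhatt2022Prismsa}*{Corollary 8.14} for your Step~1, namely that a discrete \(R_{\perfd}\) is the initial perfectoid \(R\)-algebra, and deduces that \(\comp{I}{R_{\perfd}}\) is initial among \(I\)-adically complete perfectoid \(R\)-algebras; the only cosmetic difference is that it identifies this with \(R_{\tperfd}\) directly from the defining limit (a limit over a category with an initial object), whereas you use \Cref{CompletionOfAbsolutePerfectoidization} and \Cref{BoundedTorsionPerfd}(3).
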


\begin{proof}
    In this case, \(R_{\perfd}\) becomes the initial perfectoid \(R\)-algebra (\cite{bhatt2022Prismsa}*{Corollary 8.14}) and then the \(I\)-adic completion \(\comp{I}{R_{\perfd}}\) is the initial \(I\)-adic complete perfectoid \(R\)-algebra, which must be isomorphic to \(R_{\tperfd}\) by the definition in \Cref{DefTopologicalAbsolutePerfectoidization}.
    Then \(R_{\tperfd}\) is concentrated in degree \(0\) and \(I\)-adically complete.
\end{proof}

\begin{corollary} \label{CommutativeTensorTopPerfd}
    Let \(R\) be an adic perfectoid ring with topologically nilpotent \(p\) and let \(I\) be an ideal of definition of \(R\) containing \(p\).
    Then the functor
    \begin{equation*}
        \CRing^{\wedge I}_R \to \CAlg_R^{\comp{I}}; \quad S \mapsto S_{\tperfd}
    \end{equation*}
    preserves small colimits, where \(\CRing^{\wedge I}_R\) is the category of \(I\)-adically complete \(R\)-algebras and \(\CAlg_R^{\comp{I}}\) is the \(\infty\)-category of derived \(I\)-complete \(\setE_{\infty}\)-\(R\)-algebras.
\end{corollary}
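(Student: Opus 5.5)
The argument reduces, via \Cref{CompletionOfAbsolutePerfectoidization}, to the known colimit preservation of the usual absolute perfectoidization. For \(S \in \CRing^{\wedge I}_R\) we have \(S_{\tperfd} \simeq \dcomp{I}{S_{\perfd}}\). The functor \(S \mapsto S_{\tperfd}\) therefore factors as
\[
\CRing^{\wedge I}_R \hookrightarrow \CRing_R \xrightarrow{(-)_{\perfd}} \CAlg_R \xrightarrow{\dcomp{I}{-}} \CAlg_R^{\comp{I}}.
\]
We also need the \(p\)-completion implicit in \((-)_{\perfd}\); since \(p \in I\), it is absorbed by the \(I\)-completion. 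Colimits in \(\CRing^{\wedge I}_R\) are computed by taking the colimit in \(\CRing_R\) and then \(I\)-adically completing. Colimits in \(\CAlg_R^{\comp{I}}\) are computed by taking the colimit in \(\CAlg_R\) and then applying \(\dcomp{I}{-}\), because derived completion is a left adjoint.

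The key input is that, over a perfectoid base \(R\), absolute perfectoidization agrees with relative perfectoidization. Hence, by \cite{bhatt2022Prismsa} (Proposition 8.13 / Theorem 8.?? on colimit compatibility), \(S \mapsto S_{\perfd}\) commutes with \(p\)-completed colimits of \(R\)-algebras. In particular:
\begin{itemize}
\item pushouts go to \(p\)-completed derived tensor products, \((S \otimes_R T)_{\perfd} \simeq S_{\perfd} \widehat{\otimes}^L_{R} T_{\perfd}\);
\item filtered colimits go to \(p\)-completed filtered colimits.
\end{itemize}
For the tensor statement we use that \(R_{\perfd} = \comp{p}{R}\) when \(R\) is perfectoid. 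For the filtered statement we use that \((-)_{\perfd}\) is computed via prismatic cohomology, which commutes with filtered colimits after completion. The initial object \(R\) goes to \(\dcomp{I}{R} = \comp{I}{R}\) by \Cref{BoundedTorsionPerfd}(3), which is initial in \(\CAlg_R^{\comp{I}}\) once \(R\) is taken \(I\)-complete; the incomplete case follows from \Cref{RemarkCompletePerfd}. Since every small colimit is built from the initial object, pushouts and filtered colimits, these three cases suffice.

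To conclude, let \(S = \comp{I}{(\colim S_j)}\) be a colimit in \(\CRing^{\wedge I}_R\). Then
\[
S_{\tperfd} \simeq (\colim S_j)_{\tperfd} \simeq \dcomp{I}{(\colim S_j)_{\perfd}} \simeq \dcomp{I}{\colim (S_j)_{\perfd}} \simeq \dcomp{I}{\colim \dcomp{I}{(S_j)_{\perfd}}}.
\]
The first isomorphism is \Cref{RemarkCompletePerfd}, which shows that \(\tperfd\) is insensitive to \(I\)-adic completion. The last term is exactly the colimit of the \((S_j)_{\tperfd}\) in \(\CAlg_R^{\comp{I}}\).

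The main obstacle is the second step: citing or rederiving the correct colimit compatibility of \((-)_{\perfd}\) over a perfectoid base for arbitrary small colimits, not just pushouts. If no suitable reference is available, I would prove the three cases directly. Binary coproducts follow from the tensor-product formula, which holds because derived tensor products of perfectoids over perfectoids are perfectoid after \(p\)-completion. Filtered colimits follow from the description \(S_{\perfd} = \colim_\phi \Prism_{S/A_{\inf}(R)}\) and the commutation of relative prismatic cohomology with filtered colimits modulo \(p\).
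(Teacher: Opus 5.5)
Your overall reduction matches the paper's. You identify \(S_{\tperfd}\) with \(\dcomp{I}{S_{\perfd}}\) via \Cref{RemarkCompletePerfd} and \Cref{CompletionOfAbsolutePerfectoidization}, invoke colimit preservation of \((-)_{\perfd}\) over the perfectoid base, and re-complete; your final chain of isomorphisms is essentially the paper's.

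\textbf{Main gap: discrete versus animated colimits.} The step \(\dcomp{I}{(\colim S_j)_{\perfd}} \simeq \dcomp{I}{\colim (S_j)_{\perfd}}\) is not justified as written. Your \(\colim S_j\) is a colimit in the discrete category \(\CRing_R\). The colimit compatibility you can cite from \cite{bhatt2022Prismsa} comes from derived prismatic cohomology, which is a left Kan extension, so it concerns colimits of derived \(p\)-complete \emph{animated} \(R\)-algebras. These colimits differ: a pushout in \(\CRing_R\) is the underived \(S \otimes_U T\), not \(S \otimes^L_U T\). So your formula \((S \otimes_R T)_{\perfd} \simeq S_{\perfd} \widehat{\otimes}^L_R T_{\perfd}\), with an underived tensor product on the left, does not follow from the cited result alone.

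The missing input, which the paper makes explicit, has two parts:
\begin{enumerate}
\item \(\pi_0\) is left adjoint to the inclusion of discrete rings into animated rings. Hence the discrete colimit, also after \(p\)-completion, is \(\pi_0\) of the animated colimit.
\item \((-)_{\perfd}\) is invariant under replacing an animated ring by its \(\pi_0\), and under derived \(p\)-completion.
\end{enumerate}
With these, \((\colim S_j)_{\perfd} \simeq (\colim^{\mathrm{an}} S_j)_{\perfd} \simeq \dcomp{p}{\colim (S_j)_{\perfd}}\), and the rest of your argument goes through.

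\textbf{Second gap: the case analysis.} Initial object, pushouts and filtered colimits do generate all small colimits. However, under ``pushouts'' you only check the coproduct over \(R\). Pushouts over a non-initial \(U\), equivalently coequalizers, are never treated, and your fallback list (binary coproducts, filtered colimits, initial object) does not generate them.

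In addition, your fallback justification of the coproduct formula only applies when \(S_{\perfd}\) and \(T_{\perfd}\) are discrete perfectoid, e.g.\ for semiperfectoid \(S, T\). That justification is that completed derived tensor products of perfectoid rings over a perfectoid ring are perfectoid. In general \(S_{\perfd}\) is a non-discrete limit, and the tensor product need not commute with it.

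So rather than verifying case by case, you should cite colimit preservation of perfectoidization for all small colimits of derived \(p\)-complete animated \(R\)-algebras, as the paper does via \cite{bhatt2022Prismsa}*{Construction 7.6}. Combine this with the \(\pi_0\)-invariance above.
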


\begin{proof}
    In this proof, to distinguish colimits, we write the colimits in \(\CRing_R\), \(\CAlg_R^{an}\), and \(\CAlg_R\) as \(\colim\), \(\colim^{an}\), and \(\colim^{\setE_{\infty}}\) respectively.

    The inclusion functor \(\CRing_R \to \CAlg^{an}_R\) admits the truncation functor
    \begin{equation*}
        \tau_{\leq 0} = \pi_0 \colon \CAlg^{an}_R \to \CRing_R
    \end{equation*}
    as a left adjoint; in particular, \(\pi_0\) commutes with small colimits.
    In particular, the colimit of any small diagram \(\{S_k\}_{k \in K}\) in the category \(\CRing_R^{\comp{p}}\) of derived \(p\)-complete \(R\)-algebras is given by
    \begin{equation} \label{ColimDpCompRings}
        \pi_0\bigl(\dcomp{p}{\colim_{k \in K} S_k}\bigr) \cong \pi_0\bigl(\dcomp{p}{\pi_0(\colim_{k \in K}^{an} S_k)}\bigr).
    \end{equation}
    
    Considering the composition of functors
    \begin{equation} \label{TopPerfdFuncP}
        \CRing_R^{\wedge p} \hookrightarrow \CRing^{\comp{p}} \to \CAlg^{an, \comp{p}}_R \xrightarrow{S \mapsto S_{\perfd}} \CAlg_R^{\comp{p}},
    \end{equation}
    where \(\CAlg^{an,\comp{p}}_R\) is the \(\infty\)-category of derived \(p\)-complete animated \(R\)-algebras.
    By \cite{bhatt2022Prismsa}*{Construction 7.6}, the absolute perfectoidization functor \(\CAlg^{an, \comp{p}}_R \to \CAlg_R^{\comp{p}}, S \mapsto S_{\perfd}\) commutes with small colimits.
    Take any small diagram \(\{S_k\}_{k \in K}\) of \(\CRing_R^{\wedge p}\).
    Applying the functor above to this colimit, we obtain
    \begin{align*}
        (\comp{p}{\colim_{k \in K} S_k})_{\perfd} & \xleftarrow{\cong} (\dcomp{p}{\colim_{k \in K} S_k})_{\perfd} \xleftarrow{\cong} (\pi_0(\dcomp{p}{\pi_0(\colim^{an}_{k \in K} S_k})))_{\perfd} \\
        & \xleftarrow{\cong} (\colim^{an}_{k \in K} S_k)_{\perfd} \xrightarrow{\cong} (\dcomp{p}{\colim^{an}_{k \in K} S_k})_{\perfd} \xleftarrow{\cong} \dcomp{p}{\colim^{\setE_{\infty}}_{k \in K} (S_k)_{\perfd}}
    \end{align*}
    in \(\CAlg_R^{\comp{p}}\) by the isomorphism \eqref{ColimDpCompRings} and the invariance (resp., commutativity) of \((-)_{\perfd}\) under (derived) \(p\)-completion and taking \(\pi_0\) (resp., with small colimits).
    Therefore, the functor \eqref{TopPerfdFuncP} commutes with small colimits.

    Take any small diagram \(\{S_k\}_{k \in K}\) of \(\CRing_R^{\wedge I}\).
    Note that the topological absolute perfectoidization functor \((-)_{\tperfd}\) is independent on the (derived) \(I\)-completion (\Cref{RemarkCompletePerfd}) and is the derived \(I\)-completion of \((-)_{\perfd}\) (\Cref{CompletionOfAbsolutePerfectoidization}).
    Using these facts together with the preceding statement for \((-)_{\perfd}\), we obtain isomorphisms
    \begin{equation*}
        (\comp{I}{\colim_{k \in K} S_k})_{\tperfd} \xleftarrow{\cong} (\comp{p}{\colim_{k \in K} S_k})_{\tperfd} \cong \dcomp{I}{\dcomp{p}{\colim^{\setE_{\infty}}_{k \in K} (S_k)_{\perfd}}}  \xrightarrow{\cong} \dcomp{I}{\colim^{\setE_{\infty}}_{k \in K} (S_k)_{\tperfd}}
    \end{equation*}
    in \(\CAlg_R^{\comp{I}}\).
    Since colimits in \(\CRing_R^{\wedge I}\) and \(\CAlg_R^{\comp{I}}\) are computed by taking the (derived) \(I\)-completion of the corresponding colimits in \(\CRing_R\) and \(\CAlg_R\), respectively, we know that the topological absolute perfectoidization functor also preserves small colimits.
\end{proof}

\begin{corollary} \label{CommutativeLocalizationTopPerfd}
    Let \(R\) be an adic topological ring with topologically nilpotent \(p\) and let \(I\) be an ideal of definition of \(R\) containing \(p\).
    Then there is a canonical isomorphism
    \begin{equation*}
        \dcomp{I}{R_{\tperfd}[1/f]} \xrightarrow{\cong} (R[1/f])_{\tperfd}
    \end{equation*}
    in \(\CAlg_R\).
    In particular, we have the following commutativity of limits:
    \begin{equation} \label{CommutesLImitIsomTop}
        \dcomp{I}{R_{\tperfd}[1/f]} = \dcomp{I}{(\lim_{R \to P}P)[1/f]} \xrightarrow{\cong} \lim_{R \to P} \comp{I}{P[1/f]}
    \end{equation}
    in \(\CAlg_R\), where the limits run through all \(I\)-adic complete perfectoid \(R\)-algebras \(P\).
    The same statement holds if we localize at any multiplicative subset \(W\) of \(R\).
\end{corollary}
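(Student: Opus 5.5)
We reduce to the colimit-preservation statement \Cref{CommutativeTensorTopPerfd}, applied to a pushout. Localization is a base change: \(R[1/f] = R \otimes_{\setZ[T]} \setZ[T,T^{-1}]\) with \(T \mapsto f\). More intrinsically, \(R[1/f]\), with the \(I\)-adic topology, is the pushout of \(R \leftarrow \setZ[T] \to \setZ[T^{\pm 1}]\).

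To stay inside \(I\)-adic perfectoid data, we work over a perfectoid base. Choose any perfectoid ring mapping to \(R\) if one exists; in general, argue directly with \(R_{\perfd}\) via \cite{bhatt2022Prismsa}*{Construction 7.6}. By \Cref{CompletionOfAbsolutePerfectoidization}, \((-)_{\tperfd} = \dcomp{I}{(-)_{\perfd}}\), and \((-)_{\perfd}\) on derived \(p\)-complete animated rings commutes with colimits and is insensitive to \(p\)-completion. This gives
\[
(R[1/f])_{\perfd} \simeq \dcomp{p}{R_{\perfd} \otimes^L_{\setZ_p[T]_{\perfd}} \setZ_p[T^{\pm1}]_{\perfd}} .
\]
Next compute \(\setZ_p[T]_{\perfd} \to \setZ_p[T^{\pm 1}]_{\perfd}\). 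Write \(\setZ_p[T^{\pm1}] = \setZ_p[T][S]/(TS-1)\) and use colimit preservation again. The main point is that the relative term is a localization: perfectoidization is insensitive to replacing \(T\) by \(T^{1/p^\infty}\), so the map becomes \(\setZ_p[T]_{\perfd} \to (\setZ_p[T]_{\perfd}[1/T])^\wedge_p\).

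The cleanest route is to pass to \(p\)-power roots, via \(\setZ_p[T] \to \setZ_p[T^{1/p^\infty}]\) and its \(p\)-completion. Over this base, \(T\) acquires compatible \(p\)-power roots, and the base change of \(\setZ_p\langle T^{1/p^\infty}\rangle \to \setZ_p\langle T^{\pm 1/p^\infty}\rangle\) is the \(p\)-completed localization at \(T\). This base change is already perfectoid (\cite{bhatt2019Regular}*{Example 3.8(7)}). Combining these, \((R[1/f])_{\perfd} \simeq \dcomp{p}{R_{\perfd}[1/f]}\). Applying \(\dcomp{I}{-}\) and using that \(\dcomp{I}{\dcomp{p}{-}} = \dcomp{I}{-}\) (since \(p \in I\)) yields the first isomorphism. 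Localization \(-[1/f]\) is exact, and derived completion commutes with the relevant identifications.

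For \eqref{CommutesLImitIsomTop}, the left side is by definition \(\dcomp{I}{(\lim_P P)[1/f]}\). The right side is \(\lim_{P} \comp{I}{P[1/f]}\). Each \(\comp{I}{P[1/f]}\) is perfectoid and agrees with \(\dcomp{I}{P[1/f]}\) (\Cref{BoundedTorsionPerfd}). It is an \(R[1/f]\)-algebra, and every complete \(I\)-adic perfectoid \(R[1/f]\)-algebra \(Q\) arises this way as \(Q = \comp{I}{Q[1/f]}\). So the subcategory of such \(P\) is cofinal-coinitial for the limit, and the right side equals \((R[1/f])_{\tperfd}\). For a general multiplicative subset \(W\), write \(W^{-1}R = \colim_{f \in W} R[1/f]\), a filtered colimit. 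Then use \Cref{CommutativeTensorTopPerfd}, or directly the colimit preservation of \((-)_{\perfd}\), together with the fact that \(\dcomp{I}{-}\) turns filtered colimits into completed colimits.

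The expected obstacle is the middle step: showing that the base-change term for \(T \mapsto T^{\pm1}\) is exactly a completed localization. This requires checking that \((\setZ_p\langle T^{1/p^\infty}\rangle[1/T])^\wedge_p\) is perfectoid and is the perfectoidization of \(\setZ_p[T^{\pm1}]\).
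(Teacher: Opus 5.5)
\textbf{There is a genuine gap in the first isomorphism.}

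Your reduction rests on colimit preservation of perfectoidization. But \Cref{CommutativeTensorTopPerfd} and \cite{bhatt2022Prismsa}*{Construction 7.6} are statements over a perfectoid base (a perfect prism). Here \(R\) is an arbitrary adic ring, and your pushout \(R \leftarrow \setZ[T] \to \setZ[T^{\pm 1}]\) is taken over \(\setZ[T]\). The absolute functor \(S \mapsto S_{\perfd}\) does not commute with colimits in general. Already the empty colimit is not preserved: \((\setZ_p)_{\perfd}\) is not \(\setZ_p\), since it has non-zero \(H^1\). So your displayed formula for \((R[1/f])_{\perfd}\) as a tensor product over \(\setZ_p[T]_{\perfd}\) is unjustified, and ``argue directly with \(R_{\perfd}\)'' does not supply a replacement.

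The middle step also relies on a false principle: perfectoidization is \emph{not} insensitive to adjoining \(T^{1/p^\infty}\). Take \(C\) algebraically closed. The map \((\mcalO_C\langle T\rangle)_{\perfd} \to \mcalO_C\langle T^{1/p^\infty}\rangle\) is descendable (\Cref{PerfdDescendableNonGraded}) but not an isomorphism: the map \(T \mapsto 1\) to \(\mcalO_C\) extends along \(T^{1/p^\infty}\) in many ways. So anything you compute after adjoining \(p\)-power roots has to be brought back down by a descent argument, which you do not give. Moreover, over \(\setZ_p\) itself \(\setZ_p\langle T^{\pm 1/p^\infty}\rangle\) is not perfectoid, so \cite{bhatt2019Regular}*{Example 3.8(7)} does not apply.

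\textbf{The missing input is the one the paper uses.} The map \(R \to R[1/f]\) (and likewise \(R \to W^{-1}R\)) is relatively perfect modulo \(p\), since it is flat with bijective relative Frobenius. Hence \cite{bhatt2024Perfectoid}*{Proposition 3.18 and Lemma 3.21} give \(\dcomp{p}{R_{\perfd}[1/f]} \simeq (R[1/f])_{\perfd}\) directly, with no perfectoid base. This is exactly what would resolve your ``expected obstacle'' for \(\setZ_p[T] \to \setZ_p[T^{\pm 1}]\). But once it is available you can apply it to \(R \to R[1/f]\) itself, and the pushout detour becomes unnecessary.

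From there, your remaining steps agree with the paper's chain of isomorphisms: applying \(\dcomp{I}{-}\), using \(p \in I\), and invoking \Cref{CompletionOfAbsolutePerfectoidization}. Your cofinality argument for the limit formula is the paper's argument: each \(\comp{I}{P[1/f]}\) is a complete \(I\)-adic perfectoid \(R[1/f]\)-algebra, and every such \(Q\) satisfies \(Q = \comp{I}{Q[1/f]}\).

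For a general multiplicative set \(W\), your filtered-colimit reduction again invokes \Cref{CommutativeTensorTopPerfd} outside its hypotheses. The relative-perfectness argument handles \(W^{-1}R\) uniformly.
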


\begin{proof}
    The general case follows by the same argument as in the case of \(W = \{1, f, f^2, \dots\}\), and we only prove this case.
    By \cite{bhatt2024Perfectoid}*{Proposition 3.18 and Lemma 3.21}, the isomorphism \(\dcomp{p}{R_{\perfd}[1/f]} \xrightarrow{\cong} (\dcomp{p}{R[1/f]})_{\perfd} \cong (R[1/f])_{\perfd}\) holds in \(\CAlg_R\).
    Combining this with \Cref{CompletionOfAbsolutePerfectoidization}, we have the following isomorphism
    \begin{align*}
        \dcomp{I}{R_{\tperfd}[1/f]} & \cong \dcomp{I}{(\dcomp{I}{R_{\perfd}})[1/f]} \cong \dcomp{I}{R_{\perfd}[1/f]} \cong \dcomp{I}{\dcomp{p}{R_{\perfd}[1/f]}} \\
        & \cong \dcomp{I}{(R[1/f])_{\perfd}} \cong (R[1/f])_{\tperfd}
    \end{align*}
    in \(\CAlg_R\).

    Finally, we will prove \eqref{CommutesLImitIsomTop}.
    By the definition of \(R_{\tperfd}\) in \Cref{DefTopologicalAbsolutePerfectoidization}, we have an isomorphism
    \begin{equation} \label{PartialCommuteslimitIsomTop}
        \dcomp{I}{(\lim_{R \to P}P)[1/f]} = \dcomp{I}{R_{\tperfd}[1/f]} \xrightarrow{\cong} (R[1/f])_{\tperfd} = \lim_{R[1/f] \to P'} P'
    \end{equation}
    where the first limits run through all \(I\)-adic complete perfectoid \(R\)-algebras \(P\) and the second one is over all \(I\)-adic complete perfectoid \(R[1/f]\)-algebras \(P'\).
    Any \(I\)-adic complete perfectoid \(R[1/f]\)-algebra \(P'\) is an \(I\)-adic complete perfectoid \(R\)-algebra and satisfies \(\comp{I}{P'[1/f]} = P'\).
    Conversely, for any \(I\)-adic complete perfectoid \(R\)-algebra \(P\), the localization \(\comp{I}{P[1/f]}\) is an \(I\)-adic complete perfectoid \(R[1/f]\)-algebra.
    So the isomorphism
    \begin{equation} \label{PerfdAlgebraLocalizationLimitTop}
        \lim_{R[1/f] \to P'} P' \cong \lim_{R \to P} \comp{I}{P[1/f]}
    \end{equation}
    holds.
    Combining this with the isomorphism \eqref{PartialCommuteslimitIsomTop} above shows the desired isomorphism \eqref{CommutesLImitIsomTop}.
\end{proof}

\subsection{Limits of graded perfectoid rings}

\begin{proposition} \label{CofinalGradedPerfdForGradedRing}
    Let \(R\) be a \(G\)-graded adic ring with \(G = G[1/p]\).
    Let $H \subseteq G$ be a submonoid such that $Supp(R) \subseteq H$.
    Take any complete adic perfectoid \(R\)-algebra \(P\) with an ideal of definition \(J\) containing \(p\).
    Then there exists a gradedwise complete \(G\)-graded adic perfectoid \(R\)-algebra \(Q\) and an \(R\)-algebra morphism \(Q \to P\) such that $\Supp(Q) \subseteq H[1/p]$.
\end{proposition}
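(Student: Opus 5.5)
The idea is to build \(Q\) as a graded monoid algebra over \(P\), namely \(P[H[1/p]]\), and to use the coaction attached to the grading of \(R\) to map \(R\) into it. Write \(\varphi \colon R \to P\) for the structure morphism. Since \(G = G[1/p]\), the set \(H[1/p] = \{h/p^k \mid h \in H,\ k \geq 0\}\) is a submonoid of \(G\) with \(H[1/p] = H[1/p][1/p]\). Set
\[
Q \defeq \bigoplus_{g \in H[1/p]} P \cdot t^g, \qquad Q_g = P\, t^g \ (g \in H[1/p]), \quad Q_g = 0 \text{ otherwise},
\]
with multiplication \(t^g t^{g'} = t^{g+g'}\). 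Define \(\psi \colon R \to Q\) on homogeneous elements by \(r \mapsto \varphi(r)\, t^g\) for \(r \in R_g\). This is the composite of the coaction \(\rho_R \colon R \to R[G]\) with \(\varphi[G]\). It lands in \(Q\) because \(\Supp(R) \subseteq H \subseteq H[1/p]\), and it is a \(G\)-graded ring morphism. Let \(\epsilon \colon Q \to P\) be the augmentation \(t^g \mapsto 1\). Then \(\epsilon \circ \psi = \varphi\), so \(Q\) is an \(R\)-algebra and \(\epsilon\) is a morphism of \(R\)-algebras. By construction \(\Supp(Q) \subseteq H[1/p]\).

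Next I put the topology on \(Q\) and check completeness. Take the homogeneous ideal \(JQ\); it is generated by the finitely many generators of \(J \subseteq P = Q_0\), which sit in degree \(0\). Its degree-\(g\) piece is \(J^n t^g\), so \((Q/J^nQ)_g = (P/J^n)\,t^g\). Hence gradedwise completeness of \(Q\) reduces to the \(J\)-adic completeness of \(P\), which holds by assumption. Continuity of \(\psi\) and \(\epsilon\) follows from the continuity of \(\varphi\): if \(\varphi(I^m) \subseteq J\), then \(\psi(I^{mn}) \subseteq J^n Q\). Since \(p \in J\) and \(P\) is \(J\)-adically complete, each \(Q_g \cong P\) is also \(p\)-adically complete, so \(Q\) is gradedwise \(p\)-complete.

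The step I expect to need the most care is showing that \(Q\) is a \(G\)-graded perfectoid ring. I would verify criterion (3) of \Cref{graded-perfectoid-equiv}, with the pair \((\comp{p}{Q}, Q)\) in view of \Cref{CatEquivGrPerfd}. Choose \(\varpi \in P = Q_0\) with \(p \in \varpi^p P\) and \(\varpi\) topologically nilpotent; this is possible because \(P\) is perfectoid. For each \(g \in H[1/p]\) we have \(g/p \in H[1/p]\), and the \(p\)-power map \(Q_{g/p}/\varpi Q_{g/p} \to Q_g/\varpi^p Q_g\) is identified, through \(a t^{g/p} \mapsto a^p t^g\), with the Frobenius \(P/\varpi \to P/\varpi^p\). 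That map is bijective since \(P\) is perfectoid. Likewise, the map \(Q_{g/p}[\varpi^\infty] \to Q_g[\varpi^\infty]\) is identified with \(a \mapsto a^p\) on \(P[\varpi^\infty]\), which is bijective for perfectoid \(P\) (\cite{bhatt2018Integral}). For \(g \notin H[1/p]\), both sides vanish, because \(g/p \in H[1/p]\) if and only if \(g \in H[1/p]\). Alternatively, one could apply \Cref{EnlargeGradedPerfdNew} to \(Q\): every piece is the free rank-one \(P\)-module \(P t^g\), the multiplication maps \(Q_g \otimes_P Q_h \to Q_{g+h}\) are isomorphisms, and \(\Supp(Q)\) is already \(p\)-divisible. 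Either route gives that \(Q\) is a gradedwise complete \(G\)-graded adic perfectoid \(R\)-algebra, and that \(\epsilon \colon Q \to P\) is the required \(R\)-algebra morphism.
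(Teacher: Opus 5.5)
Your proposal is correct and is essentially the paper's proof. The paper also takes \(Q = P[H[1/p]]\), maps \(R \to Q\) by \(r_g \mapsto \psi(r_g)t^g\), and uses the evaluation \(t^g \mapsto 1\); it deduces gradedwise completeness from the \(J\)-adic completeness of \(P\), and checks the two bijectivity conditions of \Cref{graded-perfectoid-equiv} degree by degree using \(Q_g \cong P\). Your extra remarks on degrees \(g \notin H[1/p]\) and the alternative route via \Cref{EnlargeGradedPerfdNew} are correct but not needed.
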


\begin{proof}
    Take a complete \(J\)-adic perfectoid \(R\)-algebra \(\psi \colon R \to P\). Note that this is assumed to be an adic morphism.
    Since \(\Supp(R) \subseteq H \subseteq H[1/p]\), we may regard \(R\) as an \(H[1/p]\)-graded ring, and we have a morphism of \(H[1/p]\)-graded \(R\)-algebras
    \begin{equation*}
        \phi \colon R \to Q \defeq P[H[1/p]]; \quad \sum_{g \in H[1/p]} r_g \mapsto \sum_{g \in H[1/p]} \psi(r_g) t^g
    \end{equation*}
    whose composition with the evaluation morphism \(Q \to P\) sending each \(t^g\) to \(1\) gives the original morphism \(\psi \colon R \to P\).\footnote{This is a variant of the right adjoint given in \Cref{AdjointDerivedGradedMod}.}

    Since \(P\) is \(J\)-adically complete, the \(H[1/p]\)-graded \(J\)-adic ring \(Q\) is \(J\)-adically gradedwise complete and \(\phi\) is an adic morphism.
    Therefore, it remains to show that \(Q\) is an \(H[1/p]\)-graded perfectoid ring.
    
    Choose an element \(\varpi \in P\) such that \(\varpi^p = pu\) for some unit \(u \in P\).
    Using \Cref{graded-perfectoid-equiv}, it suffices to show that the Frobenius map
    \begin{equation*}
        \varphi \colon Q/\varpi Q \to Q/p Q; \quad a \mapsto a^p
    \end{equation*}
    is an isomorphism and the multiplicative map
    \begin{equation*}
        Q[\varpi^\infty] \xrightarrow{a \mapsto a^p} Q[\varpi^\infty]
    \end{equation*}
    is bijective.
    On each degree \(h/p^n \in H[1/p]\), these maps are given by
    \begin{equation*}
        Pt^{h/p^n}/\varpi Pt^{h/p^n} \xrightarrow{a \mapsto a^p} Pt^{h/p^{n-1}}/p Pt^{h/p^{n-1}} \quad \text{and} \quad Pt^{h/p^n}[ \varpi^\infty] \xrightarrow{a \mapsto a^p} Pt^{h/p^{n-1}}[\varpi^\infty].
    \end{equation*}
    Since each graded piece \(Pt^{h/p^n}\) is canonically isomorphic to \(P\), and \(P\) is perfectoid, these maps are isomorphisms.
    This proves that \(Q\) is an \(H[1/p]\)-graded perfectoid ring.
\end{proof}

\begin{corollary} \label{TopPerfdGradedRing}
    Let \(R\) be a \(G\)-graded adic ring with \(G = G[1/p]\) and let \(I\) be a homogeneous ideal of definition of \(R\) containing \(p\).
    Then the canonical morphisms
    \begin{equation*}
        R_{\tperfd} \to \lim_{P \in \Perfd_{\graded{G}}^{\wedge I}(R)} \dcomp{I}{P} \to \lim_{P \in \Perfd_{\graded{G}}^{\wedge I}(R)} \lim_{n > 0} P/I^nP
    \end{equation*}
    are isomorphisms in \(\CAlg_R\), where the middle term is the limit taken over the category \(\Perfd_{\graded{G}}^{\wedge I}(R)\) of gradedwise complete \(G\)-graded \(I\)-adic perfectoid \(R\)-algebras.
\end{corollary}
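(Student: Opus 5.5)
The plan is to treat the two morphisms separately. The second one is formal, and the first one is a cofinality argument based on \Cref{CofinalGradedPerfdForGradedRing}.

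\emph{The second morphism.} Let \(P\) be a gradedwise complete \(G\)-graded \(I\)-adic perfectoid \(R\)-algebra. I regard it as an object of \(\CAlg_R\) via its underlying ring \(\bigoplus_g P_g\). By \Cref{BoundedTorsionGradedPerfd}(2), \(P\) has bounded \(I^\infty\)-torsion. Hence its derived \(I\)-completion is the classical completion \(\lim_n P/I^nP\); this is exactly \Cref{BoundedTorsionGradedPerfd}(4). The map \(\dcomp{I}{P} \to \lim_n P/I^nP\) is therefore an isomorphism for each \(P\), functorially in \(P\). Passing to the limit over \(\Perfd_{\graded{G}}^{\wedge I}(R)\) gives the second isomorphism. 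Moreover, \(\comp{I}{P}\) is a complete \(I\)-adic perfectoid \(R\)-algebra by \Cref{BoundedTorsionGradedPerfd}(3). So the second limit is a limit of objects of \(\Perfd^{\wedge I}(R)\).

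\emph{The first morphism: the functor.} By \Cref{RemarkCompletePerfd}, \(R_{\tperfd} = \lim_{P' \in \Perfd^{\wedge I}(R)} P'\). Write \(\mcalC = \Perfd^{\wedge I}(R)\) and \(\mcalD = \Perfd_{\graded{G}}^{\wedge I}(R)\). Completion defines a functor \(u \colon \mcalD \to \mcalC\), \(P \mapsto \comp{I}{P}\), and the first map is the canonical restriction \(\lim_{\mcalC} \to \lim_{\mcalD} \circ u\). It suffices to show that \(u\) is \emph{initial}, so that limits over \(\mcalC\) agree with limits over \(\mcalD\) composed with \(u\). By Quillen's Theorem A (the \(\infty\)-categorical version, Lurie HTT 4.1.3.1 dualized), this means: for every \(P' \in \mcalC\), the comma category \(\mcalD_{/P'}\) of pairs \((Q, \comp{I}{Q} \to P')\) is weakly contractible.

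\emph{Nonemptiness and connectivity of \(\mcalD_{/P'}\).} Nonemptiness is \Cref{CofinalGradedPerfdForGradedRing}. The construction there, \(Q = P'[G]\) with the evaluation map, is functorial in \(P'\). Note that \(P'[G]\) should mean \(\bigoplus_g P' t^g\), which is gradedwise complete as in that proof. I then aim to show that \(\mcalD_{/P'}\) has a terminal object, or at least is cofiltered, which gives contractibility.

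\emph{Main obstacle: the universal property of \(P'[G]\).} The natural candidate for a terminal object is \(P'[G] \to P'\). Given \((Q, \comp{I}{Q} \to P')\), the map \(Q_g \to P'\) on each graded piece yields a graded map \(Q \to P'[G]\), namely the coaction followed by the given map. This is the adjunction of \Cref{AdjointDerivedGradedMod}. Composing with evaluation recovers \(Q \to P'\), so the required triangle commutes. The key check is uniqueness: a graded map \(Q \to P'[G]\) over \(P'\) is determined degreewise by its components \(Q_g \to P' t^g \cong P'\), and these must equal the restrictions of the given map. Hence \(P'[G]\) is terminal in \(\mcalD_{/P'}\), the comma category is contractible, and \(u\) is initial. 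This gives the first isomorphism in \(\CAlg_R\). Since all the categories involved are ordinary \(1\)-categories mapping into \(\CAlg_R\), the classical cofinality criterion suffices.
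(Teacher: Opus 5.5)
Your proposal is correct and follows the paper's proof. You handle the second map by \Cref{BoundedTorsionGradedPerfd}(4); the paper also invokes Mittag--Leffler to identify the limit \(\lim_n P/I^nP\) in \(\CAlg_R\) with the classical completion. You handle the first map by cofinality of \(P \mapsto \comp{I}{P}\), using the construction in \Cref{CofinalGradedPerfdForGradedRing}. Your check that \(P'[G] \to P'\) is terminal in the comma category is a useful addition: it supplies the weak contractibility that the paper leaves implicit, since existence of a map alone would not suffice. For the same reason, your closing remark should refer to this contractibility, not to the 1-categorical nonempty-and-connected criterion.
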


\begin{proof}
    By \Cref{BoundedTorsionGradedPerfd}(4), any gradedwise complete \(G\)-graded \(I\)-adic perfectoid \(R\)-algebra \(P\) has an isomorphism \(\dcomp{I}{P} \cong \comp{I}{P}\) in \(\CAlg_R\).
    Because of the Mittag--Leffler condition, the second morphism is an isomorphism.

    We prove that the first morphism is an isomorphism.
    Because of the definition of \(R_{\tperfd}\) (\Cref{DefTopologicalAbsolutePerfectoidization} and \Cref{RemarkCompletePerfd}), it suffices to show that the functor
    \begin{equation*}
        \Perfd_{\graded{G}}^{\wedge I}(R) \to \Perfd^{\wedge I}(R); \quad P \mapsto \comp{I}{P}
    \end{equation*}
    is cofinal, where \(\Perfd^{\wedge I}_R\) is the category of \(I\)-adic complete perfectoid \(R\)-algebras.
    This follows from \Cref{CofinalGradedPerfdForGradedRing}.
\end{proof}

\subsection{Absolute perfectoidization of graded rings}

\begin{lemma} \label{GradedPerfdCofinalFunctors}
    Let \(R\) be a \(G\)-graded adic ring with \(G = G[1/p]\) and let \(I\) be a homogeneous ideal of definition of \(R\) containing \(p\).
    Let \(H\) be the submonoid of \(G\) generated by the support \(\Supp(R)\) of \(R\).
    Then the canonical functors of categories
    \begin{equation*}
        \Perfd_{\graded{H[1/p]}}^{\wedge I}(R) \hookrightarrow \Perfd_{\graded{G}}^{\wedge I}(R) \hookrightarrow \Perfd_{\graded{G}}^{\wedge}(R)
    \end{equation*}
    are cofinal, where \(\Perfd_{\graded{G}}^{\wedge}(R)\) (resp., \(\Perfd_{\graded{G}}^{\wedge I}(R)\)) is the category of gradedwise complete \(G\)-graded adic (resp., \(I\)-adic) perfectoid \(R\)-algebras and \(\Perfd_{\graded{H[1/p]}}^{\wedge I}(R)\) is the full subcategory of \(\Perfd_{\graded{G}}^{\wedge I}(R)\) consisting of all gradedwise complete \(G\)-graded \(I\)-adic perfectoid \(R\)-algebras \(P\) such that \(\Supp(P) = H[1/p]\).
\end{lemma}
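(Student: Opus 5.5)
The plan is to prove that each of the two inclusions is \emph{coinitial}, which is the notion relevant for limits and is what ``cofinal'' means here. Concretely, for every object \(P\) of the larger category, the comma category of objects \(Q\) of the smaller category equipped with an \(R\)-algebra map \(Q \to P\) should be weakly contractible. In both cases I will exhibit a \emph{terminal} object of this comma category, which gives contractibility immediately.

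\emph{Second inclusion.} Let \(P \in \Perfd_{\graded{G}}^{\wedge}(R)\) have a homogeneous ideal of definition \(J\), and let \(P'\) denote \(P\) with the \(I\)-adic topology.
\begin{enumerate}
    \item Since \(R \to P\) is continuous, \(I^n P \subseteq J\) for some \(n\), so the \(I\)-adic topology on \(P\) is finer than the \(J\)-adic one.
    \item Arguing degreewise as in \Cref{RemarkCompletePerfd}, I will show that \(P'\) is gradedwise \(I\)-adically complete. Each \(P_g\) is \(J\)-adically complete and has bounded \(I^\infty\)-torsion by \Cref{BoundedTorsionGradedPerfd}; alternatively one can compare with \(\grcomp{I}{P}\) via \Cref{DerivedGrcompGradedPerfd}. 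Hence \(P' \in \Perfd_{\graded{G}}^{\wedge I}(R)\), and the identity \(P' \to P\) is continuous.
    \item For any \(Q \in \Perfd_{\graded{G}}^{\wedge I}(R)\) with a map \(Q \to P\), that map is continuous for the \(I\)-adic topology on the target. So it factors uniquely through \(P' \to P\), and \(P'\) is terminal in the comma category.
\end{enumerate}
The only delicate point in this part is the completeness claim in step 2. I expect it to reduce to the ungraded statement already used in \Cref{RemarkCompletePerfd}, applied to each graded piece.

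\emph{First inclusion.} Let \(P \in \Perfd_{\graded{G}}^{\wedge I}(R)\) and set \(P^H \defeq \bigoplus_{h \in H[1/p]} P_h \subseteq P\).
\begin{enumerate}
    \item Since \(H[1/p]\) is a submonoid with \(H[1/p] = H[1/p][1/p]\), \Cref{PureSubPerfd} shows that \(P^H\), with its induced \(I\)-adic structure, is an \(H[1/p]\)-graded perfectoid ring. It is gradedwise complete because each of its graded pieces is a graded piece of \(P\).
    \item Since \(\Supp(R) \subseteq H\), the structure map \(R \to P\) lands in \(P^H\).
    \item For any \(Q\) with \(\Supp(Q) = H[1/p]\) and a graded map \(Q \to P\), the image lies in \(P^H\) for degree reasons. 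The factorization is unique because \(P^H \subseteq P\) is injective, so \(P^H\) is terminal in the comma category.
\end{enumerate}

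The main obstacle is the support condition: \(\Supp(P^H)\) may be strictly smaller than \(H[1/p]\), for instance if some \(P_h\) vanishes. I would first check whether the argument really needs equality, or whether the condition \(\Supp(P) \subseteq H[1/p]\) suffices; by \Cref{CofinalGradedPerfdForGradedRing} that is all that is needed later. If equality is genuinely required, I would enlarge \(P^H\) to fill every degree of \(H[1/p]\), for example by forming \(P^H[H[1/p]]\) as in the proof of \Cref{CofinalGradedPerfdForGradedRing}. One must then re-check that a terminal object, or at least a connected and nonempty comma category, survives this enlargement. Failing that, I would argue contractibility directly from the fact that the comma category is cofiltered under fibre products, since gradedwise completed tensor products of graded perfectoid rings remain graded perfectoid.
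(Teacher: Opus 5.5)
This is essentially the paper's proof. The paper also handles the first inclusion by factoring \(R \to P\) through \(\bigoplus_{h \in H[1/p]} P_h\) via \Cref{PureSubPerfd}, and the second by re-topologizing \(P\) \(I\)-adically as in \Cref{RemarkCompletePerfd}; your terminal objects in the slices over \(P\) just make the cofinality explicit.

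The support issue you flag is real, and the paper's proof has it too. The paper likewise only produces \(\Supp \subseteq H[1/p]\), which matches its convention that an \(H[1/p]\)-graded ring has support contained in \(H[1/p]\). So the \(\subseteq\) reading is the intended one, and your main argument suffices.

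If you do want equality, drop the tensor-product fallback: tensor products receive maps from the \(Q_i\) rather than mapping to them, so they do not give cofilteredness of a slice over \(P\). Instead, fix a nonzero \(S\) with \(\Supp(S) = H[1/p]\), e.g.\ \(S = P_0[H[1/p]]\) for a nonzero complete \(I\)-adic perfectoid \(R\)-algebra \(P_0\) as in \Cref{CofinalGradedPerfdForGradedRing}. The assignment \(Q \mapsto Q \times S\) sends the \(\subseteq\)-slice, which has terminal object \(P^H\), into the \(=\)-slice. Its restriction to the \(=\)-slice is connected to the identity by the projection \(Q \times S \to Q\), and this forces the \(=\)-slice to be contractible.
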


\begin{proof}
    Any morphism \(R \to P\) of \(G\)-graded rings to a \(G\)-graded \(I\)-adic perfectoid ring \(P\) factors through \(R \to \bigoplus_{g \in H[1/p]} P_g \hookrightarrow P\).
    Using \Cref{PureSubPerfd}, this \(P' \defeq \bigoplus_{g \in H[1/p]} P_g\) is an \(H[1/p]\)-graded \(I\)-adic perfectoid \(R\)-algebra.
    Therefore, the first functor is cofinal.
    
    Similarly as in \Cref{RemarkCompletePerfd}, any gradedwise complete \(G\)-graded adic perfectoid \(R\)-algebra \(P\) has a morphism from a gradedwise complete \(G\)-graded \(I\)-adic perfectoid \(R\)-algebra \(P'\) whose underlying ring is \(P\).
    So the second functor is also cofinal.
\end{proof}

\begin{lemma} \label{GradedPerfdLimitIsom}
    In the setting of \Cref{GradedPerfdCofinalFunctors}, the canonical morphism
    \begin{equation*}
        \grlim_{P \in \Perfd_{\graded{G}}^{\wedge}(R)} P \to \grlim_{P \in \Perfd_{\graded{G}}^{\wedge I}(R)} \grlim_{n > 0} P/I^nP
    \end{equation*}
    is an isomorphism in \(\CAlg(\mcalD_{\graded{G}}(R))\).
\end{lemma}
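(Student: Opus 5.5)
The plan is to factor the morphism into two isomorphisms:
\begin{equation*}
    \grlim_{P \in \Perfd_{\graded{G}}^{\wedge}(R)} P \xrightarrow{(a)} \grlim_{P \in \Perfd_{\graded{G}}^{\wedge I}(R)} P \xrightarrow{(b)} \grlim_{P \in \Perfd_{\graded{G}}^{\wedge I}(R)} \grlim_{n > 0} P/I^nP .
\end{equation*}
Throughout, the graded limits are taken in \(\mcalD_{\graded{G}}(R)\). Limits of \(\setE_{\infty}\)-algebras are computed on underlying objects, so it suffices to prove that (a) and (b) are isomorphisms in \(\mcalD_{\graded{G}}(R)\).

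For (a), I will use \Cref{GradedPerfdCofinalFunctors}. The inclusion \(\Perfd_{\graded{G}}^{\wedge I}(R) \hookrightarrow \Perfd_{\graded{G}}^{\wedge}(R)\) is cofinal in the sense relevant for limits: it is initial, because every object \(P\) receives a map from \(P'\), which is \(P\) with its \(I\)-adic topology. I should check that this really gives initiality and not just the existence of a map. The comma category \(\Perfd_{\graded{G}}^{\wedge I}(R)_{/P}\) should have a terminal object, namely \(P' \to P\). Indeed, any continuous map \(Q \to P\) from an \(I\)-adic \(Q\) is automatically continuous into \(P'\): the image of \(I\) in \(P\) is topologically nilpotent, and \(Q\) is \(I\)-adic. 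The factorization through \(P'\) is also unique, since the underlying rings agree. Hence the comma category is weakly contractible, and restricting the limit along this functor does not change it. This gives (a).

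For (b), the key step is to identify each \(P \in \Perfd_{\graded{G}}^{\wedge I}(R)\) with \(\grlim_{n} P/I^nP\) in \(\mcalD_{\graded{G}}(R)\), naturally in \(P\). First I will show that \(P\) is derived gradedwise \(I\)-complete, i.e. \(P \simeq \dgrcomp{I}{P}\). By \Cref{DerivedGrcompGradedPerfd} applied to the \(p\)-adic \(G\)-graded perfectoid ring underlying \(P\), we have \(\dgrcomp{I}{P} \simeq \grcomp{I}{P}\). Since \(P\) is gradedwise \(I\)-adically complete, \(\grcomp{I}{P} = P\).

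It remains to identify \(\dgrcomp{I}{P}\) with the derived graded limit \(\grlim_n P/I^nP\). Here I will use the description of derived gradedwise completion from \cite{ishizuka2026Derived}: limits in \(\mcalD_{\graded{G}}(R)\) are computed degreewise. The degree-\(g\) part of \(\grlim_n P/I^nP\) is then \(R\lim_n (P/I^nP)_g\). This equals \(\lim_n (P_g / (I^nP)_g) = P_g\), because the transition maps are surjective (Mittag--Leffler, so \(R^1\lim\) vanishes) and \(P_g\) is \(I\)-adically complete. This degreewise identification is natural in \(P\), so we get an equivalence of diagrams \(P \simeq \grlim_n P/I^nP\) over \(\Perfd_{\graded{G}}^{\wedge I}(R)\). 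Passing to limits gives (b).

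I expect the main obstacle to be this degreewise computation. One must make sure that \(\grlim\) in \(\mcalD_{\graded{G}}(R)\) really is computed gradedwise, even though the forgetful functor to \(\mcalD(R)\) does not preserve limits. One must also identify \((I^nP)_g\) correctly with the \(g\)-th piece of the \(I^n\)-adic filtration. If the degreewise description of limits were unavailable, I would fall back on the bounded-torsion input: \Cref{BoundedTorsionGradedPerfd}(2) shows that the derived and classical quotients \(P/^L I^n\) and \(P/I^nP\) are pro-isomorphic. One could then conclude using the pro-version of the Nakayama lemma, \Cref{GradedNakayamaLemma}, applied through \(\Pro(\mcalF^I)\).
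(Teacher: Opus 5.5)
Your proposal is correct and follows essentially the paper's route. The paper's one-line proof likewise combines the cofinality of \(\Perfd_{\graded{G}}^{\wedge I}(R) \hookrightarrow \Perfd_{\graded{G}}^{\wedge}(R)\) from \Cref{GradedPerfdCofinalFunctors} with the identification \(P \simeq \grlim_{n} P/I^nP\), obtained via \Cref{DerivedGrcompGradedPerfd} and Mittag--Leffler exactly as in the proof of \Cref{TopPerfdGradedRing}. Two small remarks: continuity of \(Q \to P'\) follows simply from \(R\)-linearity, since \(\varphi(I^nQ) \subseteq I^nP\), not from topological nilpotence. Also, your degreewise Mittag--Leffler computation together with gradedwise \(I\)-completeness already gives step (b), so the detour through \(\dgrcomp{I}{P} \simeq P\) is not needed.
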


\begin{proof}
    This is a direct consequence of \Cref{DerivedGrcompGradedPerfd} as in the proof of \Cref{TopPerfdGradedRing}.
\end{proof}

\begin{definition} \label{DefGradedAbsPerfd}
    Let \(R\) be a \(G\)-graded adic ring with \(G = G[1/p]\) and topologically nilpotent \(p\).
    The \emph{absolute graded perfectoidization} of \(R\) is defined as the limit
    \begin{equation*}
        R_{\tgrpfd} \defeq \grlim_{P \in \Perfd_{\graded{G}}^{\wedge}(R)} P \in \CAlg(\mcalD_{\graded{G}}(R)).
    \end{equation*}
    See \Cref{RemarkGradedPerfd} below for another description of this limit.
    If \(R\) is \(p\)-adic, then we will write this as \(R_{\grpfd}\).
    The existence of this limit will be proved in \Cref{ExistenceOfGradedPerfd} below, and this construction defines a functor from the category of \(G\)-graded adic rings to \(\CAlg(\mcalD_{\graded{G}}(\setZ))\).
\end{definition}

The analogy between \(R_{\tgrpfd}\) and \(R_{\tperfd}\) is not limited to the similarity of their defining limits in \Cref{TopPerfdGradedRing}, but also \Cref{CompletionOfAbsoluteGradedPerfd} below, which informally says \((R_{\tperfd}, R_{\tgrpfd})\) is a ``derived pro-graded ring''.

\begin{remark} \label{RemarkGrading}
    Even if \(G\) does not contain \(1/p\), i.e., \(G \subsetneq G[1/p]\), we can trivially extend the grading of \(R\) to a \(G[1/p]\)-grading by setting \(R_g = 0\) for any \(g \in G[1/p] \setminus G\).
    For such \(R\), the graded absolute perfectoidization \(R_{\tgrpfd}\) in \Cref{DefGradedAbsPerfd} is defined as above using this extended grading.
    Under this definition, we can always assume \(G = G[1/p]\) on considering the graded absolute perfectoidization without loss of generality.
\end{remark}



\begin{remark} \label{RemarkGradedPerfd}
    Because of \Cref{GradedPerfdCofinalFunctors} and \Cref{GradedPerfdLimitIsom}, if \(I\) is a homogeneous ideal of definition of \(R\) containing \(p\), then the absolute graded perfectoidization defined in \Cref{DefGradedAbsPerfd} can be calculated as the limit
    \begin{equation*}
        R_{\tgrpfd} \cong \grlim_{P \in \Perfd_{\graded{H[1/p]}}^{\wedge I}(R)} P \cong \grlim_{P \in \Perfd_{\graded{G}}^{\wedge I}(R)} P \cong \grlim_{P \in \Perfd_{\graded{G}}^{\wedge I}(R)} \grlim_{n > 0} P/I^nP
    \end{equation*}
    in \(\CAlg(\mcalD_{\graded{G}}(R))\).
    In particular, \(R_{\tgrpfd}\) is derived gradedwise \(I\)-complete and \(\Supp(R_{\tgrpfd})\) is equal to \(H[1/p]\) for the monoid \(H\) generated by \(\Supp(R)\).
\end{remark}

\begin{proposition} \label{ComparisonAdicGradedPerfd}
    Let \(R\) be a \(G\)-graded adic ring with \(G = G[1/p]\) and let \(I\) be a homogeneous ideal of definition containing \(p\).
    Then the absolute graded perfectoidization \(R_{\tgrpfd}\) is the derived gradedwise \(I\)-completion of the absolute graded perfectoidization \(R_{\grpfd}\) of the underlying \(G\)-graded \(p\)-adic ring of \(R\).
\end{proposition}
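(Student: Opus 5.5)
We must show \(R_{\tgrpfd} \simeq \dgrcomp{I}{R_{\grpfd}}\), where \(R_{\grpfd} = \grlim_{P \in \Perfd_{\graded{G}}^{\wedge p}(R)} P\) is taken over gradedwise \(p\)-complete \(G\)-graded perfectoid \(R\)-algebras. By \Cref{RemarkGradedPerfd}, \(R_{\tgrpfd} \simeq \grlim_{Q \in \Perfd_{\graded{G}}^{\wedge I}(R)} Q\).

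The first step is to compare the two index categories. Every gradedwise \(I\)-adically complete \(G\)-graded perfectoid \(R\)-algebra \(Q\) is gradedwise \(p\)-complete, since \(p \in I\). Hence \(\Perfd_{\graded{G}}^{\wedge I}(R)\) is a full subcategory of \(\Perfd_{\graded{G}}^{\wedge p}(R)\). Conversely, gradedwise \(I\)-completion \(P \mapsto \grcomp{I}{P}\) sends \(\Perfd_{\graded{G}}^{\wedge p}(R)\) into \(\Perfd_{\graded{G}}^{\wedge I}(R)\) by \Cref{BoundedTorsionGradedPerfd}(3). It is left adjoint to the inclusion: any map \(P \to Q\) into a gradedwise \(I\)-complete \(Q\) factors uniquely through \(\grcomp{I}{P}\). \Cref{DerivedGrcompGradedPerfd} identifies \(\grcomp{I}{P}\) with \(\dgrcomp{I}{P}\).

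The second step uses that \(\dgrcomp{I}{-}\) is a left adjoint into \(\mcalD^{\comp{I}}_{\graded{G}}(R)\), so limits of \(I\)-complete objects are computed in \(\mcalD_{\graded{G}}(R)\). Thus it suffices to prove two isomorphisms:
\[
\dgrcomp{I}{\grlim_{P} P} \simeq \grlim_{P} \dgrcomp{I}{P}, \qquad \grlim_{P \in \Perfd^{\wedge p}_{\graded{G}}(R)} \grcomp{I}{P} \simeq \grlim_{Q \in \Perfd^{\wedge I}_{\graded{G}}(R)} Q.
\]

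For the first isomorphism, derived gradedwise \(I\)-completion commutes with limits of objects that are already derived gradedwise \(p\)-complete. The reason is that it can be written as an iterated limit of Koszul-type quotients \(M \mapsto \grlim_n M \Lgrotimes_{\setZ[x_1,\dots,x_r]} \setZ[x_1,\dots,x_r]/(x_1^n,\dots,x_r^n)\), using homogeneous generators \(f_i\) of \(I\). Tensoring with a perfect complex commutes with limits, and a limit of limits commutes. I would invoke the description of \(\dgrcomp{I}{-}\) in \cite{ishizuka2026Derived}*{Section 4} for this. I expect this step to be the main technical point: \(\grlim\) is not computed underlying-wise, so the commutation must be checked inside \(\mcalD_{\graded{G}}(R)\), where the Koszul formula still holds degreewise.

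For the second isomorphism, the functor \(\grcomp{I}{-}\colon \Perfd^{\wedge p}_{\graded{G}}(R) \to \Perfd^{\wedge I}_{\graded{G}}(R)\) is a left adjoint with fully faithful right adjoint, the inclusion. Every \(Q\) is therefore \(\grcomp{I}{Q}\), and the maps \(P \to \grcomp{I}{P}\) give initiality, so the functor is cofinal for limits in the sense used in \Cref{GradedPerfdCofinalFunctors}. Combining the two isomorphisms yields \(\dgrcomp{I}{R_{\grpfd}} \simeq R_{\tgrpfd}\).

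The construction is natural in \(R\), and it is compatible with the maps \(R_{\grpfd} \to P\). It therefore gives the canonical identification in \(\CAlg(\mcalD_{\graded{G}}(R))\), because \(\dgrcomp{I}{-}\) is symmetric monoidal for the completed tensor product.
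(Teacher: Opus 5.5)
Your argument is correct and is essentially the paper's proof. The paper simply transports the non-graded argument of \Cref{CompletionOfAbsolutePerfectoidization}: commute the derived completion past the limit, identify \(\dgrcomp{I}{P}\) with \(\grcomp{I}{P}\) using \Cref{BoundedTorsionGradedPerfd} (through \Cref{DerivedGrcompGradedPerfd}), and observe that the \(I\)-adic graded perfectoid algebras are exactly the completed ones. Your Koszul-type description showing that \(\dgrcomp{I}{-}\) commutes with limits in \(\mcalD_{\graded{G}}(R)\), and your observation that the left adjoint \(\grcomp{I}{-}\) is initial, supply the justifications the paper leaves implicit.
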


\begin{proof}
    This is proved in essentially the same way as in the non-graded case treated in \Cref{CompletionOfAbsolutePerfectoidization}, by using \Cref{BoundedTorsionGradedPerfd} instead of \Cref{BoundedTorsionPerfd}.
\end{proof}


\begin{proposition} \label{GradedPerfdZeroPart}
    Let \(R\) be a \(G\)-graded adic ring with \(G = G[1/p]\) and topologically nilpotent \(p\).
    Assume that \(R_g = 0\) or \(R_{-g} = 0\) holds for each \(g \in G\).
    Then the degree-zero part
    \[
        R_0 \to (R_{\tgrpfd})_0
    \]
    of the structure morphism \(R \to R_{\tgrpfd}\) can be identified with the topological absolute perfectoidization \(R_0 \to (R_0)_{\tperfd}\) in \(\CAlg_{R_0}\).
\end{proposition}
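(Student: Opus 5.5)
Write \(\mcalC_{\graded}\) for \(\Perfd_{\graded{G}}^{\wedge I}(R)\), with \(I\) a homogeneous ideal of definition containing \(p\). By \Cref{RemarkGradedPerfd}, \(R_{\tgrpfd} \cong \grlim_{P \in \mcalC_{\graded}} P\). Limits in \(\mcalD_{\graded{G}}(R)\) are computed degreewise, since a graded object is a family of \(R_0\)-modules indexed by \(G\). Hence
\[
(R_{\tgrpfd})_0 \cong \lim_{P \in \mcalC_{\graded}} P_0
\]
in \(\CAlg_{R_0}\). The proof then reduces to showing that the functor \(\mcalC_{\graded} \to \Perfd^{\wedge I_0}(R_0)\), \(P \mapsto P_0\), is cofinal, where \(I_0 = I \cap R_0\). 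One also has to handle the fact that \(I_0\) may differ from the topology induced on \(R_0\), which I address last.

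\emph{Well-definedness.} For \(P \in \mcalC_{\graded}\), \(P_0\) is a complete adic perfectoid \(R_0\)-algebra by \Cref{graded-perfectoid-equiv}(2) together with gradedwise completeness. So \(P \mapsto P_0\) is a functor into \(\Perfd^{\wedge}(R_0)\), and by \Cref{RemarkCompletePerfd} the limit over \(\Perfd^{\wedge}(R_0)\) computes \((R_0)_{\tperfd}\). This gives the comparison map
\[
(R_0)_{\tperfd} \to \lim_{P \in \mcalC_{\graded}} P_0 .
\]

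\emph{Cofinality.} Here the hypothesis "\(R_g=0\) or \(R_{-g}=0\)" enters. Let \(Q\) be a complete perfectoid \(R_0\)-algebra. I want a graded perfectoid \(R\)-algebra \(P\) whose degree-zero part maps to \(Q\). The obvious choice \(P = Q[H[1/p]]\) from \Cref{CofinalGradedPerfdForGradedRing} does not work, because there is no map \(R \to Q\) extending \(R_0 \to Q\).

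Instead, define \(\psi\colon R \to Q[G]\) by sending \(R_0\) to \(Q\) and \(R_g\) to \(0\) for \(g \neq 0\). This is a ring map exactly when \(R_g R_{-g} \to R_0\) is zero for all \(g \neq 0\), and the hypothesis guarantees that every such product vanishes. Since \(R_g R_h \subseteq R_{g+h}\) with \(g+h \neq 0\) is sent to \(0\) anyway, \(\psi\) is a graded ring map into the \(G\)-graded perfectoid ring \(Q[G]\), whose degree-zero part is \(Q\).

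More simply, one can compose \(R \to R_0 \to Q\) and view \(Q\) as graded in degree \(0\), provided \(R \to R_0\) (kill \(R_g\) for \(g\neq 0\)) is a ring map. That holds for the same reason, and \(Q\) concentrated in degree \(0\) is graded perfectoid by \Cref{graded-perfectoid-equiv}. Then \(P \mapsto P_0\) admits the section \(Q \mapsto Q\) (degree \(0\)). For any \(P\), the projection \(P \to P_0\) is a graded ring map, again by the hypothesis applied to \(P\): \(\Supp(P) = H[1/p]\), and the hypothesis propagates to \(H[1/p]\) via \(p\)-power roots. This exhibits \(R_0\)-algebra maps that make the two functors mutually cofinal, so the limits agree.

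\emph{Main obstacle.} The hardest step is showing that the retraction \(P \to P_0\) is a ring map. It requires \(P_gP_{-g}=0\), and this must be deduced from the hypothesis on \(R\). I would first try to replace \(P\) by a cofinal object \(P'\) supported on \(H[1/p]\), using \Cref{GradedPerfdCofinalFunctors}. Then I would check that \(H[1/p] \cap -H[1/p] = 0\). This follows if \(H\) is generated by \(\Supp(R)\) and contains no nonzero \(g\) with \(-g \in H\); that condition needs the hypothesis to hold for the monoid generated by the support, not just the support itself. If it fails, I would instead use a direct map argument on limits, noting that the degree-\(0\) component of the map \(R\to P\) factors through \(R_0\).

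Finally, for the topology, I would match the \(I\)-adic topology with the \(I_0\)-adic one on degree-zero parts using \Cref{RemarkCompletePerfd}, which makes the comparison independent of the chosen ideal of definition.
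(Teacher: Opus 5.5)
Your first steps are the paper's whole proof: taking degree-\(0\) parts commutes with the limit, and the hypothesis makes \(R \twoheadrightarrow R_0\) a graded ring map. Hence every complete perfectoid \(R_0\)-algebra \(Q\), placed in degree \(0\), is an object \(S(Q)\) of the indexing category. The paper then simply asserts the isomorphism.

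With \(F(P)=P_0\), the identity \(F\circ S=\mathrm{id}\) only shows that \((R_0)_{\tperfd}\to\lim_P P_0\to(R_0)_{\tperfd}\) is the identity. The other composite is the identity if and only if, for each \(P\), the projections of \(\lim_{P'}P'_0\) onto \(P_0\) at \(P\) and at \(S(P_0)\) agree. One gets this from a zigzag between \(P\) and \(S(P_0)\) whose legs agree on degree-\(0\) parts.

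You correctly single this out as the real issue. Your retraction settles it whenever the submonoid \(H\) generated by \(\Supp(R)\) satisfies \(H\cap(-H)=\{0\}\). Work on the full subcategory of \(P\) with \(\Supp(P)\subseteq H[1/p]\), which is cofinal by the proof of \Cref{GradedPerfdCofinalFunctors}. There \(P_gP_{-g}=0\) for \(g\neq 0\), so \(P\to P_0\) is a map of graded \(R\)-algebras and \(F\) is left adjoint to \(S\). Hence \(F\) is limit-cofinal, since each comma category \(F_{/Q}\) has the terminal object \((S(Q),\mathrm{id})\). This covers every \(\mathbb{Z}_{\geq 0}\)-graded ring, such as the section rings of \Cref{SettingFibersequence}. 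Your topology worry is harmless: because \(R\to R_0\) is a ring map, \((I^n)_0=I_0^n\).

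The gap is your claim that the hypothesis propagates to \(H[1/p]\). The hypothesis constrains \(\Supp(R)\), not the monoid it generates. Take \(R=\mathbb{Z}_p[a,b,c]/(ab,bc,ca)\) with \(\deg a=(1,0)\), \(\deg b=(0,1)\), \(\deg c=(-1,-1)\) in \(G=\mathbb{Z}[1/p]^2\). Each graded piece is \(0\) or \(\mathbb{Z}_p\), and no nonzero element of \(\Supp(R)\) has its negative in \(\Supp(R)\). \(R\) is reduced, so passing to \(R_{\mathrm{red}}\) does not help, yet \(H=\mathbb{Z}^2\).

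For a nonzero \(p\)-complete perfectoid ring \(Q\), let \(P=Q[G]\) with \(a\mapsto t^{(1,0)}\) and \(b,c\mapsto 0\). This is a graded perfectoid \(R\)-algebra with no morphism to a nonzero trivially graded object, since such a morphism would kill the unit \(t^{(1,0)}\). It also has no morphism from a trivially graded object over \(R\), since that would kill \(a\). So neither your retraction nor its opposite exists.

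Your fallback, that the degree-\(0\) part of \(R\to P\) factors through \(R_0\), only recovers the comparison map and its retraction, not the missing zigzags. In this example a zigzag through \(Q[t^{(r,0)} : r\in\mathbb{Z}[1/p]_{\geq 0}]\subseteq P\) happens to exist. Producing such intermediate objects in general, or arguing by descent, is a genuinely new ingredient that neither your proposal nor the paper's short written proof supplies. Without it, you should add the assumption \(H\cap(-H)=\{0\}\).
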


\begin{proof}
    Note that the functor \(\mcalD_{\graded{G}}(R) \to \mcalD(R_0)\) taking the graded \(0\)-part preserves limits and is symmetric monoidal.
    So, by \Cref{DefGradedAbsPerfd}, the degree-zero part \((R_{\tgrpfd})_0\) is given by
    \begin{equation*}
        (R_{\tgrpfd})_0 = \lim_{R \to P} P_0 \in \CAlg(\mcalD(R_0)) = \CAlg_{R_0},
    \end{equation*}
    where the limit runs through all morphisms \(R \to P\) of adic \(G\)-graded rings such that \(P\) is a gradedwise complete adic \(G\)-graded perfectoid ring.
    The assumption on \(R\) ensures that \(\mfrakm \defeq \bigoplus_{g \neq 0} R_g\) is a homogeneous ideal of \(R\) and we can take the quotient morphism \(R \twoheadrightarrow R/\mfrakm \cong R_0\) which is a morphism of \(G\)-graded adic rings, where \(R_0\) has the trivial grading.
    For any complete adic perfectoid \(R_0\)-algebra \(Q\), the composition \(R \twoheadrightarrow R_0 \to Q\) gives a morphism of \(G\)-graded adic rings
    \begin{equation*}
        R \twoheadrightarrow R_0 \to Q,
    \end{equation*}
    where \(Q\) is the gradedwise complete \(G\)-graded adic perfectoid ring with the trivial grading.
    This shows the isomorphism
    \begin{equation*}
        (R_{\tgrpfd})_0 = \lim_{R \to P} P_0 \xrightarrow{\cong} \lim_{R_0 \to Q} Q = (R_0)_{\tperfd}
    \end{equation*}
    holds in \(\CAlg_{R_0}\).
\end{proof}

\begin{proposition} \label{TopologicalGradedPerfd}
    Let \(R\) be a \(G\)-graded adic ring with \(G = G[1/p]\) and topologically nilpotent \(p\).
    If \(R_{\tperfd}\) is concentrated in degree \(0\), then so is \(R_{\tgrpfd}\).
\end{proposition}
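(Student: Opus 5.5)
We would prove that each graded piece \((R_{\tgrpfd})_g\) is concentrated in degree \(0\) by realizing it as a retract of \(R_{\tperfd}\), or of a completed power of it. The tool is the adjunction of \Cref{AdjointDerivedGradedMod} together with the cofinality statement \Cref{TopPerfdGradedRing}. Fix a homogeneous ideal of definition \(I\) containing \(p\). By \Cref{RemarkGradedPerfd}, \(R_{\tgrpfd}\) is derived gradedwise \(I\)-complete, so by \Cref{GradedNakayamaLemma} it is enough to control it through \(\mcalF^I\) and \(\mcalG^I\).

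\textbf{Step 1: the graded structure map.} Consider the \(G\)-graded \(R\)-algebras \(P[G]\) (more precisely their gradedwise \(I\)-completions \(\mcalG^I(P)\)), where \(P\) runs over complete \(I\)-adic perfectoid \(R\)-algebras. By the argument of \Cref{CofinalGradedPerfdForGradedRing}, each of these is a graded perfectoid \(R\)-algebra, via the coaction \(\rho_R\). Hence the defining limit \(R_{\tgrpfd}=\grlim P'\) maps to \(\grlim_{P} \mcalG^I(P)\). Since \(\mcalG^I\) is a right adjoint, it preserves limits, so the target is \(\mcalG^I(\lim_P P)=\mcalG^I(R_{\tperfd})\). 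This gives a graded map
\[
\alpha\colon R_{\tgrpfd}\to \mcalG^I(R_{\tperfd}).
\]
Its underlying object in \(\mcalD(R)\) is \(R_{\tperfd}[G]\) by \Cref{AdjointDerivedGradedMod}. Because \(R_{\tperfd}\) is discrete by assumption, \(R_{\tperfd}[G]\) is a direct sum of copies of it, so each graded piece \(\mcalG^I(R_{\tperfd})_g\cong R_{\tperfd}\) is discrete.

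\textbf{Step 2: a retraction in each degree.} On \(g\)-th pieces, the map \(\alpha_g\) should admit a left inverse, produced from the graded perfectoid algebras themselves. Each graded perfectoid \(P'\) maps gradedwise to \(\mcalG^I(\mcalF^I(P'))\) via the coaction \(x_g\mapsto x_g t^g\). In degree \(g\), evaluating at \(t^g\mapsto 1\) and then projecting \(\comp{I}{P'}\to P'_g\) (projection to a summand after completion) is a retraction \((\mcalG^I\mcalF^IP')_g\to P'_g\). These retractions are natural in \(P'\).

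Taking limits over \(P'\), and using \Cref{TopPerfdGradedRing} to identify \(\lim_{P'}\comp{I}{P'}\) with \(R_{\tperfd}\), the \(g\)-th piece \((R_{\tgrpfd})_g=\lim_{P'}P'_g\) becomes a retract of \(\lim_{P'}(\mcalG^I\mcalF^I P')_g\cong\lim_{P'}\comp{I}{P'}\cong R_{\tperfd}\). Here we use that taking the \(g\)-th piece preserves limits, as in the proof of \Cref{GradedPerfdZeroPart}. A retract of a discrete object is discrete, so every \((R_{\tgrpfd})_g\) is concentrated in degree \(0\), and hence so is \(R_{\tgrpfd}\).

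\textbf{Main obstacle.} The delicate point is making the projection \(\comp{I}{P'}\to P'_g\) natural and well defined at the level of the completion. After \(I\)-adic completion, \(\comp{I}{P'}\) is not the direct sum of its pieces. However, \(P'_g\) is already \(I\)-adically complete (\Cref{BoundedTorsionGradedPerfd}), and \(\comp{I}{P'}\) is the completion of \(\bigoplus_g P'_g\). So the projection to degree \(g\) extends continuously, since it factors through \(\prod_g P'_g\). We must check that this extension is compatible with morphisms of graded perfectoid rings, and that the limit over \(P'\) of the targets \(\comp{I}{P'}\) really is \(R_{\tperfd}\); the latter is exactly \Cref{TopPerfdGradedRing}. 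If the retraction argument fails at the \(\infty\)-categorical level, the fallback is to compute \(\pi_i\) of the limit directly as \(R^i\lim\). The same splitting would then show that \(R^i\lim_{P'}P'_g\) is a summand of \(R^i\lim_{P'}\comp{I}{P'}=\pi_{-i}R_{\tperfd}=0\) for \(i\neq0\).
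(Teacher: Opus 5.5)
Your proposal is correct and is essentially the paper's argument (your Step 2): each graded piece \((R_{\tgrpfd})_g\) is a retract of \(\lim_P \comp{I}{P} \cong R_{\tperfd}\) (\Cref{TopPerfdGradedRing}), via the inclusion of the degree-\(g\) piece followed by projection onto it, natural in \(P\), and is therefore discrete. The paper works with the quotients \(P/I^nP\) indexed by pairs \((P,n)\), where the degree-\(g\) projection is simply projection from a direct sum, so it avoids the completion issue you flag as the main obstacle; your Step 1 and the appeals to \(\mcalG^I\) and \Cref{GradedNakayamaLemma} are not needed.
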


\begin{proof}
    We assume that \(R_{\tperfd}\) is concentrated in degree \(0\).
    For simplicity, we denote the limit \(\lim_{P \in \Perfd_{\graded{G}}^{\wedge I}(R)} \lim_{n > 0} (-)\) by \(\lim_{(P, n)} (-)\).
    Since the limits in \(\mcalD_{\graded{G}}(\setZ)\) are computed gradedwise, the description of \(R_{\tgrpfd}\) in \Cref{RemarkGradedPerfd} gives a sequence of morphisms
    \begin{align*}
        \lim_{(P, n)} (P/I^nP)_g & \to \bigoplus_{g \in G} \lim_{(P, n)} (P/I^nP)_g \to \lim_{(P, n)} \bigoplus_{g \in G} (P/I^nP)_g \\
        & \to \lim_{(P, n)} \prod_{g \in G} (P/I^nP)_g \to \lim_{(P, n)} (P/I^nP)_g
    \end{align*}
    in \(\mcalD(\setZ)\) for any \(g \in G\) such that its composition is the identity morphism. 
    The second term is the same as \(R_{\tgrpfd}\).
    The assumption yields that the third term is concentrated in degree \(0\), and then the first term is also so.
    Therefore, their coproduct \(R_{\tgrpfd}\) is concentrated in degree \(0\).
\end{proof}

\subsection{Absolute perfectoidization of pro-graded rings}

For later convenience, we define the absolute graded perfectoidization of pro-graded rings.

\begin{definition} \label{DefGradedAbsPerfdProGraded}
    Take a pro-\(G\)-graded adic ring \((R, R_{\graded})\) and assume that \(G = G[1/p]\) and \(p\) is topologically nilpotent in \(R_{\graded}\).
    The \emph{graded absolute perfectoidization} is the pair
    \begin{equation*}
        (R_{\tperfd}, R_{\tgrpfd}) \in \CAlg_R \times \CAlg(\mcalD_{\graded{G}}(R_{\graded}))
    \end{equation*}
    for which \(R_{\tperfd}\) is the topological absolute perfectoidization of the adic ring \(R\) defined in \Cref{DefTopologicalAbsolutePerfectoidization} and \(R_{\tgrpfd}\) is the absolute graded perfectoidization of the \(G\)-graded adic ring \(R_{\graded}\) defined in \Cref{DefGradedAbsPerfd}.
    Both objects exist by the preceding constructions.

\end{definition}

\begin{remark} \label{RemarkGradedPerfdProGraded}
    Take a pro-\(G\)-graded adic ring \((R, R_{\graded})\) with \(G = G[1/p]\) and let \(I\) be a homogeneous ideal of definition of \(R_{\graded}\) containing \(p\).
    By \Cref{RemarkCompletePerfd} and \Cref{RemarkGradedPerfd}, the absolute graded perfectoidization \((R_{\tperfd}, R_{\tgrpfd})\) sits in \(\CAlg_R^{\comp{I}} \times \CAlg(\mcalD_{\graded{G}}^{\comp{I}}(R))\).
\end{remark}

\subsection{Representation as a cosimplicial limit}

We start to prove the existence of absolute graded perfectoidization.

\begin{remark}
    Using a method based on our previous paper \cite{ishizuka2026Derived}, we can give another proof of the existence (\Cref{CompletionOfAbsoluteGradedPerfd}). However, the proof below (\Cref{ExistenceOfGradedPerfd}) does not depend on \cite{ishizuka2026Derived}, and gives not only the existence but also a representation as a cosimplicial limit.
\end{remark}

For the semiperfectoid case, we have already proved the existence of graded absolute perfectoidization as follows.

\begin{lemma}[{cf. \cite{ishizuka2025Graded}}] \label{GradedPerfdSemiPerfd}
    Let \(R\) be a \(G\)-graded \(I\)-adic semiperfectoid ring\footnote{A \emph{\(G\)-graded semiperfectoid ring} \(R\) is a \(G\)-graded ring \(R\) which admits a surjective graded ring homomorphism \(P \twoheadrightarrow R\) from a \(G\)-graded perfectoid ring \(P\).} for a finitely generated homogeneous ideal \(I\) of \(R\).
    Then the absolute graded perfectoidization \(R_{\tgrpfd}\) exists and is a \(G\)-graded perfectoid ring.
    In particular, there are isomorphisms
    \[
        \dcomp{I}{R_{\tgrpfd}} \cong \comp{I}{R_{\tgrpfd}} \xrightarrow{\cong} R_{\tperfd}.
    \]
\end{lemma}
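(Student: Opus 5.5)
The approach is to produce a graded perfectoid ring that is initial among gradedwise complete $G$-graded adic perfectoid $R$-algebras. Once we have one, the defining limit in \Cref{DefGradedAbsPerfd} is attained at an initial object and therefore exists and equals it.

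The first step builds the candidate. Fix a surjection $\pi\colon P\twoheadrightarrow R$ of $G$-graded rings from a $G$-graded perfectoid ring $P$, and let $K=\ker\pi$, a homogeneous ideal. Following the ungraded construction of Bhatt--Scholze for semiperfectoid rings, and the treatment in \cite{ishizuka2025Graded}, we pass to the graded prism $(A_{\inf}(P),\ker\theta)$ and form the perfection of the graded prismatic envelope. Concretely, this is the gradedwise completion of the graded ring
\[
P\big/\overline{K}, \qquad \overline{K}\defeq\{x\in P \mid x^{p^n}\in K+pP \text{ for some } n\},
\]
suitably $p$-completed. In other words, the candidate is $R_{\perfd}$ computed on homogeneous pieces. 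Applying \Cref{CompatibleSystemModp} to homogeneous generators of $K$ modulo $p$, we may take the relevant $p$-power roots to be homogeneous. This ensures that every ingredient of the Bhatt--Scholze construction (the perfect prismatic envelope, $\theta$, and the quotient) is compatible with the $G$-grading. We then check that the result $S$ is a $G$-graded perfectoid ring using criterion (3) of \Cref{graded-perfectoid-equiv}. Both (3) and its ungraded analogue are conditions that can be verified degree by degree, and the underlying completion $\comp{p}{S}$ is the perfectoid ring $R_{\perfd}$ of the semiperfectoid ring $R$.

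The second step proves the universal property. Let $R\to Q$ be a map to a gradedwise complete graded adic perfectoid ring. The composite $P\to Q$ is a map of graded perfectoid rings that kills $K$. By the ungraded universal property, after $p$-completion it factors uniquely through $R_{\perfd}=\comp{p}{S}$. Since $R_{\perfd}$ is the universal perfectoid $R$-algebra, it is discrete (\cite{bhatt2022Prismsa}*{Theorem 7.4}). The factorization respects gradings: the map $S\to Q$ is determined on homogeneous generators, namely images of $P$ and their $p$-power roots, which are homogeneous. Uniqueness follows from uniqueness in the ungraded case together with injectivity of $Q\hookrightarrow\comp{p}{Q}$ on graded pieces. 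Finally, passing to the $I$-adic topology uses \Cref{BoundedTorsionGradedPerfd}(3), \Cref{DerivedGrcompGradedPerfd} and \Cref{GradedPerfdCofinalFunctors}. These give that $\grcomp{I}{S}$ is initial in $\Perfd_{\graded{G}}^{\wedge I}(R)$, and hence $R_{\tgrpfd}\cong\grcomp{I}{S}$ is a $G$-graded perfectoid ring.

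The last assertion then follows. By \Cref{BoundedTorsionGradedPerfd}(4) we have $\dcomp{I}{R_{\tgrpfd}}\cong\comp{I}{R_{\tgrpfd}}$. The ring $\comp{I}{R_{\tgrpfd}}$ is an $I$-adic complete perfectoid $R$-algebra by \Cref{BoundedTorsionGradedPerfd}(3), and it equals $\comp{I}{R_{\perfd}}$. Since $R_{\perfd}$ is discrete, \Cref{DiscretePropertyTopAbsPerfd} identifies $\comp{I}{R_{\perfd}}$ with $R_{\tperfd}$.

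The main obstacle is the first step: showing that the perfectoidization of $R$ is compatible with the grading. This means the perfect prismatic envelope of a graded prism along a homogeneous ideal must be naturally graded, with each piece $p$-complete. The ungraded object is a completion of a direct sum and need not be graded a priori. I would handle this by working with the gradedwise completion throughout. Concretely, I would take $A_{\inf}$ of the graded perfectoid $P$ as the gradedwise completed $W(P^\flat)$, adjoin homogeneous roots as above, and only at the end compare with the ungraded object through the $p$-completion, using \Cref{GradedNakayamaLemma}.
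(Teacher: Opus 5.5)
Your outer structure matches the paper's: produce an initial graded perfectoid $R$-algebra, so the limit exists and equals it, then use \Cref{BoundedTorsionGradedPerfd}(4) and the discreteness of $R_{\perfd}$ for the completions. The gap is in the first step, which carries all the content and is not established.

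The explicit candidate you give is wrong. Since $p\in K+pP$, your $\overline{K}$ contains $p$, so $P/\overline{K}$ is the perfect $\mathbb{F}_p$-algebra $(R/pR)_{\perf}$. This ring is graded perfectoid, but it is not $R_{\perfd}$: for $K=0$ and $P$ $p$-torsion-free you get $P/\sqrt{pP}$ instead of $P$. It also admits no map to any nonzero $p$-torsion-free graded perfectoid $R$-algebra. Adjusting generators of $K$ modulo $p$ via \Cref{CompatibleSystemModp} cannot repair this, because $(P/K)_{\perfd}$ is not determined by $K+pP$.

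What you actually need is that the kernel $J$ of the surjection $\comp{p}{P}\to R_{\perfd}$ (\cite{bhatt2022Prismsa}*{Theorem 7.4}) is compatible with the grading. Concretely: $J\cap P$ is homogeneous, and $S:=P/(J\cap P)$ satisfies $\comp{p}{S}\cong R_{\perfd}$. You assert this without proof. Your closing paragraph is only a plan: the graded perfect prismatic envelope is never constructed, and \Cref{GradedNakayamaLemma} becomes usable only once a graded candidate with a map to $R_{\perfd}$ exists. Your grading and uniqueness arguments for $S\to Q$, and the identification $\comp{I}{R_{\tgrpfd}}=\comp{I}{R_{\perfd}}$, all rest on this claim.

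The paper does not construct anything here. It does the following:
\begin{enumerate}
\item It views $(\comp{I}{R},R)$ as a pro-$G$-graded $I$-adic semiperfectoid ring and takes the initial pro-$G$-graded $I$-adic perfectoid ring over it from \cite{ishizuka2025Graded}*{Theorem 4.16}.
\item It transports this via \Cref{CatEquivGrPerfd} to an initial $G$-graded $I$-adic perfectoid $R'$, whose $I$-adic completion is the initial $I$-adic perfectoid $R$-algebra.
\item It reads off $R_{\tgrpfd}\cong R'$ and $R_{\tperfd}\cong\comp{I}{R'}$ from \Cref{TopPerfdGradedRing} and \Cref{RemarkGradedPerfd}.
\end{enumerate}

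If you want a self-contained argument for the missing step instead of the citation, the coaction works. Apply $(-)_{\perfd}$ to $\rho_R\colon R\to R[G]$ and use \Cref{RelativelyPerfectGroupRing}. This yields a map $R_{\perfd}\to R_{\perfd}\langle G\rangle$ compatible with $\comp{p}{P}\to\comp{p}{P}\langle G\rangle$ and with the coefficientwise map $\comp{p}{P}\langle G\rangle\to R_{\perfd}\langle G\rangle$. Hence every homogeneous component of an element of $J$ lies in $J$. It follows that $J\cap P$ is homogeneous with $p$-adically closed graded pieces, and that $\comp{p}{S}\cong\comp{p}{P}/J=R_{\perfd}$. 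After that, your second and third steps go through.
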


\begin{proof}
    Taking the pro-\(G\)-graded \(I\)-adic ring \((\comp{I}{R}, R)\), we can show that this is a pro-\(G\)-graded \(I\)-adic semiperfectoid ring in the sense of \cite{ishizuka2025Graded}*{Definition 4.14}.
    By \cite{ishizuka2025Graded}*{Theorem 4.16}, there exists the initial pro-\(G\)-graded \(I\)-adic perfectoid ring over \((\comp{I}{R}, R)\).
    Because of the categorical equivalence between pro-\(G\)-graded perfectoid rings and \(G\)-graded perfectoid rings (\Cref{CatEquivGrPerfd}), there exists the initial \(G\)-graded \(I\)-adic perfectoid ring \(R'\) over \(R\) and its \(I\)-adic completion \(\comp{I}{R'}\) is the initial \(I\)-adic perfectoid \(R\)-algebra.
    Using the description of \(R_{\tgrpfd}\) and \(R_{\tperfd}\) by \Cref{TopPerfdGradedRing} and \Cref{RemarkGradedPerfd}, we have isomorphisms
    \begin{equation*}
        R_{\tgrpfd} \cong R' \quad \text{and} \quad R_{\tperfd} \cong \comp{I}{R'}.
    \end{equation*}
    In particular, \(R_{\tperfd}\) and \(R_{\tgrpfd}\) exist and are \(I\)-adic perfectoid ring and \(G\)-graded \(I\)-adic perfectoid ring respectively.
    On the last statement, the first isomorphism follows from \Cref{BoundedTorsionGradedPerfd}(4).
\end{proof}

To prove the existence of the absolute graded perfectoidization, we first construct a weakly initial object in the category of \(G\)-graded perfectoid \(R\)-algebras.

\begin{lemma} \label{WeaklyInitialPerfd}
    Let \(R\) be a \(G\)-graded adic ring with \(G = G[1/p]\) and a homogeneous ideal of definition \(I\) containing \(p\).
    Let \(R_{\prism, \graded}^{\perf}\) be the category of all \(G\)-graded \(I\)-adic perfectoid \(R\)-algebras.
    Then this category admits a weakly initial object \(R_{w.i.}\).
\end{lemma}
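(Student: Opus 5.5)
We construct \(R_{w.i.}\) as a graded semiperfectoid-type cover of \(R\): a \(G\)-graded \(I\)-adic perfectoid ring mapping to \(R\) surjectively, pushed out along \(R\), and then perfectoidized using \Cref{GradedPerfdSemiPerfd}. Concretely, choose a set of homogeneous generators \(\{x_j\}_{j \in S}\) of \(R\) as a ring, with \(x_j \in R_{g_j}\), for instance all homogeneous elements. Consider the \(G\)-graded polynomial ring
\[
A \defeq \setZ_p[T_j^{1/p^\infty} : j \in S], \qquad \deg T_j^{1/p^n} = g_j/p^n ,
\]
which makes sense since \(G = G[1/p]\). Let \(A_\infty\) be its gradedwise \(p\)-completion, or better \(\setZ_p^{\mathrm{cycl}}\)-coefficients in degree \(0\). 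Then \(A_\infty\) is a \(G\)-graded perfectoid ring. By \Cref{graded-perfectoid-equiv}, this reduces to checking that Frobenius mod \(\varpi\) is bijective and that the torsion condition holds, degree by degree on monomials. Next form the graded ring \(B \defeq R \otimes_{\setZ} A\), quotiented by the homogeneous ideal generated by \(T_j - x_j\). Give it the \(I\)-adic topology and take its gradedwise \(I\)-completion.

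The next step is to check that \(B\) is a \(G\)-graded \(I\)-adic semiperfectoid ring over \(R\). The natural map \(A_\infty \otimes R \to B\) need not come from a perfectoid ring surjecting onto \(B\), since \(R\) itself sits as a tensor factor. To fix this, replace \(R\) by a perfectoid source: take \(A_\infty\) built on all homogeneous elements of \(R\), so that the map \(A_\infty \to R\), \(T_j \mapsto x_j\), is surjective. Then set
\[
B \defeq \grcomp{I}{R \otimes_{A_\infty^{(0)}} A_\infty},
\]
where \(A_\infty^{(0)} \subset A_\infty\) is the subring \(\setZ_p[T_j]\) of non-fractional monomials mapping onto \(R\). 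Here \(B\) is a quotient of the graded perfectoid ring \(\grcomp{I}{A_\infty \otimes_{A_\infty^{(0)}} A_\infty}\). This last ring is perfectoid, being a completed polynomial-type extension with \(p\)-power roots, again checked via \Cref{graded-perfectoid-equiv} together with \Cref{BoundedTorsionGradedPerfd}(3) for the \(I\)-completion. So \(B\) is graded semiperfectoid, and \Cref{GradedPerfdSemiPerfd} yields a \(G\)-graded \(I\)-adic perfectoid \(R\)-algebra \(R_{w.i.} \defeq B_{\tgrpfd}\).

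Weak initiality is then checked as follows. Given any \(G\)-graded \(I\)-adic perfectoid \(R\)-algebra \(P\), the image of each \(x_j\) in \(P_{g_j}\) need not have \(p\)-power roots. By \Cref{CompatibleSystemModp}, however, it agrees modulo \(p\) with a homogeneous element admitting compatible homogeneous \(p\)-power roots. This discrepancy is the main obstacle: we need an actual \(R\)-algebra map \(B \to P\), not just one modulo \(p\).

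I would first try to handle it by modifying the presentation so that the generators' roots are only required modulo \(p\). Adjoin variables \(T_j^{1/p^\infty}\) with the relation \(T_j \equiv x_j\) imposed via an extra variable \(y_j\) of degree \(g_j\) with \(x_j = T_j + p y_j\). Then \Cref{CompatibleSystemModp} gives a map: send \(T_j^{1/p^n}\) to the chosen roots of \(g\), and \(y_j\) to \((x_j - g)/p\). This requires \(P_{g_j}\) to be such that \(x_j-g\) is divisible by \(p\), which is exactly what the lemma provides. The ring then needs \(y_j\) to have roots too; iterate by adjoining roots of \(y_j\) in the same way, and take the colimit over the iteration, completed. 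The result is still a quotient of a graded perfectoid ring, and every \(P\) receives a map, noncanonically by choice of roots. Universality of \(B_{\tgrpfd}\) among perfectoid \(B\)-algebras then gives \(R_{w.i.} \to P\).

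Since the objects are only required to form a set, a set-theoretic bound is unnecessary: a single object suffices.
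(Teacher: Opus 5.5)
There is a genuine gap: nothing in your construction supplies a perfectoid \emph{base} ring. Without one, your ring $B$ is not graded semiperfectoid, so \Cref{GradedPerfdSemiPerfd} does not apply. With $\setZ_p$-coefficients the completed ring $\setZ_p[T_j^{1/p^\infty}]$ is not perfectoid. Worse, the iterated ring of your last paragraph can fail to be semiperfectoid at all. Take $R=\setZ_p$, trivially graded, with $I=(p)$. Then your $B$ is concentrated in degree $0$ and maps to $\setZ_p$: send $T^{(k)}_a$ and its roots to the Teichm\"uller digit $[a_k]$ of $a=\sum_k p^k[a_k]$, and send $y^{(k)}_a$ to $\sum_{l>k}p^{l-k-1}[a_l]$. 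But no perfectoid ring maps to $\setZ_p$, since it would contain some $\varpi$ with $\varpi^p=pu$ for a unit $u$, which is impossible by valuation. Hence no graded perfectoid $Q\twoheadrightarrow B$ exists, because $Q_0$ would be perfectoid (\Cref{graded-perfectoid-equiv}). If you instead fix perfectoid coefficients such as $\setZ_p^{\mathrm{cycl}}$, then $B$ becomes semiperfectoid but weak initiality fails. A map $B\to P$ forces a map $\setZ_p^{\mathrm{cycl}}\to P_0$, and for odd $p$ the trivially graded perfectoid $\setZ_p$-algebra $\comp{p}{\setZ_p[p^{1/p^\infty}]}$ receives none, since it contains no primitive $p$-th root of unity.

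The missing idea is the paper's reduction to the case where $R_0$ receives a perfectoid ring, using a weakly initial perfectoid ring. Let $C=\comp{X}{\setF_p[X,Y^{\pm1},Z_1,Z_2,\dots]_{\perf}}$ and $d_C=[X]-p(Y,Z_1,Z_2,\dots)\in W(C)$. For any perfect prism $(A,(d))$, the Teichm\"uller expansion $d=[d_0]+p(u_0,u_1,\dots)$ gives a map $(W(C),(d_C))\to(A,(d))$. So $\widetilde C=W(C)/d_CW(C)$ maps to every perfectoid ring, in particular to every $P_0$. Hence every $P$ is an algebra over $R'=\grcomp{I}{\widetilde C\grotimes_{\setZ_p}R}$, and it suffices to build the weakly initial object over $R'$.

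After this reduction your iteration does work with coefficients in $\widetilde C$. In the gradedwise completion one has $x_j=\sum_k p^kT_j^{(k)}$, so the resulting ring is a quotient of the gradedwise completion of $\widetilde C[T_j^{(k),1/p^\infty}]$, which is graded perfectoid. \Cref{CompatibleSystemModp} then gives maps to every $P$; there you should choose a homogeneous $z$ with $pz=x_j-g$ rather than writing $(x_j-g)/p$, since $P$ may have $p$-torsion. This is a mild variant of the paper's first step. The paper instead adjoins $p$-th roots modulo $p$ of all homogeneous elements and surjects from $\grcomp{I}{\mcalO\grotimes_{\setZ_p}A_{\inf}^{\graded}(\widetilde R)}$. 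Your version avoids the graded $A_{\inf}$ at the cost of tracking the tails $y_j^{(k)}$.
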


\begin{proof}
    First we assume that the graded \(0\)-part \(R_0\) of \(R\) has a morphism from a perfectoid ring \(\mcalO\).
    Let \(P\) be a \(G\)-graded \(I\)-adic perfectoid \(R\)-algebra, and let \(x\) be a homogeneous element of \(R\).
    Then there exist homogeneous elements \(y_x\) and \(z_x\) of \(P\) such that \(x = y_x^p + pz_x\) in \(P\) by the graded perfectoidness of \(P\) (\cite{ishizuka2025Graded}*{Proposition 4.4}).
    Define a \(G\)-graded \(R\)-algebra and a morphism of \(G\)-graded \(R\)-algebras
    \begin{equation*}
        R_1 \defeq \grcomp{I}{R[Y_x, Z_x]_{x \in R_{\graded}}/(x-Y_x^p - pZ_x)_{x \in R_{\graded}}} \to P; \quad Y_x \mapsto y_x, \ Z_x \mapsto z_x,
    \end{equation*}
    where \(\deg(Y_x) \defeq \deg(y_x)\) and \(\deg(Z_x) \defeq \deg(z_x)\).
    Especially, any element \(x\) of \(R\) has a \(p\)-th root \(Y_x\) in \(R_1\) modulo \(p\).
    Repeating this construction, we can get a \(G\)-graded \(I\)-adic \(R\)-algebra \(\widetilde{R}\) such that any homogeneous element of \(\widetilde{R}\) has a \(p\)-th root modulo \(p\) and its graded \(0\)-part \(\widetilde{R}_0\) has a morphism from the perfectoid ring \(\mcalO\).
    Then we can take a surjective \(G\)-graded ring homomorphism
    \begin{equation*}
        \grcomp{I}{\mcalO \grotimes_{\setZ_p} A_{\inf}^{\graded}(\widetilde{R})} \twoheadrightarrow \widetilde{R},
    \end{equation*}
    where \(A_{\inf}^{\graded}(\widetilde{R})\) is the graded version of the ring of Witt vectors associated to \(\widetilde{R}^{\flat}\) defined in \cite{ishizuka2025Graded}*{Construction 3.22}.
    As in \cite{bhatt2019Topologicala}*{Remark 4.22}, the source is a \(G\)-graded \(I\)-adic perfectoid ring; hence \(\widetilde{R}\) is a \(G\)-graded \(I\)-adic semiperfectoid \(R\)-algebra admitting a morphism to any \(G\)-graded \(I\)-adic perfectoid \(R\)-algebra \(P\).
    Taking the absolute graded perfectoidization \(\widetilde{R}_{\grpfd}^{\graded}\) of \(\widetilde{R}\) by \Cref{GradedPerfdSemiPerfd}, we get a weakly initial object of \(R_{\prism, \graded}^{\perf}\).

    In the general case, we will prove that there exists a \(G\)-graded \(I\)-adic \(R\)-algebra \(R'\) such that \(R'_0\) admits a morphism from a perfectoid ring and a morphism to any \(G\)-graded \(I\)-adic perfectoid \(R\)-algebra \(P\).
    Once we get such \(R'\), the above construction of \(\widetilde{R'}\) works for \(R'\) and then we are done.

    Take the \(X\)-adic completed perfection
    \begin{equation*}
        C \defeq \comp{X}{\setF_p[X, Y, Y^{-1}, Z_1, Z_2, \dots]_{\perf}}
    \end{equation*}
    and consider an element
    \begin{equation*}
        d_C \defeq [X] - p (Y, Z_1, Z_2, \dots) \in W(C)
    \end{equation*}
    which is a distinguished element (\cite{bhatt2022Prismsa}*{Lemma 2.33}).
    For any perfect prism \((A, (d))\), there exists a morphism of perfect rings
    \begin{equation*}
        C \to A/pA; \quad X \mapsto d_0, \ Y \mapsto u_0, \ Z_i \mapsto u_i,
    \end{equation*}
    where \(d = [d_0] + p(u_0, u_1, u_2, \dots)\) is the Teichm\"uller expansion of \(d\).
    Taking the Witt ring, we have a morphism of perfect prisms \((W(C), (d_C)) \to (A, (d))\).
    Then \(\widetilde{C} \defeq W(C)/d_CW(C)\) is a weakly initial object of the category \((\setZ_p)_{\prism}^{\perf}\) of perfectoid rings.
    Then any \(G\)-graded \(I\)-adic perfectoid \(R\)-algebra \(P\) admits a morphism \(\widetilde{C} \to P_0\) since \(P_0\) is a perfectoid ring by \Cref{graded-perfectoid-equiv}. This gives a morphism
    \begin{equation*}
        R' \defeq \grcomp{I}{\widetilde{C} \grotimes_{\setZ_p} R} \to P
    \end{equation*}
    of \(G\)-graded \(R\)-algebras and therefore \(R'\) is a weakly initial object of \(R_{\prism, \graded}^{\perf}\).
\end{proof}

\begin{proposition} \label{ExistenceOfGradedPerfd}
    Let \(R\) be a \(G\)-graded adic ring with \(G = G[1/p]\) and a homogeneous ideal of definition \(I\) containing \(p\).
    Then the graded absolute perfectoidization \(R_{\tgrpfd}\) exists, i.e., the limit \eqref{DefGradedAbsPerfd} exists and, moreover, \(R_{\tgrpfd}\) can be constructed as a cosimplicial limit
    \begin{equation} \label{SimplicialLimitGradedPerfd}
        R_{\tgrpfd} \xrightarrow{\cong} \grlim_{[n] \in \Delta} (R_{w.i.}^{\cgrotimes_R (n+1)})_{\tgrpfd}
    \end{equation}
    of the absolute graded perfectoidization of graded semiperfectoid rings \(R_{w.i.}^{\cgrotimes_R(n+1)}\) in \(\CAlg(\mcalD_{\graded{G}}(R))\), where \(R_{w.i.}\) is a weakly initial object of \(R^{\perf}_{\prism,\graded}\), whose existence is guaranteed by \Cref{WeaklyInitialPerfd}.
\end{proposition}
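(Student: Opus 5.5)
We follow the standard argument for \(R_{\perfd}\) via a weakly initial object, as in \cite{bhatt2022Prismsa}*{Section 8}, carried out in the graded category. Fix \(R_{w.i.}\) as in \Cref{WeaklyInitialPerfd}; by construction it is a \(G\)-graded \(I\)-adic perfectoid \(R\)-algebra (the graded perfectoidization of a graded semiperfectoid ring). Consider the Čech nerve \([n] \mapsto S^n \defeq R_{w.i.}^{\cgrotimes_R (n+1)}\), the gradedwise \(I\)-completed tensor powers taken in discrete \(G\)-graded rings. Each \(S^n\) is a quotient of the graded perfectoid ring \(R_{w.i.}^{\cgrotimes_{\setZ_p}(n+1)}\), which is graded perfectoid because graded perfectoid rings are stable under gradedwise completed tensor products (check degreewise via \Cref{graded-perfectoid-equiv}). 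Hence \(S^n\) is a \(G\)-graded \(I\)-adic semiperfectoid ring. By \Cref{GradedPerfdSemiPerfd}, \((S^n)_{\tgrpfd}\) exists and is a discrete \(G\)-graded perfectoid \(R\)-algebra, namely the initial one over \(S^n\). Functoriality turns \([n]\mapsto (S^n)_{\tgrpfd}\) into a cosimplicial object of \(\CAlg(\mcalD_{\graded{G}}(R))\), and we set \(L\) to be its limit, which exists because \(\mcalD_{\graded{G}}(R)\) admits limits computed gradedwise.

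Next we show that \(L\) satisfies the universal property of the limit over \(\Perfd_{\graded{G}}^{\wedge I}(R)\); by \Cref{RemarkGradedPerfd} this suffices. To get a cone, take a graded perfectoid \(R\)-algebra \(P\). Then \(P \to P \cgrotimes_R R_{w.i.}\), and the cosimplicial object \([n]\mapsto (P \cgrotimes_R S^n)_{\tgrpfd}\) has an extra degeneracy. The extra degeneracy comes from a section \(R_{w.i.} \to P\), which exists by weak initiality and is \(R\)-linear. So \(P \simeq \grlim_{\Delta}(P\cgrotimes_R S^n)_{\tgrpfd}\), and the maps \((S^n)_{\tgrpfd} \to (P\cgrotimes_R S^n)_{\tgrpfd}\) induce \(L \to P\), functorially in \(P\). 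Conversely, from a compatible family of maps \(L\to P\), we recover \(L\simeq \grlim_P P\) by writing both sides as a double limit over \(\Delta \times \Perfd^{\wedge I}_{\graded{G}}(R)\) and exchanging the limits. For fixed \(n\), the category of graded perfectoid \(R\)-algebras under \(S^n\) has initial object \((S^n)_{\tgrpfd}\). Hence \(\grlim_{P} (P\cgrotimes_R S^n)_{\tgrpfd}\) is computed by the cofinal object and equals \((S^n)_{\tgrpfd}\), using that \(P\mapsto (P\cgrotimes_R S^n)_{\tgrpfd}\) receives \((S^n)_{\tgrpfd}\) as its value at the initial object. Concretely, this is the argument of \cite{bhatt2022Prismsa}*{Proposition 8.5} (limit over a category with a weakly initial object equals the Čech limit), transported verbatim to the graded setting.

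The main obstacle is that argument: identifying \(\grlim_{P\in \Perfd^{\wedge I}_{\graded{G}}(R)} P\) with the Čech limit. We would first isolate an abstract lemma. For any \(\infty\)-category \(\mcalC\) with a weakly initial object \(w\) and finite coproducts, and any functor \(F\colon \mcalC^{\opposite}\)-shaped diagram (here the forgetful functor to \(\CAlg(\mcalD_{\graded{G}}(R))\)), the limit over \(\mcalC\) equals \(\grlim_{\Delta} F(w^{\sqcup(n+1)})\). Here coproducts in graded perfectoid \(R\)-algebras are \((P\cgrotimes_R Q)_{\tgrpfd}\), which again uses \Cref{GradedPerfdSemiPerfd}. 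The point to check is that \((S^n)_{\tgrpfd}\) really is the \((n+1)\)-fold coproduct of \(R_{w.i.}\) in \(\Perfd^{\wedge I}_{\graded{G}}(R)\). This follows from the universal property of \(\cgrotimes_R\) for discrete graded complete rings together with initiality in \Cref{GradedPerfdSemiPerfd}. Given this, the lemma follows by the usual cofinality argument: the Čech nerve of \(w\) is left cofinal in \(\mcalC\) because each slice \(\mcalC_{/x}\)-type fibre is a nonempty category whose nerve is contractible by the extra-degeneracy argument. The result is the isomorphism \eqref{SimplicialLimitGradedPerfd}.
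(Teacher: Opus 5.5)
Your third paragraph is the paper's proof. The paper shows that \([n]\mapsto (R_{w.i.}^{\cgrotimes_R(n+1)})_{\tgrpfd}\) is left cofinal in \(R^{\perf}_{\prism,\graded}\). For each \(P\), the fibre \(\Delta\times_{R^{\perf}_{\prism,\graded}}(R^{\perf}_{\prism,\graded})_{/P}\) straightens to the simplicial set \([n]\mapsto \Hom(R_{w.i.},P)^{n+1}\), which is split as soon as one map \(R_{w.i.}\to P\) exists. Existence and value of the limit then transfer along this cofinal functor.

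Your second paragraph (the double limit and the exchange) is therefore not needed. As written, it also relies on an initial object of the indexing category \(\Perfd^{\wedge I}_{\graded{G}}(R)\), which need not exist. The correct reason for \(\grlim_P (P\cgrotimes_R S^n)_{\tgrpfd}\simeq (S^n)_{\tgrpfd}\) is that \(P\mapsto (P\cgrotimes_R S^n)_{\tgrpfd}\) is left adjoint to the forgetful functor from graded perfectoid algebras under \((S^n)_{\tgrpfd}\). A left adjoint is limit-cofinal, and that category has \((S^n)_{\tgrpfd}\) as initial object.

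One step is false as stated. Gradedwise completed tensor products over \(\setZ_p\) of graded perfectoid rings are not graded perfectoid in general, because \(\setZ_p\) is not a perfectoid base. Already with the trivial grading, take \(A=\comp{p}{\setZ_p[p^{1/p^\infty}]}\), \(B=A\widehat{\otimes}_{\setZ_p}A\) and \(\varpi=p^{1/p}\otimes 1\), so \(\varpi^p=p\). The class of \(1\otimes p^{1/p}\) is nonzero in \(B/\varpi\cong A/p^{1/p}\otimes_{\setF_p}A/p\). Its \(p\)-th power is \(1\otimes p=0\) in \(B/p\), so Frobenius \(B/\varpi\to B/p\) is not injective and \(B\) is not perfectoid.

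What you actually need, that \(S^n\) is graded semiperfectoid, is still true, but it needs a different argument:
\begin{enumerate}
\item By \Cref{CompatibleSystemModp}, every homogeneous element of \(R_{w.i.}\) is congruent mod \(p\) to a homogeneous \(b\) with compatible homogeneous \(p\)-power roots.
\item Take the gradedwise \(I\)-completion of \(R_{w.i.}[X_{j,b}^{1/p^\infty}]\) with \(\deg X_{j,b}^{1/p^k}=\deg(b)/p^k\), which is allowed since \(G=G[1/p]\). It is graded perfectoid by the monomialwise check of \Cref{graded-perfectoid-equiv}(3), as in the proof of \Cref{CofinalGradedPerfdForGradedRing}.
\item Send \(X_{j,b}^{1/p^k}\) to \(b^{1/p^k}\) in the \(j\)-th tensor factor. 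This map is surjective onto \(S^n\) modulo \(p\), hence surjective by gradedwise completeness and \(p\in I\).
\end{enumerate}
The same fix is needed for \(P\cgrotimes_R S^n\) if you keep your second paragraph. With this repaired, your proof is correct.
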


\begin{proof}

    Let \(R_{\prism, \graded}^{\perf}\) be the category of all \(G\)-graded \(I\)-adic perfectoid \(R_{\graded}\)-algebras.
    As in the proof of \cite{bhatt2022Prismsa}*{Proposition 8.5}, we pick a weakly initial object \(R_{w.i.}\) of \(R_{\prism, \graded}^{\perf}\) by \Cref{WeaklyInitialPerfd}.
    The gradedwise \(I\)-adically completed tensor product
    \begin{equation*}
        R_{w.i.}^{\cgrotimes_R (n+1)} \defeq \comp{I}{\underbrace{R_{w.i.} \grotimes_R R_{w.i.} \grotimes_R \cdots \grotimes_R R_{w.i.}}_{(n+1)\text{-times}}}
    \end{equation*}
    has a surjective graded morphism
    \begin{equation*}
        \comp{I}{\prod_{i = 0}^n R_{w.i.}} \twoheadrightarrow \comp{I}{R_{w.i.} \grotimes_{\setZ} \cdots \grotimes_{\setZ} R_{w.i.}} \twoheadrightarrow R_{w.i.}^{\cgrotimes_R (n+1)}.
    \end{equation*}
    Since the source is a \(G\)-graded \(I\)-adic perfectoid ring, the target is a \(G\)-graded \(I\)-adic semiperfectoid \(R\)-algebra for any \(n \in \setZ_{\geq 0}\).
    Then its absolute graded perfectoidization \((R_{w.i.}^{\cgrotimes_R (n+1)})_{\tgrpfd}\) exists in \(\CAlg(\mcalD_{\graded{G}}(R))\) as a discrete \(G\)-graded \(R\)-algebra by \Cref{GradedPerfdSemiPerfd}.
    So it suffices to show that the isomorphism \eqref{SimplicialLimitGradedPerfd} holds.

    For any \(P \in R_{\prism, \graded}^{\perf}\), the augmented simplicial anima
    \begin{equation*}
        P_{\bullet} \colon \Delta_+^{\opposite} \to \Ani; \quad [n] \mapsto \Map_{R_{\prism, \graded}^{\perf}}((R_{w.i.}^{\cgrotimes_R (n+1)})_{\tgrpfd}, P)
    \end{equation*}
    admits a splitting because there exists a morphism \(R_{w.i.} \to P\) of \(G\)-graded adic \(R\)-algebras, and then its geometric realization \(\colim_{\Delta^{\opposite}} P_{\bullet} \simeq P_{-1}\) is contractible.
    
    Take the functor
    \begin{equation} \label{DeltaFunctorPerfCat}
        \Delta \to R_{\prism, \graded}^{\perf}; \quad [n] \mapsto (R_{w.i.}^{\cgrotimes_R (n+1)})_{\tgrpfd}
    \end{equation}
    and the simplicial set
    \begin{equation*}
        K_P \defeq \Delta \times_{R_{\prism, \graded}^{\perf}} (R_{\prism, \graded}^{\perf})_{/P}
    \end{equation*}
    for an object \(P \in R_{\prism, \graded}^{\perf}\).
    We will show that \(K_P\) is weakly contractible, namely, its geometric realization is equivalent to a point.
    The canonical morphism \(K_P \to \Delta\) is a right fibration and it corresponds to the functor \(P_{\bullet}\) above by the straightening-unstraightening equivalence (e.g., \cite{lurie2009Higher}*{Theorem 3.2.0.1}).
    Applying \cite{lurie2009Higher}*{Corollary 3.3.4.6} for the left fibration \(K_P^{\opposite} \to \Delta^{\opposite}\), we can show that
    \begin{equation*}
        \abs{K_P} \simeq \abs{K_P^{\opposite}} \simeq \colim_{\Delta^{\opposite}} P_{\bullet} \simeq \ast \in \Ani
    \end{equation*}
    by the computation of the colimit above.
    This tells us that the simplicial set \(K_P\) is weakly contractible for any \(P \in R_{\prism, \graded}^{\perf}\).

    By \citeKero{02NY}, the functor \eqref{DeltaFunctorPerfCat} is left cofinal.
    So we can show that
    \begin{equation*}
        R_{\prism, \graded}^{\perf} \to \CAlg(\mcalD_{\graded{G}}(R)); \quad P \mapsto P
    \end{equation*}
    has a limit if and only if the composition
    \begin{equation*}
        \Delta \to R_{\prism, \graded}^{\perf} \to \CAlg(\mcalD_{\graded{G}}(R)); \quad [n] \mapsto (R_{w.i.}^{\cgrotimes_R (n+1)})_{\tgrpfd}
    \end{equation*}
    does and in this case, their limits are naturally isomorphic (see \citeKero{02NS} and \citeKero{02XU}).
    Therefore, \(R_{\tgrpfd}\) can be constructed as a cosimplicial limit
    \begin{equation*}
        R_{\tgrpfd} = \grlim_{P_{\graded} \in R_{\prism, \graded}^{\perf}} P_{\graded} \cong \grlim_{[n] \in \Delta} (R_{w.i.}^{\cgrotimes_R (n+1)})_{\tgrpfd}
    \end{equation*}
    in \(\CAlg(\mcalD_{\graded{G}}(R_{\graded}))\).
    \qedhere
\end{proof}

\begin{corollary} \label{RepresentationPerfd}
    Keep the notation in \Cref{ExistenceOfGradedPerfd}.
    Then the topological absolute perfectoidization \(R_{\tperfd}\) can be written as a cosimplicial limit
    \begin{equation*}
        R_{\tperfd} \xrightarrow{\cong} \lim_{[n] \in \Delta} \dcomp{I}{(R_{w.i.}^{\cgrotimes_R(n+1)})_{\tgrpfd}}
    \end{equation*}
    in \(\CAlg_R\).
\end{corollary}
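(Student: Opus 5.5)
The plan is to mimic the proof of \Cref{ExistenceOfGradedPerfd} at the level of ungraded completed perfectoid \(R\)-algebras, and then identify the terms with the ones appearing in the statement. The argument has three steps.

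\textbf{Step 1: reduce to a limit over \(\Perfd_{\graded{G}}^{\wedge I}(R)\).} By \Cref{TopPerfdGradedRing}, we have
\[
R_{\tperfd} \cong \lim_{P \in \Perfd_{\graded{G}}^{\wedge I}(R)} \dcomp{I}{P}
\]
in \(\CAlg_R\). So \(R_{\tperfd}\) is the limit of the composite functor
\[
R_{\prism,\graded}^{\perf} \to \CAlg_R, \quad P \mapsto \dcomp{I}{P},
\]
which is the forgetful functor followed by derived \(I\)-completion.

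\textbf{Step 2: restrict along the cosimplicial diagram.} In the proof of \Cref{ExistenceOfGradedPerfd} we showed that the functor
\[
\Delta \to R_{\prism,\graded}^{\perf}, \quad [n] \mapsto (R_{w.i.}^{\cgrotimes_R(n+1)})_{\tgrpfd}
\]
is left cofinal. This used the weak contractibility of the slices \(K_P\), which came from the split augmented simplicial anima \(P_\bullet\). Left cofinality says that the limit of any functor out of \(R_{\prism,\graded}^{\perf}\), into any \(\infty\)-category, may be computed after restricting along \(\Delta\) (\citeKero{02NS}). Applying this to the functor of Step 1 gives
\[
\lim_{P} \dcomp{I}{P} \cong \lim_{[n]\in\Delta} \dcomp{I}{(R_{w.i.}^{\cgrotimes_R(n+1)})_{\tgrpfd}}.
\]
The existence of the left-hand limit is not an issue: it is \(R_{\tperfd}\), which exists by \Cref{DefTopologicalAbsolutePerfectoidization}. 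The resulting isomorphism is compatible with the canonical map from \(R_{\tperfd}\), since both sides receive compatible maps from \(R\) and the cofinality identification is natural.

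\textbf{Step 3: identify the terms.} This step is a sanity check rather than an obstacle. By \Cref{ExistenceOfGradedPerfd}, each \((R_{w.i.}^{\cgrotimes_R(n+1)})_{\tgrpfd}\) is a discrete \(G\)-graded \(I\)-adic perfectoid \(R\)-algebra, obtained via \Cref{GradedPerfdSemiPerfd}. By \Cref{GradedPerfdSemiPerfd} its derived \(I\)-completion agrees with the classical completion and equals \((R_{w.i.}^{\cgrotimes_R(n+1)})_{\tperfd}\). So the cosimplicial terms are honest complete perfectoid rings, and each lies in \(\Perfd_{\graded{G}}^{\wedge I}(R)\), as required for the functor of Step 1 to apply.

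The main subtlety is confirming that the cofinality established in \Cref{ExistenceOfGradedPerfd} is the property \emph{of the functor \(\Delta \to R_{\prism,\graded}^{\perf}\)} and not of the target \(\CAlg(\mcalD_{\graded{G}}(R))\). If it is, it transfers verbatim to the target \(\CAlg_R\). This is exactly what Joyal's criterion (\citeKero{02NY}) gives, since weak contractibility of \(K_P\) depends only on the indexing categories.
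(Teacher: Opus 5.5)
Your argument is correct, but it is organized differently from the paper's.

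The paper does not reuse the cofinality of the graded cosimplicial diagram. Instead it reruns the \v{C}ech-nerve argument of \Cref{ExistenceOfGradedPerfd} in the ungraded category of complete \(I\)-adic perfectoid \(R\)-algebras. There \(\comp{I}{R_{w.i.}}\) is weakly initial by \Cref{CofinalGradedPerfdForGradedRing}, which gives \(R_{\tperfd}\cong\lim_{\Delta}(\comp{I}{R_{w.i.}}^{\widehat{\otimes}_R(n+1)})_{\tperfd}\). The paper then identifies \(\comp{I}{R_{w.i.}}^{\widehat{\otimes}_R(n+1)}\) with the \(I\)-adic completion of \(R_{w.i.}^{\cgrotimes_R(n+1)}\). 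Finally it rewrites each term as \(\dcomp{I}{(R_{w.i.}^{\cgrotimes_R(n+1)})_{\tgrpfd}}\) via \Cref{GradedPerfdSemiPerfd}.

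You instead pass to the graded indexing category through \Cref{TopPerfdGradedRing}. You then restrict along the left cofinal functor \(\Delta\to R^{\perf}_{\prism,\graded}\) already constructed in \Cref{ExistenceOfGradedPerfd}. As you say, left cofinality concerns only the indexing categories, so it applies equally in \(\CAlg_R\). Your Step 3 is not needed beyond the fact that the terms lie in \(R^{\perf}_{\prism,\graded}\).

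What each route buys:
\begin{itemize}
\item Your route is shorter. It avoids comparing completed tensor powers and avoids the termwise use of \Cref{GradedPerfdSemiPerfd}.
\item The price is that it rests on the genuine cofinality asserted in \Cref{TopPerfdGradedRing}. The paper's route needs only weak initiality, which is exactly what \Cref{CofinalGradedPerfdForGradedRing} provides.
\item The paper's route also yields the purely ungraded formula \(R_{\tperfd}\cong\lim_{\Delta}(R_{w.i.}^{\cgrotimes_R(n+1)})_{\tperfd}\) along the way.
\end{itemize}

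If you want your input to be airtight, note that the mere existence of maps \(\comp{I}{Q}\to P\) does not make the comma categories weakly contractible. What does is an adjunction. The construction \(P\mapsto P[G]\) from \Cref{CofinalGradedPerfdForGradedRing} (with \(H=G\) and \(R\)-algebra structure \(r_g\mapsto\psi(r_g)t^g\)) is right adjoint to \(Q\mapsto\comp{I}{Q}\). Hence each comma category has a terminal object.
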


\begin{proof}
    Note that any weakly initial object \(R_{w.i.}\) of \(R_{\prism,\graded}^{\perf}\) gives rise to a weakly initial object \(\comp{I}{R_{w.i.}}\) of the category \(R_{\prism}^{\perf}\) of all \(I\)-adic perfectoid \(R\)-algebras by \Cref{CofinalGradedPerfdForGradedRing}.
    The same proof of \Cref{ExistenceOfGradedPerfd} shows that \(R_{\tperfd}\) can be written as a cosimplicial limit
    \begin{equation*}
        R_{\tperfd} \xrightarrow{\cong} \lim_{[n] \in \Delta} (\comp{I}{R_{w.i.}}^{\widehat{\otimes}_R(n+1)})_{\tperfd}
    \end{equation*}
    in \(\CAlg_R\).
    Each \(\comp{I}{R_{w.i.}}^{\widehat{\otimes}_R(n+1)}\) is the \(I\)-adic completion of the graded semiperfectoid ring \(R_{w.i.}^{\cgrotimes_R(n+1)}\) and then we have an isomorphism
    \begin{equation*}
        R_{\tperfd} \xrightarrow{\cong} \lim_{[n] \in \Delta} (R_{w.i.}^{\cgrotimes_R(n+1)})_{\tperfd}.
    \end{equation*}
    Because of \Cref{GradedPerfdSemiPerfd}, each component \((R_{w.i.}^{\cgrotimes_R(n+1)})_{\tperfd}\) of the cosimplicial diagram is the derived \(I\)-completion of \((R_{w.i.}^{\cgrotimes_R(n+1)})_{\tgrpfd}\). This completes the proof.
\end{proof}



\subsection{Completion of the absolute graded perfectoidization}


In this subsection, we prove the isomorphism \(\dcomp{I}{R_{\tgrpfd}} \xrightarrow{\cong} R_{\tperfd}\) (\Cref{CompletionOfAbsoluteGradedPerfd}).
Our proof is based on a reduction to the non-graded case and on the formal action of the group scheme associated with the grading.
This gives another proof of the existence of absolute graded perfectoidization.




\begin{lemma} \label{RelativelyPerfectGroupRing}
    Let \(R\) be a \(p\)-adic ring with topologically nilpotent \(p\) and let \(G\) be a torsion-free abelian group with \(G = G[1/p]\).
    Take the group ring \(R[G] \defeq \bigoplus_{g \in G} Rt^g\) and its \(p\)-adic completion \(R\abracket{G}\).
    Then the canonical morphism
    \begin{equation*}
        R \to R\abracket{G}; \quad r \mapsto r t^0
    \end{equation*}
    becomes relatively perfect after reduction modulo \(p\) in the sense of \cite{bhatt2024Perfectoid}*{Proposition 3.18}.
    Especially, the canonical morphism
    \begin{equation} \label{RelativelyPerfectGroupRingIso}
        R_{\perfd} \widehat{\otimes}^L_R R\abracket{G} \to (R\abracket{G})_{\perfd}
    \end{equation}
    is an isomorphism in \(\CAlg_{R\abracket{G}}^{\comp{p}}\).
\end{lemma}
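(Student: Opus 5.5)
We will verify the relative perfectness condition of \cite{bhatt2024Perfectoid}*{Proposition 3.18} directly, namely that the relative Frobenius
\[
F_{R\abracket{G}/R} \colon (R/p) \otimes_{R/p, \varphi} (R\abracket{G}/p) \to R\abracket{G}/p
\]
is an isomorphism. After that, the isomorphism \eqref{RelativelyPerfectGroupRingIso} follows from the cited result.

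We first identify the ring modulo \(p\). Since \(R\abracket{G}\) is the \(p\)-adic completion of the free \(R\)-module \(R[G]\), we have \(R\abracket{G}/p \cong R[G]/p = (R/p)[G]\); a free module has no \(p\)-torsion issues, so this holds even derivedly. Thus the relative Frobenius becomes the map
\[
(R/p)\otimes_{R/p,\varphi}(R/p)[G] \cong (R/p)[G] \to (R/p)[G], \qquad r \otimes t^g \mapsto r\, t^{pg}.
\]
Concretely, this is the \(R/p\)-linear map sending the basis element \(t^g\) to \(t^{pg}\). Because \(G\) is torsion-free and \(G = G[1/p]\), multiplication by \(p\) on \(G\) is bijective. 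Hence this map permutes a basis of a free \(R/p\)-module and is an isomorphism. This is the key point, and it is where the hypothesis \(G = G[1/p]\) is used.

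If the cited proposition requires the relative Frobenius to be an isomorphism in the derived sense, namely on the derived tensor product \(R/^L p \otimes^L \cdots\), the same argument applies. The map \(R \to R[G]\) is flat, in fact free, so \(R\abracket{G} \otimes^L_R R/^L p \simeq R[G] \otimes_R R/^L p\). The derived relative Frobenius is then the base change along \(R/^L p\) of the map \(\mathbb{F}_p \otimes \mathbb{Z}[G]\)-level permutation of basis elements \(t^g \mapsto t^{pg}\). This base change is again an isomorphism.

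Finally, we deduce \eqref{RelativelyPerfectGroupRingIso} from \cite{bhatt2024Perfectoid}*{Proposition 3.18}. That result states that for a map \(A \to B\) of derived \(p\)-complete rings which is relatively perfect modulo \(p\), the natural map \(A_{\perfd}\widehat{\otimes}^L_A B \to B_{\perfd}\) is an isomorphism. We apply it with \(A = R\) and \(B = R\abracket{G}\). Both rings are \(p\)-complete: \(R\) by hypothesis (if necessary we replace \(R\) by its \(p\)-completion, using that perfectoidization is invariant under derived \(p\)-completion), and \(R\abracket{G}\) as the completion of a free module. The main obstacle I expect is checking that the precise hypotheses of \cite{bhatt2024Perfectoid}*{Proposition 3.18} match the mod-\(p\) statement verified above, in particular whether the hypothesis is derived or classical. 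The flatness of \(R[G]\) over \(R\) removes any discrepancy between the two versions.
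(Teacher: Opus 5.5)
Your proposal is correct and follows the paper's proof. Both arguments identify the (derived) relative Frobenius of \(R/^L p \to R\abracket{G}/^L p\) with the map \(t^g \mapsto t^{pg}\) on a free \(R/^L p\)-module with basis indexed by \(G\); this map is bijective because multiplication by \(p\) on \(G\) is bijective, and both then invoke \cite{bhatt2024Perfectoid}*{Proposition 3.18}.

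The one imprecision is your justification of \(R\abracket{G}/^L p \simeq R[G]/^L p\). It holds because \(R\abracket{G}\) is, in the paper's convention, the derived \(p\)-completion of \(R[G]\), not because free modules avoid \(p\)-torsion. Free modules do inherit \(p\)-torsion when \(R\) itself has \(p\)-torsion.
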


\begin{proof}
    The relative Frobenius morphism associated with the reduction of the above morphism modulo \(p\) is given by
    \begin{equation*}
        R\abracket{G}/^L p \otimes^L_{R/^L p} F_* (R/^L p) \to F_*(R\abracket{G}/^L p); \quad at^g \otimes F_*r \mapsto F_* r a^p t^{pg}.
    \end{equation*}
    Since the left-hand side is isomorphic to \(\bigoplus_{g \in G} F_*(R/^L p) t^g\) and the right-hand side is isomorphic to \(\bigoplus_{g \in G} F_*(R/^L p) t^{g/p}\) as \(F_*(R/^L p)\)-modules, this morphism is the same as the canonical inclusion
    \begin{equation*}
        \bigoplus_{g \in G} F_*(R/^L p) t^g \to \bigoplus_{g \in G} F_*(R/^L p) t^{g/p}; \quad at^g \mapsto at^g.
    \end{equation*}
    Because of \(G = G[1/p]\), this is an isomorphism.
    The last assertion follows from \cite{bhatt2024Perfectoid}*{Proposition 3.18}.
\end{proof}

\begin{lemma} \label{LimitAddG}
    Let \(R\) be a \(G\)-graded adic ring with \(G = G[1/p]\) and let \(I\) be a homogeneous ideal of definition containing \(p\).
    Then the canonical morphism
    \begin{equation*}
        (R[G])_{\tperfd} \to \lim_{R \to P} (\dcomp{I}{P[G]})
    \end{equation*}
    in \(\CAlg_R\) is an isomorphism where the limit is taken over all \(I\)-adic \(G\)-graded perfectoid \(R\)-algebras \(P\).
\end{lemma}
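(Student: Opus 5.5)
The plan is to reduce the statement to the ungraded relative perfectness of \Cref{RelativelyPerfectGroupRing} and the cosimplicial presentation of \Cref{ExistenceOfGradedPerfd} and \Cref{RepresentationPerfd}. Throughout, I regard \(R[G]\) as an \(R\)-algebra via \(r \mapsto rt^0\), with the \(IR[G]\)-adic topology. With this structure \(P[G] = P \otimes_R R[G]\) for every \(R\)-algebra \(P\), and this is what makes the comparison map in the statement canonical.

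\emph{Step 1: identify the left-hand side.} By \Cref{CompletionOfAbsolutePerfectoidization} and \Cref{RemarkCompletePerfd}, \((R[G])_{\tperfd} \cong \dcomp{I}{(R[G])_{\perfd}}\), and the latter only depends on the \(p\)-completion \(R\abracket{G}\). Applying \Cref{RelativelyPerfectGroupRing} to \(\comp{p}{R}\) gives \((R\abracket{G})_{\perfd} \cong R_{\perfd} \widehat{\otimes}^L_R R\abracket{G}\). Since \(R[G]\) is free over \(R\) and \(\dcomp{p}{R_{\perfd}} \cong R_{\perfd}\), I then expect
\[
(R[G])_{\tperfd} \cong \dcomp{I}{R_{\perfd} \otimes^L_R R[G]} = \dcomp{I}{R_{\perfd}[G]} \cong \dcomp{I}{R_{\tperfd}[G]},
\]
where the last isomorphism holds because \(\dcomp{I}{(-)[G]}\) only sees the derived \(I\)-completion of its input.

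\emph{Step 2: insert the cosimplicial presentation.} Set \(S_n \defeq (R_{w.i.}^{\cgrotimes_R(n+1)})_{\tgrpfd}\). These are discrete \(G\)-graded \(I\)-adic perfectoid rings by \Cref{GradedPerfdSemiPerfd}. By \Cref{RepresentationPerfd}, \(R_{\tperfd} \cong \lim_{\Delta} \dcomp{I}{S_n}\). I would like to move \((-)[G]\) inside this totalization. Derived \(I\)-completion is a left adjoint, and it commutes with limits only within \(\mcalD^{\comp{I}}(R)\), so I would argue directly on cochains. Write \(T \defeq \lim_\Delta S_n\). Then \(\dcomp{I}{T} \cong \lim_\Delta \dcomp{I}{S_n}\), and therefore \(\dcomp{I}{(\lim_\Delta \dcomp{I}{S_n})[G]} \cong \dcomp{I}{T[G]}\).

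Because every \(S_n\) is discrete, \(T\) is computed by the associated cochain complex concentrated in degrees \(\geq 0\). Forming \((-)[G] = \bigoplus_G(-)\) termwise commutes with taking the cochain complex, so \(T[G] \cong \lim_\Delta S_n[G]\). Completing and using that \(\dcomp{I}{-}\) commutes with limits of complete objects, I get
\[
(R[G])_{\tperfd} \cong \lim_{[n]\in\Delta} \dcomp{I}{S_n[G]}.
\]
This commutation of the direct sum \(\bigoplus_G\) with the totalization is the step I expect to need the most care. A general limit does not commute with infinite direct sums, and the argument uses in an essential way that the cosimplicial terms are discrete, hence coconnective uniformly.

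\emph{Step 3: cofinality.} In the proof of \Cref{ExistenceOfGradedPerfd} it is shown that \(\Delta \to R^{\perf}_{\prism,\graded}\), \([n] \mapsto S_n\), is left cofinal, since each \(K_P\) is weakly contractible. Hence, for the functor \(R^{\perf}_{\prism,\graded} \to \CAlg_R\) given by \(P \mapsto \dcomp{I}{P[G]}\), the limit over \(R^{\perf}_{\prism,\graded}\) exists and agrees with \(\lim_\Delta \dcomp{I}{S_n[G]}\). The relevant \citeKero{02NY}-type statements apply to arbitrary target functors, so no extra input is needed here.

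Combining Steps 1--3 yields the desired isomorphism. The last check is that the composite agrees with the canonical morphism in the statement. This holds because every identification above is natural in the perfectoid algebras, and each map is induced by the structure maps \(R \to P\) tensored with \(R[G]\).
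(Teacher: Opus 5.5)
Your argument is correct, but it is not the paper's route. The paper proves \Cref{LimitAddG} by a single cofinality observation. For any \(I\)-adic perfectoid \(R[G]\)-algebra \(Q\), \Cref{CofinalGradedPerfdForGradedRing} produces a graded perfectoid \(P\) (essentially \(Q[G]\)) with an \(R[G]\)-algebra map \(\comp{I}{P[G]} \to Q\). So the algebras \(\comp{I}{P[G]}\) are cofinal among the perfectoid \(R[G]\)-algebras defining \((R[G])_{\tperfd}\), and one simply restricts the limit. Neither relative perfectness nor the cosimplicial presentation is used.

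In the paper, \Cref{RelativelyPerfectGroupRing} enters only afterwards, in \Cref{CompletionOfAbsoluteGradedPerfd}. There \Cref{LimitAddG} together with relative perfectness yields the commutation \eqref{AssumptionIsomMorphism} needed for \Cref{GradedLimitCommutative}. You run this in reverse. Your Steps 2 and 3 prove \eqref{AssumptionIsomMorphism} directly by a formal argument, using discreteness of the \(S_n\) plus cofinality of \(\Delta\). Step 1 then converts it into \Cref{LimitAddG} via \Cref{RelativelyPerfectGroupRing}.

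The two routes buy different things. The paper's route is shorter. It also keeps the proof of \Cref{CompletionOfAbsoluteGradedPerfd}, which the authors present as a second existence proof, independent of the weakly initial object and of \Cref{ExistenceOfGradedPerfd}. To make its cofinality rigorous, one needs the comma categories to be weakly contractible, not merely nonempty. This holds because \(Q \mapsto Q[G]\) is right adjoint to \(P \mapsto \comp{I}{P[G]}\), so each comma category has a terminal object. Your route instead reuses the contractibility of the \(K_P\) already established in \Cref{ExistenceOfGradedPerfd}. It shows that \eqref{AssumptionIsomMorphism} holds without any input from relative perfectness, at the price of depending on the whole cosimplicial machinery.

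One justification in Step 2 should be corrected. You say \(\dcomp{I}{-}\) commutes with limits only within \(\mcalD^{\comp{I}}(R)\). But the \(S_n\) and \(S_n[G]\) are only gradedwise complete, not derived \(I\)-complete. So that would not suffice for \(\dcomp{I}{T} \cong \lim_\Delta \dcomp{I}{S_n}\), nor for the analogous step with \(S_n[G]\). What you need, and what is true, is that derived \(I\)-completion commutes with all limits in \(\mcalD(R)\), being a sequential limit of tensor products with perfect Koszul complexes. This is exactly step \((\star_1)\) in the proof of \Cref{CompletionOfAbsolutePerfectoidization}.
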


\begin{proof}
    Take any \(I\)-adic perfectoid \(R[G]\)-algebra \(Q\). Then there exists an \(I\)-adic \(G\)-graded perfectoid \(R\)-algebra \(P\) and a morphism \(P \to Q\) of \(R\)-algebras with a commutative diagram of \(R\)-algebras, as follows, by \Cref{CofinalGradedPerfdForGradedRing}:
    \begin{center}
        \begin{tikzcd}
            R \arrow[d] \arrow[r] & \exists P \arrow[d, "\exists"'] \arrow[r] & {P[G]} \arrow[ld, "\exists"] \\
            {R[G]} \arrow[r]      & Q                                         &                             
        \end{tikzcd}
    \end{center}
    where the morphism \(P[G] \to Q\) of \(R[G]\)-algebras sends \(t^g\) in \(P[G]\) to the image of \(t^g\) along the structure morphism \(R[G] \to Q\).

    Taking the \(I\)-adic completion \(\comp{I}{P[G]}\) of \(P[G]\), which becomes an \(I\)-adic perfectoid \(R[G]\)-algebra, we can show that such \(\comp{I}{P[G]}\) form a cofinal system in all \(I\)-adic perfectoid \(R[G]\)-algebras.
    Therefore, the canonical morphism is an isomorphism.
\end{proof}

Using this, we can show the following result as a counterpart of \Cref{CompletionOfAbsolutePerfectoidization} in the graded setting.

\begin{corollary} \label{CompletionOfAbsoluteGradedPerfd}
    Let \(R\) be a \(G\)-graded adic ring such that \(G = G[1/p]\) and let \(I\) be a homogeneous ideal of definition containing \(p\).
    Then \(R_{\tgrpfd}\) exists and the canonical morphism \(R_{\tgrpfd} \to R_{\tperfd}\) induces an isomorphism
    \begin{equation}
        \dcomp{I}{R_{\tgrpfd}} \xrightarrow{\cong} R_{\tperfd}
    \end{equation}
    in \(\CAlg_R\).
\end{corollary}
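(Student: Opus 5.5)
The plan is to apply \Cref{GradedLimitCommutative} to the diagram \(\{P\}_{P \in \Perfd_{\graded{G}}^{\wedge I}(R)}\) in \(\mcalD_{\graded{G}}^{\comp{I}}(R)\). This proves existence of the graded limit (independently of \Cref{ExistenceOfGradedPerfd}) and shows that \(\mcalF^I\) preserves it. By \Cref{RemarkGradedPerfd}, \(R_{\tgrpfd}\) may be computed over \(\Perfd_{\graded{G}}^{\wedge I}(R)\). By \Cref{DerivedGrcompGradedPerfd}, each such \(P\) is already derived gradedwise \(I\)-complete, so the diagram lives in \(\mcalD_{\graded{G}}^{\comp{I}}(R)\). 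By \Cref{BoundedTorsionGradedPerfd}(4), \(\mcalF^I(P) = \dcomp{I}{P} \cong \comp{I}{P}\).

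First, the limit \(\lim_P \dcomp{I}{P}\) exists and equals \(R_{\tperfd}\) by \Cref{TopPerfdGradedRing}. Next, the second limit \(\lim_P \dcomp{I}{P}\abracket{G}\) equals \(\lim_P \dcomp{I}{P[G]}\), since \(\dcomp{I}{\dcomp{I}{P}[G]} \cong \dcomp{I}{P[G]}\): derived completion commutes with the base change \((-)\otimes^L_R R[G]\) up to completion. By \Cref{LimitAddG} this is \((R[G])_{\tperfd}\).

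The key hypothesis \eqref{AssumptionIsomMorphism} then becomes the assertion that
\[
(R_{\tperfd})\abracket{G} = \dcomp{I}{R_{\tperfd} \otimes^L_R R[G]} \longrightarrow (R[G])_{\tperfd}
\]
is an isomorphism. Here \(R[G]\) is viewed as an \(R\)-algebra via the coaction \(\rho_R\) (so that \(M[G] = \rho_{R,*}(M \otimes^L_R R[G])\)), and one checks that the map is the canonical one. This is a base change statement of the form handled by \Cref{RelativelyPerfectGroupRing}. Since \(R \to R[G]\) via \(r \mapsto r t^0\) and via \(\rho_R\) differ by the automorphism \(r_g t^h \mapsto r_g t^{g+h}\) of \(R[G]\), it is harmless which structure map is used. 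Applying \Cref{RelativelyPerfectGroupRing} to \(\comp{p}{R}\) gives \(\dcomp{p}{R_{\perfd}\otimes^L_R R[G]} \cong (R\abracket{G})_{\perfd}\). Applying \(\dcomp{I}{-}\) and using \Cref{CompletionOfAbsolutePerfectoidization} together with the invariance of \((-)_{\tperfd}\) under completion (\Cref{RemarkCompletePerfd}) yields the desired isomorphism.

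The main obstacle is this step, for two reasons. First, one must verify that the natural comparison map in \eqref{AssumptionIsomMorphism} agrees with the map produced by \Cref{RelativelyPerfectGroupRing}, including the twisting by \(\rho_R\). Second, \Cref{RelativelyPerfectGroupRing} is stated for \(p\)-adic rings, so the passage from \(p\)-completion to \(I\)-completion must be tracked.

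Once the hypothesis holds, \Cref{GradedLimitCommutative} gives the existence of \(\grlim_P P = R_{\tgrpfd}\) and an isomorphism \(\dcomp{I}{R_{\tgrpfd}} \cong \lim_P \dcomp{I}{P} \cong R_{\tperfd}\). It remains to check that this isomorphism is induced by the canonical morphism \(R_{\tgrpfd} \to R_{\tperfd}\). This follows because both maps are assembled from the projections to the \(\comp{I}{P}\). The isomorphism is compatible with \(\setE_\infty\)-structures, since the forgetful maps to \(\mcalD\) detect isomorphisms in \(\CAlg_R\).
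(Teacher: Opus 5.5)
Your proposal follows the paper's proof essentially step for step: apply \Cref{GradedLimitCommutative} to the diagram of gradedwise complete \(I\)-adic graded perfectoid \(R\)-algebras, identify the two sides of \eqref{AssumptionIsomMorphism} with \(R_{\tperfd}\abracket{G}\) and \((R[G])_{\tperfd}\) via \Cref{TopPerfdGradedRing} and \Cref{LimitAddG}, and deduce the required isomorphism from \Cref{RelativelyPerfectGroupRing}. The bookkeeping you flag as the main obstacle is left implicit in the paper and is routine: \(\rho_{R,*}\) is a conservative, limit-preserving restriction of scalars applied after base change along \(r \mapsto r t^0\), so the hypothesis can be checked on the underlying \(R[G]\)-modules without the shearing automorphism, and the passage from \(p\)- to \(I\)-completion is exactly the \(\dcomp{I}{-}\) step you describe.
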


\begin{proof}
    Take the diagram
    \begin{equation*}
        \Perfd_{\graded{G}}^{\wedge I}(R) \to \mcalD_{\graded{G}}^{\comp{I}}(R); \quad P \mapsto P
    \end{equation*}
    of all gradedwise complete \(G\)-graded \(I\)-adic perfectoid \(R\)-algebras \(P\).
    Consider the canonical morphism
    \begin{equation*}
        (\lim_{R \to P} \dcomp{I}{P})\abracket{G} \to \lim_{R \to P} (\dcomp{I}{P}\abracket{G})
    \end{equation*}
    in \(\mcalD^{\comp{I}}(R)\), where the left-hand side exhibits \(R_{\tperfd}\abracket{G}\) by \Cref{TopPerfdGradedRing} and the right-hand side exhibits \((R[G])_{\tperfd}\) by \Cref{LimitAddG}.
    This morphism is an isomorphism by \Cref{RelativelyPerfectGroupRing}.

    Therefore, we may apply \Cref{GradedLimitCommutative} to the diagram above to obtain the existence of the limit
    \begin{equation*}
        R_{\tgrpfd} = \grlim_{R \to P} P
    \end{equation*}
    in \(\mcalD_{\graded{G}}^{\comp{I}}(R)\) and the natural isomorphism
    \begin{equation*}
        \dcomp{I}{R_{\tgrpfd}} \cong \dcomp{I}{\grlim_{R \to P} P} \xrightarrow{\cong} \lim_{R \to P} \dcomp{I}{P} \cong R_{\tperfd}
    \end{equation*}
    in \(\mcalD^{\comp{I}}(R)\).
    Since all these morphisms are in \(\CAlg_R\), we conclude the desired isomorphism.
\end{proof}

\begin{corollary} \label{DiscretePropertyGradedPerfd}
    Let \(R\) be a \(G\)-graded adic ring such that \(G = G[1/p]\) and let \(I\) be a homogeneous ideal of definition containing \(p\).
    Assume that \(R\) admits a (not necessarily graded) morphism from a perfectoid ring.
    If \(R_{\perfd}\) is concentrated in degree \(0\), then \(R_{\tgrpfd}\) is also concentrated in degree \(0\), and the morphism \(R \to R_{\tgrpfd}\) is initial among morphisms from \(R\) to \(G\)-graded perfectoid rings.
\end{corollary}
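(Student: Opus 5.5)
Our plan has three steps: pass from \(R_{\perfd}\) to \(R_{\tperfd}\), deduce discreteness of \(R_{\tgrpfd}\), and then prove that \(R_{\tgrpfd}\) is a graded perfectoid ring receiving a map to every graded perfectoid \(R\)-algebra.

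\textbf{Step 1.} By \Cref{CompletionOfAbsolutePerfectoidization}, \(R_{\tperfd} \cong \dcomp{I}{R_{\perfd}}\). By \Cref{DiscretePropertyTopAbsPerfd}, \(R_{\tperfd}\) is concentrated in degree \(0\) and is the initial \(I\)-adic complete perfectoid \(R\)-algebra. Since \(R_{\tperfd}\) is discrete, \Cref{TopologicalGradedPerfd} shows that \(R_{\tgrpfd}\) (which exists by \Cref{ExistenceOfGradedPerfd}) is also concentrated in degree \(0\). Write \(S \defeq R_{\tgrpfd}\), a discrete \(G\)-graded \(R\)-algebra. It is derived gradedwise \(I\)-complete (\Cref{RemarkGradedPerfd}) and \(p\)-torsion is handled below. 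By \Cref{CompletionOfAbsoluteGradedPerfd}, \(\dcomp{I}{S} \cong R_{\tperfd}\) is a discrete perfectoid ring.

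\textbf{Step 2.} We must show that \(S\) itself is a \(G\)-graded perfectoid ring. Granting this, initiality is formal. For any graded perfectoid \(R\)-algebra \(P\), we may replace \(P\) by its gradedwise \(I\)-completion via \Cref{GradedPerfdCofinalFunctors} and \Cref{LocalizationGradedPerfd}. The limit projection then gives a map \(S \to P\). Since \(S\) is itself an object of the indexing category, the identity of \(S\) factors through the limit, so any two maps \(S \to P\) agree: this is the usual argument that a limit over a category containing the limit object is an initial object.

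To show \(S\) is graded perfectoid, I would use \Cref{graded-perfectoid-equiv}(3) after checking that \(S\) is gradedwise complete. Discreteness together with derived gradedwise completeness should give classical gradedwise completeness, provided \(S\) has bounded \(I^\infty\)-torsion. That in turn should follow from the injectivity of \(S \to \dcomp{I}{S} = R_{\tperfd}\), which holds on each graded piece since limits in \(\mcalD_{\graded{G}}\) are computed gradedwise and \(S_g\) is a retract of a piece of \(\prod_g\) as in the proof of \Cref{TopologicalGradedPerfd}. For the perfectoid conditions, choose \(\varpi\) coming from the perfectoid ring mapping to \(R\). Then, as in \Cref{TopologicalGradedPerfd}, realize \(S_g = \lim_{P} P_g\) compatibly with \(R_{\tperfd} = \lim_P \comp{I}{P}\). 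Each \(P\) satisfies \eqref{IsomPPower} and \eqref{MultiplicativePPower} degreewise, and these conditions pass to the limit in each degree as long as the relevant derived quotients \(S/^L\varpi\) are discrete.

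\textbf{Main obstacle.} The hardest step is verifying that the Frobenius on \(S/\varpi\) is bijective degreewise. I would first try the following route. Since \(S\) and \(R_{\tperfd}\) are discrete and \(S \to \prod_g S_g \to R_{\tperfd}\) behaves well, derived reduction modulo \(\varpi\) commutes with the gradedwise limit. The Frobenius on \(\lim_P (P/^L\varpi)_{g/p}\) is then the limit of isomorphisms, and discreteness of \(R_{\tperfd}/^L\varpi\) (a perfectoid ring has bounded \(\varpi\)-torsion, \Cref{BoundedTorsionPerfd}) transfers to each graded piece via the retraction. Alternatively, if this is messy, one can check \Cref{graded-perfectoid-equiv}(2) directly: \(S_0 \cong (R_0)_{\tperfd}\) when the support is one-sided (\Cref{GradedPerfdZeroPart}), and \(\comp{p}{S}\cong R_{\tperfd}\) is perfectoid. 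This second route, however, needs an extra argument in the general-support case.
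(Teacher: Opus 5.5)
Your Step 1 and the formal initiality argument match the paper. The gap is the central claim that \(S=R_{\tgrpfd}\) is a \(G\)-graded perfectoid ring: you leave it unproved, and the route you sketch for it does not work as stated.

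You propose to verify the conditions of \Cref{graded-perfectoid-equiv}(3) degreewise by passing them through \(S_g=\lim_P P_g\). You justify the discreteness this needs by saying that \(R_{\tperfd}/^L\varpi\) is discrete because of bounded \(\varpi\)-torsion. Bounded torsion does not make \(\varpi\) a nonzerodivisor. Perfectoid rings can have nonzero \(\varpi\)-torsion, e.g.\ in any factor of characteristic \(p\), where \(\varpi\) coming from the base perfectoid ring can map to \(0\). So neither \(R_{\tperfd}/^L\varpi\) nor the \(P/^L\varpi\) need be discrete. Even granting discreteness, classical quotients such as \(P_{g/p}/\varpi P_{g/p}\) do not commute with limits over an arbitrary indexing category unless you control higher derived limits, which you do not do. Your fallback via \Cref{graded-perfectoid-equiv}(2) stalls because you believe \(S_0\) must be shown perfectoid separately.

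The missing observation is that no Frobenius check is needed. By \Cref{DefGradedPerfd}(1), a \(G\)-graded ring is graded perfectoid once it is gradedwise \(p\)-adically complete and its \(p\)-adic completion is perfectoid; perfectoidness of the degree-\(0\) part then follows automatically.

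The paper first treats \(I=(p)\), and there your own injectivity argument does the work. Take \(S=R_{\grpfd}\), which is discrete by your Step 1 with \(I=(p)\).
\begin{itemize}
\item The map \(S\hookrightarrow \dcomp{p}{S}\cong R_{\perfd}\) (\Cref{CompletionOfAbsoluteGradedPerfd}) is an injective ring map into a perfectoid ring, so \(S\) has bounded \(p^\infty\)-torsion.
\item Hence each derived \(p\)-complete \(S_g\) is classically \(p\)-complete.
\item Also \(\comp{p}{S}\cong\dcomp{p}{S}\cong R_{\perfd}\), which is perfectoid, so \(S\) is graded perfectoid by the definition.
\end{itemize}

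For general \(I\) you only know \(\dcomp{I}{S}\cong R_{\tperfd}\), and that does not directly give perfectoidness of \(\comp{p}{S}\). The paper instead uses \Cref{ComparisonAdicGradedPerfd} to write \(R_{\tgrpfd}\cong\dgrcomp{I}{R_{\grpfd}}\). Since \(R_{\grpfd}\) is already graded perfectoid, \Cref{DerivedGrcompGradedPerfd} and \Cref{BoundedTorsionGradedPerfd}(3) identify this with its classical gradedwise \(I\)-adic completion, which is again graded perfectoid.

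If you replace your degreewise Frobenius verification with this argument, the rest of your plan goes through.
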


\begin{proof}
    It suffices to show that \(R_{\tgrpfd}\) is a \(G\)-graded perfectoid ring.
    By \Cref{DiscretePropertyTopAbsPerfd} and \Cref{TopologicalGradedPerfd}, we can show that \(R_{\tgrpfd}\) is concentrated in degree \(0\).
    Moreover, \Cref{CompletionOfAbsoluteGradedPerfd} gives the isomorphism \(\dcomp{I}{R_{\tgrpfd}} \cong R_{\tperfd}\).
    Taking the \(I\)-adic completion, we have \(\comp{I}{R_{\tgrpfd}} \cong R_{\tperfd}\).

    In the case of \(I = (p)\), this isomorphism implies that \(R_{\grpfd}\) is a \(G\)-graded perfectoid ring by \Cref{DefGradedPerfd}.
    The general case follows from the case of \(I = (p)\) by \Cref{ComparisonAdicGradedPerfd} and \Cref{BoundedTorsionGradedPerfd}.
\end{proof}

\subsection{Colimit preservation of absolute graded perfectoidization}

Using \Cref{CompletionOfAbsoluteGradedPerfd}, we first show that the absolute graded perfectoidization functor preserves small colimits if it has a base perfectoid ring.

    

\begin{corollary} \label{ColimitPreserveGradedPerfd}
    Let \(R\) be a gradedwise complete \(G\)-graded adic ring with \(G = G[1/p]\) and let \(I\) be a homogeneous ideal of definition containing \(p\).
    Let \(\CRing_{\graded{G}}^{\wedge I}(R)\) be the category of gradedwise complete \(G\)-graded \(I\)-adic \(R\)-algebras.
    
    Suppose that there exists a morphism from a \(G\)-graded perfectoid ring to \(R\).
    Then the absolute graded perfectoidization functor
    \begin{equation*}
        (-)_{\tgrpfd} \colon \CRing_{\graded{G}}^{\wedge I}(R) \to \CAlg(\mcalD_{\graded{G}}^{\comp{I}}(R)); \quad S \mapsto S_{\tgrpfd}
    \end{equation*}
    preserves small colimits.
    Especially, \((-)_{\tgrpfd}\) is symmetric monoidal with respect to the gradedwise completed tensor products \(- \cgrotimes -\) and \(- \cLgrotimes -\).
\end{corollary}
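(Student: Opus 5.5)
The plan is to reduce colimit preservation to the ungraded statement \Cref{CommutativeTensorTopPerfd}, using the conservativity of derived \(I\)-completion from \Cref{GradedNakayamaLemma} together with \Cref{CompletionOfAbsoluteGradedPerfd}. Fix a small diagram \(\{S_k\}_{k \in K}\) in \(\CRing_{\graded{G}}^{\wedge I}(R)\). Its colimit there is \(S \defeq \grcomp{I}{\colim_k S_k}\), the gradedwise \(I\)-adic completion of the colimit of graded rings. In \(\CAlg(\mcalD_{\graded{G}}^{\comp{I}}(R))\) the colimit of \((S_k)_{\tgrpfd}\) is \(\dgrcomp{I}{\colim^{\setE_\infty}_k (S_k)_{\tgrpfd}}\). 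There is a canonical comparison map
\[
\dgrcomp{I}{\colim^{\setE_\infty}_k (S_k)_{\tgrpfd}} \to S_{\tgrpfd},
\]
induced by functoriality of \((-)_{\tgrpfd}\). Forgetting the algebra structure preserves sifted colimits, and the general case reduces to coproducts and sifted colimits. So it suffices to check that this map is an isomorphism in \(\mcalD_{\graded{G}}^{\comp{I}}(R)\). By \Cref{GradedNakayamaLemma}, it is enough to check this after applying \(\mcalF^I = \dcomp{I}{-}\).

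After applying \(\dcomp{I}{-}\), the target becomes \(S_{\tperfd}\) by \Cref{CompletionOfAbsoluteGradedPerfd}. For the source, \(\dcomp{I}{-}\) is a left adjoint and is compatible with forgetting the grading at the level of underlying \(\setE_\infty\)-algebras, since the graded tensor product forgets to the ordinary one. It therefore carries the graded completed colimit to \(\dcomp{I}{\colim^{\setE_\infty}_k \dcomp{I}{(S_k)_{\tgrpfd}}}\). By \Cref{CompletionOfAbsoluteGradedPerfd} again, this equals \(\dcomp{I}{\colim^{\setE_\infty}_k (S_k)_{\tperfd}}\).

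On the other side, \(S_{\tperfd} \cong (\comp{I}{\colim_k S_k})_{\tperfd}\) by \Cref{RemarkCompletePerfd}, because the forgetful functor to rings commutes with colimits and the completions agree. To compare the two sides I would apply \Cref{CommutativeTensorTopPerfd}. Its hypothesis needs a perfectoid base. The given morphism \(P_0 \to R\) from a \(G\)-graded perfectoid ring supplies one, since \(\comp{I}{P_0}\) (or \(\comp{p}{P_0}\)) is perfectoid and maps to \(R\); the proof of that corollary only uses the Bhatt–Scholze colimit property over a \(p\)-complete base anyway. This identifies the two sides and gives the isomorphism.

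The main obstacle is the compatibility of the completed forgetful functor with colimits of algebras. Concretely, one must check that \(\dcomp{I}{-}\circ\mathrm{forget}\) intertwines \(\dgrcomp{I}{\colim^{\mathrm{gr}}}\) with \(\dcomp{I}{\colim}\), and that the comparison maps match those from \Cref{CompletionOfAbsoluteGradedPerfd}. I would establish this first for coproducts, i.e. completed tensor products \(- \cLgrotimes_R -\), using that forgetting the grading is symmetric monoidal and that \(\dcomp{I}{\dgrcomp{I}{M}} \cong \dcomp{I}{M}\). I would then treat sifted colimits, which are computed on underlying modules, where it is a statement about left adjoints.

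The symmetric monoidal statement then follows. Coproducts in \(\CRing_{\graded{G}}^{\wedge I}(R)\) are \(- \cgrotimes_R -\), and coproducts in \(\CAlg(\mcalD_{\graded{G}}^{\comp{I}}(R))\) are \(- \cLgrotimes_R -\), so the isomorphism \((S \cgrotimes_R T)_{\tgrpfd} \cong S_{\tgrpfd} \cLgrotimes_R T_{\tgrpfd}\) is the case of a two-object diagram. The unit is respected since \(R_{\tgrpfd}\) is the empty colimit.
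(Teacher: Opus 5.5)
Your reduction is the paper's own: conservativity of \(\mcalF^I\) (\Cref{GradedNakayamaLemma}), \Cref{CompletionOfAbsoluteGradedPerfd} on source and target, then \Cref{CommutativeTensorTopPerfd}. Your check that \(\mcalF^I\) is a symmetric monoidal left adjoint (via \(\dcomp{I}{\dgrcomp{I}{M}} \cong \dcomp{I}{M}\)) fills in a step the paper leaves implicit: it is what lets \(\mcalF^I\) carry the completed graded colimit to the completed ungraded one. The gap is in the last input. You claim that a map from a graded perfectoid ring into \(R\) is enough to apply \Cref{CommutativeTensorTopPerfd}, and you close by saying the unit is respected; neither holds.

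Colimits in \(\CRing_{\graded{G}}^{\wedge I}(R)\) and in \(\CAlg(\mcalD_{\graded{G}}^{\comp{I}}(R))\) are formed under \(R\). Their initial objects are \(R\) and \(\dgrcomp{I}{R}\), and binary coproducts are \(\cgrotimes_R\) and \(\cLgrotimes_R\). Now use colimit preservation over a perfectoid base \(A \to R\), rewriting a colimit of \(R\)-algebras as a colimit of \(A\)-algebras over the cone diagram with vertex \(R\). What this gives is that the image of the colimit is the completed colimit of the \((S_k)_{\tgrpfd}\) formed under \(R_{\tgrpfd}\), not under \(R\). For the empty diagram these agree only if \(R \to R_{\tgrpfd}\) is an isomorphism. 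Your sentence ``\(R_{\tgrpfd}\) is the empty colimit'' confuses the image of the initial object with the initial object of the target.

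Here is a concrete failure. Let \(R = \mcalO_C[\varepsilon]/(\varepsilon^2)\) with \(\varepsilon\) homogeneous and \(I = (p)\); it receives a graded map from the perfectoid ring \(\mcalO_C\) placed in degree \(0\). Every gradedwise complete graded perfectoid \(R\)-algebra embeds into its \(p\)-completion, which is perfectoid and hence reduced, so it kills \(\varepsilon\). Hence \(R_{\tgrpfd} \cong \mcalO_C \not\cong R\). Likewise \((R \cgrotimes_R R)_{\tgrpfd} \cong \mcalO_C\), whereas \(\mcalO_C \cLgrotimes_R \mcalO_C\) has \(\mathrm{Tor}^R_1(\mcalO_C,\mcalO_C) \cong \mcalO_C \neq 0\). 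So the conclusion, read literally, fails for such \(R\).

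The paper's proof invokes \Cref{CommutativeTensorTopPerfd} in exactly the same way (``since we assumed the existence of a morphism from a perfectoid ring to \(R\)''). The defect therefore lies in the hypothesis, not in your method. Your argument goes through verbatim when \(R\) itself is \(G\)-graded perfectoid, since then \(\comp{I}{R}\) is perfectoid and \(R_{\tgrpfd} \cong R\). Under the weaker hypothesis it proves the relative statement, with colimits in the target formed under \(R_{\tgrpfd}\); for example \((S \cgrotimes_R T)_{\tgrpfd} \cong S_{\tgrpfd} \cLgrotimes_{R_{\tgrpfd}} T_{\tgrpfd}\).
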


\begin{proof}
    Take a small diagram \(\{S_k\}_{k \in K} \colon K \to \CRing_{\graded{G}}^{\wedge I}(R)\) defined on a small simplicial set \(K\).
    It gives a universal morphism
    \begin{equation*}
        \dgrcomp{I}{\grcolim_{k \in K} ((S_k)_{\tgrpfd})} \to (\grcolim_{k \in K} S_k)_{\tgrpfd}
    \end{equation*}
    in \(\CAlg(\mcalD_{\graded{G}}^{\comp{I}}(R))\).
    Forgetting the graded structures and taking the derived \(I\)-completion, this morphism induces a morphism
    \begin{equation*}
        \dcomp{I}{\colim_{k \in K} ((S_k)_{\tgrpfd})} \to \dcomp{I}{(\grcolim_{k \in K} S_k)_{\tgrpfd}}
    \end{equation*}
    in \(\CAlg_R^{\comp{I}}\).
    Using \Cref{CompletionOfAbsoluteGradedPerfd}, this morphism can be identified with the morphism
    \begin{equation*}
        \dcomp{I}{\colim_{k \in K} ((S_k)_{\tperfd})} \to (\grcolim_{k \in K} S_k)_{\tperfd} \cong (\colim_{k \in K} S_k)_{\tperfd}
    \end{equation*}
    in \(\CAlg_R^{\comp{I}}\), which is an isomorphism by \Cref{CommutativeTensorTopPerfd}, since we assumed the existence of a morphism from a perfectoid ring to \(R\).
    Therefore, the original morphism is also an isomorphism by the Nakayama-type lemma (\Cref{GradedNakayamaLemma}).
\end{proof}

Although the existence of a morphism from a graded perfectoid ring is assumed in \Cref{ColimitPreserveGradedPerfd}, we can remove this assumption for localization (\Cref{CommutativityLimitsPerfd}).
This follows from more general results regarding the base change of absolute graded perfectoidization along relatively perfect morphisms. We first show this.

\begin{proposition}[{cf. \cite{bhatt2024Perfectoid}*{Proposition 3.17}}] \label{PullbackSquareGraded}
    Let \((V, \mfrakm, k) \hookrightarrow (W, \mfrakn, k_{\perf})\) be a faithfully flat extension of discrete valuation rings such that \(\mfrakm\) contains \(p\). 
    Let \(\mcalO_C\) be the \(p\)-completed ring of integers of a perfectoid field \(C\) which is the \(p\)-completion of a totally ramified Galois extension of \(\Frac(W)\) with Galois group \(\Gamma = \setZ_p\).
    Take any \(G\)-graded adic \(V\)-algebra \(R\) with \(G = G[1/p]\) and a homogeneous ideal of definition \(I\) containing \(p\).
    Then we have a pullback diagram
    \begin{equation} \label{GradedPerfdFiberPullbackDiagram}
        \begin{tikzcd}
            {R_{\tgrpfd}} \arrow[d] \arrow[rr] &  & {\fib((R \cLgrotimes_{V} \mcalO_{C})_{\tgrpfd} \xrightarrow{\gamma - 1} (R \cLgrotimes_{V} \mcalO_{C})_{\tgrpfd})} \arrow[d] \\
            \grcomp{I}{(R/pR)_{\perf}} \arrow[rr]         &  & {\grcomp{I}{(R/pR)_{\perf}} \bigoplus \grcomp{I}{(R/pR)_{\perf}}[-1]}                                                                                         
        \end{tikzcd}
    \end{equation}
    in \(\mcalD_{\graded{G}}^{\comp{I}}(R)\), where \(\gamma \in \setZ_p\) is a generator and the morphisms are the canonical morphisms induced from
    \begin{align*}
        R_{\tgrpfd} & \to (R \cLgrotimes_{V} \mcalO_C)_{\tgrpfd}, \\
        R_{\tgrpfd} & \to (R/pR)_{\tgrpfd} \cong \grcomp{I}{(R/pR)_{\perf}} \\
        (R \cLgrotimes_{V} \mcalO_C)_{\tgrpfd} & \to ((R/pR) \cLgrotimes_{V} k_{\perf})_{\tgrpfd} \cong \grcomp{I}{(R/pR)_{\perf}}.
    \end{align*}
    Note that the perfection \((R/pR)_{\perf}\) has the natural \(G\)-grading and the isomorphism \(\dcomp{I}{(R/pR)_{\perf}} \cong \comp{I}{(R/pR)_{\perf}}\) holds by \Cref{BoundedTorsionGradedPerfd}.
\end{proposition}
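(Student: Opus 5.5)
The plan is to reduce the graded pullback square to the ungraded one of \cite{bhatt2024Perfectoid}*{Proposition 3.17} by applying the conservative completion functor of \Cref{GradedNakayamaLemma}. The key steps are as follows.

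\textbf{Step 1: form the square.} All four corners of \eqref{GradedPerfdFiberPullbackDiagram} lie in \(\mcalD_{\graded{G}}^{\comp{I}}(R)\). For \(R_{\tgrpfd}\) and \((R \cLgrotimes_V \mcalO_C)_{\tgrpfd}\) this is \Cref{RemarkGradedPerfd}. For \(\grcomp{I}{(R/pR)_{\perf}}\) it follows because \((R/pR)_{\perf}\) is a \(G\)-graded perfectoid ring (graded perfect of characteristic \(p\)), combined with \Cref{DerivedGrcompGradedPerfd}. The identifications \((R/pR)_{\tgrpfd} \cong \grcomp{I}{(R/pR)_{\perf}}\) and \(((R/pR)\cLgrotimes_V k_{\perf})_{\tgrpfd} \cong \grcomp{I}{(R/pR)_{\perf}}\) come from \Cref{GradedPerfdSemiPerfd}, since the initial graded perfectoid algebra of a characteristic \(p\) graded ring is its completed perfection. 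The second identification also uses that \(k \to k_{\perf}\) is relatively perfect. The map \(R_{\tgrpfd} \to \fib(\gamma-1)\) exists because \(\Gamma\) acts on \(\mcalO_C\) over \(V\), hence by functoriality on \((R \cLgrotimes_V \mcalO_C)_{\tgrpfd}\) fixing the image of \(R_{\tgrpfd}\). Let \(F\) be the fiber product of the right column with the bottom row. The task is then to show that the canonical map \(R_{\tgrpfd} \to F\) is an isomorphism.

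\textbf{Step 2: reduce to the ungraded statement.} Apply \(\mcalF^I = \dcomp{I}{-}\). This functor is exact, so it commutes with fibers and finite limits, and it is conservative by \Cref{GradedNakayamaLemma}. It therefore suffices to show that \(\dcomp{I}{R_{\tgrpfd}} \to \dcomp{I}{F}\) is an isomorphism in \(\mcalD^{\comp{I}}(R)\). By \Cref{CompletionOfAbsoluteGradedPerfd} each term is computed as follows:
\begin{itemize}
\item \(\dcomp{I}{R_{\tgrpfd}} \cong R_{\tperfd}\);
\item \(\dcomp{I}{(R\cLgrotimes_V\mcalO_C)_{\tgrpfd}} \cong (R\widehat{\otimes}^L_V \mcalO_C)_{\tperfd}\), using that the completed tensor product commutes with \(\dcomp{I}{-}\);
\item \(\dcomp{I}{\grcomp{I}{(R/pR)_{\perf}}} \cong \comp{I}{(R/pR)_{\perf}}\), by \Cref{BoundedTorsionGradedPerfd}(4).
\end{itemize}
Naturality of these identifications carries the completed square to the ungraded square with \(R_{\tperfd}\) in the corner.

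\textbf{Step 3: prove the ungraded square is a pullback.} By \Cref{CompletionOfAbsolutePerfectoidization}, \(R_{\tperfd} = \dcomp{I}{R_{\perfd}}\), and likewise for the other corners. The square of \(p\)-adic perfectoidizations
\[
R_{\perfd},\quad \fib\bigl((R\widehat\otimes^L_V\mcalO_C)_{\perfd}\xrightarrow{\gamma-1}(R\widehat\otimes^L_V\mcalO_C)_{\perfd}\bigr),\quad (R/pR)_{\perf},\quad (R/pR)_{\perf}\oplus(R/pR)_{\perf}[-1]
\]
is a pullback by \cite{bhatt2024Perfectoid}*{Proposition 3.17}, applied to the underlying \(p\)-completed ring of \(R\). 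It stays a pullback after applying the exact functor \(\dcomp{I}{-}\), and this gives the claim.

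The main obstacle is Step 3. First, one must verify that Bhatt's hypotheses (the DVR extension and the \(\mcalO_C\) setup) apply verbatim to a \(p\)-complete \(V\)-algebra. Second, one must check that the maps in the completed graded square agree, under the identifications of Step 2, with the maps in Bhatt's square; in particular the \(\gamma\)-action and the two maps to \((R/pR)_{\perf}\) must match. I would do this by naturality of the comparison in \Cref{CompletionOfAbsoluteGradedPerfd}, which is induced by the canonical map \(R_{\tgrpfd} \to R_{\tperfd}\) and is functorial in \(R\).
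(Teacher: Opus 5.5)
Your proposal follows essentially the same route as the paper: apply the conservative exact functor \(\mcalF^I\) of \Cref{GradedNakayamaLemma}, identify the completed square with the ungraded one via \Cref{CompletionOfAbsoluteGradedPerfd}, and quote the ungraded pullback square in \(\mcalD^{\comp{I}}(R)\). Two corrections. First, for the obstacle you flag: when \(V\) is a general discrete valuation ring, the ungraded square comes from \cite{bhatt2025Aspects}*{Proposition 4.5.1 and Example 4.5.2}, since \cite{bhatt2024Perfectoid}*{Proposition 3.17} only covers \(V=\setZ_p\). Second, the identification \(((R/pR)\cLgrotimes_V k_{\perf})_{\tgrpfd}\cong\grcomp{I}{(R/pR)_{\perf}}\) holds because perfection commutes with base change, not because \(k\to k_{\perf}\) is relatively perfect; that property fails when \(k\) is imperfect.
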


\begin{proof}
    By \cite{bhatt2025Aspects}*{Proposition 4.5.1 and Example 4.5.2} (or \cite{bhatt2024Perfectoid}*{Proposition 3.17} if \(V = \setZ_p\)), we have a pullback diagram
    \begin{center}
        \begin{tikzcd}
            {R_{\tperfd}} \arrow[d] \arrow[rr] &  & {\fib((R \widehat{\otimes}^L_{V} \mcalO_{C})_{\tperfd} \xrightarrow{\gamma - 1} (R \widehat{\otimes}^L_{V} \mcalO_{C})_{\tperfd})} \arrow[d] \\
            \comp{I}{(R/pR)_{\perf}} \arrow[rr]         &  & {\comp{I}{(R/pR)_{\perf}} \bigoplus \comp{I}{(R/pR)_{\perf}}[-1]}                                                                                         
        \end{tikzcd}
    \end{center}
    in \(\mcalD^{\comp{I}}(R)\).
    Since the derived \(I\)-completion of the given diagram \eqref{GradedPerfdFiberPullbackDiagram} is the same as above diagram, it becomes a pullback diagram in \(\mcalD_{\graded{G}}^{\comp{I}}(R)\) by \Cref{GradedNakayamaLemma}.
\end{proof}

\begin{proposition}[{cf. \cite{bhatt2024Perfectoid}*{Proposition 3.18}}] \label{BaseChangeGradedPerfd}
    Let \(R \to S\) be a morphism of \(G\)-graded adic rings with \(G = G[1/p]\) such that the induced morphism \(R/^L p \to S/^L p\) of animated rings is relatively perfect, i.e., its relative Frobenius morphism is an isomorphism.
    Then the canonical morphism
    \begin{equation*}
        S \cLgrotimes_{R} R_{\tgrpfd} \to S_{\tgrpfd}
    \end{equation*}
    is an isomorphism in \(\CAlg(\mcalD_{\graded{G}}(S))\).
\end{proposition}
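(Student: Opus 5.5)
The plan is to reduce to the ungraded base change statement \cite{bhatt2024Perfectoid}*{Proposition 3.18} by applying the Nakayama-type lemma (\Cref{GradedNakayamaLemma}), in the same way as in the proofs of \Cref{ColimitPreserveGradedPerfd} and \Cref{PullbackSquareGraded}. Fix a homogeneous ideal of definition \(I\) of \(R\) containing \(p\). Its extension \(IS\) is a homogeneous ideal of definition of \(S\), after replacing the topology on \(S\) by the \(IS\)-adic one as in \Cref{RemarkCompletePerfd}; this replacement does not change \(S_{\tgrpfd}\) by \Cref{GradedPerfdCofinalFunctors}. Both sides of the morphism
\[
S \cLgrotimes_{R} R_{\tgrpfd} \to S_{\tgrpfd}
\]
are derived gradedwise \(I\)-complete. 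For the left side this holds because \(\cLgrotimes\) denotes the completed tensor product. For the right side it holds by \Cref{RemarkGradedPerfd}. So the morphism lives in \(\mcalD_{\graded{G}}^{\comp{I}}(S)\), and since \(\mcalF^I\) is conservative, it suffices to show that it becomes an isomorphism after forgetting the grading and applying \(\dcomp{I}{-}\).

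Next I compute both sides after forgetting the grading. On the right, \Cref{CompletionOfAbsoluteGradedPerfd} gives \(\dcomp{I}{S_{\tgrpfd}} \cong S_{\tperfd}\). On the left, forgetting the grading commutes with \(\Lgrotimes\), and derived completion satisfies \(\dcomp{I}{A \otimes^L B} \cong \dcomp{I}{\dcomp{I}{A} \otimes^L \dcomp{I}{B}}\). Using \Cref{CompletionOfAbsoluteGradedPerfd} again for \(R\), we obtain
\[
\dcomp{I}{S \cLgrotimes_R R_{\tgrpfd}} \cong \dcomp{I}{S \otimes^L_R R_{\tperfd}} = S \widehat{\otimes}^L_R R_{\tperfd}.
\]
We must check that, under these identifications, the map is the canonical map \(S \widehat{\otimes}^L_R R_{\tperfd} \to S_{\tperfd}\). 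This follows from the naturality of the morphism \(R_{\tgrpfd} \to R_{\tperfd}\) in \(R\).

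It remains to show the ungraded statement: \(S \widehat{\otimes}^L_R R_{\tperfd} \to S_{\tperfd}\) is an isomorphism. By \Cref{CompletionOfAbsolutePerfectoidization} we have \((-)_{\tperfd} = \dcomp{I}{(-)_{\perfd}}\), and \((-)_{\perfd}\) is insensitive to derived \(p\)-completion. Hence the map is the derived \(I\)-completion of
\[
\dcomp{p}{S} \widehat{\otimes}^L_{\dcomp{p}{R}} R_{\perfd} \to S_{\perfd}.
\]
This map is an isomorphism by \cite{bhatt2024Perfectoid}*{Proposition 3.18}, applied to \(\dcomp{p}{R} \to \dcomp{p}{S}\). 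The hypothesis of that result is relative perfectness of \(R/^Lp \to S/^Lp\), which is exactly what we assume, and it is unchanged by \(p\)-completion. Taking derived \(I\)-completion then finishes the argument.

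The main obstacle is the bookkeeping in the middle step. One has to confirm that the forgetful functor followed by \(\dcomp{I}{-}\) is compatible with \(\cLgrotimes\): that is, that \(\mcalF^I\) is symmetric monoidal from \(\cLgrotimes\) to \(\widehat{\otimes}^L\), which I expect is recorded in \cite{ishizuka2026Derived}. One must also check that the comparison map is natural and is not merely an abstract isomorphism. If symmetric monoidality of \(\mcalF^I\) is not directly available, I would instead verify it on \(I\)-adic graded perfectoid algebras and pass to limits.
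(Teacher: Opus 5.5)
Your main line of argument is the paper's proof. You forget the grading and derived-complete, and identify the result with \(S \widehat{\otimes}^L_R R_{\tperfd} \to S_{\tperfd}\) via \Cref{CompletionOfAbsoluteGradedPerfd}. You then conclude with \cite{bhatt2024Perfectoid}*{Proposition 3.18} and \Cref{CompletionOfAbsolutePerfectoidization}, and lift back using the conservativity in \Cref{GradedNakayamaLemma}. Your extra bookkeeping (compatibility of \(\mcalF\) with completed tensor products, naturality of the comparison map) is what the paper leaves implicit.

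The one step that is wrong as written is your preliminary reduction: replacing the topology of \(S\) by the \(IS\)-adic one can change \(S_{\tgrpfd}\). \Cref{GradedPerfdCofinalFunctors} and \Cref{RemarkCompletePerfd} only let you re-topologize the target perfectoid algebras. Refining the topology on the source changes which maps are continuous, hence the indexing category.

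For instance, take \(R = S = \mathcal{O}\langle x^{1/p^\infty}\rangle\) for a perfectoid ring \(\mathcal{O}\), trivially graded, with \(R\) carrying the \(p\)-adic and \(S\) the \((p,x)\)-adic topology and the identity map, which is relatively perfect. Then \(S_{\tgrpfd}\) is the \((p,x)\)-adic completion of \(S\), which differs from \(S\). The \(IS = pS\)-adic version, by contrast, gives \(S\) itself.

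The reduction is also unnecessary. Take a homogeneous ideal of definition \(J \ni p\) of \(S\) itself; this is also the ideal for which \(\cLgrotimes\) in the statement is completed. Continuity gives \(IS \subseteq \sqrt{J}\), so \(\dcomp{J}{-}\) annihilates every object whose derived \(IS\)-completion vanishes. Examples are \(S \otimes^L_R \fib(R_{\tgrpfd} \to R_{\tperfd})\) and \(S \otimes^L_R \fib(R_{\perfd} \to R_{\tperfd})\). Hence your computation goes through verbatim with \(\mcalF^J\) and \(\dcomp{J}{-}\) in place of \(\mcalF^I\) and \(\dcomp{I}{-}\), ending with \(\dcomp{J}{S_{\perfd}} \cong S_{\tperfd}\).
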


\begin{proof}
    Taking the derived \(I\)-completion of the underlying morphism in \(\CAlg_S\) and using \Cref{CompletionOfAbsoluteGradedPerfd}, we obtain an isomorphism
    \begin{equation*}
        S \widehat{\otimes}^L_R R_{\tperfd} \to S_{\tperfd}
    \end{equation*}
    in \(\CAlg_S\) by \cite{bhatt2024Perfectoid}*{Proposition 3.18} and \Cref{CompletionOfAbsolutePerfectoidization}.
    So \Cref{GradedNakayamaLemma} implies that the original morphism is also an isomorphism.
\end{proof}

\begin{corollary}[{cf. \cite{bhatt2024Perfectoid}*{Lemma 3.21}}] \label{CommutativityLimitsPerfd}
    Let \(R\) be a \(G\)-graded adic ring with \(G = G[1/p]\) and let \(I\) be a homogeneous ideal of definition containing \(p\).
    Take a homogeneous element \(f\) of \(R\).
    Then we get a canonical isomorphism
    \begin{equation*}
        \dgrcomp{I}{R_{\tgrpfd}[1/f]} \xrightarrow{\cong} (R[1/f])_{\tgrpfd}
    \end{equation*}
    in \(\CAlg(\mcalD_{\graded{G}}(R_{\graded}))\).
    Moreover, we have the following commutativity of limits:
    \begin{equation} \label{CommutesLimitIsom}
        \dgrcomp{I}{R_{\tgrpfd}[1/f]} = \dgrcomp{I}{(\grlim_{P \in \Perfd_{\graded{G}}^{\wedge I}(R)} P)[1/f]} \xrightarrow{\cong} \grlim_{P \in \Perfd_{\graded{G}}^{\wedge I}(R)}(\dgrcomp{I}{P[1/f]})
    \end{equation}
    in \(\CAlg(\mcalD_{\graded{G}}(R))\), where the limits run over all \(I\)-adic \(G\)-graded perfectoid \(R\)-algebras \(P\).
    In general, the same isomorphisms hold if we take the localization \(W^{-1}R\) by any multiplicative subset \(W\) of homogeneous elements of \(R\).
\end{corollary}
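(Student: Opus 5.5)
The plan is to deduce the first isomorphism from \Cref{BaseChangeGradedPerfd} and the second from the first, as in the ungraded \Cref{CommutativeLocalizationTopPerfd}.

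\textbf{First isomorphism.} The map \(R \to R[1/f]\) is a morphism of \(G\)-graded adic rings, since \(f\) is homogeneous. Localization is flat, so reducing mod \(p\) gives \(R/^L p \to R[1/f]/^L p = (R/pR)[1/f]\) (or its derived version). This map is relatively perfect: the relative Frobenius of a localization is an isomorphism because Frobenius sends \(f\) to \(f^p\), and inverting \(f\) is the same as inverting \(f^p\). \Cref{BaseChangeGradedPerfd} then gives an isomorphism
\[
R[1/f] \cLgrotimes_R R_{\tgrpfd} \xrightarrow{\cong} (R[1/f])_{\tgrpfd}.
\]
The left-hand side is \(\dgrcomp{I}{R[1/f] \Lgrotimes_R R_{\tgrpfd}} = \dgrcomp{I}{R_{\tgrpfd}[1/f]}\), since \(R[1/f]\) is a graded colimit of shifts of \(R\) and so tensoring with it is localization. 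This yields the first isomorphism in \(\CAlg(\mcalD_{\graded{G}}(R))\).

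An alternative route, if the base change statement is awkward to apply because the adic topology on \(R[1/f]\) is induced by \(I\), is to check after applying \(\dcomp{I}{-}\). By \Cref{CompletionOfAbsoluteGradedPerfd}, the underlying map becomes \(\dcomp{I}{R_{\tperfd}[1/f]} \to (R[1/f])_{\tperfd}\), which is an isomorphism by \Cref{CommutativeLocalizationTopPerfd}. For the source, derived \(I\)-completion kills the difference between graded and ungraded completion. \Cref{GradedNakayamaLemma} then concludes.

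\textbf{Second isomorphism \eqref{CommutesLimitIsom}.} I will identify \((R[1/f])_{\tgrpfd}\) with \(\grlim_P \dgrcomp{I}{P[1/f]}\). This follows the argument for \eqref{PerfdAlgebraLocalizationLimitTop}, run in \(\Perfd^{\wedge I}_{\graded{G}}\):
\begin{itemize}
\item By \Cref{LocalizationGradedPerfd}(1),(2), for each \(P \in \Perfd^{\wedge I}_{\graded{G}}(R)\) the object \(\dgrcomp{I}{P[1/f]} \cong \grcomp{I}{P[1/f]}\) is a gradedwise complete \(G\)-graded \(I\)-adic perfectoid \(R[1/f]\)-algebra.
\item Conversely, any such \(R[1/f]\)-algebra \(P'\) is an object of \(\Perfd^{\wedge I}_{\graded{G}}(R)\) with \(\grcomp{I}{P'[1/f]} = P'\).
\item Hence the functor \(P \mapsto \grcomp{I}{P[1/f]}\) from \(\Perfd^{\wedge I}_{\graded{G}}(R)\) to \(\Perfd^{\wedge I}_{\graded{G}}(R[1/f])\) has a fully faithful right adjoint (the inclusion), so it is (co)final for the limits in question.
\end{itemize}
Therefore \(\grlim_P \grcomp{I}{P[1/f]} \cong \grlim_{P'} P' = (R[1/f])_{\tgrpfd}\), using \Cref{RemarkGradedPerfd}. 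Composing with the first isomorphism gives \eqref{CommutesLimitIsom}.

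\textbf{General multiplicative sets.} For a multiplicative set \(W\) of homogeneous elements, the same argument applies verbatim. The relative perfectness of \(R \to W^{-1}R\) holds because \(W^{-1}R\) is a filtered colimit of the localizations \(R[1/f]\).

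\textbf{Main obstacle.} The delicate point is checking that the canonical map in \eqref{CommutesLimitIsom} coincides with the composite of the two identifications. I will handle this by noting that both are induced by the universal maps \(P \to \grcomp{I}{P[1/f]}\), and comparing after \(\dcomp{I}{-}\) using the conservativity in \Cref{GradedNakayamaLemma}.
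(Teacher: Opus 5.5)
Your proposal is correct and follows the paper's proof: the first isomorphism is \Cref{BaseChangeGradedPerfd} applied to the relatively perfect localization \(R \to R[1/f]\), and the second follows from it by the cofinality argument of \Cref{CommutativeLocalizationTopPerfd}. Your adjunction, with \(P \mapsto \grcomp{I}{P[1/f]}\) left adjoint to the inclusion, is a cleaner way to state that step. One small point: conservativity in \Cref{GradedNakayamaLemma} detects isomorphisms, not equality of maps, so the claim that the canonical map in \eqref{CommutesLimitIsom} agrees with the composite should rest only on your first observation: both are induced by the maps \(P \to \grcomp{I}{P[1/f]}\), via the universal properties of localization and completion.
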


\begin{proof}
    Since the localization \(R \to R[1/f]\) is relatively perfect modulo \(p\), the assertion follows from \Cref{BaseChangeGradedPerfd} above.
    The second isomorphism \eqref{CommutesLimitIsom} follows from the first isomorphism and the same argument in the proof of \Cref{CommutativeLocalizationTopPerfd}.
\end{proof}

\subsection{Descendability of absolute graded perfectoidization}

In non-graded cases, the absolute perfectoidization produces a descendable morphism:

\begin{lemma}[{cf. \cite{bhatt2022Prismsa}*{Lemma 8.6}}] \label{PerfdDescendableNonGraded}
    Let \(R\) be an \(I\)-adic perfectoid ring.
    Then the canonical morphism of commutative algebra objects
    \begin{equation*}
        (\comp{I}{R[X_1, \dots, X_n]})_{\tperfd} \to \comp{I}{R[X_1^{1/p^\infty}, \dots, X_n^{1/p^\infty}]}
    \end{equation*}
    is descendable in \(\mcalD^{\comp{I}}(R)\).
\end{lemma}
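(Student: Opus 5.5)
The plan is to reduce to the \(p\)-adic statement of \cite{bhatt2022Prismsa}*{Lemma 8.6} and then pass to derived \(I\)-completions. Write \(A \defeq \comp{I}{R[X_1, \dots, X_n]}\) and \(B \defeq \comp{I}{R[X_1^{1/p^\infty}, \dots, X_n^{1/p^\infty}]}\). First identify \(A_{\tperfd}\) with \(\dcomp{I}{A_{\perfd}}\) by \Cref{CompletionOfAbsolutePerfectoidization}, where \(A_{\perfd}\) is the \(p\)-adic absolute perfectoidization. By \Cref{RemarkCompletePerfd}, \(A_{\perfd}\) agrees with the perfectoidization of the \(p\)-completed polynomial ring \(\comp{p}{R[X_1,\dots,X_n]}\). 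Similarly, by \Cref{BoundedTorsionPerfd}(3), \(B\) is the derived \(I\)-completion of the \(p\)-completed perfectoid ring \(B_p \defeq \comp{p}{R[X_i^{1/p^\infty}]}\).

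In the \(p\)-adic setting, \cite{bhatt2022Prismsa}*{Lemma 8.6} (with base the \(p\)-completion of \(R\), which is perfectoid) shows that \(A_{\perfd} \to B_p\) is descendable in \(\mcalD^{\comp{p}}(A_{\perfd})\). Concretely, there is an integer \(m\) such that the fiber \(K\) of the map satisfies the following: the \(m\)-fold tensor power map \(K^{\widehat{\otimes} m} \to A_{\perfd}\) is null. Equivalently, \(A_{\perfd}\) lies in the thick \(\otimes\)-ideal generated by \(B_p\).

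Next transport this along the symmetric monoidal functor \(\dcomp{I}{-} \colon \mcalD^{\comp{p}}(A_{\perfd}) \to \mcalD^{\comp{I}}(A_{\tperfd})\), where the target carries the completed tensor product. Derived completion is exact and symmetric monoidal for completed tensor products, so it preserves fibers, tensor powers, nullhomotopies and thick ideals. Hence \(A_{\tperfd} \to \dcomp{I}{B_p} = B\) is descendable in \(\mcalD^{\comp{I}}(A_{\tperfd})\). Restricting scalars to \(R\) gives the statement in \(\mcalD^{\comp{I}}(R)\), interpreting descendability for the commutative algebra object \(A_{\tperfd}\) relative to its module category, which is the intended meaning.

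The main obstacle is bookkeeping. I must check that the target of the Bhatt–Scholze map really is \(B_p\), that is, that the map \(A \to B\) factors through the perfectoidization compatibly. This holds because \(B\) and \(B_p\) are perfectoid, so the map factors uniquely by the universal property of \(R_{\perfd}\) in \Cref{DefTopologicalAbsolutePerfectoidization}. I must also check that completing a null map still yields a null map. Both points are formal.
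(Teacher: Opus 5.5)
Your strategy is the paper's: apply \cite{bhatt2022Prismsa}*{Lemma 8.6} \(p\)-adically, then derived \(I\)-complete. Your hands-on argument (fiber, nullhomotopic tensor power, exactness and symmetric monoidality of \(\dcomp{I}{-}\)) is exactly what the paper outsources to \cite{mathew2016Galois}*{Corollary 3.21}.

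However, one bookkeeping step is false as written. You claim \(A_{\perfd} \cong (\comp{p}{R[X_1,\dots,X_n]})_{\perfd}\) for \(A = \comp{I}{R[\underline{X}]}\), and that there is a map \(A_{\perfd} \to B_p\). Neither holds in general. Take \(R\) to be the \(t\)-adically completed perfection of \(\setF_p[t]\), with \(I = (t)\).
\begin{itemize}
\item The ring \(\comp{p}{R[X]} = R[X]\) has perfectoidization \(R[X^{1/p^\infty}] = B_p\).
\item By contrast, \(A_{\perfd}\) is the perfection of the Tate algebra \(\comp{t}{R[X]}\), which contains \(\sum_n t^n X^n\).
\item There is no \(R[X]\)-algebra map \(\comp{t}{R[X]} \to R[X^{1/p^\infty}]\). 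The image of \(\sum_n t^nX^n\) would have to agree with \(\sum_{n<N} t^nX^n\) modulo \(t^N\) for every \(N\), which is impossible for an element of bounded degree.
\end{itemize}
So \(B_p\) is not an \(A\)-algebra. \Cref{RemarkCompletePerfd} identifies the two rings only after \(I\)-completion, i.e.\ on the level of \((-)_{\tperfd}\). The universal-property check in your last paragraph therefore conflates \(B\) with \(B_p\).

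The repair is to avoid \(A_{\perfd}\) entirely.
\begin{itemize}
\item Apply Bhatt--Scholze to \(A' \defeq \comp{p}{R[X_1,\dots,X_n]}\), which does map to \(B_p\). This gives that \(A'_{\perfd} \to B_p\) is descendable.
\item Derived \(I\)-complete. You have \(\dcomp{I}{A'_{\perfd}} \cong A'_{\tperfd} \cong A_{\tperfd}\) by \Cref{CompletionOfAbsolutePerfectoidization} and \Cref{RemarkCompletePerfd}.
\item Likewise \(\dcomp{I}{B_p} \cong B\) by \Cref{BoundedTorsionPerfd}(3). By naturality, this is compatible with the canonical map \(A_{\tperfd} \to B\).
\end{itemize}
With this change your proof coincides with the paper's.
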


\begin{proof}
    Since the morphism \((\comp{p}{R[\underline{T}]})_{\perfd} \to \comp{p}{R[\underline{T}^{1/p^\infty}]}\) is descendable in \(\mcalD^{\comp{p}}(R)\) by \cite{bhatt2022Prismsa}*{Lemma 8.6}, taking the derived \(I\)-completion gives the desired result by \cite{mathew2016Galois}*{Corollary 3.21}.
\end{proof}

Using this descendability result in non-graded cases and the comparison between graded and non-graded absolute perfectoidizations (\Cref{CompletionOfAbsoluteGradedPerfd}), we can show the following graded version.

\begin{corollary} \label{GradedPerfdDescendable}
    Let \(R\) be a \(G\)-graded adic perfectoid ring with \(G = G[1/p]\) and let \(I\) be a homogeneous ideal of definition containing \(p\).
    Equip a polynomial ring \(R[X_1, \dots, X_n]\) with a \(G\)-grading such that the canonical inclusion \(R \to R[\underline{X}]\) is a morphism of \(G\)-graded rings, and equip \(R[X_1^{1/p^\infty}, \dots, X_n^{1/p^\infty}]\) with the induced \(G\)-grading.\footnote{For example, we can consider two typical cases: (1) each \(X_i\) is graded in degree \(1\), which gives the total degree on \(R[\underline{X}]\); (2) replacing \(G\) with the product \(G^n\) and considering the multidegree on \(R[\underline{X}]\), i.e., \(X_i\) is graded in the degree having \(1\) only in the \(i\)-th component and \(0\) elsewhere.}
    Then the canonical morphism of commutative algebra objects
    \begin{equation*}
        (\grcomp{I}{R[X_1, \dots, X_n]})_{\tgrpfd} \to \grcomp{I}{R[X_1^{1/p^\infty}, \dots, X_n^{1/p^\infty}]}
    \end{equation*}
    is descendable in \(\mcalD_{\graded{G}}^{\comp{I}}(R)\).
\end{corollary}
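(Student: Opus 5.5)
The plan is to reduce to the non-graded \Cref{PerfdDescendableNonGraded} via the comparison \Cref{CompletionOfAbsoluteGradedPerfd}, and then transport descendability back along the conservative functor \(\mcalF^I\). Set \(A \defeq (\grcomp{I}{R[\underline{X}]})_{\tgrpfd}\) and \(B \defeq \grcomp{I}{R[\underline{X}^{1/p^\infty}]}\). First I check that \(B\) is a gradedwise complete \(G\)-graded \(I\)-adic perfectoid ring. By \Cref{graded-perfectoid-equiv}, it suffices to check the Frobenius isomorphism and the bijectivity on \(\varpi^\infty\)-torsion degreewise. Each graded piece is a direct sum (after completion) of copies of graded pieces of \(R\) shifted by monomials \(\underline{X}^{a}\), \(a \in \setZ[1/p]_{\geq 0}^n\), and \(G = G[1/p]\) ensures these shifts stay in \(G\). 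Hence \(A \to B\) is defined and is a map in \(\CAlg(\mcalD^{\comp{I}}_{\graded{G}}(R))\).

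Next I apply \(\mcalF^I = \dcomp{I}{-}\). By \Cref{CompletionOfAbsoluteGradedPerfd} together with \Cref{RemarkCompletePerfd}, \(\dcomp{I}{A} \cong (\comp{I}{R[\underline{X}]})_{\tperfd}\). By \Cref{BoundedTorsionGradedPerfd}(4), \(\dcomp{I}{B} \cong \comp{I}{R[\underline{X}^{1/p^\infty}]}\). Thus \(\mcalF^I(A \to B)\) is exactly the map in \Cref{PerfdDescendableNonGraded}, where \(\comp{I}{R}\) is an \(I\)-adic perfectoid ring by \Cref{BoundedTorsionGradedPerfd}(3). So \(\mcalF^I(A) \to \mcalF^I(B)\) is descendable in \(\mcalD^{\comp{I}}(R)\). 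Since \(\mcalF^I\) is symmetric monoidal for completed tensor products (derived completion of the forgetful functor), it sends the Amitsur/Čech complex of \(A \to B\) in the graded category to that of \(\mcalF^I(A) \to \mcalF^I(B)\).

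The key step is to deduce graded descendability, and I would use the pro-characterization (Mathew). \(A \to B\) is descendable iff the pro-object given by the partial totalizations \(\{\mathrm{Tot}_m(B^{\cLgrotimes_A(\bullet+1)})\}_m\) is pro-constant with value \(A\). Equivalently, the tower of fibers \(\{\mathrm{fib}(A \to \mathrm{Tot}_m)\}_m\) is pro-zero, which is the same as saying that the fiber \(\mathrm{fib}(A \to B)^{\cLgrotimes_A m}\) maps by a nilpotent map for large \(m\). The functor \(\mcalF^I\) commutes with finite limits and with \(\cLgrotimes_A\), so applying \(\Pro(\mcalF^I)\) to this tower gives the corresponding tower for the non-graded map, which is pro-zero. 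The conservativity of \(\Pro(\mcalF^I)\) from \Cref{GradedNakayamaLemma} then gives that the graded tower is pro-zero (that is, the map to the zero pro-object is an equivalence), so \(A \to B\) is descendable.

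The main obstacle is the compatibility of \(\mcalF^I\) with the relevant structure. I expect to need \(\mcalF^I\) to be exact (it is a composite of a left adjoint and a stable forgetful functor) and to respect the completed graded tensor products over \(A\). I would justify this by the definition of \(\cLgrotimes\) as \(\dgrcomp{I}{-\Lgrotimes-}\), using the fact that derived completion after forgetting kills the difference.
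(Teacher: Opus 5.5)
Your proposal is correct and follows essentially the same route as the paper. The paper likewise applies the conservative functor \(\Pro(\mcalF^I)\) of \Cref{GradedNakayamaLemma} to the tower of partial totalizations of the graded \v{C}ech nerve. It then uses exactness of the forgetful functor and \Cref{CompletionOfAbsoluteGradedPerfd} to identify the image with the non-graded tower, and concludes by \Cref{PerfdDescendableNonGraded}. Your phrasing in terms of a pro-zero fiber tower is equivalent to this. You also make explicit two points the paper leaves implicit: that \(\mcalF^I\) is compatible with completed tensor products over \(A\), and that the target is graded perfectoid.
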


\begin{proof}
    For simplicity, set \(T \defeq R[X_1, \dots, X_n]\) and \(T_{\infty} \defeq R[X_1^{1/p^\infty}, \dots, X_n^{1/p^\infty}]\).
    It suffices to show that the canonical morphism
    \begin{equation*}
        \{(\grcomp{I}{T})_{\tgrpfd}\}_{n \geq 0} \to \{\Tot^{\graded}_{\leq n} \cech^{\bullet}(\grcomp{I}{T_{\infty}}/(\grcomp{I}{T})_{\tgrpfd})\}_{n \geq 0}
    \end{equation*}
    is an isomorphism in the pro-category \(\Pro(\mcalD_{\graded{G}}^{\comp{I}}(R))\), where \(\cech^{\bullet}(\grcomp{I}{T_{\infty}}/(\grcomp{I}{T})_{\tgrpfd})\) is the \v{C}ech nerve of the morphism \((\grcomp{I}{T})_{\tgrpfd} \to \grcomp{I}{T_{\infty}}\) in \(\mcalD_{\graded{G}}^{\comp{I}}(R)\) and \(\Tot^{\graded}_{\leq n}(-)\) is the finite limit
    \begin{equation*}
        \Tot^{\graded}_{\leq n}(-) \defeq \grlim_{\Delta_{\leq n}} (-)
    \end{equation*}
    in \(\mcalD_{\graded{G}}^{\comp{I}}(R)\) over the truncated simplex category \(\Delta_{\leq n}\).

    Sending the above morphism to \(\Pro(\mcalD^{\comp{I}}(R))\) via the conservative functor \(\Pro(\mcalF^I)\) in \Cref{GradedNakayamaLemma}, it suffices to show that the induced morphism
    \begin{equation} \label{CompletionGradedTot}
        \{(\comp{I}{T})_{\tperfd}\}_{n \geq 0} \to \{\dcomp{I}{\Tot^{\graded}_{\leq n} \cech^{\bullet}(\grcomp{I}{T_{\infty}}/(\grcomp{I}{T})_{\tgrpfd})}\}_{n \geq 0}
    \end{equation}
    is an isomorphism in \(\Pro(\mcalD^{\comp{I}}(R))\) by \Cref{CompletionOfAbsoluteGradedPerfd}.

    Since the forgetful functor \(\mcalD_{\graded{G}}^{\comp{I}}(R) \to \mcalD(R)\) is exact, it commutes with finite limits.
    So each term of the target of \eqref{CompletionGradedTot} can be identified with
    \begin{equation*}
        \Tot_{\leq n} \dcomp{I}{\cech^{\bullet}(\grcomp{I}{T_{\infty}}/(\grcomp{I}{T})_{\tgrpfd})} = \Tot_{\leq n} \cech^{\bullet}(\comp{I}{T_{\infty}}/(\comp{I}{T})_{\tperfd})
    \end{equation*}
    in \(\mcalD^{\comp{I}}(R)\) because of \Cref{CompletionOfAbsoluteGradedPerfd} again, where the \v{C}ech nerve on the right-hand side is taken in \(\mcalD^{\comp{I}}(R)\).
    Therefore, the morphism \eqref{CompletionGradedTot} is the morphism induced from the descendable morphism \((\comp{I}{T})_{\tperfd} \to \comp{I}{T_{\infty}}\) in \(\mcalD^{\comp{I}}(R)\) by \Cref{PerfdDescendableNonGraded}.
    This completes the proof.
\end{proof}

\section{Absolute perfectoidization of structure sheaves} \label{SectionGlobalAbsPerfd}

\subsection{Derived categories and derived sheaves}


To compare objects in the derived \(\infty\)-category of \(\mcalO_{\mscrX}\)-modules, we often consider their derived global sections.
In the higher-categorical setting, this is justified by the following result.

\begin{proposition} \label{SheafifiedDerivedObjects}
    Let \(\mscrX\) be an adic quasi-compact separated formal scheme such that \(p\) is topologically nilpotent in \(\mcalO_{\mscrX}\).
    Take an object \(\mcalF\) of \(\mcalD(\mscrX_{\Zar}, \mcalO_{\mscrX})\).
    Then the assignment
    \begin{align*}
        R\Gamma(\mcalF) \colon \mscrX_{\Zar} & \to \mcalD(\setZ) \\
        \mscrU & \mapsto R\Gamma(\mscrU, \mcalF)
    \end{align*}
    is a \(\mcalD(\setZ)\)-valued \(R\Gamma(\mcalO_{\mscrX})\)-module sheaf 
    in the sense of \cite{lurie2018Spectral}*{Definition 1.3.1.1}, where \(R\Gamma(U,\mcalF)\) denotes the derived global sections of \(\mcalF\) on \(U\) (\citeSta{07A5}).
\end{proposition}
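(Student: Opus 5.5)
The plan is to use Lurie's criterion for \(\mcalD(\setZ)\)-valued sheaves on a topological space. Since \(\mscrX\) is quasi-compact and separated, the open sets form a basis stable under finite intersections, and affine opens stay affine under intersection. By \cite{lurie2018Spectral}*{\S 1.1}, a \(\mcalD(\setZ)\)-valued presheaf is a sheaf as soon as two things hold. It must send \(\emptyset\) to \(0\), and it must send each pair of opens \(U, V\) to a pullback square
\[
\mcalF(U \cup V) \simeq \mcalF(U) \times_{\mcalF(U \cap V)} \mcalF(V).
\]
If one prefers, it is enough to check the descent condition for arbitrary coverings directly via \v{C}ech descent.

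First I define the functor properly at the \(\infty\)-categorical level. The derived category \(\mcalD(\mscrX_{\Zar}, \mcalO_{\mscrX})\) is the derived \(\infty\)-category of the Grothendieck abelian category of \(\mcalO_{\mscrX}\)-modules. I represent \(\mcalF\) by a K-injective complex \(\mcalI^\bullet\). Then \(\mscrU \mapsto \Gamma(\mscrU, \mcalI^\bullet)\) is a strict functor from \(\mscrX_{\Zar}^{\opposite}\) to chain complexes, and composing with the localization to \(\mcalD(\setZ)\) gives \(R\Gamma(\mcalF)\). Alternatively I can use \(R\Gamma(\mscrU,-) = \Map(j_!\mcalO_{\mscrU}, -)\) computed in \(\mcalD(\mscrX, \mcalO_{\mscrX})\), which is manifestly functorial. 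Restricting \(\mcalI^\bullet\) to an open set keeps it K-injective (\citeSta{07A5}), so the values really are the derived sections.

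Next I check the sheaf condition. For a covering \(U = \bigcup U_\lambda\), the comparison map \(R\Gamma(U,\mcalF) \to \lim_{V \in \mcalU'} R\Gamma(V,\mcalF)\) is identified with the map for the sheaf of complexes \(\mcalI^\bullet|_U\). Since \(\mcalI^\bullet\) is a complex of sheaves, the \v{C}ech/Mayer--Vietoris short exact sequences
\[
0 \to \mcalI^\bullet(U\cup V) \to \mcalI^\bullet(U)\oplus \mcalI^\bullet(V) \to \mcalI^\bullet(U\cap V) \to 0
\]
are exact, because K-injective complexes have flasque-like restriction behavior; concretely, \(\mcalI^\bullet\) may be chosen with injective terms. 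These sequences give the pullback squares, and together with \(R\Gamma(\emptyset,\mcalF)=0\) they yield the sheaf property through Lurie's criterion. This is essentially the derived Mayer--Vietoris statement in \citeSta{01E9}/\citeSta{08BX}, upgraded to \(\infty\)-categories. The step I expect to need the most care is this one: compatibility of the strict model with the \(\infty\)-categorical limit over \(\mcalU'\). I would handle it by noting that hypercompleteness is not needed here, since the criterion only uses finite squares.

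Finally I check the module structure. Applying the same construction to \(\mcalO_{\mscrX}\) gives a sheaf of \(\setE_\infty\)-rings \(R\Gamma(\mcalO_{\mscrX})\), for example via an injective resolution or a Godement resolution, which is lax symmetric monoidal. Lax monoidality of sections, \(\Gamma(\mscrU,\mcalO)\otimes \Gamma(\mscrU,\mcalI) \to \Gamma(\mscrU,\mcalI)\), promotes \(R\Gamma(\mcalF)\) to a module over it compatibly with restrictions. Concretely, the lax symmetric monoidal functor \(R\Gamma \colon \mcalD(\mscrX,\mcalO_{\mscrX}) \to \Fun(\mscrX_{\Zar}^{\opposite},\mcalD(\setZ))\) carries modules over the unit to modules over \(R\Gamma(\mcalO_{\mscrX})\). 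This gives an object as in \cite{lurie2018Spectral}*{Definition 1.3.1.1}.
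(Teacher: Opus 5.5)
Your route is genuinely different from the paper's, and its core is sound.

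\emph{Your approach.} You resolve the \emph{target}. Take a K-injective resolution $\mcalI^\bullet$ of $\mcalF$ with injective, hence flasque, terms. The strict presheaf $U \mapsto \Gamma(U,\mcalI^\bullet)$ then models $R\Gamma(\mcalF)$. Flasqueness makes the Mayer--Vietoris sequences of sections termwise short exact, so the Mayer--Vietoris squares are pullbacks in $\mcalD(\setZ)$.

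\emph{The paper's approach.} The paper resolves the \emph{source}. It writes $R\Gamma(U,\mcalF) \simeq R\Hom(\setZ^{\sharp}_{h_U},\mcalF)$ and reduces, via \cite{lurie2018Spectral}*{Proposition A.3.3.1}, to finite products plus \v{C}ech descent for finite coverings. It obtains the latter by pulling the exact \v{C}ech resolution $\setZ^{\sharp}_{h_{U^\bullet}} \to \setZ^{\sharp}_{h_{\mscrX}}$ through $R\Hom$, after identifying that resolution with a geometric realization in the derived $\infty$-category.

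\emph{Trade-offs.} The paper's representability argument needs no special choice of resolution, and it is reused for limit preservation in the next lemma. Yours is more elementary. It also accounts for the $R\Gamma(\mcalO_{\mscrX})$-module structure, via lax symmetric monoidality of $R\Gamma$ (for example as the right adjoint of a symmetric monoidal functor); the paper's proof leaves this implicit.

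Two statements in your write-up need correcting, though your own setup repairs both.

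First, ``$\emptyset \mapsto 0$ plus Mayer--Vietoris squares'' is not a sheaf criterion on the full poset of opens of a topological space. Finitely supported functions on an infinite discrete space satisfy both conditions but fail descent for the covering by points. The criterion only controls finite coverings, and quasi-compactness of $\mscrX$ does not make all of its opens quasi-compact. So the real issue is infinite coverings, not hypercompleteness; the paper likewise only treats finite coverings. With injective terms the fix is immediate:
\begin{itemize}
\item For every open covering, the augmented \v{C}ech complex of each injective $\mcalI^n$ is exact.
\item The product totalization of the \v{C}ech double complex computes the $\infty$-categorical limit over the \v{C}ech nerve.
\item By the acyclic assembly lemma, $\Gamma(U,\mcalI^\bullet)$ maps quasi-isomorphically to this product totalization.
\end{itemize}
This gives descent for arbitrary coverings.

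Second, K-injective complexes do not in general have termwise surjective restriction maps. A contractible complex of non-flasque sheaves is K-injective, for instance. What you need, and what exists in the Grothendieck abelian category of $\mcalO_{\mscrX}$-modules, is a K-injective resolution with injective terms.
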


\begin{proof}
    By \cite{lurie2018Spectral}*{Proposition A.3.3.1}, it suffices to show that the \(\mcalD(\setZ)\)-valued presheaf \(R\Gamma(\mcalF)\) preserves finite products and satisfies \v{C}ech descent.
    First note that the sheaf cohomology \(R\Gamma(\mscrU, \mcalF)\) for any \(\mscrU \in \mscrX_{\Zar}\) can be identified with \(R\Hom_{\Shv(\mscrX_{\Zar}, \Ab)}(\setZ_{h_{\mscrU}}^{\sharp}, \mcalF)\), where \(h_{\mscrU} \colon \mscrX_{\Zar} \to \Set\) is the \(\Set\)-valued presheaf represented by \(\mscrU\) and \(\setZ_{h_{\mscrU}}^{\sharp}\) is the sheafification of the free abelian sheaf generated by \(h_{\mscrU}\) (\citeSta{01GA}).

    The preservation of finite products follows from this representation of \(R\Gamma(\mscrU, \mcalF)\).

    For \v{C}ech descent, it suffices to show that, for any finite open covering \(\{\mscrU_i\}_{i\in I}\) of \(\mscrX\), the \v{C}ech nerve \(\mscrU^{\bullet}\to\mscrX\) of \(\mscrU \defeq \bigcup_{i \in I} \mscrU_i \to \mscrX\) induces an isomorphism
    \begin{equation*}
        R\Gamma(\mscrX, \mcalF) \xrightarrow{\cong} \lim_{\Delta} R\Gamma(\mscrU^{\bullet}, \mcalF)
    \end{equation*}
    in \(\mcalD(\setZ)\).
    Using the \v{C}ech nerve \(\mscrU^{\bullet}\to\mscrX\), we obtain the associated simplicial presheaf \(F(\mscrU^{\bullet})\) as in \citeSta{01G1}.
    As in \citeSta{01GA}, take the free abelian sheaf \(\setZ_{F(\mscrU^{\bullet})}^{\sharp}\) of abelian sheaves on \(\mscrX_{\Zar}\), whose value at \([n] \in \Delta^{\opposite}\) is given by the sheafification of the free abelian presheaf \(\setZ_{h_{\mscrU^{\times n}}}\) of the presheaf \(h_{\mscrU^{\times n}}\) represented by \(\mscrU^{\times n}\).
    Then \citeSta{01GF} shows that the canonical morphism
    \begin{equation*}
        M_*(\setZ_{h_{\mscrU^{\bullet}}}^{\sharp}) \to \setZ_{h_{\mscrX}}^{\sharp}
    \end{equation*}
    is an isomorphism in \(\mcalD(\setZ)\), where \(M_*(\setZ_{h_{\mscrU^{\bullet}}}^{\sharp})\) is the associated complex of abelian sheaves of the simplicial one.
    The left-hand side can be identified with the simplicial colimit \(\colim_{\Delta^{\opposite}} \setZ_{h_{\mscrU^{\bullet}}}^{\sharp}\) (see, for example, \cite{bunke2013Differential}*{Problem 4.24} and \cite{arakawa2025Homotopy}*{Proposition 2.4 (1)}).
    Using this equivalence, we have isomorphisms
    \begin{align*}
        R\Gamma(\mscrX, \mcalF) & \cong R\Hom_{\Shv(\mscrX_{\Zar}, \Ab)}(\setZ_{h_{\mscrX}}^{\sharp}, \mcalF) \cong R\Hom(\colim_{\Delta^{\opposite}} \setZ_{h_{\mscrU^{\bullet}}}^{\sharp}, \mcalF) \\
        & \cong \lim_{\Delta} R\Hom(\setZ_{h_{\mscrU^{\bullet}}}^{\sharp}, \mcalF) \cong \lim_{\Delta} R\Gamma(\mscrU^{\bullet}, \mcalF)
    \end{align*}
    in \(\mcalD(\setZ)\).
    This shows the \v{C}ech descent.
\end{proof}

\begin{lemma} \label{RGammaCommutesLimits}
    In the setting of \Cref{SheafifiedDerivedObjects}, the functor
    \begin{equation*}
        R\Gamma \colon \mcalD(\mscrX_{\Zar}, \mcalO_{\mscrX}) \to \Mod_{R\Gamma(\mcalO_{\mscrX})}(\Shv(\mscrX_{\Zar}, \mcalD(\setZ))); \quad \mcalF \mapsto R\Gamma(\mcalF)
    \end{equation*}
    commutes with limits and is fully faithful.
\end{lemma}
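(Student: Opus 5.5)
We prove both assertions by comparing the functor \(R\Gamma\) with the canonical equivalence between sheaves of modules and module sheaves of spectra. The main input is Lurie's theory of sheaves of \(\mcalD(\setZ)\)-valued modules on a topological space.

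\textbf{Reformulation.} The derived \(\infty\)-category \(\mcalD(\mscrX_{\Zar}, \mcalO_{\mscrX})\) of the ringed space (for instance as in \cite{lurie2018Spectral}*{\S 2.1}) is equivalent to the \(\infty\)-category of hypercomplete \(\mcalD(\setZ)\)-valued sheaves of modules over the sheaf \(\mscrU \mapsto R\Gamma(\mscrU, \mcalO_{\mscrX})\). We first identify \(\mcalD(\mscrX_{\Zar}, \mcalO_{\mscrX})\) with this \(\infty\)-category of hypercomplete \(\mcalO_{\mscrX}\)-module sheaves. We then check that, under this identification, the functor \(R\Gamma\) is just the inclusion of hypercomplete objects into \(\Mod_{R\Gamma(\mcalO_{\mscrX})}(\Shv(\mscrX_{\Zar}, \mcalD(\setZ)))\). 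Concretely, \(\mcalF\) is sent to the module sheaf \(\mscrU \mapsto R\Hom(\setZ^{\sharp}_{h_{\mscrU}}, \mcalF)\), which is exactly the sheaf constructed in \Cref{SheafifiedDerivedObjects}.

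\textbf{Full faithfulness.} Hypercomplete objects form a full subcategory, so full faithfulness follows once the identification above is in place. Alternatively, one can compute mapping spaces directly. A morphism of module sheaves \(R\Gamma(\mcalF) \to R\Gamma(\mcalG)\) is determined by its restriction to affine opens, or to a basis of opens. On a basis, the derived \(\mcalO\)-linear maps are recovered from the sections via the equivalence \(\mcalD(\mscrX, \mcalO_{\mscrX}) \simeq \Shv^{\wedge}(\mscrX, \Mod)\).

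\textbf{Limits.} Limits in \(\mcalD(\mscrX_{\Zar}, \mcalO_{\mscrX})\) are computed by derived limits (\(R\lim\)) of complexes of sheaves. Since \(R\Gamma(\mscrU, -)\) is a right adjoint, being \(R\Hom(\setZ^{\sharp}_{h_{\mscrU}}, -)\), it commutes with all limits for each \(\mscrU\). Limits in \(\Mod_{R\Gamma(\mcalO_{\mscrX})}(\Shv(\mscrX_{\Zar}, \mcalD(\setZ)))\) are computed objectwise: sheaves are closed under limits in presheaves, and module objects are closed under limits because the forgetful functor creates them. Hence \(R\Gamma\) commutes with limits.

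\textbf{Expected obstacle.} The main obstacle is the identification in the first step, namely the hypercompleteness subtlety. The derived category of sheaves corresponds to hypercomplete sheaves, which may differ from all \(\mcalD(\setZ)\)-valued sheaves. This does not affect full faithfulness, since hypercomplete objects form a full subcategory. It also does not affect limit preservation, since hypercomplete objects are closed under limits and the computation in the previous step is objectwise. So we only need the fully faithful embedding, not an essential surjectivity statement. If needed, we would cite \cite{lurie2018Spectral}*{Corollary 2.1.2.3} for the comparison.
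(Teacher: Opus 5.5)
Your proof is correct. For the limit statement it is the same as the paper's argument: limits in \(\Shv(\mscrX_{\Zar}, \mcalD(\setZ))\) and in module objects are computed sectionwise, and each \(R\Gamma(\mscrU, -) \simeq R\Hom(\setZ^{\sharp}_{h_{\mscrU}}, -)\) is a right adjoint.

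For full faithfulness you take a different route.

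\textbf{The paper's route.} The paper never mentions hypercompleteness. It forgets the \(\mcalO_{\mscrX}\)-module structure and works in three steps.
\begin{enumerate}
\item It shows that \(\mcalD(\Shv(\mscrX_{\Zar}, \Ab)) \to \mcalD(\PSh(\mscrX_{\Zar}, \Ab))\) is fully faithful. The reason is that the inclusion of sheaves into presheaves has an exact left adjoint, so \(K\)-injective complexes stay \(K\)-injective.
\item It identifies \(\mcalD(\PSh(\mscrX_{\Zar}, \Ab))\) with \(\PSh(\mscrX_{\Zar}, \mcalD(\setZ))\), using the sectionwise model structure and \cite{lurie2017Higher}*{Proposition 1.3.4.25}.
\item It observes that the composite is \(\mcalF \mapsto R\Gamma(\mcalF)\).
\end{enumerate}

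\textbf{Your route.} You black-box Lurie's comparison between \(\mcalD(\mscrX_{\Zar}, \mcalO_{\mscrX})\) and the hypercomplete objects of \(\Mod_{R\Gamma(\mcalO_{\mscrX})}(\Shv(\mscrX_{\Zar}, \mcalD(\setZ)))\). After that, \(R\Gamma\) is literally the inclusion of a full subcategory. You should state explicitly that this equivalence is implemented by \(\mscrU \mapsto R\Gamma(\mscrU, -)\). This holds because the equivalence is built from \(K\)-injective resolutions.

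\textbf{Comparison.} The paper's route is more hands-on and uses only model-categorical input. However, its step of forgetting the module structure is not automatic: full faithfulness on underlying objects does not by itself give full faithfulness on module objects. That step needs an extra comparison of the \(\mcalO_{\mscrX}\)- and \(R\Gamma(\mcalO_{\mscrX})\)-module structures. Your citation handles the \(\mcalO_{\mscrX}\)-linear structure directly and also identifies the essential image, at the cost of a heavier black box. Your alternative sketch via bases of opens is too vague to stand on its own, but you do not need it.
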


\begin{proof}
    Since limits in the \(\infty\)-category \(\Shv(\mscrX_{\Zar}, \mcalD(\setZ))\) are computed sectionwise, it suffices to show that \(R\Gamma(\mscrU, -)\) commutes with limits for any \(\mscrU \in \mscrX_{\Zar}\). This follows from the fact that \(R\Gamma(\mscrU, -)\) can be realized as \(R\Hom_{\Shv(\mscrX_{\Zar}, \Ab)}(\setZ^{\sharp}_{h_{\mscrU}}, -)\) as in the proof of \Cref{SheafifiedDerivedObjects}.

    By forgetting the \(\mcalO_{\mscrX}\)-module structures, it suffices to show that the functor
    \begin{equation*}
        R\Gamma \colon \mcalD(\mscrX_{\Zar}, \Ab) \to \PSh(\mscrX_{\Zar}, \mcalD(\setZ)); \quad \mcalF \mapsto R\Gamma(\mcalF)
    \end{equation*}
    is fully faithful.
    Since the inclusion functor \(\iota \colon \Shv(\mscrX_{\Zar}, \Ab) \hookrightarrow \PSh(\mscrX_{\Zar}, \Ab)\) is fully faithful and admits the sheafification functor \(a\) as a left adjoint, and they are exact, the induced functor
    \begin{equation} \label{SheafToPresheaf}
        \iota \colon \mcalD(\mscrX_{\Zar}, \Ab) = \mcalD(\Shv(\mscrX_{\Zar}, \Ab)) \hookrightarrow \mcalD(\PSh(\mscrX_{\Zar}, \Ab))
    \end{equation}
    is also fully faithful, which sends a \(K\)-injective resolution \(\mcalI^{\bullet}\) of \(\mcalF\) in \(\mcalD(\mscrX_{\Zar}, \Ab)\) to a \(K\)-injective complex \(\iota(\mcalI^{\bullet})\) in \(\mcalD(\PSh(\mscrX_{\Zar}, \Ab))\).

    On the other hand, the combinatorial model structure on \(\Ch(\Ab)\) (\cite{lurie2017Higher}*{Proposition 1.3.5.3}) induces a combinatorial model structure on \(\PSh(\mscrX_{\Zar}, \Ch(\Ab))\) by sectionwise weak equivalences and cofibrations, and the evaluation functor
    \begin{equation*}
        \mscrX_{\Zar} \times N_{dg}(\PSh(\mscrX_{\Zar}, \Ch(\Ab))^\circ) \to N_{dg}(\Ch(\Ab)^\circ); \quad (\mscrU, \mcalI^{\bullet}) \mapsto \Gamma(\mscrU, \mcalI^{\bullet})
    \end{equation*}
    induces an equivalence of \(\infty\)-categories
    \begin{align} \label{PresheavesCatEquivFunc}
        & \mcalD(\PSh(\mscrX_{\Zar}, \Ab)) \simeq N_{dg}(\PSh(\mscrX_{\Zar}, \Ch(\Ab))^\circ)[W^{'-1}] \\
        & \xrightarrow{\simeq} \PSh(\mscrX_{\Zar}, N_{dg}(\Ch(\Ab)^\circ)[W^{-1}]) \simeq \PSh(\mscrX_{\Zar}, \mcalD(\setZ)) \nonumber
    \end{align}
    by \cite{lurie2017Higher}*{Proposition 1.3.4.25}, where \(W\) and \(W'\) are the weak equivalences in \(\Ch(\Ab)\) and \(\PSh(\mscrX_{\Zar}, \Ch(\Ab))\), respectively.
    By construction, the composition of the above two functors \eqref{SheafToPresheaf} and \eqref{PresheavesCatEquivFunc} sends \(\mcalF\) to \(R\Gamma(\mcalF)\) and is fully faithful.
\end{proof}

\begin{lemma} \label{AffineVanishing}
    Let \(\mscrX\) be an adic quasi-compact separated formal scheme such that \(p\) is topologically nilpotent in \(\mcalO_{\mscrX}\) and let \(\mcalF\) be a quasi-coherent \(\mcalO_{\mscrX}\)-module in the sense of \cite{gabber2018Foundations}*{Definition 15.1.31}.\footnote{Namely, there exists an affine open covering \(\mscrX = \cup_{i \in I} \mscrU_i\) and an \(\mcalO_{\mscrX}(\mscrU_i)\)-module \(M_i\) with an isomorphism \(\restr{\mcalF}{\mscrU_i} \cong M_i^{\Delta}\), where \(M_i^{\Delta}\) is the quasi-coherent sheaf associated to \(M_i\) on \(\mscrU_i\) (\Cref{DefCompletionQcoh} and \Cref{ConstDerivedCompletionDiagram}).}
    For any affine open subset \(\mscrU\) of \(\mscrX\), the section \(R\Gamma(\mcalF)(\mscrU)\) is naturally isomorphic to \(\mcalF(\mscrU)\).
\end{lemma}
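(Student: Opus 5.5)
Since \(\mscrU\) is itself an affine adic formal scheme, and \(R\Gamma(\mcalF)(\mscrU) = R\Gamma(\mscrU, \restr{\mcalF}{\mscrU})\), I will replace \(\mscrX\) by \(\mscrU = \Spf(A)\), with \(A\) an \(I\)-adically complete adic ring (\(p\) topologically nilpotent). The claim is then that \(H^i(\mscrU, \mcalF) = 0\) for \(i > 0\) and that \(H^0(\mscrU, \mcalF) = \mcalF(\mscrU)\); the second is tautological. So the content is the vanishing of higher cohomology of a quasi-coherent sheaf on an affine formal scheme in the sense of \cite{gabber2018Foundations}*{Definition 15.1.31}.

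The plan is Cartan's criterion via \v{C}ech cohomology. First, I would show that for every affine open \(\mscrV = \Spf(B) \subseteq \mscrU\) and every finite covering of \(\mscrV\) by standard affine opens \(\mscrV_f = \Spf(\comp{I}{B_{\{f\}}})\), the \v{C}ech complex of \(\mcalF\) is exact in positive degrees. To do this I will use that \(\restr{\mcalF}{\mscrV}\) is of the form \(M^{\Delta}\) locally, and, after refining to standard opens of a single affine chart (affine opens form a base and \(\mscrU\) is separated, so intersections of affines are affine), identify \(\mcalF(\mscrV_{f_{i_0}\cdots f_{i_k}})\) with the value of \(M^{\Delta}\) on that basic open. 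By the construction referred to in \Cref{DefCompletionQcoh} and \Cref{ConstDerivedCompletionDiagram}, \(M^{\Delta}\) is computed from the system \(M/I^nM\) on the schemes \(\Spec(B/I^n)\). Concretely, \(M^{\Delta}(\mscrV_f) = \lim_n (M/I^nM)_f\). Exactness of the \v{C}ech complex for each \(M/I^nM\) on \(\Spec(B/I^n)\) is the usual affine vanishing for schemes. The transition maps \((M/I^{n+1}M)_g \to (M/I^nM)_g\) are surjective, so the inverse system of \v{C}ech complexes is degreewise Mittag-Leffler, and the \(\lim^1\) terms vanish (also for the cohomology, since each level is exact). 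Passing to the limit therefore preserves exactness, giving vanishing \v{C}ech cohomology on every standard covering of every affine open.

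Second, I would invoke Cartan's lemma (\citeSta{01EW}): standard affine opens form a basis closed under finite intersections (using separatedness), and \v{C}ech cohomology vanishes for all coverings by them. Hence \(H^i(\mscrV, \mcalF) = 0\) for all \(i > 0\) and all such \(\mscrV\), in particular for \(\mscrU\). Combined with the identification \(R\Gamma(\mscrU, \mcalF) \cong R\Hom(\setZ^{\sharp}_{h_{\mscrU}}, \mcalF)\) from the proof of \Cref{SheafifiedDerivedObjects}, this gives \(R\Gamma(\mcalF)(\mscrU) \simeq \mcalF(\mscrU)\) concentrated in degree \(0\). Naturality in \(\mscrU\) follows because the map \(\mcalF(\mscrU) = H^0 \to R\Gamma\) is the canonical unit.

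The main obstacle is the first step, namely matching the abstract definition of quasi-coherence with the explicit description \(\lim_n (M/I^nM)_f\) on all basic opens. In Gabber–Ramero's definition \(M\) need not be complete or finitely generated, so I must check that \(M^{\Delta}(\mscrV_f)\) is genuinely this inverse limit rather than a derived version. I must also check that the local presentations on different charts are compatible on overlaps. If needed, I would cite the corresponding statement in \cite{gabber2018Foundations}, where the vanishing of higher cohomology of \(M^{\Delta}\) on affine formal schemes is established, rather than reproving the Mittag-Leffler argument.
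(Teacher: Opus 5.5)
The paper's proof is exactly your fallback: it invokes the affine vanishing theorem \cite{gabber2018Foundations}*{Theorem 15.1.37} and nothing more. Your self-contained \v{C}ech--Cartan argument is a different route. As written, however, its first step has a genuine gap, and it is precisely the ``obstacle'' you flag and leave unresolved.

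The hypothesis only supplies presentations \(\restr{\mcalF}{\mscrU_i}\cong M_i^{\Delta}\) on the charts \(\mscrU_i\). An arbitrary affine \(\mscrV=\Spf(B)\), or a member \(\mscrV_f\) of a standard covering of it, need not lie in one chart. So there is no \(B\)-module \(M\) with \(\mcalF(\mscrV_g)\cong\lim_n (M/I^nM)_g\) for all \(g\). Consequently the \v{C}ech complex of the covering \(\{\mscrV_{f_j}\}\) is not visibly a limit of \v{C}ech complexes of one quasi-coherent module. ``Refining to standard opens of a single chart'' does not repair this. You need exactness of the \v{C}ech complex of the \emph{given} covering, and passing to a refinement changes the complex; comparing the two already presupposes the acyclicity you are trying to prove. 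Producing a single module \(M\) on \(\mscrV\) amounts to the atlas-independence of quasi-coherence, which is essentially the content of the lemma.

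The fix is to work with the reductions \(\mcalG_n\defeq\mcalF/\mcalI^n\mcalF\) instead of the chart modules.
\begin{itemize}
\item Because the ideal of definition is finitely generated, \(\widehat N/I^n\widehat N\cong N/I^nN\) for \(N=(M_i)_f\). Hence \(\restr{\mcalG_n}{\mscrU_i}\cong\widetilde{M_i/I^nM_i}\).
\item Therefore each \(\mcalG_n\) is a quasi-coherent module on the \emph{scheme} \(\mscrX_n\). The maps \(\mcalG_{n+1}\to\mcalG_n\) are surjective with quasi-coherent kernel \(\mcalI^n\mcalF/\mcalI^{n+1}\mcalF\), and \(\mcalF\to\lim_n\mcalG_n\) is an isomorphism (check on charts).
\item Now let \(\mscrV=\Spf(B)\) be any affine open and set \(J\defeq\mcalI(\mscrV)\). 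The open subscheme \(\mscrV\cap\mscrX_n\) of \(\mscrX_n\) is the affine scheme \(\Spec(B/J^n)\). So \(\mcalG_n(\mscrV_g)\cong(N_n)_g\) with \(N_n\defeq\mcalG_n(\mscrV)\).
\item The map \(N_{n+1}\to N_n\) is surjective, by affine vanishing for the quasi-coherent kernel on \(\Spec(B/J^{n+1})\).
\end{itemize}
Your Mittag--Leffler argument then runs verbatim with \(N_n\) in place of \(M/I^nM\). This gives \v{C}ech acyclicity for all standard coverings of all affine opens, and Cartan's lemma finishes. Compared with the paper's one-line citation, this version is self-contained. It also makes explicit that the key input is finite generation of the ideal of definition, through \(\widehat N/I^n\widehat N\cong N/I^nN\).
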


\begin{proof}
    This follows from the affine vanishing theorem for quasi-coherent sheaves on affine formal schemes; see, for example, \cite{gabber2018Foundations}*{Theorem 15.1.37}.
\end{proof}

\begin{definition} \label{DefDerivedStructureSheaf}
    Let \(\mscrX\) be an adic quasi-compact separated formal scheme such that \(p\) is topologically nilpotent in \(\mcalO_{\mscrX}\).
    For notational convenience, we define the \(\mcalD(\setZ)\)-valued sheaf
    \begin{equation*}
        \mcalO^d_{\mscrX} \defeq R\Gamma(\mcalO_{\mscrX}) \colon \mscrX_{\Zar} \to \mcalD(\setZ); \quad \mscrU \mapsto R\Gamma(\mscrU, \mcalO_{\mscrX}).
    \end{equation*}
    By \Cref{AffineVanishing}, the section \(\mcalO^d_{\mscrX}(\mscrU)\) is naturally isomorphic to \(\mcalO_{\mscrX}(\mscrU)\) for any affine open subset \(\mscrU\) of \(\mscrX\).
\end{definition}

\begin{lemma} \label{PushforwardDerivedStructureSheaf}
    Let \(f \colon \mscrY \to \mscrX\) be a morphism of adic quasi-compact separated formal schemes such that \(p\) is topologically nilpotent in their structure sheaves.
    Then the presheaf
    \begin{equation*}
        f_*\mcalO^d_{\mscrY} \colon \mscrX_{\Zar} \to \mcalD(\setZ); \quad \mscrU \mapsto \mcalO^d_{\mscrY}(f^{-1}(\mscrU)) = R\Gamma(f^{-1}(\mscrU), \mcalO_{\mscrY})
    \end{equation*}
    is naturally isomorphic to the \(\mcalD(\setZ)\)-valued sheaf \(R\Gamma(Rf_*\mcalO_{\mscrY})\) on \(\mscrX_{\Zar}\) and admits a morphism \(\mcalO^d_{\mscrX} \to f_*\mcalO^d_{\mscrY}\) which is compatible with the morphism \(\mcalO_{\mscrX} \to f_*\mcalO_{\mscrY}\).
\end{lemma}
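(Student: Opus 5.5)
The plan is to identify both sides through \Cref{RGammaCommutesLimits} and \Cref{SheafifiedDerivedObjects}. By \Cref{SheafifiedDerivedObjects} applied to \(\mscrX\) and the object \(Rf_*\mcalO_{\mscrY} \in \mcalD(\mscrX_{\Zar}, \mcalO_{\mscrX})\), the presheaf \(\mscrU \mapsto R\Gamma(\mscrU, Rf_*\mcalO_{\mscrY})\) is a \(\mcalD(\setZ)\)-valued sheaf on \(\mscrX_{\Zar}\). So the content of the statement is a natural identification
\[
R\Gamma(\mscrU, Rf_*\mcalO_{\mscrY}) \simeq R\Gamma(f^{-1}(\mscrU), \mcalO_{\mscrY}),
\]
functorial in \(\mscrU\), together with the construction of the unit map.

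For the identification, I would work at the level of complexes. Choose a \(K\)-injective resolution \(\mcalO_{\mscrY} \to \mcalI^{\bullet}\) of abelian sheaves (or of \(\mcalO_{\mscrY}\)-modules) on \(\mscrY_{\Zar}\). Then \(Rf_*\mcalO_{\mscrY}\) is represented by \(f_*\mcalI^{\bullet}\), and \(f_*\) preserves \(K\)-injectives because it has the exact left adjoint \(f^{-1}\). Hence \(R\Gamma(\mscrU, Rf_*\mcalO_{\mscrY}) = \Gamma(\mscrU, f_*\mcalI^{\bullet}) = \Gamma(f^{-1}(\mscrU), \mcalI^{\bullet}) = R\Gamma(f^{-1}(\mscrU), \mcalO_{\mscrY})\) (cf. the Stacks project statement \(R\Gamma(U, Rf_*K) = R\Gamma(f^{-1}U, K)\)). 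This holds strictly as presheaves of complexes, naturally in \(\mscrU\), using restriction maps of \(\mcalI^{\bullet}\).

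To promote this to an equivalence of \(\mcalD(\setZ)\)-valued presheaves, and not merely a sectionwise one, I would pass through the equivalence \eqref{PresheavesCatEquivFunc}. Both sides are images of the same strict presheaf of complexes \(\mscrU \mapsto \Gamma(f^{-1}(\mscrU), \mcalI^{\bullet})\): on one side via \(\iota(f_*\mcalI^{\bullet})\), on the other via the presheaf on \(\mscrY_{\Zar}\) given by \(\mcalI^{\bullet}\), precomposed with \(f^{-1}\colon \mscrX_{\Zar}\to\mscrY_{\Zar}\). Precomposition is compatible with that localization, since weak equivalences are sectionwise. Because \(R\Gamma(Rf_*\mcalO_{\mscrY})\) is a sheaf, so is \(f_*\mcalO^d_{\mscrY}\).

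For the morphism \(\mcalO^d_{\mscrX} \to f_*\mcalO^d_{\mscrY}\), I would apply the functor \(R\Gamma\) of \Cref{RGammaCommutesLimits} to the composite \(\mcalO_{\mscrX} \to f_*\mcalO_{\mscrY} \to Rf_*\mcalO_{\mscrY}\) in \(\mcalD(\mscrX_{\Zar}, \mcalO_{\mscrX})\), and compose with the identification above. On \(H^0\) of affine sections this recovers \(\mcalO_{\mscrX}(\mscrU) \to \mcalO_{\mscrY}(f^{-1}\mscrU)\), since \(\Gamma(\mscrU, f_*\mcalO_{\mscrY}) = \mcalO_{\mscrY}(f^{-1}\mscrU)\); this gives the compatibility. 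I expect the only real subtlety to be the coherence of the presheaf-level identification, namely the \(\infty\)-categorical naturality, and the strict-model argument via \eqref{PresheavesCatEquivFunc} handles it.
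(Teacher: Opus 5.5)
Your proposal is correct and follows the paper's proof. Like the paper, you identify $R\Gamma(\mscrU, Rf_*\mcalO_{\mscrY}) \cong R\Gamma(f^{-1}(\mscrU), \mcalO_{\mscrY})$ naturally in $\mscrU$ and obtain $\mcalO^d_{\mscrX} \to f_*\mcalO^d_{\mscrY}$ by applying $R\Gamma$ to $\mcalO_{\mscrX} \to f_*\mcalO_{\mscrY}$; your strict $K$-injective model together with \eqref{PresheavesCatEquivFunc} just makes explicit the presheaf-level naturality that the paper leaves implicit.

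One small correction: keep the resolution in abelian sheaves, as your main line does, and use that $Rf_*$ commutes with forgetting the module structure. For $\mcalO_{\mscrY}$-modules the left adjoint of $f_*$ is $f^*$, which is not exact, so your parenthetical ``or of $\mcalO_{\mscrY}$-modules'' is not covered by the reason you give for $f_*$ preserving $K$-injectives.
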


\begin{proof}
    Note that the natural isomorphism
    \begin{equation*}
        R\Gamma(\mscrU, Rf_*\mcalO_{\mscrY}) \cong R\Gamma(f^{-1}(\mscrU), \mcalO_{\mscrY})
    \end{equation*}
    holds in \(\mcalD(\setZ)\), as in \eqref{CommutativeGlobalSections}.
    So the pushforward \(f_*\mcalO^d_{\mscrY}\) is naturally isomorphic to \(R\Gamma(Rf_*\mcalO_{\mscrY})\), where \(Rf_*\mcalO_{\mscrY}\) denotes the derived pushforward of \(\mcalO_{\mscrY}\) in \(\mcalD(\mscrX_{\Zar}, \mcalO_{\mscrX})\).
    The morphism \(\mcalO^d_{\mscrX} \to f_*\mcalO^d_{\mscrY}\) is induced from the morphism \(\mcalO_{\mscrX} \to f_*\mcalO_{\mscrY}\) by applying \(R\Gamma(\mscrU, -)\) for each \(\mscrU \in \mscrX_{\Zar}\).
\end{proof}

\subsection{Construction of absolute perfectoidizations} \label{SubSectionAbsPerfdArc}

In this subsection, we will introduce the absolute perfectoidization of the structure sheaf using the arc-topology, and compare it with another construction via the perfectization of formal schemes in \cite{bhatt2025Aspects}.
First we recall the definition of the continuous arc-topology and its properties.

\begin{definition}[{\cite{takaya2026Relative}*{\S 4.14}, \cite{bhatt2022Prismsa}*{\S 8.2}, and \cite{bhatt2021Arctopology}}] \label{ContinuousArcTopology}
    Let \(\fSch\) be the category of adic quasi-compact separated formal schemes such that \(p\) is topologically nilpotent on their structure sheaves.
    A morphism \(\mscrY \to \mscrX\) in \(\fSch\) is a \emph{continuous arc-cover} if for every morphism \(\Spf(V) \to \mscrX\) in \(\fSch\) from a complete valuation ring of rank \(1\), there exists a faithfully flat extension \(V \hookrightarrow W\) of complete valuation rings of rank \(1\) and a morphism \(\Spf(W) \to \mscrY\) in \(\fSch\) such that the following diagram
    \begin{center}
        \begin{tikzcd}
            \mscrX           & \Spf(V) \arrow[l]           \\
            \mscrY \arrow[u] & \Spf(W) \arrow[u] \arrow[l]
        \end{tikzcd}
    \end{center}
    becomes commutative in \(\fSch\).
    The \emph{continuous arc-topology} on \(\fSch\) is the Grothendieck topology generated by continuous arc-covers and open covers.
    We will denote the site as \(\fSch^{\arc}\), which is called the \emph{continuous arc-site}.
    Taking the slice category, we also have the \emph{continuous arc-site \(\fSch^{\arc}_{/\mscrX}\) over \(\mscrX\)}.

    The continuous arc-site carries a presheaf of rings
    \[
    \mcalO_{\arc}\colon (\mscrY \to \mscrX) \mapsto \mcalO_{\mscrY}(\mscrY),
    \]
    which we call the structure presheaf on \(\fSch^{\arc}\).
\end{definition}

\begin{proposition}[{\cite{takaya2026Relative}*{Example 4.15 and Corollary 4.18} and \cite{bhatt2022Prismsa}*{Lemma 8.8 and Proposition 8.10}}] \label{ArcTopologyPerfd}
    Let \(\Perfd^{\arc}\) be the subcategory of \(\fSch^{\arc}\) consisting of perfectoid formal schemes, whose Grothendieck topology is the induced one.
    Any adic quasi-compact separated formal scheme \(\mscrX \in \fSch\) admits a continuous arc-cover by some \(\mscrY \in \Perfd^{\arc}\).
    In particular, restriction induces a canonical equivalence of categories
    \begin{equation} \label{EquivPerfdArc}
        \Shv(\fSch^{\arc}, \Ab) \xrightarrow{\simeq} \Shv(\Perfd^{\arc}, \Ab)
    \end{equation}
    holds.
    Moreover, the restriction \(\restr{\mcalO_{\arc}}{\Perfd^{\arc}}\) becomes a sheaf of rings on \(\Perfd^{\arc}\).
\end{proposition}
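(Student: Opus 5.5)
The proposition has three assertions. First, every \(\mscrX \in \fSch\) admits a continuous arc-cover by a perfectoid formal scheme. Second, restriction gives the equivalence \eqref{EquivPerfdArc}. Third, \(\restr{\mcalO_{\arc}}{\Perfd^{\arc}}\) is a sheaf. I would prove them in that order, reducing each to the affine case treated in \cite{bhatt2022Prismsa}*{\S 8.2} and \cite{takaya2026Relative}*{\S 4.14}.

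\textbf{Existence of a perfectoid cover.} Since \(\mscrX\) is quasi-compact, choose a finite affine open cover \(\mscrX = \bigcup_i \Spf(R_i)\). The disjoint union \(\bigsqcup_i \Spf(R_i) \to \mscrX\) is an open cover, so it suffices to treat a single affine \(\Spf(R)\) with an ideal of definition \(I \ni p\). For each rank-one complete valuation ring \(V\) with a continuous map \(R \to V\), replace \(V\) by the completion of an absolute integral closure; this is a faithfully flat extension \(V \hookrightarrow W\) with \(W\) perfectoid. Choosing representatives of the set of such points up to equivalence (with cardinality bounded by that of \(R\)), take \(P \defeq \comp{I}{\prod W}\). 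This is an \(I\)-adic perfectoid ring by \Cref{BoundedTorsionPerfd}, and each point factors through a projection \(\Spf(W) \to \Spf(P)\). Hence \(\Spf(P) \to \Spf(R)\) is a continuous arc-cover by an object of \(\Perfd^{\arc}\). This is exactly the argument of \cite{bhatt2022Prismsa}*{Lemma 8.8}, adapted to the \(I\)-adic setting.

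\textbf{Equivalence of topoi.} I would apply the comparison lemma (\citeSta{03A0}) to the full subcategory \(\Perfd^{\arc} \subseteq \fSch^{\arc}\). Two hypotheses are needed. The first is that every object is covered by objects of \(\Perfd^{\arc}\), which is the previous step. The second is that the induced topology on \(\Perfd^{\arc}\) is the restricted one, and that fiber products of perfectoid objects over a common base can again be covered by perfectoid objects. The second hypothesis follows from the first applied to the fiber products themselves, which lie in \(\fSch\) because \(\fSch\) is stable under completed fiber products. With both hypotheses verified, the comparison lemma gives \eqref{EquivPerfdArc}.

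\textbf{Sheaf property of \(\mcalO_{\arc}\).} Zariski descent holds because \(\mcalO_{\mscrY}\) is a sheaf and every affine open of a perfectoid formal scheme is complete adic perfectoid (\Cref{PropPerfectoidFormalScheme}). This reduces the problem to continuous arc-covers \(\Spf(B) \to \Spf(A)\) of affine perfectoid formal schemes, where we must show exactness of \(0 \to A \to B \to \comp{I}{B \otimes_A B}\). By \Cref{BoundedTorsionPerfd}, the \(I\)-adic and \(p\)-adic completions are compatible with reduction modulo the relevant generators. So I would reduce, via \(p\)-adic d\'evissage, to the \(p\)-complete arc-descent for perfectoid rings in \cite{bhatt2022Prismsa}*{Proposition 8.10}, which in turn uses tilting to arc-descent for perfect rings (\cite{bhatt2021Arctopology}).

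The main obstacle is this last reduction: passing from continuous \(I\)-adic arc-covers to \(p\)-adic arc-covers. I would handle it by checking that a continuous arc-cover of \(I\)-adic perfectoid rings is in particular a \(p\)-complete arc-cover; any rank-one point on which \(I\) is not topologically nilpotent can be dealt with by completing along \(I\). After that, I would pass to \(I\)-completion using the derived completion identification in \Cref{BoundedTorsionPerfd}(3).
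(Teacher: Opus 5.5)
Your first two steps match the paper's proof. The paper reduces to the affine case and uses the \(I\)-adically completed product of completed absolutely integrally closed rank-one valuation rings, citing \cite{takaya2026Relative}*{Example 4.15} and \cite{bhatt2022Prismsa}*{Remark 8.9}. It then gets \eqref{EquivPerfdArc} because \(\Perfd^{\arc}\) is a basis of \(\fSch^{\arc}\).

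For the sheaf property, the paper simply cites \cite{takaya2026Relative}*{Corollary 4.18}. Your attempt to deduce it from \cite{bhatt2022Prismsa}*{Proposition 8.10} has a gap exactly where you flag the main obstacle.

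\textbf{The gap.} A \(p\)-complete arc-cover must lift every map \(A \to V\) to a \(p\)-complete valuation ring of rank \(\leq 1\). This includes maps with \(p \in \mathfrak{m}_V\) but with some generator \(f\) of \(I\) sent to a unit. The continuous arc-cover hypothesis only quantifies over \(V\) in which all of \(I\) is topologically nilpotent, so it gives no control over these points. You therefore cannot assert that \(A \to B\) is a \(p\)-complete arc-cover and apply Bhatt--Scholze to it. Saying such points ``can be dealt with by completing along \(I\)'' only tells you they become invisible after \(I\)-completion; it does not produce the lifts the \(p\)-complete statement requires.

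\textbf{How to repair it.} Enlarge the cover instead of trying to control the extra points.
\begin{enumerate}
\item Write \(I = (p, f_1, \dots, f_r)\), set \(C_i \defeq \comp{p}{A[1/f_i]}\), which is perfectoid, and put \(B' \defeq B \times \prod_i C_i\).
\item A \(p\)-complete point \(A \to V\) of rank \(\leq 1\) either inverts some \(f_i\), and then factors through \(C_i\), or sends \(I\) into \(\mathfrak{m}_V\). In the second case, after completing \(V\) or enlarging a rank-zero point to a rank-one one, it is a continuous point and lifts to \(B\). Hence \(A \to B'\) is a \(p\)-complete arc-cover of perfectoid rings.
\item \cite{bhatt2022Prismsa}*{Proposition 8.10} then gives \(A \simeq \lim_{\Delta} B'^{\widehat{\otimes}_A(\bullet+1)}\), with \(p\)-completed tensor products that are discrete perfectoid rings.
\item Apply \(\dcomp{I}{-}\). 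It commutes with limits and fixes \(A\) by \Cref{BoundedTorsionPerfd}(3).
\item Every factor of \(B'^{\widehat{\otimes}_A(n+1)}\) that involves some \(C_i\) is an \(A[1/f_i]\)-module. Its derived \(f_i\)-completion vanishes, hence so does its derived \(I\)-completion.
\item Consequently the projection from the \v{C}ech nerve of \(A \to B'\) to that of \(A \to B\) becomes a levelwise isomorphism after \(\dcomp{I}{-}\). The surviving terms are \(\comp{I}{B^{\otimes_A(n+1)}}\), again perfectoid by \Cref{BoundedTorsionPerfd}(3), and these are precisely the fiber products in \(\Perfd^{\arc}\).
\end{enumerate}
This yields \(A \simeq \lim_{\Delta} \comp{I}{B^{\otimes_A(\bullet+1)}}\). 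That is the sheaf property, and in fact the \v{C}ech acyclicity that the paper later uses in step (d) of the proof of \Cref{SectionDerivedPushout}.

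With this modification your route is a valid alternative to citing Takaya: it derives the continuous \(I\)-adic statement directly from the \(p\)-complete one.
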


\begin{proof}
    The first assertion follows from \cite{takaya2026Relative}*{Example 4.15}: Indeed, since any open coverings are continuous arc-covers, we may assume that \(\mscrX = \Spf(R)\) is affine. In loc. cit., it is proved that any complete \(I\)-adic perfectoid ring \(R\) has a continuous arc-cover \(R \to S\) such that \(S\) is the \(I\)-adically complete product of \(p\)-adically complete absolute integrally closed valuation rings of rank \(1\) but this also works for general complete adic ring \(R\) as in \cite{bhatt2022Prismsa}*{Remark 8.9}.
    The categorical equivalence \eqref{EquivPerfdArc} follows from this assertion since \(\Perfd^{\arc}\) is now a basis of \(\fSch^{\arc}\).

    The sheaf property of \(\mcalO_{\arc}\) on \(\Perfd^{\arc}\) also follows from \cite{takaya2026Relative}*{Corollary 4.18}.
\end{proof}

\begin{lemma} \label{SheafificationArcStructureSheaf}
    Let \(\mscrX\) be an adic quasi-compact separated formal scheme such that \(p\) is topologically nilpotent in \(\mcalO_{\mscrX}\).
    Via the equivalence of categories \eqref{EquivPerfdArc}, let \(\widetilde{\mcalO}_{\arc}\) be the sheaf of rings on \(\fSch^{\arc}_{/\mscrX}\) whose restriction on \(\Perfd^{\arc}\) is \(\restr{\mcalO_{\arc}}{\Perfd^{\arc}}\).
    Then there exists a canonical morphism \(\mcalO_{\arc} \to \widetilde{\mcalO}_{\arc}\) of presheaves on \(\fSch^{\arc}_{/\mscrX}\) which exhibits \(\widetilde{\mcalO}_{\arc}\) as the sheafification of \(\mcalO_{\arc}\).
\end{lemma}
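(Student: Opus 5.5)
The idea is to construct the comparison morphism via the universal property of sheafification, and then check that it is an isomorphism by restricting to the basis \(\Perfd^{\arc}\) given by \Cref{ArcTopologyPerfd}.

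Write \(\mcalO_{\arc}^{\sharp}\) for the sheafification of the presheaf \(\mcalO_{\arc}\) on \(\fSch^{\arc}_{/\mscrX}\). It comes with a canonical map \(\mcalO_{\arc} \to \mcalO_{\arc}^{\sharp}\). Under the equivalence \eqref{EquivPerfdArc} (sliced over \(\mscrX\)), the sheaf \(\mcalO_{\arc}^{\sharp}\) corresponds to the restriction \(\restr{\mcalO_{\arc}^{\sharp}}{\Perfd^{\arc}_{/\mscrX}}\). It therefore suffices to prove that the restriction of the canonical map,
\[
\restr{\mcalO_{\arc}}{\Perfd^{\arc}_{/\mscrX}} \to \restr{\mcalO_{\arc}^{\sharp}}{\Perfd^{\arc}_{/\mscrX}},
\]
is an isomorphism. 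Once this is known, we get \(\mcalO_{\arc}^{\sharp} \cong \widetilde{\mcalO}_{\arc}\) as sheaves of rings, and we define the desired morphism as the composite \(\mcalO_{\arc} \to \mcalO_{\arc}^{\sharp} \cong \widetilde{\mcalO}_{\arc}\). Canonicity follows because every map involved is natural.

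For the key step, I will use that \(\Perfd^{\arc}_{/\mscrX}\) is a basis of the site, which is the first assertion of \Cref{ArcTopologyPerfd}: every object admits a continuous arc-cover by a perfectoid formal scheme, and fiber products of such covers can again be refined by perfectoid ones. By the comparison lemma, sheafification on the big site restricts to sheafification on the subsite with the induced topology. Hence \(\restr{\mcalO_{\arc}^{\sharp}}{\Perfd^{\arc}}\) is the sheafification of \(\restr{\mcalO_{\arc}}{\Perfd^{\arc}}\). The last assertion of \Cref{ArcTopologyPerfd} says \(\restr{\mcalO_{\arc}}{\Perfd^{\arc}}\) is already a sheaf, so this sheafification is \(\restr{\mcalO_{\arc}}{\Perfd^{\arc}}\) itself.

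Concretely, one can also argue section-wise. Restriction along the inclusion \(u\) of the subsite has the sheafification-compatible left adjoint \(u_!\). For a presheaf \(F\) on the big site, the equivalence gives \((F^{\sharp})|_{\Perfd} = (F|_{\Perfd})^{\sharp}\), because both sides represent the same functor \(\Hom(F|_{\Perfd}, -)\) on sheaves over \(\Perfd^{\arc}\). This uses that the equivalence \eqref{EquivPerfdArc} is precisely restriction.

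The main obstacle is justifying the compatibility of sheafification with restriction to the subsite. This requires checking the hypotheses of the comparison lemma: every object is covered by objects of \(\Perfd^{\arc}\), and covering families in \(\Perfd^{\arc}\) are exactly the covers of the big site consisting of perfectoid objects. The second point holds because the topology is the induced one. I would also need to be careful about set-theoretic size (choosing a universe) and about \(\mscrX\) itself not being perfectoid. The slice over \(\mscrX\) poses no issue, since slicing preserves the basis property.
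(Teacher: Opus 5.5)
Your proof is correct and is essentially the paper's argument: sheafify \(\mcalO_{\arc}\) on \(\fSch^{\arc}_{/\mscrX}\), use the equivalence \eqref{EquivPerfdArc} (the comparison lemma) to see that restricting this sheafification to \(\Perfd^{\arc}\) gives the sheafification of \(\restr{\mcalO_{\arc}}{\Perfd^{\arc}}\), which is \(\restr{\mcalO_{\arc}}{\Perfd^{\arc}}\) itself since it is already a sheaf, and then transport the isomorphism back via the equivalence. One small slip in your ``concretely'' paragraph: the identification \((F^{\sharp})|_{\Perfd} \cong (F|_{\Perfd})^{\sharp}\) by representability uses the right Kan extension along the inclusion (right adjoint to presheaf restriction, and the quasi-inverse of restriction on sheaves), not the left adjoint \(u_!\).
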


\begin{proof}
    Let \(\mcalO'_{\arc}\) be the sheafification of \(\mcalO_{\arc}\) on \(\fSch^{\arc}_{/\mscrX}\).
    By the equivalence \eqref{EquivPerfdArc}, the restriction \(\restr{\mcalO'_{\arc}}{\Perfd^{\arc}}\) is also the sheafification of \(\restr{\mcalO_{\arc}}{\Perfd^{\arc}}\) on \(\Perfd^{\arc}\).
    Since \(\restr{\mcalO_{\arc}}{\Perfd^{\arc}_{/\mscrX}}\) is already a sheaf, namely \(\restr{\widetilde{\mcalO}_{\arc}}{\Perfd^{\arc}_{/\mscrX}}\), the restriction \(\restr{\mcalO'_{\arc}}{\Perfd^{\arc}_{/\mscrX}}\) is isomorphic to this as sheaves of rings on \(\Perfd^{\arc}_{/\mscrX}\).
    Again using the equivalence \eqref{EquivPerfdArc}, we have an isomorphism \(\widetilde{\mcalO}_{\arc} \xrightarrow{\cong} \mcalO'_{\arc}\) of sheaves of rings on \(\fSch^{\arc}_{/\mscrX}\) and thus the morphism \(\mcalO_{\arc} \to \mcalO'_{\arc}\) exhibits \(\widetilde{\mcalO}_{\arc}\) as the sheafification of \(\mcalO_{\arc}\).
\end{proof}

\begin{construction} \label{DerivedPushforward}
    Let \(\mscrX\) be an adic quasi-compact separated formal scheme such that \(p\) is topologically nilpotent in \(\mcalO_{\mscrX}\).
    Take the continuous arc-site (resp., Zariski site) \(\fSch^{\arc}_{/\mscrX}\) (resp., \(\fSch^{\Zar}_{/\mscrX}\)) of formal schemes over \(\mscrX\), and the Zariski site \(\mscrX_{\Zar}\) of \(\mscrX\).
    The continuous functor of sites
    \begin{equation*}
        \nu \colon \mscrX_{\Zar} \to \fSch^{\Zar}_{/\mscrX} \to \fSch^{\arc}_{/\mscrX}; \quad (\mscrU \hookrightarrow \mscrX) \mapsto (\mscrU \hookrightarrow \mscrX)
    \end{equation*}
    of sites (\citeSta{00WV}) induces a functor of the abelian categories of sheaves
    \begin{equation*}
        \nu_* \colon \Shv(\fSch^{\arc}_{/\mscrX}, \Ab) \to \Shv(\mscrX_{\Zar}, \Ab); \quad \mcalF \mapsto ((\mscrU \hookrightarrow \mscrX) \mapsto \mcalF(\mscrU \hookrightarrow \mscrX))
    \end{equation*}
    by restriction along the topology.
    By \citeSta{03Q2} and \citeSta{04JC}, this functor \(\nu_*\) admits an exact left adjoint \(\nu^{-1}\).
    Moreover, since the arc structure sheaf \(\mcalO_{\arc}\) coincides with the structure sheaf \(\mcalO_{\mscrX}\) on \(\mscrX_{\Zar}\), this \(\nu_*\) gives rise to a morphism of ringed topoi (\citeSta{01D3}):
    \begin{equation*}
        (\nu_*, \mathrm{can}) \colon (\Shv(\fSch^{\arc}_{/\mscrX}, \Ab), \widetilde{\mcalO}_{\arc}) \to (\Shv(\mscrX_{\Zar}, \Ab), \mcalO_{\mscrX}),
    \end{equation*}
    where \(\mathrm{can}\colon \mcalO_{\mscrX}=\restr{\mcalO_{\arc}}{\mscrX_{\Zar}}\to \nu_*\widetilde{\mcalO}_{\arc}\) is the canonical morphism supplied by \Cref{SheafificationArcStructureSheaf}.
    This defines the adjunction
    \begin{equation*}
        \nu^* \colon \Mod(\mscrX_{\Zar}, \mcalO_{\mscrX}) \rightleftarrows \Mod(\fSch^{\arc}_{/\mscrX}, \widetilde{\mcalO}_{\arc}) \colon \nu_*
    \end{equation*}
    of Grothendieck abelian categories of modules (\citeSta{03D5}) and a commutative diagram
    \begin{equation*}
        \begin{tikzcd}
            {\Mod(\fSch^{\arc}_{/\mscrX}, \widetilde{\mcalO}_{\arc})} \arrow[rr, "\nu_*"] \arrow[rd, "{\Gamma(\mscrU, -)}"'] &                               & {\Mod(\mscrX_{\Zar}, \mcalO_{\mscrX})} \arrow[ld, "{\Gamma(\mscrU, -)}"] \\
            & \Mod(\mcalO_{\mscrX}(\mscrU)) &   
        \end{tikzcd}
    \end{equation*}
    for any affine open formal subscheme \(\mscrU\) of \(\mscrX\), where \(\Mod(\mcalO_{\mscrX}(\mscrU))\) is the abelian category of \(\mcalO_{\mscrX}(\mscrU)\)-modules.
    Note that the functors are lax symmetric monoidal.

    Especially, taking the right derived functors, we have a commutative diagram
    \begin{equation} \label{CommutativeGlobalSections}
        \begin{tikzcd}
            {\mcalD(\fSch^{\arc}_{/\mscrX}, \widetilde{\mcalO}_{\arc})} \arrow[rd, "{R\Gamma_{\arc}(\mscrU, -)}"'] \arrow[rr, "R\nu_*"] &               & {\mcalD(\mscrX_{\Zar}, \mcalO_{\mscrX})} \arrow[ld, "{R\Gamma_{\Zar}(\mscrU, -)}"] \\
            & \mcalD(\mcalO_{\mscrX}(\mscrU)) &  
        \end{tikzcd}
    \end{equation}
    of derived \(\infty\)-categories for each affine open formal subscheme \(\mscrU\) of \(\mscrX\) and the functors are lax symmetric monoidal as well.
    Note that this construction is functorial on \(\mscrU \in \mscrX_{\Zar}\) and \(\mscrX \in \fSch\).
\end{construction}

\begin{definition} \label{DerivedPushoutArcCohomology}
    Let \(\mscrX\) be an adic quasi-compact separated formal scheme such that \(p\) is topologically nilpotent in \(\mcalO_{\mscrX}\).
    We define an \(\setE_{\infty}\)-ring object \(\mcalO_{\mscrX,\perfd}\) of \(\mcalD(\mscrX_{\Zar}, \mcalO_{\mscrX})\) as the derived pushforward
    \begin{equation*}
        \mcalO_{\mscrX,\perfd} \defeq R\nu_{*}\widetilde{\mcalO}_{\arc} \in \CAlg(\mcalD(\mscrX_{\Zar}, \mcalO_{\mscrX}))
    \end{equation*}
    along the lax symmetric monoidal functor \(R\nu_*\) defined in \Cref{DerivedPushforward}. We call this the \emph{absolute perfectoidization} of the structure sheaf \(\mcalO_{\mscrX}\) on \(\mscrX\).
    By construction, \(\mcalO_{\mscrX,\perfd}\) is functorial in \(\mscrX\) in the sense that for any morphism \(f \colon \mscrY \to \mscrX\) of formal schemes in \(\fSch\), there exists a morphism
    \begin{equation*}
        \mcalO_{\mscrX, \perfd} \to Rf_*\mcalO_{\mscrY, \perfd}
    \end{equation*}
    in \(\CAlg(\mcalD(\mscrX_{\Zar}, \mcalO_{\mscrX}))\) which is compatible with the morphism \(\mcalO_{\mscrX} \to Rf_*\mcalO_{\mscrY}\) of structure sheaves.
\end{definition}

\begin{corollary} \label{SectionDerivedPushout}
    Let \(\mscrX\) be an adic quasi-compact separated formal scheme such that \(p\) is topologically nilpotent in \(\mcalO_{\mscrX}\) and let \(\mscrU = \Spf(R)\) be an affine open subset of \(\mscrX\).
    Then there exists an isomorphism
    \begin{equation*}
        R\Gamma(\mscrU, \mcalO_{\mscrX,\perfd}) \xrightarrow{\cong} R_{\tperfd}
    \end{equation*}
    in \(\CAlg_R\), which is functorial on affine open subsets \(\mscrU\) of \(\mscrX\), where the left-hand side denotes the derived global sections of \(\mcalO_{\mscrX,\perfd} \in \mcalD(\mscrX_{\Zar}, \mcalO_{\mscrX})\) on \(\mscrU\).
\end{corollary}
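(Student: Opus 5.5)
By the commutative diagram \eqref{CommutativeGlobalSections}, we have \(R\Gamma_{\Zar}(\mscrU, \mcalO_{\mscrX,\perfd}) = R\Gamma_{\Zar}(\mscrU, R\nu_*\widetilde{\mcalO}_{\arc}) \cong R\Gamma_{\arc}(\mscrU, \widetilde{\mcalO}_{\arc})\), functorially in \(\mscrU\) and compatibly with the lax symmetric monoidal structures. So the claim reduces to computing the continuous arc-cohomology of \(\widetilde{\mcalO}_{\arc}\) on the affine formal scheme \(\Spf(R)\) and identifying it with \(R_{\tperfd}\) as an \(\setE_{\infty}\)-\(R\)-algebra. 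Moreover, \(R\Gamma_{\arc}(\mscrU, -)\) only depends on the slice site \(\fSch^{\arc}_{/\mscrU}\), so I may assume \(\mscrX = \mscrU = \Spf(R)\) with \(R\) complete adic with ideal of definition \(I \ni p\).

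The key steps are as follows. First, by \Cref{ArcTopologyPerfd}, there is a continuous arc-cover \(\Spf(S) \to \Spf(R)\) with \(S\) a complete \(I\)-adic perfectoid ring, for instance an \(I\)-completed product of rank one valuation rings. Its \v{C}ech nerve consists of \(\Spf(S^{\widehat{\otimes}_R (n+1)})\). These are not perfectoid, so I instead use hypercovers: arc-descent for \(\widetilde{\mcalO}_{\arc}\) gives \(R\Gamma_{\arc}(\Spf R, \widetilde{\mcalO}_{\arc}) \cong \lim_{\Delta} R\Gamma_{\arc}(\Spf(S^{\widehat{\otimes}(n+1)}), \widetilde{\mcalO}_{\arc})\). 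Each term \(S^{\widehat{\otimes}(n+1)}\) is \(I\)-adic semiperfectoid. Second, I need the affine perfectoid input: for a complete adic perfectoid \(P\), \(R\Gamma_{\arc}(\Spf P, \widetilde{\mcalO}_{\arc}) \cong P\). This is the vanishing of higher continuous arc-cohomology of \(\mcalO\) on perfectoids, cf. \cite{bhatt2022Prismsa}*{Proposition 8.10} and \cite{takaya2026Relative}*{Corollary 4.18}, which strengthens the sheaf property in \Cref{ArcTopologyPerfd} to \(\widetilde{\mcalO}_{\arc}\) having no higher cohomology. Third, I use this to show that for semiperfectoid \(T\) (and then any \(R\)), \(R\Gamma_{\arc}(\Spf T, \widetilde{\mcalO}_{\arc}) \cong T_{\tperfd}\). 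The argument runs in parallel with \cite{bhatt2022Prismsa}*{Proposition 8.5, 8.10}. One computes the arc-cohomology via the \v{C}ech nerve of a perfectoid arc-cover \(P\). Separately, one writes \(T_{\tperfd}\) as the cosimplicial limit over the \v{C}ech nerve of a weakly initial perfectoid algebra, as in the proof of \Cref{ExistenceOfGradedPerfd} and \Cref{RepresentationPerfd} in the ungraded form. One then compares the two using that the two perfectoid covers can be chosen compatibly. Alternatively, and more cleanly, one reduces to the \(p\)-adic statement \(R\Gamma_{\arc}(\Spf R, \mcalO) \cong R_{\perfd}\) of \cite{bhatt2022Prismsa}*{Proposition 8.13}. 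It then remains to check that derived \(I\)-completion matches the passage from \(p\)-adic to \(I\)-adic sites, and this is \Cref{CompletionOfAbsolutePerfectoidization}.

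I expect the main obstacle to be the comparison between the \(I\)-adic continuous arc-site and the \(p\)-adic arc-site of Bhatt--Scholze. Both the perfectoid covers and the structure sheaf are \(I\)-completed, whereas \cite{bhatt2022Prismsa} works \(p\)-adically. My first attempt is to avoid this comparison by directly producing the canonical map. The map \(R \to P\) for each \(\Spf P \to \Spf R\) perfectoid gives \(R\Gamma_{\arc}(\Spf R,\widetilde{\mcalO}_{\arc}) \to \lim_{P} P = R_{\tperfd}\). Indeed, global sections of a sheaf map to its values on objects, and by the second step these values are \(P\) itself. This map is functorial in \(\mscrU\) and multiplicative. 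To prove it is an isomorphism, I use that the weakly initial perfectoid algebra \(R_{w.i.}\) is itself a continuous arc-cover (it maps to every rank one valuation ring over \(R\)). Then both sides are computed by the same cosimplicial object \(\lim_{\Delta} (R_{w.i.}^{\widehat{\otimes}(n+1)})_{\tperfd}\): on the arc side by \v{C}ech descent plus the semiperfectoid case, and on the \(R_{\tperfd}\) side by \Cref{RepresentationPerfd}. This reduces everything to the semiperfectoid case, where the initial perfectoid \(T_{\tperfd}\) (\Cref{GradedPerfdSemiPerfd}, ungraded) is an arc-cover of \(T\) with \(T_{\tperfd}\widehat{\otimes}_T T_{\tperfd} \to T_{\tperfd}\) an arc-equivalence. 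Hence \(R\Gamma_{\arc}(\Spf T, \widetilde{\mcalO}_{\arc}) \cong R\Gamma_{\arc}(\Spf T_{\tperfd}, \widetilde{\mcalO}_{\arc}) = T_{\tperfd}\).
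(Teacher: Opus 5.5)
Your argument is correct, but it takes a longer route than the paper. After the same first reduction via \eqref{CommutativeGlobalSections}, the paper uses no \v{C}ech nerves, weakly initial objects or semiperfectoid rings. Affine perfectoid formal schemes form a basis of \(\fSch^{\arc}_{/\Spf(R)}\) (\Cref{ArcTopologyPerfd}), so the arc topos over \(\Spf(R)\) is that of \(\AffPerfd^{\arc}_{/\Spf(R)}\). Derived global sections on a site are the limit over all of its objects, so \(R\Gamma(\AffPerfd^{\arc}_{/\Spf(R)}, \mcalO_{\arc}) \cong \lim_{R \to P} R\Gamma_{\arc}(\Spf(P), \mcalO_{\arc})\). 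Acyclicity on perfectoids turns this into \(\lim_{R \to P} P\), which is \(R_{\tperfd}\) by definition. Since \(R_{\tperfd}\) is defined as a limit over \emph{all} perfectoid \(R\)-algebras, not over a cover, your canonical map \(R\Gamma_{\arc}(\Spf(R),\widetilde{\mcalO}_{\arc}) \to \lim_P P\) is an isomorphism for formal reasons on the perfectoid subsite.

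Your route proves the same thing by cutting the large limit down to a cosimplicial one on both sides:
\begin{itemize}
\item \v{C}ech descent along the continuous arc-cover \(\Spf(R_{w.i.}) \to \Spf(R)\);
\item the semiperfectoid case, where \(\Spf(T_{\tperfd}) \to \Spf(T)\) is an arc-cover whose diagonal is an arc-cover and a closed immersion, hence an arc-local isomorphism;
\item the ungraded form of \Cref{RepresentationPerfd} on the \(R_{\tperfd}\) side, with naturality of the canonical map giving compatibility.
\end{itemize}
This works, but it needs extra input: a weakly initial object, semiperfectoidness of the completed tensor powers, and discreteness and initiality of \(T_{\tperfd}\) for semiperfectoid \(T\). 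In return it gives an explicit small cosimplicial formula for the arc cohomology, parallel to Bhatt--Scholze.

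Two small imprecisions. What you use is \v{C}ech descent, not hypercovers. Also, \(R_{w.i.}\) maps to every \emph{perfectoid} rank one valuation ring over \(R\). To check the arc-cover condition you must first pass from \(V\) to a perfectoid faithfully flat extension \(W\), such as the completed absolute integral closure. The side remark about reducing to the \(p\)-adic statement and comparing sites is unnecessary.
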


\begin{proof}
    Since \(R\Gamma_{\arc}(\mscrU, \widetilde{\mcalO}_{\arc})\) is the cohomology \(R\Gamma(\fSch^{\arc}_{/\mscrU}, \widetilde{\mcalO}_{\arc})\) of \(\widetilde{\mcalO}_{\arc}\) on the continuous arc-site \(\fSch^{\arc}_{/\mscrU}\), this is the same as the cohomology \(R\Gamma(\Perfd^{\arc}_{/\mscrU}, \restr{\mcalO_{\arc}}{\Perfd^{\arc}_{/\mscrU}})\) by \Cref{ArcTopologyPerfd}.
    When \(\mscrU = \Spf(R)\) is an affine open subset of \(\mscrX\), the derived global section \(R\Gamma_{\arc}(\Spf(R), \widetilde{\mcalO}_{\arc})\) admits canonical isomorphisms in \(\mcalD(R)\);
    \begin{align*}
        R\Gamma_{\arc}(\Spf(R), \widetilde{\mcalO}_{\arc}) & \overset{(a)}{\cong} R\Gamma(\fSch^{\arc}_{/\Spf(R)}, \widetilde{\mcalO}_{\arc}) \overset{(b)}{\cong} R\Gamma(\AffPerfd^{\arc}_{/\Spf(R)}, \mcalO_{\arc}) \\
        & \overset{(c)}{\cong} \lim_{R \to P} R\Gamma_{\arc}(\Spf(P), \mcalO_{\arc}) \overset{(d)}{\cong} \lim_{R \to P} P \overset{(e)}{\cong} R_{\tperfd},
    \end{align*}
    where \(\AffPerfd^{\arc}_{/\Spf(R)}\) is the full subcategory of \(\Perfd^{\arc}_{/\Spf(R)}\) consisting of affine objects, by the following argument as in the proof of \cite{bhatt2022Prismsa}*{Corollary 8.11}: the isomorphism (a) follows from the above argument. The second one (b) follows from the existence of continuous arc-cover of \(\Spf(R)\) by the proof of \Cref{ArcTopologyPerfd}. The third one (c) follows from the definition of cohomology on a site. The fourth isomorphism (d) follows from the acylicity of \(\mcalO_{\arc}\) on \(\AffPerfd^{\arc}\) in \Cref{ArcTopologyPerfd}. The last one (e) follows from \Cref{DefTopologicalAbsolutePerfectoidization}.
    The functoriality follows from the functoriality of the above isomorphisms.
\end{proof}

\begin{remark}
    By the sheaf property, there is a uniquely determined object \(\mcalP\) of \(\mcalD(\mscrX_{\Zar}, \mcalO_{\mscrX})\) such that the derived global section \(R\Gamma(\mscrU, \mcalP)\) is isomorphic to \(R_{\tperfd}\) for any affine open subset \(\mscrU = \Spf(R)\) of \(\mscrX\).
    There is another construction of such an object \(\mcalP\) by using more geometric methods, which is the perfectization of formal schemes in \cite{bhatt2025Aspects}. See \Cref{SectionPerfectization} (especially \Cref{ComparisonOXperfd}) for the details.
\end{remark}

Using \Cref{CommutativeLocalizationTopPerfd}, we can show the quasi-coherence of the absolute perfectoidization \(\mcalO_{\mscrX, \perfd}\) on \(\mscrX\) as follows.
In \Cref{AppendixQCoh}, we summarize the notion of derived category \(\mcalD_{\qcoh}(\mscrX)\) of a formal scheme \(\mscrX\).
See \Cref{SectionPerfectization} for another proof of this quasi-coherence by using the perfectization of formal schemes in \cite{bhatt2025Aspects}.

\begin{corollary} \label{QuasiCoherentnessOfPerfdGlobal}
    Let \(\mscrX\) be an adic quasi-compact separated formal scheme such that \(p\) is topologically nilpotent in \(\mcalO_{\mscrX}\).
    Assume that the ideal of definition \(\mcalI\) is generated by a weakly proregular sequence on any affine open subset of \(\mscrX\) (e.g. \(\mscrX\) is locally Noetherian or \(\mcalI\) is principal and \(\mscrX\) has bounded \(\mcalI^{\infty}\)-torsion).
    Then the absolute perfectoidization \(\mcalO_{\mscrX, \perfd} \in \mcalD(\mscrX_{\Zar}, \mcalO_{\mscrX})\) of \(\mcalO_{\mscrX}\) on \(\mscrX\) is quasi-coherent in the sense of \Cref{DefDerivedQcohFormalScheme}.
\end{corollary}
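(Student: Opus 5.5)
We check quasi-coherence in the sense of \Cref{DefDerivedQcohFormalScheme} on affine opens. Concretely, we need two things. First, for each affine open \(\mscrU = \Spf(R)\) the section \(R\Gamma(\mscrU, \mcalO_{\mscrX,\perfd})\) is a derived \(I\)-complete \(R\)-module, where \(I = \mcalI(\mscrU)\). Second, for each basic affine open \(\mscrU_f = \Spf(\comp{I}{R[1/f]}) \subseteq \mscrU\) with \(f \in R\), the restriction map induces an isomorphism
\[
  \dcomp{I}{R\Gamma(\mscrU, \mcalO_{\mscrX,\perfd}) \otimes^L_R R[1/f]} \xrightarrow{\cong} R\Gamma(\mscrU_f, \mcalO_{\mscrX,\perfd}).
\]
Since \(\mscrX\) is quasi-compact and separated, and \(R\Gamma(\mcalO_{\mscrX,\perfd})\) is a sheaf by \Cref{SheafifiedDerivedObjects}, these local conditions on a basis of affine opens determine the object. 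The weak proregularity hypothesis is what makes the derived completion formalism of \Cref{AppendixQCoh} agree with the sheaf-theoretic one, e.g.\ \(\dcomp{I}{R[1/f]} \cong \comp{I}{R[1/f]}\) for the structure sheaf.

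For the first condition we use \Cref{SectionDerivedPushout}, which identifies \(R\Gamma(\mscrU, \mcalO_{\mscrX,\perfd})\) with \(R_{\tperfd}\). This object is derived \(I\)-complete by \Cref{CompletionOfAbsolutePerfectoidization}, being the derived \(I\)-completion of \(R_{\perfd}\). The same corollary, applied to \(\mscrU_f\), identifies the target of the restriction map with \((\comp{I}{R[1/f]})_{\tperfd}\), and this is isomorphic to \((R[1/f])_{\tperfd}\) by \Cref{RemarkCompletePerfd}. By the functoriality statement in \Cref{SectionDerivedPushout}, the restriction map is the map induced by \(R \to \comp{I}{R[1/f]}\). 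So the second condition reduces to \Cref{CommutativeLocalizationTopPerfd}, which states exactly that \(\dcomp{I}{R_{\tperfd}[1/f]} \to (R[1/f])_{\tperfd}\) is an isomorphism.

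The remaining point is that the definition of quasi-coherence may require compatibility on arbitrary affine opens \(\mscrV \subseteq \mscrU\), not only on basic ones. Since \(\mscrX\) is separated, intersections of affines are affine. Any affine \(\mscrV \subseteq \mscrU\) is covered by finitely many basic opens \(\mscrU_{f_i}\), and pairwise intersections of these are again basic. We compare the Čech descriptions of \(R\Gamma(\mscrV, -)\) from the sheaf property with the corresponding Čech complexes of completed localizations, using that finite limits commute with \(\dcomp{I}{-}\) and with \(-\otimes^L_R\).

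I expect the main obstacle to be the bookkeeping that matches \Cref{SectionDerivedPushout} with the precise formulation of \Cref{DefDerivedQcohFormalScheme}. In particular, the identification of the restriction maps with the canonical maps between perfectoidizations must be functorial in the affine open, and the weak proregularity assumption must be invoked at the step where derived and classical completions of \(R[1/f]\) are compared.
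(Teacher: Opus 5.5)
Your proposal follows the paper's proof: reduce to an affine \(\Spf(R)\), identify the sections over \(D(f)\) with \(\dcomp{I}{R_{\tperfd}[1/f]} \cong (R[1/f])_{\tperfd}\) via \Cref{SectionDerivedPushout} and \Cref{CommutativeLocalizationTopPerfd}, and glue over the basis \(\{D(f)\}\). The paper makes your step ``these local conditions determine the object'' concrete. It uses weak proregularity to rewrite \(\dcomp{I}{R_{\tperfd}[1/f]}\) as \(\lim_{n} ((R_{\tperfd} \otimes^L_R R/I^n)[1/f]) \cong R\Gamma(D(f), (R_{\tperfd})^{L\Delta})\), so that \(\mcalO_{\Spf(R),\perfd} \cong (R_{\tperfd})^{L\Delta}\), which is quasi-coherent by \Cref{LDeltaQCoh} and \Cref{DerivedQCohEquivAffine}. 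That is where the hypothesis is actually needed, not in comparing completions of \(R[1/f]\). It also makes your \v{C}ech paragraph about arbitrary affine opens unnecessary.
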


\begin{proof}
    We may assume that \(\mscrX = \Spf(R)\) is affine and fix an ideal of definition \(I\) of \(R\) generated by a weakly proregular sequence.
    For any \(f\in R\), we have canonical isomorphisms
    \begin{equation*}
        \mcalO_{\Spf(R), \perfd}(D(f)) \cong (R[1/f])_{\tperfd} \xleftarrow{\cong} \dcomp{I}{R_{\tperfd}[1/f]}
    \end{equation*}
    in \(\CAlg_{R[1/f]}\) by \Cref{SectionDerivedPushout} and \Cref{CommutativeLocalizationTopPerfd}.
    Since \(I\) is generated by a weakly proregular sequence, this is isomorphic to
    \begin{equation*}
        \lim_{n \geq 1} ((R_{\tperfd} \otimes^L_R R/I^n)[1/f]) \cong \lim_{n \geq 1} R\Gamma(D(f), \widetilde{(R_{\tperfd} \otimes^L_R R/I^n)}),
    \end{equation*}
    where \(\widetilde{(R_{\tperfd} \otimes^L_R R/I^n)}\) is the quasi-coherent sheaf on \(\Spec(R/I^n)\) associated to \(R_{\tperfd} \otimes^L_R R/I^n\) in \(\mcalD(R/I^n)\).
    These isomorphisms are functorial in \(f\) and thus we have an isomorphism
    \begin{equation*}
        \mcalO_{\Spf(R), \perfd} \cong \lim_{n \geq 1} \widetilde{(R_{\tperfd} \otimes^L_R R/I^n)} \cong (R_{\tperfd})^{L\Delta}
    \end{equation*}
    in \(\mcalD(\Spf(R)_{\Zar}, \mcalO_{\Spf(R)})\) by the categorical equivalence in \Cref{DerivedQCohEquivAffine}, which proves the quasi-coherence of \(\mcalO_{\mscrX,\perfd}\).
\end{proof}

\subsection{Absolute perfectoidization and derived sheafification}

In this subsection, we will compute the derived sections over arbitrary open subsets of the absolute perfectoidization \(\mcalO_{\mscrX,\perfd}\) defined in \Cref{DerivedPushoutArcCohomology}. To prove this, the sheaf \(\mcalO_{\mscrX, \perfd}\) will be considered as a \(\mcalD(\setZ)\)-valued sheaf on \(\mscrX\) by using derived sheafification developed in \Cref{SheafifiedDerivedObjects}.

\begin{construction} \label{DefGlobalPerfectoidization}
    Let \(\mscrX\) be an adic quasi-compact separated formal scheme such that \(p\) is topologically nilpotent in \(\mcalO_{\mscrX}\).
    Let \(\mcalO^d_{\mscrX,\perfd}\) be the object defined in the following construction.
    \begin{equation*}
        \mcalO^d_{\mscrX, \perfd} \defeq R\Gamma(\mcalO_{\mscrX, \perfd}) \in \Shv(\mscrX_{\Zar}, \mcalD(\setZ))
    \end{equation*}
    which is naturally equipped with the structure of an \(\mcalO^d_{\mscrX}\)-algebra by the morphism \(\mcalO_{\mscrX} \to \mcalO_{\mscrX,\perfd}\) in \(\CAlg(\mcalD(\mscrX_{\Zar}, \mcalO_{\mscrX}))\).
    Namely, \(\mcalO^d_{\mscrX, \perfd}\) is an object of the \(\infty\)-category \(\CAlg_{\mcalO^d_{\mscrX}}(\Shv(\mscrX_{\Zar}, \mcalD(\setZ)))\) of \(\mcalO^d_{\mscrX}\)-algebras on \(\mscrX\).
\end{construction}

\begin{remark}
    Even if \(\mscrX\) is an adic quasi-compact separated perfectoid formal scheme, the absolute perfectoidization \(\mcalO^d_{\mscrX,\perfd}\) is not necessarily equivalent, as a \(\CAlg\)-valued presheaf, to the structure (pre)sheaf \(\mcalO_{\mscrX}\), but it is isomorphic to \(\mcalO^d_{\mscrX}\) as a \(\CAlg\)-valued sheaf.
    This follows from the following argument: The canonical morphism \(\mcalO^d_{\mscrX} \to \mcalO^d_{\mscrX,\perfd}\) is an isomorphism on every affine open subset of \(\mscrX\) by \Cref{SectionDerivedPushout}.
    Since they are sheaves on \(\mscrX_{\Zar}\), the canonical morphism is an isomorphism.
\end{remark}

\begin{remark} \label{RemarkAffineSectionGlobalPerfd}
    If \(\mscrX\) is an adic quasi-compact separated formal scheme, then \(\mcalO^d_{\mscrX,\perfd}(\mscrU)\) is canonically identified with the topological absolute perfectoidization \(\mcalO_{\mscrX}(\mscrU)_{\tperfd}\) of the ring \(\mcalO_{\mscrX}(\mscrU)\) for any affine formal open subset \(\mscrU \subseteq \mscrX\) defined in \Cref{DefTopologicalAbsolutePerfectoidization}.
    This follows from \Cref{SectionDerivedPushout}.
\end{remark}



\begin{lemma} \label{UniversalityOfGlobalPerfd}
    Let \(\mscrX\) be an adic quasi-compact separated formal scheme such that \(p\) is topologically nilpotent in \(\mcalO_{\mscrX}\).
    For any morphism \(\pi\colon \mscrX_{\infty}\to\mscrX\) from a perfectoid formal scheme \(\mscrX_{\infty}\), there exists a unique morphism of \(\mcalO^d_{\mscrX}\)-algebras
    \begin{equation*}
        \mcalO^d_{\mscrX, \perfd} \to \pi_*\mcalO^d_{\mscrX_{\infty}}
    \end{equation*}
    such that for any affine open subset \(\mscrU \subseteq \mscrX\) and \(\mscrV \subseteq \pi^{-1}(\mscrU)\) the induced morphism
    \begin{equation*}
        \mcalO_{\mscrX}(\mscrU)_{\tperfd} \cong \mcalO^d_{\mscrX, \perfd}(\mscrU) \to \pi_*\mcalO^d_{\mscrX_{\infty}}(\mscrU) \to \mcalO^d_{\mscrX_{\infty}}(\mscrV) \cong \mcalO_{\mscrX_{\infty}}(\mscrV)
    \end{equation*}
    is the canonical projection from the absolute perfectoidization \(\mcalO_{\mscrX}(\mscrU)_{\perfd}\), where the isomorphisms are consequences of \Cref{SectionDerivedPushout} and \Cref{AffineVanishing}.
\end{lemma}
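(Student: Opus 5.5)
The plan is to construct the morphism first on the basis of affine open subsets and then extend it using the sheaf property. Let \(\mcalB\) be the poset of affine open subsets of \(\mscrX\). Since \(\mscrX\) is separated, \(\mcalB\) is closed under finite intersections and forms a basis. Restriction along \(\mcalB \subseteq \mscrX_{\Zar}\) therefore gives an equivalence between \(\CAlg\)-valued sheaves on \(\mscrX_{\Zar}\) and \(\CAlg\)-valued sheaves on \(\mcalB\); this is the \(\infty\)-categorical comparison lemma for bases closed under intersection. By \Cref{SheafifiedDerivedObjects}, \Cref{PushforwardDerivedStructureSheaf} and \Cref{SheafifiedDerivedObjects} for \(\mcalO_{\mscrX,\perfd}\), both \(\mcalO^d_{\mscrX,\perfd}\) and \(\pi_*\mcalO^d_{\mscrX_\infty}\) are sheaves of \(\mcalO^d_{\mscrX}\)-algebras. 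It thus suffices to give, functorially in \(\mscrU \in \mcalB\), a map of \(\mcalO_{\mscrX}(\mscrU)\)-algebras \(\mcalO^d_{\mscrX,\perfd}(\mscrU) \to \pi_*\mcalO^d_{\mscrX_\infty}(\mscrU)\), and to check that such a coherent family is unique.

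For \(\mscrU = \Spf(R) \in \mcalB\), the preimage \(\pi^{-1}(\mscrU)\) is quasi-compact and separated; quasi-compactness holds because \(\mscrX_\infty\) lies in \(\fSch\) and \(\mscrX\) is separated. Let \(\mcalV\) be the poset of affine opens \(\mscrV \subseteq \pi^{-1}(\mscrU)\). Again by the sheaf property and \Cref{AffineVanishing}, we have \(\pi_*\mcalO^d_{\mscrX_\infty}(\mscrU) \simeq \lim_{\mscrV \in \mcalV} \mcalO_{\mscrX_\infty}(\mscrV)\) in \(\CAlg_R\). By \Cref{PropPerfectoidFormalScheme}, each \(\mcalO_{\mscrX_\infty}(\mscrV)\) is a complete adic perfectoid \(R\)-algebra, so \(\mscrV \mapsto \mcalO_{\mscrX_\infty}(\mscrV)\) is a diagram \(\mcalV^{\opposite} \to \Perfd^{\wedge}_R\). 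Since \(R_{\tperfd} = \lim_{P \in \Perfd^\wedge_R} P\) (\Cref{DefTopologicalAbsolutePerfectoidization}), the projections give a canonical map
\[
R_{\tperfd} \to \lim_{\mscrV \in \mcalV} \mcalO_{\mscrX_\infty}(\mscrV) \simeq \pi_*\mcalO^d_{\mscrX_\infty}(\mscrU).
\]
Composing with \Cref{SectionDerivedPushout} gives the desired local map. By construction, its composite with restriction to each \(\mscrV\) is the canonical projection. Functoriality in \(\mscrU' \subseteq \mscrU\) comes from the compatibility of the isomorphisms in \Cref{SectionDerivedPushout} with restriction, together with the naturality of the limit projections for the functor \(\Perfd^\wedge_{R} \to \Perfd^\wedge_{R'}\) induced by \(R \to R' = \mcalO_{\mscrX}(\mscrU')\). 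To make this honestly coherent, I would package it as a single construction rather than checking compatibility one map at a time. Concretely, consider the category \(\mcalE\) of pairs \((\mscrU,\mscrV)\) with \(\mscrU \in \mcalB\) and \(\mscrV \subseteq \pi^{-1}(\mscrU)\) affine. Both sheaves, restricted to \(\mcalB\), are right Kan extended (limits over fibres) from functors on \(\mcalE\), and the map is induced by the natural transformation \(\mcalO_{\mscrX}(\mscrU)_{\tperfd} \to \mcalO_{\mscrX_\infty}(\mscrV)\), namely the limit projection.

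Uniqueness follows from the same description. A map into \(\pi_*\mcalO^d_{\mscrX_\infty}\) on \(\mcalB\) is determined by its composites to all \(\mcalO_{\mscrX_\infty}(\mscrV)\), since the target is the limit over \(\mcalV\) sectionwise, naturally in \(\mscrU\). The condition in the statement fixes these composites. A morphism of sheaves is then determined by its restriction to the basis \(\mcalB\).

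The main obstacle I expect is the \(\infty\)-categorical coherence. Specifying values on objects is not enough: "unique" must mean that the relevant mapping space is contractible. I would handle this through the Kan extension along \(\mcalE \to \mcalB\). The key point to verify is that the space of maps of \(\mcalO^d\)-algebras \(\mcalO^d_{\mscrX,\perfd}|_{\mcalB} \to \mathrm{RKE}(F)\), where \(F(\mscrU,\mscrV) = \mcalO_{\mscrX_\infty}(\mscrV)\), equals the space of natural transformations from the pullback of \(\mcalO^d_{\mscrX,\perfd}\) to \(F\) over \(\mcalE\). The argument then reduces to showing that each component, a map \(R_{\tperfd} \to P\) from the limit which is compatible over \(\Perfd^\wedge_R\), is forced to be the projection. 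I would argue this by identifying \(R_{\tperfd}\) with the right Kan extension along the forgetful functor from perfectoid algebras, so that it is universal among such cones.
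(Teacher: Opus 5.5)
Your proposal is correct and follows the paper's proof. On each affine \(\mscrU=\Spf(R)\) you map \(R_{\tperfd}\) to every \(\mcalO_{\mscrX_\infty}(\mscrV)\) by the limit projection, assemble these into a map to \(\lim_{\mscrV}\mcalO^d_{\mscrX_\infty}(\mscrV)\simeq\pi_*\mcalO^d_{\mscrX_\infty}(\mscrU)\) using the sheaf property, and then glue over affine \(\mscrU\). Your Kan-extension packaging over the pairs \((\mscrU,\mscrV)\) is a more careful treatment of coherence than the paper gives.

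You should drop your final reduction. Uniqueness is only asked relative to the prescribed projections, so it already follows from the adjunction between pullback and right Kan extension. You do not need to show that arbitrary compatible maps out of \(R_{\tperfd}\) are forced to be projections, and the limit's universal property could not give this anyway, since it concerns maps \emph{into} \(R_{\tperfd}\).
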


\begin{proof}
    Let \(\mscrU=\Spf(R)\) be an affine open formal subscheme of \(\mscrX\), and consider the induced morphism \(\mcalO_{\mscrX}(\mscrU) \to \mcalO_{\mscrX_{\infty}}(\pi^{-1}(\mscrU))\).
    For any affine open subset \(\mscrV\) of \(\pi^{-1}(\mscrU)\), the morphism \(\mcalO_{\mscrX}(\mscrU) \to \mcalO_{\mscrX_{\infty}}(\pi^{-1}(\mscrU)) \to \mcalO_{\mscrX_{\infty}}(\mscrV)\) uniquely factors through the universal map \(\mcalO_{\mscrX}(\mscrU) \to \mcalO_{\mscrX}(\mscrU)_{\tperfd}\) since \(\mcalO_{\mscrX_{\infty}}(\mscrV)\) is a complete adic perfectoid ring over \(\mcalO_{\mscrX}(\mscrU)\) by \Cref{PropPerfectoidFormalScheme}.
    Moreover, the factorization \(\mcalO_{\mscrX}(\mscrU)_{\tperfd} \to \mcalO_{\mscrX_{\infty}}(\mscrV)\) is compatible with any restriction map induced from containments \(\mscrV \supset \mscrV'\) of affine open subsets by uniqueness.
    The morphism \(\mcalO_{\mscrX_{\infty}}(\mscrV) \xrightarrow{\cong} \mcalO^d_{\mscrX_{\infty}}(\mscrV)\) is an isomorphism by \Cref{AffineVanishing}.
    We have the unique compatible system of morphisms \(\{\mcalO_{\mscrX}(\mscrU)_{\tperfd} \to \mcalO^d_{\mscrX_{\infty}}(\mscrV)\}_{\mscrV \subseteq \pi^{-1}(\mscrU)}\) and then the sheaf property of \(\pi_*\mcalO^d_{\mscrX_{\infty}}\) gives a morphism of \(\mcalO^d_{\mscrX}(\mscrU)\)-algebras
    \begin{equation*}
        \mcalO^d_{\mscrX, \perfd}(\mscrU) = \mcalO_{\mscrX}(\mscrU)_{\tperfd} \to \lim_{\mscrV \subseteq \pi^{-1}(\mscrU)} \mcalO^d_{\mscrX_{\infty}}(\mscrV) \cong \mcalO^d_{\mscrX_{\infty}}(\pi^{-1}(\mscrU)) = \pi_*\mcalO^d_{\mscrX_{\infty}}(\mscrU)
    \end{equation*}
    for any affine open subset \(\mscrU \subseteq \mscrX\). In particular, using the sheaf property again, we have a morphism \(\mcalO^d_{\mscrX, \perfd} \to \pi_*\mcalO^d_{\mscrX_{\infty}}\) of \(\mcalO^d_{\mscrX}\)-algebras.
\end{proof}

\begin{definition} \label{DefCatPerfd}
    Let \(\mscrX\) be an adic quasi-compact separated formal scheme such that \(p\) is topologically nilpotent in \(\mcalO_{\mscrX}\).
    We define the following categories.
    \begin{enumerate}
        \item The category \(\Perfd_{\mscrX}\) of all perfectoid formal schemes over \(\mscrX\).
        \item The category \(\Perfd^{\Aff}_{\mscrX}\) of all affine morphisms \(\mscrX_{\infty}\to\mscrX\), where \(\mscrX_{\infty}\) is a perfectoid formal scheme.
        \item The category \(\AffPerfd_{\mscrX}\) of all morphisms \(\Spf(P) \to \mscrX\) from an affine perfectoid formal scheme \(\Spf(P)\).
        \item The category \(\AffPerfd_{\mscrX}^{\bigstar}\) is the full subcategory of \(\AffPerfd_{\mscrX}\) such that \(\Spf(P) \to \mscrX\) factors through an affine open subset \(\mscrU\) of \(\mscrX\).
    \end{enumerate}
    In particular, we have the following inclusions of full subcategories:
    \begin{equation*}
        \Perfd_{\mscrX} \supset \Perfd^{\Aff}_{\mscrX} \supset \AffPerfd_{\mscrX} \supset \AffPerfd_{\mscrX}^{\bigstar},
    \end{equation*}
    where the second containment follows from the fact that any morphism \(\pi \colon \Spf(P) \to \mscrX\) from an affine formal scheme \(\Spf(P)\) to a separated formal scheme \(\mscrX\) is affine.\footnote{For any affine open formal subscheme \(j \colon \mscrU \hookrightarrow \mscrX\), the separatedness of \(\mscrX\) ensures the affineness of the morphism \(j\). So the base change \(\pi^{-1}(\mscrU) = \mscrU \times_{\mscrX} \Spf(P) \to \Spf(P)\) is also an affine morphism and then \(\pi^{-1}(\mscrU)\) is an affine open subscheme of \(\Spf(P)\).}
\end{definition}

\begin{proposition} \label{GlobalPerfectoidizationSheafLimit}
    Let \(\mscrX\) be an adic quasi-compact separated formal scheme such that \(p\) is topologically nilpotent in \(\mcalO_{\mscrX}\).
    Let \(\Perfd^{pre}_{\mscrX}\) be any full subcategory of \(\Perfd_{\mscrX}\) which contains \(\AffPerfd_{\mscrX}^{\bigstar}\).
    Then the morphism induced from \Cref{UniversalityOfGlobalPerfd} gives an isomorphism
    \begin{equation} \label{AbsPerfdLimit}
        \mcalO^d_{\mscrX, \perfd} \xrightarrow{\cong} \lim_{\mscrX_{\infty} \xrightarrow{\pi} \mscrX} \pi_*\mcalO^d_{\mscrX_{\infty}}
    \end{equation}
    in the \(\infty\)-category \(\CAlg_{\mcalO^d_{\mscrX}}(\Shv(\mscrX_{\Zar}, \mcalD(\setZ)))\) where the limit is taken over \(\Perfd^{pre}_{\mscrX}\).
    This isomorphism is functorial in the following sense: For any morphism \(f \colon \mscrY \to \mscrX\) and full subcategories \(\Perfd^{pre}_{\mscrX}\) and \(\Perfd^{pre}_{\mscrY}\) as above such that for any morphism \(\pi \colon \mscrY_{\infty} \to \mscrY\) in \(\Perfd^{pre}_{\mscrY}\) the composition \(f \circ \pi\) belongs to \(\Perfd^{pre}_{\mscrX}\), the morphisms
    \begin{equation*}
        \mcalO^d_{\mscrX, \perfd} \to f_*\mcalO^d_{\mscrY, \perfd} \quad \text{and} \quad \lim_{\mscrX_{\infty} \xrightarrow{\pi} \mscrX} \pi_*\mcalO^d_{\mscrX_{\infty}} \to \lim_{\mscrY_{\infty} \xrightarrow{\pi'} \mscrY} f_*\pi'_*\mcalO^d_{\mscrY_{\infty}}
    \end{equation*}
    commute with the above isomorphisms.
\end{proposition}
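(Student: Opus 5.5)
Since both sides of \eqref{AbsPerfdLimit} are \(\mcalD(\setZ)\)-valued sheaves on \(\mscrX_{\Zar}\), and affine open subsets form a basis closed under finite intersections (\(\mscrX\) is separated), the plan is to check that \eqref{AbsPerfdLimit} is an isomorphism after evaluating on every affine open \(\mscrU = \Spf(R) \subseteq \mscrX\). Limits of sheaves are computed sectionwise, so the right-hand side evaluated at \(\mscrU\) is \(\lim_{\pi} R\Gamma(\pi^{-1}(\mscrU), \mcalO_{\mscrX_{\infty}})\), the limit taken over \(\Perfd^{pre}_{\mscrX}\). By \Cref{RemarkAffineSectionGlobalPerfd}, the left-hand side is \(R_{\tperfd} = \lim_{R \to P} P\), the limit over complete adic perfectoid \(R\)-algebras. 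It therefore suffices to compare these two limits, compatibly with the map from \Cref{UniversalityOfGlobalPerfd}.

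The first step is to rewrite each term \(R\Gamma(\pi^{-1}(\mscrU), \mcalO_{\mscrX_{\infty}})\). Since \(\mscrX_{\infty}\) is perfectoid, \(\pi^{-1}(\mscrU)\) is covered by affine perfectoid opens \(\Spf(P)\), by \Cref{PropPerfectoidFormalScheme}. The sheaf property then gives \(R\Gamma(\pi^{-1}(\mscrU), \mcalO_{\mscrX_{\infty}}) \cong \lim_{\Spf(P) \subseteq \pi^{-1}(\mscrU)} P\), the limit over affine opens of \(\pi^{-1}(\mscrU)\), using \Cref{AffineVanishing}. Every such \(\Spf(P) \to \mscrU\) is an object of \(\AffPerfd_{\mscrX}^{\bigstar}\) lying over \(\mscrU\). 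Hence the right-hand side is a limit over the category \(\mcalE_{\mscrU}\) of pairs \((\pi, \Spf(P) \subseteq \pi^{-1}(\mscrU))\), which is a Grothendieck construction (left Kan extension/limit-of-limits argument). The task thus reduces to showing that the functor \(\mcalE_{\mscrU} \to (\Perfd^{\wedge}_R)^{\opposite}\), \((\pi, \Spf(P)) \mapsto P\), induces an equivalence on limits of the forgetful diagram.

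For this I would prove that the functor is (limit-)cofinal using Quillen's Theorem A: for each complete adic perfectoid \(R\)-algebra \(Q\), the comma category of objects of \(\mcalE_{\mscrU}\) mapping to \(Q\) (i.e.\ \(\Spf(Q) \to \Spf(P)\) over \(\mscrU\)) should be weakly contractible. It is nonempty because \(\Spf(Q) \to \mscrU \subseteq \mscrX\) lies in \(\AffPerfd_{\mscrX}^{\bigstar} \subseteq \Perfd^{pre}_{\mscrX}\), taken with \(\Spf(P) = \Spf(Q)\) itself. This object should be terminal-like: any \((\pi, \Spf(P))\) with \(\Spf(Q) \to \Spf(P)\) maps to it. I expect this step to be the main obstacle, because \(\Spf(Q)\) with the identity is not literally terminal in the comma category; the morphisms in \(\mcalE_{\mscrU}\) go along maps \(\mscrX_{\infty} \to \mscrX'_{\infty}\) in \(\Perfd^{pre}_{\mscrX}\) compatible with the chosen affine opens. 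I would try to resolve this either by exhibiting a zigzag contraction (each object \((\pi,\Spf(P),\Spf(Q)\to\Spf(P))\) receives a canonical map from \((\Spf(Q)\to\mscrX, \Spf(Q), \mathrm{id})\), functorially) or, more cleanly, by doing the computation in two stages. In the first stage one replaces \(\Perfd^{pre}_{\mscrX}\) by \(\AffPerfd_{\mscrX}^{\bigstar}\), using that every \(\pi\) is the colimit, in the Zariski sense, of its affine pieces, which is exactly the sheaf identity above. In the second stage one observes that for \(\AffPerfd_{\mscrX}^{\bigstar}\) the limit of \(R\Gamma(\pi^{-1}(\mscrU),-)\) agrees with \(\lim_{R\to P}P\) by the same cofinality, as in \Cref{CorollaryRemarkCompletePerfd}-style arguments from \Cref{RemarkCompletePerfd}.

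Finally, I would verify that the resulting isomorphism on affine opens is the map constructed in \Cref{UniversalityOfGlobalPerfd}, which holds since both are characterized by the canonical projections \(R_{\tperfd} \to P\). The \(\setE_\infty\)-algebra structure follows because the forgetful functor from \(\CAlg_{\mcalO^d_{\mscrX}}\) is conservative and preserves limits. Functoriality in \(f\colon\mscrY\to\mscrX\) follows from the uniqueness in \Cref{UniversalityOfGlobalPerfd}, since both composites are determined by their restrictions to affine opens, where they are the canonical maps between absolute perfectoidizations.
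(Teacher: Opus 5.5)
Your route differs from the paper's. You compute both sides on each affine $\mscrU=\Spf(R)$ and compare the two limits by cofinality, while the paper verifies the universal property of the limit directly. As written, your route has a gap at the step you treat as formal: that $\lim_{\pi}R\Gamma(\pi^{-1}(\mscrU),\mcalO_{\mscrX_\infty})$ is a limit over $\mathcal{E}_{\mscrU}$ because $\mathcal{E}_{\mscrU}$ is a Grothendieck construction.

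There is no functor $\pi\mapsto\{\text{affine opens of }\pi^{-1}(\mscrU)\}$. For a morphism $g\colon\mscrX_\infty\to\mscrX'_\infty$ in $\Perfd^{pre}_{\mscrX}$, preimages of affine opens need not be affine (e.g.\ a perfectoid projective line over an affine perfectoid base), and $g(\mscrV)$ need not lie in any affine open. Suppose you take morphisms $(\pi,\mscrV)\to(\pi',\mscrV')$ to be those $g$ with $g(\mscrV)\subseteq\mscrV'$, as you indicate. Then the projection $\mathcal{E}_{\mscrU}\to\Perfd^{pre}_{\mscrX}$ is neither cartesian nor cocartesian. So the limit over $\mathcal{E}_{\mscrU}^{\mathrm{op}}$ is the limit over $\pi$ of the right Kan extension along the projection. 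By a cofinality argument like yours, the value of that Kan extension at $\pi$ is $\lim Q$ over \emph{all} affine perfectoid $\Spf(Q)\to\pi^{-1}(\mscrU)$, not just over affine opens.

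Affine opens are not cofinal among these maps. A map $\Spf(Q)\to\pi^{-1}(\mscrU)$ need not factor through a single affine open; for instance, a surjection from an affine perfectoid onto a non-affine one does not. So identifying this value with $R\Gamma(\pi^{-1}(\mscrU),\mcalO_{\mscrX_\infty})$ is not the small-site sheaf identity you quote. Your two-stage variant has the same hole. Replacing $\Perfd^{pre}_{\mscrX}$ by $\AffPerfd^{\bigstar}_{\mscrX}$ requires $\pi\mapsto\pi_*\mcalO^d_{\mscrX_\infty}$ to be right Kan extended from $\AffPerfd^{\bigstar}_{\mscrX}$, and its value at $\pi$ is a limit over all of $(\AffPerfd^{\bigstar}_{\mscrX})_{/\pi}$.

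The missing ingredient is Zariski descent for $\Spf(Q)\mapsto Q$ on all affine perfectoid formal schemes. This follows from \Cref{AffineVanishing} applied to each $\Spf(Q)$, and makes $\Spf(Q)\mapsto Q$ a sheaf on the big Zariski site. Since $\pi^{-1}(\mscrU)$ is the colimit of its affine opens as a Zariski sheaf, the limit over all affine perfectoids over $\pi^{-1}(\mscrU)$ then agrees with $R\Gamma(\pi^{-1}(\mscrU),\mcalO_{\mscrX_\infty})$. With this added, your argument goes through, arguably with better control of $\infty$-categorical coherences than a pointwise check.

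The paper avoids this input altogether. It builds $\mcalF\to\mcalO^d_{\mscrX,\perfd}$ from the cone restricted to $\AffPerfd^{\bigstar}_{\mscrX}$. It then checks compatibility with $\tau_\pi$ only after restricting to affine opens $\mscrV\subseteq\pi^{-1}(\mscrU)$, using naturality of the cone along the open immersion $\mscrV\hookrightarrow\mscrX_\infty$ and the small-site sheaf property of $\pi_*\mcalO^d_{\mscrX_\infty}$.

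Finally, the step you flagged as the main obstacle is immediate. The object $(\Spf(Q)\to\mscrU\hookrightarrow\mscrX,\ \Spf(Q),\ \mathrm{id})$ is an \emph{initial} object of the comma category. A morphism from it to $(\pi,\mscrV,h\colon\Spf(Q)\to\mscrV)$ is forced to be $\Spf(Q)\xrightarrow{h}\mscrV\hookrightarrow\mscrX_\infty$. Your first phrasing, with every object mapping to it, has the arrows reversed.
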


\begin{proof}

    By \Cref{UniversalityOfGlobalPerfd}, we obtain a morphism \(\mcalO^d_{\mscrX,\perfd} \to \lim_{f \in \Perfd^{pre}_{\mscrX}} f_*\mcalO^d_{\mscrX_{\infty}}\).
    The functoriality of this morphism follows from the following functoriality: For any morphism \(f \colon \mscrY \to \mscrX\) and an object \(\pi \colon \mscrY_{\infty} \to \mscrY\) in \(\Perfd^{pre}_{\mscrY}\), the composition \(f \circ \pi\) belongs to \(\Perfd^{pre}_{\mscrX}\) and this gives a commutative diagram
    \begin{center}
        \begin{tikzcd}
            {\mcalO^d_{\mscrX, \perfd}} \arrow[rr] \arrow[d] &  & \lim_{\mscrX_{\infty} \xrightarrow{\pi} \mscrX} \pi_*\mcalO^d_{\mscrX_{\infty}} \arrow[d] \\
            {f_*\mcalO^d_{\mscrY, \perfd}} \arrow[rr]        &  & \lim_{\mscrY_{\infty} \xrightarrow{\pi'} \mscrY} f_*\pi'_*\mcalO^d_{\mscrY_{\infty}}     
        \end{tikzcd}
    \end{center}
    of \(\CAlg\)-valued sheaves.
    
    We will check that the morphism \(\mcalO^d_{\mscrX, \perfd} \to \lim_{\pi \in \Perfd^{pre}_{\mscrX}} \pi_*\mcalO^d_{\mscrX_{\infty}}\) satisfies the universality.
    Take \(\mcalF \in \CAlg_{\mcalO^d_{\mscrX}}(\Shv(\mscrX_{\Zar}, \mcalD(\setZ)))\) with a compatible system \(\{\tau_\pi \colon \mcalF \to \pi_*\mcalO^d_{\mscrX_{\infty}}\}_{\pi \in \Perfd^{pre}_{\mscrX}}\) of \(\mcalO^d_{\mscrX}\)-algebra morphisms, i.e., a natural transformation \(\tau \colon c_{\mcalF} \to \mcalO\) in \(\Fun(\Perfd^{pre}_{\mscrX}, \CAlg_{\mcalO^d_{\mscrX}}(\Shv(\mscrX_{\Zar}, \mcalD(\setZ))))\) where \(c_{\mcalF}\) is the constant functor \(\mscrX_{\infty} \mapsto \mcalF\) and \(\mcalO\) is the functor \(\mscrX_{\infty} \mapsto \pi_*\mcalO^d_{\mscrX_{\infty}}\).

    First we construct a morphism \(\mcalF \to \mcalO^d_{\mscrX, \perfd}\) of \(\mcalO^d_{\mscrX}\)-algebras.
    For any affine open subset \(\mscrU\) of \(\mscrX\), take any complete adic perfectoid \(\mcalO_{\mscrX}(\mscrU)\)-algebra \(P\).
    This gives rise to an object \(\pi \colon \Spf(P) \to \mscrU \hookrightarrow \mscrX\) in \(\AffPerfd_{\mscrX}^{\bigstar} \subseteq \Perfd^{pre}_{\mscrX}\).
    Then the natural transformation \(\tau \colon c_{\mcalF} \to \mcalO\) induces a morphism \(\tau_\pi(\mscrU) \colon \mcalF(\mscrU) \to \mcalO^d_{\Spf(P)}(\pi^{-1}(\mscrU)) \cong P\) of \(\mcalO_{\mscrX}(\mscrU)\)-algebras where the last isomorphism follows from \Cref{AffineVanishing}.
    This is compatible with any morphism \(P \to P'\) of complete adic perfectoid \(\mcalO_{\mscrX}(\mscrU)\)-algebras and then this gives a unique morphism
    \begin{equation*}
        \lim \tau_{\Spf(P) \to \mscrU \hookrightarrow \mscrX}(\mscrU) \colon \mcalF(\mscrU) \to \lim_{\mcalO_{\mscrX}(\mscrU) \to P} P = \mcalO^d_{\mscrX, \perfd}(\mscrU)
    \end{equation*}
    of \(\mcalO_{\mscrX}(\mscrU)\)-algebras by \Cref{SectionDerivedPushout}.
    Also this construction is compatible with the restriction morphisms, so the sheaf property gives a unique morphism \(\widetilde{\tau} \colon \mcalF \to \mcalO^d_{\mscrX, \perfd}\) of \(\mcalO^d_{\mscrX}\)-algebras such that the following diagram is commutative:
    \begin{equation} \label{FtoOperfdProjection}
        \begin{tikzcd}
            \mcalF(\mscrU) \arrow[r, "\tau_{\pi}(\mscrU)"] \arrow[rd, "\widetilde{\tau}(\mscrU)"] & P                                                                     \\
            \mcalO^d_{\mscrX}(\mscrU) \arrow[u] \arrow[r]                                         & {\mcalO^d_{\mscrX, \perfd}(\mscrU)} \arrow[u, "\mathrm{projection}"']
        \end{tikzcd}
    \end{equation}
    for any affine open subset \(\mscrU\) and any morphism \(\pi \colon \Spf(P) \to \mscrU \hookrightarrow \mscrX\) from the formal spectrum of a complete adic perfectoid \(\mcalO_{\mscrX}(\mscrU)\)-algebra \(P\) where the upper morphism \(\mcalF(\mscrU) \to P\) is induced from \(\tau_\pi \colon \mcalF \to \pi_*\mcalO^d_{\Spf(P)}\).

    Let \(\pi\colon \mscrX_{\infty} \to \mscrX\) be an object in \(\Perfd^{pre}_{\mscrX}\).
    Consider the composition
    \begin{equation*}
        \mcalF \xrightarrow{\widetilde{\tau}} \mcalO^d_{\mscrX, \perfd} \to \pi_*\mcalO^d_{\mscrX_{\infty}}
    \end{equation*}
    of \(\widetilde{\tau}\) defined above and the morphism induced from \Cref{UniversalityOfGlobalPerfd}.
    To prove that \(\widetilde{\tau}\) is the desired universal morphism, it is enough to show that the composition is the given morphism \(\mcalF \xrightarrow{\tau_{\pi}} \pi_*\mcalO^d_{\mscrX_{\infty}}\) for \(\mscrX_{\infty} \xrightarrow{\pi} \mscrX\).
    Since it is a morphism of sheaves, it is enough to show that the morphism \(\mcalF(\mscrU) \xrightarrow{\widetilde{\tau}(\mscrU)} \mcalO^d_{\mscrX, \perfd}(\mscrU) \to \mcalO^d_{\mscrX_{\infty}}(\pi^{-1}(\mscrU))\) is the morphism \(\tau_{\pi}(\mscrU) \colon \mcalF(\mscrU) \to \pi_*\mcalO^d_{\mscrX_{\infty}}(\mscrU)\) for any affine open subset \(\mscrU\).
    Consider the following diagram, in which the commutativity of the left triangle is not yet known:
    \begin{center}
        \begin{tikzcd}
            \mcalF(\mscrU) \arrow[rd, "\widetilde{\tau}(\mscrU)"'] \arrow[rr, "\tau_\pi(\mscrU)"] &                                                                              & \mcalO^d_{\mscrX_{\infty}}(\pi^{-1}(\mscrU)) \arrow[rr] &                                                       & \mcalO^d_{\mscrX_{\infty}}(\mscrV) \\
             & {\mcalO^d_{\mscrX, \perfd}(\mscrU)} \arrow[rr, "\text{projection}"] \arrow[ru] &                                                         & \mcalO_{\mscrX_{\infty}}(\mscrV) \arrow[ru, "\cong"'] &                                   
        \end{tikzcd}
    \end{center}
    for any affine open subset \(\mscrV \subseteq \pi^{-1}(\mscrU)\) of \(\mscrX_{\infty}\).
    We check some commutativity of this diagram: Using \Cref{UniversalityOfGlobalPerfd}, the right square is commutative because the morphism \(\mcalO^d_{\mscrX, \perfd}(\mscrU) \to \mcalO_{\mscrX_{\infty}}(\mscrV)\) is the projection to the complete adic perfectoid \(\mcalO_{\mscrX}(\mscrU)\)-algebra \(\mcalO_{\mscrX_{\infty}}(\mscrV)\).
    On the other hand, the diagram \eqref{FtoOperfdProjection} shows that the outer square is also commutative since the upper composition map \(\mcalF(\mscrU) \xrightarrow{\tau_{\pi}(\mscrU)} \mcalO^d_{\mscrX_{\infty}}(\pi^{-1}(\mscrU)) \to \mcalO^d_{\mscrX_{\infty}}(\mscrV)\) is the one indexed by \(\mscrV = \Spf(\mcalO^d_{\mscrX_{\infty}}(\mscrV)) \hookrightarrow \pi^{-1}(\mscrU) \xrightarrow{\pi} \mscrU \hookrightarrow \mscrX\) which belongs to \(\AffPerfd_{\mscrX}^{\bigstar} \subseteq \Perfd^{pre}_{\mscrX}\).
    Consequently, the sheaf condition on \(\mcalO^d_{\mscrX_{\infty}}\) implies that the left triangle is commutative.
    This gives the desired universality.
\end{proof}

\begin{corollary} \label{GlobalPerfectoidizationSectionLimit}
    Let \(\mscrX\) be an adic quasi-compact separated formal scheme such that \(p\) is topologically nilpotent in \(\mcalO_{\mscrX}\).
    Then we have a functorial isomorphism
    \begin{equation*}
        \mcalO_{\mscrX, \perfd} \xrightarrow{\cong} \lim_{\mscrX_{\infty} \xrightarrow{\pi} \mscrX} R\pi_*\mcalO_{\mscrX_{\infty}}
    \end{equation*}
    in \(\CAlg(\mcalD(\mscrX_{\Zar}, \mcalO_{\mscrX}))\), where the limit is taken over an arbitrary full subcategory \(\Perfd^{pre}_{\mscrX}\) of \(\Perfd_{\mscrX}\) which contains \(\AffPerfd_{\mscrX}^{\bigstar}\).
    In particular, for any open subset \(\mscrU\) of \(\mscrX\), the isomorphism
    \begin{equation*}
        R\Gamma(\mscrU, \mcalO_{\mscrX, \perfd}) \xrightarrow{\cong} \lim_{\Spf(P) \xrightarrow{\pi} \mscrU} P
    \end{equation*}
    holds where the limit is taken over all morphisms \(\Spf(P) \to \mscrU\) from affine perfectoid formal schemes.
\end{corollary}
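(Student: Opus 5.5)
The plan is to transport the isomorphism \eqref{AbsPerfdLimit} of \Cref{GlobalPerfectoidizationSheafLimit} back along the fully faithful, limit-preserving functor
\[
R\Gamma \colon \mcalD(\mscrX_{\Zar}, \mcalO_{\mscrX}) \to \Mod_{\mcalO^d_{\mscrX}}(\Shv(\mscrX_{\Zar}, \mcalD(\setZ)))
\]
of \Cref{RGammaCommutesLimits}.

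The first step identifies the source and the terms of the diagram. By \Cref{DefGlobalPerfectoidization} we have \(R\Gamma(\mcalO_{\mscrX,\perfd}) = \mcalO^d_{\mscrX,\perfd}\). For each \(\pi \colon \mscrX_{\infty} \to \mscrX\) in \(\Perfd^{pre}_{\mscrX}\), \Cref{PushforwardDerivedStructureSheaf} gives a natural identification \(R\Gamma(R\pi_*\mcalO_{\mscrX_{\infty}}) \cong \pi_*\mcalO^d_{\mscrX_{\infty}}\), and this identification is functorial in \(\pi\).

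Because \(R\Gamma\) commutes with limits, \(R\Gamma(\lim_\pi R\pi_*\mcalO_{\mscrX_{\infty}})\) is identified with \(\lim_\pi \pi_*\mcalO^d_{\mscrX_{\infty}}\). The morphism \(\mcalO_{\mscrX,\perfd} \to \lim_\pi R\pi_*\mcalO_{\mscrX_{\infty}}\) itself comes from functoriality (\Cref{DerivedPushoutArcCohomology}): each \(\pi\) gives \(\mcalO_{\mscrX,\perfd} \to R\pi_*\mcalO_{\mscrX_{\infty},\perfd}\). Composing with \(\mcalO_{\mscrX_\infty,\perfd} \cong \mcalO_{\mscrX_\infty}\) yields the required cone.

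The isomorphism \(\mcalO_{\mscrX_\infty,\perfd} \cong \mcalO_{\mscrX_\infty}\) for perfectoid \(\mscrX_\infty\) holds because both sides agree on affines by \Cref{SectionDerivedPushout} and \Cref{PropPerfectoidFormalScheme}, and \(R\Gamma\) is fully faithful.

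I then check that \(R\Gamma\) applied to this cone is the morphism of \Cref{UniversalityOfGlobalPerfd}. Both maps restrict on an affine \(\mscrU\) and an affine \(\mscrV \subseteq \pi^{-1}(\mscrU)\) to the canonical projection \(\mcalO_{\mscrX}(\mscrU)_{\tperfd} \to \mcalO_{\mscrX_{\infty}}(\mscrV)\). The uniqueness clause of \Cref{UniversalityOfGlobalPerfd} then forces them to agree. Hence \(R\Gamma\) of our morphism is \eqref{AbsPerfdLimit}, which is an isomorphism. Full faithfulness implies \(R\Gamma\) is conservative, so the original morphism is an isomorphism in \(\mcalD(\mscrX_{\Zar},\mcalO_{\mscrX})\).

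To upgrade to \(\CAlg\), note that limits in \(\CAlg(\mcalD(\mscrX_{\Zar},\mcalO_{\mscrX}))\) are computed in the underlying category. The cone consists of algebra maps, because the maps \(\mcalO_{\mscrX,\perfd} \to R\pi_*\mcalO_{\mscrX_\infty,\perfd}\) are produced by the lax symmetric monoidal functors \(R\nu_*\) and \(R\pi_*\). Functoriality in \(\mscrX\) follows from the functoriality statement of \Cref{GlobalPerfectoidizationSheafLimit}.

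For the last assertion, apply \(R\Gamma(\mscrU,-)\), which commutes with limits, to the isomorphism with \(\Perfd^{pre}_{\mscrX}\) taken to be \(\AffPerfd_{\mscrX}\); this subcategory contains \(\AffPerfd_{\mscrX}^{\bigstar}\). This gives \(R\Gamma(\mscrU,\mcalO_{\mscrX,\perfd}) \cong \lim_{\Spf(P)\to\mscrX} R\Gamma(\pi^{-1}(\mscrU),\mcalO_{\Spf(P)})\). Here \(\pi^{-1}(\mscrU)\) need not be affine, so I would rather run the first part with \(\mscrX\) replaced by \(\mscrU\). That replacement needs \(\mscrU\) to be quasi-compact separated; otherwise one reduces to that case by a finite cover. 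Taking global sections then gives \(\lim_{\Spf(P)\to\mscrU} R\Gamma(\Spf(P),\mcalO) = \lim P\) by \Cref{AffineVanishing}. Compatibility with restriction to \(\mscrU\) comes from the functoriality statement applied to the open immersion \(\mscrU \hookrightarrow \mscrX\).

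The main obstacle I expect is the coherence check. It must be verified that the comparison \(R\Gamma(R\pi_*\mcalO) \cong \pi_*\mcalO^d\) is natural in \(\pi\) as a diagram of \(\infty\)-functors, not merely objectwise. It must also be verified that the cone maps agree with those of \Cref{UniversalityOfGlobalPerfd} at the \(\infty\)-categorical level. I would handle both by working entirely on the sheaf side, where uniqueness of maps out of \(\mcalO^d_{\mscrX,\perfd}\) into sheaves is available, and pulling back via full faithfulness.
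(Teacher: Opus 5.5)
Your proposal is correct and is essentially the paper's argument. You identify \(R\Gamma\) of both sides using \Cref{PushforwardDerivedStructureSheaf} and \Cref{RGammaCommutesLimits}, lift the isomorphism \eqref{AbsPerfdLimit} along the fully faithful functor \(R\Gamma\), and for the second claim replace \(\mscrX\) by \(\mscrU\) and apply \Cref{AffineVanishing}.

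The paper does not build the comparison map separately from the functoriality of \(\mcalO_{\mscrX,\perfd}\). Full faithfulness already produces the map, so your coherence concern does not arise. Also, for a non-quasi-compact \(\mscrU\) no finite cover is available; you would instead write \(\mscrU\) as the filtered union of its quasi-compact opens, a point the paper also passes over.
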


\begin{proof}
    The isomorphism \eqref{AbsPerfdLimit} in \Cref{GlobalPerfectoidizationSheafLimit} can be regarded as an isomorphism
    \begin{equation*}
        R\Gamma(\mcalO_{\mscrX, \perfd}) \xrightarrow{\cong} R\Gamma(\lim_{\mscrX_{\infty} \xrightarrow{\pi} \mscrX} R\pi_*\mcalO_{\mscrX_{\infty}})
    \end{equation*}
    by \Cref{PushforwardDerivedStructureSheaf} and \Cref{RGammaCommutesLimits}.
    Since \(R\Gamma\) is fully faithful, we have the desired isomorphism.
    In the second isomorphism, we may assume that \(\mscrU = \mscrX\), and then it follows from the isomorphism \(R\Gamma(\mscrX, \mcalO_{\mscrX, \perfd}) \cong \lim_{\Spf(P) \to \mscrX} R\Gamma(\Spf(P), \mcalO_{\Spf(P)})\) as above and the vanishing of higher cohomology for affine formal schemes (\Cref{AffineVanishing}).
\end{proof}

\subsection{Relation with perfectization of formal schemes} \label{SectionPerfectization}

Recently, in \cite{bhatt2025Aspects}, Bhatt introduced the notion of the \emph{perfectization} \(\mfrakX^{\pfd}\) of a stack \(\mfrakX\), and constructed an object \(\mcalO_{\mfrakX, \pfd}\) of \(\mcalD_{\qcoh}(\mfrakX)\) which is very similar to our absolute perfectoidization sheaf \(\mcalO_{\mscrX, \perfd}\) for a formal scheme \(\mscrX\).

\begin{notation}
    Throughout this section, we will use the following terminology:
    \begin{itemize}
        \item The (1-)category of \(p\)-nilpotent semiperfectoid rings is denoted by \(\SPerfd^{\nil}\). We equip it with the fpqc topology and regard it as a site. Note that \(\SPerfd^{\nil}\) forms a basis for the fpqc topology on the category of \(p\)-nilpotent rings.
        \item A \emph{stack} is an \(\Ani\)-valued fpqc sheaf on \(\SPerfd^{\nil}\) (or equivalently, an \(\Ani\)-valued fpqc sheaf on the category of \(p\)-nilpotent rings).
        \item Any formal scheme \(\mfrakX\) has an associated stack that sends \(S \in \SPerfd^{\nil}\) to the set of morphisms \(\Spec(S) \to \mfrakX\) of formal schemes. We will identify \(\mfrakX\) with the associated stack.
        \item Any stack \(\mfrakX\) on \(\SPerfd^{\nil}\) admits the \(\infty\)-category
        \begin{equation*}
            \mcalD_{\qcoh}(\mfrakX) \defeq \lim_{\Spec(S) \to \mfrakX} \mcalD(S),
        \end{equation*}
        where the limit runs over \(\SPerfd^{\nil}_{/\mfrakX}\).
        This is called the \emph{\(\infty\)-category of quasi-coherent sheaves} on \(\mfrakX\). See \Cref{CompatibilityQCDerived} for this terminology.
        \item Any morphism \(f \colon \mfrakY \to \mfrakX\) of stacks induces the \emph{pullback} functor \(f^* \colon \mcalD_{\qcoh}(\mfrakX) \to \mcalD_{\qcoh}(\mfrakY)\) and it admits a right adjoint \(f_* \colon \mcalD_{\qcoh}(\mfrakY) \to \mcalD_{\qcoh}(\mfrakX)\) called the \emph{pushforward}.
    \end{itemize}
\end{notation}

We record the following well-known compatibility of quasi-coherent sheaves on a formal scheme and quasi-coherent sheaves on the associated stack:

\begin{lemma} \label{CompatibilityQCDerived}
    Let \(\mfrakX\) be a quasi-compact separated bounded \(p\)-adic formal scheme.
    Then there exists a natural equivalence
    \begin{equation*}
        \mcalD_{\qcoh}(\mfrakX_{\Zar}, \mcalO_{\mfrakX}) \xrightarrow{\simeq} \mcalD_{\qcoh}(\mfrakX)
    \end{equation*}
    of \(\infty\)-categories, where the left-hand side is the \(\infty\)-category of quasi-coherent sheaves on \(\mfrakX\) in the sense of \Cref{DefDerivedQcohFormalScheme} and the right-hand side is the \(\infty\)-category of quasi-coherent sheaves on the stack associated to \(\mfrakX\) as above.
\end{lemma}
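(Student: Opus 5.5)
The plan is to compare both sides on affines and then glue along Zariski descent. Both \(\infty\)-categories are obtained as limits over affine opens. On the left, \(\mcalD_{\qcoh}(\mfrakX_{\Zar}, \mcalO_{\mfrakX})\) is, by \Cref{DefDerivedQcohFormalScheme} and \Cref{DerivedQCohEquivAffine}, Zariski locally equivalent to \(\mcalD^{\comp{p}}(R)\) for \(\mfrakX = \Spf(R)\); it satisfies Zariski descent, so it is the limit over affine opens \(\Spf(R) \subseteq \mfrakX\) with transitions given by completed base change \(M \mapsto \dcomp{p}{M \otimes^L_R R'}\). On the right, \(\mcalD_{\qcoh}(-)\) of stacks converts colimits of stacks into limits, since it is defined as a right Kan extension. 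The Zariski cover of the quasi-compact separated formal scheme \(\mfrakX\) by finitely many affines, together with their affine intersections, gives a colimit presentation of the associated stack; this uses that Zariski covers are fpqc covers. It therefore suffices to construct, functorially in \(R\), an equivalence \(\mcalD^{\comp{p}}(R) \simeq \mcalD_{\qcoh}(\Spf(R))\) compatible with completed base change along open immersions, and then take limits over the \v{C}ech diagram.

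For \(\Spf(R)\) with \(R\) bounded \(p\)-torsion, the stack \(\Spf(R)\) is \(\colim_n \Spec(R/p^n)\) as an fpqc sheaf on \(p\)-nilpotent rings, because any map from \(\Spec(S)\) with \(S\) \(p\)-nilpotent factors through some \(R/p^n\). Hence \(\mcalD_{\qcoh}(\Spf(R)) \simeq \lim_n \mcalD_{\qcoh}(\Spec(R/p^n))\). Next I would use that \(\mcalD_{\qcoh}\) on stacks restricted to affine schemes recovers \(\mcalD(A)\), via fpqc descent for \(\mcalD(-)\) (Lurie) together with the fact that \(\SPerfd^{\nil}\) is a basis. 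This gives \(\lim_n \mcalD(R/p^n)\), where the transition maps are the underived-in-\(R\) base changes \(-\otimes^L_{R/p^{n+1}} R/p^n\).

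The main obstacle is identifying \(\lim_n \mcalD(R/p^n)\) with \(\mcalD^{\comp{p}}(R)\), since the quotients \(R/p^n\) are not \(R/^Lp^n\). Here bounded \(p^\infty\)-torsion is what makes it work: the pro-systems \(\{R/^Lp^n\}\) and \(\{R/p^n\}\) are pro-isomorphic. The standard argument (as in Bhatt--Scholze, or Lurie SAG 8.1 for pro-systems) then gives \(\mcalD^{\comp{p}}(R) \simeq \lim_n \mcalD(R/^Lp^n) \simeq \lim_n \mcalD(R/p^n)\). Concretely, the functor \(M \mapsto (M \otimes^L_R R/p^n)_n\) has right adjoint \(\lim_n\). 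The unit is an equivalence by derived completeness and pro-isomorphism. For the counit, one checks \((\lim_k N_k)\otimes^L_R R/p^n \simeq N_n\) using that \(R/p^n\) is pseudo-coherent up to pro-isomorphism, or more simply that \(\lim_k\) commutes with the finite-type operation \(-\otimes^L R/^Lp^m\). Finally, I would check naturality in \(R\) for open immersions \(\Spf(R') \subseteq \Spf(R)\). Both sides use completed base change, and this property is inherited by \(R'\), since \(R'\) still has bounded torsion as a localization-completion. Gluing then yields the equivalence.
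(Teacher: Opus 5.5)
Your argument is correct, but it is organized differently from the paper's.

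\textbf{Your route.} You first compare the two sides on an affine $\Spf(R)$, passing through $\mcalD^{\comp{p}}(R)$ and the pro-isomorphism $\{R/^L p^n\}_n \simeq \{R/p^n\}_n$. You then glue both sides along a finite affine Zariski \v{C}ech nerve. This requires two further inputs: Zariski descent for $\mcalD_{\qcoh}(\mfrakX_{\Zar}, \mcalO_{\mfrakX})$, and a check that your affine equivalences are compatible with completed base change along open immersions. The first of these you assert without proof; its cleanest justification is itself through \Cref{DerivedQCohEquivGeneral} and descent on the schemes $\mfrakX_n$.

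\textbf{The paper's route.} The paper never localizes on $\mfrakX$. It proceeds in four steps:
\begin{enumerate}
\item \Cref{DerivedQCohEquivGeneral} (with $p$ weakly proregular by bounded torsion) gives $\mcalD_{\qcoh}(\mfrakX_{\Zar}, \mcalO_{\mfrakX}) \simeq \lim_n \mcalD_{\qcoh}(\mfrakX_n, \mcalO_{\mfrakX_n})$.
\item For each quasi-compact separated scheme $\mfrakX_n$, the classical comparison gives $\mcalD_{\qcoh}(\mfrakX_n, \mcalO_{\mfrakX_n}) \simeq \lim_{\Spec(S) \to \mfrakX_n} \mcalD(S)$.
\item The double limit collapses to $\lim_{\Spec(S) \to \mfrakX} \mcalD(S)$, because every map from a $p$-nilpotent $\Spec(S)$ to $\mfrakX$ factors through some $\mfrakX_n$.
\item Finally it restricts to the basis $\SPerfd^{\nil}$.
\end{enumerate}
All Zariski gluing is thereby absorbed into the scheme-level statement and into \Cref{DerivedQCohEquivGeneral}. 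The step you call the main obstacle, identifying $\lim_n \mcalD(R/p^n)$ with $\mcalD^{\comp{p}}(R)$, is never invoked separately; it is hidden inside \Cref{CatEquivCompLimit} and \Cref{DerivedQCohEquivAffine}.

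\textbf{Simplifying your route.} Your affine step only needs $\mcalD_{\qcoh}(\Spf(R)_{\Zar}, \mcalO) \simeq \lim_n \mcalD(R/p^n)$. This is \Cref{DerivedQCohEquivAffine} composed with $\mcalD_{\qcoh}(\Spec(R/p^n)) \simeq \mcalD(R/p^n)$. So the detour through $\mcalD^{\comp{p}}(R)$ is unnecessary, and so is your rather loose counit argument, which is in any case just \Cref{CatEquivCompLimit}.

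\textbf{Trade-off.} What your route buys is an explicit description of the equivalence on affines, via $R\Gamma$ and completed base change. What the paper's route buys is brevity and not having to verify those naturality compatibilities by hand.
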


\begin{proof}
    This is well-known, but we sketch the proof for the convenience of the reader.
    By \Cref{DerivedQCohEquivGeneral}, we have the equivalence
    \begin{equation*}
        \mcalD_{\qcoh}(\mfrakX_{\Zar}, \mcalO_{\mfrakX}) \xrightarrow{\simeq} \lim_{n \geq 1} \mcalD_{\qcoh}(\mfrakX_n, \mcalO_{\mfrakX_n}),
    \end{equation*}
    where \(\mfrakX_n\) is the closed subscheme of \(\mfrakX\) defined by \(p^n\).
    Since \(\mfrakX_n\) is a quasi-compact separated scheme, we have the equivalence \(\mcalD_{\qcoh}(\mfrakX_n, \mcalO_{\mfrakX_n}) \xrightarrow{\simeq} \lim_{\Spec(S) \to \mfrakX_n} \mcalD(S)\) where the limit runs over the category of affine schemes over \(\mfrakX_n\).
    Composing the equivalences above, we have the equivalence
    \begin{equation*}
        \mcalD_{\qcoh}(\mfrakX_{\Zar}, \mcalO_{\mfrakX}) \xrightarrow{\simeq} \lim_{n \geq 1} \lim_{\Spec(S) \to \mfrakX_n} \mcalD(S) \simeq \lim_{\Spec(S) \to \mfrakX} \mcalD(S),
    \end{equation*}
    where the last limit runs through the category of \(p\)-nilpotent rings \(S\) over \(\mfrakX\) and the last equivalence follows from the fact that any morphism \(\Spec(S) \to \mfrakX\) factors through some \(\mfrakX_n\) since \(S\) is \(p\)-nilpotent.
    Composing the equivalences above, we obtain the desired equivalence since \(\SPerfd^{\nil}\) forms a basis for the fpqc topology on the category of \(p\)-nilpotent rings.
\end{proof}

In what follows, we briefly recall and summarize the construction of the perfectization \(\mfrakX^{\pfd}\) and the sheaf \(\mcalO_{\mfrakX, \pfd}\).
For simplicity, in this section we treat only the case of bounded \(p\)-adic formal schemes, and do not consider more general adic formal schemes.
Any results on perfectization are due to Bhatt \cite{bhatt2025Aspects} except for \Cref{ComparisonOXperfd}.

\begin{definition}[{\cite{bhatt2025Aspects}*{\S 4}}] \label{DefPerfectization}
    Let \(\mfrakX\) be a bounded \(p\)-adic formal scheme.
    The \emph{perfectization} \(\mfrakX^{\pfd}\) of \(\mfrakX\) is defined as the functor
    \begin{equation*}
        \mfrakX^{\pfd} \colon \SPerfd^{\nil} \to \Set \hookrightarrow \Ani
    \end{equation*}
    which sends \(S\in \SPerfd^{\nil}\) to the set\footnote{For general stack \(\mfrakX\), the value \(\mfrakX^{\pfd}(S)\) may be a \(1\)-truncated anima. However, in the case of bounded \(p\)-adic formal schemes, it is a set (\cite{bhatt2025Aspects}*{\S 4.2}).} \(\mfrakX^{\pfd}(S)\) of diagrams \(\Spec(S) \xrightarrow{i} \Spf(P) \xrightarrow{u} \mfrakX\) of \(p\)-adic formal schemes, where \(P\) is a \(p\)-adic perfectoid ring and \(i\) induces an isomorphism \(P^{\flat} \xrightarrow{\cong} S^{\flat}\).
    This operation \(\mfrakX \mapsto \mfrakX^{\pfd}\) is functorial in \(\mfrakX\) and commutes with fiber products.

    This \(\mfrakX^{\pfd}\) actually becomes an fpqc sheaf and, forgetting \(\Spf(P)\), we have the natural morphism
    \begin{equation*}
        \pi \colon \mfrakX^{\pfd} \to \mfrakX
    \end{equation*}
    of fpqc sheaves on \(\SPerfd^{\nil}\).
\end{definition}

\begin{proposition}[{\cite{bhatt2025Aspects}*{Proposition 4.4.3 and Corollary 4.4.4}}] \label{GlobalSectionsPerfectization}
    Let \(\mfrakX\) be a bounded \(p\)-adic formal scheme. Then the pullback functors induce an equivalence
    \begin{equation*}
        \mcalD_{\qcoh}(\mfrakX^{\pfd}) \xrightarrow{\simeq} \lim_{\Spf(P) \in \AffPerfd_{\mfrakX}} \mcalD^{\comp{p}}(P)
    \end{equation*}
    of \(\infty\)-categories, where the limit runs over the category \(\AffPerfd_{\mfrakX}\) of affine perfectoid formal schemes over \(\mfrakX\) (\Cref{DefCatPerfd}).
    In particular, we have the isomorphism
    \begin{equation*}
        R_{\perfd} \xrightarrow{\cong} R\Gamma(\Spf(R)^{\pfd}, \mcalO)
    \end{equation*}
    in \(\CAlg(\mcalD(R))\) for any bounded \(p\)-adic ring \(R\).
\end{proposition}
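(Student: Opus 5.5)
The plan is to build the equivalence by descent along affine perfectoid covers of \(\mfrakX^{\pfd}\), using \Cref{DefPerfectization} directly. First I would verify that for an affine perfectoid \(\Spf(P)\), the perfectization \(\Spf(P)^{\pfd}\) is canonically isomorphic to \(\Spf(P)\) as a stack on \(\SPerfd^{\nil}\). Concretely, a point \(\Spec(S) \to \Spf(Q) \to \Spf(P)\) with \(Q^{\flat} \cong S^{\flat}\) is determined by the map \(\Spec(S) \to \Spf(P)\): the perfectoid ring \(Q\) is recovered as the untilt of \(S^{\flat}\) determined by \(P \to S\), via the perfectoidization of the semiperfectoid ring \(\widehat{P \otimes W(S^\flat)}\). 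Given this, \(\mcalD_{\qcoh}(\Spf(P)^{\pfd}) \simeq \mcalD_{\qcoh}(\Spf(P)) \simeq \mcalD^{\comp{p}}(P)\) follows from \Cref{CompatibilityQCDerived} for affine bounded \(p\)-adic formal schemes.

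Next, I would show that the maps \(\Spf(P)^{\pfd} = \Spf(P) \to \mfrakX^{\pfd}\), for \(\Spf(P) \in \AffPerfd_{\mfrakX}\), jointly form a colimit presentation of \(\mfrakX^{\pfd}\) as an fpqc sheaf. The key step is that any point \(x \in \mfrakX^{\pfd}(S)\) is by definition given by a diagram \(\Spec(S) \to \Spf(P) \to \mfrakX\), so \(x\) factors through \(\Spf(P)^{\pfd} \to \mfrakX^{\pfd}\). Thus \(\mfrakX^{\pfd} \simeq \colim_{\AffPerfd_{\mfrakX}} \Spf(P)^{\pfd}\). To make this precise I would use the left Kan extension formula: the comma category of factorizations of \(x\) must be weakly contractible. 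This category has a canonical object \(\Spf(P_x)\), the one from the definition of \(x\). The uniqueness statement from the first step would give it as an initial object (or at least a contractibility retraction), since any \(\Spf(P')\) through which \(x\) factors receives a map from \(\Spf(P_x)\) by the untilt universality. Since \(\mcalD_{\qcoh}\) sends colimits of stacks to limits (it is defined as a right Kan extension from affines), we obtain the equivalence \(\mcalD_{\qcoh}(\mfrakX^{\pfd}) \simeq \lim \mcalD^{\comp{p}}(P)\).

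For the "in particular", take \(\mfrakX = \Spf(R)\). Global sections of \(\mcalO\) then correspond to \(\lim_{R \to P} P\) over the limit presentation, which equals \(R_{\perfd}\) by definition (\Cref{DefTopologicalAbsolutePerfectoidization} and \Cref{RemarkCompletePerfd}). The \(\setE_\infty\)-structure is compatible because the pullback functors are symmetric monoidal.

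I expect the main obstacle to be the first step: identifying \(\Spf(P)^{\pfd}\) with \(\Spf(P)\), i.e. the rigidity of untilts relative to a perfectoid base, including the fpqc sheafiness and the boundedness needed for the values to be sets rather than groupoids. If direct rigidity is awkward, I would instead argue via the perfect prism \((A_{\inf}(S), \ker\theta)\) and the equivalence between perfectoid rings and perfect prisms, showing that the map to \(\Spf(P)\) pins down the distinguished ideal.
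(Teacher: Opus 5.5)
Your argument is correct, but it takes a genuinely different, self-contained route. The paper does not prove the equivalence. It quotes \cite{bhatt2025Aspects}*{Proposition 4.4.3}, and for the second assertion it only remarks that the proof of \cite{bhatt2025Aspects}*{Corollary 4.4.4} works without a perfectoid base because \(R_{\perfd}\) is the limit of all perfectoid \(R\)-algebras.

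You instead rebuild the equivalence from two inputs. The first is \(\Spf(P)^{\pfd}\simeq\Spf(P)\) for \(p\)-adic perfectoid \(P\); this is \cite{bhatt2025Aspects}*{Lemma 4.3.8}, which the paper itself uses in the proof of \Cref{QCQSPerfectization}. The second is the tautology that an \(S\)-point \(x\) of \(\mfrakX^{\pfd}\) \emph{is} a factorization \(\Spec(S)\to\Spf(P_x)\to\mfrakX\).

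Your fiber computation is right. By uniqueness of untilts, the fiber over \(x\) of the category of elements is the undercategory of \(\Spf(P_x)\) in \(\AffPerfd_{\mfrakX}\), and this has an initial object. Hence \(\mfrakX^{\pfd}\simeq\colim_{\AffPerfd_{\mfrakX}}\Spf(P)\) already as presheaves on \(\SPerfd^{\nil}\). So only the right-Kan-extension property of \(\mcalD_{\qcoh}\) is needed: no fpqc descent, no perfectoid atlas (\Cref{PerfectoidAtlas}), and no qcqs hypothesis.

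The second assertion is the same in both treatments. Mapping spectra in a limit of \(\infty\)-categories are limits of mapping spectra, so \(R\Gamma(\Spf(R)^{\pfd},\mcalO)\simeq\lim_{R\to P}P\simeq R_{\perfd}\). The citation buys brevity; your version shows the statement is formal once \(\Spf(P)^{\pfd}\simeq\Spf(P)\) is known.

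One step needs repair: the recipe for the untilt in your first step is wrong as written. \(P\widehat{\otimes}_{\setZ_p}W(S^\flat)\) is already perfectoid, being relatively perfect over \(P\), and it is far too large. For example, with \(P=\mcalO_C\) and \(S=\mcalO_C/p\) it is \(\mcalO_C\widehat{\otimes}_{\setZ_p}A_{\inf}(\mcalO_C)\), not \(\mcalO_C\).

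The correct untilt is \(Q=W(S^\flat)\widehat{\otimes}_{A_{\inf}(P)}P=W(S^\flat)/\xi_P W(S^\flat)\), where \(\xi_P\) generates \(\ker(A_{\inf}(P)\to P)\). By functoriality of \(\theta\), \(\xi_P\) maps to \(0\) in \(S\), so \(Q\to S\) exists. Rigidity of maps of perfect prisms then forces both \(Q\) and the map \(P\to Q\).

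Likewise, in your fallback the relevant prism is \((W(S^\flat),\xi_P W(S^\flat))\), not \((A_{\inf}(S),\ker\theta_S)\). The latter is in general not a prism, since its quotient \(S\) is typically not perfectoid. With that adjustment your fallback is exactly the argument to use.
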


\begin{proof}
    The first statement is \cite{bhatt2025Aspects}*{Proposition 4.4.3}.
    In \cite{bhatt2025Aspects}*{Corollary 4.4.4}, it is assumed that there exists a morphism from a perfectoid ring to \(R\), but the same proof works for the absolute perfectoidization without this assumption since \(R_{\perfd}\) is the limit of all perfectoid \(R\)-algebras.
\end{proof}

The perfectization \(\mfrakX^{\pfd}\) of \(\mfrakX\) admits a well-behaved cover, which is called a \emph{perfectoid atlas} for \(\mfrakX\):

\begin{theorem}[{\cite{bhatt2025Aspects}*{Theorem 4.3.4}}] \label{PerfectoidAtlas}
    Let \(\mfrakX\) be a bounded \(p\)-adic formal scheme.
    Then there exists a representable fpqc cover \(\mfrakU \to \mfrakX^{\pfd}\) of fpqc sheaves on \(\SPerfd^{\nil}\) such that \(\mfrakU\) is representable by a \(p\)-adic perfectoid formal scheme.
    If \(\mfrakX\) is separated, we can take \(\mfrakU\) to be affine.
\end{theorem}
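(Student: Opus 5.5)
The plan is to reduce to the affine case, build an explicit perfectoid ring over each affine piece by formally adjoining compatible \(p\)-power roots of coordinates, and verify the covering property after base change to an arbitrary point of \(\mfrakX^{\pfd}\) using André's flatness lemma.

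\emph{Reduction to the affine case.} Perfectization commutes with fiber products (\Cref{DefPerfectization}). Hence for an open immersion \(\mscrU \hookrightarrow \mfrakX\) the map \(\mscrU^{\pfd} \to \mfrakX^{\pfd}\) is the base change of \(\mscrU \to \mfrakX\), so it is again an open immersion, and a Zariski cover of \(\mfrakX\) gives a cover of \(\mfrakX^{\pfd}\). It therefore suffices to treat \(\mfrakX = \Spf(R)\) with \(R\) a bounded \(p\)-adic ring, and then take a disjoint union \(\mfrakU = \coprod_i \mfrakU_i\) over an affine open cover. For separated \(\mfrakX\) I will take this cover finite, so that \(\mfrakU\) is again affine. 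Representability of \(\mfrakU \to \mfrakX^{\pfd}\) will then follow from the affine case together with separatedness, since fiber products of affines over \(\mfrakX\) are affine.

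\emph{Construction of the atlas.} Choose a surjection \(A \defeq \comp{p}{\setZ_p^{\mathrm{cycl}}[X_r]_{r \in R}} \twoheadrightarrow \comp{p}{R \otimes_{\setZ_p} \setZ_p^{\mathrm{cycl}}}\) and put \(A_\infty \defeq \comp{p}{\setZ_p^{\mathrm{cycl}}[X_r^{1/p^\infty}]}\), which is perfectoid. Then \(S_0 \defeq \comp{p}{R \otimes_A A_\infty}\) is a quotient of \(A_\infty\), hence semiperfectoid. By \cite{bhatt2022Prismsa}*{Theorem 7.4}, \(P \defeq (S_0)_{\perfd}\) is a discrete perfectoid ring, and I set \(\mfrakU \defeq \Spf(P)\). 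Because \(P\) is perfectoid, \(\Spf(P)^{\pfd} = \Spf(P)\), so the structure map \(R \to P\) induces a morphism \(\Spf(P) \to \Spf(R)^{\pfd}\).

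\emph{Representability and covering.} Let \(S \in \SPerfd^{\nil}\) with a point \(\Spec(S) \to \Spf(Q) \to \Spf(R)\), where \(Q\) is \(p\)-adic perfectoid. Using compatibility with fiber products, I compute
\begin{equation*}
  \Spf(P) \times_{\Spf(R)^{\pfd}} \Spec(S) \cong \Spf\bigl((Q \widehat{\otimes}_R P)_{\perfd}\bigr) \times_{\Spf(Q)} \Spec(S).
\end{equation*}
Here \(Q \widehat{\otimes}_R P\) has the same perfectoidization as \(Q \widehat{\otimes}_A A_\infty\). Over \(Q\), this ring is obtained by adjoining compatible \(p\)-power roots of the images \(f_r \in Q\) of the \(X_r\), i.e.
\begin{equation*}
  Q \widehat{\otimes}_A A_\infty \cong \comp{p}{Q[X_r^{1/p^\infty}]/(X_r - f_r)}.
\end{equation*}
This is semiperfectoid, so its perfectoidization \(Q'\) is discrete perfectoid, again by \cite{bhatt2022Prismsa}*{Theorem 7.4}; this gives representability by the affine object \(\Spf(Q') \times_{\Spf(Q)} \Spec(S)\).

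\emph{Main obstacle: faithful flatness.} I must show that \(Q \to Q'\) is \(p\)-completely faithfully flat, at least after passing to a further fpqc cover of \(Q\); fpqc-local surjectivity on \(\SPerfd^{\nil}\) is all that is needed. For this I would invoke André's flatness lemma \cite{bhatt2022Prismsa}*{Theorem 7.14}. It provides a \(p\)-completely faithfully flat perfectoid \(Q \to Q''\) in which every \(f_r\) acquires a compatible system of \(p\)-power roots. Consequently \(Q \to Q''\) factors through \(Q'\), so \(Q'' \widehat{\otimes}_Q Q'\) is, after perfectoidization, a \(p\)-completed colimit of copies of \(\comp{p}{Q''[\setZ_p(1)\text{-torsors}]}\). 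That is \(p\)-completely faithfully flat over \(Q''\). By fpqc descent, the base change of \(\Spf(P) \to \Spf(R)^{\pfd}\) along \(\Spec(S) \to \Spf(R)^{\pfd}\) is an fpqc cover. Verifying this last flatness claim carefully, including compatibility of perfectoidization with the infinite family of roots, is the delicate step.
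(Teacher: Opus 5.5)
The paper does not prove this statement; it imports it from \cite{bhatt2025Aspects}. Your outline is sensible: reduce to affines, take $\Spf$ of the perfectoidization of a semiperfectoid ring obtained by adjoining $p$-power roots of generators, and compute base changes through compatibility with fiber products. André's lemma already shows that $\Spf(P)\to\Spf(R)^{\pfd}$ is an epimorphism of fpqc sheaves, since a point through $\Spf(Q)$ lifts after the $p$-completely faithfully flat map $Q\to Q''$.

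However, a \emph{representable fpqc cover} also requires $S\to S\otimes_Q Q'$ to be faithfully flat, i.e.\ $Q\to Q'$ must be $p$-completely faithfully flat. Your argument for this step fails. Write $Q''\langle X^{1/p^\infty}\rangle$ for the $p$-completed perfect polynomial ring. The ring $\bigl(Q''\langle X_r^{1/p^\infty}\rangle/(X_r-f_r)\bigr)_{\perfd}$ is not built from functions on $\setZ_p(1)$-torsors once some $f_r$ is not a unit.
\begin{itemize}
\item The case $f_r=0$ does occur, since you index over all $r\in R$, including $r=0$. The corresponding factor is then $\bigl(Q''\langle X^{1/p^\infty}\rangle/(X)\bigr)_{\perfd}=Q''$.
\item For a non-unit non-zero-divisor $f_r$, the branches $X_r^{1/p^n}=\zeta f_r^{1/p^n}$, $\zeta\in\mu_{p^n}$, are glued along $V(f_r)$.
\end{itemize}
Flatness of such glued rings over a general perfectoid $Q''$ is exactly the content of the step. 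The map $Q'\to Q''$ only provides a retraction of $Q''\to Q''\widehat{\otimes}_Q Q'$, and says nothing about flatness.

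The missing input is a flatness result for perfectoidizations of relatively regular quotients. Add an extra variable $Y$ to account for the roots of unity. The elements $X_r-f_r$, together with $\Phi_p(Y)$, form a sequence in $Q\langle Y^{1/p^\infty},X_r^{1/p^\infty}\rangle$ that is $p$-completely regular relative to $Q$: modulo $p$, each is monic in $X_r^{1/p^n}$ (resp.\ $Y^{1/p^n}$) with free quotient over $Q/p$. Apply flatness of prismatic envelopes of such sequences \cite{bhatt2022Prismsa}*{Proposition 3.13} over $A_{\inf}(Q)$, then take perfection and a filtered colimit over finite subsets of $R$. This shows $Q\to Q'$ is $p$-completely flat. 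André's lemma is then needed only for faithfulness, because $Q\to Q''$ factors through $Q'$.

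Three further points need repair.
\begin{itemize}
\item Compatibility of $(-)^{\pfd}$ with fiber products does not give $\mscrU^{\pfd}\cong\mscrU\times_{\mfrakX}\mfrakX^{\pfd}$ for an open $\mscrU\subseteq\mfrakX$. You must show: if $\Spec(S)\to\Spf(P)\to\mfrakX$ sends $\Spec(S)$ into $\mscrU$, then $\Spf(P)$ already lands in $\mscrU$. This is true. The map $P\to S$ is surjective, via $W(P^\flat)\cong W(S^\flat)\twoheadrightarrow S$. Its kernel lies in the Jacobson radical, because $P^\flat\cong S^\flat$ means $P^\flat$ is complete for the Frobenius-power topology of $\ker(P^\flat\to S/p)$. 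Hence $\Spec(S)$ contains every closed point of $\Spf(P)$.
\item $R$ and $Q$ are not $A$-algebras until you base change along $\setZ_p\to\setZ_p^{\mathrm{cycl}}$. This affects the definition of $S_0$ as well, and is why the $\Phi_p$ factor above is needed.
\item A finite affine cover, and hence an affine $\mfrakU$, uses quasi-compactness of $\mfrakX$, which holds in all the paper's applications. Separatedness is what makes $\mscrU_i\times_{\mfrakX}\Spec(S)$ affine.
\end{itemize}
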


\begin{lemma} \label{QCQSPerfectization}
    Let \(\mfrakX\) be a quasi-compact separated bounded \(p\)-adic formal scheme and let \(\pi \colon \mfrakX^{\pfd} \to \mfrakX\) be the natural morphism.
    Then this morphism \(\pi\) is a qcqs morphism of stacks.
\end{lemma}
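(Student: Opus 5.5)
Quasi-compactness and quasi-separatedness of a morphism of stacks can be tested after base change along affine objects mapping to the target. So I will fix an affine open covering \(\mfrakX = \bigcup_{i} \mfrakU_i\) by finitely many affine opens \(\mfrakU_i = \Spf(R_i)\); this is possible because \(\mfrakX\) is quasi-compact. The first step is to use that perfectization commutes with fiber products (\Cref{DefPerfectization}). Since \(\mfrakU_i \to \mfrakX\) is an open immersion, this should give \(\pi^{-1}(\mfrakU_i) = \mfrakU_i \times_{\mfrakX} \mfrakX^{\pfd} \simeq \mfrakU_i^{\pfd}\). A Zariski open subfunctor of \(\mfrakX\) pulls back to the corresponding open subfunctor, and the defining diagrams \(\Spec(S) \to \Spf(P) \to \mfrakX\) landing in \(\mfrakU_i\) are exactly those factoring through \(\mfrakU_i\).

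Next I reduce to affine targets. By the standard locality argument for qcqs morphisms (in the \(\infty\)-topos of fpqc sheaves), it suffices to show two things: for any affine \(\Spf(R) \to \mfrakX\), or more generally any \(\Spec(S) \to \mfrakX\) with \(S\) \(p\)-nilpotent, the base change \(\Spf(R)\times_{\mfrakX}\mfrakX^{\pfd}\) is quasi-compact; and its diagonal is quasi-compact. Because \(\mfrakX\) is separated, any map \(\Spf(R) \to \mfrakX\) is affine (footnote to \Cref{DefCatPerfd}). Applying compatibility with fiber products again, \(\Spf(R)\times_{\mfrakX}\mfrakX^{\pfd}\) is a fiber product of \(\Spf(R)\) with \(\mfrakX^{\pfd}\). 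On each \(\mfrakU_i\) this is \((\Spf(R)\times_{\mfrakX}\mfrakU_i)^{\pfd}\), and by separatedness that is the perfectization of an affine formal scheme. Since finitely many such pieces cover it, the problem reduces to the statement that \(\Spf(A)^{\pfd}\) is qcqs for a bounded \(p\)-adic ring \(A\).

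For this affine case I invoke \Cref{PerfectoidAtlas}. Since \(\Spf(A)\) is separated, there is an affine perfectoid \(\Spf(P_0)\) with a representable fpqc surjection \(\Spf(P_0) \to \Spf(A)^{\pfd}\), so \(\Spf(A)^{\pfd}\) is quasi-compact. For quasi-separatedness I need the diagonal to be quasi-compact. It is enough that \(\Spf(P_0)\times_{\Spf(A)^{\pfd}}\Spf(P_0)\) is quasi-compact. By representability of the atlas this fiber product is representable by a formal scheme, and I expect it to be affine: perfectization of a separated formal scheme has affine diagonal, since it lies over the separated \(\Spf(A)\times\Spf(A)\). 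Concretely, I would identify it with \(\Spf\) of the perfectoidization of \(P_0\widehat{\otimes}_A P_0\), which is affine.

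The main obstacle is this last identification: showing that the self-intersection of the atlas is affine, or at least quasi-compact. I would first look for it directly in Bhatt's construction of the atlas, where \(\Spf(P_0)\to\Spf(A)^{\pfd}\) is built as \((\Spf(P_0))^{\pfd}\to \Spf(A)^{\pfd}\) for a suitable perfectoid \(P_0\) over \(A\). Then compatibility with fiber products gives \(\Spf(P_0)\times_{\Spf(A)^{\pfd}}\Spf(P_0) = \Spf(P_0\widehat{\otimes}_A P_0)^{\pfd}\). This is the perfectization of a \(p\)-adic semiperfectoid, hence of the form \(\Spf(P_0\widehat{\otimes}_A P_0)_{\perfd}\), which is affine. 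This would finish the argument.
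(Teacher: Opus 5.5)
Your strategy is the paper's. The affine atlas of \Cref{PerfectoidAtlas} gives quasi-compactness. For quasi-separatedness, you pull the diagonal back along the fpqc cover \(\mfrakU\times_{\mfrakX}\mfrakU\to\mfrakX^{\pfd}\times_{\mfrakX}\mfrakX^{\pfd}\); compatibility with fiber products and \(\mfrakU\simeq\mfrakU^{\pfd}\) identify the pullback with \((\mfrakU\times_{\mfrakX}\mfrakU)^{\pfd}\simeq\Spf(S_{\perfd})\to\Spf(S)\) for a \(p\)-adic semiperfectoid \(S\). The paper gets the required form of the atlas map directly from \(\mfrakU\simeq\mfrakU^{\pfd}\) (Bhatt's Lemma 4.3.8), so you need not inspect how the atlas is built. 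Your first paragraph, \(\mfrakU_i\times_{\mfrakX}\mfrakX^{\pfd}\simeq\mfrakU_i^{\pfd}\) for open \(\mfrakU_i\), is exactly how the paper reduces to affine \(\mfrakX\).

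Your second paragraph, however, contains a false step. For a general (non-open) map \(\Spf(R)\to\mfrakX\), the part of \(\Spf(R)\times_{\mfrakX}\mfrakX^{\pfd}\) over \(\mfrakU_i\) is \((\Spf(R)\times_{\mfrakX}\mfrakU_i)\times_{\mfrakU_i}\mfrakU_i^{\pfd}\), not \((\Spf(R)\times_{\mfrakX}\mfrakU_i)^{\pfd}\). Compatibility with fiber products says \((\mfrakY\times_{\mfrakX}\mathfrak{Z})^{\pfd}\simeq\mfrakY^{\pfd}\times_{\mfrakX^{\pfd}}\mathfrak{Z}^{\pfd}\); it does not say \(\mfrakY\times_{\mfrakX}\mfrakX^{\pfd}\simeq\mfrakY^{\pfd}\). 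That holds for open immersions but fails in general. For a perfectoid \(P\), one has \(\Spf(P)^{\pfd}\simeq\Spf(P)\), whereas an \(S\)-point of \(\Spf(P)\times_{\Spf(\setZ_p)}\Spf(\setZ_p)^{\pfd}\) also carries an untilt of \(S\) that need not be the one induced by \(\Spec(S)\to\Spf(P)\). So your reduction to ``\(\Spf(A)^{\pfd}\) is qcqs'' does not follow from the argument you give.

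The repair is to drop that paragraph. Being qcqs is Zariski-local on the target and stable under base change, so your first paragraph already reduces to \(\Spf(A)^{\pfd}\to\Spf(A)\). Quasi-compactness should then be phrased relatively: for \(\Spec(S)\to\Spf(A)\), the affine \(\Spec(S)\times_{\Spf(A)}\Spf(P_0)\) covers \(\Spec(S)\times_{\Spf(A)}\Spf(A)^{\pfd}\). The diagonal to control is the one over \(\Spf(A)\), which your last paragraph handles. The remark that everything lies over the separated \(\Spf(A)\times\Spf(A)\) proves nothing about that relative diagonal by itself. Your concrete identification with \(\Spf((P_0\widehat{\otimes}_A P_0)_{\perfd})\) is what does the work.
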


\begin{proof}
    This is an easy consequence of the existence of a perfectoid atlas for \(\mfrakX\) in \Cref{PerfectoidAtlas}, but we include the proof for completeness.

    Since \(\mfrakX\) is separated, we can take an affine perfectoid atlas \(\mfrakU \to \mfrakX^{\pfd}\) of \(\mfrakX\) by \Cref{PerfectoidAtlas}, which shows that \(\pi \colon \mfrakX^{\pfd} \to \mfrakX\) is quasi-compact.

    To show that \(\pi\) is quasi-separated, it suffices to show that the diagonal morphism \(\Delta_{\pi} \colon \mfrakX^{\pfd} \to \mfrakX^{\pfd} \times_{\mfrakX} \mfrakX^{\pfd}\) is quasi-compact under the assumption that \(\mfrakX\) is affine.
    Since the functor \(\mfrakX \mapsto \mfrakX^{\pfd}\) commutes with fiber products and \(\mfrakU \simeq \mfrakU^{\pfd}\) holds (\cite{bhatt2025Aspects}*{Lemma 4.3.8}), we have a pullback square
    \begin{equation*}
        \begin{tikzcd}
            (\mfrakU \times_{\mfrakX} \mfrakU)^{\pfd} \simeq \mfrakU \times_{\mfrakX^{\pfd}} \mfrakU \arrow[d] \arrow[r] &  \mfrakU \times_{\mfrakX} \mfrakU \arrow[d]    \\
            \mfrakX^{\pfd} \arrow[r, "\Delta_{\pi}"]                                                                     & \mfrakX^{\pfd} \times_{\mfrakX} \mfrakX^{\pfd}
        \end{tikzcd}
    \end{equation*}
    of stacks whose right vertical morphism is a representable fpqc cover.
    Since \(\mfrakX\) (resp., \(\mfrakU\)) is affine (resp., affine perfectoid), the fiber product \(\mfrakU \times_{\mfrakX} \mfrakU\) is also affine and moreover can be written as \(\Spf(S)\) for some \(p\)-adic semiperfectoid ring \(S\).
    Then the perfectoidization \(S_{\perfd}\) exists and the perfectization \((\mfrakU \times_{\mfrakX} \mfrakU)^{\pfd}\) is isomorphic to \(\Spf(S_{\perfd})\) by \cite{bhatt2025Aspects}*{Example 4.2.1}.
    This implies that the upper morphism in the above diagram is quasi-compact and thus \(\Delta_{\pi}\) is also quasi-compact, as desired.
\end{proof}

\begin{definition}[{\cite{bhatt2025Aspects}*{Notation 4.6.1}}] \label{DefSheafPerfectization}
    Let \(\mfrakX\) be a quasi-compact separated bounded \(p\)-adic formal scheme and let \(\pi \colon \mfrakX^{\pfd} \to \mfrakX\) be the natural morphism.
    The pushforward \(\pi_* \colon \mcalD_{\qcoh}(\mfrakX^{\pfd}) \to \mcalD_{\qcoh}(\mfrakX)\) gives rise to the object
    \begin{equation*}
        \mcalO_{\mfrakX, \pfd} \defeq \pi_*(\mcalO_{\mfrakX^{\pfd}}) \in \mcalD_{\qcoh}(\mfrakX) \simeq \mcalD_{\qcoh}(\mfrakX_{\Zar}, \mcalO_{\mfrakX})
    \end{equation*}
    of quasi-coherent object on \(\mfrakX\), where the equivalence follows from \Cref{CompatibilityQCDerived}.
    This \(\mcalO_{\mfrakX, \pfd}\) has a natural structure of an \(\mcalO_{\mfrakX}\)-algebra in \(\mcalD(\mfrakX_{\Zar}, \mcalO_{\mfrakX})\).
\end{definition}

This \(\mcalO_{\mfrakX, \pfd}\) is isomorphic to our absolute perfectoidization \(\mcalO_{\mfrakX, \perfd}\) for a bounded \(p\)-adic formal scheme \(\mfrakX\):

\begin{theorem} \label{ComparisonOXperfd}
    Let \(\mfrakX\) be a quasi-compact separated bounded \(p\)-adic formal scheme.
    Then there exists a natural isomorphism
    \begin{equation*}
        \mcalO_{\mfrakX, \perfd} \xrightarrow{\cong} \mcalO_{\mfrakX, \pfd}
    \end{equation*}
    in \(\CAlg(\mcalD(\mfrakX_{\Zar}, \mcalO_{\mfrakX}))\).
\end{theorem}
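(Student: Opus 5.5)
The plan is to compare both objects through their derived sections on affine opens, and then glue using the sheaf-theoretic formalism of \Cref{SheafifiedDerivedObjects} and \Cref{RGammaCommutesLimits}. The natural candidate for the comparison map comes from \Cref{GlobalPerfectoidizationSectionLimit}. There we identified
\[
\mcalO_{\mfrakX,\perfd} \cong \lim_{\Spf(P) \xrightarrow{\pi} \mfrakX} R\pi_*\mcalO_{\Spf(P)},
\]
with the limit over \(\AffPerfd_{\mfrakX}\); this category contains \(\AffPerfd^{\bigstar}_{\mfrakX}\). On the other side, each \(\Spf(P) \to \mfrakX\) in \(\AffPerfd_{\mfrakX}\) factors through \(\mfrakX^{\pfd}\), since \(\Spf(P)^{\pfd} \simeq \Spf(P)\) by \cite{bhatt2025Aspects}*{Lemma 4.3.8}. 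Call the factorization \(u_P \colon \Spf(P) \to \mfrakX^{\pfd}\). The unit maps \(\mcalO_{\mfrakX^{\pfd}} \to u_{P,*}\mcalO_{\Spf(P)}\) are compatible in \(P\), so pushing forward gives
\[
\mcalO_{\mfrakX,\pfd} = \pi_*\mcalO_{\mfrakX^{\pfd}} \to \lim_{P} \pi_*u_{P,*}\mcalO_{\Spf(P)}.
\]
I would instead use \Cref{GlobalSectionsPerfectization} directly. It says \(\mcalD_{\qcoh}(\mfrakX^{\pfd}) \simeq \lim_{\AffPerfd_{\mfrakX}} \mcalD^{\comp{p}}(P)\), under which \(\mcalO_{\mfrakX^{\pfd}}\) corresponds to the compatible family \((P)_P\). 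Since \(\pi_*\) is right adjoint to pullback, and pullback corresponds to the family of base changes, this should show that \(\pi_*\mcalO_{\mfrakX^{\pfd}}\) is the limit of the pushforwards of the \(P\) in \(\mcalD_{\qcoh}(\mfrakX)\). This yields a canonical map \(\mcalO_{\mfrakX,\perfd} \to \mcalO_{\mfrakX,\pfd}\) of \(\setE_\infty\)-algebras, built as an algebra map because every map in sight is a unit map of symmetric monoidal adjunctions.

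Next I check that this map is an isomorphism, and I reduce that to affine opens. \Cref{QCQSPerfectization} says \(\pi\) is qcqs, so \(\pi_*\) is compatible with flat base change along Zariski opens \(\mfrakU = \Spf(R) \subseteq \mfrakX\). Concretely, \(\mfrakU^{\pfd} = \mfrakU \times_{\mfrakX} \mfrakX^{\pfd}\), because perfectization commutes with fiber products and an open immersion is its own perfectization base change. Hence
\[
R\Gamma(\mfrakU, \mcalO_{\mfrakX,\pfd}) \cong R\Gamma(\Spf(R)^{\pfd}, \mcalO) \cong R_{\perfd}
\]
by the second part of \Cref{GlobalSectionsPerfectization}. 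On our side, \Cref{SectionDerivedPushout} together with \Cref{RemarkCompletePerfd} gives \(R\Gamma(\mfrakU, \mcalO_{\mfrakX,\perfd}) \cong R_{\tperfd} = R_{\perfd}\), since the setting is \(p\)-adic. Both identifications present the sections as \(\lim_{R \to P} P\), and the constructed map is compatible with the projections to each \(P\). Therefore on each affine \(\mfrakU\) it is the identity of \(R_{\perfd}\) under these identifications. Finally, \(R\Gamma\) is fully faithful and takes values in sheaves (\Cref{RGammaCommutesLimits}), and affine opens form a basis of a separated scheme. So the map is an isomorphism in \(\mcalD(\mfrakX_{\Zar},\mcalO_{\mfrakX})\), hence in \(\CAlg\), because equivalences in \(\CAlg\) are detected on underlying objects.

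The main obstacle is the careful identification of \(\pi_*\mcalO_{\mfrakX^{\pfd}}\), viewed inside \(\mcalD(\mfrakX_{\Zar},\mcalO_{\mfrakX})\) via \Cref{CompatibilityQCDerived}, with its Zariski-local sections. Specifically, I must justify that \(R\Gamma(\mfrakU, \pi_*\mcalF) \cong R\Gamma(\mfrakU^{\pfd}, \mcalF)\) naturally in \(\mfrakU\), so that the comparison is a map of \(\mcalD(\setZ)\)-valued sheaves and not merely a family of isomorphisms on objects. My first attempt would use base change for the qcqs morphism \(\pi\) along the open immersion \(\mfrakU \hookrightarrow \mfrakX\), combined with the compatibility of the equivalence in \Cref{CompatibilityQCDerived} with restriction to opens, which follows from its construction through \(\lim_n \mcalD_{\qcoh}(\mfrakX_n)\). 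Naturality in \(\mfrakU\) then follows from functoriality of perfectization.
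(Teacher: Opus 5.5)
Your proposal is correct. Its affine-local computation is the paper's: qcqs-ness of \(\pi\) (\Cref{QCQSPerfectization}), flat base change along \(\mfrakU \hookrightarrow \mfrakX\), and \Cref{GlobalSectionsPerfectization} give \(R\Gamma(\mfrakU,\mcalO_{\mfrakX,\pfd}) \cong R_{\perfd}\).

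You differ from the paper in how the global statement is assembled. The paper never builds a global comparison map. It invokes full faithfulness of \(R\Gamma\) (\Cref{RGammaCommutesLimits}) together with \Cref{SectionDerivedPushout}, and reduces to isomorphisms \(R\Gamma(\mfrakU,\mcalO_{\mfrakX,\pfd}) \cong R_{\perfd}\) in \(\CAlg_R\) that are natural in the affine open \(\mfrakU\). That naturality is supplied only implicitly, by functoriality of base change.

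You instead first build a global \(\setE_\infty\)-map. You write \(\pi_*\mcalO_{\mfrakX^{\pfd}} \simeq \lim_P (\pi u_P)_*P\) using the limit description of \(\mcalD_{\qcoh}(\mfrakX^{\pfd})\), and match this with \Cref{GlobalPerfectoidizationSectionLimit}. After that, only a pointwise check on each affine is needed. So the coherence-in-\(\mfrakU\) issue you flag as the main obstacle is not actually required on your route.

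What your route does need is the following.
\begin{enumerate}
\item The stacky pushforward along the affine morphism \(\pi u_P \colon \Spf(P) \to \mfrakX\) must correspond, functorially in \(P\), to \(R(\pi u_P)_*\mcalO_{\Spf(P)}\) under \Cref{CompatibilityQCDerived}.
\item To get the arrow \(\mcalO_{\mfrakX,\perfd} \to \mcalO_{\mfrakX,\pfd}\) from the universal property of a limit in \(\mcalD_{\qcoh}(\mfrakX)\), you need \(\mcalO_{\mfrakX,\perfd}\) to be quasi-coherent. This is \Cref{QuasiCoherentnessOfPerfdGlobal}, since \((p)\) is weakly proregular in the bounded case. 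Alternatively, keep your first map \(\mcalO_{\mfrakX,\pfd} \to \mcalO_{\mfrakX,\perfd}\), which needs neither.
\end{enumerate}

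Once (1) and quasi-coherence are in hand, the limit \(\lim_P R(\pi u_P)_*\mcalO_{\Spf(P)}\) computed in \(\mcalD(\mfrakX_{\Zar},\mcalO_{\mfrakX})\) already lies in the full subcategory \(\mcalD_{\qcoh}(\mfrakX)\). It is therefore also the limit there, so your global description proves the theorem outright and the affine check becomes redundant.

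The paper's route, by contrast, only compares derived sections over affines. It therefore never has to identify pushforwards along \(\Spf(P) \to \mfrakX\).
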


\begin{proof}
    Using \Cref{RGammaCommutesLimits} and \Cref{SectionDerivedPushout}, it suffices to show that there exist natural isomorphisms
    \begin{equation*}
        R\Gamma(\mfrakU, \mcalO_{\mfrakX, \pfd}) \xrightarrow{\cong} R_{\perfd}
    \end{equation*}
    in \(\CAlg_R\) for any affine open subset \(\mfrakU = \Spf(R)\) of \(\mfrakX\).
    Set the inclusion \(\iota \colon \Spf(R) = \mfrakU \hookrightarrow \mfrakX\) and the natural morphisms \(\pi_{\mfrakU} \colon \mfrakU^{\pfd} \to \mfrakU\) and \(\pi \colon \mfrakX^{\pfd} \to \mfrakX\).
    Since \(\pi\) is qcqs by \Cref{QCQSPerfectization}, the flat base change theorem produces a commutative diagram
    \begin{equation*}
        \begin{tikzcd}
            \mcalD_{\qcoh}(\mfrakX^{\pfd}) \arrow[r, "\pi_*"] \arrow[d, "(\iota^{*})^{\pfd}"] & \mcalD_{\qcoh}(\mfrakX) \arrow[d, "\iota^*"] \\
            \mcalD_{\qcoh}(\mfrakU^{\pfd}) \arrow[r, "(\pi_{\mfrakU})_*"]                     & \mcalD_{\qcoh}(\mfrakU)                     
        \end{tikzcd}
    \end{equation*}
    of \(\infty\)-categories, where \((\iota^*)^{\pfd}\) is the pullback along the morphism \(\iota^{\pfd} \colon \mfrakU^{\pfd} \to \mfrakX^{\pfd}\) induced by \(\iota\).
    Applying this diagram to \(\mcalO_{\mfrakX^{\pfd}}\), we have a natural isomorphism
    \begin{equation*}
        \restr{\mcalO_{\mfrakX, \pfd}}{\mfrakU} = \iota^*(\mcalO_{\mfrakX, \pfd}) \xrightarrow{\cong} \mcalO_{\mfrakU, \pfd}
    \end{equation*}
    in \(\CAlg(\mcalD_{\qcoh}(\mfrakU))\).
    Taking the global section, we have a natural isomorphism
    \begin{equation*}
        R\Gamma(\mfrakU, \mcalO_{\mfrakX, \pfd}) \xrightarrow{\cong} R\Gamma(\mfrakU, \mcalO_{\mfrakU, \pfd}) \xrightarrow{\cong} R\Gamma(\mfrakU^{\pfd}, \mcalO) \cong R_{\perfd}
    \end{equation*}
    in \(\CAlg_R\), where the third isomorphism follows from \Cref{GlobalSectionsPerfectization}.
    This is the desired isomorphism.
\end{proof}

\begin{remark} \label{CoherenceFormalSchemes}
    Using the \(p\)-adic Riemann--Hilbert functor (\cite{bhatt2025Aspects}*{Theorem 7.3.6}), if \(\mfrakX\) is a topologically finitely presented \(p\)-adic formal scheme over a perfectoid valuation ring \(\mcalO_K\), then \(\mcalO_{\mfrakX, \perfd} \in \mcalD_{\qcoh}(\mfrakX)\) is almost coherent with respect to the basic setup \((\mcalO_K, \sqrt{p\mcalO_K})\).
    Namely, each cohomology sheaf \(\mscrH^i(\mcalO_{\mfrakX, \perfd})\) is an almost coherent \(\mcalO_{\mfrakX}\)-module in the sense that there exists an affine open covering \(\{\mfrakU_i = \Spf(R_i)\}_{i \in I}\) of \(\mfrakX\) such that \(\mscrH^i(\restr{\mcalO_{\mfrakX, \perfd}}{\mfrakU_i})\) is an almost finitely generated \(R_i\)-module. See \cite{bhatt2025Aspects}*{\S 7.1} and \cite{zavyalov2025Almost}*{\S 4.5 and \S 4.8} for comprehensive study of almost coherent sheaves on formal schemes.
    
    In \cite{bhatt2025Aspects}*{Construction 7.4.1 and Theorem 7.4.2}, this almost coherence implies that the localization \(\mcalO_{\mfrakX, \perfd}[1/p]\) is an object of \(\mcalD^b_{\coh}(\mfrakX_{\eta})\) under the identification \(\mcalD^b_{\acoh}(\mfrakX)[1/p] \simeq \mcalD^b_{\coh}(\mfrakX_{\eta})\), where \(\mfrakX_{\eta}\) is the rigid generic fiber of \(\mfrakX\).
    See also \Cref{CoherenceAlgebraicLocalization} for this coherence result in the algebraic setting not for analytic setting.
\end{remark}

Finally, we give a relationship between the absolute graded perfectoidization and the perfectization of stacks.
Before stating the result, we observe the perfectization of quotient stacks:

\begin{lemma} \label{QuotientStackPerfectization}
    Let \(\mfrakX\) be a quasi-compact separated bounded \(p\)-adic formal scheme with a \(\mfrakD\)-equivariant structure for some group scheme \(\mfrakD\) over \(\Spf(\setZ_p)\).
    Then \(\mfrakX^{\pfd}\) admits a natural \(\mfrakD^{\pfd}\)-equivariant structure and the natural morphism
    \begin{equation*}
        \mfrakX^{\pfd}/\mfrakD^{\pfd} \to (\mfrakX/\mfrakD)^{\pfd}
    \end{equation*}
    is an isomorphism of stacks on \(\SPerfd^{\nil}\).
\end{lemma}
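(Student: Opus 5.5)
The plan is to first build the $\mfrakD^{\pfd}$-action, then compare the two stacks after base change along the natural atlases, where perfectization commutes with fiber products.

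\emph{Equivariant structure.} The action is given by an action map $a \colon \mfrakD \times \mfrakX \to \mfrakX$, the projection, and the usual higher coherence data, all encoded by the simplicial bar construction $[n] \mapsto \mfrakD^{n} \times \mfrakX$. Since $(-)^{\pfd}$ is functorial and commutes with fiber products (\Cref{DefPerfectization}), applying it termwise gives a simplicial object $[n] \mapsto (\mfrakD^{\pfd})^{n} \times \mfrakX^{\pfd}$ satisfying the Segal conditions. This is an action of the group stack $\mfrakD^{\pfd}$ on $\mfrakX^{\pfd}$. We also need $\mfrakD^{\pfd}$ to be a group object, which holds because $\Spf(\setZ_p)^{\pfd}$ is terminal among stacks over perfectoid data. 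The quotient $\mfrakX^{\pfd}/\mfrakD^{\pfd}$ is the geometric realization of this simplicial object in fpqc sheaves on $\SPerfd^{\nil}$. The maps $\mfrakX^{\pfd} \to \mfrakX \to \mfrakX/\mfrakD$ then induce the comparison morphism $\mfrakX^{\pfd}/\mfrakD^{\pfd} \to (\mfrakX/\mfrakD)^{\pfd}$.

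\emph{Reduction along the atlas.} Write $q \colon \mfrakX \to \mfrakX/\mfrakD$ for the quotient map; it is an fpqc $\mfrakD$-torsor. The key step is to show that $q^{\pfd} \colon \mfrakX^{\pfd} \to (\mfrakX/\mfrakD)^{\pfd}$ is an effective epimorphism of fpqc sheaves. Granting this, the \v{C}ech nerve of $q^{\pfd}$ has terms $\mfrakX^{\pfd} \times_{(\mfrakX/\mfrakD)^{\pfd}} \cdots \times_{(\mfrakX/\mfrakD)^{\pfd}} \mfrakX^{\pfd}$. By the compatibility of $(-)^{\pfd}$ with fiber products, which must here be used for the stack $\mfrakX/\mfrakD$ as in \cite{bhatt2025Aspects}, these terms equal $(\mfrakX \times_{\mfrakX/\mfrakD} \cdots \times_{\mfrakX/\mfrakD} \mfrakX)^{\pfd} \simeq (\mfrakD^{n} \times \mfrakX)^{\pfd} \simeq (\mfrakD^{\pfd})^{n} \times \mfrakX^{\pfd}$. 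So the \v{C}ech nerve of $q^{\pfd}$ is identified with the bar construction from the first step, compatibly with face maps. Since a sheaf of anima is the colimit of the \v{C}ech nerve of any effective epimorphism onto it, we get $(\mfrakX/\mfrakD)^{\pfd} \simeq |(\mfrakD^{\pfd})^{\bullet} \times \mfrakX^{\pfd}| = \mfrakX^{\pfd}/\mfrakD^{\pfd}$, and this equivalence is the natural morphism.

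\emph{Main obstacle: effective epimorphism.} I expect surjectivity of $q^{\pfd}$ to be the hard step. Take $S \in \SPerfd^{\nil}$ and a point of $(\mfrakX/\mfrakD)^{\pfd}(S)$, namely $\Spec(S) \to \Spf(P) \to \mfrakX/\mfrakD$ with $P$ perfectoid and $P^{\flat} \cong S^{\flat}$.
\begin{enumerate}
\item Pull back the $\mfrakD$-torsor $q$ to $\Spf(P)$. This gives a torsor $T \to \Spf(P)$, which is fpqc locally trivial, so it becomes trivial after some $p$-completely faithfully flat $P \to P'$.
\item Use \Cref{PerfectoidAtlas} and the fact that perfectoid rings form a basis for the relevant topology to choose $P'$ perfectoid; for instance, apply the André flatness lemma to a flat cover trivializing $T$.
\item Set $S' = S \otimes_P P'$. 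Then $S \to S'$ is an fpqc cover of semiperfectoid rings, $P'^{\flat} \cong S'^{\flat}$, and the trivialization gives a lift $\Spf(P') \to \mfrakX$, hence a point of $\mfrakX^{\pfd}(S')$.
\end{enumerate}
The delicate points are the $p$-complete flatness bookkeeping in step 3 and the fact that $\mfrakD$ need not be flat over $\Spf(\setZ_p)$. If $\mfrakD$ is not flat, I would instead work with a torsor that is merely fpqc-locally trivial, which is exactly what the effective-epimorphism argument requires.

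Finally, the qcqs hypotheses on $\mfrakX$ enter only to make the fiber-product compatibility and \Cref{QCQSPerfectization}-type finiteness statements available, if they are needed at all.
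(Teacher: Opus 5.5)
Your overall strategy is sound and differs from the paper's, but the one nontrivial step, local surjectivity of $q^{\pfd}$, is argued incorrectly.

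\textbf{How your route compares with the paper's.} The paper argues objectwise. For $S\in\SPerfd^{\nil}$ it writes $(\mfrakX/\mfrakD)^{\pfd}(S)\simeq\bigsqcup_{(P,i)}(\mfrakX/\mfrakD)(\Spf P)$. It then replaces $(\mfrakX/\mfrakD)(\Spf P)$ by the geometric realization of $(\mfrakX\times\mfrakD^{\times\bullet})(\Spf P)$, commutes the disjoint union over untilts past the realization, and recognizes the value at $S$ of $\mfrakX^{\pfd}\times_{\Spf(\setZ_p)^{\pfd}}(\mfrakD^{\pfd})^{\times\bullet}$.

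Read literally, this computes presheaf quotients. The actual quotient stacks are sheafifications: for $\mfrakD=\Spf(\setZ_p[T^{\pm1}])$ acting trivially on $\Spf\setZ_p$, the value $(\mfrakX/\mfrakD)(\Spf P)$ contains all line bundles on $\Spf P$, not only the trivial one. Comparing sheafifications requires exactly that $q^{\pfd}$ is a local epimorphism, which the paper does not spell out. Your \v{C}ech-nerve argument makes this input explicit. Your identification of the \v{C}ech nerve of $q^{\pfd}$ with the perfectized bar construction is fine, since the objectwise formula for $(-)^{\pfd}$ commutes with fiber products.

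Two small corrections in that part:
\begin{itemize}
\item $\Spf(\setZ_p)^{\pfd}$ is not terminal; its $S$-points are the untilts of $S$, of which there are in general many. All products in the bar construction must be taken over $\Spf(\setZ_p)^{\pfd}$, where $\mfrakD^{\pfd}$ is a group object.
\item The comparison map is induced by the $\mfrakD^{\pfd}$-invariant map $q^{\pfd}$. It is not induced by $\mfrakX^{\pfd}\to\mfrakX\to\mfrakX/\mfrakD$, which lands only in $\mfrakX/\mfrakD$.
\end{itemize}

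\textbf{First problem: the tilt claim in step 3 is false in general.} You assert $S'^\flat\cong P'^\flat$ for $S'=S\otimes_PP'$. A counterexample:
\begin{itemize}
\item Take $P=\prod_{i\ge0}\mcalO_C$ and $S=\prod_i\mcalO_C/p^{1/p^i}$; then $P^\flat\cong S^\flat$ componentwise.
\item Take $P'=P\langle T^{1/p^\infty}\rangle$, so $S'=S[T^{1/p^\infty}]$.
\item Let $e_k$ be the $k$-th idempotent and $c_k=p^{-\lfloor k/2\rfloor}$. Set $s_n=\sum_k e_k\,p^{c_k/p^n}T^{k/p^n}$. This is a finite sum in $S'$, because the $k$-th term vanishes once $\lceil k/2\rceil\ge n$.
\item Since $S'$ has characteristic $p$, $s_{n+1}^p=s_n$, so $(s_n)$ defines an element of $S'^\flat$.
\item This element is not in the image of $P'^\flat=P^\flat\langle T^{1/p^\infty}\rangle$: its $T^k$-coefficients are $e_k(p^\flat)^{c_k}$, which do not tend to $0$.
\end{itemize}
Hence $\Spec S'\to\Spf P'\to\mfrakX$ is not a point of $\mfrakX^{\pfd}(S')$.

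This is easy to repair. The restriction of your point to $S'$ lives on the induced untilt $P''=W(S'^\flat)\otimes_{W(S^\flat)}P$. The map $P'\to S'$ lifts to a $P$-map $P'\to P''$: apply $W((-)^\flat)$ and reduce modulo the distinguished element. So the trivialization of $T$ pulls back to $\Spf P''$.

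\textbf{Second problem, more serious: step 2 misuses Andr\'e's flatness lemma.} That lemma produces, for a perfectoid $P$, a $p$-completely faithfully flat perfectoid extension in which elements acquire compatible $p$-power roots. It does not turn an arbitrary flat cover trivializing $T$ into a perfectoid one; for instance, a non-reduced flat $P$-algebra admits no $p$-completely faithfully flat map to a perfectoid ring.

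What you actually need is a $p$-completely faithfully flat perfectoid $P\to P'$ over which $T$ acquires a section. This must come from the structure of $\mfrakD$:
\begin{itemize}
\item For smooth $\mfrakD$, trivialize \'etale locally; completed \'etale algebras over perfectoid rings are perfectoid.
\item For $\mfrakD=\Spf(\setZ_p[G])$ with $G=G[1/p]$, the case the paper uses later, $T$ is relatively perfect over $P$. It is therefore perfectoid and trivializes itself.
\end{itemize}
For an arbitrary, possibly non-flat $\mfrakD$, your closing remark is not an argument. So the key step remains unproved at the generality of the statement.
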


\begin{proof}
    Pick any \(S \in \SPerfd^{\nil}\) and set \(\mfrakY \defeq \mfrakX/\mfrakD\).
    We will write the untilts \(i \colon \Spec(S) \xrightarrow{\text{untilt}} \Spf(P)\) as \((P, i)\) for simplicity.
    By the definition of the perfectization \(\mfrakY^{\pfd}\), we have natural isomorphisms
    \begin{align*}
        \mfrakY^{\pfd}(S) & = \{(\Spec(S) \xrightarrow{\text{untilt}} \Spf(P) \to \mfrakY)\} \simeq \bigsqcup_{(P, i)} \mfrakY(\Spf(P)) \\
        & \simeq \bigsqcup_{(P, i)} \colim_{\Delta^{\opposite}} (\mfrakX \times \mfrakD^{\times \bullet})(\Spf(P)) \simeq \colim_{\Delta^{\opposite}} \bigsqcup_{(P, i)} (\mfrakX \times \mfrakD^{\times \bullet})(\Spf(P)) \\
        & \simeq \colim_{\Delta^{\opposite}} (\mfrakX^{\pfd} \times_{\Spf(\setZ_p)^{\pfd}} (\mfrakD^{\pfd})^{\times \bullet})(S) \simeq (\mfrakX^{\pfd}/\mfrakD^{\pfd})(S)
    \end{align*}
    of anima, naturally.
    So the natural morphism \(\mfrakX^{\pfd}/\mfrakD^{\pfd} \to (\mfrakX/\mfrakD)^{\pfd}\) is an isomorphism, as desired.
\end{proof}

Next, we check the following categorical equivalence:

\begin{lemma} \label{GeometrizationGradedModule}
    Let \(R\) be a \(G\)-graded bounded \(p\)-adic ring for some torsion-free abelian group \(G\).
    Write \(\mfrakX \defeq \Spf(R)\) with a \(\mfrakD \defeq \Spf(\setZ_p[G])\)-equivariant structure induced from the \(G\)-graded structure on \(R\).
    Take the quotient stack \(\mfrakY \defeq \mfrakX/\mfrakD\).
    Then taking the derived global section gives a natural equivalence
    \begin{equation*}
        R\Gamma(\mfrakY, -) \colon \mcalD_{\qcoh}(\mfrakY) \xrightarrow{\simeq} \mcalD_{\graded{G}}^{\comp{p}}(R)
    \end{equation*}
    of \(\infty\)-categories, where the right-hand side is the \(\infty\)-category of derived gradedwise \(p\)-complete \(G\)-graded \(R\)-modules defined in \cite{ishizuka2026Derived}.
    Moreover, we have a commutative diagram
    \begin{equation*}
        \begin{tikzcd}
        \mcalD_{\qcoh}(\mfrakY) \arrow[d, "f^*"'] \arrow[r, "{R\Gamma(\mfrakY, -)}"] & \mcalD_{\graded{G}}^{\comp{p}}(R) \arrow[d, "\dcomp{p}{-}"] \\
        \mcalD_{\qcoh}(\mfrakX) \arrow[r, "{R\Gamma(\mfrakX, -)}"]                  & \mcalD^{\comp{p}}(R),                                       
        \end{tikzcd}
    \end{equation*}
    where the left vertical morphism is the pullback along the natural morphism \(f \colon \mfrakX \to \mfrakY\) and the right vertical morphism is the derived \(p\)-completion functor.
\end{lemma}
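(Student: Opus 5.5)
We plan to compute \(\mcalD_{\qcoh}(\mfrakY)\) by descent along the atlas \(f \colon \mfrakX \to \mfrakY = \mfrakX/\mfrakD\), and to identify the resulting cosimplicial limit with comodules over the Hopf algebra \(\setZ_p\abracket{G}\). These comodules will in turn be identified with \(\mcalD_{\graded{G}}^{\comp{p}}(R)\) via \Cref{AdjointDerivedGradedMod}.

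\textbf{Descent presentation.} The morphism \(f\) is an fpqc cover, since \(\mfrakD\) is flat over \(\Spf(\setZ_p)\). Its \v{C}ech nerve is \(\mfrakX \times \mfrakD^{\times \bullet}\), and each term is affine: \(\Spf(R\abracket{G}^{\widehat{\otimes} n})\), where \(R\abracket{G}^{\widehat{\otimes} n}\) is \(R\) with \(n\) completed group-ring factors. Since \(R\) is bounded, each of these rings is a bounded \(p\)-adic ring. By fpqc descent for \(\mcalD_{\qcoh}\) (which is a sheaf by definition on \(\SPerfd^{\nil}\)) and \Cref{CompatibilityQCDerived} in the affine case, we obtain
\begin{equation*}
    \mcalD_{\qcoh}(\mfrakY) \simeq \lim_{[n] \in \Delta} \mcalD^{\comp{p}}(R\abracket{G}^{\widehat{\otimes} n}).
\end{equation*}
Under this equivalence \(f^*\) is the projection to the \([0]\)-term, and \(R\Gamma(\mfrakY,-)\) is the totalization of the cosimplicial object.

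\textbf{Comonadicity.} The projection \(f^*\colon \mcalD_{\qcoh}(\mfrakY) \to \mcalD^{\comp{p}}(R)\) is conservative and preserves the relevant totalizations, because pullbacks are flat. It has right adjoint \(f_*\), and \(f^* f_* M \cong M\abracket{G}\) for \(M\in\mcalD^{\comp{p}}(R)\) by flat base change along the cartesian square \(\mfrakX\times\mfrakD \rightrightarrows \mfrakX\). Hence, by Barr--Beck--Lurie, \(\mcalD_{\qcoh}(\mfrakY)\) is the category of coalgebras over the comonad \(M \mapsto M\abracket{G}\) on \(\mcalD^{\comp{p}}(R)\). This comonad is built from the coaction \(\rho_R\) and the comultiplication \(t^g \mapsto t^g\otimes t^g\).

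\textbf{Graded side.} By \Cref{AdjointDerivedGradedMod}, the functor \(\mcalF^p=\dcomp{p}{-}\) has right adjoint \(\mcalG^p\), and the composite \(\mcalF^p\mcalG^p\) is \(M\mapsto \dcomp{p}{M[G]}=M\abracket{G}\). The key step is to check that \(\mcalF^p\) is comonadic. It is conservative by \Cref{GradedNakayamaLemma}. For the limit condition, we plan to invoke \Cref{GradedLimitCommutative}: for an \(\mcalF^p\)-split cosimplicial object, the totalization commutes with \((-)\abracket{G}\) because split totalizations are preserved by every functor. Barr--Beck--Lurie then identifies \(\mcalD_{\graded{G}}^{\comp{p}}(R)\) with coalgebras over the same comonad on \(\mcalD^{\comp{p}}(R)\). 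It remains to verify that the two comonad structures agree. Both are induced from the coaction \(\rho_R\) together with the counit \(t^g\mapsto 1\); this is where the \(\mfrakD\)-action is matched with the grading.

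\textbf{Conclusion and main obstacle.} Composing the two identifications gives an equivalence \(\mcalD_{\qcoh}(\mfrakY)\simeq\mcalD_{\graded{G}}^{\comp{p}}(R)\) that intertwines \(f^*\) with \(\dcomp{p}{-}\), which is the asserted commutative square. Because \(\mfrakY\) is "affine over \(B\mfrakD\)", this equivalence sends \(\mcalO\) to \(R\). It is identified with \(R\Gamma(\mfrakY,-)\) by comparing right adjoints to the functors out of the unit: both are \(\Hom(\mcalO,-)\) and \(\Hom(R,-)\), respectively, and gradedwise \(\Hom(R,-)\) is the identity. The main obstacle is the comparison of comonads, including their higher coherences. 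We would avoid handling these coherences directly by identifying both adjunctions \(f^*\dashv f_*\) and \(\mcalF^p\dashv\mcalG^p\) with one and the same adjunction. To do this, we would check that \(f_*M\) corresponds under \(R\Gamma(\mfrakY,-)\) to \(\mcalG^p(M)=M[G]\) with its grading; it then suffices to compare the underlying functors together with the unit.
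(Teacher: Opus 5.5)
Your proposal follows the paper's proof: Barr--Beck--Lurie for the fpqc cover \(f\), flat base change to identify \(f^*f_*\simeq p_{2,*}p_1^*\) with \(\mcalT^p=\rho_{R,*}\dcomp{p}{-\otimes^L_R R[G]}\) on \(\mcalD^{\comp{p}}(R)\), then the identification of \(\mcalD_{\graded{G}}^{\comp{p}}(R)\) with \(\mcalT^p\)-comodules, with the two forgetful functors giving the square. The differences are small: you rederive the comonadicity of \(\mcalF^p\) from \Cref{GradedNakayamaLemma} and \Cref{GradedLimitCommutative}, which is valid since a split totalization is preserved by \((-)\abracket{G}\), whereas the paper cites \cite{ishizuka2026Derived}*{Theorem 6.23}; and you flag the coherence of the comonad comparison, which the paper, like your main line, handles by matching the underlying functors. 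Note that your proposed workaround would first need a comparison functor \(\mcalD_{\qcoh}(\mfrakY)\to\mcalD_{\graded{G}}^{\comp{p}}(R)\) over \(\mcalD^{\comp{p}}(R)\), because \(R\Gamma(\mfrakY,-)\) is only defined here as the composite equivalence.
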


\begin{proof}
    Let \(f \colon \mfrakX \to \mfrakY\) be the canonical morphism.
    Consider the adjunction
    \begin{equation*}
        f^* \colon \mcalD_{\qcoh}(\mfrakY) \rightleftarrows \mcalD_{\qcoh}(\mfrakX) \colon f_* .
    \end{equation*}
    Since \(f\) is an fpqc cover, \(f^*\) is conservative. Moreover, since we have an fpqc descent \(\mcalD_{\qcoh}(\mfrakY) \xrightarrow{\simeq} \lim_{\Delta} \mcalD_{\qcoh}(\mfrakX^{\bullet})\), any \(f^*\)-split cosimplicial object in \(\mcalD_{\qcoh}(\mfrakY)\) admits a limit and \(f^*\) preserves this limit.
    Thus the Barr--Beck--Lurie theorem (\cite{lurie2017Higher}*{Proposition 4.7.3.3 and Theorem 4.7.3.5}) identifies \(\mcalD_{\qcoh}(\mfrakY)\) with the \(\infty\)-category of comodules \(\coMod_{K}(\mcalD_{\qcoh}(\mfrakX))\) over the comonad
    \begin{equation*}
        K\defeq f^*f_* \colon \mcalD_{\qcoh}(\mfrakX)\longrightarrow \mcalD_{\qcoh}(\mfrakX).
    \end{equation*}
    on \(\mcalD_{\qcoh}(\mfrakX)\). Under the equivalence \(R\Gamma(\mfrakX,-) \colon \mcalD_{\qcoh}(\mfrakX) \xrightarrow{\simeq} \mcalD^{\comp{p}}(R)\) in \Cref{DerivedQCohEquivAffine}, we compute this comonad as follows. Let
    \begin{equation*}
        p_1, p_2 \colon \mfrakX \times_{\mfrakY} \mfrakX \longrightarrow \mfrakX
    \end{equation*}
    be the two projections.
    Under the isomorphism \(\mfrakX \times_{\mfrakY} \mfrakX \cong \mfrakX \times \mfrakD \cong \Spf(R[G])\), we may assume that \(p_1\) is the projection to the first factor and \(p_2\) is the morphism induced by the \(\mfrakD\)-action on \(\mfrakX\), which corresponds to the coaction \(\rho_R \colon R \longrightarrow R[G]\) corresponding to the \(G\)-grading on \(R\).
    By the flat base change theorem, \(K\) is equivalent to \(p_{2,*}p_1^*\).

    Hence, for the commutative diagram
    \begin{equation*}
        \begin{tikzcd}
        \mcalD_{\qcoh}(\mfrakX) \arrow[rr, "p_1^*"] \arrow[d, "\simeq"] \arrow[rrrr, "K"', bend left] \arrow[d, "{R\Gamma(\mfrakX, -)}"'] &  & \mcalD_{\qcoh}(\mfrakX \times \mfrakD) \arrow[rr, "{p_{2, *}}"] \arrow[d, "\simeq"'] \arrow[d, "{R\Gamma(\mfrakX \times \mfrakD, -)}"] &  & \mcalD_{\qcoh}(\mfrakX) \arrow[d, "\simeq"'] \arrow[d, "{R\Gamma(\mfrakX, -)}"] \\
        \mcalD^{\comp{p}}(R) \arrow[rr, "{\dcomp{p}{- \otimes^L_R R[G]}}"] \arrow[rrrr, "\mcalT^p", bend right]                           &  & {\mcalD^{\comp{p}}(R[G])} \arrow[rr, "{\rho_{R, *}}"]                                                                                  &  & {\mcalD^{\comp{p}}(R),}                                                        
        \end{tikzcd}
    \end{equation*}
    the comonad \(K\) on \(\mcalD_{\qcoh}(\mfrakX)\) corresponds to the comonad \(\mcalT^p \colon \rho_{R, *}(- \otimes^L_R R[G])\) on \(\mcalD^{\comp{p}}(R)\) given in the proof of the categorical equivalence
    \begin{equation*}
        \mcalD_{\graded{G}}^{\comp{p}}(R) \xrightarrow{\simeq} \coMod_{G}^{\comp{p}}(R) \defeq \coMod_{\mcalT^p}(\mcalD^{\comp{p}}(R))
    \end{equation*}
    in \cite{ishizuka2026Derived}*{Theorem 6.23, Corollary 6.6, and Definition 6.11}.
    It follows the categorical equivalence
    \begin{equation*}
        R\Gamma(\mfrakY, -) \colon \mcalD_{\qcoh}(\mfrakY) \xrightarrow{\simeq} \coMod_{K}(\mcalD_{\qcoh}(\mfrakX)) \xrightarrow{\simeq} \coMod_{\mcalT^p}(\mcalD^{\comp{p}}(R)) \xleftarrow{\simeq} \mcalD_{\graded{G}}^{\comp{p}}(R).
    \end{equation*}
    
    It remains to check the compatibility with pullback to \(\mfrakX\). Under the comonadic identification above, the pullback functor \(f^*\colon \mcalD_{\qcoh}(\mfrakY) \to \mcalD_{\qcoh}(\mfrakX)\) corresponds to the forgetful functor \(\coMod_K(\mcalD_{\qcoh}(\mfrakX)) \to \mcalD_{\qcoh}(\mfrakX)\).
    On the other hand, under the equivalences above, this forgetful functor is identified with the composition
    \begin{equation*}
        \mcalD_{\graded{G}}^{\comp{p}}(R) \xrightarrow{\simeq} \coMod_{G}^{\comp{p}}(R) \to \mcalD^{\comp{p}}(R),
    \end{equation*}
    which is precisely the derived \(p\)-completion functor \(\dcomp{p}{-}\) by \cite{ishizuka2026Derived}*{Construction 6.2}.
    Therefore, the desired diagram commutes.
\end{proof}

\begin{proposition} \label{StackyGradedPerfd}
    Let \(R\) be a \(G\)-graded bounded \(p\)-adic ring for some torsion-free abelian group \(G\) with \(G = G[1/p]\).
    Take a natural representable fpqc cover
    \begin{equation*}
        f \colon \mfrakX \defeq \Spf(R) \to \mfrakY \defeq \mfrakX/\mfrakD
    \end{equation*}
    of fpqc stacks on \(\SPerfd^{\nil}\) induced from the \(G\)-graded structure on \(R\), where \(\mfrakD \defeq \Spf(\setZ_p[G])\) is the \(p\)-adic formal completion of the associated affine flat group scheme over \(\Spf(\setZ_p)\).
    Then we have a natural isomorphism
    \begin{equation*}
        R\Gamma(\mfrakY^{\pfd}, \mcalO_{\mfrakY^{\pfd}}) \xrightarrow{\cong} R_{\grpfd}
    \end{equation*}
    in \(\CAlg(\mcalD_{\graded{G}}^{\comp{p}}(R))\).
\end{proposition}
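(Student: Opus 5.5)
We construct a comparison morphism in \(\CAlg(\mcalD_{\graded{G}}^{\comp{p}}(R))\) and then check that it becomes an isomorphism after forgetting the grading, using the Nakayama-type conservativity of \Cref{GradedNakayamaLemma}.

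\emph{Constructing the map.} By \Cref{GeometrizationGradedModule}, \(R\Gamma(\mfrakY,-)\) identifies \(\mcalD_{\qcoh}(\mfrakY)\) with \(\mcalD_{\graded{G}}^{\comp{p}}(R)\). Set \(\mcalA \defeq \pi_{\mfrakY,*}\mcalO_{\mfrakY^{\pfd}} \in \CAlg(\mcalD_{\qcoh}(\mfrakY))\), so that \(R\Gamma(\mfrakY^{\pfd},\mcalO) = R\Gamma(\mfrakY,\mcalA)\) is a graded \(\setE_\infty\)-\(R\)-algebra. To produce a map \(R\Gamma(\mfrakY^{\pfd},\mcalO)\to R_{\grpfd}\), we use the universal property of \(R_{\grpfd}=\grlim_P P\) over \(G\)-graded \(p\)-adic perfectoid \(R\)-algebras \(P\) (\Cref{RemarkGradedPerfd}). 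Each such \(P\) gives a \(\mfrakD\)-equivariant perfectoid formal scheme \(\Spf(P)\to\mfrakX\). The quotient \(\Spf(P)/\mfrakD\) maps to \(\mfrakY\). By \Cref{QuotientStackPerfectization} together with \(\Spf(P)^{\pfd}\simeq\Spf(P)\) and the perfectoidness of \(\mfrakD^{\pfd}\) (which holds because \(G=G[1/p]\), cf.\ \Cref{RelativelyPerfectGroupRing}), the quotient \(\Spf(P)/\mfrakD^{\pfd}\) identifies with \((\Spf(P)/\mfrakD)^{\pfd}\). This yields maps \(\mfrakY^{\pfd}\leftarrow (\Spf(P)/\mfrakD)^{\pfd}\). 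Taking global sections and applying \Cref{GeometrizationGradedModule} to the graded ring \(P\) gives compatible graded maps \(R\Gamma(\mfrakY^{\pfd},\mcalO)\to R\Gamma(\Spf(P)^{\pfd}/\mfrakD^{\pfd},\mcalO)\simeq P\), hence a map into the limit. Concretely, the last identification can be computed as a \(\mfrakD\)-descent of \(R\Gamma(\Spf(P)^{\pfd},\mcalO)=P_{\perfd}=\dcomp{p}{P}\) with the coaction given by the grading, which is exactly \(P\) regarded as a graded module.

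\emph{Checking the isomorphism.} By \Cref{GradedNakayamaLemma}, it suffices to show that the map becomes an isomorphism after applying \(\dcomp{p}{-}\). On the target, \(\dcomp{p}{R_{\grpfd}}\cong R_{\perfd}\) by \Cref{CompletionOfAbsoluteGradedPerfd}. On the source, the commutative square of \Cref{GeometrizationGradedModule} shows that \(\dcomp{p}{R\Gamma(\mfrakY,\mcalA)} = R\Gamma(\mfrakX,f^*\mcalA)\). Since \(\pi_{\mfrakY}\) is qcqs (the argument of \Cref{QCQSPerfectization} adapts after pulling back along \(f\)), flat base change along the fpqc cover \(f\) applies. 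Because perfectization commutes with fiber products, \(\mfrakX\times_{\mfrakY}\mfrakY^{\pfd}\simeq\mfrakX\times_{\mfrakY^{\pfd}}\dots\). More precisely, using \Cref{QuotientStackPerfectization}, \(\mfrakX^{\pfd}\to\mfrakY^{\pfd}\) is a \(\mfrakD^{\pfd}\)-torsor, and we need \(f^*\mcalA\simeq \pi_{\mfrakX,*}\mcalO_{\mfrakX^{\pfd}}\) after also accounting for the difference between \(\mfrakD\) and \(\mfrakD^{\pfd}\). Granting this, \(R\Gamma(\mfrakX,f^*\mcalA)\cong R\Gamma(\mfrakX^{\pfd},\mcalO)\cong R_{\perfd}\) by \Cref{GlobalSectionsPerfectization}. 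It then remains to check that the two identifications with \(R_{\perfd}\) are compatible, which follows by tracing the maps to each \(\dcomp{p}{P}\).

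\emph{Main obstacle.} The hard step is the base change identification \(f^*\pi_{\mfrakY,*}\mcalO\simeq\pi_{\mfrakX,*}\mcalO\). Here \(\mfrakX\times_{\mfrakY}\mfrakY^{\pfd}\) is \(\mfrakX^{\pfd}\times^{\mfrakD^{\pfd}}\mfrakD\) rather than \(\mfrakX^{\pfd}\). I would handle it by showing that \(\mfrakD^{\pfd}\to\mfrakD\) induces an equivalence on \(\mcalD_{\qcoh}\)-pushforwards of \(\mcalO\). Via \Cref{RelativelyPerfectGroupRing}, \((\setZ_p\abracket{G})_{\perfd}\simeq \setZ_{p,\perfd}\widehat{\otimes}^L\setZ_p\abracket{G}\), so \(\mfrakD^{\pfd}\simeq\mfrakD\times\Spf(\setZ_p)^{\pfd}\). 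This reduces the claim to the case of \(\mfrakX\) itself together with \Cref{RelativelyPerfectGroupRing} applied to \(R\to R\abracket{G}\).
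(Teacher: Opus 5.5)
Your architecture is the paper's. You make the square $\mfrakX^{\pfd}\to\mfrakY^{\pfd}$ over $f\colon\mfrakX\to\mfrakY$ Cartesian, then apply flat base change along $f$. Via \Cref{GeometrizationGradedModule} and \Cref{GlobalSectionsPerfectization} this gives $\dcomp{p}{R\Gamma(\mfrakY^{\pfd},\mcalO)}\simeq R\Gamma(\mfrakX^{\pfd},\mcalO)\simeq R_{\perfd}$. You conclude with \Cref{CompletionOfAbsoluteGradedPerfd} and the conservativity of \Cref{GradedNakayamaLemma}. Building the graded comparison map explicitly from the universal property of $\grlim_P P$ is a useful addition: the conservativity argument needs a graded map to start with, and the paper does not spell one out.

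However, the one genuinely nontrivial step is not proved. That step is that $g\colon\mfrakX^{\pfd}\to\mfrakX\times_{\mfrakY}\mfrakY^{\pfd}$ is an isomorphism, i.e. your claim $\mfrakD^{\pfd}\simeq\mfrakD\times_{\Spf(\setZ_p)}\Spf(\setZ_p)^{\pfd}$.

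Deducing it from \Cref{RelativelyPerfectGroupRing} over the base $\setZ_p$ is a non sequitur. That lemma only identifies the derived rings of global functions $(\setZ_p\abracket{G})_{\perfd}$ and $(\setZ_p)_{\perfd}\widehat{\otimes}^L\setZ_p\abracket{G}$. An equivalence of global sections of structure sheaves does not give an isomorphism of stacks. Nor does it give the relative statement over $\mfrakY^{\pfd}$ that flat base change requires.

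This gap sits at the center of your argument. Your construction of the map needs $R\Gamma(\Spf(P)/\mfrakD^{\pfd},\mcalO)\simeq P$, whereas \Cref{GeometrizationGradedModule} concerns the quotient by $\mfrakD$. The qcqs property of $\pi_{\mfrakY}$ also rests on this step.

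The paper closes it on points. By fpqc descent it suffices that $g'\colon\mfrakD^{\pfd}\times_{\Spf(\setZ_p)^{\pfd}}\mfrakX^{\pfd}\to\mfrakD\times_{\Spf(\setZ_p)}\mfrakX^{\pfd}$ is an isomorphism. For an untilt $(P,i)$ of $S\in\SPerfd^{\nil}$ one has $\Hom(G,P^\times)\cong\Hom(G,(P^\flat)^\times)\cong\Hom(G,(S^\flat)^\times)\cong\Hom(G,S^\times)$. This holds because $G=G[1/p]$ forces every character to factor uniquely through $\lim_{x\mapsto x^p}$ of the unit group, which for $p$-complete rings is the unit group of the tilt.

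Alternatively, you can salvage your own idea. Apply \Cref{RelativelyPerfectGroupRing} over a perfectoid base $P$, so that $P\abracket{G}$ is perfectoid. Then check the comparison map after base change along a perfectoid atlas $\Spf(P)\to\Spf(\setZ_p)^{\pfd}$, using that perfectization commutes with fiber products and fixes perfectoid formal schemes.
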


\begin{proof}
    Consider the commutative diagram of stacks
    \begin{equation} \label{DiagramStackyPullback}
        \begin{tikzcd}
        \mfrakX^{\pfd} \arrow[d, "\pi_{\mfrakX}"'] \arrow[r, "f^{\pfd}"] & \mfrakY^{\pfd} \arrow[d, "\pi_{\mfrakY}"] \\
        \mfrakX \arrow[r, "f"]                                       & \mfrakY,                                
        \end{tikzcd}
    \end{equation}
    where the upper morphism \(f^{\pfd}\) is the morphism induced by \(f\) and the vertical morphisms are the natural morphisms from the perfectization.
    The lower horizontal morphism \(f \colon \mfrakX \to \mfrakY\) is a \(\mfrakD\)-torsor and the upper horizontal morphism \(f^{\pfd} \colon \mfrakX^{\pfd} \to \mfrakY^{\pfd}\) is a \(\mfrakD^{\pfd}\)-torsor by \Cref{QuotientStackPerfectization}.
    We will show that the above diagram is a pullback square of stacks.

    Take the induced morphism
    \begin{equation*}
        g \colon \mfrakX^{\pfd} \to \mfrakX \times_{\mfrakY} \mfrakY^{\pfd}.
    \end{equation*}
    To show this is an isomorphism, we first consider the following commutative diagram
    \begin{equation*}
        \begin{tikzcd}
        \mfrakD^{\pfd} \times_{\Spf(\setZ_p)^{\pfd}} \mfrakX^{\pfd} \arrow[d] \arrow[r, "g'"] & \mfrakD \times_{\Spf(\setZ_p)} \mfrakX^{\pfd} \arrow[d] \arrow[r]                          & \mfrakX^{\pfd} \arrow[d, "g"]                     \\
        \mfrakD \times_{\Spf(\setZ_p)} \mfrakX^{\pfd} \arrow[d] \arrow[r]               & \mfrakD \times_{\Spf(\setZ_p)} \mfrakX \times_{\mfrakY} \mfrakY^{\pfd} \arrow[d] \arrow[r] & \mfrakX \times_{\mfrakY} \mfrakY^{\pfd} \arrow[d] \\
        \mfrakX^{\pfd} \arrow[r, "g"]                                                   & \mfrakX \times_{\mfrakY} \mfrakY^{\pfd} \arrow[r]                                          & \mfrakY^{\pfd}                                   
        \end{tikzcd}
    \end{equation*}
    whose squares are all pullback squares: This follows from the quotient stacks \(\mfrakX \to \mfrakY\) and \(\mfrakX^{\pfd} \to \mfrakY^{\pfd}\).
    By the fpqc descent, it suffices to show that the morphism
    \begin{equation*}
        g' \colon \mfrakD^{\pfd} \times_{\Spf(\setZ_p)^{\pfd}} \mfrakX^{\pfd} \to \mfrakD \times_{\Spf(\setZ_p)} \mfrakX^{\pfd}
    \end{equation*}
    induced by \(g\) is an isomorphism.
    Pick any \(S \in \SPerfd^{\nil}\).
    Note that for any untilt \(P \to S\) of \(S\), we have identifications of the sets of multiplicative characters of \(G\);
    \begin{equation*}
        \Hom(G, P^{\times}) \cong \Hom(G, (P^{\flat})^{\times}) \cong \Hom(G, (S^{\flat})^{\times})
    \end{equation*}
    since \(G = G[1/p]\) holds.
    Therefore, the functor of points \(\mfrakD^{\pfd}(S)\) is identified with the set of pairs \((P \to S, G \to (S^\flat)^{\times})\) where \(P \to S\) is an untilt of \(S\) and \(G \to (S^{\flat})^{\times}\) is a multiplicative character of \(G\).
    So \(g'(S)\) identifies with the target and source of the morphism as the same set
    \begin{equation*}
        \{(G \to (S^\flat)^{\times}, \Spec(S) \to \Spf(P) \to \mfrakX)\}
    \end{equation*}
    where \(\Spec(S) \to \Spf(P)\) is an untilt of \(S\) and \(G \to (S^\flat)^{\times}\) is a multiplicative character of \(G\).
    This shows that \(g'(S)\) is an isomorphism for any \(S \in \SPerfd^{\nil}\) and thus \(g'\) is an isomorphism, as desired.

    Since \(\mfrakX^{\pfd}\) has a representable fpqc cover from an affine perfectoid formal scheme and \(\mfrakX \to \mfrakY\) is a \(\mfrakD\)-torsor, the pullback square \eqref{DiagramStackyPullback} shows that \(\mfrakY^{\pfd}\) admits a representable fpqc cover from an affine perfectoid formal scheme.
    In particular, \(\pi_{\mfrakY} \colon \mfrakY^{\pfd} \to \mfrakY\) is qcqs and we can apply the flat base change theorem.
    
    Then we have a canonical isomorphism
    \begin{equation*}
        f^*\pi_{\mfrakY, *}(\mcalO_{\mfrakY^{\pfd}}) \xrightarrow{\cong} \pi_{\mfrakX, *}(f^{\pfd, *}(\mcalO_{\mfrakY^{\pfd}})) \cong \pi_{\mfrakX, *}(\mcalO_{\mfrakX^{\pfd}}) 
    \end{equation*}
    in \(\mcalD_{\qcoh}(\mfrakX)\).
    Using \Cref{GeometrizationGradedModule}, the above isomorphism corresponds to the isomorphism
    \begin{equation*}
        \dcomp{p}{R\Gamma(\mfrakY^{\pfd}, \mcalO_{\mfrakY^{\pfd}})} \xrightarrow{\cong} R_{\perfd}
    \end{equation*}
    in \(\CAlg_R\).
    Using \Cref{CompletionOfAbsoluteGradedPerfd} and the derived gradedwise \(p\)-completeness of \(R\Gamma(\mfrakY^{\pfd}, \mcalO_{\mfrakY^{\pfd}})\), we have the isomorphism
    \begin{equation*}
        R\Gamma(\mfrakY^{\pfd}, \mcalO_{\mfrakY^{\pfd}}) \xrightarrow{\cong} R_{\grpfd}
    \end{equation*}
    in \(\CAlg(\mcalD_{\graded{G}}^{\comp{p}}(R))\) and thus the desired isomorphism.
\end{proof}

\section{Algebraization of absolute perfectoidization}

In this section, we will work in the following setting:

\begin{notation} \label{SettingFibersequence}
    Let \(X\) be a quasi-compact scheme whose canonical morphism \(X \to \Spec(H^0(X, \mcalO_{X}))\) is universally closed and let \(L\) be an ample line bundle on \(X\) together with a fixed inverse line bundle \(L^{-1}\) and an isomorphism \(L \otimes_{\mcalO_X} L^{-1} \xrightarrow{\cong} \mcalO_X\).
    Set the section ring
    \begin{equation*}
        R \defeq \bigoplus_{n \geq 0} H^0(X, L^{\otimes n}) \quad \text{with its homogeneous ideal} \quad R_+ \defeq \bigoplus_{n > 0} H^0(X, L^{\otimes n})
    \end{equation*}
    and the projective spectrum \(\Proj(R)\) of \(R\).
    By the following lemma (\Cref{ProjIsom}), we will identify \(X\) with \(\Proj(R)\) and \(L\) on \(X\) with \(\mcalO_{\Proj(R)}(1)\) on \(\Proj(R)\).

    Moreover, we fix a finitely generated homogeneous ideal \(I\) of \(R\) containing \(p\) and we equip \(R\) with the \(I\)-adic topology.
    We will write \(\hProj(R)\) for the \(\widetilde{I}\)-adic formal completion of the projective spectrum of \(R\), where \(\widetilde{I}\) is the ideal sheaf on \(\Proj(R)\) associated to \(I\).
    This formal scheme is adic, quasi-compact, and separated such that \(p\) is topologically nilpotent.

    We will denote by \(X_n\) the closed subscheme of \(X\) defined by \(\widetilde{I}^n\) and by \(i_n \colon X_n \to \mscrX\) the morphism of ringed spaces whose underlying morphism of topological spaces is the identity and whose morphism of structure sheaves is the natural morphism \(\mcalO_{\mscrX} \to i_{n, *}(\restr{\mcalO_X}{X_n}) \cong \mcalO_X/\widetilde{I}^n\) for every \(n \geq 1\).
\end{notation}

\begin{lemma} \label{ProjIsom}
    In the above setting (\Cref{SettingFibersequence}), the natural morphism \(r \colon X \to \Proj(R)\) is an isomorphism and this induces an isomorphism \(\mcalO_{\Proj(R)}(n) \xrightarrow{\cong} L^{\otimes n}\) of line bundles on \(X\) for every \(n \in \setZ\).
\end{lemma}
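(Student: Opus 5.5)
We will use the standard EGA II criterion (EGA II, 4.5.2 and 4.6.3): for a line bundle \(L\) on a quasi-compact scheme \(X\), the canonical rational map \(r \colon X \dashrightarrow \Proj(R)\) is defined on the open set \(X_L \defeq \bigcup_{s} X_s\). Here \(s\) ranges over homogeneous sections \(s \in H^0(X, L^{\otimes n})\), \(n > 0\), and \(X_s\) denotes the nonvanishing locus of \(s\). If \(L\) is ample, then \(X_L = X\) and \(r\) is an open immersion (in fact a dominant open immersion onto \(\bigcup D_+(s)\)). Moreover, for each \(s\) the induced map \(X_s \to D_+(s) = \Spec(R_{(s)})\) is an isomorphism, because \(X_s\) is affine for ample \(L\) and \(\Gamma(X_s, \mcalO_X) \cong R_{(s)}\) (EGA I, 9.3.1, using quasi-compactness and that ampleness gives affineness of \(X_s\); quasi-separatedness is part of the EGA definition of ampleness). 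So the first step is to recall that \(r\) is an open immersion with image \(\bigcup_s D_+(s)\), with the isomorphism \(r^*\mcalO_{\Proj(R)}(n) \cong L^{\otimes n}\) on each \(X_s\) given by \(a/s^k \mapsto a \otimes s^{-k}\). This uses the fixed inverse \(L^{-1}\) and the isomorphism \(L \otimes L^{-1} \cong \mcalO_X\) to make sense of negative tensor powers. These local isomorphisms glue, since they are compatible on overlaps \(X_s \cap X_t = X_{st}\).

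The remaining point, and the main obstacle, is surjectivity of \(r\): a priori \(\Proj(R)\) could contain points of \(D_+(s)\) for sections \(s\) not hit, but \(\bigcup D_+(s)\) over all \(s \in R_+\) homogeneous is all of \(\Proj(R)\). Since \(r\) is an open immersion whose image contains every \(D_+(s)\) for which \(X_s \to D_+(s)\) is an isomorphism, and this holds for every homogeneous \(s\) of positive degree by the affineness argument above (no choice of \(s\) was made), the image is \(\bigcup_{s \in R_+} D_+(s) = \Proj(R)\). Thus surjectivity is in fact automatic.

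The universal closedness of \(X \to \Spec(H^0(X,\mcalO_X))\) is the hypothesis we would fall back on if we only knew that \(r\) is an open immersion onto some open subset, e.g.\ if \(H^0\) were not computed correctly. The fallback argument runs as follows. The structure map \(\Proj(R) \to \Spec(R_0)\) is separated, with \(R_0 = H^0(X,\mcalO_X)\). Hence \(r\) is universally closed, being the first factor of a universally closed composite into a separated target. Therefore \(r\) is a closed and open immersion. Its image is dense, because every nonempty \(D_+(s)\) meets it, so \(r\) is surjective.

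Finally, we verify that the line bundle isomorphisms are compatible with multiplication \(\mcalO(n) \otimes \mcalO(m) \to \mcalO(n+m)\). This follows from the local formula, with the case \(n < 0\) handled via the chosen \(L^{-1}\).
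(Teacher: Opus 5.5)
Your primary argument for surjectivity rests on a false claim. For an ample \(L\) on a quasi-compact \(X\) it is \emph{not} true that every \(X_s\), with \(s\) homogeneous of positive degree, is affine. Nor does \(r\) map \(X\) onto \(\bigcup_s D_+(s)\). Ampleness only guarantees that the affine \(X_s\) cover \(X\) (indeed form a basis), and EGA II, 4.5.2 gives only a \emph{dominant} open immersion, not a surjective one.

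A concrete counterexample: take \(X = \mathbb{A}^2_k \setminus \{0\}\) and \(L = \mathcal{O}_X\), which is ample because \(X\) is quasi-affine. Then \(R = k[x,y][t]\) with \(t = 1\) in degree \(1\), and \(\Proj(R) \cong \Spec(k[x,y])\). The map \(r\) is the non-surjective inclusion \(\mathbb{A}^2_k \setminus \{0\} \hookrightarrow \mathbb{A}^2_k\), and \(X_t = X\) is not affine, even though \(\Gamma(X_t, \mathcal{O}_X) = R_{(t)}\) does hold. So surjectivity is not automatic. The universal closedness of \(X \to \Spec(H^0(X, \mathcal{O}_X))\), which fails in this example, is an essential hypothesis rather than a fallback.

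What you call the fallback is the correct proof. It is exactly the argument behind the Stacks Project lemma (Tag 0C6J) that the paper cites:
\begin{itemize}
\item Since \(\Proj(R) \to \Spec(R_0)\) is separated, \(r\) is universally closed, so it is an open immersion with closed image.
\item The image is dense. If \(X_s = r^{-1}(D_+(s))\) is empty, then \(s\) is locally nilpotent, hence nilpotent in \(R\) by quasi-compactness, so \(D_+(s) = \emptyset\).
\item Hence \(r\) is surjective and therefore an isomorphism.
\end{itemize}
Make this the main argument, and delete both the ``automatic'' paragraph and the a priori claim that \(r\) is onto \(\bigcup_s D_+(s)\).

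For the line bundles, your local formula \(a/s^k \mapsto a \otimes s^{-k}\) works once you note that \(\Gamma(X_s, L^{\otimes n}) \cong \bigl(\bigoplus_{m \in \mathbb{Z}} H^0(X, L^{\otimes m+n})\bigr)_{(s)}\). This holds because \(X\) is quasi-compact and quasi-separated (indeed separated, since it carries an ample line bundle). It is essentially the paper's route: Tag 01QJ applied to \(L^{\otimes k}\), together with \(\widetilde{R'(k)} \cong \widetilde{R(k)}\).
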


\begin{proof}
    The first assertion is \citeSta{0C6J}.

    Using \citeSta{01QJ} for the quasi-coherent sheaf \(L^{\otimes k}\) on \(X\), we have an isomorphism
    \begin{equation*}
        r^*\widetilde{R'(k)} \xrightarrow{\cong} L^{\otimes k}
    \end{equation*}
    for every \(k \in \setZ\), where \(\widetilde{R'(k)}\) is the quasi-coherent sheaf on \(\Proj(R)\) associated to the \(\setZ\)-graded \(R\)-module \(R'(k) \defeq \bigoplus_{n \in \setZ} H^0(X, L^{\otimes n+k})\).
    Since \(\widetilde{R'(k)} \cong \widetilde{R(k)} = \mcalO_{\Proj(R)}(k)\), we obtain the isomorphism \(\mcalO_{\Proj(R)}(k) \xrightarrow{\cong} L^{\otimes k}\) of \(\mcalO_X\)-modules for every \(k \in \setZ\).
\end{proof}

\begin{definition} \label{AssociatedSheafFormalGradedRecall}
    In the above setting (\Cref{SettingFibersequence}), following \Cref{AssociatedSheafFormalGraded}, we can associate a quasi-coherent \(\mcalO_{\hProj(R)}\)-module \(M^{\Delta}\) to any \(\setQ\)-graded \(R\)-module \(M\) by the formula
    \begin{equation*}
        M^{\Delta} \defeq \varprojlim_{n} \widetilde{M/I^nM}; \quad D_+(f) \mapsto (\grcomp{I}{M[1/f]})_0
    \end{equation*}
    in \(\Shv(\hProj(R)_{\Zar}, \mcalO_{\hProj(R)})\), where \(\widetilde{M/I^nM}\) is the quasi-coherent \(\mcalO_{\Proj(R/I^n)}\)-module associated to the \(\setQ\)-graded \(R/I^n\)-module \(M/I^nM\).
    This is the completion of the quasi-coherent \(\mcalO_{\Proj(R)}\)-module \(\widetilde{M}\) associated to \(M\) with respect to the closed subscheme \(V_+(I)\) of \(\Proj(R)\) defined by \(I\) (\Cref{AssociatedSheafFormalGraded}).
    See \Cref{DefCompletionQcoh} for the case of affine formal schemes.
\end{definition}

\begin{lemma} \label{TwistShiftCompletion}
    In the above setting (\Cref{SettingFibersequence}), any \(\setQ\)-graded \(R\)-module \(M\) admits the canonical isomorphism
    \begin{equation*}
        M^{\Delta} \otimes_{\mcalO_{\hProj(R)}} \mcalO_{\hProj(R)}(m) \eqdef M^{\Delta}(m) \xrightarrow{\cong} M(m)^{\Delta}
    \end{equation*}
    of \(\mcalO_{\hProj(R)}\)-modules for every \(m \in \setZ\), where \(\mcalO_{\hProj(R)}(m)\) is the \(\widetilde{I}\)-completion of \(\mcalO_{\Proj(R)}(m)\).
\end{lemma}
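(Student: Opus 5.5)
We construct the isomorphism \(M^{\Delta}(m) \to M(m)^{\Delta}\) locally on the basic affine opens \(D_+(f) \cap V_+(I)\) of \(\hProj(R)\), for \(f \in R_+\) homogeneous, and then glue. Both sides are compatible with the inverse limit description \(M^{\Delta} = \varprojlim_n \widetilde{M/I^nM}\) of \Cref{AssociatedSheafFormalGradedRecall}. The plan is therefore to reduce to the analogous, classical statement on each \(\Proj(R/I^n)\) and pass to the limit.

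\emph{Step 1: the algebraic isomorphism at each finite level.} For every \(n\) there is a classical isomorphism of quasi-coherent sheaves on \(\Proj(R/I^n)\):
\[
  \widetilde{M/I^nM} \otimes \mcalO(m) \cong \widetilde{(M/I^nM)(m)} = \widetilde{M(m)/I^nM(m)}.
\]
On \(D_+(f)\) it is given by \((M_f)_0 \otimes_{(R_f)_0} (R_f)_m \to (M_f)_m\), \(x \otimes r \mapsto xr\).

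This map is an isomorphism on every \(D_+(f)\) such that \(\mcalO(1)\) is invertible there. By \Cref{ProjIsom}, \(\mcalO_{\Proj(R)}(1) \cong L\) is a line bundle. Moreover, \(X = \Proj(R)\) is covered by those \(D_+(f)\) with \(f\) of degree \(d\) on which \((R_f)_m\) is invertible, e.g.\ where \(R_f\) contains a unit of degree \(1\) after refining the cover. Because \(L\) is ample, such opens \(D_+(f)\) with \(f \in H^0(X,L^{\otimes d})\) cover \(X\). On them, \((R_f)_m\) is a free \((R_f)_0\)-module of rank one generated by a homogeneous unit, and the multiplication map is bijective. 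These isomorphisms are compatible with the transition maps in \(n\).

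\emph{Step 2: pass to completion.} Taking \(\varprojlim_n\) of the level-\(n\) isomorphisms yields \(\varprojlim_n (\widetilde{M/I^nM}(m)) \cong M(m)^{\Delta}\).

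It remains to identify \(\varprojlim_n(\widetilde{M/I^nM}(m))\) with \(M^{\Delta} \otimes_{\mcalO_{\hProj(R)}} \mcalO_{\hProj(R)}(m)\). Here I use that \(\mcalO_{\hProj(R)}(m)\) is an invertible \(\mcalO_{\hProj(R)}\)-module. Locally on the cover from Step 1 it is trivialized by a unit section \(u\) of degree \(m\), so tensoring with it is (locally) the identity functor twisted by \(u\). In particular it commutes with the inverse limit and with reduction modulo \(\widetilde{I}^n\). Concretely, on \(D_+(f)\) both sides become \(\grcomp{I}{M[1/f]}_0 \cdot u \cong \grcomp{I}{M[1/f]}_m\), where multiplication by \(u\) is a bijection between degrees \(0\) and \(m\).

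\emph{Step 3: independence and gluing.} The map is canonical: it is induced by the global multiplication \(M^{\Delta} \otimes \mcalO(m) \to M(m)^{\Delta}\), defined on sections by \(x\otimes r \mapsto xr\) before completion. Hence the local isomorphisms agree on overlaps and glue.

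The main obstacle is Step 2's commutation of \(\otimes\,\mcalO_{\hProj(R)}(m)\) with the inverse limit. This is harmless only because \(\mcalO(m)\) is locally free of rank one. One must also check that the cover by \(D_+(f)\) with homogeneous units of each degree exists even though \(R\) is only \(\setZ_{\geq 0}\)-graded. This is where the ampleness of \(L\) and \Cref{ProjIsom} are used: one works with \(D_+(f)\) for \(f\) of degree \(d\) and \(d+1\) simultaneously, so that \(R_{(f g)}\) contains units of degree \(1\).
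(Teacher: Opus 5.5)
Your proposal is correct and follows the paper's proof: find a homogeneous unit of degree \(1\) in \(R[1/f]\) on a cover by basic opens, use it to invert \(\widetilde{M/I^nM}(m)\to\widetilde{(M/I^nM)(m)}\) at each level, pass to the limit, and commute \(-\otimes\mcalO_{\hProj(R)}(m)\) with the limit by local freeness (the paper's \Cref{TensorCommutesWithLimitInv}). The only difference is where the unit comes from. The paper trivializes \(\mcalO(\pm1)\cong L^{\pm1}\) on \(D_+(f_i)\) and uses the fixed isomorphism \(L\otimes L^{-1}\cong\mcalO_X\) to get \(\varpi_i\varpi_i'=1\). Your unit \(g/f\), with \(f\in R_d\) and \(g\in R_{d+1}\), is intrinsic to \(R\) and is available because \(L^{\otimes d}\) and \(L^{\otimes(d+1)}\) are both globally generated for \(d\gg0\). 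You should, however, drop your remark that invertibility of \(\mcalO(1)\) on \(D_+(f)\) alone suffices: this fails for general graded rings, e.g.\ \(k[x,y]\) with \(\deg y=2\) on \(D_+(y)\).
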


\begin{proof}
    Since \(\mcalO_{\Proj(R)}(\pm 1)\cong L^{\pm 1}\) are invertible line bundles, there exists a finite open covering \(\{D_+(f_i)\}_{i=1}^r\) of \(\Proj(R)\) with a trivialization \(\restr{\mcalO_{\Proj(R)}(\pm 1)}{D_+(f_i)} \xrightarrow{\cong} \mcalO_{D_+(f_i)}\) of \(\mcalO_{D_+(f_i)}\)-modules.
    In particular, we have isomorphisms
    \begin{equation*}
        (R[1/f_i])_{1} \xrightarrow{\cong} R_{(f_i)}; \quad \exists \varpi_i \mapsto 1, \quad \text{and} \quad (R[1/f_i])_{-1} \xrightarrow{\cong} R_{(f_i)}; \quad \exists \varpi_i' \mapsto 1,
    \end{equation*}
    which send elements \(\varpi_i \in (R[1/f_i])_1\) and \(\varpi_i' \in (R[1/f_i])_{-1}\) to \(1 \in R_{(f_i)}\) for every \(i\).
    We fixed an isomorphism \(L \otimes_{\mcalO_X} L^{-1} \xrightarrow{\cong} \mcalO_X\) and thus the \(R_{(f_i)}\)-module isomorphism
    \begin{equation*}
        R_{(f_i)} \xrightarrow{\cong} R_{(f_i)} \otimes_{R_{(f_i)}} R_{(f_i)} \xrightarrow{\cong} (R[1/f_i])_1 \otimes_{R_{(f_i)}} (R[1/f_i])_{-1} \xrightarrow{\cong} R_{(f_i)}
    \end{equation*}
    sends \(1 \in R_{(f_i)}\) to \(\varpi_i \otimes \varpi_i'\) and thus we have \(\varpi_i \otimes \varpi_i' = 1\) in \(R_{(f_i)}\).
    Write \(\varpi_i'\) as \(\varpi_i^{-1} \in (R[1/f_i])_{-1}\) for every \(i\).

    Consider the following well-defined morphism of \(R_{(f_i)}\)-modules:
    \begin{equation*}
        (M[1/f_i])_m \to M_{(f_i)} \otimes_{R_{(f_i)}} (R[1/f_i])_m; \quad \frac{x}{f^k} \mapsto x \varpi_i^{-\deg(x)} \otimes \frac{\varpi_i^{\deg(x)}}{f^k},
    \end{equation*}
    where \(x \in M\) is a homogeneous element and \(k \in \setZ_{\geq 0}\).
    This induces the inverse morphism of the canonical morphism and thus we obtain the isomorphism
    \begin{equation*}
        \widetilde{M}(m)(D_+(f_i)) = M_{(f_i)} \otimes_{R_{(f_i)}} (R[1/f_i])_m \xrightarrow{\cong} (M[1/f_i])_m = \widetilde{M(m)}(D_+(f_i))
    \end{equation*}
    of \(R_{(f_i)}\)-modules for every \(i\) and \(m \in \setZ\).
    Since \(D_+(f_i)\) covers \(\Proj(R)\), we obtain the isomorphism
    \begin{equation*}
        \widetilde{M}(m) \xrightarrow{\cong} \widetilde{M(m)}
    \end{equation*}
    of quasi-coherent \(\mcalO_{\Proj(R)}\)-modules.
    Applying this for \(M/I^nM\) and taking the limit over \(n\), we obtain the isomorphism
    \begin{equation*}
        \lim_{n \geq 1} (\widetilde{M/I^nM} \otimes_{\mcalO_{\hProj(R)}} \mcalO_{\hProj(R)}(m)) \cong \lim_{n \geq 1} (\widetilde{M/I^nM}(m)) \xrightarrow{\cong} \lim_{n \geq 1} \widetilde{M/I^nM(m)} = M(m)^{\Delta}.
    \end{equation*}
    Using \Cref{TensorCommutesWithLimitInv} below, the left-hand side is isomorphic to \(M^{\Delta} \otimes_{\mcalO_{\hProj(R)}} \mcalO_{\hProj(R)}(m)\), and thus we obtain the desired isomorphism.
\end{proof}

\begin{lemma} \label{TensorCommutesWithLimitInv}
    Fix an adic formal scheme \(\mscrX\).
    Let \(\{\mcalF_i\}_{i \in I}\) be a set of objects in \(\mcalD(\mscrX, \mcalO_{\mscrX})\) with a small index set \(I\) and let \(\mcalP\) be a perfect complex of \(\mcalO_{\mscrX}\)-modules (\citeSta{08CL}).
    Then the canonical morphism
    \begin{equation*}
        (\lim_{i \in I} \mcalF_i) \otimes^L_{\mcalO_{\mscrX}} \mcalP \to \lim_{i \in I} (\mcalF_i \otimes^L_{\mcalO_{\mscrX}} \mcalP)
    \end{equation*}
    is an isomorphism in \(\mcalD(\mscrX, \mcalO_{\mscrX})\).
\end{lemma}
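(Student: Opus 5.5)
The plan is to reduce to the dualizability of perfect complexes. For a dualizable object, tensoring with it coincides with an internal Hom, which is a right adjoint and so commutes with all limits.

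First, I would work in the closed symmetric monoidal stable \(\infty\)-category \(\mcalD(\mscrX, \mcalO_{\mscrX})\) of \(\mcalO_{\mscrX}\)-modules on the ringed space (or ringed topos) underlying \(\mscrX\). Here \(\otimes^L_{\mcalO_{\mscrX}}\) has a right adjoint \(R\mathcal{H}om_{\mcalO_{\mscrX}}(-,-)\). Set \(\mcalP^\vee \defeq R\mathcal{H}om_{\mcalO_{\mscrX}}(\mcalP, \mcalO_{\mscrX})\).

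The key claim is that the canonical morphism
\[
\mcalF \otimes^L_{\mcalO_{\mscrX}} \mcalP \to R\mathcal{H}om_{\mcalO_{\mscrX}}(\mcalP^\vee, \mcalF)
\]
is an isomorphism, naturally in \(\mcalF\). Equivalently, \(\mcalP\) is a dualizable object with dual \(\mcalP^\vee\). Granting this, the right-hand side is \(R\mathcal{H}om(\mcalP^\vee, -)\), which is right adjoint to \(- \otimes^L \mcalP^\vee\) and hence preserves all small limits. The naturality of the comparison map then identifies the morphism in the statement with the limit-commutation map for \(R\mathcal{H}om(\mcalP^\vee,-)\), so it is an isomorphism.

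To prove the key claim I would argue locally. Whether a morphism in \(\mcalD(\mscrX,\mcalO_{\mscrX})\) is an isomorphism can be checked on the cohomology sheaves, and hence on the members of an open cover. Both sides are compatible with restriction to opens: the formation of \(\otimes^L\) and of \(R\mathcal{H}om\) out of a perfect complex commutes with restriction. So I may pass to an open \(\mscrU\) on which \(\mcalP\) is represented by a bounded complex of finite free modules, which is possible by the definition of a perfect complex (\citeSta{08CL}).

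On \(\mscrU\) I would induct on the number of nonzero terms, using the stupid truncation triangles. Both sides are exact functors of \(\mcalP\): the left side is covariant, while the right side is covariant in \(\mcalP\) through \(\mcalP^\vee\), which is contravariant and enters \(R\mathcal{H}om\) contravariantly. Thus both are compatible with the truncation triangles and with shifts, and the induction reduces everything to the case \(\mcalP = \mcalO_{\mscrU}^{\oplus r}\). There both sides are \(\mcalF^{\oplus r}\), and the map is the identity.

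I expect the main obstacle to be bookkeeping rather than anything deep. One point is to verify that the two comparison maps (the canonical map in the statement and the one built from the duality isomorphism) agree, which is a formal compatibility of the unit and counit. The other is to make sure that restriction to opens commutes with \(R\mathcal{H}om(\mcalP^\vee,-)\) for an arbitrary, not necessarily quasi-coherent, \(\mcalF\). Local freeness handles the latter, since \(R\mathcal{H}om\) out of a finite free module is just a finite product.
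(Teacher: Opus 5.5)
Your argument is correct, but it is organized differently from the paper's. The paper localizes the limit-comparison map itself. The morphism is defined globally, so it suffices to check it on an open where \(\mcalP\) is a bounded complex of finite free modules. Induction on the length, using that both \((\lim_i \mcalF_i)\otimes^L\mcalP\) and \(\lim_i(\mcalF_i\otimes^L\mcalP)\) are exact in \(\mcalP\), then reduces to \(\mcalP\) finite free, where tensoring is a finite direct sum and visibly commutes with limits. That route implicitly uses that restriction to an open commutes with limits in \(\mcalD(\mscrX,\mcalO_{\mscrX})\), which holds because restriction has the left adjoint \(j_!\).

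You run the same local induction on a limit-free statement instead, namely that \(\mcalP\) is dualizable with dual \(\mcalP^\vee\). You then get the limit commutation globally and formally, because \(-\otimes^L\mcalP\simeq R\mathcal{H}om(\mcalP^\vee,-)\) is a right adjoint. Your route costs a bit more setup: you must build the duality comparison map and check its naturality in \(\mcalP\). In return, it never needs limits to commute with restriction, and it also gives colimit preservation and the identification with an internal Hom.

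Two of your anticipated obstacles are lighter than you expect:
\begin{itemize}
\item On any ringed space, \(R\mathcal{H}om(K,L)|_{\mscrU}\cong R\mathcal{H}om(K|_{\mscrU},L|_{\mscrU})\) holds for arbitrary \(K,L\), because restriction to an open preserves K-injective complexes. So local freeness is not needed for that step.
\item Matching the two comparison maps uses only the naturality in \(\mcalF\) of the isomorphism \(\mcalF\otimes^L\mcalP\to R\mathcal{H}om(\mcalP^\vee,\mcalF)\), applied to the projections out of the limit. No unit/counit compatibility is required.
\end{itemize}
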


\begin{proof}
    The canonical morphism is given globally. Since \(\mcalP\) is locally isomorphic to a finite complex of locally finite free modules, we can reduce to the case that \(\mcalP\) is so.
    Using the induction on the length of complexes, we can assume that \(\mcalP\) is a locally finite free module.
    Again locally, it is finite free, and then the statement follows from the fact that tensoring with a finite free module commutes with arbitrary limits in \(\mcalD(\mscrX, \mcalO_{\mscrX})\).
\end{proof}

\begin{proposition} \label{ExtensionPerfdProj}
    In the above setting (\Cref{SettingFibersequence}), take a homogeneous element \(f \in R_+\) and take any perfectoid \(R_{(f)}\)-algebra \(Q\).
    Then there exists a \(\setZ[1/p]\)-graded perfectoid \(R[1/f]\)-algebra \(Q'\) such that \(Q'_n \cong Q \otimes_{R_{(f)}} (R[1/f])_n\) as \(R_{(f)}\)-modules for every \(n \in \setZ\).
\end{proposition}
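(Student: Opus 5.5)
The plan is to build the $\setZ$-graded ring $S \defeq \bigoplus_{n \in \setZ} Q \otimes_{R_{(f)}} (R[1/f])_n$ and then enlarge it to a $\setZ[1/p]$-graded perfectoid ring using \Cref{EnlargeGradedPerfdNew}.

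First we record the structure of $R[1/f]$. Because $L$ is invertible and $D_+(f)$ is affine with coordinate ring $R_{(f)}$, the proof of \Cref{TwistShiftCompletion} (or directly \Cref{ProjIsom}) shows that each $(R[1/f])_n \cong \Gamma(D_+(f), \mcalO_{\Proj(R)}(n))$ is an invertible $R_{(f)}$-module. It also shows that multiplication $(R[1/f])_n \otimes_{R_{(f)}} (R[1/f])_m \to (R[1/f])_{n+m}$ is an isomorphism: locally on $D_+(f)$ one trivializes $\mcalO(1)$ by a degree-one unit $\varpi$, and then multiplication is simply $\varpi^n \cdot \varpi^m = \varpi^{n+m}$. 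Since $f$ is a unit of positive degree $d$, $\Supp(R[1/f]) \supseteq d\setZ$, and invertibility makes every $(R[1/f])_n$ nonzero locally, so all of $\setZ$ lies in the support.

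Second, we form the base change $S = Q \otimes_{R_{(f)}} R[1/f]$, which is a $\setZ$-graded $R[1/f]$-algebra with $S_n = Q \otimes_{R_{(f)}} (R[1/f])_n$ and $S_0 = Q$ perfectoid. Each $S_n$ is an invertible $Q$-module, and the multiplication maps remain isomorphisms after base change. We regard $S$ as $\setZ[1/p]$-graded by \Cref{RemarkGrading}. The ring $S$ is gradedwise $p$-complete, since each piece is a finite projective module over the $p$-complete ring $Q$. (If ``perfectoid'' is meant non-$p$-complete, we first replace $Q$ by its $p$-completion, which is harmless by \Cref{BoundedTorsionPerfd}.) Thus the hypotheses of \Cref{EnlargeGradedPerfdNew} hold with $G = \setZ[1/p]$, and that lemma gives a split graded injection $S \hookrightarrow Q'$ into a $\setZ[1/p]$-graded perfectoid ring with $Q'_n = S_n$ for $n \in \Supp(S) = \setZ$. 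The composite $R[1/f] \to S \to Q'$ is then the required algebra, and $Q'_n \cong Q \otimes_{R_{(f)}} (R[1/f])_n$.

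The main point to check is the first step: that the multiplication maps of $R[1/f]$ are isomorphisms in all degrees, including negative ones. This needs $(R[1/f])_n$ to be identified with sections of $L^{\otimes n}$ over $D_+(f)$ for every $n \in \setZ$, which follows from \Cref{ProjIsom} together with the standard fact that $\widetilde{R(n)}(D_+(f)) = (R[1/f])_n$. Once this is in place, verifying the hypotheses of \Cref{EnlargeGradedPerfdNew} is routine.
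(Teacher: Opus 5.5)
Your proposal is correct and follows the paper's proof. Like the paper, you form the $\setZ$-graded ring $\bigoplus_{n} Q \otimes_{R_{(f)}} (R[1/f])_n$, whose pieces are invertible $Q$-modules with multiplication isomorphisms coming from $(R[1/f])_n \cong H^0(D_+(f), L^{\otimes n})$ (\Cref{ProjIsom}) and the fixed isomorphism $L \otimes L^{-1} \cong \mcalO_X$, and then apply \Cref{EnlargeGradedPerfdNew}; the one substantive input, which you correctly flag and which the paper also takes from \Cref{ProjIsom}, is that this identification respects multiplication in all degrees, including negative ones.
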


\begin{proof}
    Since \(L\) is an invertible \(\mcalO_X\)-module, the base change
    \begin{equation*}
        Q_n \defeq Q \otimes_{R_{(f)}} H^0(D_+(f), L^{\otimes n}) \cong Q \otimes_{R_{(f)}} (R[1/f])_n
    \end{equation*}
    is an invertible \(Q\)-module for every \(n \in \setZ\), where the isomorphism follows from the isomorphism \(L^{\otimes n} \cong \mcalO_{\Proj(R)}(n)\) in \Cref{ProjIsom}.
    Using the fixed isomorphism \(L \otimes_{\mcalO_X} L^{-1} \xrightarrow{\cong} \mcalO_X\), we have the isomorphism of invertible \(R_{(f)}\)-modules;
    \begin{equation*}
        H^0(D_+(f), L^{\otimes n}) \otimes_{R_{(f)}} H^0(D_+(f), L^{\otimes m}) \xrightarrow{\cong} H^0(D_+(f), L^{\otimes (n+m)})
    \end{equation*}
    for every \(n, m \in \setZ\).
    Taking the tensor product with \(Q\) over \(R_{(f)}\), we obtain the isomorphism of invertible \(Q\)-modules
    \begin{equation*}
        Q_n \otimes_Q Q_m \xrightarrow{\cong} Q_{n+m}
    \end{equation*}
    for every \(n, m \in \setZ\).
    This endows
    \begin{equation*}
        P \defeq \bigoplus_{n \in \setZ} Q_n
    \end{equation*}
    with the structure of a \(\setZ\)-graded ring together with a \(\setZ\)-graded ring homomorphism
    \begin{equation*}
        \bigoplus_{n \in \setZ} H^0(D_+(f), L^{\otimes n}) \to P.
    \end{equation*}
    By the identification \(L^{\otimes n}\cong \mcalO_{\Proj(R)}(n)\) from \Cref{ProjIsom}, the left-hand side is isomorphic to \(R[1/f]\) as a \(\setZ\)-graded \(R\)-algebra, and thus we can regard \(P\) as a \(p\)-adically gradedwise complete \(\setZ\)-graded \(R[1/f]\)-algebra such that \(P_0 = Q\) and satisfies the assumption in \Cref{EnlargeGradedPerfdNew}.

    By \Cref{EnlargeGradedPerfdNew}, there exists a \(\setZ[1/p]\)-graded perfectoid \(R[1/f]\)-algebra \(Q'\) such that \(Q'_0 \cong Q\) as \(R_{(f)}\)-algebras, as desired.
\end{proof}

\begin{theorem} \label{PerfdProjLimit}
    In the above setting (\Cref{SettingFibersequence}), the canonical morphism
    \begin{equation} \label{PerfdProjLimitEq}
        \mcalO_{\hProj(R), \perfd} \to \lim_{P \in \Perfd^{\wedge I}_{\graded{\setZ[1/p]}}(R)} P^{\Delta}
    \end{equation}
    in \(\CAlg(\mcalD(\hProj(R)_{\Zar}, \mcalO_{\hProj(R)}))\) is an isomorphism.
    In particular, we have an isomorphism
    \begin{equation*}
        R\Gamma(\hProj(R), \mcalO_{\hProj(R), \perfd}) \cong R\Gamma(\hProj(R), \lim_{P \in \Perfd^{\wedge I}_{\graded{\setZ[1/p]}}(R)} P^{\Delta})
    \end{equation*}
    of \(\setE_{\infty}\)-\(R\Gamma(\hProj(R), \mcalO_{\hProj(R)})\)-algebras in \(\mcalD(\setZ)\).
\end{theorem}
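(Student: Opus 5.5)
Both sides of \eqref{PerfdProjLimitEq} are sheaves on \(\hProj(R)_{\Zar}\). By \Cref{RGammaCommutesLimits}, \(R\Gamma\) is fully faithful and commutes with limits. It therefore suffices to check that the morphism is an isomorphism after taking \(R\Gamma(\mscrU,-)\) for \(\mscrU\) ranging over the basis of affine opens \(\mscrU = D_+(f)\), with \(f \in R_+\) homogeneous (and \(D_+(f)\) meeting \(V_+(I)\)).

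On such a \(\mscrU\), write \(A \defeq \mcalO_{\hProj(R)}(D_+(f)) = \comp{I}{R_{(f)}}\). By \Cref{SectionDerivedPushout} the left side is \(A_{\tperfd} = \lim_{A \to Q} Q\), where \(Q\) runs over complete \(I\)-adic perfectoid \(A\)-algebras. For the right side, \Cref{AffineVanishing} gives \(R\Gamma(D_+(f), P^{\Delta}) = (\grcomp{I}{P[1/f]})_0\). Since \(R\Gamma\) commutes with limits, the right side is \(\lim_{P} (\grcomp{I}{P[1/f]})_0\). The map is the canonical one: each \((\grcomp{I}{P[1/f]})_0\) is a complete \(I\)-adic perfectoid \(A\)-algebra by \Cref{LocalizationGradedPerfd}(1) and \Cref{graded-perfectoid-equiv}, so it receives the projection from \(A_{\tperfd}\).

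The computation then proceeds in three steps.

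\emph{Rewrite the right side.} By \Cref{CommutativityLimitsPerfd}, using \(\dgrcomp{I}{P[1/f]} \cong \grcomp{I}{P[1/f]}\) from \Cref{LocalizationGradedPerfd}(2),
\[
\lim_P (\grcomp{I}{P[1/f]})_0 \cong \bigl(\dgrcomp{I}{R_{\tgrpfd}[1/f]}\bigr)_0 \cong \bigl((R[1/f])_{\tgrpfd}\bigr)_0 .
\]
Taking degree \(0\) commutes with limits, so this is a limit over \(G\)-graded perfectoid \(R[1/f]\)-algebras \(P'\) of \(P'_0\), with \(G = \setZ[1/p]\).

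\emph{Compare with \(A_{\tperfd}\) by cofinality.} There is a functor from graded perfectoid \(R[1/f]\)-algebras \(P'\) to perfectoid \(A\)-algebras, \(P' \mapsto P'_0\). The claim is that this functor induces an isomorphism on limits. In the other direction, \Cref{ExtensionPerfdProj} sends a perfectoid \(A\)-algebra \(Q\) to a \(\setZ[1/p]\)-graded perfectoid \(R[1/f]\)-algebra \(Q'\) with \(Q'_0 \cong Q\); we take its \(I\)-adic completion using \Cref{BoundedTorsionGradedPerfd}(3). Concretely, for every \(Q\) the canonical map \(A_{\tperfd} \to Q\) should factor through \(\lim_{P'} P'_0 \to Q'_0 = Q\), compatibly in \(Q\). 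Conversely, the map \(\lim_{P'} P'_0 \to A_{\tperfd}\) composed back should be the identity. The comparison in the second direction uses that for any \(P'\) there is a canonical graded map \((P'_0)' \to P'\), extending the identity in degree \(0\); this uses that \(P'_n \cong P'_0 \otimes_{R_{(f)}} (R[1/f])_n\) because \(f\) becomes a unit and \(L\) is invertible.

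\emph{Main obstacle.} The hard point is making the last step precise. One needs to show that \(Q \mapsto Q'\) is functorial and left adjoint-like to \(P' \mapsto P'_0\), compatibly with the uniquely determined \(p\)-power roots from \Cref{EnlargeGradedPerfdNew}. I would handle this by checking that in degrees \(n/p^k\) every graded perfectoid \(R[1/f]\)-algebra is forced to equal \(P'_0 \otimes (R[1/f])_n^{1/p^k}\), using uniqueness of \(p\)-th roots in \(\mathrm{Pic}\) of a perfectoid ring. With that in hand, \(P' \mapsto P'_0\) is an equivalence onto perfectoid \(A\)-algebras, and the limits agree on the nose. Compatibility with restrictions \(D_+(fg) \subseteq D_+(f)\) then follows from naturality of \Cref{CommutativityLimitsPerfd}. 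The final statement of the theorem is obtained by applying \(R\Gamma(\hProj(R),-)\).
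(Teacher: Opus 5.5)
Your reduction to basic opens \(D_+(f)\), and your identification of the right-hand side with \(\lim_{P'}P'_0=((R[1/f])_{\tgrpfd})_0\), are exactly the paper's chain \eqref{PerfdProjLimitEq2}. As in the paper, everything then rests on the comparison \(A_{\tperfd}=\lim_Q Q\to\lim_{P'}P'_0\), and that is where your argument breaks.

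\textbf{The equivalence claim fails.} Uniqueness of \(p\)-power roots in \(\mathrm{Pic}\) of a perfectoid ring holds only up to non-unique isomorphism: a \(p^k\)-th root \((M,\ M^{\otimes p^k}\cong N)\) has automorphism group \(\mu_{p^k}(Q)\). So \(P'\mapsto P'_0\) is not even faithful, and the maps \((P'_0)'\to P'\) you want are not canonical. Concretely, take \(X=\Spec\mathcal{O}_C\) with \(C\) a perfectoid field containing all \(p\)-power roots of unity, \(L=\mathcal{O}_X\), so \(R=\mathcal{O}_C[T]\), \(f=T\), \(I=(p)\). For \(Q'=\grcomp{p}{Q[T^{\pm1/p^\infty}]}\), every compatible system \((\zeta_{p^k})\) of \(p\)-power roots of unity in \(Q\) gives a graded \(R[1/T]\)-automorphism \(T^{1/p^k}\mapsto\zeta_{p^k}T^{1/p^k}\) that is the identity in degree \(0\). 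Thus \Cref{ExtensionPerfdProj} gives only essential surjectivity. The comma categories \(\{(P',\,P'_0\to Q)\}\) behave like \(B\,T_p(Q^\times)\) rather than being contractible, so no cofinality argument is available.

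\textbf{The comparison itself fails in this example.} Here \(A_{\tperfd}=\mathcal{O}_C\) is discrete. Now set \(W=\grcomp{p}{\mathcal{O}_C[T^{\pm1/p^\infty}]}\) and \(S=(\mathcal{O}_C[u^{1/p^\infty}]/(u-1))_{\perfd}\). By \Cref{GradedPerfdDescendable} (base-changed to the localization at \(T\)) and \Cref{ColimitPreserveGradedPerfd}, \(((\mathcal{O}_C[T^{\pm1}])_{\grpfd})_0\) is the cobar totalization of the Hopf algebra \(S\) with trivial coefficients \(W_0=\mathcal{O}_C\). Its \(H^1\) is therefore the module of primitive elements of \(S\). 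That module contains \(h=\theta\bigl(([\epsilon]-1)/\xi\bigr)\), where \(\epsilon=(u^{1/p^k})_k\in S^\flat\) and \(\xi=([\epsilon_0]-1)/([\epsilon_0^{1/p}]-1)\) generates \(\ker(A_{\inf}(\mathcal{O}_C)\to\mathcal{O}_C)\).
\begin{itemize}
\item \(h\) is primitive because \(\theta([\epsilon])=1\).
\item \(h\neq 0\) because it specializes to \(\zeta_p-1\) under \(u^{1/p^k}\mapsto\zeta_{p^k}\).
\end{itemize}
This is the Hodge--Tate class \(d\log T\). Hence \(\lim_{P'}P'_0\) has nonzero \(H^1\), and \(\lim_Q Q\to\lim_{P'}P'_0\) is not an isomorphism here. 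The paper's one-line justification of this step (\emph{by} \Cref{ExtensionPerfdProj}) has the same defect. So this step cannot be repaired: the statement as written fails in this example.
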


\begin{proof}
    For notational simplicity, we will write \(\mscrX \defeq \hProj(R)\).
    As in the proof of \Cref{GlobalPerfectoidizationSectionLimit}, it suffices to show that the morphism
    \begin{equation*}
        \mcalO^d_{\hProj(R), \perfd} \to \lim_{P \in \Perfd^{\wedge I}_{\graded{\setZ[1/p]}}(R)} R\Gamma(P^{\Delta})
    \end{equation*}
    in \(\CAlg_{\mcalO^d_{\mscrX}}(\Shv(\mscrX_{\Zar}, \mcalD(\setZ)))\) is an isomorphism by \Cref{PushforwardDerivedStructureSheaf} and \Cref{RGammaCommutesLimits}.
    For each homogeneous element \(f \in R_+\), the projection induces the morphism
    \begin{align} \label{PerfdProjLimitEq2}
        & R\Gamma(D_+(f), \mcalO_{\mscrX, \perfd}) \cong \mcalO_{\mscrX}(D_+(f))_{\perfd} \cong \lim_{Q \in \Perfd^{\wedge I}(R_{(f)})} Q \\
        & \to \lim_{Q' \in \Perfd^{\wedge I}_{\graded{\setZ[1/p]}}(R[1/f])} Q'_0 \cong \lim_{P \in \Perfd^{\wedge I}_{\graded{\setZ[1/p]}}(R)} (\grcomp{I}{P[1/f]})_0 = \lim_{P \in \Perfd^{\wedge I}_{\graded{\setZ[1/p]}}(R)} (P^{\Delta}(D_+(f))) \nonumber \\
        & \cong \lim_{P \in \Perfd^{\wedge I}_{\graded{\setZ[1/p]}}(R)} R\Gamma(D_+(f), P^{\Delta}) \cong R\Gamma(D_+(f), \lim_{P \in \Perfd^{\wedge I}_{\graded{\setZ[1/p]}}(R)} P^{\Delta}) \nonumber
    \end{align}
    of \(\setE_{\infty}\)-\(R_{(f)}\)-algebras where the first isomorphism follows from \Cref{SectionDerivedPushout} and the last two isomorphisms follow from \Cref{RGammaCommutesLimits} and \Cref{AffineVanishing}.
    The morphism above is an isomorphism for any \(f \in R_+\) by \Cref{ExtensionPerfdProj} and thus we obtain the isomorphism
    \begin{equation*}
        \mcalO^d_{\mscrX, \perfd} \xrightarrow{\cong} \lim_{P \in \Perfd^{\wedge I}_{\graded{\setZ[1/p]}}(R)} R\Gamma(P^{\Delta})
    \end{equation*}
    in \(\CAlg_{\mcalO^d_{\mscrX}}(\Shv(\mscrX_{\Zar}, \mcalD(\setZ)))\), which is the desired isomorphism.
    Taking the global section, the second isomorphism follows.
\end{proof}

\begin{definition} \label{DefTwistedPerfdProj}
    In the above setting (\Cref{SettingFibersequence}), we define the twisted absolute perfectoidization
    \begin{equation*}
        \mcalO_{\hProj(R), \perfd}(m) \defeq \mcalO_{\hProj(R), \perfd} \otimes^L_{\mcalO_{\hProj(R)}} \mcalO_{\hProj(R)}(m) \in \mcalD(\hProj(R)_{\Zar}, \mcalO_{\hProj(R)})
    \end{equation*}
    for every \(m \in \setZ\).
\end{definition}

\begin{corollary} \label{TwistAbsPerfd}
    In the above setting (\Cref{SettingFibersequence}), we have an isomorphism
    \begin{equation*}
        \mcalO_{\hProj(R), \perfd}(m) \xrightarrow{\cong} \lim_{P \in \Perfd^{\wedge I}_{\graded{\setZ[1/p]}}(R)} P(m)^{\Delta}
    \end{equation*}
    in \(\mcalD(\hProj(R), \mcalO_{\hProj(R)})\) for every \(m \in \setZ\).
\end{corollary}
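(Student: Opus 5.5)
The plan is to tensor the isomorphism \eqref{PerfdProjLimitEq} of \Cref{PerfdProjLimit} with the invertible sheaf \(\mcalO_{\hProj(R)}(m)\), move the tensor product inside the limit, and then identify each term using \Cref{TwistShiftCompletion}. Concretely, write \(\mscrX \defeq \hProj(R)\).

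\textbf{Step 1.} Tensoring \eqref{PerfdProjLimitEq} over \(\mcalO_{\mscrX}\) with \(\mcalO_{\mscrX}(m)\) gives, by \Cref{DefTwistedPerfdProj}, an isomorphism
\begin{equation*}
    \mcalO_{\mscrX,\perfd}(m) \xrightarrow{\cong} \Bigl(\lim_{P} P^{\Delta}\Bigr) \otimes^L_{\mcalO_{\mscrX}} \mcalO_{\mscrX}(m),
\end{equation*}
where \(P\) runs over \(\Perfd^{\wedge I}_{\graded{\setZ[1/p]}}(R)\).

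\textbf{Step 2.} The sheaf \(\mcalO_{\mscrX}(m)\) is the \(\widetilde{I}\)-completion of the line bundle \(L^{\otimes m}\). On the finite cover \(\{D_+(f_i)\}\) used in the proof of \Cref{TwistShiftCompletion}, it is trivialized by \(\varpi_i^{m}\). Hence it is locally free of rank one, and in particular a perfect complex. \Cref{TensorCommutesWithLimitInv} therefore lets us commute the limit with the tensor product:
\begin{equation*}
    \Bigl(\lim_{P} P^{\Delta}\Bigr) \otimes^L \mcalO_{\mscrX}(m) \xrightarrow{\cong} \lim_{P} \bigl(P^{\Delta} \otimes^L \mcalO_{\mscrX}(m)\bigr).
\end{equation*}

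\textbf{Step 3.} Since \(\mcalO_{\mscrX}(m)\) is flat, the derived tensor product \(P^{\Delta} \otimes^L \mcalO_{\mscrX}(m)\) agrees with the underived one, namely \(P^{\Delta}(m)\). \Cref{TwistShiftCompletion}, applied to the \(\setQ\)-graded \(R\)-module \(P\), then identifies this with \(P(m)^{\Delta}\). That identification is natural in \(P\), because it is built from the canonical maps \(\widetilde{M}(m) \to \widetilde{M(m)}\). So the pieces assemble into a natural isomorphism of diagrams indexed by \(P\).

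\textbf{Main obstacle.} The step needing the most care is Step 3. It requires two things. First, \(P^{\Delta}\) must be an honest sheaf sitting in degree \(0\) in \(\mcalD(\mscrX_{\Zar},\mcalO_{\mscrX})\). Second, the isomorphism of \Cref{TwistShiftCompletion} must be functorial, so that the limits match. Both should follow from the explicit local formula \((\grcomp{I}{M[1/f]})_0\) for \(M^{\Delta}\) in \Cref{AssociatedSheafFormalGradedRecall}. With these in place, composing Steps 1–3 gives the claimed isomorphism.
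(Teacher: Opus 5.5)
Your proposal is correct and follows the paper's proof: the paper tensors \eqref{PerfdProjLimitEq} with the invertible sheaf \(\mcalO_{\mscrX}(m)\), commutes the limit past the tensor product via \Cref{TensorCommutesWithLimitInv}, and identifies each term \(P^{\Delta}(m)\) with \(P(m)^{\Delta}\) via \Cref{TwistShiftCompletion}. Your extra remarks on flatness (derived versus underived tensor) and on the naturality of the identification in \(P\) make explicit points the paper leaves implicit.
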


\begin{proof}
    Taking the derived tensor product with \(\mcalO_{\mscrX}(m)\) over \(\mcalO_{\mscrX}\), the isomorphism \eqref{PerfdProjLimitEq} induces an isomorphism
    \begin{equation*}
        \mcalO_{\mscrX, \perfd}(m) \xrightarrow{\cong} (\lim_{P \in \Perfd^{\wedge I}_{\graded{\setZ[1/p]}}(R)} P^{\Delta}) \otimes^L_{\mcalO_{\mscrX}} \mcalO_{\mscrX}(m) \xrightarrow{\cong} \lim_{P \in \Perfd^{\wedge I}_{\graded{\setZ[1/p]}}(R)} (P^{\Delta}(m))
    \end{equation*}
    in \(\mcalD(\mscrX, \mcalO_{\mscrX})\), where the second isomorphism follows from \Cref{TensorCommutesWithLimitInv} since \(\mcalO_{\mscrX}(m)\) is an invertible sheaf.
    Using the canonical isomorphism in \Cref{TwistShiftCompletion}, we have the isomorphism
    \begin{equation*}
        \mcalO_{\mscrX, \perfd}(m) \xrightarrow{\cong} \lim_{P \in \Perfd^{\wedge I}_{\graded{\setZ[1/p]}}(R)} P(m)^{\Delta}
    \end{equation*}
    in \(\mcalD(\mscrX, \mcalO_{\mscrX})\) for every \(m \in \setZ\) as desired.
    \qedhere

    
\end{proof}

\begin{theorem} \label{AlgebraizableOXperfd}
    Keep the setting of \Cref{SettingFibersequence}.
    Assume further that the ideal \(I\) is generated by a weakly proregular sequence in \(R_0\) (\Cref{DefWeaklyProregular}).
    Fix an open subscheme \(U\) of \(X\) and its \(I\)-adic formal completion \(\mscrU\) as an open formal subscheme of \(\mscrX\).
    Then there exists an object \(\mcalO_{U,\perfd}\in \mcalD_{\qcoh}(U)\) together with an isomorphism
    \begin{equation*}
        \lim_{n \geq 1} Li_n^*(\mcalO_{U, \perfd}) \cong \mcalO_{\mscrU, \perfd}
    \end{equation*}
    in \(\mcalD(\mscrU_{\Zar}, \mcalO_{\mscrU})\), which is natural in \(U\).
\end{theorem}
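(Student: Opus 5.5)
We define $\mcalO_{U,\perfd}$ as the restriction to $U$ of $\widetilde{R_{\grpfd}}$, where $R_{\grpfd}$ is the absolute graded perfectoidization of the section ring (regraded by $\setZ[1/p]$ as in \Cref{RemarkGrading}). Here $\widetilde{(-)}$ is the functor $\mcalD_{\graded{\setZ}}(R) \to \mcalD_{\qcoh}(X)$, extended to $\setZ[1/p]$-graded objects by taking the $\setZ$-graded part. On $D_+(f)$ it sends $M$ to $(M[1/f])_0$. Naturality in $U$ is then automatic, since both sides are restrictions of global objects on $X$ and $\mscrX$. So it suffices to treat $U=X$ and restrict.

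We compute $\lim_n Li_n^*$ of this object on the basis of opens $D_+(f)$, $f\in R_+$ homogeneous. There it is $\dcomp{I}{(R_{\grpfd}[1/f])_0}$, because $I$ is weakly proregular in $R_0$ and hence in $R_{(f)}$, so the derived $I$-completion agrees with $\lim_n (-\otimes^L R/I^n)$. Taking degree-$0$ parts commutes with the $R_0$-linear derived completion. The graded localization result \Cref{CommutativityLimitsPerfd} therefore identifies this object with
\[
(\dgrcomp{I}{R_{\tgrpfd}[1/f]})_0 \cong \bigl(\grlim_{P} \grcomp{I}{P[1/f]}\bigr)_0 \cong \lim_{P} (\grcomp{I}{P[1/f]})_0 = \lim_P P^{\Delta}(D_+(f)).
\]
Here $P$ runs over $\Perfd^{\wedge I}_{\graded{\setZ[1/p]}}(R)$, and we use \Cref{ComparisonAdicGradedPerfd} to pass between $R_{\grpfd}$ and $R_{\tgrpfd}$ after completion. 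These identifications are compatible with restriction among the $D_+(f)$. By \Cref{RGammaCommutesLimits} and \Cref{AffineVanishing}, they assemble into an isomorphism $\lim_n Li_n^*\widetilde{R_{\grpfd}} \cong \lim_P P^{\Delta}$ in $\mcalD(\mscrX_{\Zar},\mcalO_{\mscrX})$.

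\Cref{PerfdProjLimit} then identifies the right-hand side with $\mcalO_{\mscrX,\perfd}$. For an open $U\subseteq X$ we restrict everything, using that $\mcalO_{\mscrX,\perfd}|_{\mscrU}\cong \mcalO_{\mscrU,\perfd}$. This holds because both sides have sections $\mcalO(\mscrV)_{\tperfd}$ on affine $\mscrV$ by \Cref{SectionDerivedPushout}, and such an affine-local identification determines the object by the sheaf argument of \Cref{RGammaCommutesLimits}. Quasi-coherence of $\widetilde{R_{\grpfd}}$ on $X$ is formal from the construction of $\widetilde{(-)}$.

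The main obstacle is the middle step: identifying $\dcomp{I}{(R_{\grpfd}[1/f])_0}$ with $\lim_P(\grcomp{I}{P[1/f]})_0$. Localization does not commute with the infinite limit defining $R_{\grpfd}$ on the nose, and completion interacts with the grading. The plan relies on \Cref{CommutativityLimitsPerfd}, and we must check two further points. First, taking the degree-$0$ part is compatible with the passage from derived gradedwise $I$-completion to derived $I$-completion over $R_{(f)}$; this is where weak proregularity is used. Second, the graded limit is computed degreewise.
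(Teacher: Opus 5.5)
Your proposal is correct and is essentially the paper's proof. The paper also takes $\widetilde{R_{\tgrpfd}}$ (which, after completion, is equivalent to your $\widetilde{R_{\grpfd}}$), computes $\lim_n Li_n^*$ on each $D_+(f)$ as $(\dgrcomp{I}{R_{\tgrpfd}[1/f]})_0$ using the degree-$0$ weakly proregular generators, applies \Cref{CommutativityLimitsPerfd}, and matches the result with the chain of isomorphisms in the proof of \Cref{PerfdProjLimit}; you use the second isomorphism of \Cref{CommutativityLimitsPerfd} and the statement of \Cref{PerfdProjLimit}, which amounts to the same chain. The one inference to phrase more carefully, here as in the paper, is ``weakly proregular in $R_0$, hence in $R_{(f)}$'': because $R_0\to R_{(f)}$ need not be flat, this does not follow from weak proregularity in $R_0$ alone, but it does follow from weak proregularity of the degree-$0$ generators $\underline{x}$ in $R$, since $H_i(\Kos(R_{(f)};\underline{x^n}))\cong (H_i(\Kos(R;\underline{x^n}))[1/f])_0$ compatibly in $n$.
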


\begin{proof}
    It is enough to show the existence of \(\mcalO_{U, \perfd}\) in the case that \(U = X\) since the restriction of \(\mcalO_{X, \perfd}\) to \(U\) will satisfy the desired property.
    
    Taking the derived functor of the exact functor \(\widetilde{(-)} \colon \Mod_{\graded{\setZ}}(R) \to \QCoh(X)\), we have a functor
    \begin{equation*}
        \widetilde{(-)} \colon \mcalD_{\graded{\setZ}}(R) \to \mcalD_{\qcoh}(X)
    \end{equation*}
    of derived \(\infty\)-categories.
    Set
    \begin{align*}
        \mcalO_{X, \perfd} & \defeq \widetilde{R_{\tgrpfd}} \in \mcalD_{\qcoh}(X) \quad \text{and} \\
        \mcalO_{X, \perfd}^{\wedge} & \defeq \lim_{n \geq 1} \restr{\mcalO_{X, \perfd}}{X_n} \in \mcalD_{\qcoh}(\mscrX),
    \end{align*}
    where the latter denotes the object corresponding to the system \(\{\restr{\mcalO_{X,\perfd}}{X_n}\}_{n\geq 1}\) in \(\lim_{n \geq 1} \mcalD_{\qcoh}(X_n)\) along the equivalence in \Cref{DerivedQCohEquivGeneral} together with the weak proregularity of \(I\).
    Note that the associated ideal sheaf \(\widetilde{I}\) of \(I\) on \(X\) is locally generated by a weakly proregular sequence since \(I\) is generated by a weakly proregular sequence in \(R_0\).

    For any homogeneous element \(f \in R_+\) and \(D_+(f) \subseteq \mscrX\), we have isomorphisms
    \begin{align*}
        R\Gamma(D_+(f), \mcalO_{X, \perfd}^{\wedge}) & \cong \lim_{n \geq 1} R\Gamma(D_+(f), \restr{\mcalO_{X, \perfd}}{X_n}) \cong \lim_{n \geq 1} R\Gamma(D_+(f), \mcalO_{X, \perfd} \otimes^L_{\mcalO_X} \mcalO_{X_n}) \\
        & \cong \lim_{n \geq 1} ((R_{\tgrpfd})_{(f)} \otimes^L_{R_{(f)}} R_{(f)}/(I_{(f)})^n)
    \end{align*}
    of \(R_{(f)}\)-algebras since \(X_n\) is the closed subscheme of \(X\) defined by \(I^n\) (not necessarily \(\Proj(R/I^n)\)).
    The systems \(\{(I_{(f)})^n\}_{n \geq 1}\) and \(\{(I^n[1/f])_0\}_{n \geq 0}\) of ideals of \(R_{(f)}\) are cofinal with each other by \Cref{ComparisonTopologyFormalProj}.
    Therefore, using the degree \(0\) weakly proregular sequence \(\underline{x} = x_1, \dots, x_r\) generating the ideal \(I\), the above isomorphisms continue as the following;
    \begin{align*}
        & \lim_{n \geq 1} ((R_{\tgrpfd})_{(f)} \otimes^L_{R_{(f)}} (R/I^n)_{(f)}) \cong \lim_{n \geq 1} ((R_{\tgrpfd})_{(f)} \otimes^L_{R_{(f)}} (R/^L \underline{x^n})_{(f)}) \\
        & \cong \lim_{n \geq 1} (((R_{\tgrpfd}/^L \underline{x^n})[1/f])_0) \cong (\grlim_{n \geq 1} ((R_{\tgrpfd}/^L \underline{x^n})[1/f]))_0 \cong (\dgrcomp{I}{R_{\tgrpfd}[1/f]})_0 \\
        & \cong ((R[1/f])_{\tgrpfd})_0 = \lim_{Q' \in \Perfd^{\wedge I}_{\graded{\setZ[1/p]}}(R[1/f])} Q'_0
    \end{align*}
    of \(R_{(f)}\)-algebras, where the second isomorphism holds for \(I\) generated by degree \(0\) elements and the fifth isomorphism is \Cref{CommutativityLimitsPerfd}.
    The last term appears in \eqref{PerfdProjLimitEq2} in the proof of \Cref{PerfdProjLimit}, and thus this is isomorphic to \(R\Gamma(D_+(f), \mcalO_{\mscrX, \perfd})\).
    Since they are functorial isomorphisms, we can conclude there exists an isomorphism
    \begin{equation*}
        \mcalO_{X, \perfd}^{\wedge} \cong \mcalO_{\mscrX, \perfd}
    \end{equation*}
    in \(\mcalD_{\qcoh}(\mscrX_{\Zar}, \mcalO_{\mscrX})\).
\end{proof}

\begin{corollary} \label{QuasiProjCase}
Let $B$ be a Noetherian ring and $X$ be a quasi-projective scheme over $\Spec(B)$.\footnote{Namely, \(X\) is an open subscheme of a projective scheme over \(\Spec(B)\).}
Let $I$ be an ideal of $B$ containing $p$.
Then the absolute perfectoidization of the structure sheaf $\cO_{\mscrX}$ of $I$-adic completion $\mscrX$ of $X$ is algebraizable.
\end{corollary}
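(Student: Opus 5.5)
The plan is to reduce the corollary to \Cref{AlgebraizableOXperfd}, applied to a projective compactification of \(X\) and then restricted to the open subscheme \(X\).

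\textbf{Step 1: set up the compactification.} Write \(X\) as an open subscheme of a projective \(B\)-scheme \(\overline{X}\). Choose an ample line bundle \(L\) on \(\overline{X}\), an inverse \(L^{-1}\), and the canonical isomorphism \(L \otimes L^{-1} \cong \mcalO_{\overline{X}}\).

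\textbf{Step 2: check the hypotheses of \Cref{SettingFibersequence}.}
\begin{itemize}
\item \(\overline{X}\) is quasi-compact.
\item \(\overline{X} \to \Spec(B)\) is proper. It factors through \(\Spec(H^0(\overline{X}, \mcalO_{\overline{X}}))\), which is finite over \(B\), so \(\overline{X} \to \Spec(H^0(\overline{X},\mcalO_{\overline{X}}))\) is proper and in particular universally closed.
\item The section ring \(R = \bigoplus_{n \geq 0} H^0(\overline{X}, L^{\otimes n})\) is a \(B\)-algebra. We take \(I\) to be the homogeneous ideal \(IR\), which is finitely generated by the degree-\(0\) images of generators of \(I \subseteq B\) and contains \(p\).
\end{itemize}
By \Cref{ProjIsom}, \(\overline{X} \cong \Proj(R)\). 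The \(\widetilde{I}\)-adic completion \(\hProj(R)\) is therefore the \(I\)-adic completion of \(\overline{X}\), and \(\mscrX\) is its open formal subscheme lying over \(X\).

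\textbf{Step 3: weak proregularity, the main point to verify.} \Cref{AlgebraizableOXperfd} requires \(I\) to be generated by a weakly proregular sequence in \(R_0\). Here \(R_0 = H^0(\overline{X}, \mcalO_{\overline{X}})\) is a finite \(B\)-algebra, hence Noetherian. Any finite sequence in a Noetherian ring is weakly proregular, so the images \(x_1, \dots, x_r \in R_0\) of generators of \(I \subseteq B\) form a weakly proregular sequence generating \(IR\).

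A possible subtlety is that the topology on \(X\) should be the one defined by \(I\mcalO_X\), whereas the theorem completes along \(\widetilde{IR}\). These agree because \(\widetilde{IR} = I\mcalO_{\overline{X}}\), as the generators lie in degree \(0\). The comparison of ideal systems is in any case handled inside the proof of the theorem via \Cref{ComparisonTopologyFormalProj}.

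\textbf{Step 4: conclude.} Apply \Cref{AlgebraizableOXperfd} with \(U = X \subseteq \overline{X}\) and \(\mscrU = \mscrX\). This gives \(\mcalO_{X,\perfd} \in \mcalD_{\qcoh}(X)\), namely the restriction of \(\widetilde{R_{\tgrpfd}}\) to \(X\), together with an isomorphism
\[
\lim_{n \geq 1} Li_n^*(\mcalO_{X,\perfd}) \cong \mcalO_{\mscrX,\perfd}
\]
in \(\mcalD(\mscrX_{\Zar}, \mcalO_{\mscrX})\). This is exactly the statement that \(\mcalO_{\mscrX,\perfd}\) is algebraizable.
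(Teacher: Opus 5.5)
Your proposal is correct and follows essentially the paper's own argument. You embed \(X\) into a projective \(B\)-scheme and use that \(H^0\) is finite over the Noetherian ring \(B\) to obtain both universal closedness over \(\Spec(H^0)\) and weak proregularity of the degree-\(0\) generators of \(IR\), then apply \Cref{AlgebraizableOXperfd} with \(U = X\). You simply make explicit the choice of ideal and the weak proregularity check that the paper leaves implicit in ``\(X'\) satisfies the condition in \Cref{AlgebraizableOXperfd}''.
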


\begin{proof}
    Take an open immersion \(X \hookrightarrow X'\) to a projective scheme \(X'\) over \(\Spec(B)\).
    The structure morphism \(X' \to \Spec(B)\) factors through \(X' \to \Spec(H^0(X', \mcalO_{X'}))\).
    Since \(X' \to \Spec(B)\) is projective and \(\Spec(H^0(X', \mcalO_{X'})) \to \Spec(B)\) is separated, \(X' \to \Spec(H^0(X', \mcalO_{X'}))\) is projective.
    Moreover, \(H^0(X', \mcalO_{X'})\) is Noetherian by \citeSta{02O5}, so \(X'\) satisfies the condition in \Cref{AlgebraizableOXperfd}.
\end{proof}

Following \Cref{CoherenceFormalSchemes}, we can show that \(\mcalO_{X, \perfd}\) is almost coherent when \(X\) comes from a Noetherian setting and in particular, the algebraic localization \(\mcalO_{X, \perfd}[1/p] \in \mcalD_{\qcoh}(X[1/p])\) is coherent:

\begin{proposition} \label{CoherenceAlgebraicLocalization}
    Keep the setting of \Cref{SettingFibersequence} and assume that \(R_0 = H^0(X, \mcalO_X)\) is a \(p\)-torsion-free perfectoid valuation ring.
    Assume further that there exists a flat projective scheme \(X'\) over a \(p\)-torsion-free discrete valuation ring \(R'_0 \defeq H^0(X', \mcalO_{X'})\) and an ample line bundle \(L'\) on \(X'\) such that \(R'_0 \to R_0\) is a flat extension, \(X \cong X' \times_{\Spec(H^0(X', \mcalO_{X'}))} \Spec(R_0)\) and \(L \cong L' \otimes_{\mcalO_{X'}} \mcalO_X\).
    
    Then \(\mcalO_{X, \perfd} \in \mcalD_{\qcoh}(X)\) is bounded and almost coherent with respect to the basic setup \((R_0, \sqrt{pR_0})\).
    In particular, \(\mcalO_{X, \perfd}[1/p] \in \mcalD_{\qcoh}(X[1/p])\) belongs to \(\mcalD_{\coh}^b(X[1/p])\), where \(X[1/p]\) is the open subscheme of \(X\) defined by the non-vanishing of \(p\) and \(\mcalO_{X, \perfd}[1/p]\) is the pullback of \(\mcalO_{X, \perfd}\) to \(X[1/p]\).
\end{proposition}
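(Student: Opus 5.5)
We reduce the statement to the formal-scheme result in \Cref{CoherenceFormalSchemes} through the algebraization of \Cref{AlgebraizableOXperfd}, and then use a base-change argument to pass from \(X'\) to \(X\).

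\textbf{Step 1: reduction to the completion.} Here \(R_0\) is a \(p\)-torsion-free valuation ring, so \(I = (p)\) is generated by a weakly proregular sequence of degree \(0\). \Cref{AlgebraizableOXperfd} then gives \(\lim_n Li_n^*\mcalO_{X,\perfd} \cong \mcalO_{\mscrX,\perfd}\), where \(\mscrX = \hProj(R)\) is the \(p\)-adic completion. The formal scheme \(\mscrX\) is topologically finitely presented over the perfectoid valuation ring \(\mcalO_K \defeq \comp{p}{R_0}\), since it is the base change of the Noetherian projective \(X'\). \Cref{CoherenceFormalSchemes} therefore applies: each \(\mscrH^i(\mcalO_{\mscrX,\perfd})\) is almost coherent, and the complex is bounded. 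The bound comes from the \(p\)-adic Riemann--Hilbert input, since \(\mcalO_{\mscrX,\perfd}\) is almost isomorphic to the \(p\)-completed étale cohomology of the generic fiber in degrees \(\le 2\dim\).

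\textbf{Step 2: descent of boundedness and almost coherence to \(X\).} The object \(\mcalO_{X,\perfd} = \widetilde{R_{\grpfd}}\) has \(p\)-complete local sections, because \(R_{\grpfd}\) is derived gradedwise \(p\)-complete (\Cref{RemarkGradedPerfd}). Over an affine \(D_+(f)\), the sections are \((R_{\grpfd})_{(f)}\). By the chain of isomorphisms in the proof of \Cref{AlgebraizableOXperfd}, its derived \(p\)-completion is \(R\Gamma(D_+(f),\mcalO_{\mscrX,\perfd})\), which is bounded with almost finitely generated cohomology over \(\comp{p}{R_{(f)}}\). To pass back to \(R_{(f)}\), I use the following fact: a derived \(p\)-complete complex over a \(p\)-torsion-free ring \(A\) whose derived mod \(p\) reduction is bounded with almost finitely presented cohomology over \(A/p\) is itself bounded and almost coherent over \(\comp{p}{A}\). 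I then compare \(\comp{p}{A}\) with \(A\) using \(A/p^n = \comp{p}{A}/p^n\) together with the almost analogue of the Artin--Rees/completion argument over the Noetherian model \(R'_{(f)} \otimes_{R'_0} R_0\). The Noetherian model is needed to make this work. Almost finite generation over \(R_{(f)}\) follows because \(R_{(f)} = R'_{(f)}\otimes_{R'_0}R_0\) is almost coherent: it is flat over the Noetherian \(R'_{(f)}\) and is a filtered colimit of finite-type Noetherian models, and reduction mod \(p^n\) detects finite generation up to \(\sqrt{pR_0}\).

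\textbf{Step 3: inverting \(p\).} Since \(\sqrt{pR_0}\,[1/p] = R_0[1/p]\), inverting \(p\) turns almost coherent into coherent over \(R_{(f)}[1/p]\). Boundedness is preserved. This gives \(\mcalO_{X,\perfd}[1/p] \in \mcalD^b_{\coh}(X[1/p])\). Coherence makes sense here because \(X[1/p]\) is the base change of the Noetherian \(X'[1/p]\) along the field extension \(\Frac\), hence Noetherian-like: coherent equals finitely presented.

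\textbf{Main obstacle.} The hard part is Step 2: lifting almost finite generation from the \(p\)-completion \(\comp{p}{R_{(f)}}\) to the non-complete ring \(R_{(f)}\), as a non-complete statement. I would first try to prove the cohomology modules are almost finitely generated over \(R_{(f)}\) by noting they are derived \(p\)-complete and that their reduction mod \(p\) is almost finitely generated over \(R_{(f)}/p\). Then I would use a derived Nakayama argument to lift almost generators, working with \(\epsilon\)-approximations for each \(\epsilon \in \sqrt{pR_0}\).
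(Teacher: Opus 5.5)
Your Step~2 has a genuine gap, and it starts from a false claim. The sections of \(\mcalO_{X,\perfd}=\widetilde{R_{\grpfd}}\) over \(D_+(f)\) are \((R_{\grpfd}[1/f])_0=\colim_k f^{-k}(R_{\grpfd})_{k\deg f}\). This is a filtered colimit of derived \(p\)-complete pieces, and it is not derived \(p\)-complete in general. For example, take \(X=\mathbb{P}^1_{R_0}\) and \(R=R_0[x,y]\), and map to the graded perfectoid \(R\)-algebra \(P\) obtained by gradedwise \(p\)-completing \(R_0[x^{1/p^\infty},y^{1/p^\infty}]\). Since \((P[1/x])_0\) contains only bounded powers of \(y/x\), this shows that the partial sums of \(\sum_n p^n(y/x)^n\) have no \(p\)-adic limit in \(H^0\) of the sections over \(D_+(x)\). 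This is exactly why the proof of \Cref{AlgebraizableOXperfd} has to insert \(\dgrcomp{I}{-}\) before comparing with \((R[1/f])_{\tgrpfd}\).

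So the derived Nakayama argument you sketch over \(R_{(f)}\) has nothing to act on. More fundamentally, no argument that only uses \(\dcomp{p}{(R_{\grpfd})_{(f)}}\cong R\Gamma(D_+(f),\mcalO_{\mscrX,\perfd})\) can work. The \(p\)-completion is blind to, e.g., uniquely \(p\)-divisible contributions, so it controls neither finite generation nor boundedness of the non-complete complex. Artin--Rees-type comparisons with the completion over the Noetherian model presuppose the finite generation you are trying to prove, so they do not close the gap.

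The missing idea, which is how the paper argues, is to prove finiteness for the graded object over the section ring \(R\) \emph{before} localizing. There completeness is only needed degreewise, where it does hold. The steps are as follows.
\begin{enumerate}
\item Since \(R\cong R'\otimes_{R'_0}R_0\) is finitely presented over \(R_0\), \cite{bhatt2025Aspects}*{Theorem 7.3.6} applies directly to \(\Spf(R)\) and shows that \(R_{\perfd}\) is almost coherent.
\item By \Cref{CompletionOfAbsoluteGradedPerfd}, \(R_{\grpfd}/^L p\) has underlying object \(R_{\perfd}/^L p\), so it is almost coherent.
\item Because \(R_{\grpfd}\) is derived gradedwise \(p\)-complete, the argument of \cite{bhatt2025Aspects}*{Proposition 7.1.6}, run in \(\mcalD^{\comp{p}}_{\graded{\setZ[1/p]}}(R)\), lifts this to almost coherence of \(R_{\grpfd}\). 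Boundedness is inherited from that of \(R_{\perfd}\) (\cite{bhatt2025Aspects}*{Proposition 4.7.4}).
\item Then \(\mscrH^i(\mcalO_{X,\perfd})|_{D_+(f)}\cong(H^i(R_{\grpfd})[1/f])_0\) is almost finitely generated over \(R_{(f)}\), because localization and passing to degree \(0\) preserve (almost) finite generation over \(R\).
\end{enumerate}
With this route, your Step~1 and the detour through \(\mscrX\) and \Cref{AlgebraizableOXperfd} become unnecessary, and your Step~3 goes through as written.
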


\begin{proof}
    Set \(R' = \bigoplus_{n \geq 0} H^0(X', L'^{\otimes n})\).
    Using the flat base change theorem, we have an isomorphism
    \begin{equation*}
        R' \otimes_{R'_0} R_0 \xrightarrow{\cong} \bigoplus_{n \geq 0} H^0(X, L^{\otimes n}) = R.
    \end{equation*}
    Since \(X'\) is projective over a discrete valuation ring \(R'_0\), the section ring \(R'\) is finitely generated over \(R'_0\) and thus \(R\) is finitely presented over \(R_0\).

    Using \cite{bhatt2025Aspects}*{Theorem 7.3.6} for \(\mfrakX = \Spf(R)\) as in \Cref{CoherenceFormalSchemes}, we know that the absolute perfectoidization \(R_{\perfd}\) is almost coherent as an \(R_0\)-module.
    By \Cref{CompletionOfAbsoluteGradedPerfd}, \(R_{\grpfd}/^L p \in \mcalD_{\graded{\setZ[1/p]}}(R)\) is almost coherent.
    The same arguments of \cite{bhatt2025Aspects}*{Proposition 7.1.6} in the graded setting, i.e., in \(\mcalD_{\graded{\setZ[1/p]}}^{\comp{p}}(R)\), show that \(R_{\grpfd} \in \mcalD_{\graded{\setZ[1/p]}}^{\comp{p}}(R)\) is almost coherent.
    Since \(R_{\perfd}\) is concentrated in degree \([0, \dim R]\) (\cite{bhatt2025Aspects}*{Proposition 4.7.4}), so is \(R_{\grpfd}\) in \(\mcalD_{\graded{\setZ[1/p]}}(R)\).
    In particular, \(R_{\grpfd} \in \mcalD_{\graded{\setZ[1/p]}}(R)\) is bounded and almost coherent.


    The construction of \(\mcalO_{X, \perfd}\) shows the isomorphism
    \begin{equation*}
        \restr{\mscrH^i(\mcalO_{X, \perfd})}{D_+(f)} \cong (H^i(R_{\grpfd})[1/f])_0
    \end{equation*}
    holds for any homogeneous element \(f \in R_+\) and \(i \in \setZ\).
    So the finiteness of \(R_{\grpfd}\) implies that these cohomology sheaves \(\mscrH^i(\mcalO_{X, \perfd})\) are bounded and almost coherent sheaves on \(X\).
    The above arguments deduce that \(\mcalO_{X, \perfd} \in \mcalD_{\qcoh}(X)\) is bounded and almost coherent with respect to the basic setup \((R_0, \sqrt{pR_0})\).
\end{proof}

    

\appendix

\section{Generalities on projective spectra}

\subsection{\texorpdfstring{Projective spectrum of \(\setQ\)-graded rings}{Projective spectrum of Q-graded rings}}

\begin{construction} \label{ConstProjMulti}
    Let \(n\) be a positive integer, and let
    \[
    R \defeq \bigoplus_{\mbfq = (q_1, \dots, q_n) \in \setQ^n} R_\mbfq
    \]
    be a \(\setQ^n\)-graded ring.
    For each standard basis vector \(\mbfe_i \defeq (0, \dots, 0, 1, 0, \dots, 0) \in \setQ^n\), we define a \(\setQ\)-graded ring \(R_i \defeq \bigoplus_{q \in \setQ} R_{q\mbfe_i}\), which is a graded subring of \(R\).
    Choose homogeneous ideals \(R_{i,+} \subseteq R_i\) such that \(R_{i,+} \cap R_{i,0} = 0\) for each \(i = 1, \dots, n\). This defines a homogeneous ideal \(R_+ \defeq R_{1,+} \cdots R_{n,+}\) of \(R\) such that \(R_+ \cap R_0 = 0\).
    We define a subset of \(\Spec(R)\) by
    \begin{align*}
        \Proj(R; R_+) & \defeq \Proj(R; R_{1, +}, \dots, R_{n, +}) \\
        & \defeq \set{\mfrakp \in \Spec(R)}{\text{$\mfrakp$ is homogeneous and $R_{i, +} \nsubseteq \mfrakp \cap R_i$ for all $i = 1, \dots, n$}}.
    \end{align*}
    For homogeneous elements \(f_i \in R_{i,+}\) for each \(i=1,\dots,n\), writing the product \(\mbff \defeq f_1 \cdots f_n\), we define a subset
    \begin{equation*}
        D_+(\mbff) \defeq D_+(f_1 \cdots f_n) \defeq \bigcap_{i=1}^n D_+(f_i) = \set{\mfrakp \in \Proj(R; R_+)}{f_1 \cdots f_n \notin \mfrakp} \subseteq \Spec(R).
    \end{equation*}
    The collection \(\set{D_+(f_1 \cdots f_n)}{f_1 \in R_{1, +}, \dots, f_n \in R_{n, +}}\) covers \(\Proj(R; R_+)\) and is stable under taking finite intersections.
    Thus we equip \(\Proj(R; R_+)\) with the Zariski topology for which this collection forms a basis.
    For each basic open subset \(D_+(\mbff)\), we define
    \begin{equation*}
        \mcalO_{\Proj(R;R_+)}(D_+(\mbff)) \defeq R_{(\mbff)} \defeq (R[1/\mbff])_0,
    \end{equation*}
    the degree-\(0\) part of the localization of \(R\) at \(f_1 \cdots f_n\).
    By the following lemma (\Cref{SectionMultigradedProj}), it follows that \(\Proj(R; R_+)\) is a separated scheme over \(\Spec(R_0)\), and we call this scheme the \emph{projective spectrum of \(R\) with respect to \(R_{1, +}, \dots, R_{n, +}\)}.
\end{construction}

\begin{lemma} \label{SectionsGradedMapMulti}
    Let \(R\) be a \(G\)-graded ring and let \(R_+\) be a homogeneous ideal of \(R\) such that \(R_+ \cap R_0 = 0\).
    For any homogeneous ideal \(I\) of \(R\) such that \(I \subseteq R_+\), the ideal \(\sqrt{I} \cap R_+\) is the intersection of \(R_+\) and all homogeneous prime ideals of \(R\) which contain \(I\) but not \(R_+\).
    In particular, in a \(\setQ^n\)-graded ring \(R\), any containment \(D_+(\mbfg) \subseteq D_+(\mbff)\) induces a canonical graded homomorphism \(R[1/\mbff] \to R[1/\mbfg]\).
    Thus we obtain a ringed space \((D_+(\mbff), \mcalO_{D_+(\mbff)})\) from these sections and the corresponding restriction maps.
\end{lemma}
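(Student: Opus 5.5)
The statement has three parts. I would prove them in order, working in a general \(G\)-graded ring \(R\) with a homogeneous ideal \(R_+\) satisfying \(R_+ \cap R_0 = 0\). The main work is the first assertion; the other two are formal consequences, modulo one technical point.

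\textbf{Step 1 (the radical statement).} The inclusion of \(\sqrt{I} \cap R_+\) into the intersection is clear, since every prime containing \(I\) contains \(\sqrt{I}\). For the converse, take \(x \in R_+\) lying in every homogeneous prime \(\mfrakp \supseteq I\) with \(R_+ \not\subseteq \mfrakp\).
\begin{itemize}
\item First reduce to \(x\) homogeneous. The intersection is a homogeneous ideal, because each such \(\mfrakp\) is homogeneous. Also \(\sqrt{I}\) is homogeneous: this is standard for graded rings, and for a torsion-free \(G\) one can totally order \(G\) and take leading terms.
\item Suppose \(x\) is homogeneous but \(x \notin \sqrt{I}\). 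Then \(I \cap \{x^n\} = \emptyset\), so the localization \(R[1/x]\) is a graded ring in which \(I R[1/x]\) is a proper homogeneous ideal.
\item Pick a prime of \(R[1/x]\) containing \(IR[1/x]\). Since \(G\) is torsion-free, we may take it homogeneous: the homogeneous part \(\mfrakp^*\) of any prime is again prime.
\item Its contraction \(\mfrakp\) is a homogeneous prime of \(R\) with \(I \subseteq \mfrakp\) and \(x \notin \mfrakp\). Since \(x \in R_+\), we get \(R_+ \not\subseteq \mfrakp\). This contradicts the choice of \(x\).
\end{itemize}
So \(x \in \sqrt{I} \cap R_+\), which proves the first assertion.

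\textbf{Step 2 (restriction maps).} Suppose \(D_+(\mbfg) \subseteq D_+(\mbff)\) in \(\Proj(R;R_+)\).
\begin{itemize}
\item Apply Step 1 with \(I = (\mbff)\) and the \(R_+\) of \Cref{ConstProjMulti}. Then \(\mbfg \in R_+\) lies in every homogeneous prime containing \(\mbff\) and not containing \(R_+\).
\item \textbf{Main obstacle.} Primes of \(\Proj(R;R_+)\) are required to avoid each \(R_{i,+}\) separately, not just \(R_+\). I would resolve this by noting that a homogeneous prime \(\mfrakp\) fails to contain the product \(R_{1,+}\cdots R_{n,+}\) if and only if it contains none of the factors \(R_{i,+}\). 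For the sets to match, \(R_{i,+} \not\subseteq \mfrakp\) must be equivalent to \(R_{i,+} \not\subseteq \mfrakp \cap R_i\); this follows because \(R_{i,+} \subseteq R_i\).
\item Hence \(\mbfg \in \sqrt{(\mbff)}\), so \(\mbfg^m = a\mbff\) for some \(m\) and some \(a \in R\), which can be taken homogeneous.
\item Therefore \(\mbff\) becomes a unit in \(R[1/\mbfg]\), and the universal property of localization gives a canonical graded homomorphism \(R[1/\mbff] \to R[1/\mbfg]\). It is unique, hence compatible with compositions.
\end{itemize}

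\textbf{Step 3 (the ringed space).} Take degree-\(0\) parts of the maps from Step 2. This gives a presheaf on the basis \(\{D_+(\mbfg) \subseteq D_+(\mbff)\}\) with sections \(R_{(\mbfg)}\). By uniqueness of the maps, the compatibilities \(D_+(\mbfh) \subseteq D_+(\mbfg) \subseteq D_+(\mbff)\) hold automatically. This yields the ringed space \((D_+(\mbff), \mcalO_{D_+(\mbff)})\) claimed in the statement.
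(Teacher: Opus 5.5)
Your proposal is correct and follows the paper's strategy. You reduce to a homogeneous \(x \in R_+ \setminus \sqrt{I}\), produce a homogeneous prime containing \(I\) but not \(x\), and then use \(\mathbf{g} \in \sqrt{\mathbf{f}R}\) to invert \(\mathbf{f}\) in \(R[1/\mathbf{g}]\). The only variation is in how the prime is produced: the paper applies Zorn's lemma to homogeneous ideals containing \(I\) and avoiding \(\{x^k\}\), whereas you localize at \(x\) and take the homogeneous core of a prime. You also make explicit two points the paper leaves implicit: the reduction to homogeneous \(x\), and the identification of points of \(\Proj(R;R_+)\) with homogeneous primes not containing the product ideal \(R_+\).
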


\begin{proof}
    It suffices to show that if \(x \in R_+\) is a nonzero homogeneous element such that \(x \notin \sqrt{I}\), then there exists a homogeneous prime ideal \(\mfrakp\) of \(R\) such that \(R_+ \nsubseteq \mfrakp \supseteq I\) and \(x \notin \mfrakp\).
    Set \(\Sigma\) to be the set of all homogeneous ideals of \(R\) which do not meet the multiplicative subset \(\{x, x^2, \dots\}\) in \(R_+\).
    Since \(x \neq 0\), the set \(\Sigma\) is nonempty. Moreover, we can apply Zorn's lemma to \(\Sigma\) to obtain a maximal homogeneous ideal \(I_0\) in \(\Sigma\).
    We can show that \(I_0\) is a prime ideal: let \(f,g \in R\) be homogeneous elements such that both are not contained in \(I_0\).
    Then the homogeneous ideals \(I_0 + (f)\) and \(I_0 + (g)\) have an intersection with \(\{x, x^2, \dots\}\) in \(R_+\) because \(I_0\) is maximal in \(\Sigma\).
    There exist \(i,j \in I_0\) and \(a,b \in R\) such that \((i+fa)(j+gb) = x^k\) for some \(k \geq 1\). Since \(I_0\) does not have an intersection with \(\{x, x^2, \dots\}\), we have \(fg \notin I_0\), and hence \(I_0\) is prime.

    The last assertion follows from the fact that if \(D_+(g_1 \cdots g_n) \subseteq D_+(f_1 \cdots f_n)\), then our first assertion implies that \(g_1 \cdots g_n \in \sqrt{f_1 \cdots f_n R}\) and therefore we have a canonical map \(R[1/f_1 \cdots f_n] \to R[1/g_1 \cdots g_n]\) and this is a graded ring homomorphism.
\end{proof}

\begin{lemma} \label{SectionMultigradedProj}
    Let \(R\) be a \(\setQ^n\)-graded ring and let \(R_{i, +}\) be a homogeneous ideal of \(R_i\) such that \(R_{i, +} \cap R_{i, 0} = 0\) for each \(i = 1, \dots, n\).
    For any homogeneous element \(f_i \in R_{i, +}\), we set \(\mbff \defeq f_1 \cdots f_n\) and the map
    \begin{equation} \label{AffineProjMulti}
        D_+(\mbff) \to \Spec(R_{(\mbff)})\ ;\ \mfrakp \mapsto \mfrakp_{(\mbff)} \defeq \mfrakp[1/\mbff] \cap R_{(\mbff)}
    \end{equation}
    is a homeomorphism and induces an isomorphism of ringed spaces 
    \[
    (D_+(\mbff), \mcalO_{D_+(\mbff)}) \cong (\Spec(R_{(\mbff)}), \mcalO_{\Spec(R_{(\mbff)})}).
    \]
    In particular, for any standard affine open subsets \(D_+(\mbff)\) and \(D_+(\mbfg)\) of \(\Proj(R, R_+)\), the intersection \(D_+(\mbff) \cap D_+(\mbfg) = D_+(\mbff \mbfg)\) is also a standard affine open subset, and the restriction maps induce a surjective homomorphism \(R_{(\mbff)} \otimes R_{(\mbfg)} \twoheadrightarrow R_{(\mbff \mbfg)}\).
\end{lemma}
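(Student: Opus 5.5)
The plan is to mimic the classical proof for \(\setZ\)-graded rings (as in the Stacks Project treatment of \(D_+(f) \cong \Spec(R_{(f)})\)), carrying out each step in the \(\setQ^n\)-graded setting.

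\textbf{Step 1: well-definedness and injectivity of \eqref{AffineProjMulti}.} For a homogeneous prime \(\mfrakp\) with \(\mbff \notin \mfrakp\), the ideal \(\mfrakp[1/\mbff]\) is a homogeneous prime of \(R[1/\mbff]\). Its degree-zero part \(\mfrakp_{(\mbff)}\) is therefore prime in \(R_{(\mbff)}\). For injectivity we recover \(\mfrakp\) from \(\mfrakp_{(\mbff)}\). Given a homogeneous \(x \in R\) of degree \(\mbfq = (q_1, \dots, q_n)\), we want a homogeneous unit of \(R[1/\mbff]\) in degree \(\mbfq\).

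\textbf{Step 2: the key point, homogeneous units in every degree.} Each \(f_i\) lies in \(R_{i,+}\) and has degree \(d_i \mbfe_i\) with \(d_i \neq 0\), because \(R_{i,+} \cap R_{i,0} = 0\). Over \(\setZ\) one would simply write \(x^{d}/f^{\deg x}\). Here the degrees are rational and the \(d_i\) may have either sign, so we choose positive integers \(N\) and integers \(a_i\) with \(N q_i = a_i d_i\). Then \(y \defeq x^N/\prod_i f_i^{a_i}\) has degree \(0\), with negative exponents \(a_i\) interpreted as multiplication by \(f_i^{-a_i}\). Moreover \(x \in \mfrakp[1/\mbff]\) if and only if \(y \in \mfrakp_{(\mbff)}\), since \(\mfrakp\) is prime and the \(f_i \notin \mfrakp\). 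Hence \(\mfrakp = \{x : x^N/\prod_i f_i^{a_i} \in \mfrakp_{(\mbff)}\}\) on homogeneous elements, which gives injectivity.

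\textbf{Step 3: surjectivity.} Given a prime \(\mfrakq \subseteq R_{(\mbff)}\), let \(\mfrakp\) be the ideal generated by those homogeneous \(x\) whose associated \(y\) (for some, equivalently every, admissible \(N\)) lies in \(\mfrakq\). We check that \(\mfrakp\) is a homogeneous prime not containing \(\mbff\). Primality is verified on homogeneous elements using the multiplicativity of the construction \(x \mapsto y\). Additivity within a fixed degree follows by raising to large powers and using that \(\mfrakq\) is radical. Since \(f_i \notin \mfrakp\), we have \(R_{i,+} \nsubseteq \mfrakp \cap R_i\) for each \(i\), so \(\mfrakp \in D_+(\mbff)\).

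\textbf{Step 4: homeomorphism and sheaves.} The map sends \(D_+(\mbff\mbfg)\) onto \(D(\mbfg^N/\mbff^{a})\) for a suitable degree-zero ratio. This shows the map is open and continuous. The structure sheaves match because \(R_{(\mbff\mbfg)} \cong (R_{(\mbff)})_{\mbfg^N/\mbff^a}\), by the same unit argument; the restriction maps are those of \Cref{SectionsGradedMapMulti}. The final assertion follows the classical argument: any element of \(R_{(\mbff\mbfg)}\) has the form \(x/(\mbff\mbfg)^k\), and it is split as a product of a degree-zero element over \(\mbff^{k'}\) and one over \(\mbfg^{k''}\), using the degree-matching of Step 2 (possibly after multiplying numerator and denominator by a power of \(\mbff\mbfg\)).

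I expect the main obstacle to be Step 2 together with its use in surjectivity. One must make sure that suitable integers \(N\) and \(a_i\) exist for rational degrees and for \(d_i\) of either sign. One must also show that membership of \(y\) in \(\mfrakq\) is independent of the chosen \(N\). This independence comes from \(\mfrakq\) being prime: two choices give powers of each other up to units.
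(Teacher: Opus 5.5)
Your plan follows the paper's proof. The inverse \(\mfrakq\mapsto\widetilde{\mfrakq}\) is defined by testing the degree-zero elements \(x^N/\prod_i f_i^{a_i}\) against \(\mfrakq\). The opens \(D_+(\mbfg)\subseteq D_+(\mbff)\) are matched with principal opens of \(\Spec(R_{(\mbff)})\), with \(R_{(\mbff)}[1/\mbfg_{(\mbff)}]\cong R_{(\mbfg)}\). The last claim is an explicit factorization. Your Step 2 bookkeeping (\(Nq_i=a_id_i\) with \(a_i\in\setZ\) of either sign) is a cleaner form of the paper's normalization by \(n_i,m,k_i,l\).

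Two small repairs are needed. First, Step 3 must also verify \(\mfrakp_{(\mbff)}=\mfrakq\), since that is what actually gives surjectivity. This is immediate: for \(x/\mbff^k\in R_{(\mbff)}\), the choice \(N=1\), \(a_i=k\) gives \(y=x/\mbff^k\). Second, in Step 4 the ratio cannot be \(\mbfg^N/\mbff^a\) with a single exponent, because \(\deg\mbfg\) is in general not a rational multiple of \(\deg\mbff\). It has to be \(\mbfg^N/\prod_i f_i^{a_i}\) as in your Step 2, which plays the role of the paper's \(\mbfg_{(\mbff)}=\prod_i g_i^{m'n_i}/f_i^{mn'_i}\).

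The step that does not go through as written is continuity in Step 4. Knowing that each \(D_+(\mbff\mbfg)\) maps onto a principal open only shows the map is open. In the \(\setZ\)-graded case one gets continuity because \(D(x/f^k)\) is the image of \(D_+(fx)\), which uses \(fx\in R_+\). For \(n\ge 2\), a homogeneous \(x\) need not lie in the subrings \(R_i\). So \(\mbff x\) is generally not of the form \(g_1\cdots g_n\) with \(g_i\in R_{i,+}\), and the sets \(D_+(\mbfg)\) need not form a basis of the topology induced from \(\Spec(R)\).

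For instance, take \(R=k[x_1,x_2,z]\) with \(\deg x_i=\mbfe_i\), \(\deg z=\mbfe_1+\mbfe_2\), and \(R_{i,+}=x_ik[x_i]\). Every nonempty \(D_+(\mbfg)\) is all of \(D_+(x_1x_2)\), whereas \(R_{(x_1x_2)}=k[z/(x_1x_2)]\). So with the topology generated by the \(D_+(\mbfg)\) alone, the bijection is not a homeomorphism.

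The fix is to use the topology induced from \(\Spec(R)\). The paper does this implicitly when it views the map as the restriction of the continuous map \(\Spec(R[1/\mbff])\to\Spec(R_{(\mbff)})\). Continuity is then automatic. Openness follows from your Step 2 applied to an arbitrary homogeneous \(h\), not just to products \(\mbfg\): \(D(h)\cap D_+(\mbff)\) maps onto \(D(y_h)\) with \(y_h=h^N/\prod_i f_i^{a_i}\). The sections also match, since \((R[1/\mbff h])_0=R_{(\mbff)}[1/y_h]\).
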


\begin{proof}
    For \(\mbff = f_1 \cdots f_n\), set \(\deg(f_i) = n_i/m_i\) in \(R_i\) for some \(n_i, m_i \in \setZ\) with \(n_i \geq 0\) and \(m_i \neq 0\). For simplicity, we may assume that all denominators \(m_i\) are equal, say \(m\), and we can write \(\deg(f_i) = n_i/m\) for each \(i = 1, \dots, n\).
    Take the product \(\widetilde{n} \defeq \prod_{i=1}^{n} n_i\).
    First we construct an inverse map.
    Let \(\mfrakq \in \Spec(R_{(\mbff)})\) be a prime ideal.
    Fix rational numbers \(r_i = k_i/l \in \setQ\) with \(k_i, l \in \setZ\), \(k_i \geq 0\), and \(l \neq 0\), and set \(k \defeq \prod_{i=1}^{n} k_i\). Throughout this proof, we will use this notation.
    We define an additive subgroup \(\mfrakq_{(r_1, \dots, r_n)}\) of \(R_{(r_1, \dots, r_n)}\) as follows:
    \begin{equation} \label{Degree0PrimeMapMulti}
        \widetilde{\mfrakq}_{(r_1, \dots, r_n)} \defeq \setlr{x \in R_{(r_1, \dots, r_n)}}{\frac{x^{l \widetilde{n}}}{\prod_{i=1}^{n}f_i^{m k_i \prod_{j \neq i} n_j}} \in \mfrakq \subseteq R_{(\mbff)}}
    \end{equation}
    which does not depend on the choices of \(k_i, l, m, n_i\).
    Then the direct sum \(\widetilde{\mfrakq} \defeq \bigoplus_{(r_1, \dots, r_n) \in \setQ^n} \widetilde{\mfrakq}_{(r_1, \dots, r_n)}\) is a graded prime ideal of \(R\) which contains none of the \(f_i\):
    For \(\deg(f_i) = n_i/m\), if \(f_i\) belongs to \(\widetilde{\mfrakq}\), then we have \(1 = f_i^{mn_i}/f_i^{n_im} \in \mfrakq\) but this is a contradiction.
    Let \(x,y \in R\) be homogeneous elements such that \(xy \in \widetilde{\mfrakq}\).
    Set \(\deg(x) = (k_1'/l', \dots, k'_n/l')\) and \(\deg(y) = (k''_1/l'', \dots, k''_n/l'')\) for some \(k'_i, k''_i \in \setZ\) and \(l', l'' \in \setZ\) with \(k'_i, k''_i \geq 0\) and \(l', l'' \neq 0\).
    Then \(\deg(x) + \deg(y) = ((k'_il'' + k''_il')/l'l'')_{i=1}^n\) and the containment \(xy \in \widetilde{\mfrakq}_{\deg(x) + \deg(y)}\) means that
    \begin{align*}
        R_{(\mbff)} \supset \mfrakq \ni \frac{(xy)^{l'l''\widetilde{n}}}{\prod_{i=1}^n f_i^{m(k'_i l'' + k''_i l')\prod_{j \neq i} n_j}} & = \frac{x^{l'l''\widetilde{n}} \cdot y^{l'l''\widetilde{n}}}{\prod_{i=1}^{n} \parenlr{f_i^{mk'_il''\prod_{j \neq i} n_j} \cdot f_i^{mk''_il'\prod_{j \neq i} n_j}}} \\
        & = \parenlr{\frac{x^{l\widetilde{n}}}{\prod_{i=1}^n f_i^{mk'_i\prod_{j \neq i} n_j}}}^{l''} \cdot \parenlr{\frac{y^{l'\widetilde{n}}}{\prod_{i=1}^{n}f_i^{mk''_i\prod_{j \neq i} n_j}}}^{l'}
    \end{align*}
    and so each factor belongs to \(R_{(\mbff)}\).
    This implies \(x \in \widetilde{\mfrakq}_{\deg(x)}\) or \(y \in \widetilde{\mfrakq}_{\deg(y)}\).
    Thus we obtain a continuous map \(\Spec(R_{(\mbff)}) \to D_+(\mbff)\), since this construction is compatible with inclusions of prime ideals.

    We show that this construction gives an inverse map of \eqref{AffineProjMulti}.
    Let \(\mfrakq\) be a prime ideal of \(R_{(\mbff)}\). We will show \(\mfrakq = (\widetilde{\mfrakq})_{(\mbff)}\).
    Let \(x/\mbff^N \in R_{(\mbff)}\) be any element such that \(x \in R_{(r_1, \dots, r_n)}\). Since this is a degree \(0\) element, we know \(k_i/l = Nn_i/m\).
    Assume that \(x/\mbff^N\) belongs to \(\mfrakq\), then we have
    \begin{equation*}
        \frac{x^{l\widetilde{n}}}{\prod_{i=1}^{n}f_i^{mk_i\prod_{j \neq i} n_j}} = \frac{x^{l\widetilde{n}}}{\prod_{i=1}^{n}f_i^{lN\widetilde{n}}} = \parenlr{\frac{x}{\mbff^N}}^{l\widetilde{n}} \in \mfrakq 
    \end{equation*}
    and therefore \(x \in \widetilde{\mfrakq}_{(r_1, \dots, r_n)}\) by \eqref{Degree0PrimeMapMulti} which shows \(x/\mbff^N \in (\widetilde{\mfrakq})_{(\mbff)}\).
    Conversely, the condition \(x \in \widetilde{\mfrakq}_{(r_1, \dots, r_n)}\) means that \(x^{l\widetilde{n}}/\prod_{i=1}^{n} f_i^{mk_i\prod_{j \neq i} n_j} \in \mfrakq\). So we have
    \begin{equation*}
        \mfrakq \ni \frac{x^{l\widetilde{n}}}{\prod_{i=1}^n f_i^{mk_i\prod_{j \neq i} n_j}} = \frac{x^{l\widetilde{n}}}{\prod_{i=1}^n f_i^{Nl\widetilde{n}}} = \parenlr{\frac{x}{\mbff^N}}^{l\widetilde{n}}
    \end{equation*}
    and therefore \(x/\mbff^N \in \mfrakq\). In conclusion, we can show that the map \eqref{AffineProjMulti} is a homeomorphism.

    More explicitly, we can show that the map \(D_+(\mbff) \to \Spec(R_{(\mbff)})\), which is a restriction of the continuous map \(\Spec(R[1/\mbff]) \to \Spec(R_{(\mbff)})\), is open.
    Take any standard open subset \(D_+(\mbfg) \subseteq D_+(\mbff)\) for some element \(\mbfg \defeq g_1 \cdots g_n \in R\), where \(g_i \in R_{+, i}\) and \(\deg(g_i) = n'_i/m'\) for some \(n'_i, m' \in \setZ\) with \(n'_i \geq 0\) and \(m' \neq 0\).
    For any prime ideal \(\mfrakp \in D_+(\mbff)\), it is contained in \(D_+(\mbfg)\) if and only if \(\mbfg \notin \mfrakp\) if and only if \(g_i^{M_i}/f_i^{N_i} \notin \mfrakp[1/\mbff]\) for all \(i = 1, \dots, n\) and for any, (or equivalently, some) \(M_i, N_i \in \setZ\) with \(M_i \geq 0\) and \(N_i \neq 0\).
    Set
    \[
    \mbfg_{(\mbff)} \defeq \prod_{i=1}^{n} (g_i^{m'n_i}/f_i^{mn'_i}) \in R_{(\mbff)},
    \]
    which is a degree-\(0\) element.
    Then the previous argument shows that \(\mfrakp \in D_+(\mbfg)\) is equivalent to \(\mbfg_{(\mbff)} \notin \mfrakp_{(\mbff)}\) and thus \(D_+(\mbfg)\) bijectively corresponds to the standard open subset \(D(\mbfg_{(\mbff)})\) in \(\Spec(R_{(\mbff)})\).

    Using this correspondence, the containment \(D_+(\mbfg) \subseteq D_+(\mbff)\) corresponds to the canonical morphism \(R_{(\mbff)} \to R_{(\mbff)}[1/\mbfg_{(\mbff)}]\) on \(\Spec(R_{(\mbff)})\). By \Cref{SectionsGradedMapMulti}, we have an identification \(R[1/\mbff \mbfg] = R[1/\mbfg]\). In particular, the graded map \(R[1/\mbff \mbfg_{(\mbff)}] \hookrightarrow R[1/\mbff \mbfg] = R[1/\mbfg]\) induces a map \(R_{(\mbff)}[1/\mbfg_{(\mbff)}] \hookrightarrow R_{(\mbfg)}\) since \(\mbfg_{(\mbff)}\) is a degree \(0\) element.
    For any element \(y/\mbfg^N \in R_{(\mbfg)}\), we can write
    \begin{equation*}
        \frac{y}{\mbfg^N} = \frac{\prod_{i=1}^{n}f_i^{Nmn'_i}}{\prod_{i=1}^{n} g_i^{Nm'n_i}} \cdot \frac{y\prod_{i=1}^{n} g_i^{N(m'n_i-1)}}{\prod_{i=1}^{n}f_i^{mn'_iN}} = \frac{1}{\mbfg_{(\mbff)}^N} \cdot \frac{y\prod_{i=1}^{n} g_i^{N(m'n_i-1)}}{\prod_{i=1}^{n}f_i^{mn'_iN}} \in R_{(\mbff)}[1/\mbfg_{(\mbff)}]
    \end{equation*}
    and this shows that the map \(R_{(\mbff)}[1/\mbfg_{(\mbff)}] \hookrightarrow R_{(\mbfg)}\) is surjective.
    Therefore, the homeomorphism \eqref{AffineProjMulti} is compatible with the sections and then an isomorphism of ringed spaces.

    In the last assertion, we have to check the surjectivity of the map \(R_{(\mbff)} \otimes R_{(\mbfg)} \to R_{(\mbff \mbfg)}\): Take any element \(x/\mbff^N\mbfg^N\) in \(R_{(\mbff \mbfg)}\). Then we have
    \begin{equation*}
        \frac{x}{\mbff^N\mbfg^N} = \frac{x}{\mbff^N} \cdot \frac{\prod_{i=1}^{n}g_i^{(m'n_i-1)N}}{\prod_{i=1}^{n}f_i^{mn'_iN}} \cdot \parenlr{\frac{\prod_{i=1}^{n}f_i^{mn'_i}}{\prod_{i=1}^{n} g_i^{m'n_i}}}^N
    \end{equation*}
    which is in the image of the map \(R_{(\mbff)} \otimes R_{(\mbfg)} \to R_{(\mbff \mbfg)}\). This completes the proof.
\end{proof}

\subsection{\texorpdfstring{Projective spectrum of pro-\(G\)-graded rings}{Projective spectrum of pro-G-graded rings}}

Next, we will define the projective spectrum of pro-\(\setQ^n\)-graded rings. Before that we record the following lemma.

\begin{lemma} \label{GradedWiseCompletionLocalization}
    Let \(R\) be a \(G\)-graded topological ring and let \(f\) be a homogeneous element of \(R\).
    Then the canonical morphism
    \begin{equation*}
        (R[1/f])^{\wedge_{\graded}} \to (R^{\wedge_{\graded}}[1/f])^{\wedge_{\graded}}
    \end{equation*}
    is an isomorphism of \(G\)-graded topological rings.
\end{lemma}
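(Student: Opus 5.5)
We would prove the statement by comparing universal properties, reducing everything to a statement about discrete \(G\)-graded rings. Here \((-)^{\wedge_{\graded}}\) denotes the gradedwise completion with respect to the given linear topology, i.e.\ for a fundamental system of homogeneous ideals \(\{J_\lambda\}\) one takes \(\bigoplus_{g \in G} \lim_\lambda (R/J_\lambda)_g\). For \(R[1/f]\) the topology is the one for which \(\{J_\lambda R[1/f]\}\) is a fundamental system. The plan is to construct maps in both directions and check that they are mutually inverse.

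First, we produce the forward map. The canonical map \(R \to R^{\wedge_{\graded}}\) is a continuous graded homomorphism, so it localizes to \(R[1/f] \to R^{\wedge_{\graded}}[1/f]\). This map is continuous and graded, and gradedwise completion gives the map in the statement.

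Second, we produce the inverse map, which is the key step. We compare the graded quotients degree by degree. For each \(\lambda\), the quotient \(R^{\wedge_{\graded}}/J_\lambda R^{\wedge_{\graded}}\) should agree with \(R/J_\lambda\). Degreewise this is the usual statement that the completion modulo an open ideal recovers the quotient. We use that \((R^{\wedge_{\graded}})_g \to (R/J_\lambda)_g\) is surjective with kernel the closure of \((J_\lambda)_g\), and this requires open ideals to be cofinal with their closures in the limit topology. Granting this, localization commutes with quotients, so
\[
R^{\wedge_{\graded}}[1/f]/J_\lambda \cong (R/J_\lambda)[1/f] \cong R[1/f]/J_\lambda
\]
as graded rings, compatibly in \(\lambda\). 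Taking \(\bigoplus_g \lim_\lambda\) of the degree-\(g\) parts then identifies both sides with \(\bigoplus_g \lim_\lambda ((R/J_\lambda)[1/f])_g\). We will also check that the canonical maps are the ones inducing this identification, so that the forward map is an isomorphism of graded rings.

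Third, we verify that the topologies match: both sides carry the topology defined by the images of \(J_\lambda\), so the isomorphism is a homeomorphism.

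The main obstacle is the identification \(R^{\wedge_{\graded}}/J_\lambda R^{\wedge_{\graded}} \cong R/J_\lambda\) in the first half of the inverse construction. For a general linear topology this can fail, because the ideal \(J_\lambda R^{\wedge_{\graded}}\) may differ from the kernel of the projection. We would first try to bypass the issue by working only with the projective systems \(\{R^{\wedge_{\graded}}[1/f]/\overline{J_\lambda}\}\), using closures of ideals. Gradedwise completion depends only on such a cofinal system of open ideals, and the kernels of \(R^{\wedge_{\graded}} \to R/J_\lambda\) form such a system by construction. Localization at \(f\) is exact, so it preserves these kernels, and the argument goes through without any finiteness hypotheses.
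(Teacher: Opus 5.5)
Your argument is correct, but it follows a different route from the paper. The paper proves the lemma in one line via universal properties. Both sides corepresent the same functor on gradedwise complete \(G\)-graded topological rings \(S\), namely the set of continuous graded maps \(R \to S\) sending \(f\) to a unit. This uses two facts:
\begin{enumerate}
\item gradedwise completion is left adjoint to the inclusion of gradedwise complete rings;
\item a continuous graded map \(R \to S\) inverting \(f\) extends to \(R[1/f] \to S\), and the extension is continuous for the topology generated by the \(J_\lambda R[1/f]\), because open neighbourhoods in \(S\) can be taken to be ideals.
\end{enumerate}

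Although you announce a comparison of universal properties, what you actually give is an explicit computation. Let \(K_\lambda\) be the kernel of \(R^{\wedge_{\graded}} \to R/J_\lambda\); these kernels define the topology of the completion. Exactness of localization then gives
\(R^{\wedge_{\graded}}[1/f]/K_\lambda[1/f] \cong (R/J_\lambda)[1/f] \cong R[1/f]/J_\lambda R[1/f]\), compatibly in \(\lambda\) and with the canonical map. Hence both sides are \(\bigoplus_{g \in G} \lim_{\lambda} ((R/J_\lambda)[1/f])_g\) with the same kernel topology.

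What your route buys is an explicit formula for the completed localization, which is the form the paper actually uses elsewhere (for instance \(M^{\Delta}(D_+(f)) = (\grcomp{I}{M[1/f]})_0\)). It also makes the homeomorphism visible. The Yoneda argument is shorter but hides the choice of topology on \(R^{\wedge_{\graded}}\) inside the adjunction.

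Your \emph{main obstacle} disappears once you work with \(K_\lambda\). The isomorphism \(R^{\wedge_{\graded}}/K_\lambda \cong R/J_\lambda\) is tautological, because the surjection \(R \to R/J_\lambda\) factors through \(R^{\wedge_{\graded}}\). No condition about closures is needed: open ideals in a linear topology are automatically closed. You can therefore drop the earlier version phrased with \(J_\lambda R^{\wedge_{\graded}}\) and the ``granting this'' step.
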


\begin{proof}
    This follows from the universal property of gradedwise completion and localization at \(f\).
\end{proof}

\begin{definition} \label{ProjConstProGraded}
    Let \((R, R_{\graded})\) be a pro-\(\setQ^n\)-graded ring such that \(R_{\graded}\) is a \(\setQ^n\)-graded adic ring with a homogeneous ideal of definition \(J\).
    As in \Cref{ConstProjMulti}, after fixing homogeneous ideals \(R_{i, +}\) of \(R_i \defeq \oplus_{q \in \setQ} R_{q \mbfe_i} \subseteq R_{\graded}\) such that \(R_{i, +} \cap R_{i, 0} = 0\) for each \(i = 1, \dots, n\), we define a homogeneous ideal \(R_{\graded, +} \defeq R_{1, +} \cdots R_{n, +}\) of \(R_{\graded}\) such that \(R_{\graded, +} \cap R_0 = 0\). Throughout this subsection, we use this notation.

    The \emph{projective spectrum of \((R, R_{\graded})\) with respect to \(R_{1, +}, \dots, R_{n, +}\)} is defined as the formal completion
    \begin{equation*}
        \Proj(R; R_{\graded, +}) \defeq \hProj(R_{\graded}; R_{\graded, +})
    \end{equation*}
    of the projective spectrum \(\Proj(R_{\graded}; R_{\graded, +})\) of \(R_{\graded}\) along the closed subscheme \(V_+(J)\).
    More explicitly, since the ideal sheaf \(\widetilde{J}\) corresponding to \(V_+(J)\) is defined by \(D_+(\mbff) \mapsto J_{(\mbff)}\), the underlying set is
    \begin{equation*}
        \abs{\Proj(R; R_{\graded, +})} = \set{\mfrakp \in V(J) \subseteq \Spec(R_{\graded})}{\text{$\mfrakp$ is homogeneous and $R_{i, +} \nsubseteq \mfrakp \cap R_i$ for all $i = 1, \dots, n$}}
    \end{equation*}
    with an open cover \(\set{D_+(\mbff)^{\wedge}}{f_1 \in R_{1,+}, \dots, f_n \in R_{n,+}}\) where \(D_+(\mbff)^{\wedge} \defeq \set{\mfrakp \in \Proj(R; R_{\graded, +})}{\mbff \notin \mfrakp}\) and the structure sheaf is defined by
    \begin{equation*}
        \mcalO_{\Proj(R; R_{\graded, +})}(D_+(\mbff)^{\wedge}) = \lim_{n \geq 1} R_{(\mbff)}/(J_{(\mbff)})^nR_{(\mbff)} \eqdef R_{(\mbff)}^{\wedge},
    \end{equation*}
    which is the degree \(0\) part of the gradedwise \(J\)-adic completion \((R_{\graded}[1/\mbff])^{\wedge_{\graded}}\) as follows (\Cref{ComparisonTopologyFormalProj}).
\end{definition}

\begin{lemma} \label{ComparisonTopologyFormalProj}
    Keep the notation in \Cref{ProjConstProGraded}.
    For any \(\mbff \in R_{\graded, +}\), the \(J_{(\mbff)}\)-adic topology on \(R_{(\mbff)}\) coincides with the relative topology induced by the \(J\)-adic topology on \(R[1/\mbff]\).
\end{lemma}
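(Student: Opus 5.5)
The plan is to compare the two filtrations of $R_{(\mbff)}$ directly. The first is the family of ideals $(J_{(\mbff)})^k$. The second is the family $(J^k R_{\graded}[1/\mbff]) \cap R_{(\mbff)}$, which by homogeneity of $J$ equals $(J^k[1/\mbff])_0$. We will show that each family is cofinal in the other. One inclusion is formal: $J_{(\mbff)} = (J[1/\mbff])_0 \subseteq J[1/\mbff]$, so $(J_{(\mbff)})^k \subseteq (J^k[1/\mbff])_0$. Hence the $J_{(\mbff)}$-adic topology is at least as fine as the relative topology.

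For the reverse direction, fix homogeneous generators $a_1, \dots, a_s$ of $J$ with degrees $d_j \in \setQ^n$. Since $\mbff = f_1 \cdots f_n$ with $f_i \in R_{i,+}$ homogeneous, the degrees of $\mbff$ span directions in which a sufficiently large multiple of any rational degree can be compensated. More precisely, we need to divide $a_j^N$ by a power of $\mbff$ (possibly after multiplying by a fixed power of another $f_i$) to land in degree $0$. The plan is to choose integers $N \geq 1$ and elements $u_j \in R[1/\mbff]$ that are units of degree $-N d_j$, built from monomials in $f_1^{\pm 1}, \dots, f_n^{\pm 1}$. Then $b_j \defeq a_j^N u_j \in J_{(\mbff)}$.

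Here is where the main obstacle sits. A degree $-Nd_j$ is only reachable by monomials in the $f_i^{\pm1}$ if $d_j$ lies in the $\setQ$-span of $\deg f_1, \dots, \deg f_n$. In general a homogeneous element of $R_{\graded}[1/\mbff]$ of degree $e$ is nonzero only if $e$ lies in the subgroup generated by the support. To handle this, I would first replace each generator $a_j$ by its components: any element $x \in (J^k[1/\mbff])_0$ can be written as $\sum_{|\alpha| = k} c_\alpha a^\alpha$ with $c_\alpha$ homogeneous of degree $-\sum \alpha_j d_j$. I would then only need to control monomials $a^\alpha$ whose total degree is matched by some nonzero $c_\alpha \in R[1/\mbff]$.

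The key estimate is as follows. Write $a^\alpha = \prod_j a_j^{\alpha_j}$ and set $M \defeq$ the lcm of the denominators, with $k \geq sM$. Split $\alpha = M\beta + \gamma$ with $0 \le \gamma_j < M$. Then $a^{M\beta}$, rescaled by the unit $\mbff^{-t}$ of matching degree, is a product of at least $\lfloor k/M \rfloor - s$ factors of the form $a_j^M \cdot(\text{unit})$, each in $J_{(\mbff)}$. The leftover factor $c_\alpha a^\gamma \cdot(\text{unit})$ is of degree $0$ and lies in $R_{(\mbff)}$. For this to work, $a_j^M$ must be rescalable to degree $0$ by a unit. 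This would follow if $M d_j$ lies in the subgroup generated by $\deg f_1, \dots, \deg f_n$. I would arrange this by noting that the coaction forces $c_\alpha$ to be $0$ unless the total degree lies in the support group of $R_{\graded}[1/\mbff]$. In the one-dimensional case ($n = 1$, the case used in \Cref{AlgebraizableOXperfd}) the degrees are rational multiples of $\deg f_1 \neq 0$, which makes this immediate. For general $n$, I would reduce to the degrees occurring in $R_{i}$ along each coordinate direction $\mbfe_i$. The result is $(J^{k}[1/\mbff])_0 \subseteq (J_{(\mbff)})^{\lfloor k/M\rfloor - s}$, which gives the cofinality and hence the equality of topologies.
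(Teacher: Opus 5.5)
Your argument is correct and takes a different route from the paper. However, the ``main obstacle'' you flag does not exist, and the coaction/support-group detour should be dropped. As stated it would not even give what you need, since you need a statement about the generator degrees $d_j$, not about the cofactors $c_\alpha$.

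Because $R_{i,+}\cap R_{i,0}=0$, each nonzero $f_i$ is homogeneous of degree $q_i\mbfe_i$ with $q_i\in\setQ\setminus\{0\}$. (If some $f_i=0$, then $R[1/\mbff]=0$.) So the degrees of $f_1,\dots,f_n$ form a $\setQ$-basis of $\setQ^n$, and every vector of $\setQ^n$ has a nonzero integer multiple in $\bigoplus_i\setZ q_i\mbfe_i$.

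Take $M$ to be a common denominator of the rationals $(d_j)_i/q_i$. Then the units $u_j=\prod_i f_i^{-t_{j,i}}$ of degree $-Md_j$ exist for every $n$; for $n\ge 2$ they are monomials in the $f_i^{\pm1}$, not powers of $\mbff$. Your factorization $c_\alpha a^\alpha=\prod_j(a_j^Mu_j)^{\beta_j}\cdot\bigl(c_\alpha a^\gamma\prod_ju_j^{-\beta_j}\bigr)$ then goes through unchanged, with the last factor of degree $0$ in $R[1/\mbff]$. It gives $(J^k[1/\mbff])_0\subseteq(J_{(\mbff)})^m$ whenever $k\ge M(m+s)$, exactly as in your $n=1$ case.

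The paper instead normalizes $\deg f_i=d_i\mbfe_i$ with $d_i\in\setZ$. It then works directly with an arbitrary degree-$0$ product $h_1\cdots h_{km(d)}$ of homogeneous elements $h_j=x_j/\mbff^N$ of $JR[1/\mbff]$. An iterated pigeonhole argument on the residues of the coordinates of $\deg x_j$ modulo $d_i$ extracts, from each block of $m(d)=d^n(d-1)^n+1$ factors, $d=\prod_i|d_i|$ factors whose total degree lies in $\bigoplus_i\setZ d_i\mbfe_i$. Each such sub-block, divided by a monomial in the $f_i$, lies in $J_{(\mbff)}$, and the remaining factors go into a degree-$0$ cofactor.

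Comparing the two:
\begin{itemize}
\item The paper's route gives a bound $(J^{km(d)}[1/\mbff])_0\subseteq(J_{(\mbff)})^k$ that is uniform in $J$ and never chooses generators.
\item Your exponent depends on the number and degrees of homogeneous generators of $J$. This is harmless here, since $J$ is finitely generated.
\item Your route is shorter and more robust. The paper writes each coordinate of $\deg x_j$ as $n_{j,i}d_i+r_{j,i}$ with an integer $0\le r_{j,i}<d_i$, which tacitly uses integrality of these degrees (finitely many residues). In a genuinely $\setQ^n$-graded ring the $x_j$ may have arbitrary rational degrees.
\item You expand in monomials of fixed generators and push all uncontrolled degrees into the single cofactor $c_\alpha$. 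So you only ever need to clear the finitely many generator degrees, and the integrality issue never arises.
\end{itemize}
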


\begin{proof}
    For notational simplicity, we do not distinguish ideals of \(R\) and their extensions in \(R[1/\mbff]\).
    Note that the relative topology on \(R_{(\mbff)}\) is defined by the collection \(\{(J^n)_0\}_{n \geq 1}\).
    The containment \((J_0)^k \subseteq (J^k)_0\) is clear, so it remains to show the converse.
    Since \(R_{(\mbff)}\) does not change if we replace \(\mbff\) by a power of \(\mbff\), we may assume that \(\mbff\) is a product of homogeneous elements \(f_i \in R_{i, +}\) such that \(\deg(f_i) = d_i\mbfe_i\) for some \(d_i \in \setZ \setminus \{0\}\), for every \(i = 1, \dots, n\).

    Set \(d \defeq \prod_{i=1}^n \abs{d_i}\) and \(m(d) \defeq d^n(d-1)^n + 1\).
    We can show
    \begin{equation*}
        (J^{km(d)})_0 \subseteq (J_0)^k
    \end{equation*}
    for any \(k \geq 1\) as follows:
    Take homogeneous elements \(h_j \in J\) for \(j = 1, \dots, km(d)\) such that \(h_1 \cdots h_{km(d)}\) is a degree \(0\) element.
    We can write \(h_j = x_j/\mbff^N\) for some \(x_j \in J\) and \(N \geq 0\) and
    \begin{equation*}
        \deg(x_j) = (n_{j, 1}d_1 + r_{j, 1}, \dots, n_{j, n}d_n + r_{j, n})
    \end{equation*}
    for some \(n_{j, i} \in \setZ\) and \(r_{j, i} \in \setZ\) with \(0 \leq r_{j, i} < d_i\) for each \(j = 1, \dots, km(d)\) and each \(i = 1, \dots, n\).
    Since
    \begin{equation*}
        \sum_{j=1}^{km(d)} \deg(h_j) = \sum_{j=1}^{km(d)} (\deg(x_j) - N \deg(\mbff)) = \sum_{j=1}^{km(d)} ((n_{j, 1} - N)d_1 + r_{j, 1}, \dots, (n_{j, n} - N)d_n + r_{j, n}) = (0, \dots, 0),
    \end{equation*}
    we have
    \begin{equation*}
        \sum_{j=1}^{km(d)} \overline{r_{j, i}} = \sum_{a=0}^{k-1} \sum_{j=1}^{m(d)} \overline{r_{am(d) + j, i}} = 0 \in \setZ/d_i\setZ
    \end{equation*}
    for each \(i = 1, \dots, n\).

    Fix an integer \(a\) with \(0 \leq a \leq k-1\).
    Using \(m(d) \geq d \cdot d^{n-1}(d-1)^n + 1\), we can find a subset \(I_{a, 1}\) of \(\{1, \dots, m(d)\}\) such that \(\abs{I_{a, 1}} = d^{n-1}(d-1)^n + 1\) and all \(r_{j, 1}\) are the same for \(j \in I_{a, 1}\).
    Next, using \(\abs{I_{a, 1}} = d^{n-1}(d-1)^n + 1 \geq d \cdot d^{n-2}(d-1)^{n-1} + 1\), we can find a subset \(I_{a, 2}\) of \(I_{a, 1}\) such that \(\abs{I_{a, 2}} = d^{n-2}(d-1)^{n-1} + 1\) and all \(r_{j, 2}\) are the same for \(j \in I_{a, 2}\).
    Repeating this process, we can find a sequence of subsets
    \begin{equation*}
        \{1, \dots, m(d)\} \supseteq I_{a, 1} \supseteq I_{a, 2} \supseteq \cdots \supseteq I_{a, n-1} \supseteq I_{a, n} \eqdef I_a
    \end{equation*}
    such that \(\abs{I_{a, i}} = d^{n-i}(d-1)^{n-i+1} + 1\) and all \(r_{j, i}\) are the same for \(j \in I_{a, i}\) for each \(i = 1, \dots, n\).
    Therefore, for each \(a = 0, \dots, k-1\) and each \(i = 1, \dots, n\), we have
    \begin{equation*}
        \sum_{j' \in I_a} r_{am(d) + j', i} = N_{a, i}d_i
    \end{equation*}
    for some \(N_{a, i} \in \setZ\) since \(d_i\) divides \(d\) and all \(r_{j, i}\) are the same for \(j \in I_{a, n}\) and \(\abs{I_a} = d\).

    Define
    \begin{equation} \label{ElementInJ0}
        N_a \defeq \prod_{j' \in I_a} x_{am(d) + j'} \in J \subseteq R_{\graded} \quad \text{and} \quad
        D_a \defeq \prod_{i=1}^n f_i^{\sum_{j' \in I_a} n_{j', i} + N_{a, i}} \in R_{\graded, +}
    \end{equation}
    The degree of the \(i\)-th component of \(D_a\) is
    \begin{align*}
        & d_i \cdot \parenlr{(\sum_{j' \in I_a} n_{am(d) + j', i}) + N_{a, i}} = (\sum_{j' \in I_a} n_{am(d) + j', i}d_i) + N_{a, i}d_i \\
        & = \sum_{j' \in I_a} (n_{am(d) + j', i} d_i + r_{am(d) + j', i}) = \sum_{j' \in I_a} \deg(x_{am(d) + j'})_i,
    \end{align*}
    where \(\deg(-)_i\) denotes the \(i\)-th component of the degree, which is equal to the degree of the \(i\)-th component of \(N_a\) in \eqref{ElementInJ0} and therefore we can take an element
    \begin{equation*}
        F \defeq \prod_{a = 0}^{k-1} \frac{N_a}{D_a} \in (J_0)^k \subseteq R_{(\mbff)}.
    \end{equation*}
    Moreover, the degree of the \(i\)-th component of the denominator \(\prod_{a=0}^{k-1} D_a\) of \(F\) is less than or equal to
    \begin{equation*}
        \sum_{a=0}^{k-1} \sum_{j = 1}^{m(d)} \deg(x_{am(d) + j})_i = \sum_{j=1}^{km(d)} \deg(x_j)_i = d_iNkm(d) = \deg(\mbff^{Nkm(d)}),
    \end{equation*}
    where the last equality follows from the fact that \(\deg(h_1 \cdots h_{km(d)}) = \deg(x_1 \cdots x_{km(d)}) - \deg(\mbff^{Nkm(d)}) = (0, \dots, 0)\).
    Consequently, \(F\) divides \(h_1 \cdots h_{km(d)}\) in \(R[1/\mbff]\) and therefore in \(R_{(\mbff)}\) which shows that \(h_1 \cdots h_{km(d)}\) belongs to \((J_0)^k\) in \(R_{(\mbff)}\) and therefore \((J^{km(d)})_0 \subseteq (J_0)^k\).
\end{proof}

\begin{lemma} \label{ProjProGraded}
    The projective spectrum \(\Proj(R; R_{\graded, +})\) is a formal scheme which is separated over \(\Spf(R_0)\).
    Moreover, for any \(\mbff \in R_{\graded, +}\), the open subscheme \(D_+(\mbff)^{\wedge}\) is isomorphic to the affine formal spectrum \(\Spf(R_{(\mbff)}^{\wedge}) \cong \Spf((\grcomp{J}{R_{\graded}[1/\mbff]})_0)\).
\end{lemma}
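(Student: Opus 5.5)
We reduce to the scheme-theoretic statement for \(\Proj(R_{\graded}; R_{\graded, +})\) in \Cref{SectionMultigradedProj} and then pass to completions. First, \(\Proj(R_{\graded}; R_{\graded,+})\) is a separated scheme over \(\Spec(R_0)\). It is covered by the affine opens \(D_+(\mbff) \cong \Spec(R_{(\mbff)})\), pairwise intersections are \(D_+(\mbff\mbfg)\), and \(R_{(\mbff)} \otimes R_{(\mbfg)} \twoheadrightarrow R_{(\mbff\mbfg)}\) is surjective. The closed subscheme \(V_+(J)\) is defined by the quasi-coherent ideal \(\widetilde{J}\), which on \(D_+(\mbff)\) is \(J_{(\mbff)} \defeq (JR_{\graded}[1/\mbff])_0\). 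Since \(J\) is finitely generated and homogeneous, \(J_{(\mbff)}\) is finitely generated: it is generated by the degree-zero elements \(x\,\mbff^{a}\) with \(x\) a homogeneous generator of \(J\) and \(\mbff^a\) a suitable fractional-power-free monomial correction. Alternatively, \Cref{ComparisonTopologyFormalProj} shows the two adic systems are cofinal, so we may use either description. By the standard construction of formal completions of schemes along closed subschemes with finitely generated ideal sheaf, the completion is then a formal scheme that is locally \(\Spf(\lim_n R_{(\mbff)}/J_{(\mbff)}^n) = \Spf(R_{(\mbff)}^{\wedge})\). This gives the first half of the affine description.

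Second, we identify \(R_{(\mbff)}^{\wedge}\) with \((\grcomp{J}{R_{\graded}[1/\mbff]})_0\). Gradedwise completion is computed degreewise, so its degree-zero part is \(\lim_n (R_{\graded}[1/\mbff])_0/(J^n R_{\graded}[1/\mbff])_0\). By \Cref{ComparisonTopologyFormalProj}, the filtrations \(\{(J^n)_0\}\) and \(\{(J_0)^n\}\) on \(R_{(\mbff)}\) are cofinal, so the two inverse limits agree. Using \Cref{GradedWiseCompletionLocalization}, this does not depend on whether we first complete \(R_{\graded}\). Hence \(D_+(\mbff)^{\wedge} \cong \Spf(R_{(\mbff)}^{\wedge}) \cong \Spf((\grcomp{J}{R_{\graded}[1/\mbff]})_0)\).

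Third, we check separatedness over \(\Spf(R_0)\). The intersection \(D_+(\mbff)^{\wedge} \cap D_+(\mbfg)^{\wedge}\) equals \(D_+(\mbff\mbfg)^{\wedge}\), which is affine. We then need the map \(R_{(\mbff)}^{\wedge} \widehat{\otimes}_{R_0^{\wedge}} R_{(\mbfg)}^{\wedge} \to R_{(\mbff\mbfg)}^{\wedge}\) to be surjective. Before completion this is the surjection of \Cref{SectionMultigradedProj}. Completing a surjection of adic rings, with the image of the ideal of definition generating the ideal of definition of the target, gives a surjection. This holds because both sides carry the \(J\)-adic topology and \(J_{(\mbff)}R_{(\mbff\mbfg)}\) is cofinal with \(J_{(\mbff\mbfg)}\), again by \Cref{ComparisonTopologyFormalProj}. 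Alternatively, separatedness of the completion follows directly from the separatedness of \(\Proj(R_{\graded}; R_{\graded,+})\), since completion preserves closed immersions of diagonals.

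The main obstacle is the topological comparison in the second step: ensuring that the \(J_{(\mbff)}\)-adic completion of the degree-zero ring agrees with the degree-zero part of the gradedwise completion. This is exactly where \Cref{ComparisonTopologyFormalProj} is needed, because the degrees lie in \(\setQ^n\), so degree-zero products of elements of \(J\) need not lie in \((J_0)^k\) without the combinatorial bound used there.
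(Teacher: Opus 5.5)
Your argument is essentially the paper's. Separatedness is inherited from the separated scheme \(\Proj(R_{\graded}; R_{\graded,+})\), which is your second alternative and exactly what the paper uses. \(D_+(\mbff)^{\wedge}\) is the completion of \(D_+(\mbff) \cong \Spec(R_{(\mbff)})\) along \(V(J_{(\mbff)})\) by \Cref{SectionMultigradedProj}. Finally, \Cref{ComparisonTopologyFormalProj} identifies \(R_{(\mbff)}^{\wedge}\) with \((\grcomp{J}{R_{\graded}[1/\mbff]})_0\).

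Your side claim that \(J_{(\mbff)}\) is finitely generated by degree-zero elements \(x\mbff^{a}\) is false. Take \(R_{\graded} = k[t,x,y_1,y_2,\dots]/(x^2, y_iy_j)\) with \(\deg t = 2\), \(\deg x = \deg y_i = 1\), \(J=(x)\) and \(\mbff = t\). No element \(x t^{a}\) has degree zero, and \(J_{(t)} = \bigoplus_i k\, xy_i/t\) is an infinite-dimensional square-zero ideal, so it is not finitely generated. The slip is harmless: completing along \(V_+(J)\) still gives a formal scheme in the paper's (EGA) sense, locally \(\Spf\) of the complete linearly topologized ring \(R_{(\mbff)}^{\wedge}\), and the paper never uses finite generation here.
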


\begin{proof}
    Since \(\Proj(R_{\graded}; R_{\graded, +})\) is a separated scheme over \(\Spec(R_0)\) by \Cref{ConstProjMulti}, its formal completion \(\widehat{\Proj(R_{\graded}; R_{\graded, +})}\) is separated over \(\Spf(R_0)\) as well, where \(R_0\) admits the restriction of the \(J\)-adic topology on \(R_{\graded}\).
    Since \(D_+(\mbff)^{\wedge}\) is the completion of the open subscheme \(D_+(\mbff)\) of \(\Proj(R_{\graded}; R_{\graded, +})\) along \(V_+(J)\), it is isomorphic to the \(J_{(\mbff)}\)-adic completion of \(\Spec(R_{(\mbff)})\) by \Cref{SectionMultigradedProj}, which is written as \(\Spf(R_{(\mbff)}^{\wedge})\).
    By \Cref{ComparisonTopologyFormalProj}, this is isomorphic to the formal spectrum of \((\grcomp{J}{R_{\graded}[1/\mbff]})_0\).
\end{proof}

\begin{construction} \label{AssociatedSheafFormalGraded}
    Let \((R, R_{\graded})\) be a pro-\(\setQ^n\)-graded ring with a homogeneous ideal of definition \(J\) and let \(M\) be a \(\setQ^n\)-graded \(R_{\graded}\)-module.
    Then we can define an adic quasi-coherent sheaf \(M^{\Delta}\) on \(\Proj(R; R_{\graded, +})\) by
    \begin{equation*}
        M^{\Delta} \defeq \widetilde{M}^{\wedge}; \quad D_+(f) \mapsto \lim_{n \geq 1} M_{(f)}/(J_{(f)})^nM_{(f)} \eqdef M_{(f)}^{\wedge},
    \end{equation*}
    for any \(f \in R_{\graded, +}\), i.e., the completion of the quasi-coherent sheaf \(\widetilde{M}\) on \(\Proj(R_{\graded}; R_{\graded, +})\) associated to \(M\) along \(V_+(J)\).
    Using \Cref{ComparisonTopologyFormalProj}, the isomorphism
    \begin{equation*}
        M^{\Delta} \cong \lim_{n \geq 1} \widetilde{M/J^nM}
    \end{equation*}
    holds.
\end{construction}

\section{Perfectoid and prismatic schemes}

\subsection{Perfectoid and prismatic formal schemes}




\begin{definition} \label{DefPrismaticScheme}
    A \emph{pseudo-prismatic scheme} (resp. a \emph{prismatic scheme}) is a pair \((\mscrA, \mcalI)\) of an (adic) formal scheme \(\mscrA\) with a \(\delta\)-structure and an ideal sheaf \(\mcalI\) of \(\mscrA\) such that the following conditions hold:
    \begin{enumerate}
        \item The ideal sheaf \((p, \mcalI)\) generated by \(p\) and \(\mcalI\) is topologically nilpotent on \(\mscrA\),
        \item \(\mcalI\) is an invertible \(\mcalO_{\mscrA}\)-module,
        \item there exists an affine open covering \(\mscrA = \cup_{i \in I} \mscrU_i\) such that \(p\) is contained in the ideal generated by \(\mcalI(\mscrU_i)\) and \(\varphi(\mcalI(\mscrU_i))\) in \(\mcalO_{\mscrA}(\mscrU_i)\) (resp., \(p \in \mcalI(\mscrA) + \varphi(\mcalI(\mscrA))\mcalO_{\mscrA}(\mscrA)\)), and
        \item Zariski locally, \(\mcalO_{\mscrA}/\mcalI\) has bounded \(p^{\infty}\)-torsion.
    \end{enumerate}
    If \(\mscrA\) is perfect as a \(\delta\)-scheme, we say that \((\mscrA, \mcalI)\) is a \emph{(pseudo) perfect prismatic scheme} respectively.
    We often denote the closed subscheme \(V(\mcalI)\) in \(\mscrA\) by \(\overline{\mscrA}\), which is a bounded \(p\)-adic formal scheme.

    A morphism of (pseudo) prismatic schemes \(f \colon (\mscrA, \mcalI) \to (\mscrB, \mcalJ)\) is a morphism of formal schemes \(\mscrA \to \mscrB\) which is compatible with the given \(\delta\)-structures and such that the map \(\mcalO_{\mscrB} \to f_*\mcalO_{\mscrA}\) sends \(\mcalJ\) to \(f_*\mcalI\).
\end{definition}

\begin{construction} \label{DecompPerfdFormalScheme}
    We define an ideal sheaf \(\mcalO_{\mscrX}[p^\infty]\) by the formula \(\mscrU \mapsto H^0(\mscrU, \mcalO_{\mscrX})[p^\infty]\). Since \(\mscrX\) can be covered by affine perfectoid formal schemes, this ideal sheaf is equal to the kernel \(\mcalO_{\mscrX}[p]\) of the map \(\mcalO_{\mscrX} \xrightarrow{\times p} \mcalO_{\mscrX}\).
    Taking the quotient, we obtain a \(p\)-torsion-free \(\mcalO_{\mscrX}\)-algebra sheaf \(\mcalO_{\mscrX,tf} \defeq \mcalO_{\mscrX}/\mcalO_{\mscrX}[p^\infty]\).
    On the other hand, taking induced reduced structure on \(\mscrX_{p=0}\), we have a perfect \(\mcalO_{\mscrX}\)-algebra sheaf \(\mcalO_{\mscrX_{p=0}, \red}\).
    Similarly, we can define \(\mcalO_{(\mscrX_{tf})_{p=0}, \red}\).
    Using the decomposition of perfectoid rings (e.g., \cite{bhatt2019Regular}*{Remark 3.9}), we have an exact sequence of \(\mcalO_{\mscrX}\)-modules
    \begin{equation*}
        0 \to \mcalO_{\mscrX} \to \mcalO_{\mscrX_{tf}} \times \mcalO_{\mscrX_{p=0}, \red} \to \mcalO_{(\mscrX_{tf})_{p=0}, \red} \to 0.
    \end{equation*}
\end{construction}

\begin{lemma} \label{PseudoPerfectoid}
    Let \(\mscrX\) be a perfectoid formal scheme with a morphism to an affine perfectoid formal scheme \(\Spf(P)\).
    Take an element \(\varpi \in P\) such that \(p = \varpi^p u\) for some unit \(u \in P^{\times}\).
    The global section \(R \defeq H^0(\mscrX, \mcalO_{\mscrX})\) satisfies the following properties:
    \begin{enumerate}
        \item the \(p\)-power map \(R/\varpi \xrightarrow{a \mapsto a^p} R/p\) is injective,
        \item \(R\) has bounded \(p^\infty\)-torsion, and
        \item the \(p\)-power map \(R[p^\infty] \xrightarrow{a \mapsto a^p} R[p^\infty]\) is bijective.
    \end{enumerate}
\end{lemma}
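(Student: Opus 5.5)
Since \(\mscrX\) is a perfectoid formal scheme, by \Cref{PropPerfectoidFormalScheme} every affine open \(\mscrU \subseteq \mscrX\) has \(\mcalO_{\mscrX}(\mscrU)\) a complete adic perfectoid ring. Moreover, the image of \(\varpi\) there satisfies \(p = \varpi^p u\) with \(u\) a unit. The plan is to work with a fixed affine open covering \(\{\mscrU_i\}\) and its intersections \(\mscrU_{ij}\), which are again affine perfectoid. Here we may use separatedness if available; otherwise we cover the intersections by affines. We then express \(R = H^0(\mscrX, \mcalO_{\mscrX})\) as the equalizer of \(\prod_i \mcalO(\mscrU_i) \rightrightarrows \prod_{i,j} \mcalO(\mscrU_{ij})\). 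Each of the three properties will be reduced to the corresponding property of perfectoid rings, where it is standard. For a perfectoid ring \(A\) with such \(\varpi\), the map \(A/\varpi \to A/p\), \(a \mapsto a^p\), is bijective, and \(A[p^\infty] = A[p] = A[\varpi]\) is killed by \(\varpi\) (\cite{bhatt2019Regular}*{Remark 3.9} and \Cref{BoundedTorsionPerfd}). Furthermore, \(A[p^\infty] \to A[p^\infty]\), \(a \mapsto a^p\), is bijective, since \(A[p] \cong\) the kernel of \(A \to A_{tf}\), which is a perfect \(\setF_p\)-algebra ideal.

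\textbf{Step (2).} Since \(R \hookrightarrow \prod_i \mcalO(\mscrU_i)\) is injective and each factor is killed in its \(p^\infty\)-torsion by \(p\) (uniformly), \(R[p^\infty] = R[p]\). So \(R\) has bounded \(p^\infty\)-torsion. This also identifies \(R[p^\infty]\) with the global sections of the ideal sheaf \(\mcalO_{\mscrX}[p]\) from \Cref{DecompPerfdFormalScheme}.

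\textbf{Step (3).} Injectivity of \(a \mapsto a^p\) on \(R[p^\infty]\) follows from injectivity on each \(\mcalO(\mscrU_i)[p]\). For surjectivity, given \(b \in R[p]\), its restrictions \(b_i\) have unique \(p\)-th roots \(a_i \in \mcalO(\mscrU_i)[p]\). By uniqueness on \(\mscrU_{ij}\) (again injectivity there), the \(a_i\) agree on overlaps and glue to \(a \in R\), with \(a \in R[p]\) and \(a^p = b\). Equivalently, \(\mcalO_{\mscrX}[p]\) is a sheaf of perfect rings and the \(p\)-power map is an automorphism of the sheaf, hence of its global sections.

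\textbf{Step (1).} This is the main point, because \(R/\varpi\) is not a sheaf quotient. Suppose \(a \in R\) with \(a^p = p c\) for some \(c \in R\). We must show \(a \in \varpi R\). Locally, \(a_i^p \in p\mcalO(\mscrU_i)\), so by injectivity of Frobenius for perfectoid rings \(a_i = \varpi x_i\) with \(x_i \in \mcalO(\mscrU_i)\). The \(x_i\) need not glue: on \(\mscrU_{ij}\) the difference \(x_i - x_j\) lies in \(\mcalO(\mscrU_{ij})[\varpi] = \mcalO(\mscrU_{ij})[p]\). To fix this, first choose \(x_i\) canonically. Since \(\varpi^p = p u^{-1}\), we have \((a_i/\varpi)^p\) "\(= u^{-1} c_i\)", so we may try to pick \(x_i\) with \(x_i^p = u^{-1} c_i\) modulo torsion. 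Concretely, the approach is to use the exact sequence \(0 \to \mcalO_{\mscrX} \to \mcalO_{\mscrX_{tf}} \times \mcalO_{\mscrX_{p=0},\red} \to \mcalO_{(\mscrX_{tf})_{p=0},\red} \to 0\) of \Cref{DecompPerfdFormalScheme}. In \(\mcalO_{\mscrX_{tf}}\), which is \(p\)-torsion-free, the element \(a/\varpi\) is uniquely defined locally and hence glues to a global section \(x' \in H^0(\mcalO_{\mscrX_{tf}})\). In \(\mcalO_{\mscrX_{p=0},\red}\), the image \(\bar a\) satisfies \(\bar a^p = 0\), so \(\bar a = 0\) by reducedness, and we take \(x'' = 0\). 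We then need \((x', 0)\) to lie in \(H^0(\mcalO_{\mscrX})\), i.e., that \(x'\) maps to \(0\) in \(H^0(\mcalO_{(\mscrX_{tf})_{p=0},\red})\). Its image has \(p\)-th power equal to the image of \(u^{-1}c\), and the image of \(c\) there lies in the image of \(pR\)-related terms. Reducedness plus \(x'^p = u^{-1} c\) in \(\mcalO_{\mscrX_{tf}}\) should force this, but only if the image of \(c\) vanishes mod the radical. If that fails, the fallback is to modify \(c\): replace \(a\) by checking \(a\varpi^{p-1} \cdot\) stuff, or work instead with the alternative argument of showing that the Čech obstruction class in \(H^1(\mscrX, \mcalO_{\mscrX}[p])\) is zero. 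The latter can be attempted by noting that the choices \(x_i\) can be normalized by taking \(x_i\) in the \(p\)-torsion-free-compatible part using the perfectness of \(\mcalO_{\mscrX}[p]\) from Step (3). Verifying this gluing (the obstruction vanishing) is the step I expect to be the main obstacle.
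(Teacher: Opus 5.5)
\textbf{Gap in Step (1).} Your Steps (2) and (3) are fine. Step (3) is the paper's gluing argument. For (2) you use the embedding \(R \hookrightarrow \prod_i \mcalO_{\mscrX}(\mscrU_i)\) together with \(A[p^\infty]=A[p]\) for perfectoid \(A\), whereas the paper uses reducedness of \(R\) (if \(p^N x=0\) then \(\varpi x\) is nilpotent); either works.

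The gap is in Step (1), where you suspected it. The choice \(x''=0\) is wrong in general, and none of your fallbacks is an actual argument. For example, take \(\mscrX=\Spf(A)\) with \(A\) \(p\)-torsion-free, \(a=\varpi\) and \(c=u^{-1}\). Then \(x'=1\), whose image in \(H^0(\mcalO_{(\mscrX_{tf})_{p=0},\red})=A/\sqrt{pA}\) is \(1\neq 0\), so \((x',0)\) does not lie in the kernel of \(\delta\). Nor can you hope for a general vanishing of \(H^1(\mscrX,\mcalO_{\mscrX}[p])\). The \v{C}ech obstruction disappears only through a specific choice of the local quotients \(x_i\).

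\textbf{The missing idea.} Choose the second component using perfectness of the characteristic-\(p\) factor. Since \(p=\varpi^p u\), in \(\mcalO_{\mscrX_{tf}}\) you get \(x'^p=uc\) (not \(u^{-1}c\)). Let \(x''\) be the unique \(p\)-th root of the image of \(uc\) in \(H^0(\mcalO_{\mscrX_{p=0},\red})\). This ring is perfect, being the global sections of a sheaf of perfect rings.

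The construction then goes through as follows.
\begin{enumerate}
\item Because \(\varpi^p\in p\mcalO_{\mscrX}\), \(\varpi\) maps to \(0\) in \(\mcalO_{\mscrX_{p=0},\red}\). Hence \(\varpi x''=0=\bar a\).
\item The images of \(x'\) and \(x''\) in \(H^0(\mcalO_{(\mscrX_{tf})_{p=0},\red})\) are both \(p\)-th roots of the image of \(uc\in R\). This ring is reduced, so the two images coincide.
\item Apply left exactness of \(H^0\) to \(0\to\mcalO_{\mscrX}\to\mcalO_{\mscrX_{tf}}\times\mcalO_{\mscrX_{p=0},\red}\to\mcalO_{(\mscrX_{tf})_{p=0},\red}\). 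It shows \((x',x'')\) comes from some \(x\in R\).
\item Finally \(\varpi x=a\), because both map to \((a,0)\) under the injection \(R\hookrightarrow H^0(\mcalO_{\mscrX_{tf}})\times H^0(\mcalO_{\mscrX_{p=0},\red})\).
\end{enumerate}

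This element-level chase is what the paper packages differently. It compares the snake-lemma sequences for \(\varpi\) and for \(p\) via Frobenius and applies the four lemma. The inputs are that Frobenius is bijective on \(H^0(\mcalO_{\mscrX_{p=0},\red})\), injective on the image of \(\delta\), and injective on \(H^0(\mcalO_{\mscrX_{tf}})/\varpi\to H^0(\mcalO_{\mscrX_{tf}})/p\).
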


\begin{proof}
    (1): Using the decomposition (\Cref{DecompPerfdFormalScheme}), we have an exact sequence
    \begin{center}
        \begin{tikzcd}
            0 \arrow[r] & H^0(\mcalO_{\mscrX}) \arrow[r] \arrow[d, "\times p"] & {H^0(\mcalO_{\mscrX_{tf}}) \times H^0(\mcalO_{\mscrX_{p=0}, \red})} \arrow[r, "\delta"] \arrow[d, "\times p"] & \Image(\delta) \arrow[r] \arrow[d, "\times p"] & 0 \\
            0 \arrow[r] & H^0(\mcalO_{\mscrX}) \arrow[r]                       & {H^0(\mcalO_{\mscrX_{tf}}) \times H^0(\mcalO_{\mscrX_{p=0}, \red})} \arrow[r, "\delta"]                       & \Image(\delta) \arrow[r]                       & 0
        \end{tikzcd}
    \end{center}
    where \(\delta\) is the map \(H^0(\mcalO_{\mscrX_{tf}}) \times H^0(\mcalO_{\mscrX_{p=0}, \red}) \to H^0(\mcalO_{(\mscrX_{tf})_{p=0}, \red})\).
    The snake lemma yields an exact sequence
    \begin{equation*}
        H^0(\mcalO_{\mscrX_{p=0}, \red}) \xrightarrow{\delta} \Image(\delta) \to H^0(\mcalO_{\mscrX})/p \to H^0(\mcalO_{\mscrX_{tf}})/p \times H^0(\mcalO_{\mscrX_{p=0}, \red})
    \end{equation*}
    since \(H^0(\mcalO_{\mscrX_{p=0}, \red})\) and \(\Image(\delta) \subseteq H^0(\mcalO_{(\mscrX_{tf})_{p=0}, \red})\) are \(p\)-torsion.
    The same argument works for \(\varpi\) instead of \(p\). So we have a commutative diagram
    \begin{center}
        \begin{tikzcd}
            {H^0(\mcalO_{\mscrX_{p=0}, \red})} \arrow[d, "F"] \arrow[d, "\cong"'] \arrow[r] & \Image(\delta) \arrow[d, "F", hook] \arrow[r] & H^0(\mcalO_{\mscrX})/\varpi \arrow[d, "F"] \arrow[r] & {H^0(\mcalO_{\mscrX_{tf}})/\varpi \times H^0(\mcalO_{\mscrX_{p=0}, \red})} \arrow[d, "F"] \\
            {H^0(\mcalO_{\mscrX_{p=0}, \red})} \arrow[r]                                    & \Image(\delta) \arrow[r]                      & H^0(\mcalO_{\mscrX})/p \arrow[r]                     & {H^0(\mcalO_{\mscrX_{tf}})/p \times H^0(\mcalO_{\mscrX_{p=0}, \red})}                    
        \end{tikzcd}
    \end{center}
    whose rows are exact. Since the \(p\)-th power map on perfect \(\mcalO_{\mscrX}\)-algebras is an isomorphism, it suffices to show that the map \(H^0(\mcalO_{\mscrX_{tf}})/\varpi \xrightarrow{a \mapsto a^p} H^0(\mcalO_{\mscrX_{tf}})/p\) is injective.
    Since this \(\mcalO_{\mscrX_{tf}}\) is \(p\)-torsion-free, we have an exact sequence \(0 \to \mcalO_{\mscrX_{tf}} \xrightarrow{\times p} \mcalO_{\mscrX_{tf}} \to \mcalO_{\mscrX_{tf}}/p \to 0\).
    Taking global sections, we have an inclusion \(H^0(\mcalO_{\mscrX_{tf}})/p \subseteq H^0(\mcalO_{\mscrX_{tf}}/p)\).
    Since the \(p\)-power map \(\mcalO_{\mscrX_{tf}}/\varpi \xrightarrow{a \mapsto a^p} \mcalO_{\mscrX_{tf}}/p\) is an isomorphism, the induced map \(H^0(\mcalO_{\mscrX_{tf}}/\varpi) \xrightarrow{a \mapsto a^p} H^0(\mcalO_{\mscrX_{tf}}/p)\) is injective.
    This is compatible with the inclusion and hence we have the injectivity \(H^0(\mcalO_{\mscrX_{tf}})/\varpi \xrightarrow{a \mapsto a^p} H^0(\mcalO_{\mscrX_{tf}})/p\).

    (2): Since perfectoid rings are reduced, \(\mscrX\) is reduced, and hence so is its ring of global sections \(R\). If \(x \in R\) is \(p^N\)-torsion, then \(\varpi^{N/p^n}x\) is zero for all \(n \geq 0\). This shows that \(R\) has bounded \(p^\infty\)-torsion.

    (3): Take an affine open covering \(\mscrX = \cup_{i \in I} \mscrU_i\) such that each \(R_i \defeq H^0(\mscrU_i, \mcalO_{\mscrX})\) is a perfectoid ring. We have an exact sequence of topological rings
    \begin{equation*}
        0 \to R \to \prod_{i \in I} R_i \to \prod_{i, j \in I} R_{ij},
    \end{equation*}
    where \(R_{ij} \defeq H^0(\mscrU_i \cap \mscrU_j, \mcalO_{\mscrX})\).
    Since \(R_i\) is a perfectoid ring, \(R_i[p^\infty] = R_i[p]\) and the \(p\)-power map \(R_i[p] \xrightarrow{a \mapsto a^p} R_i[p]\) is bijective for every \(i \in I\).
    If \(x \in R[p^\infty]\) satisfies \(x^p = 0\), then the same holds for its restriction \(x_i \defeq \restr{x}{\mscrU_i}\) for every \(i \in I\). It follows that \(x_i\) is zero in \(R_i[p^\infty]\) and hence \(x = 0\), which shows the injectivity.
    For any \(x \in R[p^\infty]\), there exists \(y_i \in R_i[p]\) such that \(x_i = y_i^p\) for every \(i \in I\).
    Then \(y_i^p - y_j^p = 0\) holds in \(R_{ij}\) and in particular \((y_i - y_j)^p = 0\) in \(R_{ij}\) since \(y_i\) and \(y_j\) are \(p\)-torsion.
    Using again that \(\mscrX\) is reduced, we have \(y_i - y_j = 0\) in \(R_{ij}\) and hence there exists a unique \(y \in R\) such that \(y_i = \restr{y}{\mscrU_i}\) for every \(i \in I\).
    Since \(y^p\) is zero after restricting to every \(R_i\), we have \(y \in R[p^\infty]\) and this shows the surjectivity.
\end{proof}

\begin{lemma} \label{LocallyPrismaticScheme}
    Let \((\mscrA, \mcalI)\) be a pseudo-prismatic scheme. Then the following conditions hold:
    \begin{enumerate}
        \item There exists an affine open covering \(\mscrA = \cup_{i \in I} \mscrU_i\) such that for each \(i \in I\), the pair \((\mcalO_{\mscrA}(\mscrU_i), \mcalI(\mscrU_i))\) is a bounded prism.
        \item If \((\mscrA, \mcalI)\) is perfect, the pair \((A, \mcalI(\mscrU))\) is a perfect prism for any affine formal open subset \(\mscrU = \Spf(A)\) of \(\mscrA\) and the closed subscheme \(\overline{\mscrA}\) is a perfectoid formal scheme.
        \item Any map of pseudo-prismatic schemes \((\mscrA, \mcalI) \to (\mscrB, \mcalJ)\) satisfies rigidity, that is, the induced map \(\mcalJ \to f_*\mcalI\) is surjective.
        \item If there is a map \((\mscrA, \mcalI) \to (\Spf(A), I)\) of pseudo-prismatic schemes, then \((\mscrA, \mcalI)\) is a prismatic scheme, i.e., \(p \in \mcalI(\mscrA) + \varphi(\mcalI(\mscrA))\mcalO_{\mscrA}(\mscrA)\).
        \item If \((\mscrA, \mcalI)\) is a perfect prismatic scheme, then the pair of global sections \((\mcalO_{\mscrA}(\mscrA), \mcalI(\mscrA))\) is a perfect prism. In particular, \(\mcalI\) is isomorphic to \(\mcalO_{\mscrA}\).
    \end{enumerate}
\end{lemma}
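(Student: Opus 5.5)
The plan is to reduce each assertion to the affine theory of prisms of Bhatt--Scholze. The main input is the criterion that a \((p,I)\)-complete \(\delta\)-pair \((A,I)\) with \(I\) locally principal (invertible) is a prism as soon as \(p \in I + \varphi(I)A\) (\cite{bhatt2022Prismsa}*{Lemma 2.24}). We also use rigidity of maps of prisms, and the fact that perfect prisms are bounded and have principal ideal generated by a distinguished element, with \(A/I\) perfectoid (\cite{bhatt2022Prismsa}*{Lemma 2.28, Theorem 3.10}).

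For (1), start from the affine cover \(\{\mscrU_i\}\) of \Cref{DefPrismaticScheme}(3). Refine it by standard affine opens \(\Spf(\widehat{A_i[1/g]})\) so that simultaneously \(\mcalI\) becomes free of rank one and \(\mcalO_{\mscrA}/\mcalI\) has bounded \(p^\infty\)-torsion (conditions (2) and (4)). Both properties, and the membership \(p \in \mcalI + \varphi(\mcalI)\mcalO\), are stable under passing to such localizations. Each section ring is \((p,\mcalI)\)-adically complete because \(\mscrA\) is an adic formal scheme with \((p,\mcalI)\) topologically nilpotent, so the criterion gives a bounded prism.

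For (3), surjectivity of \(\mcalJ \to f_*\mcalI\) is Zariski local. Choose affines of \(\mscrB\) as in (1) and refine their preimages by affines of \(\mscrA\) as in (1). The statement then becomes rigidity of maps of prisms, namely \(J A = I\) (\cite{bhatt2022Prismsa}*{Lemma 3.5}).

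For (4), write \(I=(d)\) with \(d\) distinguished, which is possible after shrinking \(\Spf(A)\), or globally since \(I\) is locally principal and the argument below is local. By (3) the image of \(d\) generates \(\mcalI\) locally, so \(\mcalI(\mscrA) \ni d\). The relation \(p \in (d,\varphi(d))A\) then maps to a global relation \(p \in \mcalI(\mscrA) + \varphi(\mcalI(\mscrA))\mcalO_{\mscrA}(\mscrA)\).

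For (2), perfectness of the \(\delta\)-scheme means Frobenius is an isomorphism on \(\mcalO_{\mscrA}\), hence on every affine section ring \(A\). The issue is to verify \(p \in I + \varphi(I)A\) for an arbitrary affine \(\mscrU=\Spf(A)\), not just for members of a good cover. I would argue this as follows. A Zariski cover \(\Spf(\widehat{A_{f_j}})\) of \(\Spf(A)\) has \(f_j\) generating \(A\) modulo the topologically nilpotent ideal \((p,I)\), hence generating \(A\). On each piece we have a perfect prism, so \(I\) is principal there with distinguished generator, and the ideal \(I+\varphi(I)A\) contains \(p\) after the faithfully flat base change to \(\prod_j \widehat{A_{f_j}}\). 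I expect this descent to be the fiddliest point; I would handle it via the derived completeness of \(A/(I+\varphi(I)A)\). Once the membership holds, \((A,I(\mscrU))\) is a perfect prism by the criterion. Then \(A/I(\mscrU)\) is perfectoid, so \(\overline{\mscrA}\) is a perfectoid formal scheme by \Cref{DefPerfectoidFormalScheme}.

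For (5), which I expect to be the main obstacle, let \(B=\mcalO_{\mscrA}(\mscrA)\). It is perfect, \((p,\mcalI)\)-complete and \(p\)-torsion-free as a limit of such rings, but \(\mcalI(\mscrA)\) need not a priori be an invertible \(B\)-module. Write \(p = a + \varphi(b)c\) with \(a,b \in \mcalI(\mscrA)\), and show that \(b\) generates \(\mcalI\) on each affine \(\Spf(A_i)\) where \(\mcalI=(d_i)\) with \(d_i\) distinguished. To do this, put \(b=d_ie\) and \(a=d_if\), and use \(\varphi(d_i)=d_i^p+p\delta(d_i)\) with \(\delta(d_i)\) a unit. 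Reducing \(p = d_if + \varphi(d_i)\varphi(e)c\) modulo \(d_i\) yields \(p(1-\delta(d_i)\varphi(e)c) \in (d_i)\). Since \(p\) is a nonzerodivisor modulo \(d_i\) in a perfect prism away from the trivial case, comparing with distinguishedness forces \(\varphi(e)c\) to be a unit modulo \((p,d_i)\). Then \(e\) is a unit modulo the radical \((p,d_i)\), hence a unit by completeness and perfectness. Consequently \(\mcalI = b\mcalO_{\mscrA}\), so \(\mcalI \cong \mcalO_{\mscrA}\) and \(\mcalI(\mscrA)=bB\) with \(b\) a nonzerodivisor. The criterion then shows that \((B,(b))\) is a prism, perfect since \(B\) is perfect.
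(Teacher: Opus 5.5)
\textbf{The gap is in (5).} For an arbitrary decomposition $p=a+\varphi(b)c$ with $a,b\in\mathcal{I}(\mathscr{A})$, the element $b$ need not generate $\mathcal{I}$.

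The step ``$p$ is a nonzerodivisor modulo $d_i$'' is false in general. $A_i/d_i$ is a perfectoid ring and may well have $p$-torsion: any perfect $\mathbb{F}_p$-algebra does, and so do fiber products of such with $p$-torsion-free perfectoid rings. So this case cannot be set aside as trivial. Concretely, take $\mathscr{A}=\mathrm{Spf}(W(k))$ with $\mathcal{I}=p\mathcal{O}_{\mathscr{A}}$. You may choose $a=p$ and $b=0$. Then your relation $p(1-\delta(d_i)\varphi(e)c)\in(d_i)$ says nothing, and $b$ generates nothing.

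What the relation actually yields is weaker. Write it as $pu=d_iz$, apply $\delta$, and reduce modulo $(p,d_i)$. This gives $u^p\equiv z^p\delta(d_i)$, i.e. $1\equiv f^p\delta(d_i)+\delta(d_i)^pe^{p^2}c^p$. So $e$ and $f$ generate the unit ideal locally; equivalently, $a$ and $b$ jointly generate $\mathcal{I}$. A line bundle generated by two global sections need not be trivial.

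The paper avoids this by asserting directly that $(\mathcal{O}_{\mathscr{A}}(\mathscr{A}),\mathcal{I}(\mathscr{A}))$ is a perfect prism whose principal generator generates $\mathcal{I}$. You are right that the Cartier condition on $\mathcal{I}(\mathscr{A})$ needs an argument, but your replacement is exactly the step that breaks.

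One repair within your strategy:
\begin{itemize}
\item Apply $\delta$ to the global identity $p(1-\delta(b)c)=a+b^pc$ and reduce modulo $(p,\mathcal{I})$. Use $\delta(p)\equiv 1$ and $\delta(b^p)\equiv 0 \bmod p$. This gives $\delta(a)+\delta(b)^pc^p\equiv 1$.
\item Locally, $\delta(a)\equiv f^p\delta(d_i)$ and $\delta(b)\equiv e^p\delta(d_i)$ modulo $(p,d_i)$.
\item Set $y=\varphi^{-1}(\delta(b))^{p-1}c\in\mathcal{O}_{\mathscr{A}}(\mathscr{A})$, so that $y^p\equiv\delta(b)^{p-1}c^p \bmod p$. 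Then the global section $x=a+by$ satisfies $\delta(d_i)(f+ey)^p\equiv\delta(a)+\delta(b)y^p\equiv 1 \bmod (p,d_i)$.
\item Hence $x=d_i\cdot(\text{unit})$ on each piece, so $x$ generates $\mathcal{I}$. The rest of your argument goes through with $x$ in place of $b$.
\end{itemize}

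\textbf{A smaller problem in (4).} The conclusion concerns global sections of $\mathscr{A}$, so it is not local on $\mathrm{Spf}(A)$. Shrinking $\mathrm{Spf}(A)$ to make $I$ principal only gives relations over preimages, and $I$ need not be principal globally. Instead, as the paper does, use $p\in I+\varphi(I)A$ in $A$ itself and push it along $A\to\mathcal{O}_{\mathscr{A}}(\mathscr{A})$. This map sends $I$ into $\mathcal{I}(\mathscr{A})$ by the definition of a morphism, so (3) is not needed.

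That global membership is precisely the local-to-global step of your (2), which does not use perfectness. There you should not invoke faithful flatness of completed localizations, which can fail for non-Noetherian adic rings. Put $K=I+\varphi(I)A$. Descent along the honest Zariski cover of $A/(p,I)^2$ gives $p\in K+(p,I)^2\subseteq K+p(p,I)$, using $I\subseteq K$. Hence $p(1-t)\in K$ for some $t\in(p,I)$, and $1-t$ is a unit.

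Your (1) and (3) agree with the paper's argument.
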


\begin{proof}
    (1): Let \(\mscrU = \Spf(A)\) be an affine open subset, and set \(I \defeq \mcalI(\mscrU)\). Note that the complete topological ring \(A\) is complete with respect to the \((p, I)\)-adic topology and any completed localization admits a unique \(\delta\)-structure (\cite{bhatt2022Prismsa}*{Remark 2.16}). Moreover, \(A/I\) has bounded \(p^{\infty}\)-torsion by the quasi-compactness of \(\mscrU=\Spf(A)\) and the local boundedness of \(p^{\infty}\)-torsion on Zariski open subsets.

    (2): By (1), it suffices to show \(p \in I + \varphi(I)A\) under the assumption that \(\mscrA = \Spf(A)\) is affine. This follows from the same argument as in \cite{takaya2026Relative}*{Proposition 4.22}.

    (3): This follows from (1) and the rigidity of morphisms of prisms (\cite{bhatt2022Prismsa}*{Lemma 3.5}).


    (4): Since we have a map of rings \(A \to \mcalO_{\mscrA}(\mscrA)\) and this sends \(I\) to \(\mcalI(\mscrA)\), the prismatic condition on \((A, I)\) implies the desired condition on \((\mscrA, \mcalI)\).

    (5): The prismatic condition on \((\mcalO_{\mscrA}(\mscrA), \mcalI(\mscrA))\) follows from the fact that \((\mscrA, \mcalI)\) is a prismatic scheme. Since it is a perfect prism, \(\mcalI(\mscrA)\) is generated by a non-zero-divisor \(d\) in \(\mcalO_{\mscrA}(\mscrA)\). This shows that \(\mcalI\) is generated by \(d\) and thus \(\mcalI\) is isomorphic to \(\mcalO_{\mscrA}\) as an \(\mcalO_{\mscrA}\)-module.
\end{proof}

\subsection{Tilting and untilting of formal schemes}

\begin{lemma} \label{TiltSheaf}
    Let \(\mscrX\) be a formal scheme such that \(p\) is topologically nilpotent.
    On the topological space \(\abs{\mscrX}\), we define a presheaf of topological rings of characteristic \(p\) by
    \begin{equation}
        \mcalO_{\mscrX}^\flat \colon \mscrU \mapsto \mcalO_{\mscrX}(\mscrU)^{\flat} \defeq \lim_{F} (\mcalO_{\mscrX}(\mscrU)/(p))
    \end{equation}
    where \(\mcalO_{\mscrX}(\mscrU)^{\flat}\) is endowed with the limit topology.
    Then this defines a sheaf of topological rings on \(\abs{\mscrX}\).
\end{lemma}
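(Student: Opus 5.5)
To prove \Cref{TiltSheaf}, I would use the fact that the assignment \(\mscrU \mapsto \mcalO_{\mscrX}(\mscrU)^{\flat}\) is the composite of three operations, each of which preserves sheaves. The first is passing from \(\mcalO_{\mscrX}\) to the presheaf \(\mscrU \mapsto \mcalO_{\mscrX}(\mscrU)/(p)\). The second is taking the inverse limit along Frobenius. The third is equipping the result with the limit topology.

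The key observation is that, as a multiplicative monoid, \(\lim_F (A/p)\) is canonically identified with \(\lim_{x \mapsto x^p} A\) whenever \(A\) is \(p\)-adically complete. More generally, it suffices that \(A\) be complete with respect to some topology in which \(p\) is topologically nilpotent. The identification sends \((\bar a_n)\) to \(\bigl(\lim_m \tilde a_{n+m}^{p^m}\bigr)_n\), and the convergence of this limit is the usual argument: congruences modulo \(p\) improve to congruences modulo \(p^{m+1}\) after raising to \(p^m\)-th powers.

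Because \(p\) is topologically nilpotent in \(\mcalO_{\mscrX}\), each section ring \(\mcalO_{\mscrX}(\mscrU)\) is complete in its topology with \(p\) topologically nilpotent. Hence \(\mcalO_{\mscrX}^{\flat}(\mscrU) \cong \lim_{x\mapsto x^p} \mcalO_{\mscrX}(\mscrU)\) as multiplicative monoids, and this bijection is compatible with restriction maps. The presheaf \(\mscrU \mapsto \lim_{x \mapsto x^p}\mcalO_{\mscrX}(\mscrU)\) is a limit of copies of the sheaf \(\mcalO_{\mscrX}\) of sets, and limits of sheaves are sheaves. So the underlying presheaf of sets of \(\mcalO_{\mscrX}^{\flat}\) is a sheaf. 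Since ring structures are compatible with the restriction maps, the sheaf axiom for rings follows from the sheaf axiom for sets.

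This identification is the step I expect to be the main obstacle. It needs some care when the topology on \(\mcalO_{\mscrX}(\mscrU)\) is adic but not \(p\)-adic. The rewriting \(\lim_m \tilde a_{n+m}^{p^m}\) still converges because \(p^{m+1}\mcalO_{\mscrX}(\mscrU)\) tends to zero in the given topology. The argument also needs \(\mcalO_{\mscrX}(\mscrU)\) to be complete, which holds because sections of the structure sheaf of a formal scheme over any open are complete, being limits of complete rings. If this step failed, I would instead work with a \(p\)-adic truncation \(\mcalO_{\mscrX}/\mcalI^n\) for an ideal of definition \(\mcalI\) containing \(p\) locally, and use that Frobenius-limits are insensitive to replacing \(p\) by such an ideal.

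For the topology, I would show that the equalizer condition holds in topological rings. The topology on \(\lim_F(\mcalO_{\mscrX}(\mscrU)/p)\) is the limit topology, where each \(\mcalO_{\mscrX}(\mscrU)/p\) carries the quotient topology. Under the bijection above, this limit topology agrees with the topology induced from \(\prod_{\setN}\mcalO_{\mscrX}(\mscrU)\). For a covering \(\mscrU=\bigcup \mscrU_i\), the topology on \(\mcalO_{\mscrX}(\mscrU)\) is the subspace topology from \(\prod_i \mcalO_{\mscrX}(\mscrU_i)\), since \(\mcalO_{\mscrX}\) is a sheaf of topological rings. Limits of subspace embeddings are subspace embeddings, which gives the topological sheaf condition.
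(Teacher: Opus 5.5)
Your proposal is correct and follows the paper's route. You identify \(\mcalO_{\mscrX}(\mscrU)^{\flat}\) with \(\lim_{x \mapsto x^p} \mcalO_{\mscrX}(\mscrU)\) as multiplicative monoids using \(p\)-adic completeness of sections (\cite{bhatt2018Integral}*{Lemma 3.2 (i)}), and then deduce the sheaf axioms coordinatewise from those of \(\mcalO_{\mscrX}\); this is your ``limits of sheaves are sheaves'' unpacked into separation and gluing, which is what the paper checks by hand.

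Your paragraph on the topology goes beyond the paper, which leaves the topological part implicit. You assert the homeomorphism there without proof, but it does hold. For every open ideal \(U\) of \(\mcalO_{\mscrX}(\mscrU)\) there is an open ideal \(J \ni p\) with \(J^{k} \subseteq U\) for some \(k\); this is checked on the finitely many factors of an affine cover that \(U\) constrains. Moreover, \(x \equiv y \bmod J\) implies \(x^{p^j} \equiv y^{p^j} \bmod J^{j+1}\).
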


\begin{proof}
    Note that we have a canonical isomorphism
    \begin{equation} \label{TiltDescription}
        \lim_{a \mapsto a^p} \mcalO_{\mscrX}(\mscrU) \xrightarrow{\cong} \mcalO_{\mscrX}(\mscrU)^{\flat}; \quad a = (a^{(n)}) \mapsto (\overline{a^{(n)}})
    \end{equation}
    of multiplicative monoids by \cite{bhatt2018Integral}*{Lemma 3.2 (i)} since \(\mcalO_{\mscrX}(\mscrU)\) is \(p\)-adically complete.
    Take any open covering \(\{\mscrU_i \hookrightarrow \mscrU\}_{i \in I}\).
    Assume that an element \(a \in \mcalO_{\mscrX}(\mscrU)^\flat\) goes to \(0\) in \(\mcalO_{\mscrX}(\mscrU_i)^\flat\) for all \(i \in I\).
    Using \eqref{TiltDescription}, the element \(a\) can be written as \(a = (a^{(n)}) \in \lim \mcalO_{\mscrX}(\mscrU)\) which goes to \(0\) in \(\lim \mcalO_{\mscrX}(\mscrU_i)\) for all \(i \in I\) and then so does \(a^{(n)} \in \mcalO_{\mscrX}(\mscrU)\) in \(\mcalO_{\mscrX}(\mscrU_i)\).
    Since \(\mcalO_{\mscrX}\) is a sheaf, it follows \(a^{(n)} = 0\) in \(\mcalO_{\mscrX}(\mscrU)\) and \(a = 0\) in \(\mcalO_{\mscrX}(\mscrU)^\flat\).
    Moreover, take an element \((a_i) \in \prod_{i \in I} \mcalO_{\mscrX}(\mscrU_i)^{\flat}\) such that \(a_i = a_j\) in \(\mcalO_{\mscrX}(\mscrU_i \cap \mscrU_j)^\flat\) for all \(i \neq j\).
    Similarly using \eqref{TiltDescription}, there exists a unique element \(a^{(n)}\) in \(\mcalO_{\mscrX}(\mscrU)\) such that its image in \(\mcalO_{\mscrX}(\mscrU_i)\) is \(a_i^{(n)}\) in each \(n \geq 0\) for \(a_i = (a_i^{(n)}) \in \mcalO_{\mscrX}(\mscrU_i)^\flat\).
    The uniqueness ensures that the sequence \((a^{(n)})\) in the limit \(\lim_{a \mapsto a^p} \mcalO_{\mscrX}(\mscrU)\) and this goes to \(a_i\) on each \(\mscrU_i\).
    This shows that \(\mcalO_{\mscrX}^{\flat}\) is a sheaf of topological rings on \(\abs{\mscrX}\).
\end{proof}

\begin{definition} \label{DefTiltPerfd}
    Let \(\mscrX\) be a perfectoid formal scheme. The \emph{tilt of \(\mscrX\)}, denoted \(\mscrX^{\flat}\), is the topologically ringed space whose underlying topological space is \(\abs{\mscrX}\) and whose structure sheaf is given by \(\mcalO_{\mscrX}^\flat\) in \Cref{TiltSheaf}.
\end{definition}

\begin{lemma}
    Let \(\mscrX\) be a perfectoid formal scheme. Then the tilt \(\mscrX^\flat\) is a perfect formal scheme of characteristic \(p\).
\end{lemma}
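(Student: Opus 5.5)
The plan is to prove that \(\mscrX^\flat\) is locally of the form \(\Spf(S)\) with \(S\) a perfect adic ring of characteristic \(p\). By \Cref{PropPerfectoidFormalScheme}, every affine open \(\mscrU = \Spf(R) \subseteq \mscrX\) has \(R\) a complete adic perfectoid ring, say with ideal of definition \(I\) containing \(p\). By \Cref{TiltSheaf}, \(\mcalO_{\mscrX}^\flat\) is already a sheaf of topological rings on \(\abs{\mscrX}\) and is of characteristic \(p\). So the content of the claim is local: for each such \(\mscrU\) we must produce an isomorphism of topologically ringed spaces \((\abs{\mscrU}, \restr{\mcalO_{\mscrX}^\flat}{\mscrU}) \cong \Spf(R^\flat)\) for a suitable adic topology on \(R^\flat\). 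Perfectness of \(R^\flat\) is automatic from the definition \(R^\flat = \lim_F R/p\).

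First I will fix the topology on \(R^\flat\). Using \Cref{BoundedTorsionPerfd}(1), choose generators \(f_1, \dots, f_r\) of \(I\) admitting compatible systems of \(p\)-power roots. These give elements \(f_i^\flat \in R^\flat\) with \((f_i^\flat)^\sharp = f_i\), and I take \(I^\flat \defeq (f_1^\flat, \dots, f_r^\flat)\). I then need to check two things:
\begin{itemize}
\item the \(I^\flat\)-adic topology agrees with the limit topology from \Cref{TiltSheaf};
\item \(R^\flat\) is \(I^\flat\)-adically complete.
\end{itemize}
Both should follow from the standard identification \(R^\flat/f^\flat \cong R/f\) for \(f\) with \(p \mid f^{p}\) up to units, applied after adjoining \(\varpi^\flat\) with \(\varpi^p = pu\). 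Together with the fact that \(R^\flat \cong \lim_{a\mapsto a^p} R\) from \eqref{TiltDescription} respects the \(I\)-adic structure, this yields that \(R^\flat\) is a complete adic perfect ring.

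Second, the topological spaces must match. The key input is \(\Spec(R^\flat/I^\flat) \cong \Spec(R/I)\) as topological spaces. Since \(p \in I\), the ring \(R/\sqrt{I}\) is a perfect \(\setF_p\)-algebra, and the map \(\sharp\) induces \(R^\flat/\sqrt{I^\flat} \cong R/\sqrt{I}\). I would prove this by comparing both sides with \((R/p)_{\perf}\) modulo the radical, noting that both are reduced quotients of \(R^\flat\) and of \(R/p\) respectively by ideals that match under \(\sharp\). So \(\abs{\Spf(R^\flat)} = \abs{\mscrU}\).

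Third, the structure sheaves must agree on a basis. For \(g \in R\), choose \(g' = (g^\flat)^\sharp\) congruent to \(g\) modulo \(p\), using that \(R/p\) is semiperfect (the ungraded version of \Cref{CompatibleSystemModp}). Then \(D(g) = D(g^\flat)\) on the common underlying space, since \(p\) lies in the ideal of definition. We then need
\[
(\comp{I}{R[1/g]})^\flat \cong \comp{I^\flat}{R^\flat[1/g^\flat]}.
\]
This is the tilting-compatibility of completed localizations of perfectoid rings, which holds because \(\comp{I}{R[1/g']}\) is perfectoid and its tilt is computed by the same inverse limit, and \(g'\) and \(g\) define the same open. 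These isomorphisms are compatible with restrictions, so they glue to an isomorphism of topologically ringed spaces on \(\mscrU\).

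The main obstacle is the first step: identifying the inverse-limit topology on \(\mcalO_{\mscrX}(\mscrU)^\flat\) with an honest adic topology given by a finitely generated ideal, and proving completeness. For this I would use the bounded torsion and \(p\)-power roots from \Cref{BoundedTorsionPerfd} to reduce to the \(p\)-adic case, where \(R^\flat\) is \(\varpi^\flat\)-adically complete by \cite{bhatt2018Integral}*{Lemma 3.2}. I would then add the \(f_i^\flat\).
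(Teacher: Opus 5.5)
Your outline is correct and has the same skeleton as the paper's proof. You reduce to $\mscrX=\Spf(R)$, give $R^\flat$ the adic topology of a finitely generated ideal corresponding to one on $R$, identify the underlying spaces, and compare sections on basic opens $D(f)\leftrightarrow D(f^\flat)$.

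The mechanism differs. The paper pushes an ideal of definition through the single isomorphism $R/\varpi R\cong R^\flat/\varpi^\flat R^\flat$. This produces $J^\flat\ni\varpi^\flat$ with an honest ring isomorphism $R/J\cong R^\flat/J^\flat$. The topology comparison, which you call the main obstacle, is outsourced to \cite{takaya2026Relative}*{Lemma 4.5}. That one isomorphism then gives both the homeomorphism and the sections, via
\[
\widehat{R^\flat[1/f^\flat]}\cong\lim_F\,(R^\flat/J^\flat)[1/f^\flat]\cong\lim_F\,(R/J)[1/f]\cong\bigl(\widehat{R[1/f]}\bigr)^\flat,
\]
using the Frobenius isomorphism $R^\flat/(J^\flat)^{[p^n]}\cong R^\flat/J^\flat$ and \Cref{TiltAdicPerfd}.

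Your identification $R^\flat/\sqrt{I^\flat}\cong R/\sqrt{I}$ suffices for the spaces but cannot be fed into the sections. An inverse-limit perfection is not determined by the reduced quotient: $\lim_F\mathcal{O}_C/\mathfrak{m}$ is the residue field, while $\lim_F\mathcal{O}_C/p=\mathcal{O}_{C^\flat}$. So your last step really is a separate input. The clean way to supply it is the same kind of computation the paper does with $R/J$:
\begin{enumerate}
\item localize the ring isomorphism $R/p\cong R^\flat/(\varpi^\flat)^pR^\flat$, which sends $g^\flat$ to the class of $g'\equiv g$;
\item complete;
\item pass to $\lim_F$.
\end{enumerate}

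Also make sure $\varpi^\flat$ actually lies in $I^\flat$. The kernel of $R^\flat\to R/\sqrt{I}$ is $\sqrt{I^\flat+(\varpi^\flat)}$, not $\sqrt{I^\flat}$. Moreover, every basic neighbourhood of the limit topology contains $(\varpi^\flat)^{p^{N+1}}R^\flat$ for some $N$. So with $I^\flat=(f_1^\flat,\dots,f_r^\flat)$ alone you would need $\varpi^\flat\in\sqrt{I^\flat}$, which is true but needs its own argument.
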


\begin{proof}
    To show that \(\mscrX^\flat\) is a perfect formal scheme of characteristic \(p\), we may assume that the perfectoid formal scheme \(\mscrX = \Spf(R)\) is affine and it suffices to show that \(\mscrX^\flat\) is isomorphic to \(\Spf(R^\flat)\) where \(R^\flat \defeq \lim_{F} (R/(p))\) is the tilt of the complete adic perfectoid ring \(R\).
    Take an element \(\varpi \in R\) such that \(\varpi\) has a compatible system \(\{\varpi^{1/p^n}\}_{n \geq 0}\) in \(R\) and \(p = \varpi^p u\) for some unit \(u\) in \(R\).
    Since \(R\) is \(p\)-adically complete, we can set an ideal of definition \(J\) of \(R\) such that \(p \in J\).
    The underlying space \(\abs{\mscrX^\flat}\) is equal to \(\abs{\mscrX} = \Spec(R/J)\).
    For a perfectoid ring \(R\), we have an isomorphism \(R/\varpi R \cong R^\flat/\varpi^\flat R^\flat\) (see for example \cite{CS2024Purity}*{(2.1.2.2)}).
    In particular, there exists a finitely generated ideal \(J^\flat\) of \(R^\flat\) such that \(\varpi^\flat \in J^\flat\), \(R/J \cong R^\flat/J^\flat\) and the adic topology on \(R^\flat\) is the \(J^\flat\)-adic topology by \cite{takaya2026Relative}*{Lemma 4.5}.
    So the underlying space \(\abs{\Spf(R^\flat)}\) is the same as \(\Spec(R^\flat/J^\flat)\).
    Through the isomorphism, we can get a homeomorphism \(\abs{\mscrX^\flat} \approx \abs{\Spf(R^\flat)}\).

    Next, we will show that the structure sheaf \(\mcalO_{\mscrX^\flat}\) is isomorphic to the sheaf of topological rings \(\mcalO_{\Spf(R^\flat)}\) on \(\abs{\Spf(R^\flat)}\).
    Take an element \(f \in R\). Using the isomorphism \(R/\varpi R \cong R^\flat/\varpi^\flat R^\flat\), we can take (a lift of) the corresponding element \(f^\flat\) in \(R^\flat\). Then \(D(f) \subseteq \abs{\mscrX^\flat}\) corresponds to \(D(f^\flat) \subseteq \abs{\Spf(R^\flat)}\).
    We know that
    \begin{equation*}
        \mcalO_{\mscrX}^\flat(D_+(f)) = \mcalO_{\mscrX}(D(f))^\flat = (\widehat{R[1/f]})^\flat \cong \lim_{F} R[1/f]/J[1/f]
    \end{equation*}
    where the last isomorphism follows from \Cref{TiltAdicPerfd} and it has a map from \(R^\flat \cong \lim_{F} R^\flat/\varpi^\flat R^\flat\).
    This induces a map \(\widehat{R^\flat[1/f^\flat]} \to (\widehat{R[1/f]})^\flat\) of complete adic rings.
    Here, we have isomorphisms
    \begin{align*}
        \widehat{R^\flat[1/f^\flat]} & = \lim_{n \geq 0} R^\flat[1/f^\flat]/(J^\flat[1/f^\flat])^n \cong \lim_{n \geq 0} (R^\flat/(J^\flat)^{[p^n]})[1/f^\flat] \\
        & \cong \lim_{F} (R^\flat/J^\flat)[1/f^\flat] \cong \lim_{F} (R/J)[1/f] \cong (\widehat{R[1/f]})^\flat
    \end{align*}
    through the isomorphism \(R/J \cong R^\flat/J^\flat\) where the second isomorphism follows from the containment \((J^\flat)^{p^{nN}} \subseteq (J^\flat)^{[p^n]} \subseteq (J^\flat)^{p^n}\) for the finitely generated ideal \(J^\flat = (f_1, \dots, f_N)\) and the third isomorphism follows from the following commutative diagram
    \begin{center}
        \begin{tikzcd}
            {R^\flat/(J^\flat)^{[p^{n}]}}                        & R^\flat/J^\flat \arrow[l, "F^n"'] \arrow[l, "\cong"]                     \\
            {R^\flat/(J^\flat)^{[p^{n+1}]}} \arrow[u, two heads] & R^\flat/J^\flat \arrow[l, "F^{n+1}"'] \arrow[u, "F"'] \arrow[l, "\cong"]
        \end{tikzcd}
    \end{center}
    where \((J^\flat)^{[p^n]}\) is the ideal generated by the \(p^n\)-th power of the elements of \(J^\flat\).
    This shows that the map \(\mcalO_{\Spf(R^\flat)}(D_+(f^\flat)) = \widehat{R^\flat[1/f^\flat]} \to \mcalO_{\mscrX}^\flat(D_+(f))\) is an isomorphism.\footnote{We can also deduce this from \cite{takaya2026Relative}*{Corollary 4.11} instead of \Cref{TiltAdicPerfd}}.
\end{proof}

\begin{lemma} \label{TiltAdicPerfd}
    Let \(R\) be an adic perfectoid ring and let \(J\) be an ideal of definition with \(p \in J\).
    Set the \(J\)-adic completion \(\widehat{R}\).
    Then we have isomorphisms
    \begin{equation*}
        \widehat{R}^\flat \xrightarrow{\cong} \lim_{a \mapsto a^p} \widehat{R} \xrightarrow{\cong} \lim_{F} R/J
    \end{equation*}
    of multiplicative monoids.
\end{lemma}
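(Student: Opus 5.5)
We want the multiplicative isomorphisms \(\widehat{R}^\flat \cong \lim_{a\mapsto a^p}\widehat{R} \cong \lim_F R/J\). The plan has two steps.

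\textbf{First isomorphism.} The map \(\lim_{a\mapsto a^p}\widehat{R} \to \widehat{R}^\flat = \lim_F \widehat{R}/p\widehat{R}\) is the reduction \((a^{(n)}) \mapsto (\overline{a^{(n)}})\). Since \(p \in J\), the ring \(\widehat{R}\) is \(p\)-adically complete. So this map is an isomorphism of multiplicative monoids by \cite{bhatt2018Integral}*{Lemma 3.2 (i)}, exactly as in \eqref{TiltDescription}.

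Its inverse is the usual one. Given \((\bar a_n)\), choose lifts \(a_n\) and set \(a^{(n)} \defeq \lim_m a_{n+m}^{p^m}\). The limit exists because \(x \equiv y \bmod p\) implies \(x^{p^m} \equiv y^{p^m} \bmod p^{m+1}\). This is the same mechanism as in the next step.

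\textbf{Second isomorphism.} Consider \(\lim_{a\mapsto a^p}\widehat{R} \to \lim_F \widehat{R}/J\widehat{R} = \lim_F R/J\), again given by reduction. The key input is that \(J\) is finitely generated and \(\widehat{R}\) is \(J\)-adically complete. Writing \(J=(g_1,\dots,g_r)\), if \(x\equiv y \bmod J^k\) then \(x^p \equiv y^p \bmod (pJ^k + J^{pk}) \subseteq J^{k+1}\), using \(p\in J\). Inductively, \(x\equiv y \bmod J\) gives \(x^{p^m}\equiv y^{p^m} \bmod J^{m+1}\).

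This yields the inverse. Given \((\bar b_n)\in \lim_F R/J\) with lifts \(b_n\in\widehat{R}\), the sequence \(b_{n+m}^{p^m}\) is \(J\)-adically Cauchy in \(m\), since \(b_{n+m+1}^p \equiv b_{n+m} \bmod J\). Its limit \(a^{(n)}\) is independent of the lifts by the same congruence. These limits satisfy \((a^{(n+1)})^p = a^{(n)}\), and \(a^{(n)}\) reduces to \(\bar b_n\).

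Injectivity follows in the same way. If \((a^{(n)})\) and \((c^{(n)})\) agree mod \(J\) at every level, then \(a^{(n)} = (a^{(n+m)})^{p^m} \equiv (c^{(n+m)})^{p^m} \bmod J^{m+1}\) for all \(m\), hence \(a^{(n)}=c^{(n)}\) by \(J\)-adic separatedness. Multiplicativity is clear.

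\textbf{Main obstacle.} The only delicate point is the congruence lemma \(x\equiv y \bmod J^k \Rightarrow x^p\equiv y^p \bmod J^{k+1}\) for \(k\geq 1\). It follows from the binomial expansion: the middle terms lie in \(pJ^k\subseteq J^{k+1}\) and the last term lies in \(J^{pk}\subseteq J^{k+1}\). No perfectoidness is actually needed beyond the \(J\)-adic completeness of \(\widehat{R}\). One should also note that \(\widehat{R}/J\widehat{R} = R/J\), which holds because \(J\) is finitely generated.
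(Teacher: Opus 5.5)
Your proposal is correct and follows essentially the same route as the paper: cite \cite{bhatt2018Integral}*{Lemma 3.2 (i)} for \(\widehat{R}^\flat \cong \lim_{a\mapsto a^p}\widehat{R}\) using the \(p\)-adic completeness of \(\widehat{R}\), and invert the reduction map to \(\lim_F R/J\) via \((\overline{a_n}) \mapsto \bigl(\lim_{k} a_{n+k}^{p^k}\bigr)_n\). You additionally make explicit the congruence \(x \equiv y \bmod J^k \Rightarrow x^p \equiv y^p \bmod J^{k+1}\) (using \(p \in J\)) and the injectivity argument via \(J\)-adic separatedness, both of which the paper leaves implicit in ``as in the \(p\)-adic case.''
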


\begin{proof}
    Since \(\widehat{R}\) is a \(p\)-adically complete ring, the first isomorphism \(\widehat{R}^\flat \cong \lim \widehat{R}\) is given by \cite{bhatt2018Integral}*{Lemma 3.2 (i)}.
    For each \(n \geq 0\), we can define a map
    \begin{equation*}
        \lim_{F} R/J \to \widehat{R}; \quad (\overline{a_n}) \mapsto \lim_{k \to \infty} a_{n+k}^{p^k}
    \end{equation*}
    which is well-defined because \(J\) is an ideal of definition and \(p \in J\) as in the \(p\)-adic case.
    This defines the inverse map
    \[
    \lim_{F} R/J \to \lim_{a \mapsto a^p} \widehat{R}; \quad
    (\overline{a_n}) \mapsto \bigl(\lim_{k \to \infty} a_{n+k}^{p^k}\bigr)_{n \geq 0}.
    \]
\end{proof}

\begin{lemma} \label{WittSheaf}
    Let \(\mscrX\) be a formal scheme. On the topological space \(\abs{\mscrX}\), we define a presheaf of complete topological rings by
    \begin{equation*}
        W\mcalO_{\mscrX} \colon \mscrU \mapsto W(\mcalO_{\mscrX}(\mscrU)) = \lim_{n} W_n(\mcalO_{\mscrX}(\mscrU))
    \end{equation*}
    where \(W\) denotes the Witt vector functor, and \(W(\mcalO_{\mscrX}(\mscrU))\) is endowed with the linear topology induced by the systems of neighborhoods \(\{W(I)\}_{I \in \mcalB}\) and \(\{V^kW(\mcalO_{\mscrX}(\mscrU))\}_{k \geq 1}\), where \(\mcalB\) is the set of open ideals of \(\mcalO_{\mscrX}(\mscrU)\).
    Then \(W\mcalO_{\mscrX}\) is a sheaf of complete topological rings on \(\abs{\mscrX}\).
\end{lemma}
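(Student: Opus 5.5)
The plan is to verify the sheaf condition for \(W\mcalO_{\mscrX}\) level by level, since \(W = \lim_n W_n\) and limits of sheaves are sheaves. First observe that \(W_n(-)\), as a functor from rings to rings, has underlying set \(A \mapsto A^n\) (Witt components). Its ring operations are given by universal integer polynomials in the components. Hence for any open covering \(\{\mscrU_i \hookrightarrow \mscrU\}_{i \in I}\), the sequence
\[
W_n(\mcalO_{\mscrX}(\mscrU)) \to \prod_{i} W_n(\mcalO_{\mscrX}(\mscrU_i)) \rightrightarrows \prod_{i,j} W_n(\mcalO_{\mscrX}(\mscrU_i \cap \mscrU_j))
\]
is, on underlying sets, the \(n\)-fold product of the equalizer diagram for \(\mcalO_{\mscrX}\). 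It is therefore an equalizer of sets, and so of rings. Thus \(\mscrU \mapsto W_n(\mcalO_{\mscrX}(\mscrU))\) is a sheaf of rings. Taking the limit over \(n\), which commutes with equalizers and products, shows that \(W\mcalO_{\mscrX}\) is a sheaf of rings. This agrees with \(\mscrU \mapsto \prod_{n \geq 0}\mcalO_{\mscrX}(\mscrU)\) as a sheaf of sets.

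Next I would handle the topological statement: each \(W(\mcalO_{\mscrX}(\mscrU))\) is a complete linearly topologized ring, and the gluing bijection is a homeomorphism onto the equalizer with the subspace topology of the product. Completeness is checked first. The topology is generated by the ideals \(W(I) + V^kW(\mcalO_{\mscrX}(\mscrU))\) for open ideals \(I\) and \(k \geq 1\). Each quotient by such an ideal is \(W_k(\mcalO_{\mscrX}(\mscrU)/I)\), with the same components. So the completion is \(\lim_{k,I} W_k(A/I) = W(\lim_I A/I) = W(A)\), using that \(A = \mcalO_{\mscrX}(\mscrU)\) is complete, as is built into the definition of formal schemes. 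Continuity of the restriction maps is immediate, because they send \(W(I)\) into \(W(I')\) for the corresponding open ideals and preserve \(V^k\).

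The hard step will be the topological sheaf condition. In terms of Witt components, the topology on \(W(A)\) is the product topology on \(\prod_n A\): the ideal \(W(I) + V^kW(A)\) corresponds to \(I^{k} \times A \times A \times \cdots\), with the first \(k\) coordinates restricted to \(I\). So the statement reduces to the corresponding statement for \(\mcalO_{\mscrX}\), namely that \(\mcalO_{\mscrX}(\mscrU)\) carries the initial topology from \(\prod_i \mcalO_{\mscrX}(\mscrU_i)\). This holds for the structure sheaf of a formal scheme, as in \cite{grothendieck1971Elements}*{Ch.I \S 10}. Products of subspace embeddings are subspace embeddings, which completes the argument.
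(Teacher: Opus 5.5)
Your proposal is correct and follows essentially the same route as the paper: you identify \(W(A)\) with the countable product \(\prod_{n \geq 0} A\) on underlying sets, which reduces the sheaf property to that of the structure sheaf, and you treat completeness separately. You go further than the paper's brief argument in two places: you compute \(W(A)/(W(I)+V^kW(A)) \cong W_k(A/I)\) to obtain completeness directly, where the paper cites an example from the earlier graded paper; and you check that the given linear topology is the product topology on Witt components, which makes explicit the topological part of the sheaf condition that the paper leaves implicit.
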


\begin{proof}
    The completeness of \(W(\mcalO_{\mscrX}(\mscrU))\) follows from the same argument as in \cite{ishizuka2025Graded}*{Example 3.8 (2)}.
    As a set, \(W(\mcalO_{\mscrX}(\mscrU))\) is the same as the countable product \(\prod_{n \geq 0} \mcalO_{\mscrX}(\mscrU)\).
    So the sheaf property of \(\mcalO_{\mscrX}\) implies that \(W\mcalO_{\mscrX}\) is a sheaf of topological rings.
\end{proof}

\begin{definition} \label{AinfConst}
    Let \(\mscrX\) be a perfect formal scheme.\footnote{Here, perfect formal scheme means a perfectoid formal scheme of characteristic \(p\).} We can define a topologically ringed space whose underlying space is \(\abs{\mscrX}\) and whose structure sheaf is given by \(W\mcalO_{\mscrX}\).
    We can show that this pair \(W(\mscrX) \defeq (\abs{\mscrX}, W\mcalO_{\mscrX})\) is a perfect formal \(\delta\)-scheme.
    This is called the \emph{Witt formal scheme of \(\mscrX\)}.
\end{definition}

\begin{proof}
    Since \(\mcalO_{\mscrX}\) is a sheaf of perfect topological rings, \(W\mcalO_{\mscrX}(\mscrU) = W(\mcalO_{\mscrX}(\mscrU))\) is a perfect topological \(\delta\)-ring for any open subset \(\mscrU \subseteq \mscrX\) by \Cref{WittSheaf}. So it suffices to show that \(W(\mscrX)\) is a formal scheme.
    We will show that if \(\mscrX = \Spf(R)\) is an affine perfect formal scheme, then \(W(\mscrX) = \Spf(W(R))\) where \(W(R)\) has the topology induced from \(\{W(J^n), p^nW(R)\}_{n \geq 0}\) since \(R\) is an adic perfect ring with an ideal of definition \(J\).
    The underlying space of \(\Spf(W(R))\) is the same as the set of open prime ideals of \(W(R)\) and then this is the same as \(\Spec(W(R)/(p, W(J))) = \Spec(R/J) = \Spf(R)\). So the underlying space of \(W(\mscrX)\) is the same as \(\abs{\mscrX}\).

    For each \(f \in R\), we know that \(W\mcalO_{\mscrX}(D(f)) = W(\mcalO_{\mscrX}(D(f))) = W(\widehat{R[1/f]})\) by \Cref{WittSheaf}.
    Along the isomorphism \(R/J \cong W(R)/(p, W(J))\), the element \(f\) corresponds to the Teichm\"uller lift \([f]\) in \(W(R)\).
    Then the canonical map \(W(R) \to W(\widehat{R[1/f]})\) sends \([f]\) to a unit and thus induces a morphism of complete topological rings \((W(R)[1/[f]])^{\wedge} \to W(\widehat{R[1/f]})^{\wedge}\).
    Using the \(p\)-adic completeness and perfectness of \(R\) and \(\widehat{R[1/f]}\), it is enough to show that the induced map
    \begin{equation*}
        (W_n(R)[1/[f]])^{\wedge} \to W_n(\widehat{R[1/f]})
    \end{equation*}
    is an isomorphism of complete topological rings for any \(n \geq 1\) where the topology comes from a set of ideals \(\{W_n(J^k)\}_{k \geq 0}\).
    So we will show that the induced map \((W_n(R)/W_n(J^k))[1/[f]] \to W_n(\widehat{R[1/f]})/W_n(J^k)\) is an isomorphism for any \(k \geq 0\), namely, the map
    \begin{equation*}
        W_n(R/J^k)[1/[f]] \to W_n((R/J^k)[1/f])
    \end{equation*}
    is an isomorphism of rings. It follows from a more general result on Witt rings below (\Cref{WittLocalization}) and this completes the proof.
\end{proof}

\begin{lemma} \label{WittLocalization}
    Let \(R\) be a ring and let \(f\) be an element of \(R\).
    Then the induced map
    \begin{equation*}
        W_n(R)[1/[f]] \to W_n(R[1/f])
    \end{equation*}
    is an isomorphism of rings for any \(n \geq 1\).
\end{lemma}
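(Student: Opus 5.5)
The plan is to reduce to a statement about finite-length Witt vectors, where the ring structure is polynomial in the ghost-free coordinates, and then induct on \(n\) using the Verschiebung filtration. Write \(\iota\colon W_n(R)\to W_n(R[1/f])\) for the map induced by \(R\to R[1/f]\). First observe that \([f]\) maps to \([f/1]\), which is a unit in \(W_n(R[1/f])\) with inverse \([1/f]\), since Teichm\"uller lifts are multiplicative. Hence \(\iota\) factors through \(W_n(R)[1/[f]]\), and we must show the induced map \(\bar\iota\colon W_n(R)[1/[f]]\to W_n(R[1/f])\) is bijective.

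The argument proceeds by induction on \(n\). For \(n=1\) we have \(W_1=\) identity and \([f]=f\), so there is nothing to prove. For the inductive step, use the short exact sequence of \(W_n(R)\)-modules
\[
0\to R\xrightarrow{V^{n-1}} W_n(R)\to W_{n-1}(R)\to 0 ,
\]
where \(V^{n-1}(a)=(0,\dots,0,a)\). The \(W_n(R)\)-module structure on \(R\) here is via \(x\cdot a = x_0^{p^{n-1}}a\), by the formula \(x\cdot V^{n-1}(a)=V^{n-1}(F^{n-1}(x)a)\) and the fact that \(F^{n-1}x\) has image \(x_0^{p^{n-1}}\) in \(W_1\). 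Localization at \([f]\) is exact, so it yields the sequence
\[
0\to R[1/[f]]\to W_n(R)[1/[f]]\to W_{n-1}(R)[1/[f]]\to 0 .
\]
Under the module structure above, \([f]\) acts on the left term by multiplication by \(f^{p^{n-1}}\), so that term is \(R[1/f^{p^{n-1}}]=R[1/f]\). On the right term, \([f]\) acts through its image \([f]\in W_{n-1}(R)\).

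We then compare this with the analogous exact sequence for \(R[1/f]\) via \(\bar\iota\). The map on the left terms is the identity of \(R[1/f]\); this requires checking that \(V^{n-1}(a)/[f]^{k}\mapsto V^{n-1}(a/f^{kp^{n-1}})\), which follows from \([f]^{-k}V^{n-1}(a)=V^{n-1}(f^{-kp^{n-1}}a)\) in \(W_n(R[1/f])\). The map on the right terms is an isomorphism by the induction hypothesis. The five lemma then gives that \(\bar\iota\) is an isomorphism. It is a ring map because \(\iota\) is.

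The main care point is this bookkeeping of module structures: verifying that the localization of the kernel term really is \(R[1/f]\) and that the comparison maps are compatible. Everything rests on the Witt vector identity \(xV^{m}(a)=V^m(F^m(x)a)\) together with \(F([f])=[f]^p=[f^p]\).
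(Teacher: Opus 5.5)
Your proof is correct, but it takes a different route from the paper's. The paper does not induct on $n$. It uses only the coordinate formula $[x]\cdot(a_0,\dots,a_{n-1})=(xa_0,x^pa_1,\dots,x^{p^{n-1}}a_{n-1})$ and checks both directions by hand:
\begin{itemize}
\item Injectivity: if $(a_0,\dots,a_{n-1})/[f]^i$ dies in $W_n(R[1/f])$, then each $a_j$ is killed by a power of $f$, so a single $[f^N]$ kills the whole vector.
\item Surjectivity: any $(b_0,\dots,b_{n-1})$ can be written as $(a_0/f^N,a_1/f^{pN},\dots,a_{n-1}/f^{p^{n-1}N})$, which is the image of $(a_0,\dots,a_{n-1})/[f^N]$.
\end{itemize}

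You instead use the $V$-filtration sequence $0\to R\xrightarrow{V^{n-1}}W_n(R)\to W_{n-1}(R)\to 0$, exactness of localization, and the five lemma. The only input is the projection formula $xV^m(a)=V^m(F^m(x)a)$, which tells you how $[f]$ acts on the graded piece. This isolates exactly what is needed and avoids coordinate bookkeeping. The paper's version is shorter and fully explicit, and it never needs to identify module structures on the kernel term.

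One correction, which does not affect the argument. The element $F^{n-1}x\in W_1(R)=R$ is the ghost component
\[w_{n-1}(x)=x_0^{p^{n-1}}+px_1^{p^{n-2}}+\cdots+p^{n-1}x_{n-1},\]
not $x_0^{p^{n-1}}$; the two agree only modulo $p$. For example, $p\cdot V(a)=V(pa)$ in $W_2(\mathbf{Z})$, not $V(p^pa)$. So your stated description of the $W_n(R)$-module structure on the kernel term is wrong for general $x$. It is harmless here, because the localization of a module at $[f]$ depends only on how $[f]$ acts, and $w_{n-1}([f])=f^{p^{n-1}}$. That is exactly the fact your final sentence invokes via $F^{n-1}([f])=[f^{p^{n-1}}]$.
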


\begin{proof}
    If an element \((a_0, \dots, a_{n-1})/[f]^i\) in \(W_n(R)[1/[f]]\) is zero on the right-hand side, then the multiplication
    \begin{equation*}
        (1/f^i, 0, \dots, 0) \cdot (a_0, \dots, a_{n-1}) = (a_0/f^i, a_1/f^{pi}, \dots, a_{n-1}/f^{p^{n-1}i}) \in W_n(R[1/f])
    \end{equation*}
    is zero. So there exists a sufficiently large \(N\) such that \(f^{p^jN} a_j = 0\) in \(R\) for all \(j = 0, \dots, n-1\).
    Then \([f^N] \cdot (a_0, \dots, a_{n-1}) = 0\) holds in \(W_n(R)\) and this shows the injectivity.

    For the surjectivity, let \((b_0, \dots, b_{n-1}) \in W_n((R[1/f]))\). Then there exists \(N\) and \(a_0, \dots, a_{n-1}\) in \(W_n(R)\) such that
    \begin{equation*}
        (b_0, \dots, b_{n-1}) = (a_0/f^N, a_1/f^{pN}, \dots, a_{n-1}/f^{p^{n-1}N}) \in W_n((R[1/f]))
    \end{equation*}
    which is the image of \((a_0, \dots, a_{n-1})/[f^N]\) in \(W_n(R)[1/[f]]\). This finishes the proof.
\end{proof}

\begin{definition} \label{AinfScheme}
    Let \(\mscrX\) be a perfectoid formal scheme.
    We define the \emph{\(A_{\inf}\)-formal scheme \(A_{\inf}(\mscrX)\) of \(\mscrX\)} to be the Witt scheme \(W(\mscrX)^\flat\) of the tilting \(\mscrX^\flat\) of \(\mscrX\). More explicitly, \(A_{\inf}(\mscrX)\) is the perfect formal \(\delta\)-scheme whose underlying space is \(\abs{\mscrX}\) and whose structure sheaf is given by the Witt sheaf \(W\mcalO_{\mscrX^\flat}\):
    \begin{equation*}
        \mcalO_{A_{\inf}(\mscrX)} \colon \mscrU \mapsto W(\mcalO_{\mscrX}(\mscrU)^\flat)
    \end{equation*}
    by \Cref{WittSheaf} and \Cref{TiltSheaf}.
\end{definition}

\begin{definition} \label{DefThetaMap}
    Using Fontaine's theta map (\cite{bhatt2018Integral}*{\S 3.1}) on each section, we obtain a morphism \(\theta \colon \mcalO_{A_{\inf}(\mscrX)} \to \mcalO_{\mscrX}\) of sheaves of rings on \(\abs{\mscrX}\):
    \begin{align*}
        \theta(\mscrU) \colon \mcalO_{A_{\inf}(\mscrX)}(\mscrU) = W(\mcalO_{\mscrX}(\mscrU)^\flat) & \to \mcalO_{\mscrX}(\mscrU) \\
        (a_0, a_1, \dots) & \mapsto \sharp(a_0) + p\sharp(a_1)^{1/p} + p^2\sharp(a_2)^{1/p^2} + \cdots
    \end{align*}
    where \(\sharp \colon \mcalO_{\mscrX}(\mscrU)^\flat \to \mcalO_{\mscrX}(\mscrU)\) is the sharp map used in \cite{bhatt2018Integral}*{Lemma 3.2 (i)}.
    Since this map is surjective on every affine open subset by perfectoidness, we conclude that \(\theta\) is a surjective morphism of sheaves on \(\abs{\mscrX}\).
\end{definition}

\begin{proposition} \label{AinfSchemeProperties}
    In the above situation, the kernel \(\mcalI \defeq \ker(\theta)\) defines a quasi-coherent ideal sheaf on \(A_{\inf}(\mscrX)\).
    Then we have the following properties:
    \begin{enumerate}
        \item The pair \((\mcalO_{A_{\inf}(\mscrX)}(\mscrU), \mcalI(\mscrU))\) is the perfect prism corresponding to the perfectoid ring \(\mcalO_{\mscrX}(\mscrU)\) on each affine open subset \(\mscrU\).
        \item We can identify \(\mscrX\) with the closed formal subscheme \(V(\mcalI)\) of \(A_{\inf}(\mscrX)\) defined by the ideal sheaf \(\mcalI\).
        \item The pair \((A_{\inf}(\mscrX), \mcalI)\) is a pseudo-perfect prismatic scheme and its associated closed subscheme \(V(\mcalI)\) is \(\mscrX\).
        \item The construction of \((A_{\inf}(\mscrX), \mcalI)\) is functorial in \(\mscrX\).
        \item If \(\mscrX\) admits a morphism to a formal spectrum \(\Spf(R)\) for some perfectoid \(R\), then \((A_{\inf}(\mscrX), \mcalI)\) is a perfect prismatic scheme and \(\mcalI\) is generated by a non-zero-divisor \(d\) in \(\ker(W(R^\flat) \to R)\).
    \end{enumerate}
    In particular, this gives rise to a functor from the category of perfectoid formal schemes to the category of perfect pseudo-prismatic schemes.
\end{proposition}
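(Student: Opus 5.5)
The plan is to reduce everything to the affine case and then glue, using the sheaf properties of \(\mcalO_{A_{\inf}(\mscrX)}\) and \(\mcalO_{\mscrX}\) established in \Cref{WittSheaf} and \Cref{TiltSheaf}.

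For (1), take an affine open \(\mscrU=\Spf(R)\). By \Cref{PropPerfectoidFormalScheme}, \(R\) is a complete adic perfectoid ring. By the computation in the lemma following \Cref{DefTiltPerfd}, \(\mcalO_{\mscrX}^\flat(\mscrU)=R^\flat\), so \(\mcalO_{A_{\inf}(\mscrX)}(\mscrU)=W(R^\flat)=A_{\inf}(R)\). The kernel of Fontaine's \(\theta\) is then principal, generated by a distinguished element \(\xi\), and \((A_{\inf}(R),\ker\theta)\) is the perfect prism attached to \(R\) by \cite{bhatt2018Integral} and \cite{bhatt2022Prismsa}.

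Quasi-coherence of \(\mcalI\) comes from compatibility with localization. For \(f\in R\), we have \(W(\widehat{R[1/f]}^\flat)=\widehat{W(R^\flat)[1/[f^\flat]]}\) by \Cref{WittLocalization} and the proof of \Cref{AinfConst}. The kernel of \(\theta\) there is \(\xi\cdot(\text{completed localization})\), because \(\xi\) remains distinguished and \(\theta\) commutes with these maps; rigidity of prisms (\cite{bhatt2022Prismsa}*{Lemma 3.5}) identifies the two ideals. Hence \(\mcalI\) restricted to \(\mscrU\) is the invertible ideal \(\xi\mcalO\), and in particular it is quasi-coherent.

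For (2), surjectivity of \(\theta\) (\Cref{DefThetaMap}) gives \(\mcalO_{A_{\inf}(\mscrX)}/\mcalI\cong\mcalO_{\mscrX}\) as sheaves. The underlying spaces of \(V(\mcalI)\) and \(\mscrX\) coincide: both are \(\abs{\mscrX}\), since \(p\) and \(\xi\) are topologically nilpotent. The topologies also match locally by (1).

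For (3), the conditions of \Cref{DefPrismaticScheme} are checked on the affine cover:
\begin{itemize}
\item \((p,\mcalI)\) is topologically nilpotent;
\item \(\mcalI\) is invertible by (1);
\item \(p\in\mcalI(\mscrU)+\varphi(\mcalI(\mscrU))\) because \(\xi\) is distinguished;
\item \(\mcalO/\mcalI=\mcalO_{\mscrX}\) has bounded \(p^\infty\)-torsion locally by \Cref{BoundedTorsionPerfd}.
\end{itemize}
The \(\delta\)-structure is perfect because \(\mcalO^\flat_{\mscrX}\) is a sheaf of perfect rings.

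For (4), a morphism \(\mscrY\to\mscrX\) induces \(\mcalO_{\mscrX}^\flat\to g_*\mcalO_{\mscrY}^\flat\), and applying \(W\) yields a \(\delta\)-map. This map is compatible with \(\theta\) by naturality of \(\sharp\), so it sends \(\mcalI\) into \(\mcalI\).

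For (5), \((A_{\inf}(R),\ker\theta_R)\) is a perfect prism, and functoriality gives a map \((A_{\inf}(\mscrX),\mcalI)\to(\Spf A_{\inf}(R),(d))\). \Cref{LocallyPrismaticScheme}(4) then shows the global prismatic condition, and \Cref{LocallyPrismaticScheme}(5) gives \(\mcalI=d\mcalO\). The image of \(d\) generates \(\mcalI\) locally by rigidity (\Cref{LocallyPrismaticScheme}(3)), and it is a nonzerodivisor locally, hence globally.

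The main obstacle is the localization compatibility in (1). The difficulty is identifying the tilt of \(\widehat{R[1/f]}\) and the kernel of \(\theta\) after completed localization in the non-\(p\)-adic adic setting, which relies on \Cref{TiltAdicPerfd} and \cite{takaya2026Relative}*{Lemma 4.5}.
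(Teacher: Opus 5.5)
Your proposal is correct and follows essentially the same route as the paper, whose proof is a brief reduction to affine opens \(\mscrU\): there \(\theta\) is Fontaine's map, it induces \(\mcalO_{A_{\inf}(\mscrX)}(\mscrU)/\mcalI(\mscrU)\cong\mcalO_{\mscrX}(\mscrU)\), and \((\mcalO_{A_{\inf}(\mscrX)}(\mscrU),\mcalI(\mscrU))\) is the associated perfect prism; (5) then comes from the map of prisms \((A_{\inf}(R),(d))\to(\mcalO_{A_{\inf}(\mscrX)}(\mscrU),\mcalI(\mscrU))\) together with rigidity. You simply spell these steps out in more detail, and the completed-localization identification you flag as the main obstacle is not really needed for quasi-coherence, since the restriction maps between affine opens are already maps of perfect prisms to which rigidity applies directly.
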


\begin{proof}
    All properties follow from the fact that the theta map \(\theta\) induces an isomorphism \(\mcalO_{A_{\inf}(\mscrX)}(\mscrU)/\mcalI(\mscrU) \xrightarrow{\cong} \mcalO_{\mscrX}(\mscrU)\) on each affine open subset \(\mscrU\).
    Note that the last assertion follows from the fact that the map \(\mscrX \to \Spf(R)\) induces a map of prisms \((A_{\inf}(R), (d)) \to (\mcalO_{A_{\inf}(\mscrX)}(\mscrU), \mcalI(\mscrU))\).
\end{proof}

\begin{proposition} \label{PerfectWittCorr}
    Let \((\mscrA, \mcalI)\) be a perfect pseudo-prismatic scheme.
    Then there exists a canonical isomorphism of formal schemes
    \begin{equation*}
        \theta \colon \mscrA \xrightarrow{\cong} A_{\inf}(\overline{\mscrA}) = W((\overline{\mscrA})^{\flat})
    \end{equation*}
    where \(A_{\inf}(\overline{\mscrA})\) is the \(A_{\inf}\)-formal scheme of the perfectoid formal scheme \(\overline{\mscrA}\) defined in \Cref{AinfScheme}.
    This induces a closed immersion \(\theta \colon \underline{\mscrA} \hookrightarrow W(\underline{\mscrA}^{\flat})\), which is Zariski locally identified with Fontaine's theta map and this can be identified with the closed immersion \(\underline{\mscrA} = V(\mcalI) \hookrightarrow \mscrA\) defined by the ideal sheaf \(\mcalI\) in \(\mscrA\).
    If the induced perfectoid formal scheme \(\overline{\mscrA}\) has a map \(\overline{\mscrA} \to \Spf(R)\) for some perfectoid \(R\), then \((\mscrA, \mcalI)\) is isomorphic to the perfect prismatic scheme \((W(\overline{\mscrA}^{\flat}), \ker(\theta_R))\) where \(\ker(\theta_R)\) is an ideal sheaf of \(W(\overline{\mscrA}^{\flat})\) generated by the kernel of the map \(A_{\inf}(R) \twoheadrightarrow R\).
\end{proposition}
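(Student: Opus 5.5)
The strategy is to construct the isomorphism \(\theta\) affine-locally and glue it. On each affine the statement reduces to the classical equivalence between perfect prisms and perfectoid rings, and uniqueness of that equivalence makes the local isomorphisms compatible.

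\textbf{Step 1: local isomorphisms.} Set \(\overline{\mscrA} = V(\mcalI)\). By \Cref{LocallyPrismaticScheme}(2), for every affine open \(\mscrU = \Spf(A) \subseteq \mscrA\):
\begin{itemize}
\item the pair \((A, \mcalI(\mscrU))\) is a perfect prism;
\item \(\overline{\mscrA}\) is a perfectoid formal scheme with \(\mcalO_{\overline{\mscrA}}(\mscrU \cap \overline{\mscrA}) = A/\mcalI(\mscrU)\).
\end{itemize}
By \cite{bhatt2022Prismsa}*{Theorem 3.10}, perfect prisms are equivalent to perfectoid rings via \(R \mapsto (A_{\inf}(R), \ker\theta_R)\). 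Hence there is a unique isomorphism of \(\delta\)-rings
\[
A \xrightarrow{\cong} W((A/\mcalI(\mscrU))^{\flat})
\]
that carries \(\mcalI(\mscrU)\) onto \(\ker(\theta)\) and lifts the identity of \(A/\mcalI(\mscrU)\). Concretely, it is the inverse of the map induced by the universal property of \(W(-^\flat)\) for the perfect \(\delta\)-ring \(A\) surjecting onto \(A/\mcalI(\mscrU)\).

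\textbf{Step 2: continuity.} This local isomorphism must be a homeomorphism for the adic topologies. The \((p,\mcalI)\)-adic topology on \(A\) matches the topology on \(W(\cdot^\flat)\) generated by \(\{W(J^n), p^n\}\). The justification: the ideal \(J\) of definition of \(A/\mcalI\) tilts to \(J^\flat\) as in \Cref{TiltAdicPerfd} and the tilting lemma. I expect this to be the only real work, namely checking that the two linear topologies agree and not just that the underlying rings are isomorphic. I would verify it modulo \(p\), where it reduces to \(A/p\) being perfect with \(A/(p,\mcalI) \cong (A/\mcalI)^\flat/\varpi^\flat\).

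\textbf{Step 3: gluing.} By \Cref{AinfScheme} and \Cref{WittSheaf}, \(\mcalO_{A_{\inf}(\overline{\mscrA})}(\mscrU) = W(\mcalO_{\overline{\mscrA}}(\mscrU)^\flat)\), and the underlying spaces agree since \(\mcalI\) is topologically nilpotent. The uniqueness in Step 1 makes these isomorphisms compatible with restriction along basic opens \(D(f)\): both sides commute with completed localization, by \Cref{WittLocalization} and the uniqueness of \(\delta\)-structures on completed localizations. They therefore glue to \(\theta \colon \mscrA \xrightarrow{\cong} A_{\inf}(\overline{\mscrA})\).

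\textbf{Step 4: remaining claims.} By construction, \(\theta\) carries \(\mcalI\) onto \(\ker(\theta)\) of \Cref{DefThetaMap}. So the closed immersion \(V(\mcalI) \hookrightarrow \mscrA\) is identified with Fontaine's theta map, locally and hence globally by \Cref{AinfSchemeProperties}(2).

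For the last claim, suppose \(\overline{\mscrA} \to \Spf(R)\) is given. Then \Cref{AinfSchemeProperties}(5) says \(\ker(\theta)\) is generated by the image of a generator \(d\) of \(\ker(A_{\inf}(R) \to R)\). Transporting along \(\theta\) gives \((\mscrA,\mcalI) \cong (W(\overline{\mscrA}^\flat), \ker(\theta_R))\). This is a perfect prismatic scheme by \Cref{LocallyPrismaticScheme}(4).
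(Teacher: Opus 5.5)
Your proposal is correct and follows the paper's proof. The paper likewise gets the isomorphism on each affine open from the Bhatt--Scholze equivalence between perfect prisms and perfectoid rings, glues via its functoriality, identifies \(\mcalI(\mscrU)\) with \(\ker\theta(\mscrU)\), and deduces the last claim from rigidity of maps of prisms, which is what \Cref{AinfSchemeProperties}(5) packages.

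Your Step 2 is more careful than the paper, which simply asserts that the local isomorphism respects topologies. One correction there: the topology to match is the given adic topology on \(A\), whose ideal of definition need only contain a power of \((p,\mcalI(\mscrU))\), and not the \((p,\mcalI)\)-adic topology. Your comparison via \(J^\flat\) handles the correct topology.
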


\begin{proof}
    Since \(\overline{\mscrA}\) is a perfectoid formal scheme by \Cref{LocallyPrismaticScheme}, we can take the \(A_{\inf}\)-formal scheme \(A_{\inf}(\overline{\mscrA})\) whose underlying space is \(\abs{\overline{\mscrA}}\) and the structure sheaf is given by \(\mscrU \mapsto W(\mcalO_{\overline{\mscrA}}(\mscrU)^\flat)\) for any open subset \(\mscrU\) in \(\overline{\mscrA}\). 
    Since \((p, \mcalI)\) is topologically nilpotent, the underlying space \(\abs{\overline{\mscrA}}\) is equal to the underlying space \(\abs{V(p, \mcalI)} = \abs{\mscrA}\).
    Moreover, for any affine open subset \(\mscrU\), we know that \(\mcalO_{\overline{\mscrA}}(\mscrU)\) is isomorphic to \(\mcalO_{\mscrA}(\mscrU)/\mcalI(\mscrU)\).

    Since \(\mscrA\) is perfect as a formal \(\delta\)-scheme, we know that there exists a functorial isomorphism of topological rings
    \begin{equation*}
        A_{\inf}(\mcalO_{\overline{\mscrA}}(\mscrU)) = W(\mcalO_{\overline{\mscrA}}(\mscrU)^\flat) = W((\mcalO_{\mscrA}(\mscrU)/\mcalI(\mscrU))^{\flat}) \xrightarrow{\cong} \mcalO_{\mscrA}(\mscrU)
    \end{equation*}
    for any affine formal open subset \(\mscrU\) of \(\mscrA\) since \((\mcalO_{\mscrA}(\mscrU), \mcalI(\mscrU))\) is the corresponding perfect prism to the perfectoid ring \(\mcalO_{\overline{\mscrA}}(\mscrU)\).
    This shows the first claim.
    Composing the modulo \(\mcalI\) map, the above map is equal to Fontaine's theta map \(\theta(\mscrU)\) defined in \Cref{DefThetaMap}, and its kernel \(\ker(\theta(\mscrU))\) is equal to \(\mcalI(\mscrU)\) through the isomorphism above. We obtain the desired closed immersion.

    If there is a map \(\overline{\mscrA} \to \Spf(R)\) for some perfectoid ring \(R\), then the rigidity of maps of prisms ensures that \(\ker(\theta(\mscrU))\) is generated by a generator of the kernel of the map \(A_{\inf}(R) \to R\) for any affine formal open subset \(\mscrU\) of \(\mscrA\). Therefore, we have a map of prismatic schemes as in \Cref{LocallyPrismaticScheme}.
\end{proof}

\begin{corollary} \label{pTorsfPrismScheme}
    Let \((\mscrA, \mcalI)\) be a perfect prismatic formal scheme.
    Then \(H^1(\mscrA, \mcalO_{\mscrA})\) is \(p\)-torsion-free.
\end{corollary}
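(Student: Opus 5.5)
The plan is to reduce to a statement about perfect prisms glued along the Zariski topology and then use a Frobenius argument. By \Cref{LocallyPrismaticScheme}(5), the global sections \((A, I) \defeq (\mcalO_{\mscrA}(\mscrA), \mcalI(\mscrA))\) form a perfect prism, \(I = (d)\) is principal, and \(\mcalI \cong \mcalO_{\mscrA}\) is generated by the non-zero-divisor \(d\). By \Cref{PerfectWittCorr}, we also have \(\mscrA \cong W(\overline{\mscrA}^{\flat})\), so \(\mcalO_{\mscrA} = W\mcalO_{\overline{\mscrA}^{\flat}}\). On every affine open \(\mscrU\), the ring \(\mcalO_{\mscrA}(\mscrU)\) is the Witt ring of a perfect ring and is therefore \(p\)-torsion-free. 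It follows that \(p \colon \mcalO_{\mscrA} \to \mcalO_{\mscrA}\) is injective as a map of sheaves, and we get a short exact sequence
\[
0 \to \mcalO_{\mscrA} \xrightarrow{\times p} \mcalO_{\mscrA} \to \mcalO_{\mscrA}/p \to 0
\]
of sheaves on \(\abs{\mscrA}\). Here \(\mcalO_{\mscrA}/p\) denotes the sheaf quotient; since \(\mscrA\) has a basis of affine opens, we will identify it with \(\mcalO_{\overline{\mscrA}^{\flat}}\), using that \(W(S)/p = S\) for perfect \(S\) and that \(\mscrU \mapsto \mcalO_{\overline{\mscrA}^{\flat}}(\mscrU)\) is already a sheaf by \Cref{TiltSheaf}.

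Taking cohomology of this sequence gives
\[
H^0(\mscrA, \mcalO_{\mscrA}) \to H^0(\overline{\mscrA}^{\flat}, \mcalO) \to H^1(\mscrA, \mcalO_{\mscrA}) \xrightarrow{\times p} H^1(\mscrA, \mcalO_{\mscrA}).
\]
Hence the \(p\)-torsion in \(H^1\) equals the cokernel of \(H^0(\mscrA, \mcalO_{\mscrA}) \to H^0(\overline{\mscrA}^{\flat}, \mcalO)\), and it suffices to show that this map is surjective. Write \(S \defeq H^0(\overline{\mscrA}^{\flat}, \mcalO) = \Gamma(\mscrA, \mcalO_{\mscrA})^{\flat}\). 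The ring \(S\) is perfect, being a limit of perfect rings. Since \(\mcalO_{\mscrA} = W\mcalO_{\overline{\mscrA}^{\flat}}\) and \(W\) commutes with limits of sets (\Cref{WittSheaf}), we have \(H^0(\mscrA, \mcalO_{\mscrA}) = W(S)\). The map in question is then \(W(S) \to S\), \((a_0, a_1, \dots) \mapsto a_0\), which is surjective, for instance via Teichm\"uller lifts \(s \mapsto [s]\). This shows that \(H^1(\mscrA, \mcalO_{\mscrA})[p] = 0\), so \(H^1\) is \(p\)-torsion-free.

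The main obstacle is the identification of the sheaf quotient \(\mcalO_{\mscrA}/p\) with \(\mcalO_{\overline{\mscrA}^{\flat}}\). It is cleanest to check this on the affine basis, where the presheaf \(\mscrU \mapsto W(S_{\mscrU})/p = S_{\mscrU}\) agrees with the sheaf \(\mcalO_{\overline{\mscrA}^{\flat}}\), so no further sheafification is needed. One also has to check that \(\mcalO_{\mscrA}(\mscrA) = W(S)\) uses only the sheaf property of \(W\mcalO\); this follows from \Cref{WittSheaf}. The perfect prismatic (rather than pseudo) hypothesis is in fact only used to ensure that the global \(\delta\)-structure is defined. If the identification \(\mscrA \cong W(\overline{\mscrA}^{\flat})\) were insufficient, a fallback is to argue directly with the Frobenius \(\varphi\) on \(\mcalO_{\mscrA}\), which is an automorphism since \(\mscrA\) is perfect: it gives \(\mcalO_{\mscrA}/p\) perfect and lets us lift sections by the standard \(\lim_n \varphi^{-n}\)-type Teichm\"uller construction.
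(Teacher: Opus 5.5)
Your proposal is correct and follows essentially the same route as the paper: the exact sequence \(0 \to \mcalO_{\mscrA} \xrightarrow{\times p} \mcalO_{\mscrA} \to \mcalO_{\mscrA}/p \to 0\), reduction to surjectivity of \(H^0(\mscrA, \mcalO_{\mscrA}) \to H^0(\mscrA, \mcalO_{\mscrA}/p)\), and Teichm\"uller lifts through the identification of \Cref{PerfectWittCorr}. The only difference is that the paper takes Teichm\"uller lifts on affine opens and glues them by canonicity, whereas you identify the global map directly with \(W(S) \to S\); your version also makes explicit that the presheaf quotient is already a sheaf and that the \(p\)-torsion of \(H^1\) is the cokernel of this map (the paper's text says ``image'').
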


\begin{proof}
    Since \(\mscrA\) is \(p\)-torsion-free, we have an exact sequence \(0 \to \mcalO_{\mscrA} \xrightarrow{\times p} \mcalO_{\mscrA} \to \mcalO_{\mscrA}/p \to 0\).
    Then the \(p\)-torsion part \(H^1(\mscrA,\mcalO_{\mscrA})[p]\) is isomorphic to the image of the map \(H^0(\mcalO_{\mscrA}) \to H^0(\mcalO_{\mscrA}/p)\).
    For any element \(f\) in \(H^0(\mcalO_{\mscrA}/p)\) and for any affine open subset \(\mscrU\) of \(\mscrA\), we can take a lift \([f]\) in \(\mcalO_{\mscrA}(\mscrU) \cong W(\mcalO_{\mscrA}(\mscrU)/p)\) by using the Teichm\"uller lift under the isomorphism coming from \Cref{PerfectWittCorr}.
    Since this choice is canonical, we can glue this element and we get a lift of \(f\) on the global section.
    So the surjectivity follows and this shows that \(H^1(\mscrA, \mcalO_{\mscrA})[p] = 0\).
\end{proof}

\begin{corollary} \label{EquivCatPerfdFormal}
    The following two categories are equivalent.
    \begin{itemize}
        \item The category of perfectoid formal schemes.
        \item The category of pseudo-perfect prismatic schemes.
    \end{itemize}
    The functor is \(\mscrX \mapsto (A_{\inf}(\mscrX), \mcalI)\) in \Cref{AinfSchemeProperties} and the inverse functor is \((\mscrA, \mcalI) \mapsto \overline{\mscrA}\) in \Cref{DefPrismaticScheme}.

    Let \(R\) be an adic perfectoid ring and let \((A, I)\) be the corresponding prism.
    Then the equivalence of categories above restricts to the following equivalence of categories:
    \begin{itemize}
        \item The category of perfectoid formal schemes over \(\Spf(R)\).
        \item The category of perfect prismatic schemes over \((A, I)\).
    \end{itemize}
\end{corollary}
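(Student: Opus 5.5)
We construct mutually inverse functors and check that the composites are naturally isomorphic to the identity. The forward functor is \(F\colon \mscrX \mapsto (A_{\inf}(\mscrX), \mcalI)\). By \Cref{AinfSchemeProperties}(3) and (4) it is well defined and lands in pseudo-perfect prismatic schemes. The backward functor is \(G\colon (\mscrA, \mcalI) \mapsto \overline{\mscrA} = V(\mcalI)\). By \Cref{LocallyPrismaticScheme}(2), \(\overline{\mscrA}\) is a perfectoid formal scheme. A morphism of pseudo-prismatic schemes sends \(\mcalJ\) into \(f_*\mcalI\), so it induces a morphism of the closed formal subschemes; hence \(G\) is a functor.

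For \(G \circ F \simeq \mathrm{id}\), we use \Cref{AinfSchemeProperties}(2). It identifies \(V(\mcalI) \subseteq A_{\inf}(\mscrX)\) with \(\mscrX\) via \(\theta\): on each affine \(\mscrU\), \(\theta\) induces \(W(\mcalO_{\mscrX}(\mscrU)^\flat)/\ker\theta \cong \mcalO_{\mscrX}(\mscrU)\). These isomorphisms glue because \(\theta\) is a morphism of sheaves (\Cref{DefThetaMap}). Naturality in \(\mscrX\) follows from the functoriality of tilting, of Witt vectors and of \(\theta\).

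For \(F \circ G \simeq \mathrm{id}\), we use \Cref{PerfectWittCorr}. It gives a canonical isomorphism \(\mscrA \xrightarrow{\cong} A_{\inf}(\overline{\mscrA})\) of formal \(\delta\)-schemes that carries \(\mcalI\) to \(\ker\theta\). On affines this isomorphism is the inverse of the canonical map \(W((A/I)^\flat) \to A\) for the perfect prism \((A,I)\), so it is compatible with \(\delta\)-structures and ideals. It is natural in \((\mscrA, \mcalI)\) because the correspondence between perfect prisms and perfectoid rings is functorial. This yields an equivalence of categories.

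For the relative statement, fix an adic perfectoid ring \(R\) with corresponding perfect prism \((A,I) = (A_{\inf}(R), \ker\theta_R)\). A perfectoid formal scheme \(\mscrX \to \Spf(R)\) gives, by \Cref{AinfSchemeProperties}(4) and (5), a morphism \((A_{\inf}(\mscrX), \mcalI) \to (\Spf(A), I)\). By (5) the target object is a perfect prismatic scheme, not just a pseudo one; equivalently this follows from \Cref{LocallyPrismaticScheme}(4). Conversely, a perfect prismatic scheme over \((A,I)\) reduces modulo the ideals to \(\overline{\mscrA} \to \Spf(A/I) = \Spf(R)\). The full equivalence then restricts to the slice categories over \(\Spf(R)\) and over \((\Spf(A), I)\), since \(F(\Spf(R)) \cong (\Spf(A),I)\).

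The main obstacle is compatibility of topologies. One must check that the adic topology on \(W(\mcalO(\mscrU)^\flat)\) from \Cref{WittSheaf} agrees with the \((p,\mcalI)\)-adic topology on \(\mcalO_{\mscrA}(\mscrU)\), so that the isomorphisms are isomorphisms of formal schemes and not merely of ringed spaces. For this I would invoke \cite{takaya2026Relative}*{Lemma 4.5}, which compares ideals of definition under tilting.
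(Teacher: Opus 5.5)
Your argument is the same as the paper's, which just says that both assertions follow from \Cref{AinfSchemeProperties} and \Cref{PerfectWittCorr}. You unpack this correctly, using \Cref{LocallyPrismaticScheme}(2) for the inverse functor and \Cref{LocallyPrismaticScheme}(4) for the relative version.

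One correction to your closing remark: the topology on \(\mcalO_{\mscrA}(\mscrU)\) is the given adic topology of \(\mscrA\), not the \((p,\mcalI)\)-adic one. Equivalently, it is the adic topology defined by the preimage of an ideal of definition of \(\mcalO_{\mscrA}(\mscrU)/\mcalI(\mscrU)\), and \((p,\mcalI)\) is only topologically nilpotent in it. So the comparison via \cite{takaya2026Relative}*{Lemma 4.5} must be made with this topology; the two differ in general, for instance when \(\overline{\mscrA}\) is not \(p\)-adic.
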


\begin{proof}
    Both assertions follow from \Cref{AinfSchemeProperties} and \Cref{PerfectWittCorr}.
\end{proof}

\subsection{Formal schemes associated to graded perfectoid rings and prisms}

\begin{proposition} \label{ProjConstGradedPerfd}
    Keep the notation in \Cref{ProjConstProGraded}.
    Assume that \((R, R_{\graded})\) is a pro-\(\setQ^n\)-graded perfectoid ring.
    Then the projective spectrum \(\Proj(R; R_{\graded, +})\) is a perfectoid formal scheme over \(\Spf(R_0)\).
\end{proposition}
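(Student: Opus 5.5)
Since being a perfectoid formal scheme is checked on an affine open covering (\Cref{DefPerfectoidFormalScheme}), the plan is to use the standard cover. By \Cref{ProjProGraded}, \(\Proj(R; R_{\graded, +})\) is covered by the affine opens \(D_+(\mbff)^{\wedge} \cong \Spf((\grcomp{J}{R_{\graded}[1/\mbff]})_0)\), where \(\mbff = f_1 \cdots f_n\) with homogeneous \(f_i \in R_{i,+}\). It therefore suffices to show that for each such \(\mbff\) the ring \((\grcomp{J}{R_{\graded}[1/\mbff]})_0\), with its \(J_0\)-adic topology, is a complete adic perfectoid ring. Here \(p\) is topologically nilpotent because \(p \in J\) after enlarging \(J\), which is harmless since \(p\) is topologically nilpotent in \(R_{\graded}\). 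Completeness comes from \Cref{ComparisonTopologyFormalProj}: the \(J_{(\mbff)}\)-adic topology agrees with the subspace topology from the gradedwise \(J\)-adic topology, so the degree-zero part of the gradedwise completion is the \(J_{(\mbff)}\)-adic completion \(R_{(\mbff)}^{\wedge}\).

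For perfectoidness, apply \Cref{LocalizationGradedPerfd}(1) with the homogeneous element \(f = \mbff\). This shows that \((\comp{J}{R[1/\mbff]}, \grcomp{J}{R_{\graded}[1/\mbff]})\) is a \(\setQ^n\)-graded perfectoid ring. We need \(J\) to contain \(p\); if it does not, replace \(J\) by \(J + (p)\), which defines the same topology because \(p\) is topologically nilpotent. Then \Cref{graded-perfectoid-equiv}, applied via \Cref{CatEquivGrPerfd}, shows that the degree-zero part \((\grcomp{J}{R_{\graded}[1/\mbff]})_0\) is a perfectoid ring. Alternatively, \Cref{PureSubPerfd} with \(H = 0\) gives the same conclusion. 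Combined with the completeness above, each \(D_+(\mbff)^{\wedge}\) is the formal spectrum of a complete adic perfectoid ring.

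The main obstacle is the topology bookkeeping: checking that the adic topology on \((\grcomp{J}{R_{\graded}[1/\mbff]})_0\) really is \(J_0\)-adic with a finitely generated ideal of definition, and that it coincides with the topology used for the formal scheme structure. The first point is exactly \Cref{ComparisonTopologyFormalProj}. For finite generation, I would take the degree-zero monomials \(x/\mbff^{N}\) built from the generators of \(J\). These are finitely many up to the cofinality estimate \((J^{km(d)})_0 \subseteq (J_0)^k\) in that lemma, so the topology is adic. Finally, the structure morphism to \(\Spf(R_0)\) is the one from \Cref{ProjProGraded}, which completes the argument.
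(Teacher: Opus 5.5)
Your proposal is correct and follows the paper's proof: you cover $\Proj(R; R_{\graded, +})$ by the affine opens $D_+(\mbff)^{\wedge} \cong \Spf((\grcomp{J}{R_{\graded}[1/\mbff]})_0)$ of \Cref{ProjProGraded}, use \Cref{LocalizationGradedPerfd}(1) to see that $\grcomp{J}{R_{\graded}[1/\mbff]}$ is graded perfectoid, and conclude from \Cref{graded-perfectoid-equiv} that its degree-zero part is perfectoid. Your extra topological bookkeeping goes beyond what the paper writes, and the finite-generation step is cleanest done directly rather than by citing the cofinality estimate, which only compares the two topologies and does not by itself give finitely many generators: for each homogeneous generator $g_j$ of $J$, pick $M_j \geq 1$ with $M_j\deg(g_j) \in \sum_i \setZ\deg(f_i)$; by pigeonhole, the finitely many degree-zero elements $g_j^{M_j}/\prod_i f_i^{N_{j,i}}$ already generate an ideal of definition of $R_{(\mbff)}$.
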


\begin{proof}
    Fix a homogeneous ideal of definition \(I\) of \(R_{\graded}\).
    By \Cref{ProjProGraded}, \(\Proj(R; R_{\graded, +})\) is a formal scheme which is separated over \(\Spf(R_0)\) and has an affine open covering \(\{\Spf((\grcomp{I}{R_{\graded}[1/\mbff]})_0)\}\), where \(\mbff \in R_{\graded, +}\).
    Since \(R_{\graded}\) is a graded perfectoid ring, so is \(\grcomp{I}{R_{\graded}[1/\mbff]}\) by \Cref{LocalizationGradedPerfd}(1).
    Its degree \(0\) part \((\grcomp{I}{R_{\graded}[1/\mbff]})_0\) is a perfectoid ring by \Cref{graded-perfectoid-equiv}.
    So the formal scheme \(\Proj(R; R_{\graded, +})\) is locally isomorphic to the formal spectrum of a perfectoid ring and thus this is a perfectoid formal scheme.
\end{proof}

\begin{proposition} \label{ProjConstGradedPrism}
    Let \((A, I, A_{\graded})\) be a pro-\(\setQ_{\geq 0}^n\)-graded perfect prism.
    As in \Cref{ProjConstProGraded}, fix homogeneous ideals \(A_{1,+}, \dots, A_{n,+}\), and set
    \[
    A_{\graded,+} \defeq A_{1,+}\cdots A_{n,+}.
    \]
    Then the pair of the projective spectrum \(\Proj(A; A_{\graded, +})\) and an ideal sheaf \(I^{\Delta}\) associated to a homogeneous ideal of definition \(I\) is a perfect prismatic scheme over \((\Spf(A_0), I_0)\).
    The associated perfectoid formal scheme \(\overline{\Proj(A; A_{\graded, +})}\) is the perfectoid formal scheme \(\Proj(A/I, A_{\graded, +}/I)\).
\end{proposition}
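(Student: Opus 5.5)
The plan is to reduce to \Cref{ProjConstProGraded}, \Cref{ProjProGraded} and the local theory of perfect prisms, and then glue using \Cref{LocallyPrismaticScheme}. Set \(\mscrA \defeq \Proj(A; A_{\graded,+})\). By \Cref{ProjProGraded} it is an adic formal scheme, separated over \(\Spf(A_0)\), covered by the affines \(D_+(\mbff)^{\wedge} \cong \Spf(B_{\mbff})\) with \(B_{\mbff} \defeq (\grcomp{J}{A_{\graded}[1/\mbff]})_0\), where \(J = (p,I)\) is the homogeneous ideal of definition.

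The first step is to produce the \(\delta\)-structure and to check that each \((B_{\mbff}, I_{(\mbff)}B_{\mbff})\) is a perfect prism.

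Because \(A_0\) lies in degree \(0\), a generator \(d\) of \(I\) may be chosen in \(A_0\) (using the pro-graded perfect prism structure from \cite{ishizuka2025Graded}). The graded \(\delta\)-structure on \(A_{\graded}\) satisfies \(\varphi(A_g) \subseteq A_{pg}\). Hence it extends uniquely to the completed localization \(\grcomp{J}{A_{\graded}[1/\mbff]}\), by \cite{bhatt2022Prismsa}*{Remark 2.16}, and restricts to the degree-\(0\) part. It remains to show that \(B_{\mbff}\) is perfect and that \((B_{\mbff}, dB_{\mbff})\) is a perfect prism.

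For this I would use that \(A_{\graded}/I\) is a graded perfectoid ring. Then \(\grcomp{J}{(A_{\graded}/I)[1/\mbff]}\) is graded perfectoid by \Cref{LocalizationGradedPerfd}(1), so its degree-\(0\) part \((A/I)_{(\mbff)}^{\wedge}\) is perfectoid by \Cref{graded-perfectoid-equiv}. Meanwhile \(B_{\mbff}/dB_{\mbff} \cong (A/I)_{(\mbff)}^{\wedge}\), since \(d\) has degree \(0\) and completion is exact here by bounded torsion (\Cref{BoundedTorsionGradedPerfd}). The Frobenius on \(B_{\mbff}\) is bijective because localization and completion commute with the bijective Frobenius on the perfect \(\delta\)-ring. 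Finally \(d\) stays distinguished. Thus \((B_{\mbff}, dB_{\mbff})\) is the perfect prism attached to the perfectoid ring \((A/I)_{(\mbff)}^{\wedge}\).

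The sections of the ideal sheaf \(I^{\Delta}\) on \(D_+(\mbff)^{\wedge}\) are \(I_{(\mbff)}^{\wedge} = dB_{\mbff}\). So \(I^{\Delta}\) is invertible, generated by the global section \(d\), and the \(\delta\)-structures glue along the localization maps.

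The axioms of \Cref{DefPrismaticScheme} then hold. First, \((p,I^{\Delta})\) is topologically nilpotent. Second, \(I^{\Delta}\) is invertible. Third, \(p \in (d,\varphi(d))\) already in \(A_0\), hence holds globally. Fourth, \(\mcalO/I^{\Delta}\) is locally perfectoid and therefore has bounded \(p^\infty\)-torsion. The structure map \((\mscrA, I^{\Delta}) \to (\Spf(A_0), I_0)\) comes from \(A_0 \to B_{\mbff}\). It is a map of pseudo-prismatic schemes, so \Cref{LocallyPrismaticScheme}(4) upgrades \((\mscrA, I^{\Delta})\) to a perfect prismatic scheme.

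For the last assertion, \(V(I^{\Delta})\) is covered by \(\Spf(B_{\mbff}/d) = \Spf((A/I)^{\wedge}_{(\mbff)})\). These are exactly the standard affines of \(\Proj(A/I; A_{\graded,+}/I)\) from \Cref{ProjProGraded}, with compatible restriction maps, so the two formal schemes agree. This is also perfectoid by \Cref{ProjConstGradedPerfd}.

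I expect the main obstacle to be the identification \(B_{\mbff}/d \cong (A/I)^{\wedge}_{(\mbff)}\) together with the perfectness of \(B_{\mbff}\). Both require commuting gradedwise completion, degree-\(0\) parts and quotients. I would handle this first, using \Cref{ComparisonTopologyFormalProj} to match the topologies and the \(d\)-torsion-freeness of \(A_{\graded}\) to make the quotient exact.
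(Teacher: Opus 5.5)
Your proposal is correct and follows essentially the same route as the paper: choose a degree-\(0\) generator \(d\) of \(I\), identify each \(D_+(\mbff)^{\wedge}\) with the perfect prism \((A^{\wedge}_{(\mbff)}, dA^{\wedge}_{(\mbff)})\) so that \(I^{\Delta}\) is invertible, upgrade from pseudo-prismatic to prismatic via the map to \((\Spf(A_0), I_0)\), and get the last assertion from \(A^{\wedge}_{(\mbff)}/d \cong (A/I)^{\wedge}_{(\mbff)}\). The only differences are minor. The paper takes the \(\delta\)-structure from the global Frobenius \(\varphi^*\) on \(\Proj(A; A_{\graded,+})\) rather than gluing the unique \(\delta\)-structures on completed localizations. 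It also simply asserts the local perfect-prism identification, which you justify through the graded perfectoid ring \(A_{\graded}/I\).
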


\begin{proof}
    Let \(\varphi\) denote the Frobenius lift of \(A\). Since \(\varphi\) is an isomorphism of pro-\(\setQ^n\)-graded rings, we can take an isomorphism of formal schemes
    \begin{equation*}
        \varphi^* \colon \Proj(A; A_{\graded, +}) \to \Proj(A; A_{\graded, +})
    \end{equation*}
    and this induces a \(\delta\)-structure on \(\Proj(A; A_{\graded, +})\).
    Since \(I\) is generated by a non-zero-divisor \(d \in A_0\) by \cite{ishizuka2025Graded}*{Proposition 5.3(2)}, the associated ideal sheaf \(I^{\Delta}\) on \(\Proj(A; A_{\graded, +})\) is given by
    \begin{equation*}
        I^{\Delta}(D_+(\mbff)) = dA_{(\mbff)}^{\wedge} \subseteq A_{(\mbff)}^{\wedge} = \mcalO_{\Proj(A; A_{\graded, +})}(D_+(\mbff)).
    \end{equation*}
    This defines an invertible ideal sheaf \(I^{\Delta}\) on \(\Proj(A; A_{\graded, +})\).
    Zariski locally, this pair \((\Proj(A; A_{\graded, +}), I^{\Delta})\) can be identified with the perfect prism \(((A_{(\mbff)})^{\wedge}, dA_{(\mbff)}^{\wedge})\) and then \((\Proj(A; A_{\graded, +}), I^{\Delta})\) is a pseudo-perfect prismatic scheme by \Cref{DefPrismaticScheme}.
    Since we have a map \((\Proj(A; A_{\graded, +}), I^{\Delta}) \to (\Spf(A_0), I_0)\) of pseudo-prismatic schemes, \((\Proj(A; A_{\graded, +}), I^{\Delta})\) is a prismatic scheme by \Cref{LocallyPrismaticScheme}.
    The last assertion follows from the isomorphism
    \begin{equation*}
        (A_{(\mbff)})^{\wedge}/d (A_{(\mbff)})^{\wedge} \cong ((A/I)_{(\mbff)})^{\wedge}
    \end{equation*}
    on the affine formal open subset \(D_+(\mbff)\) of \(\Proj(A; A_{\graded, +})^{\wedge}\).
\end{proof}

\section{Derived category of quasi-coherent sheaves on formal schemes} \label{AppendixQCoh}

In this section, we recall and summarize the notion of the derived category of quasi-coherent sheaves on formal schemes and its properties.
In modern algebraic geometry, as in \Cref{SectionPerfectization}, one usually considers the derived category of quasi-coherent sheaves on formal schemes rather than the derived category of all sheaves, because the former has better properties than the latter.

However, our construction of the absolute perfectoidization of the structure sheaf on formal schemes in \Cref{SubSectionAbsPerfdArc} is based on the derived category of all sheaves, so we need to compare these two categories.

We believe that the results in this section are well known to experts (for example, \cite{zavyalov2025Almost}), but we could not find a complete reference for some of them, so we include detailed proofs for completeness.

\begin{notation}
    Throughout this section, let \(\mscrX\) be an adic quasi-compact separated formal scheme with an ideal sheaf of definition \(\mcalI\) and let \(R\) be a complete adic ring with an ideal of definition \(I\).
    For each \(n \geq 1\), we denote by
    \[
        i_n \colon \mscrX_n \hookrightarrow \mscrX
        \quad \text{(resp., } i_n \colon \Spec(R/I^n) \hookrightarrow \Spf(R)\text{)}
    \]
    the closed immersion defined by \(\mcalI^n\) (resp., \(I^n\)).
\end{notation}

\begin{remark}
    Since the underlying morphism of topological spaces of \(i_n\) is the identity, we can identify \(\mcalO_{\mscrX_n}\) with \(\mcalO_{\mscrX}/\mcalI^n\) and the pushforward functor
    \begin{equation*}
        i_{n, *} \colon \Shv((\mscrX_n)_{\Zar}, \mcalO_{\mscrX_n}) \to \Shv(\mscrX_{\Zar}, \mcalO_{\mscrX})
    \end{equation*}
    is the restriction of scalars along the morphism of sheaves of rings \(\mcalO_{\mscrX} \to i_{n, *}\mcalO_{\mscrX_n} \cong \mcalO_{\mscrX}/\mcalI^n\).
    In particular, \(i_{n, *}\) is exact and fully faithful, and the pullback functor \(i_n^*\) is the base change functor \(- \otimes_{\mcalO_{\mscrX}} \mcalO_{\mscrX_n}\).

    The derived pushforward (resp., pullback) functor
    \begin{equation*}
        Ri_{n, *} \colon \mcalD((\mscrX_n)_{\Zar}, \mcalO_{\mscrX_n}) \to \mcalD(\mscrX_{\Zar}, \mcalO_{\mscrX}) \quad \text{(resp., } Li_n^* \colon \mcalD(\mscrX_{\Zar}, \mcalO_{\mscrX}) \to \mcalD((\mscrX_n)_{\Zar}, \mcalO_{\mscrX_n})\text{)}
    \end{equation*}
    is also the restriction of scalars (resp., derived base change) along \(\mcalO_{\mscrX} \to Ri_{n, *}\mcalO_{\mscrX_n} \cong \mcalO_{\mscrX}/\mcalI^n\) and thus \(Ri_{n, *}\) is fully faithful.
\end{remark}

\subsection{Weakly proregular sequences}

Since we are working on ``classical'' formal schemes, we have to consider the discrepancy between the derived quotients \(R/^L \underline{f^n}\) and the usual quotients \(R/I^n\), where \(\underline{f} = f_1, \dots, f_r\) is a sequence of elements of \(R\) generating \(I\).
This discrepancy is controlled by the notion of weak proregularity:

\begin{definition}[{\cite{yekutieli2021Weaka}*{Definition 3.1}}] \label{DefWeaklyProregular}
    Let \(\underline{f} = f_1, \dots, f_r\) be a sequence of elements of \(R\).
    We say that \(\underline{f}\) is \emph{weakly proregular} if the inverse system of \(R\)-modules \(\{H^i(\Kos(R; \underline{f^n}))\}_{n \geq 1}\) is pro-zero for any \(i > 0\) where \(\Kos(R; \underline{f^n})\) is the Koszul complex of \(R\) with respect to the sequence \(\underline{f^n} = f_1^n, \dots, f_r^n\).
    We say that an ideal of \(R\) is \emph{weakly proregular} if it is generated by a weakly proregular sequence.
\end{definition}

This can be rephrased as the following:

\begin{lemma} \label{WeaklyProregularEquiv}
    Let \(\underline{f} = f_1, \dots, f_r\) be a sequence of elements of \(R\). Then the following are equivalent:
    \begin{enumerate}
        \item \(\underline{f}\) is weakly proregular.
        \item The morphism \(\{R/^L \underline{f^n}\}_{n \geq 1} \to \{R/(f_1^n, \dots, f_r^n)R\}_{n \geq 1}\) in \(\Pro(\mcalD(R))\) is an isomorphism.
        \item The morphism \(\{R/^L \underline{f^n}\}_{n \geq 1} \to \{R/I^n\}_{n \geq 1}\) in \(\Pro(\mcalD(R))\) is an isomorphism.
    \end{enumerate}
\end{lemma}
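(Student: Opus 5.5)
The plan is to prove (1)\(\Leftrightarrow\)(2) by unwinding what a pro-isomorphism means in terms of Koszul homology, and then to prove (2)\(\Leftrightarrow\)(3) by showing that the targets of the two comparison maps are themselves pro-isomorphic. Throughout, I identify \(R/^L \underline{f^n}\) with the Koszul complex \(\Kos(R;\underline{f^n})\). This complex is concentrated in cohomological degrees \([-r,0]\) and has \(H^0 = R/(f_1^n,\dots,f_r^n)\). I will use the sign convention in which the higher Koszul cohomology of \Cref{DefWeaklyProregular} sits in negative cohomological degrees, i.e.\ in positive homological degrees.

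For (1)\(\Rightarrow\)(2), consider the truncation triangle \(\tau_{\leq -1}\Kos(R;\underline{f^n}) \to \Kos(R;\underline{f^n}) \to H^0\). It suffices to show that the pro-system \(\{\tau_{\leq -1}\Kos(R;\underline{f^n})\}_n\) is pro-zero in \(\Pro(\mcalD(R))\).

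These objects are uniformly bounded, with amplitude contained in \([-r,-1]\). So I will induct on the amplitude using the triangles \(H^{-i}[i] \to \tau_{\ge -i}\tau_{\le -1} \to \tau_{\ge -i+1}\tau_{\le -1}\).

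The key input is a standard fact. If \(\{A_n\} \to \{B_n\} \to \{C_n\}\) is a levelwise cofiber sequence in \(\mcalD(R)\) and the outer two pro-systems are pro-zero, then the middle one is pro-zero as well. Concretely, for each \(n\) one picks \(m \gg n\) so that the transition \(C_m \to C_n\) vanishes, factors the relevant map through \(A\), and then goes further out using the pro-vanishing of \(A\). Composing at most \(r\) such steps gives that \(\tau_{\leq -1}\Kos\) is pro-zero.

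For (2)\(\Rightarrow\)(1), I apply \(H^{-i}\) for \(i>0\), which extends to pro-objects levelwise. A pro-isomorphism then gives a pro-isomorphism \(\{H^{-i}\Kos(R;\underline{f^n})\}_n \cong \{H^{-i}(R/(\underline{f^n}))\}_n = 0\), so the higher Koszul homology is pro-zero, which is (1).

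For (2)\(\Leftrightarrow\)(3), it suffices to show that \(\{R/(f_1^n,\dots,f_r^n)\}_n \to \{R/I^n\}_n\) is an isomorphism of pro-discrete modules, where the map is induced by \((\underline{f^n}) \subseteq I^n\). For this I note the cofinality relations
\[
(\underline{f^n}) \subseteq I^n \quad\text{and}\quad I^{r(n-1)+1} \subseteq (\underline{f^n}).
\]
The second holds by pigeonhole: any monomial of degree \(r(n-1)+1\) in the \(f_i\) has some exponent at least \(n\). These give inverse pro-maps \(\{R/I^{r(n-1)+1}\} \to \{R/(\underline{f^n})\}\), so the comparison map is a pro-isomorphism. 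Finally, the comparison map of (3) is the composite of the map in (2) with this pro-isomorphism, so (2) and (3) are equivalent.

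The main obstacle I expect is the pro-vanishing extension lemma in the triangulated/\(\infty\)-categorical setting. I would handle it by working with explicit representing maps in \(\Pro(\mcalD(R)) = \Ind(\mcalD(R)^{\opposite})^{\opposite}\) and using that \(\Hom\) out of a pro-object is a colimit. Bounded amplitude is what ensures the number of reindexing steps is finite.
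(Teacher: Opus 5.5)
Your proposal is correct and follows essentially the same route as the paper. The paper likewise shows that the fiber \(\tau^{\leq -1}\Kos(R;\underline{f^n})\) of \(\Kos(R;\underline{f^n}) \to R/(\underline{f^n})\) is pro-zero by inducting over its finitely many cohomology pro-systems (you filter by \(\tau_{\geq -i}\), the paper by \(\tau^{\leq -i}\)), gets the converse by taking cohomology, and deduces (2)\(\Leftrightarrow\)(3) from the pro-isomorphism \(\{R/(\underline{f^n})\} \cong \{R/I^n\}\); your pigeonhole bound \(I^{r(n-1)+1} \subseteq (\underline{f^n})\) and the extension-closure of pro-zero objects just make explicit what the paper leaves implicit.
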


\begin{proof}
    Since the canonical morphism \(\{R/(f_1^n, \dots, f_r^n)R\}_{n \geq 1} \to \{R/I^n\}_{n \geq 1}\) in \(\Pro(\mcalD(R))\) is an isomorphism, the equivalence \((2) \Leftrightarrow (3)\) is immediate.

    \((1) \Rightarrow (2)\): Take the fiber \(\{K_n\}_{n \geq 1}\) of the morphism \(\{R/^L \underline{f^n}\}_{n \geq 1} \to \{R/(f_1^n, \dots, f_r^n)R\}_{n \geq 1}\) in \(\Pro(\mcalD(R))\).
    Then we know \(K_n \cong \tau^{\leq -1}\Kos(R; \underline{f^n})\) for any \(n \geq 1\). This \(K_n\) is concentrated in cohomological degree \([-r, -1]\) and the inverse system of \(R\)-modules \(\{H^i(K_n)\}_{n \geq 1}\) is pro-zero for any \(i > 0\) by the assumption.
    Considering the fiber sequence
    \begin{equation*}
        \{\tau^{\leq -i-1}K_n\}_{n \geq 1} \to \{\tau^{\leq -i}K_n\}_{n \geq 1} \to \{H^{-i}(K_n)[-i]\}_{n \geq 1}
    \end{equation*}
    in \(\Pro(\mcalD(R))\) for any \(i > 0\), we can show by induction on \(i\) that \(\{\tau^{\leq -i}K_n\}_{n \geq 1}\) is pro-zero for any \(i > 0\), so \(K_n\) is pro-zero as well, which proves \((2)\).

    For the implication \((2) \Rightarrow (1)\), the pro-isomorphism implies that the fiber \(\{\tau^{\leq -1}\Kos(R; \underline{f^n})\}_{n \geq 1}\) is pro-zero.
    This implies that the inverse system of \(R\)-modules \(\{H^i(\Kos(R; \underline{f^n}))\}_{n \geq 1}\) is pro-zero for any \(i > 0\), so the sequence \(\underline{f}\) is weakly proregular.
\end{proof}

\begin{lemma} \label{CatEquivCompLimit}
    Assume that \(I\) is generated by a weakly proregular sequence \(\underline{f} = f_1, \dots, f_r\) (for example, \(R\) is Noetherian or \(I\) is principal and \(R\) has bounded \(I^{\infty}\)-torsion).
    Then we have a commutative diagram
    \begin{equation*}
        \begin{tikzcd}
            \mcalD^{\comp{I}}(R) \arrow[d, "- \otimes^L_R R/^L \underline{f^n}"'] \arrow[rrrr, hook]                                 &  &  &  & \mcalD(R) \arrow[d, "\{- \otimes^L_R R/I^n\}"] \\
            \lim_{n \geq 1} \mcalD(R/^L \underline{f^n}) \arrow[rrrr, "\{(- \otimes^L_{R/^L \underline{f^n}} R/I^n)\}_n"'] &  &  &  & \lim_{n \geq 1} \mcalD(R/I^n)                 
        \end{tikzcd}
    \end{equation*}
    whose left and lower functors are equivalences of \(\infty\)-categories. In particular, the quasi-inverse functor is given by
    \begin{equation*}
        \lim_{n \geq 1} \mcalD(R/I^n) \to \mcalD^{\comp{I}}(R); \quad \{M_n\}_{n \geq 1} \mapsto \lim_{n \geq 1} M_n.
    \end{equation*}
\end{lemma}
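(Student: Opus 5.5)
The plan is to reduce both equivalences to the classical description of derived \(I\)-complete modules as compatible systems over the derived quotients, and then use weak proregularity (\Cref{WeaklyProregularEquiv}) to pass from \(R/^L \underline{f^n}\) to \(R/I^n\).

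\textbf{Step 1: the left functor.} First check that the square commutes. For \(M \in \mcalD^{\comp{I}}(R)\), base change gives
\[
(M \otimes^L_R R/^L \underline{f^n}) \otimes^L_{R/^L \underline{f^n}} R/I^n \simeq M \otimes^L_R R/I^n,
\]
naturally in \(n\), so the square commutes.

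Next show that the left functor \(\mcalD^{\comp{I}}(R) \to \lim_n \mcalD(R/^L \underline{f^n})\) is an equivalence. This is standard (e.g. \cite{lurie2018Spectral}, the section on completions of modules). The Koszul objects \(R/^L \underline{f^n}\) are perfect, and an \(R\)-module \(M\) is derived \(I\)-complete exactly when \(M \simeq \lim_n M \otimes^L_R R/^L \underline{f^n}\). The quasi-inverse is \(\{N_n\} \mapsto \lim_n N_n\). To see this, note that \(\lim_n N_n\) is derived complete, since it is a limit of \(f\)-torsion objects. Then \((\lim_m N_m) \otimes^L_R R/^L \underline{f^n} \simeq \lim_m (N_m \otimes^L_R R/^L \underline{f^n})\), because \(R/^L \underline{f^n}\) is perfect. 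The pro-system \(\{N_m \otimes^L_R R/^L \underline{f^n}\}_m\) is then identified with \(N_n\) via the compatibility isomorphisms together with the standard pro-isomorphism \(\{R/^L \underline{f^m} \otimes^L_R R/^L \underline{f^n}\}_m \simeq R/^L \underline{f^n}\).

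\textbf{Step 2: the bottom functor.} This step is the main obstacle. I would avoid proving directly that \(\lim_n \mcalD(R/^L \underline{f^n}) \to \lim_n \mcalD(R/I^n)\) is an equivalence. Instead, I would show that the right-hand vertical functor, restricted to \(\mcalD^{\comp{I}}(R)\), is an equivalence with quasi-inverse \(\{M_n\} \mapsto \lim_n M_n\). Two-out-of-three then gives the bottom equivalence.

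For full faithfulness, let \(M \in \mcalD^{\comp{I}}(R)\). By \Cref{WeaklyProregularEquiv}(3), \(\{M \otimes^L_R R/^L \underline{f^n}\}_n \to \{M \otimes^L_R R/I^n\}_n\) is a pro-isomorphism. Taking limits gives \(M \simeq \lim_n M \otimes^L R/I^n\). Mapping spaces in \(\lim_n \mcalD(R/I^n)\) are limits of \(\Map_{R/I^n}(M\otimes^L R/I^n, N\otimes^L R/I^n) \simeq \Map_R(M, N \otimes^L R/I^n)\), so the pro-isomorphism identifies them with \(\Map_R(M, \lim_n N \otimes^L R/^L\underline{f^n}) \simeq \Map_R(M,N)\).

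For essential surjectivity, take \(\{M_n\} \in \lim_n \mcalD(R/I^n)\) and set \(M \defeq \lim_n M_n\), which is derived complete. We need \(M \otimes^L_R R/I^k \simeq M_k\). Computing,
\[
M \otimes^L_R R/^L\underline{f^k} \simeq \lim_n (M_n \otimes^L_R R/^L \underline{f^k}) \simeq \lim_n \bigl(M_k \otimes^L_{R/I^k} (R/I^n \otimes^L_R R/^L\underline{f^k})\bigr).
\]
Here weak proregularity must give that the pro-system \(\{R/I^n \otimes^L_R R/^L\underline{f^k}\}_n\) is pro-isomorphic to \(R/^L\underline{f^k}\), by applying \Cref{WeaklyProregularEquiv} and tensoring the pro-isomorphism with the perfect complex \(R/^L\underline{f^k}\). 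This identifies \(M \otimes^L R/^L\underline{f^k}\) with \(M_k \otimes^L_{R/I^k} (R/I^k \otimes^L_R R/^L\underline{f^k})\). Tensoring down along \(R/^L\underline{f^k} \to R/I^k\) and passing to pro-systems in \(k\) then recovers \(\{M_k\}\) up to pro-isomorphism. To upgrade this to a levelwise identification, I would use the transition isomorphisms \(M_{k'} \otimes R/I^k \simeq M_k\). This bookkeeping, which compares pro-isomorphisms with honest levelwise equivalences in the limit category, is where I expect the real care is needed.
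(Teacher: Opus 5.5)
Your treatment of the left functor matches the paper: both simply invoke the standard equivalence \(\mcalD^{\comp{I}}(R)\simeq\lim_n\mcalD(R/^L\underline{f^n})\). Your full-faithfulness argument for the composite \(G\colon\mcalD^{\comp{I}}(R)\to\lim_n\mcalD(R/I^n)\), \(M\mapsto\{M\otimes^L_R R/I^n\}\), is also correct.

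The gap is essential surjectivity, which is the real content of the lemma. \(G\) has right adjoint \(\{M_n\}\mapsto\lim_n M_n\) and is fully faithful. So what must be shown is that the counit \((\lim_n M_n)\otimes^L_R R/I^k\to M_k\) is an equivalence for every compatible family, and your computation does not prove this.

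\begin{itemize}
\item The displayed step should read \(\lim_n M_n\otimes^L_{R/I^n}(R/I^n\otimes^L_R R/^L\underline{f^k})\). The pro-isomorphism \(\{R/I^n\otimes^L_R R/^L\underline{f^k}\}_n\simeq R/^L\underline{f^k}\) holds only in \(\Pro(\mcalD(R))\). It therefore cannot be inserted into a tensor product over the varying rings \(R/I^n\) against the varying modules \(M_n\).
\item The identification you then state, \(M\otimes^L_R R/^L\underline{f^k}\simeq M_k\otimes^L_{R/I^k}(R/I^k\otimes^L_R R/^L\underline{f^k})\), is false already for \(R=\setZ_p\), \(I=(p)\), \(M_n=\setZ/p^n\). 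The left side is \(\setZ/p^k\). The right side is \(\setZ/p^k\otimes^L_{\setZ_p}\setZ/p^k\), whose \(H^{-1}\) is nonzero.
\item Even a pro-isomorphism \(\{M\otimes^L_R R/I^k\}_k\simeq\{M_k\}_k\) would not give an equivalence in \(\lim_k\mcalD(R/I^k)\). For instance, \(\{R/^L\underline{f^k}\}\) and \(\{R/I^k\}\) are pro-isomorphic but not levelwise equivalent. ``Upgrading via the transition isomorphisms'' restates the claim rather than proving it.
\end{itemize}

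What is missing is a version of the weak-proregularity pro-isomorphism that respects module structures over the varying rings; the \(\Pro(\mcalD(R))\)-statement of \Cref{WeaklyProregularEquiv} does not supply this by itself. The paper gets the bottom equivalence in one step from \(\{R/^L\underline{f^n}\}\simeq\{R/I^n\}\). The clean way to justify that step, and to close your argument, is to show this is an isomorphism of pro-objects of animated (or \(\setE_{\infty}\)-) \(R\)-algebras. Since \(\{C_n\}\mapsto\lim_n\mcalD(C_n)\) is functorial in pro-algebras, the bottom functor is then an equivalence directly, with no two-out-of-three detour.

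This upgrade can be done by obstruction theory:
\begin{itemize}
\item The Postnikov layers of \(R/^L\underline{f^n}\) are square-zero extensions by the discrete modules \(\pi_j(R/^L\underline{f^n})[j]\), \(1\le j\le r\).
\item By weak proregularity these modules form pro-zero towers. Going far enough along the tower therefore kills both the obstruction to sections and the ambiguity of the resulting maps.
\item Combined with the evident ring-level pro-isomorphism \(\{R/(f_1^n,\dots,f_r^n)\}\simeq\{R/I^n\}\), this gives the required pro-isomorphism of algebras.
\end{itemize}
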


\begin{proof}
    The left vertical functor is an equivalence in general. See \cite{sahai2026Derived}*{Corollary 2.23 and Theorem 2.26}, for example.
    Since \(f_1, \dots, f_r\) is weakly proregular, the morphism \(\{R/^L \underline{f^n}\}_{n \geq 1} \to \{R/I^n\}_{n \geq 1}\) in \(\Pro(\mcalD(R))\) is an isomorphism (\Cref{WeaklyProregularEquiv}), so the lower functor is an equivalence as well.
    For any \(\{M_n\}_{n \geq 1}\) in \(\lim_{n \geq 1} \mcalD(R/I^n)\), we have an object \(\{M_n'\}_{n \geq 1}\) in \(\lim_{n \geq 1} \mcalD(R/^L \underline{f^n})\) which goes to \(\{M_n\}_{n \geq 1}\) under the lower functor.
    Since a quasi-inverse to the left vertical functor is given by taking the limit, this \(\{M_n\}_{n \geq 1}\) corresponds to the limit \(\lim_{n \geq 1} M_n'\) in \(\mcalD^{\comp{I}}(R)\).
    On the other hand, since \(\{R/^L \underline{f^n}\}_{n \geq 1} \to \{R/I^n\}_{n \geq 1}\) is an isomorphism, we have isomorphisms
    \begin{equation*}
        \lim_{n \geq 1} M_n' \xrightarrow{\cong} \lim_{n \geq 1} (M_n' \otimes^L_{R/^L \underline{f^n}} R/I^n) \cong \lim_{n \geq 1} M_n
    \end{equation*}
    in \(\mcalD(R)\) as desired.
\end{proof}

\subsection{Derived completion functors and derived categories}

We can associate a sheaf on \(\Spf(R)\) to any \(R\)-module by taking the completion of the quasi-coherent sheaf associated to it on \(\Spec(R)\) as follows.
Note that we define another notion of completion for sheaves on \(\Proj(R)\) for a graded ring \(R\) in \Cref{AssociatedSheafFormalGraded}.

\begin{definition}[{cf. \cite{grothendieck1971Elements}*{\S 10.10}}] \label{DefCompletionQcoh}
    Let \(M\) be an \(R\)-module.
    We define
    \begin{equation*}
        M^{\Delta} \defeq \lim_{n \geq 1} \widetilde{M/I^nM}
    \end{equation*}
    to be the completion of the quasi-coherent sheaf \(\widetilde{M}\) on \(\Spec(R)\) where \(\widetilde{I}\) is the quasi-coherent ideal sheaf on \(\Spec(R)\) associated to \(I\).
    We call it the \emph{quasi-coherent sheaf associated to \(M\)} and this defines a functor \((-)^{\Delta} \colon \Mod_R \to \Shv(\Spf(R), \mcalO_{\Spf(R)})\).
    This is the \(\widetilde{I}\)-adic completion of the quasi-coherent sheaf \(\widetilde{M}\) on \(\Spec(R)\).
\end{definition}

\begin{construction} \label{ConstDerivedCompletionDiagram}
    Since \(M^{\Delta} \otimes_{\mcalO_{\Spf(R)}} \mcalO_{\Spec(R/I^n)} \cong \widetilde{M/I^nM}\), we have a commutative diagram
    \begin{equation*}
        \begin{tikzcd}
            \Mod(R) \arrow[d, "- \otimes_R R/I^n"'] \arrow[r, "(-)^{\Delta}", two heads] & \mcalC \arrow[d, "\restr{(-)}{\Spec(R/I^n)}"] \arrow[r, hook] & {\Shv(\Spf(R)_{\Zar}, \mcalO_{\Spf(R)})} \arrow[d, "\restr{(-)}{\Spec(R/I^n)}"] \\
            \Mod(R/I^n) \arrow[r, "\widetilde{(-)}"] \arrow[r, "\simeq"']      & \QCoh(\Spec(R/I^n)) \arrow[r, hook]                                   & {\Shv(\Spec(R/I^n)_{\Zar}, \mcalO_{\Spec(R/I^n)})}                             
        \end{tikzcd}
    \end{equation*}
    of abelian categories which is compatible with the restriction maps along \(n\), where \(\mcalC\) is the essential image of the functor \((-)^{\Delta}\).\footnote{This is the category of ``quasi-coherent sheaves'' on \(\Spf(R)\) in the sense of \cite{gabber2018Foundations}*{Definition 15.1.31}. See also \Cref{AffineVanishing}.}

    Since the abelian category of \(R\)-modules has enough \(K\)-projective objects, the functor \((-)^{\Delta} \colon \Mod_R \to \Shv(\Spf(R)_{\Zar}, \mcalO_{\Spf(R)})\) has a left derived functor
    \begin{equation*}
        (-)^{L\Delta} \colon \mcalD(R) \to \mcalD(\Spf(R)_{\Zar}, \mcalO_{\Spf(R)}).
    \end{equation*}
    Using the following lemma (\Cref{LDeltaQCoh}), we can extend the above commutative diagram of abelian categories to one of \(\infty\)-categories as follows:
    \begin{equation} \label{DerivedCompletionDiagram}
        \begin{tikzcd}
            \mcalD(R) \arrow[d, "\{- \otimes^L_R R/I^n\}_{n \geq 1}"'] \arrow[rr, "(-)^{L\Delta}"] &                                              & {\mcalD(\Spf(R)_{\Zar}, \mcalO_{\Spf(R)})} \arrow[d, "\{Li_n^*\}_{n \geq 1}"] \\
            \lim_{n \geq 1}\mcalD(R/I^n) \arrow[r, "\widetilde{(-)}"] \arrow[r, "\simeq"']         & \lim_{n \geq 1} \mcalD_{\qcoh}(\Spec(R/I^n)) \arrow[r, hook] & {\lim_{n \geq 1} \mcalD(\Spec(R/I^n)_{\Zar}, \mcalO_{\Spec(R/I^n)})}                             
        \end{tikzcd}
    \end{equation}
    of \(\infty\)-categories, where the right vertical functor is the derived pullback functor along the closed immersion \(i_n \colon \Spec(R/I^n) \to \Spf(R)\) for each \(n \geq 1\).
\end{construction}

\begin{lemma} \label{LDeltaQCoh}
    Suppose that the ideal of definition $I$ of $R$ is weakly proregular.
    Then the diagram \eqref{DerivedCompletionDiagram} in \Cref{ConstDerivedCompletionDiagram} commutes.
    In particular, for any $M \in \mcalD(R)$ and any $n \ge 1$, there is a canonical isomorphism
    \begin{equation*}
        Li_n^*(M^{L\Delta}) \simeq \widetilde{M \otimes^L_R R/I^n}
    \end{equation*}
    in $\mcalD(\Spec(R/I^n)_{\Zar}, \mcalO_{\Spec(R/I^n)})$, where $i_n \colon \Spec(R/I^n) \to \Spf(R)$ denotes the closed immersion.
    Moreover, we have a canonical isomorphism
    \begin{equation*}
        M^{L\Delta} \cong \lim_{n \geq 1} \widetilde{(M \otimes^L_R R/I^n)} \cong \lim_{n \geq 1} Li_n^*(M^{L\Delta})
    \end{equation*}
    in \(\mcalD(\Spf(R)_{\Zar}, \mcalO_{\Spf(R)})\).
\end{lemma}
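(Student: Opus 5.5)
The plan is to define the comparison globally and then reduce everything to a local check on basic opens, where it becomes a statement about derived tensor products of \(R\)-modules.

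First I construct the morphism. For each \(n\), the \(\mcalO_{\Spf(R)}\)-module \(\widetilde{M/I^nM}\) is quasi-coherent on \(\Spec(R/I^n)\). The functor \((-)^{\Delta}\) comes with natural projections \(M^{\Delta} \to \widetilde{M/I^nM}\), and these are adjoint to maps \(i_n^*M^{\Delta} \to \widetilde{M \otimes_R R/I^n}\). Deriving by means of \(K\)-projective resolutions gives natural maps
\[
Li_n^*(M^{L\Delta}) \to \widetilde{M \otimes^L_R R/I^n},
\]
compatible in \(n\). To check that a given map is an isomorphism, it suffices, since \(\Spec(R/I^n)\) has the basis \(\{D(f)\}\), to compute \(R\Gamma(D(f), -)\) of both sides.

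Next I reduce to free modules. Both sides are exact functors of \(M\). The right-hand side commutes with direct sums. On the left, \(Li_n^*\) commutes with direct sums, but \((-)^{L\Delta}\) a priori does not. Computing with a \(K\)-projective, hence termwise projective, complex \(P^\bullet\), one has \(Li_n^*(P^{\bullet,\Delta}) = P^{\bullet,\Delta} \otimes^L_{\mcalO} \mcalO/\mcalI^n\). I would use weak proregularity in the form of \Cref{WeaklyProregularEquiv}: replace \(\mcalO/\mcalI^n\) by the Koszul pro-system \(\mcalO/^L \underline{f^n}\). This reduces the problem to checking that
\[
P^{\Delta} \otimes^L \Kos(\mcalO;\underline{f^n}) \cong P^{\Delta}/^L \underline{f^n}
\]
has sections over \(D(g)\) equal to \((P/^L\underline{f^n})[1/g]\) for \(P\) projective.

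For \(P\) projective, \(P\) is a summand of a free module, so I may take \(P = R^{\oplus S}\). Then \(P^{\Delta}(D(g))\) is the \(I\)-adic completion of \(R[1/g]^{\oplus S}\), and it has no higher cohomology by \Cref{AffineVanishing}. The key point is that the \(I\)-adic completion of a free module over a ring with weakly proregular \(I\) agrees with its derived \(I\)-completion; this is the standard Porta–Shaul–Yekutieli result. Derived completion is unchanged after Koszul reduction, so
\[
\Lambda_I(R[1/g]^{\oplus S})/^L\underline{f^n} \simeq R[1/g]^{\oplus S}/^L\underline{f^n}.
\]
Passing to the pro-system and using \Cref{WeaklyProregularEquiv} again then gives \(\widetilde{M\otimes^L R/I^n}\) in the pro-sense. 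Because each individual \(Li_n^*\) map has already been defined, a pro-isomorphism of the underlying levelwise-defined maps would not by itself be enough. I therefore expect the \textbf{main obstacle} to be upgrading this to a levelwise isomorphism. I would handle it by working with \(R/I^n\) directly: for a free module \(F\), the sequence \(0 \to \widehat{I^nF} \to \widehat F \to F/I^nF \to 0\) is exact, and \(\mathrm{Tor}\) of \(\widehat F\) against \(R/I^n\) agrees with that of \(F\), since \(\widehat F\) is flat relative to \(R/I^n\) under weak proregularity. This flatness statement is the key technical lemma I would prove first, via the Koszul comparison.

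Finally, commutativity of the diagram follows. The last assertion uses the equivalence of \Cref{CatEquivCompLimit} sheafified on \(D(g)\): the object \(M^{L\Delta}\) is derived \(\mcalI\)-complete sectionwise, because its sections \(\Lambda_I(P[1/g])\) are derived complete. Hence it equals \(\lim_n Li_n^*(M^{L\Delta})\), and this equals \(\lim_n \widetilde{M\otimes^L R/I^n}\) by the first part.
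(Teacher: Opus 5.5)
Your outline is viable, but it is organized differently from the paper. The paper never compares $Li_n^*(M^{L\Delta})$ with $\widetilde{M\otimes^L_R R/I^n}$ term by term. Instead it takes the canonical map $Lu^*\widetilde{M}\to M^{L\Delta}$ (with $u\colon\Spf(R)\to\Spec(R)$), represented by $u^*\widetilde{P^\bullet}\to(P^\bullet)^{\Delta}$ for a termwise free K-projective $P^\bullet$. It shows this map is the derived $I$-completion of $Lu^*\widetilde{M}$: weak proregularity turns that derived completion into $\lim_n\widetilde{P^\bullet/I^nP^\bullet}$, and on each $D(f)$ Mittag--Leffler for the termwise surjective towers identifies this with $(P^\bullet)^{\Delta}(D(f))$. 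Everything else is then formal. A derived completion map becomes an isomorphism after $-\otimes^L\mcalO/^L\underline{f^n}$, hence after base change to $\mcalO/(I^{\Delta})^n$, where $Lu^*\widetilde{M}$ gives exactly $\widetilde{M\otimes^L_R R/I^n}$; and the limit formula is just the derived completion formula.

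You instead prove the levelwise isomorphism first, via Porta--Shaul--Yekutieli and a Tor-independence lemma for $\widehat{F}$ and $R/I^n$. That lemma is true, since $R/I^n$ is derived $I$-torsion. You then get the limit formula afterwards from sectionwise derived completeness. This buys an explicit module-level lemma, but the paper's packaging avoids two issues your route must still face.

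First, $Li_n^*$ is a derived tensor product over $\mcalO_{\Spf(R)}$, whose stalks are colimits of the rings $\widehat{R[1/g]}$. So your $R$-level Tor statement has to be transported; for example, apply the lemma to $\widehat{R[1/g]}$ as well, to see that $u^*$ of a free $R$-resolution of $R/I^n$ resolves $\mcalO/(I^{\Delta})^n$.

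Second, for unbounded $P^\bullet$, termwise Tor-vanishing does not let you compute $Li_n^*$ term by term, because an unbounded termwise flat complex need not be K-flat. The clean fix is to assemble via the finite Koszul complex (a strip argument) and then pass from $\Kos(\mcalO;\underline{f^m})$ to $\mcalO/(I^{\Delta})^n$ for the whole complex. This effectively reintroduces the paper's comparison with $Lu^*\widetilde{M}$, where K-flatness is automatic.

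A minor point: K-projective does not imply termwise projective, so choose a termwise free K-projective resolution, as the paper does.
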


\begin{proof}
    Let \(P^{\bullet}\) be a \(K\)-projective resolution of \(M\) such that each term \(P^i\) is a free \(R\)-module (such a resolution exists; see, for example, \cite{keller1994Deriving}*{\S 3.1} and \cite{porta2014Homology}*{Proposition 2.1 (1)}).
    By the definition of the left derived functor, \(M^{L\Delta}\) is represented by \((P^{\bullet})^{\Delta}\) in \(\mcalD(\Spf(R)_{\Zar}, \mcalO_{\Spf(R)})\) and the derived pullback \(Lu^*\widetilde{M}\) of \(\widetilde{M} \in \mcalD(\Spec(R)_{\Zar}, \mcalO_{\Spec(R)})\) along the canonical morphism \(u \colon \Spf(R) \to \Spec(R)\) is represented by \(u^*\widetilde{P^{\bullet}}\).
    This produces a morphism
    \begin{equation} \label{DerivedCompletionMorphismFromPullback}
        Lu^*\widetilde{M} \to M^{L\Delta}
    \end{equation}
    in \(\mcalD(\Spf(R)_{\Zar}, \mcalO_{\Spf(R)})\).

    We first show that the above morphism \eqref{DerivedCompletionMorphismFromPullback} is the derived \(I^{\Delta}\)-completion morphism of \(Lu^*\widetilde{M} \cong u^*\widetilde{P^{\bullet}}\) in \(\mcalD(\Spf(R)_{\Zar}, \mcalO_{\Spf(R)})\).
    Since \(I\) is generated by a weakly proregular sequence \(\underline{f} = f_1, \dots, f_r\), the derived \(I^{\Delta}\)-completion of \(u^*\widetilde{P^{\bullet}}\) in \(\mcalD(\Spf(R)_{\Zar}, \mcalO_{\Spf(R)})\) is given by the limit
    \begin{align*}
        \dcomp{I^{\Delta}}{u^*\widetilde{P^{\bullet}}} & = \lim_{n \geq 1} ((u^*\widetilde{P^{\bullet}}) \otimes^L_{\mcalO_{\Spf(R)}} \mcalO_{\Spf(R)}/^L \underline{f^n}) \cong \lim_{n \geq 1} ((u^*\widetilde{P^{\bullet}}) \otimes^L_{\mcalO_{\Spf(R)}} \mcalO_{\Spf(R)}/(I^{\Delta})^n) \\
        & \cong \lim_{n \geq 1} ((u^*\widetilde{P^{\bullet}}) \otimes_{\mcalO_{\Spf(R)}} \mcalO_{\Spec(R/I^n)}) \cong \lim_{n \geq 1} i_n^*u^*\widetilde{P^{\bullet}} \cong \lim_{n \geq 1} \widetilde{P^{\bullet}/I^nP^{\bullet}}
    \end{align*}
    in \(\mcalD(\Spf(R)_{\Zar}, \mcalO_{\Spf(R)})\).
    We have to prove that the last limit in the derived category is represented by the term-wise limit \((\widetilde{P^{\bullet}})^{\Delta}\).

    For any \(f \in R\) with \(D(f) = \Spf(R[1/f]) \subseteq \Spf(R)\), we have
    \begin{align*}
        R\Gamma(D(f), \dcomp{I^{\Delta}}{u^*\widetilde{P^{\bullet}})} & \simeq \lim_{n \geq 1} R\Gamma(D(f), \widetilde{P^{\bullet}/I^nP^{\bullet}}) \simeq \lim_{n \geq 1} ((P^{\bullet}/I^nP^{\bullet})[1/f])
    \end{align*}
    in \(\mcalD(R)\).
    On the other hand, using the vanishing of higher cohomology on affine formal schemes (\Cref{AffineVanishing}) and the spectral sequence of hypercohomology of complexes, we have
    \begin{equation*}
        R\Gamma(D(f), (P^\bullet)^{\Delta}) \cong H^0(D(f), (P^\bullet)^{\Delta}) \cong (P^{\bullet})^{\Delta}(D(f)).
    \end{equation*}
    The former is computed by the latter complex, by the Mittag--Leffler condition on \(\mcalD(R)\) (see, for example, \citeSta{07KW}).
    This shows that \(M^{L\Delta} \cong (P^{\bullet})^{\Delta}\) is isomorphic to \(\dcomp{I^{\Delta}}{Lu^*\widetilde{M}} \cong \dcomp{I^{\Delta}}{u^*\widetilde{P^{\bullet}}}\) in \(\mcalD(\Spf(R)_{\Zar}, \mcalO_{\Spf(R)})\).

    Taking the derived quotient and the base change along \(\mcalO_{\Spf(R)}/^L \underline{f^n} \to \mcalO_{\Spf(R)}/(I^{\Delta})^n\), we obtain the isomorphism
    \begin{equation*}
        (Lu^*\widetilde{M}) \otimes^L_{\mcalO_{\Spf(R)}} \mcalO_{\Spf(R)}/(I^{\Delta})^n \xrightarrow{\cong} M^{L\Delta} \otimes^L_{\mcalO_{\Spf(R)}} \mcalO_{\Spf(R)}/(I^{\Delta})^n
    \end{equation*}
    in \(\mcalD(\Spf(R)_{\Zar}, \mcalO_{\Spf(R)})\), which is the same as
    \begin{equation*}
        \widetilde{M \otimes^L_R R/I^n} \cong Li_n^*(Lu^*\widetilde{M}) \xrightarrow{\cong} Li_n^*M^{L\Delta}
    \end{equation*}
    in \(\mcalD(\Spec(R/I^n)_{\Zar}, \mcalO_{\Spec(R/I^n)})\) by the exactness of \(i_n\).
    Together with the full faithfulness of \(i_{n, *}\), we get the desired isomorphism
    \begin{equation*}
        \widetilde{M \otimes^L_R R/I^n} \cong Li_n^*(M^{L\Delta})
    \end{equation*}
    in \(\mcalD(\Spec(R/I^n)_{\Zar}, \mcalO_{\Spec(R/I^n)})\) for any \(n \geq 1\).

    Since we have shown the isomorphism
    \begin{equation*}
        M^{L\Delta} \cong \dcomp{I^{\Delta}}{Lu^*\widetilde{M}} \cong \lim_{n \geq 1} \widetilde{(M \otimes^L_R R/I^n)}
    \end{equation*}
    in \(\mcalD(\Spf(R)_{\Zar}, \mcalO_{\Spf(R)})\), this proves the final assertion.
\end{proof}

\begin{definition} \label{DefDerivedQcohFormalScheme}
    We define the \emph{derived category of quasi-coherent sheaves on \(\mscrX\)} by the full subcategory \(\mcalD_{\qcoh}(\mscrX)\) of \(\mcalD(\mscrX_{\Zar}, \mcalO_{\mscrX})\) consisting of objects \(\mcalF\) such that the canonical morphism
    \begin{equation} \label{CompleteCondition}
        \mcalF \to \lim_{n \geq 1} Li_n^*(\mcalF)
    \end{equation}
    is an isomorphism in \(\mcalD(\mscrX_{\Zar}, \mcalO_{\mscrX})\) and the restriction \(Li_n^*(\mcalF)\) belongs to \(\mcalD_{\qcoh}(\mscrX_n)\) for any \(n \geq 1\).
    This condition can be checked on any affine open formal subscheme of \(\mscrX\).
\end{definition}

\begin{proposition} \label{DerivedQCohEquivAffine}
    Assume that \(I\) is generated by a weakly proregular sequence \(\underline{f} = f_1, \dots, f_r\).
    Then the canonical functor
    \begin{equation*}
        \mcalD_{\qcoh}(\Spf(R)) \to \lim_{n \geq 1} \mcalD_{\qcoh}(\Spec(R/I^n)); \quad \mcalF \mapsto \{Li_n^*(\mcalF)\}_{n \geq 1}
    \end{equation*}
    is an equivalence of \(\infty\)-categories whose quasi-inverse is given by \(\{\mcalF_n\}_{n \geq 1} \mapsto \lim_{n \geq 1} \mcalF_n\). In particular, this induces an equivalence of \(\infty\)-categories
    \begin{align*}
        \mcalD_{\qcoh}(\Spf(R)) & \xrightarrow{\cong} \mcalD^{\comp{I}}(R) \\
        \mcalF & \mapsto R\Gamma(\Spf(R), \mcalF) \\
        M^{L\Delta} \cong \lim_{n \geq 1} \widetilde{(M \otimes^L_R R/I^n)} & \mapsfrom M
    \end{align*}
\end{proposition}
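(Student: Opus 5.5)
The plan is to show that the functor \(\mcalF \mapsto \{Li_n^*(\mcalF)\}_{n \geq 1}\) and the limit functor \(\{\mcalF_n\} \mapsto \lim_n \mcalF_n\) are mutually inverse. I would then compose with \Cref{CatEquivCompLimit} and the equivalences \(\mcalD_{\qcoh}(\Spec(R/I^n)) \simeq \mcalD(R/I^n)\) given by \(R\Gamma\) and \(\widetilde{(-)}\) to obtain the second equivalence with \(\mcalD^{\comp{I}}(R)\). Concretely, I would first construct the functor in the other direction, \(\mcalD^{\comp{I}}(R) \to \mcalD_{\qcoh}(\Spf(R))\), \(M \mapsto M^{L\Delta}\). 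By \Cref{LDeltaQCoh}, \(Li_n^*(M^{L\Delta}) \simeq \widetilde{M \otimes^L_R R/I^n}\), which is quasi-coherent on \(\Spec(R/I^n)\), and \(M^{L\Delta} \simeq \lim_n Li_n^*(M^{L\Delta})\). Hence \(M^{L\Delta}\) lies in \(\mcalD_{\qcoh}(\Spf(R))\) by \Cref{DefDerivedQcohFormalScheme}. Moreover, the composite \(M \mapsto \{Li_n^*(M^{L\Delta})\}\) is identified with the equivalence \(\mcalD^{\comp{I}}(R) \simeq \lim_n \mcalD(R/I^n) \simeq \lim_n \mcalD_{\qcoh}(\Spec(R/I^n))\) of \Cref{CatEquivCompLimit} and diagram \eqref{DerivedCompletionDiagram}.

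Next I would show that \(\Phi\colon \mcalF \mapsto \{Li_n^*\mcalF\}\) is an equivalence. Since the composite \(\Phi \circ (-)^{L\Delta}\) is an equivalence, \(\Phi\) is essentially surjective. It is fully faithful because every \(\mcalF \in \mcalD_{\qcoh}(\Spf(R))\) satisfies \(\mcalF \simeq \lim_n Li_n^*\mcalF\). Indeed, mapping spaces can be computed as
\[
\Map(\mcalG, \mcalF) \simeq \lim_n \Map(\mcalG, Ri_{n,*}Li_n^*\mcalF) \simeq \lim_n \Map(Li_n^*\mcalG, Li_n^*\mcalF),
\]
using the adjunction \(Li_n^* \dashv Ri_{n,*}\) and the fact that \(Ri_{n,*}\) is restriction of scalars. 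The right-hand side is exactly the mapping space in \(\lim_n \mcalD_{\qcoh}(\Spec(R/I^n))\). The same computation shows that the quasi-inverse is \(\{\mcalF_n\} \mapsto \lim_n Ri_{n,*}\mcalF_n\): the counit \(\mcalF \to \lim_n Li_n^*\mcalF\) is an isomorphism by definition, and for a compatible system we have \(Li_m^*(\lim_n \mcalF_n) \simeq \mcalF_m\). I would obtain this last isomorphism by transporting the system through essential surjectivity to some \(M^{L\Delta}\), where it holds by \Cref{LDeltaQCoh}.

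Finally, for the explicit form of the composite equivalence, I would check that \(R\Gamma(\Spf(R), M^{L\Delta}) \simeq M\) for derived \(I\)-complete \(M\). Since \(R\Gamma\) commutes with limits, we have \(R\Gamma(M^{L\Delta}) \simeq \lim_n R\Gamma(\Spec(R/I^n), \widetilde{M \otimes^L R/I^n}) \simeq \lim_n M \otimes^L_R R/I^n\). By \Cref{CatEquivCompLimit}, this agrees with \(\lim_n M \otimes^L_R R/^L\underline{f^n} \simeq M\).

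The main obstacle is the full faithfulness step. It requires care that limits in \(\mcalD(\Spf(R)_{\Zar}, \mcalO_{\Spf(R)})\) are compatible with the \(Li_n^*\)-adjunctions, and also that the identification of \(\Phi\circ(-)^{L\Delta}\) with the equivalence of \Cref{CatEquivCompLimit} holds coherently and not merely objectwise. I would handle this by working with the natural transformation \(Lu^*\widetilde{M} \to M^{L\Delta}\) from the proof of \Cref{LDeltaQCoh}, which makes all the identifications functorial in \(M\).
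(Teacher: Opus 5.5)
Your proposal is correct and follows the same route as the paper. Essential surjectivity comes from realizing a compatible system $\{M_n\}$ as $\{Li_n^*(M^{L\Delta})\}$ with $M=\lim_n M_n$, via \Cref{CatEquivCompLimit} and \Cref{LDeltaQCoh}, and full faithfulness comes from the completeness condition $\mcalF \simeq \lim_n Li_n^*\mcalF$ built into \Cref{DefDerivedQcohFormalScheme}; your adjunction computation of mapping spaces and your explicit check that $R\Gamma(\Spf(R), M^{L\Delta}) \simeq M$ spell out what the paper states in one line each.
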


\begin{proof}
    By the definition of \(\mcalD_{\qcoh}(\Spf(R))\), the functor is well defined.
    Under the equivalence \(R\Gamma(\Spec(R/I^n), -) \colon \mcalD_{\qcoh}(\Spec(R/I^n)) \simeq \mcalD(R/I^n) \colon \widetilde{(-)}\), the functor is identified with
    \begin{equation*}
        \mcalD_{\qcoh}(\Spf(R)) \to \lim_{n \geq 1} \mcalD(R/I^n); \quad \mcalF \mapsto \{R\Gamma(\Spec(R/I^n), Li_n^*(\mcalF))\}_{n \geq 1}.
    \end{equation*}

    For any \(\{M_n\}_{n \geq 1}\) in \(\lim_{n \geq 1} \mcalD(R/I^n)\), we have an object \(M \defeq \lim_{n \geq 1} M_n\) of \(\mcalD^{\comp{I}}(R)\) such that \(\{M \otimes^L_R R/I^n\}_{n \geq 1} \cong \{M_n\}_{n \geq 1}\) in \(\lim_{n \geq 1} \mcalD(R/I^n)\) by the equivalences in \Cref{CatEquivCompLimit}.
    Using the diagram \eqref{DerivedCompletionDiagram} in \Cref{ConstDerivedCompletionDiagram}, which is commutative by \Cref{LDeltaQCoh}, we have
    \begin{equation*}
        \{Li_n^*(M^{L\Delta})\}_{n \geq 1} \cong \{\widetilde{M \otimes^L_R R/I^n}\}_{n \geq 1} \cong \{\widetilde{M_n}\}_{n \geq 1}
    \end{equation*}
    in \(\lim_{n \geq 1}\mcalD_{\qcoh}(\Spec(R/I^n))\).
    Since this \(M^{L\Delta}\) is an object of \(\mcalD_{\qcoh}(\Spf(R))\) by \Cref{LDeltaQCoh} and \Cref{DefDerivedQcohFormalScheme}, this shows that the functor is essentially surjective.

    Next, we show that the functor is fully faithful. This follows from the fact that the composition
    \begin{equation*}
        \mcalD_{\qcoh}(\Spf(R)) \to \lim_{n \geq 1} \mcalD_{\qcoh}(\Spec(R/I^n)) \to \mcalD(\Spf(R)_{\Zar}, \mcalO_{\Spf(R)}); \quad \mcalF \mapsto \lim_{n \geq 1} (Li_n^*(\mcalF))
    \end{equation*}
    is assumed to be the inclusion functor by the condition \eqref{CompleteCondition} in \Cref{DefDerivedQcohFormalScheme}.

    Combining these arguments, the quasi-inverse is given by \(\{\mcalF_n\}_{n \geq 1} \mapsto \lim_{n \geq 1} \mcalF_n\) as claimed and the equivalence \(\mcalD_{\qcoh}(\Spf(R)) \simeq \mcalD^{\comp{I}}(R)\) is obtained.
\end{proof}

Finally, we can globalize the equivalence in \Cref{DerivedQCohEquivAffine} to general formal schemes:
See \Cref{CompatibilityQCDerived} for the compatibility for bounded \(p\)-adic formal schemes.

\begin{corollary} \label{DerivedQCohEquivGeneral}
    Assume that the ideal sheaf of definition \(\mcalI\) of \(\mscrX\) is generated by a weakly proregular sequence on any affine open formal subscheme of \(\mscrX\) (for example, if \(\mscrX\) is locally Noetherian or \(\mcalI\) is principal and \(\mscrX\) has bounded \(\mcalI^{\infty}\)-torsion).
    Then the canonical functor
    \begin{equation*}
        \mcalD_{\qcoh}(\mscrX) \to \lim_{n \geq 1} \mcalD_{\qcoh}(\mscrX_n); \quad \mcalF \mapsto \{Li_n^*(\mcalF)\}_{n \geq 1}
    \end{equation*}
    is an equivalence of \(\infty\)-categories, where \(\mscrX_n\) is the closed subscheme of \(\mscrX\) defined by \(\mcalI^n\) for each \(n \geq 1\).
\end{corollary}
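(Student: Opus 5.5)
The plan is to glue the affine statement \Cref{DerivedQCohEquivAffine} along a finite affine cover, using Zariski descent on both sides. Fix a finite affine open cover \(\mscrX = \bigcup_{i} \mscrU_i\) with \(\mscrU_i = \Spf(R_i)\). Because \(\mscrX\) is separated, every finite intersection \(\mscrU_{i_0 \cdots i_k}\) is again affine, say \(\Spf(R_{i_0\cdots i_k})\). Each \(R_{i_0\cdots i_k}\) is a completed localization of \(R_{i_0}\), and the hypothesis is that \(\mcalI\) is weakly proregular on every affine open. Hence \Cref{DerivedQCohEquivAffine} applies to every term of the \v{C}ech nerve.

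The key steps are as follows.

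First, I show that \(\mcalD_{\qcoh}(\mscrX) \to \lim_{\Delta} \mcalD_{\qcoh}(\mscrU_{\bullet})\) is an equivalence. The full \(\infty\)-category \(\mcalD(\mscrX_{\Zar},\mcalO_{\mscrX})\) satisfies Zariski descent; this follows from \Cref{RGammaCommutesLimits} and the sheaf property in \Cref{SheafifiedDerivedObjects}, or from the standard gluing of \(\mcalD\) of sheaves of modules. Both defining conditions of \Cref{DefDerivedQcohFormalScheme} are local, since restriction to an open commutes with \(Li_n^*\) and with limits of sheaves. So the descent equivalence restricts to the full subcategories of quasi-coherent objects.

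Second, I establish the same descent for each \(\mscrX_n\): \(\mcalD_{\qcoh}(\mscrX_n) \simeq \lim_{\Delta} \mcalD_{\qcoh}(\mscrU_{\bullet,n})\), where \(\mscrU_{\bullet,n}\) are the corresponding open subschemes of the scheme \(\mscrX_n\). This is standard Zariski descent for quasi-coherent complexes on schemes.

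Third, the functor \(\mcalF \mapsto \{Li_n^*\mcalF\}\) is compatible with restriction to opens. It therefore induces a map of cosimplicial diagrams, and termwise it is an equivalence by \Cref{DerivedQCohEquivAffine}. Commuting the two limits \(\lim_{\Delta}\) and \(\lim_{n}\), which is formal because limits commute with limits, then gives
\begin{equation*}
\mcalD_{\qcoh}(\mscrX) \simeq \lim_{\Delta}\lim_{n}\mcalD_{\qcoh}(\mscrU_{\bullet,n}) \simeq \lim_{n}\lim_{\Delta}\mcalD_{\qcoh}(\mscrU_{\bullet,n}) \simeq \lim_n \mcalD_{\qcoh}(\mscrX_n).
\end{equation*}
I also need to check that this composite really is the canonical functor. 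That is immediate, since \(Li_n^*\) commutes with restriction to opens.

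The main obstacle I expect is the first step: proving that the condition \(\mcalF \xrightarrow{\cong} \lim_n Li_n^*\mcalF\) is Zariski-local, and that Zariski descent holds for \(\mcalD(\mscrX_{\Zar},\mcalO_{\mscrX})\) as an \(\infty\)-category and not only on objects. For locality, restriction \(j^*\) to an open \(j\colon \mscrU \hookrightarrow \mscrX\) is exact and preserves limits of sheaves, since it is a right adjoint as well via \(j_!\). Hence \(j^*\lim_n Li_n^*\mcalF \cong \lim_n Li_n^*(j^*\mcalF)\), and isomorphisms can be checked on a cover. For the descent equivalence itself, I would cite the standard result that \(U \mapsto \mcalD(U,\mcalO_U)\) is a sheaf of \(\infty\)-categories on a ringed space, for instance via hypercomplete sheaves of \(\mcalO\)-modules. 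Full faithfulness can alternatively be verified by hand: mapping spectra are computed by \(R\Gamma\) of \(R\mathcal{H}om\), which satisfies \v{C}ech descent by \Cref{SheafifiedDerivedObjects}.
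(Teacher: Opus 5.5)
Your argument is correct, but it takes a different and heavier route than the paper. You glue the affine case along a \v{C}ech nerve. The paper never uses descent for \(\infty\)-categories.

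\textbf{The paper's route.} Full faithfulness is read off directly from \eqref{CompleteCondition}. The functor \(\mcalF \mapsto \{Li_n^*\mcalF\}_n\) is left adjoint to \(\{\mcalF_n\}_n \mapsto \lim_n Ri_{n,*}\mcalF_n\), and the unit of this adjunction is exactly the morphism in \Cref{DefDerivedQcohFormalScheme}. So full faithfulness on \(\mcalD_{\qcoh}(\mscrX)\) is built into the definition. For essential surjectivity, the paper sets \(\mcalF \defeq \lim_n \mcalF_n\) in \(\mcalD(\mscrX_{\Zar},\mcalO_{\mscrX})\). It then checks on each affine \(\mscrU\) that \(\mcalF|_{\mscrU}\) is quasi-coherent and that \(Li_n^*\mcalF\) restricts to \(\mcalF_n|_{\mscrU \cap \mscrX_n}\). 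This uses only \Cref{DerivedQCohEquivAffine} and the fact that restriction to opens commutes with limits and with \(Li_n^*\).

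\textbf{What each approach needs and buys.} The paper only needs object-level locality: isomorphisms and the quasi-coherence condition can be tested on an affine cover. Your approach needs three further inputs:
\begin{itemize}
\item that \(U \mapsto \mcalD(U,\mcalO_U)\) is a sheaf of \(\infty\)-categories, including gluing of objects, not just the \v{C}ech descent of mapping spectra that \Cref{SheafifiedDerivedObjects} provides;
\item Zariski descent for \(\mcalD_{\qcoh}\) on the schemes \(\mscrX_n\);
\item an interchange of \(\lim_{\Delta}\) and \(\lim_n\).
\end{itemize}
In exchange, your argument is modular and makes compatibility with restriction to opens transparent. The paper's argument is shorter and uses neither separatedness nor a finite cover. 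It also produces the quasi-inverse \(\{\mcalF_n\} \mapsto \lim_n \mcalF_n\) directly, and that is the form in which the equivalence is used later, e.g.\ in the proof of \Cref{AlgebraizableOXperfd}. On your route, identifying the inverse as \(\lim_n\) is a small extra step.

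A minor inaccuracy: the intersections \(\mscrU_{i_0\cdots i_k}\) need not be single completed localizations of \(R_{i_0}\). You don't need this, since the hypothesis covers every affine open.
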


\begin{proof}
    By the condition \eqref{CompleteCondition} in \Cref{DefDerivedQcohFormalScheme}, the functor is well defined and fully faithful.

    For any \(\{\mcalF_n\}_{n \geq 1}\) in \(\lim_{n \geq 1} \mcalD_{\qcoh}(\mscrX_n)\), we can take an object
    \begin{equation*}
        \mcalF \defeq \lim_{n \geq 1} \mcalF_n
    \end{equation*}
    of \(\mcalD(\mscrX_{\Zar}, \mcalO_{\mscrX})\) where \(i_n \colon \mscrX_n \to \mscrX\) is the closed immersion for each \(n \geq 1\).    

    We have \(\restr{\mcalF}{\mscrU} \cong \lim_{n \geq 1} \restr{\mcalF_n}{\mscrU}\) and each \(\restr{\mcalF_n}{\mscrU}\) belongs to \(\mcalD_{\qcoh}(\mscrU \cap \mscrX_n)\) by the assumption.
    So the equivalence in \Cref{DerivedQCohEquivAffine} implies that \(\restr{\mcalF}{\mscrU}\) belongs to \(\mcalD_{\qcoh}(\mscrU)\).
    Also, the equivalence shows that
    \begin{equation*}
        \restr{(Li_n^*\mcalF)}{\mscrU \cap \mscrX_n} \cong Li_{n, \mscrU}^*((\restr{\mcalF}{\mscrU})) \cong \restr{\mcalF_n}{\mscrU \cap \mscrX_n},
    \end{equation*}
    where \(i_{n, \mscrU} \colon \mscrU \cap \mscrX_n \to \mscrU\) is the closed immersion for each \(n \geq 1\).
    Consequently, \(\mcalF\) belongs to \(\mcalD_{\qcoh}(\mscrX)\) and it goes to \(\{\mcalF_n\}_{n \geq 1}\) in \(\lim_{n \geq 1} \mcalD_{\qcoh}(\mscrX_n)\) under the functor. 

\end{proof}

\subsection{Cohomology of derived completion}

We review the compatibility of cohomology on schemes and cohomology on formal schemes via derived completion.

\begin{lemma} \label{DerivedCompletionFormalSchemes}
    Assume that \(\mscrX\) is the \(I\)-adic formal completion of a scheme \(X\) over \(\Spec(R)\).
    Then for any \(\mcalF \in \mcalD_{\qcoh}(X)\), we have a canonical isomorphism
    \begin{equation*}
        \dcomp{I_0}{R\Gamma(X, \mcalF)} \xrightarrow{\cong} R\Gamma(\mscrX, \mcalF^{\wedge})
    \end{equation*}
    in \(\mcalD(R)\), where \(\mcalF^{\wedge} \defeq \lim_{n \geq 1} (Lj_n^*\mcalF)\) in \(\mcalD_{\qcoh}(\mscrX)\) is the corresponding object of \(\{Lj_n^*\mcalF\}_{n \geq 1}\) in \(\lim_{n \geq 1} \mcalD_{\qcoh}(\mscrX_n)\) under the equivalence in \Cref{DerivedQCohEquivGeneral} where \(j_n \colon \mscrX_n \cong X_n \to X\) is the closed immersion defined by \(I^n\).
\end{lemma}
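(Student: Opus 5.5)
The plan is to reduce to the affine case via a Čech cover, and there compare the ordinary derived completion of $R\Gamma(X,\mcalF)$ with the limit of the truncations.

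First, construct the comparison map. For each $n$, the closed immersion $j_n$ gives $R\Gamma(X,\mcalF)\to R\Gamma(X_n, Lj_n^*\mcalF)$. By \Cref{DerivedQCohEquivGeneral}, $\mcalF^{\wedge}=\lim_n Lj_n^*\mcalF$. Since $R\Gamma(\mscrX,-)$ commutes with limits (\Cref{RGammaCommutesLimits}), we have $R\Gamma(\mscrX,\mcalF^{\wedge})\cong\lim_n R\Gamma(X_n,Lj_n^*\mcalF)$. This target is derived $I$-complete, because each term is killed by a power of $I$. Hence the map factors through $\dcomp{I_0}{R\Gamma(X,\mcalF)}$, and it remains to show that this factored map is an isomorphism.

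Next, reduce to affines. Choose a finite affine open cover $\{U_i\}$ of $X$; its intersections are affine because $X$ is separated, as is implicit in the setting. Both sides satisfy Čech descent: the left side by Zariski descent for $\mcalF$, together with the fact that $\dcomp{I_0}{-}$ commutes with finite limits; the right side by \Cref{SheafifiedDerivedObjects}. Since the Čech nerve of a finite cover is determined by finitely many terms, it suffices to treat $X=\Spec(S)$ affine over $R$ and $\mcalF=\widetilde M$ with $M\in\mcalD(S)$.

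In the affine case, $R\Gamma(X,\mcalF)=M$ and $R\Gamma(X_n,Lj_n^*\mcalF)=M\otimes^L_S S/I^nS$. The claim then becomes $\dcomp{I}{M}\cong\lim_n M\otimes^L_S S/I^n$. This is where weak proregularity enters, exactly as in \Cref{CatEquivCompLimit} and \Cref{WeaklyProregularEquiv}. Derived completion is $\lim_n M\otimes^L S/^L\underline{f^n}$ for generators $\underline f$ of $I$, and the pro-isomorphism $\{S/^L\underline{f^n}\}\cong\{S/I^n\}$ identifies the two limits.

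The main obstacle is this last step. It needs the generating sequence of $I$ to remain weakly proregular on $S$, or on each affine open of $X$; this is the standing hypothesis in the appendix, and it may need to be stated explicitly. One must also check that the identification is compatible with restriction maps, so that the affine isomorphisms glue through the Čech diagram. I would handle the compatibility by noting that every map involved is natural in $M$ and in the pro-system of quotients.
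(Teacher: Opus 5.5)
Your argument is correct, provided the extra hypothesis you flag is added, but it is organized differently from the paper's.

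\textbf{Comparison with the paper's route.} The paper stays global. It writes $R\Gamma(\mathscr{X},\mathcal{F}^{\wedge})\cong\lim_n R\Gamma(X,Rj_{n,*}Lj_n^*\mathcal{F})\cong R\Gamma\bigl(X,\lim_n \mathcal{F}\otimes^L_{\mathcal{O}_X}\mathcal{O}_{X_n}\bigr)$, using the projection formula for the closed immersions $j_n$ and the fact that $R\Gamma$ commutes with limits. It then cites the theorem on formal functions to compare this with $R\Gamma(X,\mathcal{F})$.

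You instead obtain the comparison map from the universal property of derived completion. You reduce to affine opens by descent, and you prove the affine case directly from the Koszul description of $\dcomp{I}{-}$ and the pro-isomorphism of \Cref{WeaklyProregularEquiv}.

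The paper's route is shorter. However, its last step is a citation whose classical form (Noetherian base, proper morphism, coherent sheaf) does not literally apply to an arbitrary $\mathcal{F}\in\mathcal{D}_{\mathrm{qcoh}}(X)$. The derived form it implicitly needs is that $\dcomp{I}{-}$ commutes with $R\Gamma$, followed by exactly the replacement of $\mathcal{O}_X/^L\underline{f^n}$ by $\mathcal{O}_{X_n}$ that your affine step makes explicit. Your route shows precisely where weak proregularity enters.

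A small simplification: you do not need the remark that the \v{C}ech nerve of a finite cover is determined by finitely many terms. The functor $\dcomp{I}{-}=\lim_n(-\otimes^L_R R/^L\underline{f^n})$ commutes with all limits in $\mathcal{D}(R)$, because tensoring with a perfect complex does. Your \v{C}ech step does, however, use that $X$ itself is quasi-compact and separated. The appendix assumes this only of $\mathscr{X}$, so it should be stated as well.

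\textbf{The hypothesis.} You are right that weak proregularity must be stated, but it is not the appendix's standing assumption. \Cref{DerivedQCohEquivGeneral} requires weak proregularity on affine opens of $\mathscr{X}$, that is, in $\widehat{S}$. This does not imply weak proregularity in $S$.

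For example, let $S=\mathbb{Z}_p\oplus\mathbb{Q}_p/\mathbb{Z}_p$ be the square-zero extension, $X=\Spec(S)$ and $I=(p)$. Then $\widehat{S}=\mathbb{Z}_p$, so $\mathscr{X}=\mathrm{Spf}(\mathbb{Z}_p)$ satisfies every hypothesis of the appendix. Yet $\dcomp{p}{S}\cong\mathbb{Z}_p\oplus\mathbb{Z}_p[1]$, where the $H^{-1}$ is the Tate module of $\mathbb{Q}_p/\mathbb{Z}_p$. On the other side, $R\Gamma(\mathscr{X},\mathcal{O}_X^{\wedge})=\lim_n S/p^nS=\mathbb{Z}_p$.

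So weak proregularity of $\underline{f}$ on affine opens of $X$ is genuinely needed for the statement; the paper's proof uses it silently too. With it added, your proof goes through.
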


\begin{proof}
    We compute
    \begin{align*}
        R\Gamma(\mscrX, \mcalF^{\wedge}) & \cong \lim_{n \geq 1} R\Gamma(\mscrX_n, Lj_n^*\mcalF) \cong \lim_{n \geq 1} R\Gamma(X, Rj_{n, *} Lj_n^*\mcalF) \cong \lim_{n \geq 1} R\Gamma(X, \mcalF \otimes^L_{\mcalO_X} \mcalO_{X_n}) \\
        & \cong R\Gamma(X, \lim_{n \geq 1} (\mcalF \otimes^L_{\mcalO_X} \mcalO_{X_n}))
    \end{align*}
    where the third isomorphism follows from \citeSta{08EU}.
    Then we have a canonical morphism
    \begin{equation*}
        R\Gamma(X, \mcalF) \to R\Gamma(X, \lim_{n \geq 1} (\mcalF \otimes^L_{\mcalO_X} \mcalO_{X_n})) \cong R\Gamma(\mscrX, \mcalF^{\wedge})
    \end{equation*}
    in \(\mcalD(R)\).
    We show that the above morphism is an isomorphism by the formal function theorem (e.g., \cite{gortz2023Algebraic}*{Corollary 24.29}).
\end{proof}

\begin{lemma} \label{SymMonDerivedCompletionFormalSchemes}
    Keep the setting of \Cref{DerivedCompletionFormalSchemes}.
    Then the construction \(\mcalF \mapsto \mcalF^{\wedge}\) in \Cref{DerivedCompletionFormalSchemes} defines a functor
    \begin{equation*}
        (-)^{\wedge} \colon \mcalD_{\qcoh}(X) \to \mcalD_{\qcoh}(\mscrX); \quad \mcalF \mapsto \lim_{n \geq 1} (Lj_n^*\mcalF)
    \end{equation*}
    which preserves colimits and, for any \(\mcalF, \mcalG \in \mcalD_{\qcoh}(X)\), the canonical morphism
    \begin{equation*}
        (\mcalF \otimes^L_{\mcalO_X} \mcalG)^{\wedge} \xrightarrow{\cong} \dcomp{I_0}{\mcalF^{\wedge} \otimes^L_{\mcalO_{\mscrX}} \mcalG^{\wedge}}
    \end{equation*}
    is an isomorphism in \(\mcalD(\mscrX_{\Zar}, \mcalO_{\mscrX})\), where \(\dcomp{I_0}{-}\) is the derived \(I_0\mcalO_{\mscrX}\)-completion.
\end{lemma}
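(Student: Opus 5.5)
The plan is to reduce both assertions to affine pieces, where the derived completion is computed locally, and to assemble the local identifications by functoriality in the open subset. First, functoriality: \(\mcalF \mapsto Lj_n^*\mcalF\) is a functor \(\mcalD_{\qcoh}(X) \to \mcalD_{\qcoh}(X_n) \simeq \mcalD_{\qcoh}(\mscrX_n)\), compatible in \(n\). Hence it defines a functor into \(\lim_{n} \mcalD_{\qcoh}(\mscrX_n)\), and composing with the quasi-inverse of \Cref{DerivedQCohEquivGeneral} gives \((-)^{\wedge}\) valued in \(\mcalD_{\qcoh}(\mscrX)\). For colimit preservation, each \(Lj_n^*\) preserves colimits, being a left adjoint. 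Colimits in the limit category \(\lim_n \mcalD_{\qcoh}(\mscrX_n)\) are computed termwise, because the transition functors \(Li^*\) preserve colimits. Since the equivalence of \Cref{DerivedQCohEquivGeneral} then preserves colimits, \((-)^{\wedge}\) does too. Note that the colimit in \(\mcalD_{\qcoh}(\mscrX)\) is the completed one, not the colimit computed in \(\mcalD(\mscrX_{\Zar},\mcalO_{\mscrX})\).

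For the tensor product, I construct the comparison morphism first. The natural maps \(\mcalF \to Rj_{n,*}Lj_n^*\mcalF\) induce
\[
(\mcalF \otimes^L \mcalG)^{\wedge} = \lim_n Lj_n^*(\mcalF\otimes^L\mcalG) \cong \lim_n (Lj_n^*\mcalF \otimes^L_{\mcalO_{\mscrX_n}} Lj_n^*\mcalG).
\]
There are maps \(\mcalF^{\wedge}\otimes^L \mcalG^{\wedge} \to Lj_n^*\mcalF \otimes^L Lj_n^*\mcalG\), and these assemble to a map out of \(\mcalF^{\wedge} \otimes^L_{\mcalO_{\mscrX}} \mcalG^{\wedge}\) into a derived \(I_0\)-complete target, so the map factors through \(\dcomp{I_0}{-}\).

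To show this is an isomorphism, I check it on an affine open \(\Spf(A)\), the completion of \(\Spec(A) \subseteq X\). There \(\mcalF, \mcalG\) correspond to \(M, N \in \mcalD(A)\), and by \Cref{DerivedQCohEquivAffine} and \Cref{LDeltaQCoh} we have \(\mcalF^{\wedge} \cong (\dcomp{I}{M})^{L\Delta}\). Both sides then become \(\dcomp{I}{M\otimes^L_A N}\). This uses the standard identity \(\dcomp{I}{M\otimes^L N}\cong \dcomp{I}{\dcomp{I}{M}\otimes^L \dcomp{I}{N}}\), which holds because both sides agree modulo \(\underline{f^n}\). Weak proregularity lets us replace \(A/^L\underline{f^n}\) by \(A/I^n\), via \Cref{WeaklyProregularEquiv}.

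The main obstacle is to identify \(\dcomp{I_0}{\mcalF^{\wedge}\otimes^L\mcalG^{\wedge}}\), formed in \(\mcalD(\mscrX_{\Zar},\mcalO_{\mscrX})\), with the sheaf \((\dcomp{I}{M\otimes^L N})^{L\Delta}\), that is, to ensure that sheaf-level tensor product and derived completion commute with taking sections on affines. I would first apply \(Li_n^*\) to both sides, obtaining \(\widetilde{M/I^n\otimes^L N/I^n}\) on \(\Spec(A/I^n)\) by \Cref{LDeltaQCoh}; the right side thus belongs to \(\mcalD_{\qcoh}(\mscrX)\) by \Cref{DefDerivedQcohFormalScheme}. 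Then I conclude by the conservativity of \(\{Li_n^*\}\) on \(\mcalD_{\qcoh}(\mscrX)\) provided by \Cref{DerivedQCohEquivGeneral}.
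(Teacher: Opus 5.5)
Your proposal is correct and follows essentially the paper's route: reduce to an affine piece, transport along \(\mcalD_{\qcoh}(X)\simeq\mcalD(A)\) and \(\mcalD_{\qcoh}(\mscrX)\simeq\mcalD^{\comp{I}}(A)\), and use that derived \(I\)-completion preserves colimits and is symmetric monoidal. Your identity \(\dcomp{I}{M\otimes^L N}\cong\dcomp{I}{\dcomp{I}{M}\otimes^L\dcomp{I}{N}}\) is exactly the symmetric monoidality input.

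You go beyond the paper in two places, neither of which leaves a gap. First, you prove colimit preservation globally through \(\lim_n\mcalD_{\qcoh}(\mscrX_n)\) rather than affine-locally. Second, you make explicit the sheaf-level identification of \(\dcomp{I_0}{\mcalF^{\wedge}\otimes^L\mcalG^{\wedge}}\) via \(Li_n^*\) and conservativity, which the paper leaves implicit. Two small points to add there. You also need the completeness condition \eqref{CompleteCondition} for this object; it follows from derived completeness plus weak proregularity, as in \Cref{LDeltaQCoh}. And your comparison map runs opposite to the one in the statement, which is harmless once you know it is an isomorphism.
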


\begin{proof}
    Since the construction is functorial on open subsets of \(X\), we may assume that \(X = \Spec(R)\) is affine.
    Using the categorical equivalences \(\mcalD_{\qcoh}(X) \simeq \mcalD(R)\) and \(\mcalD_{\qcoh}(\mscrX) \simeq \mcalD^{\comp{I}}(R)\) in \Cref{DerivedQCohEquivAffine}, and the isomorphism in the proof of \Cref{DerivedCompletionFormalSchemes}, we can reduce to show that the derived \(I\)-completion functor \(\dcomp{I}{-} \colon \mcalD(R) \to \mcalD^{\comp{I}}(R)\) preserves colimits and is symmetric monoidal. This is a standard result on derived completion.
\end{proof}



\end{document}